\numberwithin{equation}{section}
\newtheorem{thm}{Theorem}[section]
\newtheorem{cor}[thm]{Corollary}
\newtheorem{lemma}[thm]{Lemma}
\newtheorem{prop}[thm]{Proposition}
\newtheorem{definition}{Definition}[section]
\newtheorem{assumption}{Assumption}
\newtheorem*{remark}{Remark}
\newcommand{\PP}{\mathbb{P}}
\newcommand{\Var}{\textnormal{Var}}
\newcommand{\Cov}{\textnormal{Cov}}
\newcommand{\op}{\textnormal{op}}
\newcommand{\Imm}{\textnormal{Im}}
\newcommand{\Tr}{\textnormal{Tr}}
\newcommand{\dd}{\textnormal{d}}
\DeclareMathOperator*{\argmin}{arg\,min}
\newcommand{\mcl}{\mathcal}
\newcommand{\mcn}{\mathcal{N}}
\newcommand{\mbb}{\mathbb}
\newcommand{\ba}{\bm{a}}
\newcommand{\bb}{\bm{b}}
\newcommand{\bh}{\bm{h}}
\newcommand{\bu}{\bm{u}}
\newcommand{\bv}{\bm{v}}
\newcommand{\bx}{\bm{x}}
\newcommand{\by}{\bm{y}}
\newcommand{\fa}{\mathfrak{a}}
\newcommand{\fb}{\mathfrak{b}}
\newcommand{\fc}{\mathfrak{c}}
\newcommand{\bA}{\bm{A}}
\newcommand{\bB}{\bm{B}}
\newcommand{\bD}{\bm{D}}
\newcommand{\bF}{\bm{F}}
\newcommand{\bG}{\bm{G}}
\newcommand{\bH}{\bm{H}}
\newcommand{\bI}{\bm{I}}
\newcommand{\bJ}{\bm{J}}
\newcommand{\bM}{\bm{M}}
\newcommand{\bU}{\bm{U}}
\newcommand{\bV}{\bm{V}}
\newcommand{\bW}{\bm{W}}
\newcommand{\bX}{\bm{X}}
\newcommand{\bZ}{\bm{Z}}
\newcommand{\bbeta}{\bm{\beta}}
\newcommand{\bdelta}{\bm{\delta}}
\newcommand{\bep}{\bm{\epsilon}}
\newcommand{\bxi}{\bm{\xi}}
\newcommand{\bzeta}{\bm{\zeta}}
\newcommand{\bSigma}{\bm{\Sigma}}
\newcommand{\bLambda}{\bm{\Lambda}}
\newcommand{\btheta}{\bm{\theta}}
\newcommand{\R}{\mbb{R}}
\newcommand{\C}{\mbb{C}}
\newcommand{\E}{\mbb{E}}
\newcommand{\mI}{\mcl{I}}
\newcommand{\lcon}{\lesssim}
\newif\ifappendixrefs
\newcommand{\appendixref}[2]{%
  \ifappendixrefs
    \ref{#1}%
  \else
    #2%
  \fi
}
\begin{document}
\title{\textbf{Generalization error of min-norm interpolators in transfer learning}}

\author[1,\textdagger]{Yanke Song}
\author[2,\textdagger]{Kenneth Gu}
\author[3,*]{Sohom Bhattacharya}
\author[1,*,$\ddagger$]{Pragya Sur}
\affil[1]{Department of Statistics, Harvard University}
\affil[2]{Department of Statistics, Stanford University}
\affil[3]{Department of Statistics, University of Florida}
\makeatletter
\renewcommand\AB@affilsepx{\\ \protect\Affilfont}
\renewcommand\Authsep{, }
\renewcommand\Authand{, }
\renewcommand\Authands{, }
\makeatother

\maketitle
\footnotetext[2]{co-first authors.}
\footnotetext[1]{co-senior authors.}
\footnotetext[3]{Emails: \href{mailto:ysong@g.harvard.edu}{\textit{ysong@g.harvard.edu}};  \href{mailto:kennygu@stanford.edu}{\textit{kennygu@stanford.edu}}; \href{mailto:bhattacharya.s@ufl.edu}{\textit{bhattacharya.s@ufl.edu}}; \href{mailto:pragya@fas.harvard.edu}{\textit{pragya@fas.harvard.edu}}}

\begin{abstract}
This paper establishes the generalization error of pooled min-$\ell_2$-norm interpolation in transfer learning, where data from diverse distributions are available. Min-norm interpolators arise naturally as implicit regularized limits of modern machine learning algorithms. Prior work has characterized their out-of-distribution risk when samples from the test distribution are unavailable during training. In many applications, however, limited test samples may be available at training time, yet properties of min-norm interpolation in this regime remain poorly understood. We address this gap by characterizing the bias and variance of pooled min-$\ell_2$-norm interpolation under both covariate shift and model shift. Our results yield several important implications. In certain cases under model shift, we show that adding data always hurts when the signal-to-noise ratio (SNR) is low. At higher SNR levels, transfer learning is beneficial provided the shift-to-signal ratio falls below a threshold that we characterize explicitly. Under covariate shift, we find that when the source sample size is small relative to the dimension, greater heterogeneity between domains reduces risk, and vice versa. While our model shift results are initially established for Gaussian designs, we extend them to more general designs through a universality argument. To illustrate the broader applicability of our technical tools beyond interpolation learning, we characterize the risk of a bias-corrected estimator that uses the pooled interpolator as an initialization and corrects the resulting bias with target data. On the technical side, we develop a novel anisotropic local law and a Lindeberg-swapping argument, yielding tools that may be of independent interest in random matrix theory and universality analysis. Finally, we supplement our theory with simulations demonstrating the finite-sample efficacy of our results.
\end{abstract}

\section{Introduction}
Recent advancements in deep learning methodology have uncovered a surprising phenomenon that defies conventional statistical wisdom: overfitting can yield remarkably effective generalization \cite{bartlett2020benign,belkin2019reconciling,belkin2019does,belkin2020two}. In the overparametrized regime, complex models that achieve zero-training error, i.e.~models that interpolate the training data, have gained popularity \cite{ghorbani2021linearized,hastie2022surprises,liang2020just}. However, as researchers grapple with increasingly large and heterogeneous datasets, the imperative  to effectively integrate diverse sources of information has become crucial. Transfer learning~\cite{torrey2010transfer} has emerged as a promising approach to address this challenge, allowing one to leverage knowledge from related datasets to boost performance on target problems. Yet, in the context of overparametrization, a crucial question arises: how should one leverage transfer learning in the presence of overfitting? Specifically, should diverse datasets be aggregated to construct a unified interpolator, or should they be handled individually? Intuition suggests that for closely related datasets, a pooled interpolator may yield superior performance compared to those derived from individual datasets. Conversely, for unrelated datasets, the opposite might be true. This paper investigates the interplay between interpolation risk and task relatedness, focusing particularly on overparametrized linear models and min-norm interpolation.

To achieve this, we explore two common ways datasets can differ: covariate shift, where the covariate distribution varies across different datasets, and model shift, where the conditional distribution of the outcome given the covariates differs across datasets. Covariate shift has been extensively studied through concepts like  likelihood ratios~\cite{ma2023optimally} and transfer exponents~\cite{hanneke2019value,kpotufe2021marginal,patil2024optimal}, among others. Similarly, a substantial body of literature has studied model shift problems~\cite{bastani2021predicting,li2022transfer,li2023transfer,maity2022minimax,tian2022transfer,patil2024optimal}. Other recent work has also studied overparametrized transfer learning in linear regression, including approaches that regularize toward a source-based estimator \cite{dar2021common} or transfer specific parameters learned from a source task \cite{dar2022double}.

However, the aforementioned literature has primarily considered statistical ideas where explicit penalties are added to loss functions to aid learning in high dimensions or non-parametric approaches. 
In contrast, modern machine learning has seen growing interest in and success with implicit regularization---the phenomenon whereby algorithms, under suitable initialization and step sizes, converge to predictors that generalize well despite overparameterization.  In this context, min-norm interpolators have emerged as an important class of predictors that arise commonly as implicit regularized limits of these algorithms \cite{bartlett2020benign,deng2022model,gunasekar2018characterizing,gunasekar2018implicit,liang2022precise,montanari2019generalization,muthukumar2020harmless,soudry2018implicit,zhang2005boosting,wang2022tight,zhou2022non}. However, properties of these interpolators under transfer learning are less well-studied. 

A recent work \cite{mallinar2024minimum} identified conditions on benign overfitting, i.e., data distribution choices such that min-norm interpolators are consistent under covariate shift. Another related work \cite{patil2024optimal} characterized the prediction risk of ridge regression and min-$\ell_2$-norm interpolators, while \cite{tripuraneni2021covariate} provided an asymptotic analysis of high-dimensional random feature regression under covariate shift. 
However, these works consider an out-of-distribution (OOD) setting, where no observation from the test distribution is utilized during training. 

Transfer learning is often required in settings where along with enormous source data, limited test data is indeed available, cf.~\cite{zhao2022construction}. 
Data from the test distribution can enhance transfer performance.
However, min-norm interpolators in this context are not as well-understood.  Some recent work has introduced and analyzed approaches that transfer specific parameters from an estimator fitted on source-only data \cite{dar2022double} or that interpolate the target data while minimizing distance to a source-only interpolator \cite{kim2026transfer}. However, these works study mechanisms where a source estimator is learned separately and then incorporated with separate target data, and their model shift analyses are restricted to isotropic Gaussian covariates. A closely related line of work studies data-level pooling in high-dimensional transfer learning in the underparametrized regime \cite{yang2020analysis}. However, the analyses in \cite{yang2020analysis} rely on properties of OLS in the underparametrized regime, and the model shift analysis in \cite{yang2020analysis} is similarly restricted to Gaussian covariates. In contrast, the behavior of early fusion mechanisms---where source and target samples are combined at the data level---in the overparametrized regime remains largely unexplored: it is unclear when one should pool source and target samples into a single interpolating estimator and how model and covariate shift affect the risk of such an estimator, especially under more general covariate distributions.

This paper addresses this crucial gap in the literature and characterizes the precise prediction performance of a pooled min-$\ell_2$-norm interpolator calculated using both source and target data under overparametrization.  
Our contributions are summarized as follows:
\begin{enumerate}
    \item[(i)] We consider a min-$\ell_2$-norm interpolator constructed by combining the source and target data (henceforth referred to as the pooled min-$\ell_2$-norm interpolator).
    We characterize the finite-sample bias and variance of the pooled min-$\ell_2$-norm interpolator $\hat\bbeta$, defined in \eqref{eqn:interpolator}, in an overparametrized regime. We consider the cases of  model shift and covariate shift (Theorems \ref{thm:model_shift} and \ref{thm:design_shift} respectively) and quantify  the generalization error of $\hat\bbeta$.
    In the presence of model shift, Theorem~\ref{thm:model_shift} characterizes the risk of $\hat\bbeta$ under general anisotropic covariance matrices; special examples are presented in Corollaries \ref{cor:model_shift_isotropic}--\ref{cor:REresult}, covering the cases of isotropic designs, spiked matrix models and random effects models. Our main theorem for covariate shift (Theorem \ref{thm:design_shift}) operates under an assumption of  simultaneous diagonalizability. We discuss possibilities of relaxing this assumption in Section~\ref{sec:future}(v) and Appendix~\appendixref{sec:sketch_nonsimul}{I}. 

    \item[(ii)] Our general results have a few important takeaways. For instance, our model shift results (Theorem~\ref{thm:model_shift}) characterize the precise threshold at which the pooled interpolator outperforms the target-only interpolator (Proposition~\ref{prop:model_shift_summary}). In the special case of isotropic covariance, we further provide a data-driven procedure for estimating this phase transition (Appendix~\appendixref{sec:choice_of_interpolator}{A}).
    In a similar spirit, our findings on covariate shift (Theorem~\ref{thm:design_shift}) uncover surprising dichotomies. In particular, we observe that increased degrees of covariate shift can in some cases enhance the performance of the pooled min-$\ell_2$-norm interpolator (while the opposite happens in other regimes). This property is entirely dependent on the dimensionalities of the observed data (see Proposition \ref{prop:cov_shift_example}). 

   \item[(iii)]  We prove a universality result (Theorem~\ref{thm:universality}), establishing that our asymptotic risk characterization of the pooled min-$\ell_2$-norm interpolator holds for a broad class of covariate distributions. This demonstrates that the transfer phenomena identified in our Gaussian-based analysis are robust to the choice of covariate distributions. Our result complements related work \cite{hu2023universality, gerace2024universality, dudeja2024universality, han2023universality, montanari2022universality, liang2022precise, lahiry2024universality} on the universality of regression estimators in high-dimensional settings.

   \item[(iv)] On the technical front, a significant mathematical contribution of our work is the derivation of an anisotropic law applicable to a broad class of random matrices (see Theorem \appendixref{thm:anisotropic_law}{F.1}). This complements previous anisotropic local laws \cite{knowles2017anisotropic,yang2020analysis} and may be of independent interest both in random matrix theory as well as in the study of other high-dimensional transfer learning problems, particularly those involving multiple covariance matrices. 
   
   Additionally, we establish how Lindeberg's replacement method can be combined with Gaussian interpolation to establish universality for resolvent-based functionals of mixtures of sample covariance matrices (Appendix \appendixref{sec:proof:universality}{G}). Our formulation allows for more precise control of expected derivative terms that arise along the interpolation path, thus permitting us to pass to the ridgeless limit. This complements previous approaches to proving universality \cite{chatterjee2006lindeberg,korado2011lindeberg,montanari2022universality} while also handling functionals of mixtures of sample covariance matrices, which prior techniques do not readily accommodate.
   
 \item[(v)] We interpret the pooled min-$\ell_2$-norm interpolator as a ridgeless limit of a pooled ridge estimator (see \eqref{eqn:ridgeless}). As part of our results, we characterize the exact bias and variance of the pooled ridge estimator under the aforementioned settings. This contributes independently to the expanding body of work on the generalization error of high-dimensional ridge regression~\cite{cheng2022high,hastie2022surprises}. Our work complements the out-of-distribution error characterized by recent works~\cite{mallinar2024minimum,patil2024optimal} where target data is not used in constructing the estimator. 

   \item[(vi)] Furthermore, we demonstrate that our framework accommodates both (1) settings beyond the well-specified linear model (Theorem~\ref{thm:misspecification}) and (2) estimators beyond the pooled min-$\ell_2$-norm interpolator (Theorem~\ref{thm:bias_corrected_risk}). In a misspecified linear model studied in the existing literature \cite{hastie2022surprises, dar2021common}, we characterize how misspecification alters the risk of the pooled min-$\ell_2$-norm interpolator under model shift. We also propose a two-step bias-corrected estimator, based on a held-out subset of target samples, and derive its exact asymptotic risk under model shift.
   \item[(vii)] We complement our theoretical findings with extensive simulations that demonstrate the finite-sample efficacy of our theory. Our results exhibit remarkable finite-sample performance under both model shift (Figures \ref{fig:isotropic_model_shift} and \ref{fig:spiked_model_shift}) and covariate shift (Figure \ref{fig:design_shift}), as well as across varying shift levels, signal-to-noise ratios, and dimensionality ratios. Additional simulations in Appendix \appendixref{sec:additional_simulations}{J} further validate the universality of our risk characterization for a range of synthetic and semi-synthetic covariates and illustrate the relative performance of our bias-corrected estimator. Our results complement analogous results for the underparametrized setting in \cite{yang2020analysis} to show that the generalization error of this interpolator exhibits the well-known double-descent phenomenon observed in the machine learning literature~\cite{belkin2019reconciling,belkin2019does}. 
\end{enumerate}
\noindent The remainder of the paper is structured as follows: In Section \ref{sec:setup}, we lay out our framework and model assumptions. Sections \ref{sec:model_shift} and \ref{sec:covariate_shift} provide detailed analyses of the generalization errors for model shift and covariate shift, respectively. Section \ref{sec:extensions} includes extensions of our work to a wide range of covariate distributions, to mildly misspecified settings, and to a two-step bias-corrected estimator. A proof outline is provided in Section \ref{sec:proof_outline}, followed by discussions and future directions in Section \ref{sec:future}. Auxiliary results concerning pooled ridge regression are included in Appendix \appendixref{sec:ridge}{C}, with detailed proofs of our main results deferred to the remaining appendices.

\section{Setup}\label{sec:setup}
In this section, we present our formal mathematical setup.
\subsection{Data model}
We consider the canonical transfer learning setting where random samples are observed from two different populations. Formally, we observe two datasets $(\by^{(1)}, \bX^{(1)})$ and $(\by^{(2)}, \bX^{(2)})$---one serves as the source data and the other the target. Typically, the former has a larger sample size than the latter. Some of our results extend to the general case of multiple source datasets; see Appendix \appendixref{sec:multiple}{B}. We assume that the observed data comes from linear models given by
\begin{equation}\label{eqn:model_linear}
\by^{(k)} = \bX^{(k)} \bbeta^{(k)} + \bep^{(k)}, k =1,2,  
\end{equation}
where $k=1$ and $k=2$ correspond to the source and target, respectively. Here, $\mathbf{y}^{(k)} \in \mathbb{R}^{n_k}$ denotes the response vectors, with $n_k$ representing the number of samples observed in population $k$, $\bX^{(k)} \in \mathbb{R}^{n_k \times p}$ denotes the observed covariate matrix from population $k$ with $p$ the number of features, $\boldsymbol{\beta}^{(k)} \in \mathbb{R}^p$ are signal vectors independent of $\bX^{(k)}$,
and $\boldsymbol{\epsilon}^{(k)} \in \mathbb{R}^{n_k}$ are noise vectors independent of both $\bX^{(k)}$ and $\boldsymbol{\beta}^{(k)}$. We denote $n = n_1 + n_2$ as the total number of samples.

Throughout the paper, we assume that the design matrices take the form
\begin{equation}\label{eqn:assumption_X}
\bX^{(k)} = \bZ^{(k)} (\bSigma^{(k)})^{1/2},
\end{equation}
where $\bSigma^{(k)} \in \R^{p \times p}$ are deterministic positive definite covariate matrices and $\bZ^{(k)} \in \R^{n_k \times p}$ are random matrices with independent entries. We impose a common set of regularity assumptions throughout the paper related to regularity of the covariance matrices $\bSigma^{(k)}$, the noise terms $\bep^{(k)}$, and the aspect ratios $p / n_1$ and $p / n_2$. Distributional assumptions on the design matrices $\bZ^{(k)}$ are stated separately, since different results require different levels of moment control. 

The precise assumptions are as follows.
\begin{assumption}[Shared regularity conditions]\label{as:shared}
    The random objects $\bZ^{(1)}$, $\bZ^{(2)}$, $\bep^{(1)}$, $\bep^{(2)}$ are mutually independent. Additionally, there exists a fixed constant $\tau \in (0,1)$ such that the following conditions hold for $k = 1,2$:
    \begin{enumerate}
        \item[(a)] The eigenvalues $\lambda_1^{(k)} \ge \cdots \ge \lambda_p^{(k)}$ of $\bSigma^{(k)}$ satisfy
        \begin{equation}\label{eqn:eigenvalue_bounded}
        \tau \leq \lambda_p^{(k)} \leq ... \leq \lambda_1^{(k)} \leq \tau^{-1}.
        \end{equation}
        \item[(b)] The entries of $\bep^{(k)}$ are independent, have mean zero, and have variance $\sigma^2 < \infty$.
        \item[(c)] The aspect ratios $\gamma_1 := p/n_1$, $\gamma_2 := p/n_2$, and $\gamma := p/n$ satisfy
        \begin{equation}\label{eqn:dimensions}
        1 + \tau \leq \gamma_1,\gamma_2,\gamma \leq  \tau^{-1}.
        \end{equation}
    \end{enumerate}
\end{assumption}
\begin{assumption}[Gaussian design]\label{as:design_gaussian}
    For $k = 1,2$, the entries of $\bZ^{(k)}$ are independent standard Gaussian random variables with mean zero and unit variance. 
\end{assumption}

\begin{assumption}[All-moments design]\label{as:design_all_moments}
    For $k = 1,2$, the entries of $\bZ^{(k)}$ are independent random variables with mean zero and variance one. Additionally, for every fixed integer $m \ge 1$, there exists a fixed constant $\tau_m \in(0,1)$ such that
    \begin{equation*}
        \sup_{i,j,k} \E[|Z_{ij}^{(k)}|^m] \le \tau_m^{-1} < \infty.
    \end{equation*}
\end{assumption}

\begin{assumption}[$4 + \epsilon$-moments design]\label{as:design_four_epsilon}
    For $k = 1,2$, the entries of $\bZ^{(k)}$ are independent random variables with mean zero and variance one. Additionally, there exists a fixed constant $\varphi > 4$ and a fixed constant $\tau_\varphi \in (0,1)$ such that 
    \begin{equation*}
        \sup_{i,j,k} \E[|Z_{ij}^{(k)}|^\varphi] \leq \tau^{-1}_\varphi < \infty.
    \end{equation*}
\end{assumption}
The following assumption will be used only for the model shift results in Section \ref{sec:model_shift}.
\begin{assumption}[Shared covariance]\label{as:shared_covariance}
    The source and target covariance matrices $\bSigma^{(1)}$ and $\bSigma^{(2)}$ are identical.
    When this holds, we denote the shared covariance by $\bSigma :=\bSigma^{(1)} = \bSigma^{(2)}$.
\end{assumption}
\begin{assumption}[Simultaneous diagonalizability]\label{as:simul_diag}
    The source and target covariance matrices $\bSigma^{(1)}$ and $\bSigma^{(2)}$ are simultaneously diagonalizable. That is, there exist an orthogonal matrix $\bV$ and two diagonal matrices $\bLambda^{(1)},\bLambda^{(2)}$ such that $\bSigma^{(1)} = \bV\bLambda^{(1)}\bV^\top$ and $\bSigma^{(2)} = \bV\bLambda^{(2)}\bV^\top$.
\end{assumption}
As displayed in \eqref{eqn:dimensions}, we are particularly interested in the overparametrized regime where $p>n$, partly because it is more prevalent in modern machine learning applications. Besides, the underparametrized regime, i.e. $p < n$, is better understood in the literature, cf. \cite{yang2020analysis} (they assumed $n_2 > p$, but their results remain valid for the general case $n>p$).

We remark that we do not require any additional assumptions on the signals $\bbeta^{(k)}$, except for their independence from $\bX^{(k)}$. In fact, our results later show that the convergence of the risks of interest is conditional on $\bbeta^{(k)}$, and depends on $\bbeta^{(k)}$ through their norms  and inner products (i.e., $\bbeta^{(k)}$ can also be taken to be arbitrary and deterministic). 

\subsection{Estimator}
Data fusion, the integration of information from diverse datasets, has been broadly categorized into three approaches in the literature: early, intermediate and late fusion \cite{baltruvsaitis2018multimodal,sidheekh2024credibility}. Early fusion combines information from multiple sources at the input level. Late fusion learns independently from each dataset and combines information at the end. 
Intermediate fusion achieves a middle ground and attempts to jointly learn information from both datasets in a more sophisticated manner than simply concatenating them. 
In this work, we consider a form of min-norm interpolation that captures both early and a class of intermediate fusion strategies. 
To achieve early fusion using min-norm interpolation, one would naturally consider the pooled min-$\ell_2$-norm interpolator, defined as 

\begin{equation}\label{eqn:interpolator}
\hat\bbeta = \argmin \left \{ \|\bb\|_2 \ \ \text{s.t.}\ \by^{(k)} = \bX^{(k)}\bb,\ k=1,2 \right \}.
\end{equation}
This interpolator admits the following analytical expression:

\begin{equation}\label{eqn:analytical_pinv}
\hat\bbeta = \left(\bX^{(1)\top}\bX^{(1)} + \bX^{(2)\top}\bX^{(2)}\right)^\dagger \left(\bX^{(1)\top}\by^{(1)} + \bX^{(2)\top}\by^{(2)}\right),
\end{equation}
where for any matrix $\bA$, we denote its pseudoinverse by $\bA^\dagger$. This interpolator is alternatively designated as the "ridgeless" estimator because it is the limit of the ridge estimator when the penalty term vanishes (noticed earlier by \cite{hastie2022surprises} in the context of a single training dataset):

\begin{equation}\label{eqn:ridgeless}
\begin{aligned}
\hat\bbeta
&= \lim_{\lambda\rightarrow 0}\left(\bX^{(1)\top}\bX^{(1)} + \bX^{(2)\top}\bX^{(2)}+n\lambda\bI\right)^{-1} \left(\bX^{(1)\top}\by^{(1)} + \bX^{(2)\top}\by^{(2)}\right)\\
&= \lim_{\lambda \rightarrow 0}\argmin_{\bb} \left\{\frac{1}{n}\|\by^{(1)} - \bX^{(1)}\bb\|_2^2 + \frac{1}{n}\|\by^{(2)} - \bX^{(2)}\bb\|_2^2 + \lambda \|\bb\|_2^2 \right\}.
\end{aligned}
\end{equation}

Coincidentally, although the pooled min-$\ell_2$-norm interpolator is motivated via early fusion, it can also capture a form of intermediate fusion as described below. If we consider a weighted ridge loss with weights $w_1,w_2$ on the source and target data respectively, then the ridgeless limit of this loss recovers the pooled min-$\ell_2$-norm interpolator. This can be seen as follows:  for \textit{any} two weighting coefficients $w_1,w_2 > 0$,
\begin{equation}\label{eqn:intermediate_fusion}
\begin{aligned}
\hat\bbeta &= \argmin \left \{ \|\bb\|_2 \ \ \text{s.t.}\ w_k\by^{(k)} = w_k\bX^{(k)}\bb,\ k=1,2 \right \}\\
&= \left(w_1\bX^{(1)\top}\bX^{(1)} + w_2\bX^{(2)\top}\bX^{(2)}\right)^\dagger \left(w_1\bX^{(1)\top}\by^{(1)} + w_2\bX^{(2)\top}\by^{(2)}\right)\\
&= \lim_{\lambda\rightarrow 0}\left(w_1\bX^{(1)\top}\bX^{(1)} + w_2\bX^{(2)\top}\bX^{(2)}+n\lambda\bI\right)^{-1} \left(w_1\bX^{(1)\top}\by^{(1)} + w_2\bX^{(2)\top}\by^{(2)}\right)\\
&= \lim_{\lambda \rightarrow 0}\argmin_{\bb} \left\{\frac{w_1}{n}\|\by^{(1)} - \bX^{(1)}\bb\|_2^2 + \frac{w_2}{n}\|\by^{(2)} - \bX^{(2)}\bb\|_2^2 + \lambda \|\bb\|_2^2 \right\},
\end{aligned}
\end{equation}
indicating that the interpolator above uniquely stands as the common solution of \textit{both} the min-$\ell_2$-norm interpolator and the ridgeless estimator when two datasets are weighted by $w_1,w_2$.

Furthermore, the pooled min-$\ell_2$-norm interpolator \eqref{eqn:interpolator} appears naturally as the limit of the gradient descent based linear estimator in the following way: Initialize $\bbeta_0=0$ and run gradient descent with iterates given by 
$$\bbeta_{t+1}= \bbeta_t + \eta \sum_{k=1}^2\bX^{(k)\top}(\by^{(k)} - \bX^{(k)}\bbeta_{t}), \qquad t=0,1,\ldots,$$
with $0< \eta \le \min\{\lambda_{\max}(\bX^{(1)\top}\bX^{(1)}), \lambda_{\max}(\bX^{(2)\top}\bX^{(2)})\}$,  
then $\lim_{t \rightarrow \infty} \bbeta_t= \hat \bbeta$.

In recent years, min-norm interpolation-type solutions and their statistical properties have been extensively studied in numerous contexts~\cite{bartlett2020benign,belkin2018understand,belkin2019does,belkin2020two,bunea2020interpolation,liang2020just,liang2022mehler,liang2022precise}. In fact, it is conjectured that this min-norm regularization is responsible for the superior statistical behavior of estimators in the overparametrized regime. In light of these results and the aforementioned abundant formulations, we view our work as a first step of understanding the behavior of this important family of estimators in transfer learning, under the lens of early and intermediate fusion.

We conclude this section by highlighting that computing such an interpolator requires only the summary statistics $\mathbf{X}^{(k)\top}\mathbf{X}^{(k)}$ and $\mathbf{X}^{(k)\top}\mathbf{y}^{(k)}$ for $k=1,2$, rather than entire datasets. This characteristic is particularly beneficial in fields such as medical or genomics research, where individual-level data may be highly sensitive and not permissible for sharing across institutions.

\subsection{Risk}
To assess the performance of $\hat \bbeta$, we focus on its prediction risk at a new (unseen) test point $(\by_0, \bx_0)$ drawn from the target distribution, i.e., $k=2$. 
This out-of-sample prediction risk (hereafter simply referred to as risk) is defined as
\begin{equation} \label{eq:define_risk}
R(\hat\bbeta;\bbeta^{(2)}) := \E\left[(\bx^{\top}_0\hat\bbeta - \bx^{\top}_0\bbeta^{(2)})^2\hspace{0.5ex}|\hspace{0.5ex}\bX\right] =  \E\left[\|\hat\bbeta - \bbeta^{(2)}\|_{\bSigma^{(2)}}^2\hspace{0.5ex}|\hspace{0.5ex}\bX\right],
\end{equation}
where $\|\bx\|_{\bSigma}^2:=\bx^\top\bSigma\bx$ and $\bX := (\bX^{(1)\top},\bX^{(2)\top})^\top$. Note that this risk has a $\sigma^2$ difference from the mean-squared prediction error for the new data point, which does not affect the relative performance and is therefore omitted. Also, this risk definition takes expectation over both the randomness in the new test point $(\by_0,\bx_0)$ and the randomness in the training noises $(\bep^{(1)},\bep^{(2)})$. The risk is conditional on the covariates $\bX$. Despite this dependence, we will use the notation $R(\hat\bbeta,\bbeta^{(2)})$ for conciseness since the context is clear. The risk admits a bias-variance decomposition:

\begin{equation}\label{eq:risk_analytical}
R(\hat\bbeta;\bbeta^{(2)}) = \underbrace{\|\E(\hat\bbeta|\bX)-\bbeta^{(2)}\|_{\bSigma^{(2)}}^2}_{B(\hat\bbeta;\bbeta^{(2)})} + \underbrace{\Tr[\Cov(\hat\bbeta|\bX)\bSigma^{(2)}]}_{V(\hat\bbeta;\bbeta^{(2)})}.
\end{equation}
Plugging in \eqref{eqn:ridgeless}, we immediately obtain the following result which we will crucially use for the remainder of the sequel:

\begin{lemma}\label{lemma:risk_analytical}
Under Assumption \ref{as:shared}, the min-norm interpolator \eqref{eqn:interpolator} has variance
\begin{equation}\label{eqn:variance_analytical}
V(\hat\bbeta;\bbeta^{(2)}) = \frac{\sigma^2}{n}\Tr(\hat\bSigma^\dagger\bSigma^{(2)})
\end{equation}
and bias
\begin{equation}\label{eqn:bias_analytical}
\begin{aligned}
B(\hat\bbeta;\bbeta^{(2)}) =& \underbrace{\bbeta^{(2)\top} (\hat\bSigma^\dagger\hat\bSigma - \bI)\bSigma^{(2)} (\hat\bSigma^\dagger\hat\bSigma - \bI) \bbeta^{(2)}}_{B_1(\hat\bbeta;\bbeta^{(2)})} \\
&+ 
\underbrace{\tilde\bbeta^\top (\frac{\bX^{(1)\top} \bX^{(1)}}{n})\hat\bSigma^\dagger \bSigma^{(2)} \hat\bSigma^\dagger (\frac{\bX^{(1)\top} \bX^{(1)}}{n})\tilde\bbeta}_{B_2(\hat\bbeta;\bbeta^{(2)})}\\
&+\underbrace{2\bbeta^{(2)\top} (\hat\bSigma^\dagger\hat\bSigma - \bI)\bSigma^{(2)} \hat\bSigma^\dagger (\frac{\bX^{(1)\top} \bX^{(1)}}{n})\tilde\bbeta}_{B_3(\hat\bbeta;\bbeta^{(2)})},
\end{aligned}
\end{equation}
where $\hat\bSigma = \bX^\top\bX/n$ is the (uncentered) sample  covariance matrix obtained on appending the source and target samples, and $\tilde\bbeta:=\bbeta^{(1)} - \bbeta^{(2)}$ is the signal shift vector.
\end{lemma}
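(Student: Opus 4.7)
The plan is to compute the conditional mean and conditional covariance of $\hat\bbeta$ given $\bX$ directly from the closed-form expression \eqref{eqn:analytical_pinv}, and then substitute into the bias-variance decomposition \eqref{eq:risk_analytical}. Substituting the linear model $\by^{(k)} = \bX^{(k)}\bbeta^{(k)} + \bep^{(k)}$ into \eqref{eqn:analytical_pinv} and using $\hat\bSigma = \bX^\top\bX/n = (\bX^{(1)\top}\bX^{(1)} + \bX^{(2)\top}\bX^{(2)})/n$, I would write
$$\hat\bbeta = \hat\bSigma^\dagger \cdot \frac{1}{n}\bigl(\bX^{(1)\top}\bX^{(1)}\bbeta^{(1)} + \bX^{(2)\top}\bX^{(2)}\bbeta^{(2)}\bigr) + \hat\bSigma^\dagger \cdot \frac{1}{n}\bigl(\bX^{(1)\top}\bep^{(1)} + \bX^{(2)\top}\bep^{(2)}\bigr).$$

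For the bias, taking $\E[\cdot\,|\,\bX]$ removes the noise term. The key algebraic move is to introduce the shift $\tilde\bbeta = \bbeta^{(1)} - \bbeta^{(2)}$ via the identity $\bX^{(2)\top}\bX^{(2)} = n\hat\bSigma - \bX^{(1)\top}\bX^{(1)}$, which gives
$$\E[\hat\bbeta\,|\,\bX] = \hat\bSigma^\dagger\hat\bSigma\,\bbeta^{(2)} + \hat\bSigma^\dagger\,\frac{\bX^{(1)\top}\bX^{(1)}}{n}\,\tilde\bbeta,$$
so that $\E[\hat\bbeta\,|\,\bX] - \bbeta^{(2)} = (\hat\bSigma^\dagger\hat\bSigma - \bI)\bbeta^{(2)} + \hat\bSigma^\dagger(\bX^{(1)\top}\bX^{(1)}/n)\tilde\bbeta$. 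Plugging this difference into $B(\hat\bbeta;\bbeta^{(2)}) = \|\E[\hat\bbeta\,|\,\bX] - \bbeta^{(2)}\|_{\bSigma^{(2)}}^2$ and expanding the square produces three terms: a pure-$\bbeta^{(2)}$ quadratic form ($B_1$), a pure-$\tilde\bbeta$ quadratic form ($B_2$), and the cross term ($B_3$), matching \eqref{eqn:bias_analytical} exactly.

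For the variance, independence of $\bep^{(1)}$ and $\bep^{(2)}$ conditional on $\bX$, together with $\mathrm{Var}(\epsilon_i^{(k)}) = \sigma^2$, yields
$$\Cov\bigl(\bX^{(1)\top}\bep^{(1)} + \bX^{(2)\top}\bep^{(2)}\,\big|\,\bX\bigr) = \sigma^2\bigl(\bX^{(1)\top}\bX^{(1)} + \bX^{(2)\top}\bX^{(2)}\bigr) = n\sigma^2\hat\bSigma.$$
Therefore $\Cov(\hat\bbeta\,|\,\bX) = (\sigma^2/n)\,\hat\bSigma^\dagger\hat\bSigma\hat\bSigma^\dagger = (\sigma^2/n)\hat\bSigma^\dagger$ by the Moore--Penrose identity $\bA^\dagger\bA\bA^\dagger = \bA^\dagger$, which applies because $\hat\bSigma$ is symmetric positive semidefinite. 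Taking the trace against $\bSigma^{(2)}$ recovers \eqref{eqn:variance_analytical}.

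There is no substantive obstacle here: the lemma is a direct algebraic unpacking of the analytical formula \eqref{eqn:analytical_pinv}. The only small points requiring care are (i) the substitution of $\bX^{(2)\top}\bX^{(2)}$ in terms of $\hat\bSigma$ and $\bX^{(1)\top}\bX^{(1)}$, which is what allows the bias to split cleanly into a target-signal piece and a shift piece, and (ii) the pseudoinverse identity collapsing $\hat\bSigma^\dagger\hat\bSigma\hat\bSigma^\dagger$ to $\hat\bSigma^\dagger$. The deeper machinery (anisotropic local laws, etc.) is not needed until one actually wants to evaluate the deterministic equivalents of these traces, which is the subject of subsequent sections.
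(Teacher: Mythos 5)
Your proposal is correct and is essentially the same computation the paper carries out: plug the linear model into the closed form \eqref{eqn:analytical_pinv}, split into signal and noise parts, use $\bX^{(2)\top}\bX^{(2)} = n\hat\bSigma - \bX^{(1)\top}\bX^{(1)}$ to produce the $\bbeta^{(2)}$ and $\tilde\bbeta$ terms, and invoke $\hat\bSigma^\dagger\hat\bSigma\hat\bSigma^\dagger = \hat\bSigma^\dagger$ for the variance. One minor imprecision worth noting: the identity $\bA^\dagger\bA\bA^\dagger = \bA^\dagger$ is one of the four defining Moore--Penrose axioms and holds for \emph{any} matrix, so attributing its validity to $\hat\bSigma$ being symmetric positive semidefinite is unnecessary (that property is instead what makes $\hat\bSigma^\dagger$ and the projector $\hat\bSigma^\dagger\hat\bSigma$ symmetric, which you implicitly use when expanding the bias quadratic form).
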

\begin{proof}
The proof is straightforward by plugging in the respective definitions, and observing the fact that $(\bX^\top\bX)^\dagger\bX^\top\bX(\bX^\top\bX)^\dagger = (\bX^\top\bX)^\dagger$.
\end{proof}

In the next two sections, we will lay out our main results which involve precise risk characterizations of the pooled min-$\ell_2$-norm interpolator. Bias and variance terms, as decomposed in Lemma \ref{lemma:risk_analytical}, will be presented separately.

Throughout the rest of the paper, we denote $f(p) = O(g(p))$ if there exists a constant $C$ such that $|f(p)| \leq Cg(p)$ for large enough $p$. Moreover, we say an event $\mathcal{E}$ happens with high probability (w.h.p.) if $\PP(\mathcal{E})\rightarrow 1$ as $p \rightarrow \infty$.

\section{Model Shift}\label{sec:model_shift}
We first investigate the effect of model shift. More precisely, we assume the covariates $\bX^{(1)}$ and $\bX^{(2)}$ come from the same distribution, but the underlying signal changes from $\bbeta^{(1)}$ to $\bbeta^{(2)}$. We denote the shift in the signal parameter by $\tilde\bbeta := \bbeta^{(1)}-\bbeta^{(2)}$. We will first characterize the precise risk of the pooled min-$\ell_2$-norm interpolator and then compare it with that of the min-$\ell_2$-norm interpolator using solely the target data. 

\subsection{Risk under model shift}
Recall the formula for the risk $R(\hat \bbeta, \bbeta^{(2)})$ in \eqref{eq:risk_analytical} and its decomposition in Lemma \ref{lemma:risk_analytical}. We will split the risk into bias and variance following Lemma \ref{lemma:risk_analytical}, and characterize their behavior respectively.
Therefore, our contribution focuses on  characterizing the asymptotic behavior of $B_1(\hat\bbeta;\bbeta^{(2)})$, $B_2(\hat\bbeta;\bbeta^{(2)})$,  $B_3(\hat \bbeta;\bbeta^{(2)})$, and $V(\hat\bbeta;\bbeta^{(2)})$ as defined by \eqref{eqn:variance_analytical} and \eqref{eqn:bias_analytical}. 

The risk of the pooled min-$\ell_2$-norm interpolator depends on the geometry of the shared covariance matrix $\bSigma$, the model shift $\tilde \bbeta$, and the target signal $\bbeta^{(2)}$. Letting $\bSigma = \sum_{i=1}^p s_i \bm v_i \bm v_i^\top$ denote the eigendecomposition of the shared covariance matrix, we encode this geometry via the following signed measures on $\R_{\geq 0}$ for nonzero $\bbeta^{(2)}$ and $\tilde \bbeta$:
\begin{gather*}
    \hat G_n^{\bbeta^{(2)}}(s) = \frac{1}{\|\bbeta^{(2)}\|_2^2} \sum_{i=1}^p \langle \bbeta^{(2)}, \bm v_i\rangle^2 \bm 1_{\{s \geq s_i\}}, \quad \hat G_n^{\tilde \bbeta}(s) = \frac{1}{\|\tilde \bbeta\|_2^2} \sum_{i=1}^p \langle \tilde \bbeta, \bm v_i\rangle^2 \bm 1_{\{s \geq s_i\}} \\
    \hat G_n^{(b)}(s) = \frac{1}{\|\bbeta^{(2)}\|_2 \|\tilde \bbeta\|_2}\sum_{i=1}^p \langle \bbeta^{(2)}, \bm v_i\rangle  \langle \tilde \bbeta, \bm v_i\rangle  \bm 1_{\{s \geq s_i\}}, \quad 
    \hat H_n(s) = \frac{1}{p} \sum_{i=1}^p \bm 1_{\{s \geq s_i\}}.
\end{gather*}
When $\bbeta^{(2)} = 0$ or $\tilde \bbeta = 0$, we define the corresponding signed measures to be the Dirac measure on $\{0\}$.

These measures encode how the spectrum of the covariance matrix $\bSigma$ interacts with the target signal $\bbeta^{(2)}$ and the model shift $\tilde \bbeta$. While the empirical spectral distribution $\hat H_n$ simply records the eigenvalue distribution of the shared covariance $\bSigma$, the probability measures $\hat G_n^{\bbeta^{(2)}}$ and $\hat G_n^{\tilde \bbeta}$ weight this spectrum by the alignment between each eigenvector and the corresponding signal $\bbeta^{(2)}$ or shift $\tilde \bbeta$. The signed measure $\hat G_n^{(b)}$ then captures the directional alignment between $\bbeta^{(2)}$ and $\tilde \bbeta$ with respect to the eigenbasis $\bm v_1, \dots, \bm v_p$. Thus, these four measures translate the spectral geometry of $\bSigma$, $\tilde \bbeta$, and $\bbeta^{(2)}$ into inputs for the following asymptotic risk quantities:
\begin{definition}\label{def:cov_geometry}
    For $\gamma \in \R_{> 0}$, define $c_0(\gamma, \hat H_n) \in \R_{\ge0}$ to be the unique non-negative solution of
    \begin{equation}
        1 - \frac{1}{\gamma} = \int \frac{1}{1 + c_0 \gamma s} d\hat H_n(s).
    \end{equation}
    and then define 
    \begin{equation}
        c_1 := c_0 \frac{\int \frac{s^2}{(1 + c_0 \gamma s)^2} d\hat H_n(s)}{\int \frac{s}{(1 + c_0\gamma s)^2} d\hat H_n(s)}.
    \end{equation}
    Finally, we define 
    \begin{align}
        \mathcal B_1(\hat H_n, \hat G_n^{\bbeta^{(2)}}, \gamma) &:= \|\bbeta^{(2)}\|_2^2 (1 + \gamma c_1) \int \frac{s}{(1 + c_0 \gamma s)^2} d\hat G_n^{\bbeta^{(2)}}(s)\\
        \mathcal B_2(\hat H_n, \hat G_n^{\tilde \bbeta}, \gamma) &:= \|\tilde \bbeta\|_2^2 \biggl\{\biggl(\frac{n_1}{n}\biggr)^2 \int \frac{\gamma s(c_1 + \gamma c_0^2 s^2)}{(1 + c_0 \gamma s)^2} d\hat G_n^{\tilde \bbeta}(s) \\
        &\hspace{5em}+ \frac{n_1}{n}\biggl(1 - \frac{n_1}{n}\biggr) \frac{(1 + \gamma c_1) \int \frac{s^2}{(1 + \gamma c_0 s)^2} d\hat H_n(s)}{\gamma (\int \frac{s}{1 + \gamma c_0 s} d\hat H_n(s))^2}\int s \ d\hat G_n^{\tilde \bbeta}(s)\biggr\} \\
        \mathcal B_3(\hat H_n, \hat G_n^{(b)}, \gamma) &:= -\frac{2n_1}{n} \|\bbeta^{(2)}\|_2\|\tilde \bbeta\|_2 \int \frac{\gamma s (c_0 s- c_1)}{(1 +  c_0 \gamma s)^2} d\hat G_n^{(b)}(s) \\
        \mathcal V(\hat H_n,\gamma) &:= \sigma^2 \gamma c_1.
    \end{align}
\end{definition}
Our main result for model shift can then be stated in terms of these quantities.
\begin{thm}[Risk under model shift]\label{thm:model_shift}
Under Assumptions \ref{as:shared}, \ref{as:design_gaussian}, and \ref{as:shared_covariance}, with high probability over the randomness of $(\bZ^{(1)},\bZ^{(2)})$, we have
\begin{align}
    V(\hat \bbeta; \bbeta^{(2)}) &= \mathcal V(\hat H_n, \gamma) + O(p^{-1/12})  \label{eqn:model_shift_V}\\
    B_1(\hat \bbeta; \bbeta^{(2)}) &= \mathcal B_1(\hat H_n, \hat G_n^{\bbeta^{(2)}}, \gamma) + O(p^{-1/12} \|\bbeta^{(2)}\|_2^2) \label{eqn:model_shift_B1}\\
    B_2(\hat \bbeta; \bbeta^{(2)}) &= \mathcal B_2(\hat H_n, \hat G_n^{\tilde \bbeta}, \gamma) + O(p^{-1/12} \|\tilde \bbeta\|_2^2) \label{eqn:model_shift_B2}\\
    B_3(\hat \bbeta; \bbeta^{(2)}) &= \mathcal B_3(\hat H_n, \hat G_n^{(b)}, \gamma) + O(p^{-1/12} \|\bbeta^{(2)}\|_2 \|\tilde \bbeta\|_2) \label{eqn:model_shift_B3}
\end{align}
\end{thm}

\begin{remark}
    In Section \ref{sec:extensions}, we present a universality result for the out-of-sample prediction risk for the pooled min-$\ell_2$-norm interpolator from which we can conclude that the characterization in Theorem \ref{thm:model_shift} holds for a very general class of covariates (i.e., replacing Assumption \ref{as:design_gaussian} by Assumption \ref{as:design_all_moments}). Separately, in Appendix \appendixref{sec:multiple}{B}, we demonstrate that Theorem \ref{thm:model_shift} can be generalized to the analogous setting with multiple source datasets. 
\end{remark}

We remark that Theorem \ref{thm:model_shift} holds true for finite values of $n$ and $p$. It 
 characterizes the bias and variance of the pooled min-$\ell_2$-norm interpolator for the overparametrized regime $p>n$. The variance $V(\hat\bbeta;\bbeta^{(2)})$ is decreasing in $p/n$, but the bias $B(\hat\bbeta;\bbeta^{(2)})$ is not necessarily monotone. We provide extensive numerical examples of $R(\hat\bbeta;\bbeta^{(2)})$ in Section \ref{sec:model_shift} showing that the risk function still exhibits the well-known double-descent phenomenon. 
The main challenge in establishing Theorem \ref{thm:model_shift} lies in the analysis of $B_2(\hat\bbeta;\bbeta^{(2)})$ and $B_3(\hat\bbeta;\bbeta^{(2)})$. 
More details and explanations are provided in Section \ref{sec:proof_outline}. The theorem allows us to answer the following questions:
\begin{enumerate}
    \item[(i)] When does the pooled min-$\ell_2$-norm interpolator outperform the target-only min-$\ell_2$-norm interpolator? 
    \item[(ii)] What is the optimal target sample size that minimizes the generalization error of the pooled min-$\ell_2$-norm interpolator? 
\end{enumerate}
We study these questions for three example problems: isotropic designs (Section \ref{subsec:isotropic_model_shift}), spiked covariance designs (Section \ref{subsec:spiked_model_shift}), and random effects models (Section \ref{subsec:random_model_shift}). In each case, we first establish results under the assumption that $(\bZ^{(1)}, \bZ^{(2)})$ are both Gaussian random matrices but note that our universality result (Theorem \ref{thm:universality}) implies that the same results hold for very general classes of covariates.

\subsection{Risk under isotropic designs}\label{subsec:isotropic_model_shift}
In this section, we consider the special case where $\bSigma^{(1)} = \bSigma^{(2)} = \bm I$. Here, the following corollary of Theorem \ref{thm:model_shift} holds:
\begin{cor}\label{cor:model_shift_isotropic}
    In the setting where $\bSigma^{(1)} = \bSigma^{(2)} = \bm I$, with high probability over the randomness of $(\bZ^{(1)}, \bZ^{(2)})$, the risk terms decompose as   
    \begin{align}
        V(\hat\bbeta;\bbeta^{(2)}) &= \sigma^2 \frac{n}{p-n} + O(\sigma^2 p^{-1/12}),
    \\
        B_1(\hat\bbeta;\bbeta^{(2)}) &= \frac{p-n}{p}\|\bbeta^{(2)}\|_2^2 + O( p^{-1/12}\|\bbeta^{(2)}\|_2^2), \label{eqn:model_shift_isotropic_B1}
   \\
        B_2(\hat\bbeta;\bbeta^{(2)}) &= \frac{n_1(p-n_1)}{p(p-n)}\|\tilde\bbeta\|_2^2  + O(p^{-1/12}\|\tilde\bbeta\|_2^2), \label{eqn:model_shift_isotropic_B2}\\
        B_3(\hat\bbeta;\bbeta^{(2)}) &= 0.
    \end{align}
\end{cor}
In fact, it shows that, in the isotropic setting, the bias can be decomposed into two simple terms \eqref{eqn:model_shift_isotropic_B1} and \eqref{eqn:model_shift_isotropic_B2}, which depend only on the $\ell_2$-norm of $\bbeta^{(2)}$ and $\tilde \bbeta$ respectively. Therefore, under isotropic designs, the generalization error is independent of the angle between $\bbeta^{(1)}$ and $\bbeta^{(2)}$. 

Now, we compare the risk of $\hat\bbeta$, the pooled min-$\ell_2$-norm interpolator, with that of the interpolator using only the target dataset, defined as follows:

\begin{equation}\label{eqn:interpolator_single}
\hat\bbeta^{(2)} = \argmin \left\{ \|\bb\|_2 \ \ \text{s.t.}\ \by^{(2)} = \bX^{(2)}\bb \right\}.
\end{equation}

The risk of $\hat\beta^{(2)}$ has been previously studied:

\begin{prop}[Corollary of Theorem 1 in \cite{hastie2022surprises}]\label{prop:single_interpolator_risk_isotropic}
Consider the estimator $\hat\bbeta^{(2)}$ in \eqref{eqn:interpolator_single}. Under the same setting as Corollary \ref{cor:model_shift_isotropic} and assuming that $\|\bbeta^{(2)}\|_2^2$ is bounded, we have almost surely,
\begin{equation}\label{eqn:single_interpolator_risk_isotropic}
R(\hat\bbeta^{(2)};\bbeta^{(2)}) = \frac{p-n_2}{p}\|\bbeta^{(2)}\|_2^2 + \frac{n_2}{p-n_2}\sigma^2 + o(1)
\end{equation}
\end{prop}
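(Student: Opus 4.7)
The statement is a direct translation of Theorem 1 of \cite{hastie2022surprises} into our notation, so the plan is to verify that the hypotheses match and invoke the cited result, with a brief self-contained sketch for completeness.

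First, $\hat\bbeta^{(2)}$ depends only on the target pair $(\by^{(2)}, \bX^{(2)})$, which under Assumption \ref{assumption1} augmented with $\bSigma^{(2)} = \bI$ and Gaussianity of $\bZ^{(2)}$ has i.i.d.\ standard Gaussian entries, homoscedastic noise of variance $\sigma^2$, and aspect ratio $\gamma_2 = p/n_2 \in (1+\tau, \tau^{-1})$---exactly the setting of Hastie et al. Their overparametrized formula $R = (1 - 1/\gamma_2)\|\bbeta^{(2)}\|_2^2 + \sigma^2/(\gamma_2 - 1) + o(1)$ almost surely, evaluated at $\gamma_2 = p/n_2$, yields \eqref{eqn:single_interpolator_risk_isotropic}.

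A self-contained derivation proceeds via the bias-variance decomposition \eqref{eq:risk_analytical} specialized to the single-dataset case. The bias reduces to $\bbeta^{(2)\top}(\bI - \Pi)\bbeta^{(2)}$, where $\Pi$ is the orthogonal projection onto the row space of $\bX^{(2)}$. By rotational invariance of the Gaussian isotropic design and independence from $\bbeta^{(2)}$, $\Pi$ is Haar-distributed on rank-$n_2$ projections of $\R^p$; thus $\E\Pi = (n_2/p)\bI$, and $\bbeta^{(2)\top}\Pi\bbeta^{(2)}/\|\bbeta^{(2)}\|_2^2$ follows a $\mathrm{Beta}(n_2/2,(p-n_2)/2)$ law and concentrates at $n_2/p$ with variance $O(1/p)$. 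Combined with the boundedness of $\|\bbeta^{(2)}\|_2^2$, this delivers the claimed $\tfrac{p-n_2}{p}\|\bbeta^{(2)}\|_2^2$ bias term. The variance equals $(\sigma^2/n_2)\,\Tr[(\hat\bSigma^{(2)})^\dagger]$ with $\hat\bSigma^{(2)} = \bX^{(2)\top}\bX^{(2)}/n_2$, and Marchenko--Pastur applied to the nonzero spectrum of $\hat\bSigma^{(2)}$ with aspect ratio $\gamma_2 > 1$ gives $\tfrac{1}{p}\Tr[(\hat\bSigma^{(2)})^\dagger] \to (\gamma_2 - 1)^{-1}$, producing $\sigma^2 n_2/(p - n_2)$ after rescaling.

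The only technicality in the self-contained route is uniform integrability of $x \mapsto 1/x$ against the MP spectrum near $0$; this is handled by the classical Bai--Yin edge estimate, which provides a deterministic positive lower bound on the smallest nonzero eigenvalue of $\hat\bSigma^{(2)}$ with overwhelming probability given that $\gamma_2$ is bounded away from $1$. With this edge bound in place the remainder is a routine computation, so there is no substantial obstacle: the proposition is essentially a corollary of a well-established result, and the work lies in checking the parametrization rather than in any new analysis.
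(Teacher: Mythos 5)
Your argument matches the paper's: the proposition is stated as a corollary of Hastie et al.'s Theorem 1 and the paper supplies no independent proof beyond the citation, so verifying that $\hat\bbeta^{(2)}$ depends only on the target data, which is isotropic Gaussian with aspect ratio $\gamma_2 = p/n_2 > 1$, and then plugging into their risk formula is exactly what is intended. Your optional self-contained sketch is structurally sound, but the variance normalization should read $\tfrac{1}{n_2}\Tr[(\hat\bSigma^{(2)})^\dagger] \to (\gamma_2-1)^{-1}$ rather than $\tfrac{1}{p}\Tr[(\hat\bSigma^{(2)})^\dagger] \to (\gamma_2-1)^{-1}$ --- the latter overstates by a factor of $\gamma_2$ --- after which $(\sigma^2/n_2)\Tr[(\hat\bSigma^{(2)})^\dagger] \to \sigma^2/(\gamma_2-1) = n_2\sigma^2/(p-n_2)$ falls out with no further rescaling.
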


The above proposition, along with our results on model shift (Theorem \ref{thm:model_shift} and Corollary \ref{cor:model_shift_isotropic}), allows us to characterize when pooled min-$\ell_2$-norm interpolator yields lower risk than the target-only min-$\ell_2$-norm interpolator, i.e. when data pooling leads to positive/negative transfer:

\begin{prop}[Impact of model shift]\label{prop:model_shift_summary}
Under the same setting as Corollary \ref{cor:model_shift_isotropic} and assuming further that $\|\bbeta^{(2)}\|_2^2,\|\tilde\bbeta\|_2^2$ are bounded, define the signal-to-noise ratio $\rm{SNR} := \frac{\|\bbeta^{(2)}\|_2^2}{\sigma^2}$ and the shift-to-signal ratio $\rm{SSR}:= \frac{\|\tilde\bbeta\|_2^2}{\|\bbeta^{(2)}\|_2^2}$. Then, the following statements hold with high probability.
\begin{enumerate}
    \item[(i)] If $\mathrm{SNR}\leq \frac{p^2}{(p-n)(p-n_2)}$, then
    \begin{equation}\label{eqn:model_shift_negative}
    R(\hat\bbeta^{(2)};\bbeta^{(2)}) \leq R(\hat\bbeta;\bbeta^{(2)}) + o(1).
    \end{equation}
    \item[(ii)] If $\mathrm{SNR}> \frac{p^2}{(p-n)(p-n_2)}$, then there exists \begin{equation}\label{eq:rho_cutoff}
        \rho := \frac{p-n}{p-n_1} - \frac{p^2}{(p-n_1)(p-n_2)} \cdot \frac{1}{\rm{SNR}}
    \end{equation} such that when $\rm{SSR}\geq \rho$, then \eqref{eqn:model_shift_negative} holds; when $\rm{SSR} < \rho$, then 
    \begin{equation}\label{eqn:model_shift_positive}
    R(\hat\bbeta;\bbeta^{(2)}) \leq R(\hat\bbeta^{(2)};\bbeta^{(2)}) + o(1).
    \end{equation}
\end{enumerate}
\end{prop}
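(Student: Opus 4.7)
The plan is to form the difference $R(\hat\bbeta;\bbeta^{(2)}) - R(\hat\bbeta^{(2)};\bbeta^{(2)})$ using Theorem \ref{thm:model_shift} and Proposition \ref{prop:single_interpolator_risk_isotropic}, then show that the sign of this difference is controlled precisely by the threshold $\rho$. Since the remainder terms in Theorem \ref{thm:model_shift} are $O(p^{-1/7})$ times quantities that are bounded by assumption (and similarly for Proposition \ref{prop:single_interpolator_risk_isotropic}), all error terms collapse to a single $o(1)$.

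First, I would add the expressions from Theorem \ref{thm:model_shift} (noting $B_3=0$ under the isotropic assumption) to get
\begin{equation}
R(\hat\bbeta;\bbeta^{(2)}) = \frac{n}{p-n}\sigma^2 + \frac{p-n}{p}\|\bbeta^{(2)}\|_2^2 + \frac{n_1(p-n_1)}{p(p-n)}\|\tilde\bbeta\|_2^2 + o(1),
\end{equation}
and subtract the formula in Proposition \ref{prop:single_interpolator_risk_isotropic}. Using $n=n_1+n_2$, the variance difference simplifies to $\frac{n}{p-n}-\frac{n_2}{p-n_2}=\frac{p n_1}{(p-n)(p-n_2)}$ and the $B_1$ difference to $-n_1/p$, yielding
\begin{equation}
R(\hat\bbeta;\bbeta^{(2)}) - R(\hat\bbeta^{(2)};\bbeta^{(2)}) = n_1\!\left[\frac{p\sigma^2}{(p-n)(p-n_2)} - \frac{\|\bbeta^{(2)}\|_2^2}{p} + \frac{(p-n_1)\|\tilde\bbeta\|_2^2}{p(p-n)}\right] + o(1).
\end{equation}

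Next, the condition that the bracket is negative (i.e.\ positive transfer) is, after dividing by $\|\bbeta^{(2)}\|_2^2$ and multiplying by $p(p-n)/(p-n_1)$, equivalent to
\begin{equation}
\mathrm{SSR} < \frac{p-n}{p-n_1} - \frac{p^2}{(p-n_1)(p-n_2)}\cdot\frac{1}{\mathrm{SNR}} = \rho.
\end{equation}
From this one reads off both cases: the threshold $\rho$ is nonpositive precisely when $\mathrm{SNR}\leq \tfrac{p^2}{(p-n)(p-n_2)}$, and in that regime SSR$\,\geq 0\geq \rho$ forces the difference to be nonnegative (yielding (i)); otherwise $\rho>0$ and the sign of the difference is determined by whether SSR falls below or above $\rho$, giving (ii).

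There is no genuine obstacle here beyond bookkeeping: the main delicate point is to verify that each of the three $o(1)$ remainders from Theorem \ref{thm:model_shift} (which are $O(p^{-1/7}\sigma^2)$, $O(p^{-1/7}\|\bbeta^{(2)}\|_2^2)$, $O(p^{-1/7}\|\tilde\bbeta\|_2^2)$) and the $o(1)$ from Proposition \ref{prop:single_interpolator_risk_isotropic} remain $o(1)$ after dividing by $\|\bbeta^{(2)}\|_2^2$ in the definition of the thresholds. This uses the boundedness hypotheses on $\|\bbeta^{(2)}\|_2^2$ and $\|\tilde\bbeta\|_2^2$ together with \eqref{eqn:dimensions}, which keeps the prefactors $n_1,(p-n),(p-n_2)$ of the same order as $p$ and prevents cancellation from blowing up the error.
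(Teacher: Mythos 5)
Your proof follows the paper's argument exactly: subtract Proposition \ref{prop:single_interpolator_risk_isotropic} from the sum of the terms in Theorem \ref{thm:model_shift}, simplify to get the expression $n_1\bigl[\frac{p\sigma^2}{(p-n)(p-n_2)} - \frac{\|\bbeta^{(2)}\|_2^2}{p} + \frac{(p-n_1)\|\tilde\bbeta\|_2^2}{p(p-n)}\bigr] + o(1)$, and read off the sign condition as SSR versus $\rho$. The paper compresses this into one displayed formula followed by ``the rest follows with simple algebra,'' and you have simply carried out that algebra; the only small over-elaboration is your final worry about error terms ``after dividing by $\|\bbeta^{(2)}\|_2^2$'' --- the stated conclusions compare the $R$'s directly, so no division actually happens and the $o(1)$ errors pass through unchanged given the boundedness hypotheses.
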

\begin{proof}
From Theorem \ref{thm:model_shift} and Proposition \ref{prop:single_interpolator_risk_isotropic}, we know with high probability, 
$$
R(\hat\bbeta;\bbeta^{(2)}) - R(\hat\bbeta^{(2)};\bbeta^{(2)}) = -\frac{n_1}{p}\|\bbeta^{(2)}\|_2^2 + \frac{n_1p}{(p-n)(p-n_2)}\sigma^2 + \frac{n_1(p-n_1)}{p(p-n)}\|\tilde\bbeta\|_2^2+o(1).$$
The rest follows with simple algebra.
\end{proof}

The implications of Proposition \ref{prop:model_shift_summary} are as follows:
\begin{enumerate}
    \item[(i)] When the SNR is \textit{small}, i.e. $\rm{SNR} \le \frac{1}{(1-1/\gamma)(1-1/\gamma_2)}$, adding more data always hurts the pooled min-$\ell_2$-norm interpolator. Note that, even if the source and target populations have the exact same model settings, pooling the data induces higher risk for the interpolator. 
    \item[(ii)] If SNR is large, whether pooling the data helps or not depends on the level of model shift. Namely, if SSR exceeds $\rho$ defined by \eqref{eq:rho_cutoff}, the models are far apart resulting in negative transfer. Otherwise, pooling the datasets helps. In particular, if $\textnormal{SSR} \ge \frac{1-1/\gamma}{1-1/\gamma_1}$, then for \textit{ any values of $\textnormal{SNR}$}, data pooling worsens performance.
    \item[(iii)] Note that the $\textnormal{SNR}$ and $\textnormal{SSR}$ can be consistently estimated using the available samples. Based on this observation, in Appendix \appendixref{sec:choice_of_interpolator}{A}, we provide a data-driven method for deciding whether to use the pooled interpolator or the target-based interpolator.
\end{enumerate}

In sum, in the overparametrized regime, positive transfer happens when the SSR is low and the SNR is sufficiently high. 
When SSR is low, the source data is close to the target data, so pooling introduces little model shift bias. When SNR is high, the signal generally dominates the noise in the sample, so additional source samples can help stabilize the interpolation. In contrast, when SSR is high and SNR is low, adding source samples forces the interpolator to fit observations that are both noisy and generated from a signal far from the target of interest, thus leading to negative transfer.

From a practical perspective, these results suggest that the data-driven procedure in Appendix \appendixref{sec:choice_of_interpolator}{A} can be used to select the number of  source samples to pool with the target samples so as to approximately minimize the risk of the resulting linear interpolator.  

Figure \ref{fig:isotropic_model_shift} illustrates the interplay between the $\textnormal{SNR}$, $\textnormal{SSR}$, and the transfer performance as described in Proposition \ref{prop:model_shift_summary}. Figures \ref{fig:isotropic_model_shift}(a) and \ref{fig:isotropic_model_shift}(b) show that for a fixed source sample size $n_1$, the risk of the pooled interpolator $\hat \bbeta$ decreases relative to that of the target interpolator $\hat \bbeta^{(2)}$ as the tasks become more similar (as $\textnormal{SSR}$ decreases) or as the target signal becomes easier to estimate (as $\textnormal{SNR}$ increases).

For completeness, we also depict the  empirical performance of the pooled interpolator in the underparametrized regime (where $n_1 + n_2 > p$) along with the corresponding theoretical predictions from \cite{yang2020analysis}. Figures \ref{fig:isotropic_model_shift}(a) and \ref{fig:isotropic_model_shift}(b) show that, when combined with our results in the overparametrized setting, a double descent phenomenon appears, though the first descent in the overparametrized setting can fail to occur for sufficiently small $\textnormal{SNR}$ or sufficiently large $\textnormal{SSR}$, where pooling remains detrimental throughout the overparametrized regime.

Finally, Figures \ref{fig:isotropic_model_shift}(c) and \ref{fig:isotropic_model_shift}(d) demonstrate that, when the \textnormal{SNR} is sufficiently small, the pooled interpolator is outperformed by the target-only interpolator, regardless of the \textnormal{SSR}. However, we also note that, for especially small SNR (e.g., $\textnormal{SNR} = 1$ in Figure \ref{fig:isotropic_model_shift}(a)), the null estimator $\hat \bbeta_{\text{null}}$ can outperform both interpolators throughout the range of source sample sizes $n_1$. In these regimes with especially low SNR, this comparison should not be interpreted as identifying a practically useful transfer procedure: see Appendix \appendixref{sec:null_comparison}{J.4} for more. On the other hand, for larger \textnormal{SNR}, pooling improves performance when the \textnormal{SSR} is sufficiently small. The phase transitions observed in Figures \ref{fig:isotropic_model_shift}(a) and \ref{fig:isotropic_model_shift}(b) closely match the phase transitions predicted by our theory in Proposition \ref{prop:model_shift_summary}, thus providing strong finite-sample support for our results.

 \begin{figure}
    \centering
    \begin{subfigure}[b]{0.48\linewidth}
    \includegraphics[width=\linewidth]{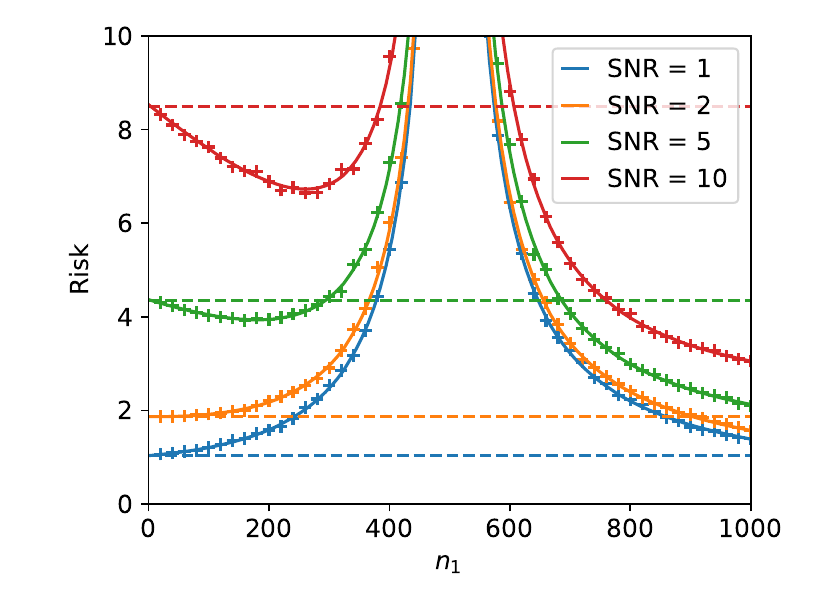}
    \caption{Fixing $\textnormal{SSR} = 0.2$}
    \end{subfigure}
    \begin{subfigure}[b]{0.48\linewidth}
    \includegraphics[width=\linewidth]{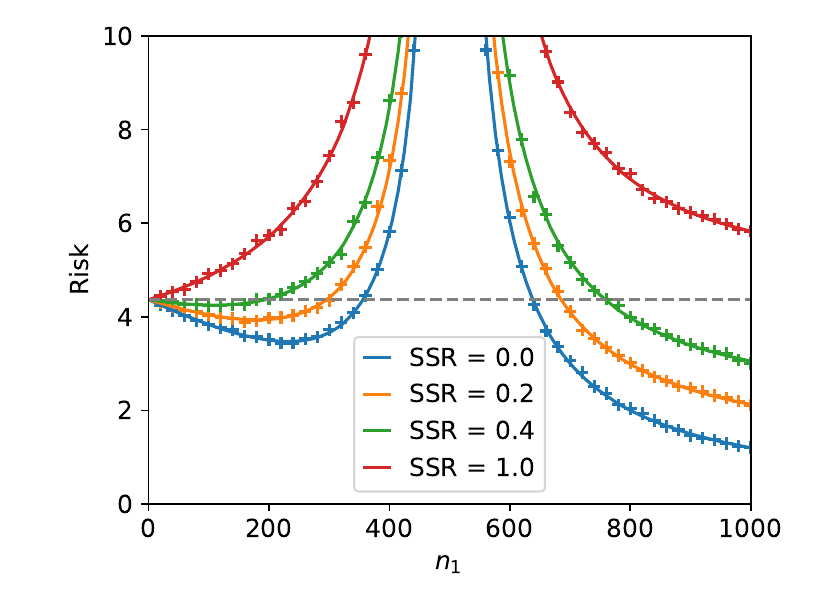}
    \caption{Fixing $\textnormal{SNR}=5$}
    \end{subfigure}
    \begin{subfigure}[b]{0.48\linewidth}
    \includegraphics[width=\linewidth]{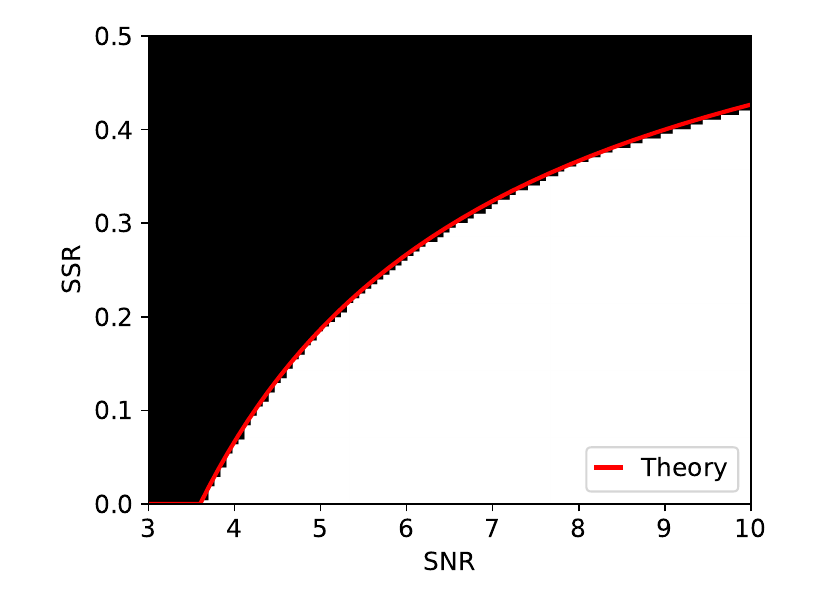}
    \caption{Pooled interpolator outperforms target-only interpolator}
    \end{subfigure}
    \begin{subfigure}[b]{0.48\linewidth}
    \includegraphics[width=\linewidth]{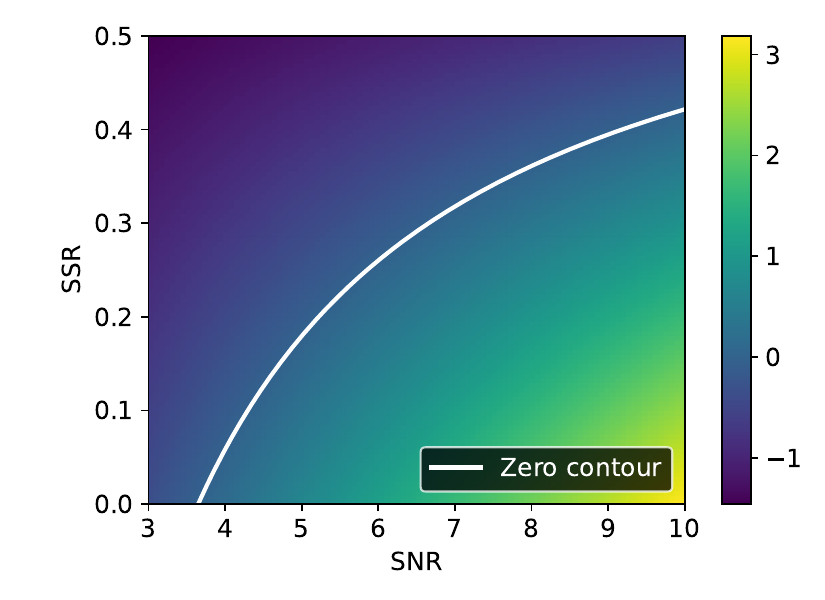}
    \caption{Risk difference $R(\hat \bbeta^{(2)};\bbeta^{(2)}) - R(\hat \bbeta; \bbeta^{(2)})$}
    \end{subfigure}
    \caption{Generalization error of the pooled min-$\ell_2$-norm interpolator under isotropic model shift with $n_2 = 100$ and $p = 600$. A fixed realization of $\bbeta^{(2)}$ and $\tilde \bbeta$ are used throughout, where $\bbeta^{(2)}$ is drawn uniformly from the sphere of radius $\sqrt{\textnormal{SNR}}$ and $\tilde \bbeta$ is drawn uniformly from the sphere of radius $\sqrt{\textnormal{SSR} \cdot \textnormal{SNR}}$. Isotropic Gaussian covariates and i.i.d. $\mcn(0,1)$ noise are redrawn across trials. Panels (a) and (b) show the risk of the pooled interpolator over varying source sample sizes $n_1$ for varying SNR and SSR, respectively. Solid curves denote theoretical predictions, obtained from Corollary \ref{cor:model_shift_isotropic} for $n_1 + n_2 < p$ and from \cite{yang2020analysis} for $n_1 + n_2 > p$. $+$ markers denote simulation averages across $50$ trials, and dashed horizontal lines denote the simulation average risk of the target-only interpolator. In Panel (c), the white region indicates where the pooled min-$\ell_2$-norm interpolator outperforms the target-only interpolator in a majority of $100$ trials with $n_1 = 300$; the red curve is the theoretical boundary from Proposition \ref{prop:model_shift_summary}. Panel (d) shows the risk difference $R(\hat \bbeta^{(2)};\bbeta^{(2)}) - R(\hat \bbeta; \bbeta^{(2)})$ estimated from $1000$ trials with $n_1 = 300$ for a range of SNR and SSR; the white curve is the corresponding estimated zero contour.}
    \label{fig:isotropic_model_shift}
\end{figure}

\subsection{Risk under spiked covariance designs}\label{subsec:spiked_model_shift}
In this section, we now consider the setting where $\bSigma^{(1)} = \bSigma^{(2)} = \bm I + \alpha \bm v\bm v^\top$ for $\alpha > 0$ and a unit vector $\bm v \in \R^p$. In this case, we obtain the following corollary of Theorem \ref{thm:model_shift}.

\begin{cor}\label{cor:model_shift_spiked_covariance}
    Let $\bm v \in \R^p$ be a unit vector, and let $\alpha \geq 0$ be a fixed non-negative real number.
    Further assuming that $\bSigma^{(1)} = \bSigma^{(2)} = \bm I + \alpha \bm v \bm v^\top$, with high probability over the randomness of $(\bm Z^{(1)}, \bm Z^{(2)})$, we have
    \begin{align*}
        V(\hat \bbeta; \bbeta^{(2)}) &= \sigma^2 \frac{n}{p-n} + O(p^{-1/12}) \\
        B_1(\hat \bbeta;\bbeta^{(2)}) &= \langle \bm v, \bbeta^{(2)}\rangle^2 \cdot \biggl(\frac{\gamma(\gamma-1)(1+\alpha)}{(\gamma + \alpha)^2} - \frac{p-n}{p}\biggr) + \frac{p-n}{p} \|\bbeta^{(2)}\|_2^2 \\
        &\quad + O(p^{-1/12} \|\bbeta^{(2)}\|_2^2) \\
        B_2(\hat \bbeta;\bbeta^{(2)}) &= \langle \bm v, \tilde \bbeta \rangle^2 \cdot \biggl(\biggl(\frac{n_1}{n}\biggr)^2 \biggl(\frac{(1+\alpha)(1 + \frac{(1+\alpha)^2}{\gamma-1})}{(\gamma-1)(1 + \frac{1+\alpha}{\gamma-1})^2} - \frac{1}{\gamma}\biggr) + \frac{n_1}{n}\biggl(1 - \frac{n_1}{n}\biggr) \frac{\alpha}{\gamma-1} \biggr)  \\
        &\quad + \frac{n_1(p-n_1)}{p(p-n)} \|\tilde \bbeta\|_2^2 +O( p^{-1/12} \|\tilde \bbeta\|_2^2) \\
        B_3(\hat \bbeta; \bbeta^{(2)}) &= -\frac{2n_1}{n} \langle \bm v, \bbeta^{(2)}\rangle \langle \bm v, \tilde \bbeta\rangle \frac{(\gamma-1)(1+\alpha)\alpha}{(\gamma+\alpha)^2} + O(p^{-1/12} \|\bbeta^{(2)}\|_2 \|\tilde \bbeta\|_2)
    \end{align*}
\end{cor}
 Corollary \ref{cor:model_shift_spiked_covariance} highlights how anisotropy enters the transfer learning problem. In Corollary \ref{cor:model_shift_isotropic}, the asymptotic out-of-sample prediction risk depended on $\bbeta^{(2)}$ and $\tilde \bbeta$ only through their norms $\|\bbeta^{(2)}\|_2$ and $\|\tilde \bbeta\|_2$ and was thus invariant to the relative positions of the signal vectors $\bbeta^{(1)}$ and $\bbeta^{(2)}$. In contrast, Corollary \ref{cor:model_shift_spiked_covariance} uncovers that even a rank-one perturbation from isotropy is sufficient to make the prediction risk depend on the target alignment $\frac{\langle \bm v, \bbeta^{(2)} \rangle}{\|\bbeta^{(2)}\|_2}$ and the shift alignment $\frac{\langle \bm v, \tilde \bbeta \rangle}{\|\tilde \bbeta\|_2}$, which suggests that the invariance of the prediction risk in the isotropic setting is particular to that setting.

Note that the  variance term in Corollary \ref{cor:model_shift_spiked_covariance} matches that of Corollary \ref{cor:model_shift_isotropic}; that is, the difference in the prediction risk between the two settings is driven by differences in the  bias expressions. In particular, each of the three terms in the bias decomposition can be expressed as one component that matches the corresponding bias term in Corollary \ref{cor:model_shift_isotropic} and one new component that depends on $\langle \bm v, \bbeta^{(2)}\rangle$, $\langle \bm v, \tilde \bbeta \rangle$, or both, in the cases of $B_1$, $B_2$, and $B_3$, respectively. Thus, when the spike $\bm v$ is orthogonal (or asymptotically orthogonal) to the target signal $\bbeta^{(2)}$ and the shift $\tilde \bbeta$, the risk in the spiked covariance setting reverts to the risk in the isotropic setting. On the other hand, when either the target signal $\bbeta^{(2)}$ or the shift $\tilde \bbeta$ is highly aligned with the spike $\bm v$, the resulting asymptotic risk can differ significantly from that of the isotropic setting.

Figure \ref{fig:spiked_model_shift} illustrates these findings in a simulation setting. In Figure \ref{fig:spiked_model_shift}(a), when the shift alignment $\frac{\langle \bm v,  \tilde \bbeta \rangle}{\|\tilde \bbeta\|_2}$ is fixed to be zero, increasing the magnitude of the alignment between the target $\bbeta^{(2)}$ and the spike $\bm v$ reduces the generalization error. Conversely, in Figure \ref{fig:spiked_model_shift}(b), when the target alignment $\frac{\langle \bm v, \bbeta^{(2)}\rangle}{\|\bbeta^{(2)}\|_2}$ is fixed to be zero, increasing the magnitude of the alignment between the shift $\tilde \bbeta$ and the spike $\bm v$ increases the generalization error. In both cases, the effects of alignment become more pronounced as the spike strength $\alpha$ grows. However, Figures \ref{fig:spiked_model_shift}(c) and \ref{fig:spiked_model_shift}(d) demonstrate that, when both the target $\bbeta^{(2)}$ and the shift $\tilde \bbeta$ have nonzero alignment with the spike $\bm v$, their effects are no longer separable. This interaction is partly driven by the growing $B_3$ term, which depends on the alignment between $\tilde \bbeta$ and $\bbeta^{(2)}$ through their one-dimensional projections onto the spike $\bm v$.

 \begin{figure}
    \centering
    \begin{subfigure}[b]{0.48\linewidth}
    \includegraphics[width=\linewidth]{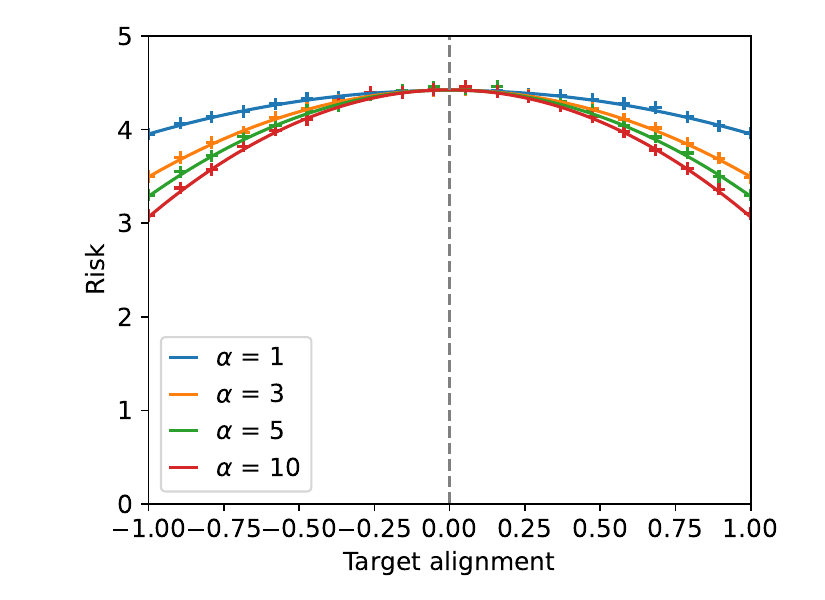}
    \caption{Fixing $\textnormal{shift alignment} = 0$}
    \end{subfigure}
    \begin{subfigure}[b]{0.48\linewidth}
    \includegraphics[width=\linewidth]{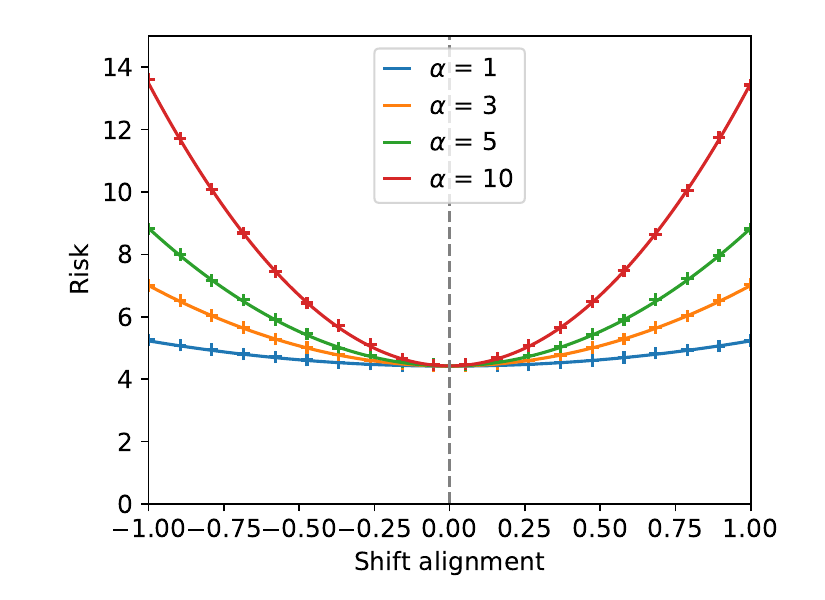}
    \caption{Fixing $\textnormal{target alignment}=0$}
    \end{subfigure}
    \begin{subfigure}[b]{0.48\linewidth}
    \includegraphics[width=\linewidth]{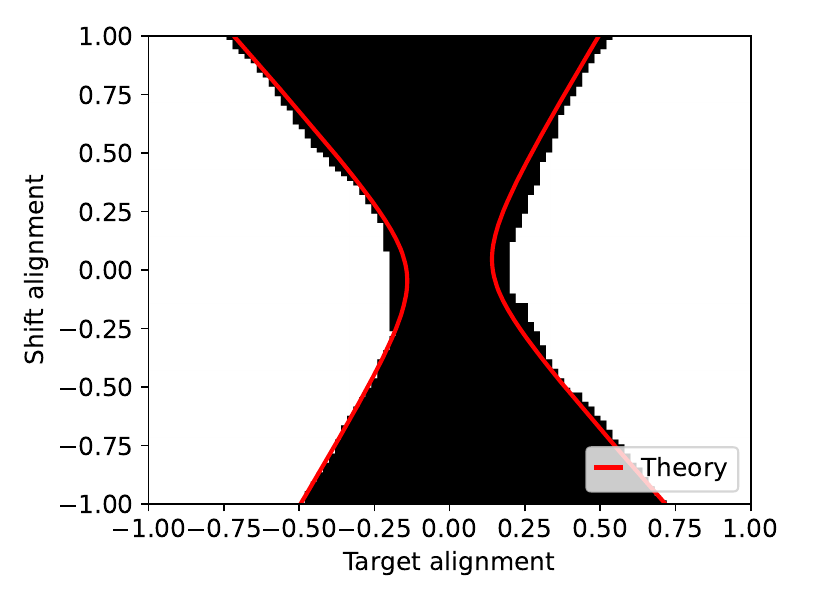}
    \caption{Pooled interpolator outperforms target-only interpolator}
    \end{subfigure}
    \begin{subfigure}[b]{0.48\linewidth}
    \includegraphics[width=\linewidth]{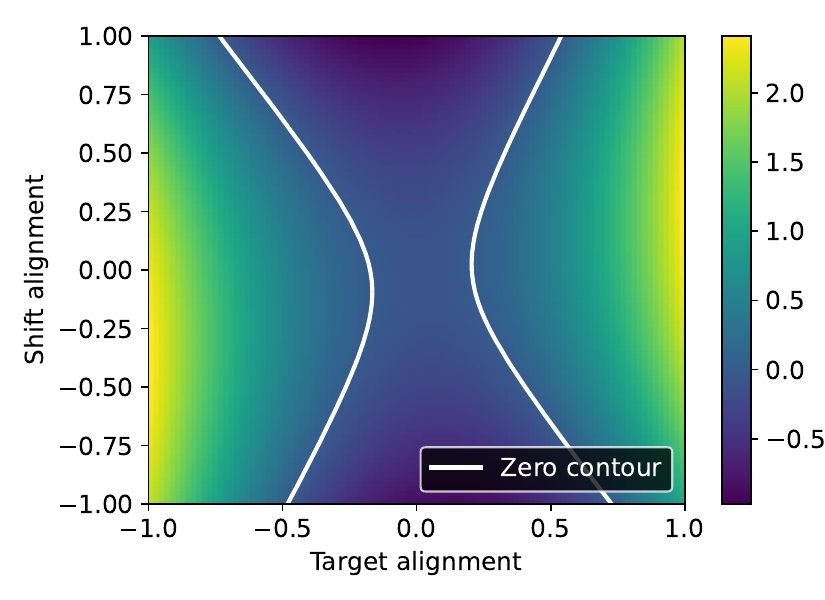}
    \caption{Risk difference $R(\hat \bbeta^{(2)};\bbeta^{(2)}) - R(\hat \bbeta; \bbeta^{(2)})$}
    \end{subfigure}
    \caption{Generalization error of the pooled min-$\ell_2$-norm interpolator under model shift with a rank-one spiked covariance structure, $n_1 = 300$, $n_2 = 100$, $p = 600$, $\textnormal{SNR} = 5$, and $\textnormal{SSR} = 0.2$. We let $\bSigma = \bm I + \alpha \bm v \bm v^\top$ for a fixed realization of $\bm v$ drawn uniformly from the unit sphere. 
    The target parameter $\bbeta^{(2)}$ has norm $\sqrt{\textnormal{SNR}}$ and the shift parameter $\tilde \bbeta$ has norm $\sqrt{\textnormal{SSR} \cdot \textnormal{SNR}}$ throughout. Gaussian covariates with covariance $\bSigma$ and i.i.d. $\mcn(0,1)$ noise are redrawn across trials. Target alignment is the cosine similarity between $\bbeta^{(2)}$ and $\bm v$, whereas shift alignment is the cosine similarity between $\tilde \bbeta$ and $\bm v$. Panels (a) and (b) show the risk of the pooled interpolator over varying target and shift alignments, respectively. Solid curves denote theoretical predictions, obtained from Corollary \ref{cor:model_shift_spiked_covariance}. $+$ markers denote simulation averages across $50$ trials, and dashed horizontal lines denote the simulation average risk of the target-only interpolator. In Panel (c), the white region indicates where the pooled min-$\ell_2$-norm interpolator outperforms the target-only interpolator in a majority of $100$ trials; the red curve is the theoretical boundary. Panel (d) shows the risk difference $R(\hat \bbeta^{(2)};\bbeta^{(2)}) - R(\hat \bbeta; \bbeta^{(2)})$ estimated from $100$ trials with $n_1 = 300$ for a range of target and shift alignments; the white curve is the corresponding estimated zero contour.}
    \label{fig:spiked_model_shift}
\end{figure}

\subsection{Risk under a random effects model}\label{subsec:random_model_shift}
In this section, we consider the setting where $\tilde\bbeta$ and $\bbeta^{(2)}$ are randomly drawn vectors. In particular, suppose that $\tilde \bbeta$, $\bbeta^{(2)}$ are i.i.d. uniform draws from the $p-1$ dimensional unit sphere. 
In this case, one can easily determine that the quantities $B_1(\hat \bbeta; \bbeta^{(2)})$ and $B_2(\hat \bbeta; \bbeta^{(2)})$ concentrate around the analogous tracial quantities, while $B_3(\hat \bbeta; \bbeta^{(2)})$ concentrates around $0$. The proof of Theorem \ref{thm:model_shift} then immediately yields the following corollary. 

\begin{cor}\label{cor:REresult}
    Suppose that $\tilde \bbeta$ and $\bbeta^{(2)}$ are drawn as i.i.d. random vectors from the uniform distribution over $\mathcal S^{p-1}$. Then, with high probability over the randomness of $(\tilde \bbeta, \bbeta^{(2)}, \bZ^{(1)}, \bZ^{(2)})$, we have
    \begin{align*}
    V(\hat \bbeta; \bbeta^{(2)}) &= \mathcal V(\hat H_n, \gamma) + O(p^{-1/12})  \\
    B_1(\hat \bbeta; \bbeta^{(2)}) &= \mathcal B_1(\hat H_n, \hat H_n, \gamma) + O(p^{-1/12} ) \\
    B_2(\hat \bbeta; \bbeta^{(2)}) &= \mathcal B_2(\hat H_n, \hat H_n, \gamma) + O(p^{-1/12} )\\
    B_3(\hat \bbeta; \bbeta^{(2)}) &= O(p^{-1/12} ) .
\end{align*}
 Notably, the asymptotic out-of-sample risk of the min-norm interpolator $\hat \bbeta$ is a function solely of the empirical spectral distribution $\hat H_n$ of $\bSigma$.
\end{cor}
Therefore, under the random effects model, the specific geometry of $\tilde \bbeta$ and $\bbeta^{(2)}$ becomes asymptotically irrelevant: the risk depends only on the spectrum of $\bSigma$ and not on any particular alignment with the eigenvectors of $\bSigma$.

This stands in direct contrast with the result in Corollary \ref{cor:model_shift_spiked_covariance}, where the risk could depend heavily on the projections of $\tilde \bbeta$ and $\bbeta^{(2)}$ in the spiked direction. Here, directions for $\tilde \bbeta$ and $\bbeta^{(2)}$ are sampled from an isotropic distribution, which leads to a result more reminiscent of Corollary \ref{cor:model_shift_isotropic}. However, the result here still depends on the spectrum of $\bSigma$ and thus allows us to probe how the covariance structure of the covariates influences the success of transfer procedures. 

\section{Covariate Shift}\label{sec:covariate_shift}
In this section, we characterize the bias and variance of the pooled min-$\ell_2$-norm interpolator under covariate shift. We assume the underlying signal $\bbeta^{(1)}=\bbeta^{(2)}=\bbeta$ stays the same for both datasets, whereas the population covariance matrix changes from $\bSigma^{(1)}$ to $\bSigma^{(2)}$. Under this condition, $B_2(\hat\bbeta;\bbeta^{(2)})=B_3(\hat\bbeta;\bbeta^{(2)})=0$ in \eqref{eqn:bias_analytical}. We then compare the remaining bias term $B_1(\hat\bbeta;\bbeta^{(2)})$ and the variance term $V(\hat\bbeta;\bbeta^{(2)})$ of the pooled min-$\ell_2$-norm interpolator with the target-based interpolator and investigate conditions that determine achievability of positive/negative transfer.

\subsection{Risk under simultaneously diagonalizable covariances}
We assume that the source and target covariance matrices $(\bSigma^{(1)},\bSigma^{(2)})$ are simultaneously diagonalizable, as defined below: 

\begin{definition}[Simultaneously diagonalizable]\label{def:simultaneously_diagonalizable}
For two symmetric matrices $\bSigma^{(1)}$, $\bSigma^{(2)}\in \R^{p\times p}$, we say they are simultaneously diagonalizable if there exist an orthogonal matrix $\bV$ and two diagonal matrices $\bLambda^{(1)},\bLambda^{(2)}$ such that $\bSigma^{(1)} = \bV\bLambda^{(1)}\bV^\top$ and $\bSigma^{(2)} = \bV\bLambda^{(2)}\bV^\top$. Under this condition, the generalization error depends on the joint empirical distribution as well as the geometry between $\bV$ and $\bbeta^{(2)}$. We encode these via the following probability distributions:
\begin{equation}\label{eqn:joint_ESD}
\begin{aligned}
\hat H_p(\lambda^{(1)},\lambda^{(2)}) &:= \frac{1}{p} \sum_{i=1}^p \bm{1}_{\{(\lambda^{(1)},\lambda^{(2)}) \ge (\lambda_{i}^{(1)},\lambda_i^{(2)})\}},\\
\hat G_p(\lambda^{(1)},\lambda^{(2)}) &:= \frac{1}{\|\bbeta^{(2)}\|_2^2} \sum_{i=1}^p \langle \bbeta^{(2)},\bv_i\rangle^2\bm{1}_{\{(\lambda^{(1)},\lambda^{(2)}) \ge (\lambda_{i}^{(1)},\lambda_i^{(2)})\}},
\end{aligned}
\end{equation}
where, $\lambda_i^{(k)} := \Lambda_{ii}^{(k)}$ for $k=1,2$, and $\bv_i$ is the $i$th common eigenvector (the $i$-th column of $\bV$).
\end{definition}

The simultaneous diagonalizability assumption has previously been used in the transfer learning literature (cf. \cite{mallinar2024minimum}). It  ensures that the difference in the covariate distributions of the   source and the target  is solely captured by the eigenvalues $\lambda_{i}^{(1)},\lambda_i^{(2)}$. Characterizing the generalization error when covariance matrices lack a common orthonormal basis appears to be fairly technical, and we defer this to future work. 

Note that in the typical overparametrized setup with a single dataset, the risk of the min-$\ell_2$-norm interpolator is expressed in terms of the empirical distribution of eigenvalues and the angle between the signal and the eigenvector of the covariance matrix (equation (9) in \cite{hastie2022surprises}). Consequently, it is  natural to expect that the generalization error of our estimator will depend on $\hat H_p$ and $\hat G_p$ defined by \eqref{eqn:joint_ESD}, which indeed proves to be the case. The following presents our main result concerning covariate shift.

\begin{thm}[Risk under covariate shift]\label{thm:design_shift}
Suppose Assumptions \ref{as:shared}, \ref{as:design_four_epsilon}, and \ref{as:simul_diag} hold, with joint spectral distributions for $\bSigma^{(1)}, \bSigma^{(2)}$ defined in \eqref{eqn:joint_ESD}. Then, with high probability over the randomness of $(\bZ^{(1)},\bZ^{(2)})$, we have
\begin{align}
V(\hat\bbeta;\bbeta^{(2)}) = -\sigma^2\gamma\int \frac{\lambda^{(2)}(\tilde a_3\lambda^{(1)}+\tilde a_4\lambda^{(2)})}{(\tilde a_1\lambda^{(1)}+\tilde a_2\lambda^{(2)}+1)^2} d \hat H_p(\lambda^{(1)},\lambda^{(2)}) + O(p^{-1/12})\label{eqn:design_shift_V}\\
B(\hat\bbeta;\bbeta^{(2)}) = \|\bbeta^{(2)}\|_2^2 \cdot\int \frac{\tilde b_3\lambda^{(1)}+(\tilde b_4+1)\lambda^{(2)}}{(\tilde b_1\lambda^{(1)}+\tilde b_2\lambda^{(2)}+1)^2} d \hat G_p(\lambda^{(1)},\lambda^{(2)}) + O(p^{-1/12})\label{eqn:design_shift_B}
\end{align}
where $(\tilde a_1,\tilde a_2,\tilde a_3,\tilde a_4)$ is the unique solution, with $\tilde a_1,\tilde a_2$ positive, to the following system of equations:
\begin{equation}\label{eqn:design_shift_a_eqns}
\begin{aligned}
0 &= 1 - \gamma \int \frac{\tilde a_1\lambda^{(1)}+\tilde a_2\lambda^{(2)}}{\tilde a_1\lambda^{(1)}+\tilde a_2\lambda^{(2)}+1} d\hat H_p(\lambda^{(1)},\lambda^{(2)}),\\
0 &= \frac{n_1}{n} - \gamma \int \frac{\tilde a_1\lambda^{(1)}}{\tilde a_1\lambda^{(1)}+\tilde a_2\lambda^{(2)}+1} d\hat H_p(\lambda^{(1)},\lambda^{(2)}),\\
\tilde a_1+\tilde a_2 &= -\gamma\int \frac{\tilde a_3\lambda^{(1)}+\tilde a_4\lambda^{(2)}}{(\tilde a_1\lambda^{(1)}+\tilde a_2\lambda^{(2)}+1)^2}d\hat H_p(\lambda^{(1)},\lambda^{(2)}),\\
\tilde a_1&= -\gamma\int \frac{\tilde a_3\lambda^{(1)}+\lambda^{(1)}\lambda^{(2)}(\tilde a_3\tilde a_2-\tilde a_4\tilde a_1)}{(\tilde a_1\lambda^{(1)}+\tilde a_2\lambda^{(2)}+1)^2}d\hat H_p(\lambda^{(1)},\lambda^{(2)}),
\end{aligned}
\end{equation}
and $(\tilde b_1,\tilde b_2,\tilde b_3,\tilde b_4)$ is the unique solution, with $\tilde b_1,\tilde b_2$ positive, to the following system of equations:
\begin{equation}\label{eqn:design_shift_b_eqns}
\begin{aligned}
0 &= 1 - \gamma \int \frac{\tilde b_1\lambda^{(1)}+\tilde b_2\lambda^{(2)}}{\tilde b_1\lambda^{(1)}+\tilde b_2\lambda^{(2)}+1} d\hat H_p(\lambda^{(1)},\lambda^{(2)}),\\
0 &= \frac{n_1}{n} - \gamma \int \frac{\tilde b_1\lambda^{(1)}}{\tilde b_1\lambda^{(1)}+\tilde b_2\lambda^{(2)}+1} d\hat H_p(\lambda^{(1)},\lambda^{(2)}),\\
0 &= \int \frac{\lambda^{(1)}(\tilde b_3-\tilde b_1\lambda^{(2)})+\lambda^{(2)}(\tilde b_4-\tilde b_2\lambda^{(2)})}{(\tilde b_1\lambda^{(1)}+\tilde b_2\lambda^{(2)} + 1)^2}d\hat H_p(\lambda^{(1)},\lambda^{(2)}),\\
0 &= \int \frac{\lambda^{(1)}(\tilde b_3-\tilde b_1\lambda^{(2)})+\lambda^{(1)}\lambda^{(2)}(\tilde b_3\tilde b_2-\tilde b_4\tilde b_1)}{(\tilde b_1\lambda^{(1)}+\tilde b_2\lambda^{(2)} + 1)^2}d\hat H_p(\lambda^{(1)},\lambda^{(2)}).
\end{aligned}    
\end{equation}
\end{thm}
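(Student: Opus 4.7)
}
The plan is to work through the ridge regularization bridge: treat the pooled min-$\ell_2$-norm interpolator as the $\lambda \to 0^+$ limit of the pooled ridge estimator, compute the bias and variance at fixed $\lambda > 0$ via resolvents, apply the anisotropic local law (Theorem~\ref{thm:anisotropic_law}) to obtain deterministic equivalents, and finally pass to the ridgeless limit. First, because $\bSigma^{(1)}, \bSigma^{(2)}$ are simultaneously diagonalizable, I would rotate by the common eigenbasis $\bV$ so that both population covariances are diagonal; this is without loss of generality since the Gaussian-like entries of $\bZ^{(k)}$ are rotationally symmetric at the level of the anisotropic law. Under covariate shift, Lemma~\ref{lemma:risk_analytical} gives $B_2 = B_3 = 0$, and only $B_1$ and $V$ need to be controlled. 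Writing $\bR(\lambda) := (\hat\bSigma + \lambda \bI)^{-1}$, the ridge analogues of these two quantities become
\begin{equation}
V_\lambda = \tfrac{\sigma^2}{n}\Tr\bigl[\bR(\lambda)\hat\bSigma\bR(\lambda)\bSigma^{(2)}\bigr], \qquad
B_{1,\lambda} = \lambda^2\,\bbeta^{(2)\top}\bR(\lambda)\bSigma^{(2)}\bR(\lambda)\bbeta^{(2)},
\end{equation}
whose ridgeless limits recover $V(\hat\bbeta;\bbeta^{(2)})$ and $B_1(\hat\bbeta;\bbeta^{(2)})$.

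Next, the core step is to produce a deterministic equivalent for $\bR(\lambda)$. Since $\hat\bSigma = (1/n)\sum_{k=1}^2 \bSigma^{(k)1/2}\bZ^{(k)\top}\bZ^{(k)}\bSigma^{(k)1/2}$ is the sum of two independent sample covariance blocks sharing the eigenbasis $\bV$, the appropriate deterministic equivalent is diagonal in the $\bV$-basis with $i$th entry
\begin{equation}
M_i(\lambda) = \bigl(a_1(\lambda)\lambda^{(1)}_i + a_2(\lambda)\lambda^{(2)}_i + \lambda\bigr)^{-1},
\end{equation}
where $(a_1(\lambda), a_2(\lambda))$ are the positive solutions of self-consistent equations obtained from a Schur-complement / leave-one-row-out argument applied separately to the two blocks of rows; after rescaling $\tilde a_k := \lim_{\lambda\to 0}\lambda a_k(\lambda)$ these are precisely the first two relations in \eqref{eqn:design_shift_a_eqns}. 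The second pair of relations in \eqref{eqn:design_shift_a_eqns} arises from a deterministic equivalent for the \emph{squared} resolvent $\bR(\lambda)\bSigma^{(2)}\bR(\lambda)$, obtainable either by differentiating the master equations for $a_1, a_2$ in an auxiliary parameter, or equivalently by setting up a second system of self-consistent equations for the bilinear form with $\bSigma^{(2)}$ inserted. The coefficients $\tilde a_3, \tilde a_4$ are the ridgeless limits of these derivative quantities and encode the weighting by $\bSigma^{(2)}$ when forming the variance trace against $\hat H_p$.

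For the bias $B_1$, the same local law is invoked but now evaluated as a quadratic form $\bbeta^{(2)\top}(\cdot)\bbeta^{(2)}$ rather than a trace, which by construction produces integrals against $\hat G_p$ instead of $\hat H_p$; this accounts for the appearance of $\hat G_p$ in \eqref{eqn:design_shift_B}. The first two equations in \eqref{eqn:design_shift_b_eqns} coincide with those for $\tilde a_1, \tilde a_2$ (so $\tilde b_1 = \tilde a_1$, $\tilde b_2 = \tilde a_2$ by uniqueness), reflecting the fact that the underlying resolvent is the same; the last two equations, however, encode the specific structure of $\lambda^2 \bR(\lambda)\bSigma^{(2)}\bR(\lambda)$ restricted to the range of $\hat\bSigma^\dagger\hat\bSigma - \bI$, and the factor $\lambda^2$ balances against the $\lambda^{-2}$ divergence of $\bR(\lambda)$ on the kernel of $\hat\bSigma$ to yield a finite ridgeless limit. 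The $O(p^{-1/12})$ error is inherited directly from the quantitative rate in Theorem~\ref{thm:anisotropic_law}, propagated through the (Lipschitz, uniformly on compact sets away from the spectral edge) maps $(\tilde a_1, \tilde a_2) \mapsto (\tilde a_3, \tilde a_4)$ and the analogous map for $\tilde b_\bullet$.

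The main obstacle is two-fold. Technically, the novel anisotropic law must be strong enough to control \emph{squared} resolvents and isotropic quadratic forms simultaneously, uniformly for $\lambda$ in a neighborhood of $0$; this is the step where the overparametrized regime $\gamma > 1 + \tau$ is essential, since the spectrum of $\hat\bSigma$ is bounded away from $0$ with high probability and the resolvent remains well-behaved as $\lambda \downarrow 0$. Analytically, I must verify that the systems \eqref{eqn:design_shift_a_eqns} and \eqref{eqn:design_shift_b_eqns} admit unique solutions with the stipulated positivity; for the first two equations this follows from standard monotonicity / convexity arguments in the Stieltjes transform of free additive convolutions, while for the remaining linear equations in $\tilde a_3, \tilde a_4$ (resp.\ $\tilde b_3, \tilde b_4$) uniqueness reduces to nonsingularity of an explicit $2\times 2$ matrix, which can be verified by showing that its determinant is strictly positive using the fixed-point relations for $\tilde a_1, \tilde a_2$.
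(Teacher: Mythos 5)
Your proposal follows essentially the same architecture as the paper's proof: ridge bridge, anisotropic local law for the deterministic equivalent of $(\hat\bSigma + \lambda\bI)^{-1}$ in the common eigenbasis, self-consistent equations for $\tilde a_1,\tilde a_2$ from a leave-one-out / Schur-complement scheme, derivative equations for $\tilde a_3,\tilde a_4$, quadratic-form version for $\hat G_p$, and finally $\lambda \downarrow 0$. One substantive imprecision: you claim the overparametrized regime makes ``the spectrum of $\hat\bSigma$ bounded away from $0$'' so that ``the resolvent remains well-behaved as $\lambda \downarrow 0$.'' Both halves are wrong as stated: $\hat\bSigma$ has rank $n<p$, so $0$ is an eigenvalue of multiplicity $p-n$, and $\bR(\lambda) = (\hat\bSigma+\lambda\bI)^{-1}$ genuinely blows up like $\lambda^{-1}$ on the kernel. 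You do recover from this two sentences later, but what you actually need is Lemma~\ref{lemma:min_eigenvalue_heterogeneous} (smallest \emph{nonzero} eigenvalue bounded away from $0$ w.h.p.), which is what controls the $\lambda$-dependent error $|R(\hat\bbeta_\lambda)-R(\hat\bbeta)| = O(\lambda)$. A second gloss: the $O(p^{-1/12})$ error is not ``inherited directly'' from Theorem~\ref{thm:anisotropic_law}; the local law holds only for $\lambda \gtrsim p^{-1/7+\epsilon}$ and carries error terms like $\lambda^{-9/2}p^{-1/2}$ (variance) and $\lambda^{-3/2}p^{-1/4}$ (bias), while the ridge--ridgeless gap contributes an extra $O(\lambda)$. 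The paper then chooses $\lambda = p^{-1/12}$ to balance these competing errors; spelling out that optimization is what turns your sketch into a complete argument.
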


Note that although $(\tilde a_1,\tilde a_2)=(\tilde b_1,\tilde b_2)$ in the displayed equations above, we intentionally used different notations for clarity, as they are derived through distinct routes in the proof. 

\subsection{Performance comparison example: when does heterogeneity help?}\label{sec:design_shift_example}
In this section, we illustrate the impact of covariate shift on the performance of our pooled min-$\ell_2$-norm interpolator. Since the closed-form expressions for the  bias and variance of the pooled min-$\ell_2$-norm interpolator are complicated (see \eqref{eqn:design_shift_V}, \eqref{eqn:design_shift_B}), we exhibit the impact through stylized examples. Specifically, we consider the setting where the target covariance $\bSigma^{(2)}=\mathbf{I}$ and the source covariance $\bSigma^{(1)}$ has pairs of reciprocal eigenvalues: $\bSigma^{(1)}:=\bM \in \mathcal{S}$, where $\mathcal{S}$ denotes the collection of all diagonal matrices with pairs of reciprocal eigenvalues. To elaborate, suppose that $p$ is  an even integer. In this case, $\bSigma^{(1)}$ is diagonal, with $\lambda_{p+1-i}^{(1)} = 1/\lambda_i^{(1)}$ for $i=1,...,p/2$. Note that $\mathbf{I} \in \mathcal{S}$. This setting leads to a simplification in the bias and the variance, and allows us to study the effects of covariate shift. 

We will analyze the risk of the pooled min-$\ell_2$-norm interpolator $R(\hat{\bbeta}; \bbeta^{(2)})$, defined by \eqref{eq:define_risk}, under this covariate shift model. To emphasize that the difference in the source and target feature covariances is captured by the matrix $\bM$, we denote this risk as  $\hat{R}(\bM)$ through the rest of the section.

Our first result studies the following question: under what conditions does heterogeneity reduce the risk for the pooled estimator?
To this end, we compare $\hat{R}(\bM), \bM \in \mathcal{S}$, with $\hat{R}(\mathbf{I})$, i.e.~the risk under covariate shift with the one without covariate shift.

\begin{prop}[Dependence on sample size]\label{prop:cov_shift_example}
Under the conditions of Theorem \ref{thm:design_shift}, and additionally assuming $\bSigma^{(2)}=\mathbf{I}$, the following holds with high probability:
\begin{enumerate}
\item[(i)] When $n_1 < \min\{p/2,p-n_2\}$, $\hat{R}(\bM) \leq \hat{R}(\mathbf{I})+o(1)$ for any $\bM \in \mathcal{S}$.
\item[(ii)] When $p/2 \leq n_1 <p-n_2$, $\hat{R}(\bM) \geq \hat{R}(\mathbf{I})+o(1)$ for any $\bM \in \mathcal{S}$.
\end{enumerate}
\end{prop}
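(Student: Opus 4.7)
The plan is to first collapse the systems \eqref{eqn:design_shift_a_eqns}--\eqref{eqn:design_shift_b_eqns} in the special case $\bSigma^{(2)} = \mathbf{I}$, then reduce the risk comparison to a one-parameter question. A direct substitution shows that both integrands in the last two equations of \eqref{eqn:design_shift_b_eqns} vanish pointwise under the guess $(\tilde b_3, \tilde b_4) = (\tilde b_1, \tilde b_2)$, so by uniqueness of the solution with $\tilde b_1, \tilde b_2 > 0$ this choice is valid. Moreover, the first two equations of \eqref{eqn:design_shift_a_eqns} and \eqref{eqn:design_shift_b_eqns} coincide, so $(\tilde a_1, \tilde a_2) = (\tilde b_1, \tilde b_2) =: (t_1, t_2)$. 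Reading off the third equation of \eqref{eqn:design_shift_a_eqns} directly (with $\lambda^{(2)} \equiv 1$) yields the compact identity $V = \sigma^2(t_1 + t_2)$, and the bias collapses to $B = \|\bbeta^{(2)}\|_2^2 \int d\hat G_p(\lambda)/(t_1 \lambda + t_2 + 1)$.

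Next I would extract a key algebraic fact. Rewriting the first equation of \eqref{eqn:design_shift_a_eqns} as $\gamma \int (1 - 1/(t_1 \lambda + t_2 + 1)) d\hat H_p = 1$ pins $\int d\hat H_p(\lambda)/(t_1 \lambda + t_2 + 1) = (p-n)/p$ for every $\bM$. Subtracting the second equation from the first then forces $t_2 = n_2/(p-n)$, a value independent of $\bM$. Hence the variance depends on $\bM$ only through $t_1$, and the comparison of $\hat R(\bM)$ with $\hat R(\mathbf{I})$ at the variance level reduces to comparing $t_1(\bM)$ with $t_1(\mathbf{I}) = n_1/(p-n)$.

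To compare these scalars, I would use the symmetry $\hat H_p(\lambda) = \hat H_p(1/\lambda)$ for $\bM \in \mathcal{S}$ to rewrite the second equation as $t_1 \int \bar f(\lambda; t_1, t_2) \, d\hat H_p(\lambda) = n_1/p$, where $\bar f(\lambda; a, b) := \tfrac{1}{2}\bigl[\lambda/(a\lambda + b) + 1/(a + b\lambda)\bigr]$ with $(a,b) = (t_1, t_2+1)$. A direct computation gives $\bar f(\lambda) - \bar f(1) = b(b-a)(\lambda - 1)^2 / [2(a+b) D(\lambda)]$ with $D(\lambda) = ab\lambda^2 + (a^2+b^2)\lambda + ab > 0$, whose sign equals $\mathrm{sign}(t_2 + 1 - t_1)$. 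Evaluating at $(t_1, t_2) = (n_1/(p-n), n_2/(p-n))$ this becomes $\mathrm{sign}((p - 2n_1)/(p-n))$---precisely the threshold appearing in the proposition. Since the map $t_1 \mapsto \int t_1 \lambda /(t_1 \lambda + t_2 + 1) d\hat H_p(\lambda)$ is strictly increasing (easy derivative check), the sign of $t_1(\bM) - t_1(\mathbf{I})$ is opposite to $\mathrm{sign}(p - 2n_1)$, yielding the variance dichotomy $V(\bM) \lessgtr V(\mathbf{I})$ exactly as stated.

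For the bias, the same identity $\int d\hat H_p/(t_1\lambda + t_2 + 1) = (p-n)/p$ immediately gives $B(\bM) = B(\mathbf{I}) = \|\bbeta^{(2)}\|_2^2(p-n)/p$ whenever $\hat G_p$ coincides with $\hat H_p$, covering the generic setting in which $\bbeta^{(2)}$ is not pathologically aligned with the eigenbasis of $\bM$. The hard part will be handling general $\bbeta^{(2)}$, where $\hat G_p \neq \hat H_p$: the integrand $1/(t_1 \lambda + t_2 + 1)$ can a priori favor either large or small eigenvalues depending on the alignment of $\bbeta^{(2)}$. I would address this via a pair-wise convexity bound on $1/(t_1 \lambda + t_2 + 1) + 1/(t_1/\lambda + t_2 + 1)$ at the common value $t_1^M$, combined with the $t_1^M$ versus $t_1^I$ comparison above, to certify that the bias shift cannot reverse the sign of the risk inequality established at the variance level, closing the proof in both regimes.
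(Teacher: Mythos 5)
Your variance comparison is correct and takes a genuinely different, cleaner route than the paper. The paper reduces the second fixed-point equation to
$\sum_{i=1}^{p/2}\bigl[1/(\tilde a_1(p-n)\lambda_i^{(1)}+p-n_1)+1/(\tilde a_1(p-n)/\lambda_i^{(1)}+p-n_1)\bigr]=1$,
substitutes $\tilde a_1 = n_1/(p-n)$ to obtain a function $\psi(n_1)$, and argues from $\psi(0)=\psi(p/2)=1$, $\psi'(p/2)>0$, $\psi''>0$. You instead symmetrize the fixed-point equation over reciprocal pairs in $\hat H_p$ and factor $\bar f(\lambda)-\bar f(1)=b(b-a)(\lambda-1)^2/[2(a+b)D(\lambda)]$ with $D>0$, so that the sign threshold $n_1\lessgtr p/2$ drops out as $\mathrm{sign}(t_2+1-t_1^I)=\mathrm{sign}(p-2n_1)$, and the monotonicity of $t_1\mapsto\gamma\int t_1\lambda/(t_1\lambda+t_2+1)\,d\hat H_p$ finishes. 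Both are sound for the variance; your identity makes the mechanism more transparent and gives strictness for free.

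The bias part is where there is a genuine gap, and it affects the paper's own proof as well. You correctly collapse $B(\bM)=\|\bbeta^{(2)}\|_2^2\int d\hat G_p(\lambda)/(t_1\lambda+t_2+1)$ using $(\tilde b_3,\tilde b_4)=(\tilde b_1,\tilde b_2)$, and you correctly observe that the first equation pins down only $\int d\hat H_p/(t_1\lambda+t_2+1)=(p-n)/p$, not the $\hat G_p$-average. The paper's assertion that the bias "is also $\|\bbeta^{(2)}\|_2^2(p-n)/p$" silently conflates $\hat G_p$ with $\hat H_p$, which is valid only when $\bbeta^{(2)}$ is equidistributed over the eigenbasis of $\bM$ (as in the i.i.d.\ Gaussian simulations). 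Your proposed repair, however, does not work. A pair-wise convexity bound in $\lambda\leftrightarrow 1/\lambda$ relies on the reciprocal symmetry of the measure, which $\hat G_p$ does not possess; nothing prevents $\hat G_p$ from concentrating entirely on the small-eigenvalue directions of $\bM$. Moreover, the conclusion you hope to certify, that the bias shift cannot overturn the variance ordering, is simply false for adversarial $\bbeta^{(2)}$. Take $p=100$, $n_1=40<p/2$, $n_2=10$, $\kappa=2$, $\sigma^2=\|\bbeta^{(2)}\|_2^2=1$, and $\bbeta^{(2)}$ supported entirely on the eigenspace of $\lambda^{(1)}=1/2$, so $\hat G_p$ is a point mass at $1/2$. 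Then $t_2=0.2$, $t_1^I=0.8$, and solving the fixed point gives $t_1^M\approx 0.76$, so $V(\bI)-V(\bM)\approx 0.04$, while $B(\bM)-B(\bI)=1/(0.5\cdot 0.76+1.2)-0.5\approx 0.13$; hence $\hat R(\bM)-\hat R(\bI)\approx 0.09>0$ even though $n_1<p/2$. The proposition therefore requires an additional hypothesis on the alignment of $\bbeta^{(2)}$ (e.g., $\hat G_p=\hat H_p$, or a delocalization condition) that both the paper's proof and your proposal leave unstated.
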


Proposition \ref{prop:cov_shift_example} is proved in Appendix \appendixref{sec:cov-example}{F.4}. Its implications are as follows: in the overparametrized regime, heterogeneity leads to lower risk for the pooled min-$\ell_2$-norm interpolator if and only if the sample size from the source data is small relative to the dimension, specifically,   smaller than $p/2$. Note that since we consider the overparametrized regime, we always have $p > n_1+n_2$; in other words, $n_1 < p - n_2$. Thus, Proposition \ref{prop:cov_shift_example} gives an exhaustive characterization of when heterogeneity helps in terms of the relations between $p,n_1,n_2$. More generally, an additional question may be posed: how does the degree of heterogeneity affect the risk of the pooled min-$\ell_2$-norm interpolator?  Investigating this  is hard  even for the structured class of covariance matrices $\mathcal{S}$. Therefore, we consider the case when $\bSigma^{(1)}$ has only two eigenvalues: $\lambda_{p+1-i}^{(1)} = 1/\lambda_i^{(1)}=\kappa, \forall 1 \leq i \leq p/2$ for some $\kappa>1$. We denote this covariance matrix as $\bM(\kappa)$. Note that $\bM(1)=\bI$ and $\kappa$ here denotes the degree of heterogeneity. In this special setting, we can establish the following. 

\begin{prop}[Dependence on the degree of heterogeneity]\label{prop:cov_shift_kappa}
Under the conditions of Theorem \ref{thm:design_shift}, and additionally assuming $\bSigma^{(2)}=\mathbf{I}$, the following holds with high probability:
\begin{enumerate}
\item[(i)] When $n_1 < \min\{p/2,p-n_2\}$, $\hat{R}(\bM(\kappa_1)) \leq \hat{R}(\bM(\kappa_2))+o(1)$ for any $\kappa_1 > \kappa_2 >1$.
\item[(ii)] When $p/2 < n_1 <p-n_2$, $\hat{R}(\bM(\kappa_1)) \ge \hat{R}(\bM(\kappa_2))+o(1)$ for any $\kappa_1 > \kappa_2 >1$.
\item[(iii)] If $n_1 = \min\{p/2,p-n_2\}$, then $\hat{R}(\bM(\kappa))$ does not depend on $\kappa \ge 1$.
\end{enumerate}
\end{prop}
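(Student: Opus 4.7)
The plan is to specialize Theorem~\ref{thm:design_shift} to the two-atom covariate shift $(\bSigma^{(1)}, \bSigma^{(2)}) = (\bM(\kappa), \bI)$, solve the fixed-point systems in closed form, and then track the monotonicity of the risk in $\kappa$ through a single quadratic equation.

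Under this setting, $\hat H_p$ is a two-atom measure placing mass $1/2$ at each of $(\kappa, 1)$ and $(1/\kappa, 1)$. The first two equations of \eqref{eqn:design_shift_a_eqns} and \eqref{eqn:design_shift_b_eqns} coincide, so $(\tilde b_1, \tilde b_2) = (\tilde a_1, \tilde a_2)$. A short algebraic manipulation---dividing and combining these two equations---yields $c := \tilde a_2 + 1 = (p - n_1)/(p - n)$ independent of $\kappa$, and then the identity $1/A + 1/B = 2(1 - 1/\gamma)$, with $A := \tilde a_1\kappa + c$ and $B := \tilde a_1/\kappa + c$, combined with Vieta's formulas for $A + B$ and $AB$, reduces to the quadratic
\begin{equation*}
2(p - n)\tilde a_1^2 + (p - 2 n_1)(\kappa + 1/\kappa)\tilde a_1 - 2 c n_1 = 0,
\end{equation*}
whose positive root defines $\tilde a_1(\kappa)$. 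Turning to the remaining equations of \eqref{eqn:design_shift_b_eqns} and using $\lambda^{(2)} = 1$, the fourth equation forces $\tilde b_3 c = \tilde b_1(1 + \tilde b_4)$; substituting into the third and factoring out $1/A + 1/B \neq 0$ then gives $\tilde b_4 = c - 1$ and $\tilde b_3 = \tilde a_1$.

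Substituting back, I would show that (a) the variance collapses to $V = \sigma^2(\tilde a_1 + \tilde a_2)$ by direct comparison with the third equation of \eqref{eqn:design_shift_a_eqns}, so the only $\kappa$-dependence lies in $\tilde a_1(\kappa)$; and (b) the bias integrand simplifies via the cancellation $\tilde b_3 \lambda^{(1)} + (\tilde b_4 + 1) = \tilde a_1 \lambda^{(1)} + c$ to
\begin{equation*}
B = \|\bbeta^{(2)}\|_2^2 \int \frac{d\hat G_p(\lambda^{(1)}, 1)}{\tilde a_1 \lambda^{(1)} + c}.
\end{equation*}
Under the isotropic-signal structure implicit in the statement (the two eigenspaces of $\bM(\kappa)$ carry equal mass), $\hat G_p$ agrees with $\hat H_p$, and the bias telescopes to the $\kappa$-independent value $\|\bbeta^{(2)}\|_2^2 (p - n)/p$ via $1/A + 1/B = 2(1 - 1/\gamma)$.

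The monotonicity claims follow by implicitly differentiating the quadratic:
\begin{equation*}
\frac{d\tilde a_1}{d\kappa} = -\frac{(p - 2 n_1)(1 - \kappa^{-2})\tilde a_1}{4(p - n)\tilde a_1 + (p - 2 n_1)(\kappa + 1/\kappa)}.
\end{equation*}
Rewriting the quadratic itself as $\tilde a_1\bigl[2(p - n)\tilde a_1 + (p - 2 n_1)(\kappa + 1/\kappa)\bigr] = 2 c n_1 > 0$ shows the denominator is strictly positive, so for $\kappa > 1$ the sign of $d\tilde a_1 / d\kappa$---and hence of $d\hat R / d\kappa$---is opposite to the sign of $p - 2 n_1$. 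This yields part (i) when $n_1 < p/2$ and part (ii) when $n_1 > p/2$. Part (iii) is immediate: at $n_1 = p/2$ the middle coefficient vanishes and the positive root is $\tilde a_1 = \sqrt{c n_1/(p - n)} = p/(p - 2 n_2) = c$, independent of $\kappa$, making both $V$ and $B$ $\kappa$-independent. The hard part will be cleanly handling the bias when the signal is not equally balanced between the two eigenspaces of $\bM(\kappa)$: an additional $\kappa$-dependent correction appears in $B$, and one must either justify an isotropic-signal reduction (e.g.\ via high-probability concentration if $\bbeta^{(2)}$ has i.i.d.\ entries) or derive a uniform bound showing this correction cannot reverse the monotonicity direction inherited from $V$.
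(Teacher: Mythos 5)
Your argument matches the paper's proof route: both reduce the problem to the single quadratic $2(p-n)^2\tilde a_1^2 + (p-n)(p-2n_1)(\kappa+\kappa^{-1})\tilde a_1 - 2n_1(p-n_1) = 0$ for $\tilde a_1(\kappa)$, observe that the bias and $\tilde a_2=n_2/(p-n)$ are $\kappa$-free (so that $\hat R(\bM(\kappa))$ moves with $\tilde a_1$ alone), and then read off the monotonicity from the sign of the linear coefficient $p-2n_1$. Your implicit-differentiation formula and the positivity of the denominator (via rewriting the quadratic as $\tilde a_1\cdot[\,2(p-n)\tilde a_1 + (p-2n_1)(\kappa+\kappa^{-1})\,]=2cn_1>0$) are a slightly more explicit version of the paper's ``positive linear coefficient $\Rightarrow$ smaller positive root'' remark, but they give the same conclusion; your derivations of $\tilde b_4=\tilde b_2$ and $\tilde b_3=\tilde b_1$ from the third and fourth equations of \eqref{eqn:design_shift_b_eqns}, and of $V=\sigma^2(\tilde a_1+\tilde a_2)$ from the third equation of \eqref{eqn:design_shift_a_eqns}, are all correct.

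Your concluding caveat is well-taken and is in fact a gap \emph{in the paper's own argument}, not just yours. The bias reduces to $\|\bbeta^{(2)}\|_2^2\int (\tilde b_1\lambda^{(1)}+\tilde b_2+1)^{-1}\,d\hat G_p$, and the identity that makes this equal the $\kappa$-free constant $(p-n)/p$ is $\int(\tilde b_1\lambda^{(1)}+\tilde b_2+1)^{-1}\,d\hat H_p=(\gamma-1)/\gamma$, which is an integral against $\hat H_p$, not $\hat G_p$. If the signal places mass $g_1\neq 1/2$ on the $\kappa$-eigenspace, you get $B/\|\bbeta^{(2)}\|_2^2 = (1-2g_1)(\tilde a_1/\kappa + c)^{-1} + 2g_1(p-n)/p$, which does vary with $\kappa$ through $\tilde a_1(\kappa)$, and its sign of variation flips with the sign of $1-2g_1$. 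So as stated (and proved) Proposition~\ref{prop:cov_shift_kappa} tacitly assumes $\hat G_p\approx\hat H_p$ (e.g.\ a delocalized signal so that $g_1\to 1/2$), and the paper's invocation of ``by the proof of Proposition~\ref{prop:cov_shift_example}'' carries the same unstated hypothesis. One small precision: this balance condition is not ``implicit in the statement'' — $\hat H_p$ automatically weights both eigenspaces equally by dimension, but $\hat G_p$ need not; it is an additional assumption you would need to write down, exactly as you suggest at the end. (Also, the paper's value $\tilde a_2=n_2/(p-n_2)$ in that proof is a typo for $n_2/(p-n)$, which you correctly use.)
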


The key takeaways from Proposition \ref{prop:cov_shift_kappa} can be summarized as follows:
\begin{enumerate}
    \item[(i)] If the size of the source dataset is small, then the risk is a decreasing function of $\kappa$, i.e., more discrepancy between source and target dataset helps achieve lower risk for the pooled min-$\ell_2$-norm interpolator.
    \item[(ii)] When the size of the  source dataset is large, then larger heterogeneity hurts the pooled min-$\ell_2$-norm interpolator.
\end{enumerate}

Lastly, similar to Section \ref{subsec:isotropic_model_shift}, we compare here the pooled min-$\ell_2$-norm interpolator with the  target-based interpolator \eqref{eqn:interpolator_single}. Note that the risk of the latter under similar settings has been studied in Theorem 2 of \cite{hastie2022surprises} which we cite below:

\begin{prop}\label{prop:single_interpolator_risk_anisotropic}
Consider the estimator \eqref{eqn:interpolator_single}. Let $\hat H_p^{(2)}(\lambda^{(2)}),\hat G_p^{(2)}(\lambda^{(2)})$ be the marginal distribution of $\hat H_p(\lambda^{(1)},\lambda^{(2)}),\hat G_p(\lambda^{(1)},\lambda^{(2)})$ in \eqref{eqn:joint_ESD} on $\lambda^{(2)}$. Under the same setting as Theorem \ref{thm:design_shift}, and further assuming that entries of $\bZ^{(2)}$ have bounded $\phi$ moments for all $\phi\geq 2$. Then with high probability,
\begin{equation}\label{eqn:single_interpolator_risk_anisotropic}
\begin{aligned}
R(\hat\bbeta^{(2)};\bbeta^{(2)})=& \|\bbeta^{(2)}\|_2^2 \biggl(1 + \gamma_2 c_0 \frac{\int\frac{(\lambda^{(2)})^2}{(1+c_0\gamma_2 \lambda^{(2)})^2} d\hat H_p^{(2)}(\lambda^{(2)})}{\int\frac{\lambda^{(2)}}{(1+c_0\gamma_2 \lambda^{(2)})^2} d\hat H_p^{(2)}(\lambda^{(2)})} \biggr)  \int\frac{\lambda^{(2)}}{(1+c_0\gamma_2 \lambda^{(2)})^2} d\hat G_p^{(2)}(\lambda^{(2)})\\
&+\sigma^2\gamma_2 c_0\frac{\int\frac{(\lambda^{(2)})^2}{(1+c_0\gamma_2 \lambda^{(2)})^2} d\hat H_p^{(2)}(\lambda^{(2)})}{\int\frac{\lambda^{(2)}}{(1+c_0\gamma_2 \lambda^{(2)})^2} d\hat H_p^{(2)}(\lambda^{(2)})} + O(n^{-1/7}),
\end{aligned}
\end{equation}
where $c_0$ is the unique non-negative solution of
\begin{equation*}
1 - \frac{1}{\gamma} = \int\frac{\lambda^{(2)}}{1+c_0\gamma \lambda^{(2)}} d\hat H_p^{(2)}(\lambda^{(2)}).
\end{equation*}
\end{prop}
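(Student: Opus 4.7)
The estimator $\hat\bbeta^{(2)}$ in \eqref{eqn:interpolator_single} depends only on the target sample $(\bX^{(2)}, \by^{(2)})$; neither the source data nor the joint structure of $(\bSigma^{(1)}, \bSigma^{(2)})$ enters the risk. Consequently only the target-side spectral information survives, captured exactly by the marginals $\hat H_p^{(2)}$ and $\hat G_p^{(2)}$ of $\hat H_p$ and $\hat G_p$. The claim thus reduces to the classical single-dataset anisotropic min-$\ell_2$-norm result, namely Theorem 2 of \cite{hastie2022surprises}. The plan is to invoke that theorem after checking its hypotheses are met: Assumption \ref{assumption1} restricted to $k=2$ supplies the multiplicative model $\bX^{(2)}=\bZ^{(2)}(\bSigma^{(2)})^{1/2}$ with i.i.d. entries of $\bZ^{(2)}$ of zero mean and unit variance, the bounded spectrum of $\bSigma^{(2)}$, and the overparametrized ratio $\gamma_2 > 1+\tau$; the strengthened moment condition $\E|Z^{(2)}_{ij}|^\phi \le C_\phi$ for all $\phi\ge 2$ stated in the proposition then ensures the remaining analytic regularity that \cite{hastie2022surprises} require.

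For a self-contained derivation, I would start from the specialization of Lemma \ref{lemma:risk_analytical} to a single dataset,
\begin{equation*}
V(\hat\bbeta^{(2)};\bbeta^{(2)}) = \frac{\sigma^2}{n_2}\Tr\!\big(\hat\bSigma^{(2)\dagger}\bSigma^{(2)}\big),\qquad B(\hat\bbeta^{(2)};\bbeta^{(2)}) = \bbeta^{(2)\top}\bigl(\bI - \hat\bSigma^{(2)\dagger}\hat\bSigma^{(2)}\bigr)\bSigma^{(2)}\bigl(\bI - \hat\bSigma^{(2)\dagger}\hat\bSigma^{(2)}\bigr)\bbeta^{(2)},
\end{equation*}
with $\hat\bSigma^{(2)} = \bX^{(2)\top}\bX^{(2)}/n_2$. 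Writing the pseudoinverse and the projector $\bI - \hat\bSigma^{(2)\dagger}\hat\bSigma^{(2)}$ as ridgeless limits of the resolvent $(\hat\bSigma^{(2)}+\lambda\bI)^{-1}$, one feeds the two traces into the anisotropic Marchenko--Pastur / local law for sample covariance matrices (e.g. \cite{knowles2017anisotropic}). This produces deterministic equivalents that can be expressed through the companion Stieltjes transform $c(\lambda)$ and its ridgeless limit $c_0$; the identity $1 - 1/\gamma_2 = \int \lambda^{(2)}/(1+c_0\gamma\lambda^{(2)})\,d\hat H_p^{(2)}$ emerges as the $\lambda\to 0^+$ limit of the fixed-point equation that characterizes $c(\lambda)$. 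Combining these equivalents with the bias/variance expressions and simplifying yields exactly \eqref{eqn:single_interpolator_risk_anisotropic}.

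The main obstacle is not identifying the limiting formula but obtaining the quantitative remainder $O(n^{-1/7})$ uniformly as $\lambda \to 0^+$, i.e.\ controlling the resolvent near the hard edge of the Marchenko--Pastur spectrum where $\hat\bSigma^{(2)}$ has its smallest eigenvalue. This is precisely where the stronger moment hypothesis $\E|Z^{(2)}_{ij}|^\phi\le C_\phi$ for every $\phi\ge 2$ is needed: it drives a quantitative anisotropic local law with error bounds uniform on a spectral domain that reaches the hard edge, after which one commutes the $\lambda\to 0^+$ limit with the concentration estimates without blow-up. Apart from that hard-edge control, the remainder of the argument is routine resolvent calculus, and no new ideas beyond those already present in \cite{hastie2022surprises} are required.
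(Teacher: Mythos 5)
Your proposal is correct and takes the same approach as the paper: the paper's own treatment of this proposition is to observe that $\hat\bbeta^{(2)}$ is the single-dataset min-norm interpolator and to cite Theorem~2 of \cite{hastie2022surprises} directly, which is exactly what you do in your first paragraph. The self-contained sketch you add (bias--variance split as in Lemma \ref{lemma:risk_analytical} for one dataset, ridgeless limit of the resolvent, anisotropic local law, and the fixed-point equation for $c_0$) is a faithful description of how \cite{hastie2022surprises} prove their Theorem~2, and your checking of hypotheses against Assumption~\ref{assumption1} is sound. One small imprecision: in the overparametrized regime $\gamma_2 > 1$ the smallest eigenvalue of $\hat\bSigma^{(2)}$ is exactly zero with multiplicity $p-n_2$, and the nonzero eigenvalues are bounded away from zero by the Marchenko--Pastur edge; the quantitative control needed as $\lambda\to 0^+$ is therefore about the gap between the zero block and the left edge of the bulk, not about a classical ``hard edge'' that pinches at the origin. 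This does not affect the validity of your argument. The paper also remarks that the extra all-moments condition could be relaxed to \eqref{eqn:Z_moment_bounded} using the techniques of Theorem \ref{thm:design_shift}, but chooses to state it so as to invoke \cite{hastie2022surprises} verbatim, consistent with your reading.
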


We remark that using similar proof techniques as Theorem \ref{thm:design_shift}, the additional bounded moments condition in Proposition \ref{prop:single_interpolator_risk_anisotropic} can be relaxed to Assumption \ref{as:design_four_epsilon}. We omit the details. Regretfully, although the precise risks for both interpolators are available from Theorem \ref{thm:design_shift} and Proposition \ref{prop:single_interpolator_risk_anisotropic}, one cannot obtain a simple result even for $\bSigma^{(2)}=\bI$ due to the complexity of expressions. Thus we make the comparison via solving the expressions numerically in the next section.

\subsection{Numerical examples under covariate shift}
Similar to Section \ref{subsec:isotropic_model_shift}, we illustrate the applicability of our results in finite samples and compare the performances of the pooled min-$\ell_2$-norm interpolator and its target-based counterpart. We pick the special setting of Proposition \ref{prop:cov_shift_kappa} where the target distribution has covariance $\bI$, while the source distribution has a covariance matrix with two distinct eigenvalues  $\kappa$ and $1/\kappa$, each having multiplicity $p/2$. The choice of covariance matrices ensures that both have equal determinants. In Figure \ref{fig:design_shift}, we illustrate how the generalization error varies with $n_1,n_2, p$, the SNR, and the covariate shift magnitude $\kappa$. Again, we include the generalization error of the OLS estimator for the underparametrized regime $p<n$ for completeness.

The key insights from the plots are summarized as follows. Figure \ref{fig:design_shift}(a) illustrates the generalization error of the pooled min-$\ell_2$-norm interpolator, showcasing the double descent phenomena across varying $p,n_1,n_2$. The generalization error of the target-based interpolator is represented by dotted horizontal lines. We observe that in the overparametrized regime, if $n_1$ is small, the pooled estimator consistently outperforms the target-based estimator. 
Figure \ref{fig:design_shift}(b) presents our findings from Proposition \ref{prop:cov_shift_kappa}. For small values of $n_1$, the generalization error decreases with the degree of heterogeneity, i.e., the value of $\kappa$. However, for $n_1> \min \{p/2, p-n_2\}$, higher values of $\kappa$ result in higher generalization error. A notable scenario occurs at $n_1 = \min \{p/2, p-n_2\}$, where the generalization error becomes independent of $\kappa$, causing all curves to intersect at a single point.
Finally, Figure \ref{fig:design_shift}(c) demonstrates the dependence of the generalization error on the SNR. Higher SNR values lead to significantly higher error when $n_1$ is small, but the curves tend to converge at the interpolation threshold, i.e., when the total sample size matches the number of covariates. As in Figure \ref{fig:design_shift}(a), the null estimator $\hat \bbeta_{\text{null}} = 0$ can outperform both interpolators throughout the range of source sample sizes $n_1$ for sufficiently small SNR (e.g., $\textnormal{SNR} = 1$ in Figure \ref{fig:design_shift}(c)). In such SNR regimes, this comparison should not be interpreted as identifying a practically useful transfer procedure: see Appendix \appendixref{sec:null_comparison}{J.4} for more. 

\begin{figure}
    \centering
    \begin{subfigure}[b]{0.48\linewidth}
    \includegraphics[width=\linewidth]{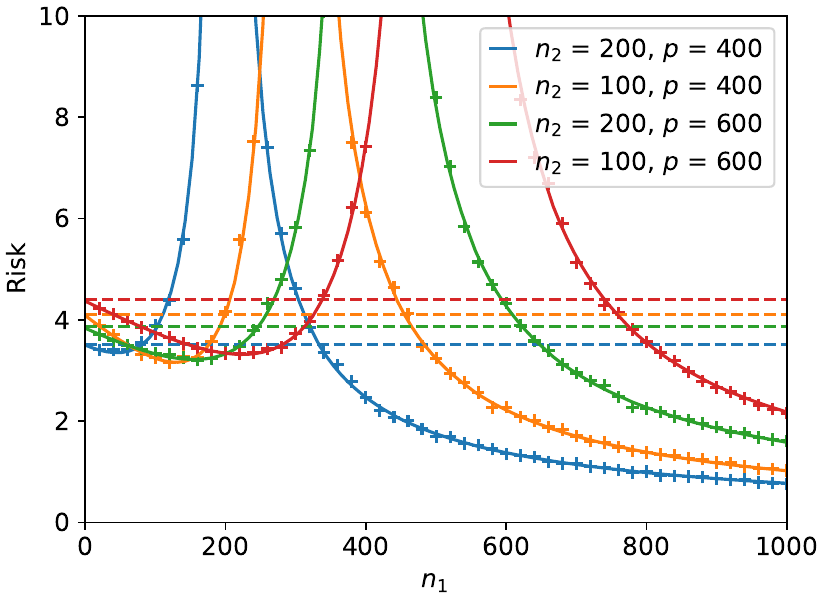}
    \caption{Fixing $\kappa=4,\textnormal{SNR}=5$}
    \end{subfigure}
    \begin{subfigure}[b]{0.48\linewidth}
    \includegraphics[width=\linewidth]{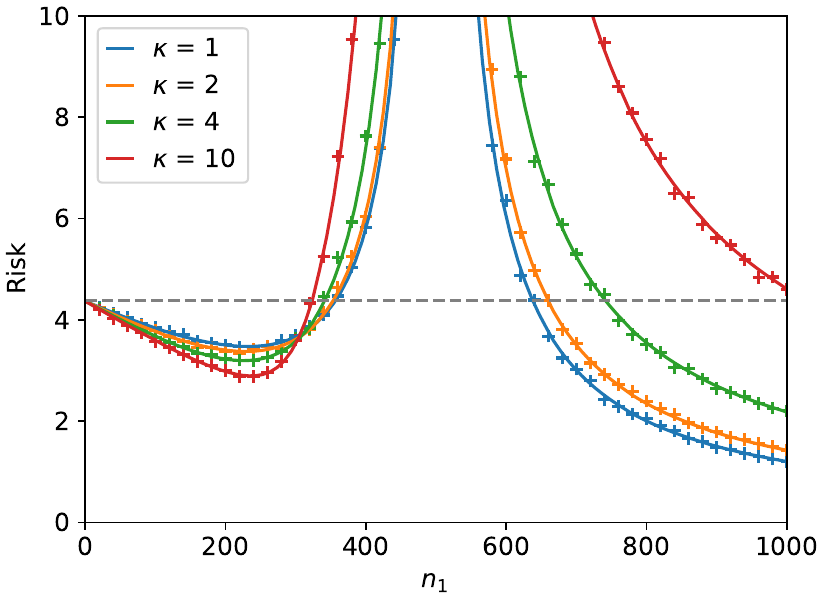}
    \caption{Fixing $n_2=100,p=600,\textnormal{SNR}=5$}
    \end{subfigure}
    \begin{subfigure}[b]{0.48\linewidth}
    \includegraphics[width=\linewidth]{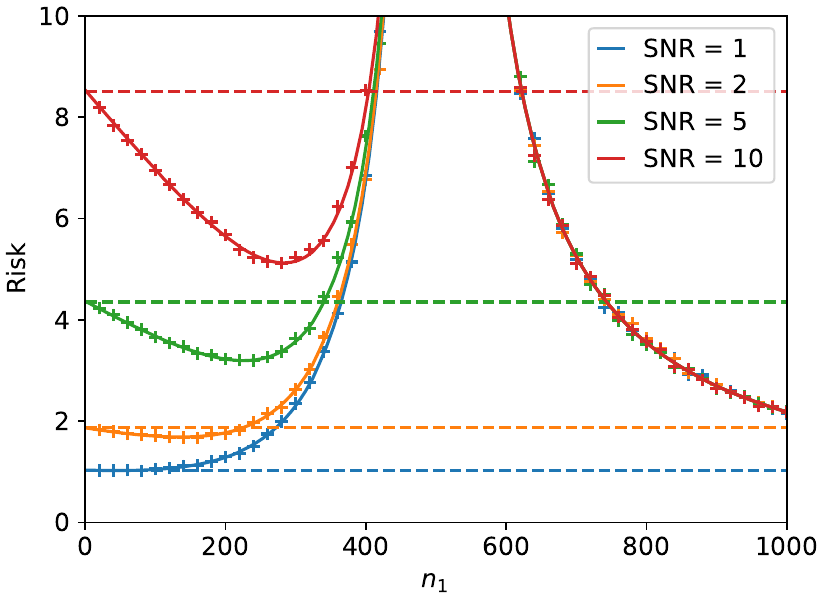}
    \caption{Fixing $n_2=100,p=600,\kappa=4$}
    \end{subfigure}
    \caption{Generalization error of the pooled min-$\ell_2$-norm interpolator under covariate shift. Solid lines: theoretically predicted values. $+$ marks: empirical values. Dotted horizontal line: Risk for min-$\ell_2$-norm interpolator using only the target data. Design choices: $\beta_i^{(1)}=\beta_i^{(2)} \sim \mcn(0,\sigma_\beta^2)$ where $\sigma_\beta^2=\textnormal{SNR} / p$, $\bSigma^{(2)} = \bI$ and $\bSigma^{(1)}$ has two distinct eigenvalues $\kappa$ and $1/\kappa$, $\bX^{(k)}$ has i.i.d. rows from $\mcn(0,\bSigma^{(k)})$. The signals and design matrices are then held fixed. We generate $50$ random $\epsilon_i \sim \mcn(0,1)$ and report the average empirical risks.}
    \label{fig:design_shift}
\end{figure}

\section{Beyond Gaussianity and Other Extensions}\label{sec:extensions}
In this section, we discuss several extensions to our above results.
\subsection{Universality of out-of-sample prediction risk} Our model shift result in
Theorem \ref{thm:model_shift} assumes that $(\bZ^{(1)}, \bZ^{(2)})$ consist of i.i.d. Gaussian entries (Assumption \ref{as:design_gaussian}). Here, we show that the same results hold when Assumption \ref{as:design_gaussian} is replaced by Assumption \ref{as:design_all_moments}.
Our universality result is the following.
\begin{thm}[Universality of the risk]\label{thm:universality}
    Suppose Assumptions \ref{as:shared}, \ref{as:design_all_moments}, and \ref{as:shared_covariance} hold. Let $(\tilde \bZ^{(1)}, \tilde \bZ^{(2)})$ consist of i.i.d. standard Gaussian entries, and let $\tilde \bX^{(k)} = \tilde \bZ^{(k)} (\bSigma^{(k)})^{1/2}$. For $j = 1,2,3$, let $\tilde B_j$ denote the respective bias term $B_j$ under $(\tilde \bX^{(1)}, \tilde \bX^{(2)})$, rather than $(\bX^{(1)}, \bX^{(2)})$. With high probability over $(\tilde \bZ^{(1)}, \tilde \bZ^{(2)}, \bZ^{(1)}, \bZ^{(2)})$, when $\|\bbeta^{(2)}\|_2 = O(1)$ and $\|\tilde \bbeta\|_2 = O(1)$, we have that
\begin{align}
|B_1(\hat \bbeta; \bbeta^{(2)}) - \tilde B_1(\hat \bbeta; \bbeta^{(2)})|
&= o(1) \quad &
|B_2(\hat \bbeta; \bbeta^{(2)}) - \tilde B_2(\hat \bbeta; \bbeta^{(2)})|
&= o(1) \\
|B_3(\hat \bbeta; \bbeta^{(2)}) - \tilde B_3(\hat \bbeta; \bbeta^{(2)})|
&= o(1) \quad &
|V(\hat \bbeta; \bbeta^{(2)}) - \tilde V(\hat \bbeta; \bbeta^{(2)})|
&= o(1).
\end{align}
\end{thm}
It immediately follows that the characterization of the out-of-sample prediction risk under model shift in Theorem \ref{thm:model_shift} also holds when Assumption \ref{as:design_gaussian} is replaced by Assumption \ref{as:design_all_moments}.

We complement Theorem \ref{thm:universality} with extensive finite-sample simulations in Appendix \appendixref{sec:university_sims}{J.1}. These simulations compare the theoretical risk predictions for the pooled min-$\ell_2$-norm interpolator against empirical risks under a variety of synthetic covariate distributions, as well as semi-synthetic covariates derived from real human genomics data. These simulations validate Theorem \ref{thm:universality} for several covariate distributions satisfying Assumptions \ref{as:shared} and \ref{as:design_all_moments}, and further suggest that the same risk predictions remain accurate for heavier-tailed covariates and real-world covariates that may not exactly satisfy these assumptions.

\subsection{Misspecified model}
Following \cite{hastie2022surprises}, we consider a misspecified model where the true model (for both the source and target data) is linear but we only observe a subset of the features. Formally, we assume that the data comes from linear models given by
\begin{equation}\label{eq:misspecified_model}
    \bm y^{(k)} = \bm X^{(k)} \bbeta^{(k)} + \bm W^{(k)} \btheta^{(k)} + \bep^{(k)}, k = 1,2
\end{equation}
where, once again, $k = 1$ denotes the source and $k = 2$ denotes the target. For $k = 1,2$, the matrix $\bm X^{(k)} \in \R^{n_k \times p}$ contains the observed covariates, while $\bm W^{(k)} \in \R^{n_k \times p_{m,k}}$ contains unobserved covariates. For each $k = 1,2$, we assume that $\bX^{(k)}$ and $\bW^{(k)}$ are independent, with covariance matrices $\bSigma^{(k)}$ and $\bSigma_m^{(k)}$, respectively.  
Then, for a test point $(\bm x_0, \bm w_0)$ drawn independently from the target distribution, we study the out-of-sample prediction risk for an estimator $\hat \bbeta$, defined in this case as follows: 
\begin{equation}
    R(\hat \bbeta; \bbeta^{(2)}, \btheta^{(2)}) = \E[(\bm x_0^\top \hat \bbeta - \bm x_0^\top \bbeta^{(2)} - \bm w_0^\top \btheta^{(2)})^2 \mid \bm X].
\end{equation}
This risk admits the following decomposition \cite[Lemma 2]{hastie2022surprises}:
\begin{align}
    R(\hat \bbeta; \bbeta^{(2)}, \btheta^{(2)}) &= \underbrace{\E[(\bm x_0^\top \hat \bbeta - \bm x_0^\top \bbeta^{(2)} - \E[\bm w_0 \mid \bm x_0]^\top \btheta^{(2)})^2 \mid \bm X]}_{R_1(\hat \bbeta; \bbeta^{(2)}, \btheta^{(2)})} \\
    &\qquad + \underbrace{\E[(\E[\bm w_0 \mid \bm x_0]^\top \btheta^{(2)} - \bm w_0^\top \btheta^{(2)})^2]}_{R_2(\hat \bbeta; \bbeta^{(2)}, \btheta^{(2)})}.
\end{align}
We generalize our results for this model as follows.
\begin{thm}[Risk under misspecification and model shift]\label{thm:misspecification}
    Assume the misspecified model \eqref{eq:misspecified_model}. Under Assumptions \ref{as:shared}, \ref{as:design_gaussian}, and \ref{as:shared_covariance} (with assumptions on $\bm W$ paralleling those of $\bm X$), 
    with high probability over the randomness of $(\bm Z^{(1)}, \bm Z^{(2)})$, we obtain that
    \begin{align}
        R_1(\hat \bbeta; \bbeta^{(2)}, \btheta^{(2)}) &=  \mathcal B_1(\hat H_n, \hat G_n^{\bbeta^{(2)}}, \gamma) + \mathcal B_2(\hat H_n, \hat G_n^{\tilde \bbeta}, \gamma) + \mathcal B_3(\hat H_n, \hat G_n^{(b)}, \gamma)  \\
        &\qquad + \biggl(1 + \frac{n_1}{n}\frac{\| \btheta^{(1)}\|_{\bSigma^{(1)}_m}^2}{\sigma^2} + \frac{n_2}{n} \frac{\|\btheta^{(2)}\|_{\bSigma^{(2)}_m}^2}{\sigma^2} \biggr) \mathcal V(\hat H_n, \gamma) \\
        &\qquad + O(p^{-1/12} (\|\bbeta^{(2)}\|_2 + \|\tilde \bbeta\|_2)^2) \\
        R_2(\hat \bbeta; \bbeta^{(2)}, \btheta^{(2)}) &= \|\btheta^{(2)}\|_{\bSigma^{(2)}_m}^2.
    \end{align}
\end{thm}
Thus, under this form of misspecification, the unobserved covariates influence the out-of-sample prediction risk through two additive components. The first component $R_1$ closely resembles the well-specified case, but includes an additional term reflecting the impact of misspecification through both the source and target datasets, thereby capturing errors arising from fitting the model on incomplete covariates.

The second component $R_2$ is new and depends solely on the unobserved covariates in the target distribution. It represents an irreducible error due to misspecification, capturing the discrepancy between the true response and its best approximation based solely on the observed covariates.

In Appendix \appendixref{sec:misspec_sim}{J.2}, we provide finite-sample simulations that display the risk of the pooled min-$\ell_2$-norm interpolator for different amounts of target and source misspecification. These simulations confirm that our theoretical predictions in Theorem \ref{thm:misspecification} closely match those seen in finite samples, thus again providing evidence for the  finite-sample validity of our predictions.

\subsection{Bias-corrected estimator}\label{sec:bias_corrected}
In the setting of model shift, we can also consider a modification of the min-norm interpolator \eqref{eqn:analytical_pinv} that corrects for the bias of  the combined min-norm interpolator with a holdout subset of the target dataset. That is, our \textit{bias-corrected} estimator---which we denote as $\hat \bbeta_{\text{BC}}$---is formed by a two-step procedure, parametrized by a regularization parameter $\lambda_\delta > 0$ and a holdout parameter $\kappa \in (0,1)$. We present this procedure in Algorithm \ref{alg:bias_corrected}.
\begin{algorithm}
    \caption{Bias-corrected estimator}
    \label{alg:bias_corrected}
    \KwIn{Observed data $\{(\bm y^{(k)}, \bm X^{(k)})\}_{k=1}^2$ and parameters $\lambda_\delta > 0$ and $\kappa \in (0,1)$.}
    Let $\mathcal{I} \subset [n_2]$ be  a random subset of size $\lfloor(1-\kappa) n_2\rfloor$\;
    Compute
    \begin{equation}
    \hat\bbeta_{\mathcal I} = \argmin \left \{ \|\bb\|_2 \ \ \text{s.t.}\ \by^{(1)} = \bX^{(1)}\bb\ \text{and}\ \by^{(2)}_{\mathcal I} = \bX^{(2)}_{\mathcal I} \bb\right \}
    \end{equation}
    as the first-stage min-norm interpolator\;
    Compute 
    \begin{equation*}
        \hat{\bdelta}_{\lambda_\delta} = \argmin_{\bdelta \in \R^p} \biggl\{ \|\bm y^{(2)}_{\mathcal{I}^c} - \bm X^{(2)}_{\mathcal{I}^c} (\hat \bbeta_{\mathcal I} + \bdelta)\|_2^2 + \lceil\kappa n_2\rceil\lambda_\delta \|\bdelta\|_2^2 \biggr\}
    \end{equation*}
    where $\lambda_\delta > 0$ is the regularizing parameter\;
    \KwOut{$\hat{\bbeta}_{\text{BC}} = \hat \bbeta_{\mathcal I} + \hat{\bdelta}_{\lambda_\delta}$}
\end{algorithm}
Note that the bias-corrected estimator $\hat \bbeta_{\text{BC}}$ is closely related to the estimator considered in \cite{dar2021common}, which regularizes the target estimator toward a source-only estimator. It is also conceptually related to the Transfer MNI estimator of \cite{kim2026transfer}, which first trains a source-only min-$\ell_2$-norm interpolator and then interpolates the target data while minimizing the $\ell_2$ distance to the source estimator. Our estimator and the estimators in \cite{dar2021common, kim2026transfer} all involve a two-step estimation procedure, where the second stage produces an estimator based on the first estimator. Our construction differs in that the reference estimator is itself obtained by pooling part of the target data with the source data.

We can then prove the following result for the out-of-sample prediction risk of the bias-corrected estimator:
\begin{thm}[Risk of the bias-corrected estimator]\label{thm:bias_corrected_risk}
    Fix $\kappa \in (0,1)$ and $\lambda_\delta > 0$. Let $\gamma_{\kappa} = p / \lceil \kappa n_2 \rceil$, and define $m_n(-\lambda_\delta)$ as the unique positive solution of
    \begin{equation}
        \gamma_\kappa \lambda_\delta m_n(-\lambda_\delta)^2 + (1-\gamma_\kappa + \lambda_\delta) m_n(-\lambda_\delta) - 1 = 0.
    \end{equation}
    Additionally, define $m_{n,1}(-\lambda_\delta)$ as
    \begin{equation*}
        m_{n,1}(-\lambda_\delta) = \frac{m_n(-\lambda_\delta) - \lambda_\delta m_n(-\lambda_\delta)^2}{1 + \gamma_\kappa \lambda_\delta m_n(-\lambda_\delta)^2}.
    \end{equation*}
    Suppose Assumptions \ref{as:shared} and \ref{as:design_gaussian} hold, and assume that $\bSigma^{(1)} = \bSigma^{(2)} = \bm I$ with  $\|\bbeta^{(2)}\|_2 = O(1)$ and $ \|\tilde \bbeta\|_2 = O(1)$. With high probability over the randomness of $(\bZ^{(1)}, \bZ^{(2)})$, we have
    \begin{align*}
        R(\hat \bbeta_{\text{BC}}; \bbeta^{(2)}) &= \frac{\lambda_\delta^2(1 + \gamma_\kappa m_{n,1}(-\lambda_\delta))}{(\lambda_\delta + 1 - \gamma_\kappa + \gamma_\kappa \lambda_\delta m_n(-\lambda_\delta))^2} \cdot R(\hat \bbeta_{\mathcal I}; \bbeta^{(2)}) \\
        &\qquad + \sigma^2 \gamma_\kappa \cdot \frac{\lambda_\delta + 1 - \gamma_\kappa + \gamma_\kappa \lambda_\delta m_n(-\lambda_\delta) - \lambda_\delta (1 + \gamma_\kappa m_{n,1}(-\lambda_\delta))}{(\lambda_\delta + 1 - \gamma_\kappa + \gamma_\kappa \lambda_\delta m_n(-\lambda_\delta))^2}  + o(1),
    \end{align*}
    where $R(\hat \bbeta_\mathcal I, \bbeta^{(2)})$ is the risk of the pooled interpolator with source sample size $n_1$ and target sample size $\lfloor (1-\kappa) n_2\rfloor$, as characterized in Theorem \ref{thm:model_shift}.
\end{thm}
Theorem \ref{thm:bias_corrected_risk} shows that, in the isotropic setting, the risk of the bias-corrected estimator is equal to the sum of two terms. The first term is the risk of the interpolator fitted with the source data and a split of the target data, multiplied by a shrinkage factor induced by the second-stage correction. The second term is the additional variance incurred by fitting the correction on the held-out target split. Therefore, the bias-corrected estimator improves over the interpolator in \eqref{eqn:interpolator} precisely when the reduction  in the first-stage risk outweighs the additional second-stage variance cost. 

In Appendix \appendixref{sec:bias_corected_sims}{J.3}, we provide finite-sample simulations that compare the bias-corrected estimator with the target-only and pooled min-$\ell_2$-norm interpolators. Across a range of SNR, SSR, sample-split parameter $\kappa$, and second-stage regularization parameter $\lambda_\delta$ values, the empirical risks closely track the theoretical values provided in Theorem \ref{thm:bias_corrected_risk}.

\section{Proof Outline}\label{sec:proof_outline}
In this section, we outline the key ideas behind the proofs of our main results in Sections \ref{sec:model_shift}, \ref{sec:covariate_shift}, and \ref{sec:extensions}, highlighting crucial auxiliary results along the way. All convergence statements will be presented in an approximate sense, with comprehensive proof details provided in the Appendix.

To circumvent the difficulty of dealing with the pseudo-inverse formula in \eqref{eqn:analytical_pinv}, we utilize the connection between the pooled min-$\ell_2$-norm interpolators and ridgeless least squares interpolation. This says that to study the interpolator, it suffices to study the following ridge estimator
\begin{equation}\label{eqn:ridge}
\hat\bbeta_\lambda = \left(\bX^{(1)\top}\bX^{(1)} + \bX^{(2)\top}\bX^{(2)}+n\lambda\bI\right)^{-1} \left(\bX^{(1)\top}\by^{(1)} + \bX^{(2)\top}\by^{(2)}\right),
\end{equation} 
and then study its $\lambda \rightarrow 0$ limit. 

In the remainder of this section, we focus on the proofs of Theorems \ref{thm:model_shift}, \ref{thm:design_shift}, and \ref{thm:universality}, as these results are the most technically challenging and intricate. We note that, while the proofs of Theorems \ref{thm:misspecification} and \ref{thm:bias_corrected_risk} require careful analysis, they reuse the same machinery rather than introducing new random matrix inputs. Therefore, we omit a discussion of these proofs here and defer them to Appendix \appendixref{sec:other_extension_proofs}{H}.

\subsection{Model shift and covariate shift}
Owing to the decomposition \eqref{eq:risk_analytical}, we analyze the bias and the variance terms separately. With this in mind, our proofs of Theorems \ref{thm:model_shift} and \ref{thm:design_shift} consist of the following four steps: 
\begin{enumerate}
    \item[(i)] For a fixed value of $\lambda>0$, we analyze the risk of the pooled ridge estimator defined by \eqref{eqn:ridge}, i.e., we show there exists $\mathcal{R}$ such that 
     $R(\hat\bbeta_{\lambda};\bbeta^{(2)}) \overset{p \rightarrow \infty}{\longrightarrow} \mathcal{R}(\hat\bbeta_{\lambda};\bbeta^{(2)}),$ where $R$ is defined as in \eqref{eq:define_risk}. [see Theorem \appendixref{thm:model_shift_ridge}{C.2} and Theorem \appendixref{thm:design_shift_ridge}{C.3} for the proof under model shift and covariate shift respectively].
    \item[(ii)] Show that with high probability, the difference between the (random) risks of the min-$\ell_2$-norm interpolator and the ridge estimator vanishes as $\lambda \rightarrow 0^+$, i.e., $R(\hat\bbeta_{\lambda};\bbeta^{(2)})  -R(\hat\bbeta;\bbeta^{(2)})\overset{\lambda \rightarrow 0^+}{\longrightarrow} 0$.
    \item[(iii)] Analyze the limit of the (deterministic) risk of $\hat \bbeta_\lambda$, i.e., $\mathcal{R}(\hat\bbeta_{\lambda};\bbeta^{(2)}) \overset{\lambda \rightarrow 0^+}{\longrightarrow} \mathcal{R}(\hat\bbeta;\bbeta^{(2)})$.
    \item[(iv)] Find a suitable relation between $\lambda$ and $p$ such that the aforementioned three convergence relationships are simultaneously satisfied.
\end{enumerate}

The most nontrivial parts of these results are to prove the convergence (i) and obtain the precise expression for $\mathcal{R}(\hat\bbeta_{\lambda};\bbeta^{(2)})$. For (ii), one can rewrite the difference $R(\hat\bbeta_{\lambda};\bbeta^{(2)})  -R(\hat\bbeta;\bbeta^{(2)})$ in terms of the eigenvalues of some symmetric random matrices such as $\bX^{(1)\top}\bX^{(1)} + \bX^{(2)\top}\bX^{(2)}$. Next, using high probability bounds of the largest and smallest eigenvalues (see Lemma \appendixref{lemma:ESD_bounded}{D.3} and Corollary \appendixref{cor:ESD_bounded_weaker}{D.4}), we are able to establish high probability convergence given by (ii) above. Part (iii) only involves deterministic quantities and therefore classical analysis techniques. Finally, Step (iv) reduces to  optimization problems over $\lambda$.

The rest of this section is devoted to outlining the proof for part (i). These steps for model shift and covariate shift are detailed in Appendix \appendixref{sec:proof:model_shift}{E} and Appendix \appendixref{sec:proof:design_shift}{F}, respectively.

\subsubsection{Model shift} 
After decomposing $R(\hat \bbeta_\lambda; \bbeta^{(2)})$ into the variance term and the three bias components, the terms requiring novel analyses are $B_2(\hat \bbeta_\lambda; \bbeta^{(2)})$ and $B_3(\hat \bbeta_\lambda; \bbeta^{(2)})$. These involve products of the source-only and pooled covariance matrices
\begin{equation*}
\frac{1}{n} \bm Z^{(1)\top}\bm Z^{(1)} \quad \text{and} \quad \frac{1}{n} \bm Z^{(1)\top}\bm Z^{(1)} + \frac{1}{n} \bm Z^{(2)\top}\bm Z^{(2)},
\end{equation*}
respectively. The key difficulty arises from this asymmetry: some terms depend only on the source design, while others involve both source and target designs. As a result, standard random matrix results---typically formulated for functions of a single sample covariance matrix---cannot be applied directly. This limitation also appears in related work: e.g., \cite{yang2020analysis} notes that this technical hurdle prevents their OLS analysis for the underparametrized model shift setting from generalizing beyond isotropic designs. Similarly, the analysis of a different transfer estimator in  \cite{dar2022double} relies heavily on properties of Wishart matrices and thus fails to extend to non-Gaussian covariates.

Our proof resolves this asymmetry by exploiting the exchangeability of the rows of the whitened design matrices $\bm Z^{(1)}$ and $\bm Z^{(2)}$. At the level of expectations, source-only insertions can be converted into weighted combinations of pooled-resolvent expressions. This reduction to a single covariance matrix is delicate because it must be performed inside anisotropic quadratic forms. Therefore, we carefully apply Gaussian concentration tools (e.g., Poincar\'e-type inequalities and the Efron-Stein inequality) to upgrade our expectation-level reductions to high-probability statements. Finally, we are able to apply anisotropic local laws and related resolvent techniques (e.g., \cite{knowles2017anisotropic, hastie2022surprises}) to analyze the reduced matrix expressions.

\subsubsection{Covariate shift}
For covariate shift, $B_2(\hat\bbeta_{\lambda},\bbeta^{(2)})=B_3(\hat \bbeta_\lambda, \bbeta^{(2)}) = 0$ since $\tilde\bbeta=0$, eliminating the need for dealing with quadratic terms with free addition. One can therefore relax the Gaussianity assumption on $(\bZ^{(1)},\bZ^{(2)})$ to the one in Theorem \ref{thm:design_shift}. However, new challenges arise with anisotropic covariances $(\bSigma^{(1)},\bSigma^{(2)})$. For conciseness, we illustrate the representative variance term, which is formulated as (see (F.2) 
in Section \appendixref{sec:proof:design_shift}{F}) 
\begin{equation}\label{eqn:outline_V}
\begin{aligned}
V(\hat\bbeta_\lambda;\bbeta^{(2)})=& \frac{\partial}{\partial\lambda} \left\{\frac{\sigma^2\lambda}{n}\Tr\left(\bSigma^{(2)}(\hat\bSigma+\lambda\bI)^{-1}\right)\right\}\\
=&\frac{\partial}{\partial\lambda} \left\{\frac{\sigma^2\lambda}{n}\Tr\left((\bLambda^{(2)})^{1/2}\left(\bW +\lambda\bI\right)^{-1}(\bLambda^{(2)})^{1/2}\right)\right\},
\end{aligned}
\end{equation}
where
$$\bW := (\bLambda^{(1)})^{1/2}\bV^\top\hat\bSigma_{\bZ}^{(1)}\bV(\bLambda^{(1)})^{1/2} +(\bLambda^{(2)})^{1/2}\bV^\top\hat\bSigma_{\bZ}^{(2)}\bV(\bLambda^{(2)})^{1/2}.$$

This simplification highlights the necessity of the jointly diagonalizable assumption for $(\bSigma^{(1)},\bSigma^{(2)})$. The core challenge, therefore, reduces to characterizing the behavior of $(\bW+\lambda\bI)^{-1}$ under the setting of Theorem \ref{thm:design_shift}. In other words, we aim to characterize the resolvent of $\bW$, a sum of two sample-covariance-like matrices with anisotropic covariances. To this end, we present Theorem \appendixref{thm:anisotropic_law}{F.1} (omitted here for the sake of space), a novel type of anisotropic local law \cite{knowles2017anisotropic}, which might be of independent interest. Convergence of the trace term in \eqref{eqn:outline_V} follows as a corollary of our anisotropic local law, and further analysis of the derivative yields the value of $V(\hat\bbeta_\lambda;\bbeta^{(2)})$.

\subsection{Universality}
Our proof of Theorem \ref{thm:universality} (detailed in Appendix \appendixref{sec:proof:universality}{G}) uses a Lindeberg replacement argument at the level of the full risk decomposition. We replace entries of the whitened design matrices $\bZ^{(k)}$ by their Gaussian counterparts and show that the distribution of each risk component is asymptotically unchanged, as $n,p \to \infty$. The nontrivial point is that the risk components are not smooth bounded functions of the data uniformly in the ridgeless regime, as many applications of Lindeberg replacement arguments often assume. Rather, they contain resolvents whose operator norms grow as $\lambda \downarrow 0$; additionally, as with the proofs of Theorems \ref{thm:model_shift} and \ref{thm:design_shift}, the analysis is complicated by the interactions between source-only, target-only, and pooled covariance terms.

Therefore, we first demonstrate universality of the risk of the ridge estimator $\hat \bbeta_\lambda$ for a fixed $\lambda > 0$. Using both the Sherman-Morrison formula and Taylor expansions, we reduce the replacement error to bounds on the derivatives of resolvent-based functionals. These bounds must be sharp enough to be summed over all replaced elements and must retain the explicit dependence on $\lambda$. Therefore, both our Taylor expansions and the subsequent derivative bounds must be carefully executed to maintain sufficient control over the error. Once we have controlled the replacement error for a given $\lambda > 0$, we take the limit $\lambda \downarrow 0$ to establish universality of the risk of the interpolator $\hat \bbeta$.

\section{Discussion and Future Directions}\label{sec:future}
In this paper, we characterize the generalization error of the pooled min-$\ell_2$-norm interpolator under overparametrization and distribution shift. 
Our study underscores how the degree of model shift or covariate shift influences the risk associated with the interpolator. Mathematically, we derive a novel anisotropic local law, 
which may be of independent interest in random matrix theory.
We conclude by identifying several avenues for future research and discussing some limitations of our current findings:
\begin{enumerate}
    \item[(i)]\textbf{Expanding to Multiple Datasets:} In Appendix \appendixref{sec:multiple}{B}, we establish the risk of the pooled min-$\ell_2$-norm interpolator with multiple source datasets under model shift. Extending our results under covariate shift to the case of multiple source datasets is equally important. We anticipate that this would be feasible by  extending our approach to incorporate more general anisotropic local laws. However, we defer this study to future work in the interest of space. 

\item[(ii)] \textbf{Combined Model and Covariate Shift:} 
In Sections \ref{sec:model_shift} and \ref{sec:covariate_shift}, we separately analyze model shift and covariate shift. A natural next step is to characterize the risk of the min-$\ell_2$-norm interpolator when both shifts occur simultaneously. Such settings are common in real-world transfer learning settings and have been partially explored in \cite{yang2020analysis} for the underparametrized regime.
Extending our precise risk characterizations to such settings would require significant new ideas for handling mixed random matrix terms involving multiple covariance structures and signal-shift directions.

\item[(iii)] \textbf{Comparison with Other Estimators:} Our pooled min-$\ell_2$-norm interpolator covers early and intermediate min-$\ell_2$-norm interpolation under a single umbrella thanks to the alternate representation \eqref{eqn:intermediate_fusion}. Additionally, our bias-corrected estimator $\hat \bbeta_{\text{BC}}$ in Section \ref{sec:bias_corrected} captures a non-interpolating intermediate-fusion estimator. However, a more complete picture of interpolation under transfer learning and overparametrization necessitates comprehensive comparisons with the following different types of estimators: 1) Late-fused min-$\ell_2$-norm interpolators, which combine min-$\ell_2$-norm interpolators that are individually obtained from different datasets; 2) Other types of interpolators including min-$\ell_1$-norm ones \cite{liang2022precise}; 3) Other estimators in overparametrized regimes including the penalized ones, beyond the ridge estimators covered in Appendix \appendixref{sec:ridge}{C}.

\item[(iv)] \textbf{Extension to Nonlinear Models:} Recent literature has explored the connection between linear models and more
complex models such as neural networks \cite{allen2019convergence,du2018gradient,jacot2018neural,mei2019mean}. To be precise, for i.i.d. data $(y_i, z_i)$, $i \le n$, $y_i \in \mathbb R$, $z_i \in \mathbb R^d$. Consider learning a neural network with
weights $\btheta \in \mathbb R^p$, $f(.; \btheta): \mathbb R^d \mapsto \mathbb R$, $z \mapsto f(z, \btheta)$, where the form of $f$ need not be related to the distribution of $(y_i, z_i)$. In some settings, the number of parameters $p$ is so large that training effectively changes $\btheta$ only by a
small amount with respect to a random initialization $\btheta_0 \in \mathbb R^p$. Then, given $\btheta_0$ such that $f(z, \btheta_0) \approx 0$, one obtains that the neural network can be approximated by $z \mapsto \nabla_{\btheta} f(z, \btheta_0)^\top \beta$, where we have reparametrized $\btheta=\btheta_0+ \bbeta$. Such models, despite being nonlinear in $z_i$s, are linear in $\bbeta$. Moving beyond high-dimensional linear regression, one might ask about the statistical consequences of such linearization in the context of transfer learning. We leave such directions for future research.
\item[(v)] \textbf{Relaxing Simultaneous Diagonalizability:} Our covariate shift result (Theorem \ref{thm:design_shift}) assumes that the source and target covariance matrices are simultaneously diagonalizable. A natural next step is to remove this assumption and allow non-commuting source-target covariance structures, which would allow us to extend our theory to covariate shift settings where eigenspaces change, in addition to eigenvalues. In Appendix \appendixref{sec:sketch_nonsimul}{I},  we sketch a potential route that involves iteratively applying anisotropic local laws, thus allowing us to isolate individual components. However, the required analysis  is substantially more delicate; we therefore defer a complete treatment to future work.
\end{enumerate}

\section*{Acknowledgements}
P.S. would like to acknowledge partial funding from NSF DMS-$21134$, NSF CAREER Award $2440824$ and the Office of
Naval Research Award N00014-26-1-2144.

\bibliography{main.bib}
\newpage
\appendix

\title{Supplement to ``Generalization error of min-norm interpolators in transfer learning''}

\section{A Data-Driven Choice of Interpolator}\label{sec:choice_of_interpolator}

Our findings in  Section \ref{subsec:isotropic_model_shift} highlight that, to achieve the least possible risk for pooled min-$\ell_2$-norm interpolator, 
one should neither always pool both datasets nor always use all samples in either datasets. In fact, if SNR and SSR were \textit{known}, it would be possible to identify subsets of source and target datasets, having sample size $\tilde n_1, \tilde n_2$ respectively, such that the risk of the pooled min-$\ell_2$-norm interpolator based on the total data of size $\tilde n_1 +\tilde n_2$ is minimized. 
To this end, the purpose of this section is two-fold: first to provide an estimation strategy for SNR and SSR, and second to invoke the estimator and provide a guideline for choosing appropriate sample size. 

For SNR and SSR 
estimation, we employ a simplified version of the recently introduced HEDE method \cite{song2024unpublished}. Let $\hat \bbeta_d^{(k)}$ denote the following debiased Lasso estimator \cite{javanmard2014hypothesis, bellec2019biasing}:
$$\hat{\bbeta}_d^{(k)} = \hat{\bbeta}_{\rm{L}}^{(k)} + \frac{\bX^{(k)\top} (\by^{(k)} - \bX^{(k)} \hat{\bbeta}_{\rm{L}}^{(k)})}{n_k - \|\hat{\bbeta}_{\rm{L}}^{(k)}\|_0}$$
that is based on the Lasso estimator for some fixed $\lambda_{\rm{L}}$:
$$\hat{\bbeta}_{\rm{L}}^{(k)} := \argmin_{\bb} \frac{1}{2n} \|\by^{(k)} - \bX^{(k)} \bb\|_2^2 + \frac{\lambda_{\textrm{L}}}{\sqrt{n_k}} \|\bb\|_1,$$
and let
$$\hat\tau_k^2 =\frac{ \|\by^{(k)} - \bX^{(k)} \hat{\bbeta}_{\textrm{L}}^{(k)}\|_2^2}{(n_k - \|\hat{\bbeta}_{\textrm{L}}^{(k)}\|_0)^2}$$
be the corresponding estimated variance for $k=1,2$. Then by Theorem 4.1 and Proposition 4.2 in \cite{song2024unpublished}, we know
\begin{align*}
    &\|\hat \bbeta_d^{(k)}\|_2^2 - p \hat \tau^2_k \rightarrow \|\bbeta^{(k)}\|_2^2,\ \ k=1,2, \\
    &\widehat{\Var}(\by^{(k)}) -( \|\hat \bbeta_d^{(k)}\|_2^2 - p \hat \tau^2_k) \rightarrow \sigma^2,\ \ k=1,2, \\
    &\|\hat \bbeta^{(1)}_d- \hat \bbeta^{(2)}_d\|_2^2 -p (\hat \tau^2_1+ \hat \tau^2_2) \rightarrow  \|\bbeta^{(1)}-\bbeta^{(2)}\|_2^2.
\end{align*}
where the last line invokes independence between the datasets.
Hence, we obtain the following consistent estimators for SNR and SSR:
\begin{align}\label{eq:snr_hat}
\widehat{\text{SNR}}:= \frac{\|\hat \bbeta_d^{(2)}\|_2^2 - p \hat \tau^2_2}{\widehat{\Var}(\by^{(2)}) -( \|\hat \bbeta_d^{(2)}\|_2^2 - p \hat \tau^2_2)} \rightarrow & \rm{SNR}, \nonumber\\
\widehat{\text{SSR}}:=\frac{\|\hat \bbeta^{(1)}_d- \hat \bbeta^{(2)}_d\|_2^2 -p (\hat \tau^2_1+ \hat \tau^2_2)}{\|\hat \bbeta_d^{(2)}\|_2^2 - p \hat \tau^2_2} \rightarrow & \rm{SSR}.
\end{align}

Given the estimators of SNR and SSR, denoted by $\widehat{\text{SNR}}$ and $\widehat{\text{SSR}}$ respectively, an important question arises: what is the optimal size of the target dataset for which the pooled min-$\ell_2$-norm interpolator achieves least mean-squared prediction error $R(\hat{\bbeta}; \bbeta^{(2)})$? This is answered by the following proposition:

\begin{cor}\label{prop:n2size}
Assume the conditions of Theorem \ref{thm:model_shift} holds. Denote by $\hat \bbeta_{t}$ the pooled min-$\ell_2$-norm interpolator based on $n_1$ samples from the source and $n_2$ samples from the target  where $n_2/p=t$. Then we have
\begin{equation}\label{eq:n2_size}
\argmin_{t: t>0, n_1+n_2 <p} R(\hat\bbeta_{t};\bbeta^{(2)}) = \left(p-n_1 - \sqrt{\frac{p^2}{\rm{SNR}}+ n_1 (p-n_1)\rm{SSR}}\right)_{+}, \quad n_2/p=t,
\end{equation}
    where $x_{+} =x$ if $x>0$ and $=0$ otherwise.
    \end{cor}
\begin{proof}
    Note that, by Theorem \ref{thm:model_shift}, we have $$R(\hat\bbeta;\bbeta^{(2)})= \sigma^2 \frac{n}{p-n} +\frac{p-n}{p}\|\bbeta^{(2)}\|_2^2+ \frac{n_1(p-n_1)}{p(p-n)}\|\tilde\bbeta\|_2^2 +o(1).$$ 
Plugging in $n_2=pt$ and differentiating yields the finishes the proof.
\end{proof}
    
This proposition validates our intuition for the problem: If SNR is low or SSR is large, large amount of target data is not necessary for achieving good generalization. In fact, given the estimators $\widehat{\rm{SNR}}$, $\widehat{\rm{SSR}}$, a data-dependent estimated optimal target data size is given by 
\begin{equation*}
    \hat n_{2,\text{opt}}=\left(p-n_1 - \sqrt{\frac{p^2}{\widehat{\rm{SNR}}}+ n_1(p-n_1) \widehat{\rm{SSR}}}\right)_{+}
\end{equation*}
Simulations in \cite{song2024unpublished} suggest that both $\widehat{\text{SNR}}$ and $\widehat{\text{SSR}}$ exhibit favorable finite-sample performance, which suggests favorable finite-sample performance of our data-dependent estimated optimal target data size $\hat n_{2, \text{opt}}$. We defer further analysis of the finite-sample properties of $\hat n_{2, \text{opt}}$ to future work.

\section{Extension to Multiple Source Datasets}\label{sec:multiple}
Our results for model shift, i.e., Theorem \ref{thm:model_shift}, can be extended to the general framework of multiple sources and one target datasets. Denote the target as $T$ and $K$ sources as $1,...,K$, and further denote the signal shift vectors as $\tilde\bbeta^{(k)} := \bbeta^{(k)} - \bbeta^{(T)}$. Suppose we are still in the overparametrized regime $p> n:=n_1+...+n_K+n_T$, and consider the pooled min-$\ell_2$-norm interpolator
\begin{equation}\label{eqn:multiple_joint_interpolator}
\hat\bbeta = \argmin \left\{\|\bb\|_2 \ \ \text{s.t.}\ \by^{(T)} = \bX^{(T)}\bb,\ \by^{(k)} = \bX^{(k)}\bb,\ k=1,...,K.\right\}
\end{equation}
The following assumption combines and generalizes Assumptions \ref{as:shared}, \ref{as:design_gaussian}, and \ref{as:shared_covariance} to this setting.
\begin{assumption}[Regularity conditions for multiple sources]\label{as:shared_multiple}
The following conditions all hold:
\begin{enumerate}
    \item[(a)] The random objects $\bZ^{(1)}$, $\bZ^{(2)}$, \dots, $\bZ^{(K)}$, $\bZ^{(T)}$, $\bep^{(1)}$, $\bep^{(2)}$, \dots, $\bep^{(K)}$, and $\bep^{(T)}$ are mutually independent.
        \item[(b)] The source and target covariance matrices $\bSigma^{(1)}$, \dots, $\bSigma^{(K)}$, and $\bSigma^{(T)}$ are identical. That is,
        \begin{equation*}
            \bSigma := \bSigma^{(1)} = \cdots = \bSigma^{(K)} = \bSigma^{(T)}.
        \end{equation*}
        \item[(c)] There exists a fixed constant $\tau \in (0,1)$ such that the eigenvalues $\lambda_1 \ge \cdots \ge \lambda_p$ of $\bSigma$ satisfy
        \begin{equation}
        \tau \leq \lambda_p \leq ... \leq \lambda_1 \leq \tau^{-1}.
        \end{equation}
        \item[(d)] For $i \in \{1,2, \dots, K\} \cup\{T\}$, the entries of $\bep^{(i)}$ are independent, have mean zero, and have variance $\sigma^2 < \infty$.
        \item[(e)] For $i \in \{1,2, \dots, K\} \cup\{T\}$, the entries of $\bZ^{(i)}$ are independent standard Gaussian random variables with mean zero and unit variance.
        \item[(f)] For $i \in \{1,2, \dots, K\} \cup\{T\}$, let $\gamma_i := p/n_i$, and let $\gamma := p / n$. The aspect ratios $\gamma_ := p/n_1$, $\gamma_2 := p/n_2$, and $\gamma := p/n$ satisfy
        \begin{equation}
        1 + \tau \leq \gamma_1,\gamma_2,\dots, \gamma_K, \gamma_T, \gamma \leq  \tau^{-1}.
        \end{equation}
    \end{enumerate}
\end{assumption}
Under these assumptions, the out-of-sample prediction risk of $\hat \bbeta$ on a new observation from the target distribution can similar be decomposed as
\begin{equation*}
    R(\hat \bbeta; \bbeta^{(T)}) = \underbrace{\|\E[\hat \bbeta \mid \bm X] - \bbeta^{(T)}\|_{\bSigma}^2}_{B(\hat \bbeta; \bbeta^{(T)})} + \underbrace{\Tr[\Cov(\hat \bbeta \mid X) \bSigma]}_{V(\hat \bbeta; \bbeta^{(T)})}.
\end{equation*}
Note that the pooled min-$\ell_2$-norm interpolator in \eqref{eqn:multiple_joint_interpolator} now takes the form
\begin{equation*}
    \hat \bbeta = \biggl(\bm X^{(T)\top} \bm X^{(T)} + \sum_{k=1}^K \bm X^{(k)\top} \bm X^{(k)} \biggr)^\dagger \biggl(\bm X^{(T)\top} \bm y^{(T)} + \sum_{k=1}^K \bm X^{(k)\top} \bm X^{(k)}\biggr).
\end{equation*}
and thus, we can further decompose
\begin{align*}
    V(\hat \bbeta; \bbeta^{(T)}) &= \frac{\sigma^2}{n} \Tr(\hat \bSigma^\dagger \bSigma), \\
    B(\hat \bbeta; \bbeta^{(T)}) &=  \underbrace{\bbeta^{(T)\top} (\hat \bSigma^\dagger \hat \bSigma - \bm I) \bSigma (\hat \bSigma^\dagger \hat \bSigma - \bm I) \bbeta^{(T)}}_{B_1(\hat \bbeta; \bbeta^{(T)})} \\
    &\qquad + \underbrace{\sum_{k=1}^{K} \tilde \bbeta^{(k)\top} \biggl(\frac{\bm X^{(k)\top} \bm X^{(k)}}{n}\biggr) \hat \bSigma^\dagger \bSigma \hat \bSigma^\dagger \biggl(\frac{\bm X^{(k)\top} \bm X^{(k)}}{n}\biggr)\tilde \bbeta^{(k)}}_{B_2(\hat \bbeta; \bbeta^{(T)})} \\
    &\qquad + \underbrace{\sum_{k=1}^{K} 2 \bbeta^{(T)\top} (\hat \bSigma^\dagger \hat \bSigma - \bm I) \bSigma \hat \bSigma^\dagger \biggl(\frac{\bm X^{(k)\top} \bm X^{(k)}}{n}\biggr) \tilde \bbeta^{(k)}}_{B_3(\hat \bbeta; \bbeta^{(T)})} \\
    &\qquad + \underbrace{\sum_{\substack{1 \leq k, \ell \leq K,\\ k \neq\ell}} \tilde \bbeta^{(k)\top} \biggl(\frac{\bm X^{(k)\top} \bm X^{(k)}}{n}\biggr) \hat \bSigma^\dagger \bSigma \hat \bSigma^\dagger \biggl(\frac{\bm X^{(\ell)\top} \bm X^{(\ell)}}{n}\biggr) \tilde \bbeta^{(\ell)}}_{B_4(\hat \bbeta; \bbeta^{(T)})}
\end{align*}
where $\hat \bSigma  = \bm X^\top \bm X / n $ is again the uncentered sample covariance matrix obtained by appending all source and target samples together and where $\tilde \bbeta^{(k)} = \bbeta^{(k)} - \bbeta^{(T)}$ for $k = 1, \dots, K$.

Like Theorem \ref{thm:model_shift}, the asymptotic risk of the pooled min-$\ell_2$-norm interpolator is characterized with the following signed measures that capture the geometry between $\bSigma$, $\bbeta^{(T)}$, and $\tilde \bbeta^{(k)}$ (for $k = 1,\dots, K$):
\begin{gather*}
    \hat G_n^{\bbeta^{(T)}}(s) = \frac{1}{\|\bbeta^{(T)}\|_2^2} \sum_{i=1}^p \langle \bbeta^{(T)}, \bm v_i\rangle^2 \bm 1_{\{s \geq s_i\}}, \quad \hat G_n^{\tilde \bbeta^{(k)}}(s) = \frac{1}{\|\tilde \bbeta^{(k)}\|_2^2} \sum_{i=1}^p \langle \tilde \bbeta^{(k)}, \bm v_i\rangle^2 \bm 1_{\{s \geq s_i\}} \\
    \hat G_n^{(b,k)}(s) = \frac{1}{\|\bbeta^{(T)}\|_2 \|\tilde \bbeta^{(k)}\|_2}\sum_{i=1}^p \langle \bbeta^{(T)}, \bm v_i\rangle  \langle \tilde \bbeta^{(k)}, \bm v_i\rangle  \bm 1_{\{s \geq s_i\}},  \\ 
    \hat G_n^{(k,\ell)}(s) =\frac{1}{\|\bbeta^{(k)}\|_2 \|\tilde \bbeta^{(\ell)}\|_2}\sum_{i=1}^p \langle \bbeta^{(k)}, \bm v_i\rangle  \langle \tilde \bbeta^{(\ell)}, \bm v_i\rangle  \bm 1_{\{s \geq s_i\}} \\
    \hat H_n(s) = \frac{1}{p} \sum_{i=1}^p \bm 1_{\{s \geq s_i\}}\\
\end{gather*}

Then, the following result characterizes the bias and variance of the pooled min-$\ell_2$-norm interpolator under this setting: 
\begin{thm}\label{thm:multiple_joint_interpolator}
    Let Assumptions \ref{as:shared}, \ref{as:design_gaussian}, and \ref{as:shared_covariance} hold (adjusted suitably for multiple source dataset $k=1,...,K$ and a target dataset $T$). Define
    \begin{align}
        {\mathcal B}_4(\hat H_n, \hat G_n^{(k,\ell)}, \gamma) &= \frac{n_\ell}{n-n_k} \cdot \frac{n_k}{n} \|\tilde \bbeta^{(\ell)}\|_2 \|\tilde \bbeta^{(k)}\|_2 \int \frac{\gamma s(c_1 + \gamma c_0^2 s^2)}{(1 + c_0 \gamma s)^2} d\hat G_n^{(k,\ell)}(s) \\
        &\qquad - \frac{n_\ell}{n-n_k} \mathcal B_2(\hat H_n, \hat G_n^{(k,\ell)}, \gamma).
    \end{align}
    Then, with high probability over the randomness of $(\bZ^{(1)},...,\bZ^{(K)},\bZ^{(T)})$, we have
    \begin{align}
        V(\hat\bbeta;\bbeta^{(T)}) &= \mathcal V(\hat H_n, \gamma) + O(p^{-1/12}),\\
        B_1(\hat\bbeta;\bbeta^{(T)}) &= \mathcal B_1(\hat H_n, \hat G_n^{\bbeta^{(T)}}, \gamma) + O(p^{-1/12} \|\bbeta^{(K)}\|_2^2),
    \\
        B_2(\hat\bbeta;\bbeta^{(T)}) &= \sum_{k=1}^{K} \mathcal B_2(\hat H_n, \hat G_n^{\tilde \bbeta^{(k)}}, \gamma)  + O( p^{-1/12}\|\tilde\bbeta^{(k)}\|_2^2) \\
        B_3(\hat \bbeta; \bbeta^{(T)}) &= \sum_{k=1}^{K} \mathcal B_3(\hat H_n, \hat G_n^{(b,k)}, \gamma) + O(p^{-1/12} \|\tilde \bbeta^{(k)}\|_2 \|\bbeta^{(T)}\|_2) \\
        &\qquad + \sum_{\substack{1 \leq k,\ell \leq K \\ k \neq \ell}} {\mathcal B}_4(\hat H_n, \hat G_n^{(k,\ell)}, \gamma) + O(p^{-1/12} \|\tilde \bbeta^{(k)}\|_2\|\tilde \bbeta^{(j)}\|_2).
    \end{align}
\end{thm}
Note that the results for $B_1(\hat \bbeta; \bbeta^{(T)})$, $B_2(\hat \bbeta; \bbeta^{(T)})$, $B_3(\hat \bbeta; \bbeta^{(T)})$, and $V(\hat \bbeta; \bbeta^{(T)})$ follow directly from Theorem \ref{thm:model_shift}. It thus suffices to analyze the remaining term $B_4(\hat \bbeta; \bbeta^{(T)})$, though the strategy and machinery we employ is identical to that of Theorem \ref{thm:model_shift}: we thus omit the full argument for brevity.

One can similarly verify that, in the isotropic and random effects settings, the asymptotic out-of-sample prediction risk depends only on the spectrum of the covariance matrix and the norms of the target $\bbeta^{(T)}$ and the shifts $\tilde \bbeta^{(1)}, \dots, \tilde \bbeta^{(K)}$, again emphasizing the relationship between the transfer geometry and covariate geometry in determining the out-of-sample prediction risk.

\section{Ridge Regression}\label{sec:ridge}
As mentioned in Section \ref{sec:proof_outline}, the risk of the pooled min-$\ell_2$-norm interpolator is closely related to that of the pooled ridge estimator in \eqref{eqn:ridge}, with a fixed penalty $\lambda >0$. In this section, we obtain exact generalization error of $\hat\bbeta_\lambda$ under the assumptions of Section \ref{sec:model_shift} and Section \ref{sec:covariate_shift}. These results are crucial intermediate results in our proofs of evaluating interpolator risk.

Similar to Lemma \ref{lemma:risk_analytical}, we first state the bias-variance decomposition of the prediction risk:

\begin{lemma}\label{lemma:risk_analytical_ridge}
Under Assumption \ref{as:shared}, the ridge estimator \eqref{eqn:ridge} has variance
\begin{equation}\label{eqn:variance_analytical_ridge}
V(\hat\bbeta_\lambda;\bbeta^{(2)}) = \frac{\sigma^2}{n}\Tr((\hat\bSigma+\lambda\bI)^{-2}\hat\bSigma\bSigma^{(2)})
\end{equation}
and bias
\begin{equation}\label{eqn:bias_analytical_ridge}
\begin{aligned}
B(\hat\bbeta_\lambda;\bbeta^{(2)})=&\underbrace{\lambda^2\bbeta^{(2)^\top} (\hat\bSigma+\lambda\bI)^{-1}\bSigma^{(2)} (\hat\bSigma+\lambda\bI)^{-1} \bbeta^{(2)}}_{B_1(\hat\bbeta_\lambda;\bbeta^{(2)})} \\
&+ \underbrace{\tilde\bbeta^\top (\frac{\bX^{(1)\top} \bX^{(1)}}{n})(\hat\bSigma+\lambda\bI)^{-1} \bSigma^{(2)}(\hat\bSigma+\lambda\bI)^{-1} (\frac{\bX^{(1)\top} \bX^{(1)}}{n})\tilde\bbeta}_{B_2(\hat\bbeta_\lambda;\bbeta^{(2)})}\\
&\underbrace{-2\lambda\bbeta^{(2)^\top} (\hat\bSigma+\lambda\bI)^{-1}\bSigma^{(2)} (\hat\bSigma+\lambda\bI)^{-1} (\frac{\bX^{(1)\top} \bX^{(1)}}{n})\tilde\bbeta}_{B_3(\hat\bbeta_\lambda;\bbeta^{(2)})}.
\end{aligned}
\end{equation}
\end{lemma}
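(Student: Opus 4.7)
The proposal is to carry out a direct computation using the bias–variance decomposition in \eqref{eq:risk_analytical} and the explicit form of $\hat\bbeta_\lambda$ from \eqref{eqn:ridge}. Let me denote $\bQ_\lambda := (\hat\bSigma + \lambda \bI)^{-1}$ and $\hat\bSigma^{(k)} := \bX^{(k)\top}\bX^{(k)}/n$, so that $\hat\bSigma = \hat\bSigma^{(1)} + \hat\bSigma^{(2)}$ and
$$
\hat\bbeta_\lambda \;=\; \bQ_\lambda \cdot \tfrac{1}{n}\bigl(\bX^{(1)\top}\by^{(1)} + \bX^{(2)\top}\by^{(2)}\bigr).
$$

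First I would handle the variance. Conditioning on $\bX$, the only randomness is the mean-zero noise $\bep^{(1)},\bep^{(2)}$, each with covariance $\sigma^2 \bI$, and mutually independent. Hence
$$
\Cov\bigl(\hat\bbeta_\lambda \mid \bX\bigr) \;=\; \tfrac{\sigma^2}{n^2}\, \bQ_\lambda\bigl(\bX^{(1)\top}\bX^{(1)} + \bX^{(2)\top}\bX^{(2)}\bigr)\bQ_\lambda \;=\; \tfrac{\sigma^2}{n}\,\bQ_\lambda \hat\bSigma \bQ_\lambda.
$$
Taking $\Tr(\,\cdot\,\bSigma^{(2)})$ and using the cyclic property of the trace yields \eqref{eqn:variance_analytical_ridge} immediately.

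Next I would compute the conditional mean: plugging $\by^{(k)} = \bX^{(k)}\bbeta^{(k)} + \bep^{(k)}$ into $\hat\bbeta_\lambda$ and taking expectations gives $\E[\hat\bbeta_\lambda \mid \bX] = \bQ_\lambda\bigl(\hat\bSigma^{(1)}\bbeta^{(1)} + \hat\bSigma^{(2)}\bbeta^{(2)}\bigr)$. Writing $\bbeta^{(1)} = \bbeta^{(2)} + \tilde\bbeta$ and regrouping gives $\E[\hat\bbeta_\lambda \mid \bX] = \bQ_\lambda\hat\bSigma\,\bbeta^{(2)} + \bQ_\lambda \hat\bSigma^{(1)}\tilde\bbeta$. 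Now I invoke the resolvent identity $\bQ_\lambda \hat\bSigma = \bI - \lambda \bQ_\lambda$, obtained by multiplying $\bQ_\lambda(\hat\bSigma + \lambda \bI) = \bI$ and rearranging. This produces the compact formula
$$
\E[\hat\bbeta_\lambda \mid \bX] - \bbeta^{(2)} \;=\; -\lambda\,\bQ_\lambda \bbeta^{(2)} \;+\; \bQ_\lambda \hat\bSigma^{(1)} \tilde\bbeta.
$$

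Finally I would take the squared $\bSigma^{(2)}$-norm of this vector and expand the resulting binomial. The square of the first summand reproduces $B_1(\hat\bbeta_\lambda;\bbeta^{(2)})$, the square of the second reproduces $B_2(\hat\bbeta_\lambda;\bbeta^{(2)})$ (after replacing $\hat\bSigma^{(1)}$ by $\bX^{(1)\top}\bX^{(1)}/n$), and twice the cross term reproduces $B_3(\hat\bbeta_\lambda;\bbeta^{(2)})$, matching \eqref{eqn:bias_analytical_ridge}. There is no real obstacle here; the only nontrivial ingredient is the resolvent identity, and the rest is bookkeeping. The lemma is a computational counterpart of Lemma \ref{lemma:risk_analytical}, where the pseudo-inverse identity $(\bX^\top\bX)^\dagger \bX^\top\bX (\bX^\top\bX)^\dagger = (\bX^\top\bX)^\dagger$ is replaced by the much simpler $\bQ_\lambda(\hat\bSigma + \lambda\bI) = \bI$.
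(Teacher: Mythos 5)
Your proof is correct and is the direct computation the paper leaves implicit, exactly parallel to how the paper treats Lemma~\ref{lemma:risk_analytical}: plug the model into $\hat\bbeta_\lambda$, take conditional mean and covariance, and expand, with the resolvent identity $\bQ_\lambda\hat\bSigma = \bI - \lambda\bQ_\lambda$ replacing the pseudoinverse identity. One small precision point in the variance step: passing from $\Tr(\bQ_\lambda\hat\bSigma\bQ_\lambda\bSigma^{(2)})$ to $\Tr(\bQ_\lambda^2\hat\bSigma\bSigma^{(2)})$ is not a consequence of trace cyclicity alone; it uses that $\bQ_\lambda$ and $\hat\bSigma$ commute (both are functions of $\hat\bSigma$), which of course they do.
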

In the subsections below, we present exact risk formulae under model shift and covariate shift, respectively.

\subsection{Model Shift}
We first define deterministic quantities that only depend on $\gamma_1$, $\gamma_2$, $\gamma$, $\lambda$, and $\bSigma$, then present our main results stating that these deterministic quantities are the limits of the bias and variance components of prediction risk.

\begin{definition}[Risk limit, ridge]\label{def:model_shift_ridge}
Recall the definitions of $\hat H_n$, $\hat G_n^{\tilde \bbeta}$, $\hat G_n^{\bbeta^{(2)}}$, and $\hat G_n^{(b)}$ from Definition \ref{def:cov_geometry}.
Define $m_n(-\lambda) = m_n(-\lambda; \hat H_n, \gamma)$ as the unique solution of
\begin{equation}
    m_n(-\lambda) = \int\frac{d\hat H_n(s)}{s(1 - \gamma + \gamma \lambda m_n(-\lambda)) + \lambda} 
\end{equation}
and then define $m_{n,1}(-\lambda) = m_{n,1}(-\lambda; \hat H_n, \gamma)$
\begin{equation}
    m_{n,1}(-\lambda) = \frac{\int \frac{s^2 (1 - \gamma + \gamma \lambda m_n(-\lambda))}{(s (1 - \gamma + \gamma \lambda m_n(-\lambda)) + \lambda)^2} d\hat H_n(s)}{1 + \gamma \lambda \int \frac{s}{(s(1 - \gamma + \gamma \lambda m_n(-\lambda)) + \lambda)^2} d \hat H_n(s)}.
\end{equation}
Next, define
\begin{align}
    \mathcal V(\lambda; \hat H_n, \gamma) &:= \sigma^2 \gamma \int \frac{s^2 (1-\gamma+\gamma \lambda^2 m_n'(-\lambda))}{(\lambda + s(1 - \gamma + \gamma \lambda m_n(-\lambda)))^2} d \hat H_n(s) \\
    \mathcal B_1(\lambda; \hat H_n, \hat G_n^{\bbeta^{(2)}}, \gamma) &:=  \|\bbeta^{(2)}\|^2  \int \frac{\lambda^2s(1 + \gamma m_{n,1}(-\lambda))}{(\lambda + (1 - \gamma + \gamma \lambda m_n(-\lambda))s)^2} d\hat G_n^{\bbeta^{(2)}}(s) \\
    \mathcal B_2(\lambda; \hat H_n, \hat G_n^{\tilde \bbeta}, \gamma) &:= \frac{n_1(n_1 - 1)}{n(n-1)} \|\tilde \bbeta\|_2^2 \\
    &\qquad \qquad \cdot\int \frac{s((1 -\gamma + \gamma \lambda m_n(-\lambda))^2 s^2 + \lambda^2 \gamma m_{n,1}(-\lambda))}{(\lambda + (1 - \gamma + \gamma \lambda m_n(-\lambda))s)^2} d\hat G^{\tilde \bbeta}_n(s)\\
     &\qquad + \biggl(\frac{n_1}{n} - \frac{n_1(n_1 - 1)}{n(n-1)}\biggr) \|\tilde \bbeta\|_2^2  \int s\; d\hat G^{\tilde \bbeta}_n(s) \\
     &\qquad \qquad \cdot \frac{\lambda^2 \gamma ( 1 + \gamma m_{n,1}(\lambda)) \int \frac{s^2}{(\lambda + (1 - \gamma + \gamma \lambda m_n(-\lambda))s)^2} d\hat H_n(s)}{(\lambda + \gamma \int \frac{\lambda s}{\lambda + (1 - \gamma + \gamma \lambda m_n(-\lambda)) s} d\hat H_n(s))^2}. \\
    \mathcal B_3(\lambda; \hat H_n, \gamma) &:= \frac{n_1}{n} \|\tilde \bbeta\|_2 \|\bbeta^{(2)}\|_2 \biggl(\int \frac{\lambda s}{\lambda + (1 - \gamma + \gamma \lambda m_n(-\lambda))s} \hat G_n^{(b)}(s) \\
    &\hspace{9em} + \int \frac{\lambda^2 (1 + \gamma m_{n,1}(-\lambda))s}{(\lambda + (1 - \gamma + \gamma \lambda m_n(-\lambda))s)^2} \hat G_n^{(b)}(s)\biggr).
\end{align}
\end{definition}

\begin{thm}[Risk limit under model shift, ridge]\label{thm:model_shift_ridge}
Suppose Assumptions \ref{as:shared}, \ref{as:design_gaussian}, and \ref{as:shared_covariance} hold. Let $s := \lambda(1+\alpha) > p^{-2/3+\epsilon}$, where $\epsilon>0$ is a small constant. Then, for any small constant $c>0$, with high probability over the randomness of $(\bZ^{(1)},\bZ^{(2)})$, we have
\begin{align}
    V(\hat\bbeta_\lambda;\bbeta^{(2)}) &= \mcl{V}(\lambda;\gamma) + O(\sigma^2 \lambda^{-2}p^{-1/2+c}),
\label{eqn:model_shift_ridge_V}\\
    B_1(\hat\bbeta_\lambda;\bbeta^{(2)}) &= \mcl{B}_1(\lambda;\gamma) + O(\lambda^{-1} p^{-1/2+c}\|\bbeta^{(2)}\|_2^2),
\label{eqn:model_shift_ridge_B1}\\
    B_2(\hat\bbeta_\lambda;\bbeta^{(2)}) &= \mcl{B}_2(\lambda;\gamma_1,\gamma_2)  + O(\lambda^{-4} p^{-1/2+c}\|\tilde\bbeta\|_2^2),\label{eqn:model_shift_ridge_B2}\\
    B_3(\hat\bbeta_\lambda;\bbeta^{(2)}) &= \mcl{B}_3(\lambda;\gamma_1,\gamma_2)  + O(\lambda^{-3} p^{-1/2+c}\|\tilde\bbeta\|_2 \|\bbeta^{(2)}\|_2).\label{eqn:model_shift_ridge_B3}
\end{align}
\end{thm}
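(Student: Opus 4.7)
Starting from the bias--variance decomposition in Lemma~\ref{lemma:risk_analytical_ridge}, the isotropic assumption gives $\hat\bSigma+\lambda\bI=(n_1/n)(\bQ^{(1)}+\alpha\bQ^{(2)}+s\bI)$, where $\bQ^{(k)}:=\bZ^{(k)\top}\bZ^{(k)}/n_k$ are independent Gaussian Wishart matrices, $\alpha=\gamma_1/\gamma_2$, and $s=\lambda(1+\alpha)$. Because $\bbeta^{(1)},\bbeta^{(2)}$ are independent of $\bX^{(1)},\bX^{(2)}$ and $\bQ^{(1)},\bQ^{(2)}$ are rotationally invariant, every quadratic or bilinear form $\bv^{\top}M\bw$ with $\bv,\bw\in\{\bbeta^{(2)},\tilde\bbeta\}$ and $M$ a rational function of $\bQ^{(1)},\bQ^{(2)}$ concentrates, via a Hanson--Wright / isotropic-local-law argument, to $(\bv^{\top}\bw)\cdot p^{-1}\Tr(M)$ with relative error $O(p^{-1/2+c})$. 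This reduces all four risk components to normalized traces of rational functions of $\bQ^{(1)},\bQ^{(2)}$.

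\textbf{Variance $V$ and bias $B_1$.} Because the stacked matrix $[\bZ^{(1)\top},\bZ^{(2)\top}]^{\top}$ has i.i.d.\ Gaussian entries, $\hat\bSigma$ is itself a Gaussian Wishart with aspect ratio $\gamma$, so the local Marchenko--Pastur law gives $p^{-1}\Tr(\hat\bSigma+\lambda\bI)^{-1}\approx m(-\lambda)$ with the claimed rate (the bound $s>p^{-2/3+\epsilon}$ keeps us away from the spectral edge). The identity $\Tr(\hat\bSigma+\lambda\bI)^{-2}=-\partial_\lambda\Tr(\hat\bSigma+\lambda\bI)^{-1}$ then yields $V$ and $B_1$ in terms of $m(-\lambda)$ and $m'(-\lambda)$, matching \eqref{eqn:def:model_shift_ridge_V}--\eqref{eqn:def:model_shift_ridge_B1}.

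\textbf{Bias $B_3$.} After Hanson--Wright the remaining quantity is $p^{-1}\Tr\bigl[\bQ^{(1)}(\bQ^{(1)}+\alpha\bQ^{(2)}+s\bI)^{-2}\bigr]$. I rewrite this as $-\partial_t p^{-1}\Tr\bigl[(1+t)\bQ^{(1)}+\alpha\bQ^{(2)}+s\bI\bigr]^{-1}\big|_{t=0}$, so the matrix inside the resolvent remains a linear combination of two independent Wishart matrices. Free additive convolution (applicable because the Gaussian $\bQ^{(2)}$ is asymptotically free from anything built from $\bQ^{(1)}$) combined with the scaled Marchenko--Pastur R-transforms $R_{\bQ^{(k)}}(z)=1/(1-\gamma_k z)$ delivers the scalar fixed-point equation \eqref{eqn:model_shift_ridge_f} satisfied by $f_1$. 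Differentiation in $t$ produces $f_3$, differentiation in $s$ produces $f_2$, and combining them gives \eqref{eqn:def:model_shift_ridge_B3}.

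\textbf{Bias $B_2$, the main obstacle.} The analogous reduction yields $\|\tilde\bbeta\|_2^2\cdot p^{-1}\Tr\bigl[\bQ^{(1)}(\bQ^{(1)}+\alpha\bQ^{(2)}+s\bI)^{-2}\bQ^{(1)}\bigr]$, which I rewrite as $-\partial_t p^{-1}\Tr\bigl[\bQ^{(1)}+t(\bQ^{(1)})^2+\alpha\bQ^{(2)}+s\bI\bigr]^{-1}\big|_{t=0}$. Now the summand $\bQ^{(1)}+t(\bQ^{(1)})^2$ is a \emph{nonlinear} function of $\bQ^{(1)}$, so the resolvent at $t\ne 0$ corresponds to the free additive convolution of the push-forward of the Marchenko--Pastur law under $x\mapsto x+tx^2$ with a shifted scaled Marchenko--Pastur law. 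My plan is: (a) write the subordination fixed-point system for the Cauchy transform $m_t(-s)$ of the free sum, using the Cauchy transform of the push-forward obtained via a change-of-variable in the MP equation; (b) differentiate the subordination equations implicitly in $t$ at $t=0$ and solve for $\partial_t m_t(-s)|_{t=0}$; (c) algebraically identify the result with the rational expression in \eqref{eqn:def:model_shift_ridge_B2}, where $f_2$ arises from the derivative of the MP Cauchy transform in its argument, and the denominator $1-\gamma_1 f_2(f_3-s)/(1+\gamma_1 f_1)$ is the Jacobian of the implicit differentiation. Promoting this asymptotic statement to the quantitative bound $O(\lambda^{-2}p^{-1/2+c})$ requires an honest local law rather than convergence in expectation; I will use the Gaussianity of $\bZ^{(2)}$ to drive a resolvent-exchange argument on the rows of $\bZ^{(2)}$ at fixed $\bQ^{(1)}$, with $s>p^{-2/3+\epsilon}$ providing the necessary spectral-edge separation both before and after the $\partial_t$ differentiation.
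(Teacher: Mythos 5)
Your conceptual route for $B_2$ and $B_3$ exactly mirrors the paper's: rotational invariance of Gaussian Wishart matrices reduces the quadratic/bilinear forms to normalized traces (this is the paper's Lemma \ref{lemma:model_shift_ridge_B2_tr}), the key trace is rewritten as $-\partial_t p^{-1}\Tr[\bQ^{(1)}+t(\bQ^{(1)})^2+\alpha\bQ^{(2)}+s\bI]^{-1}\big|_{t=0}$, and the limit is computed via free additive convolution and the subordination system (Lemma \ref{lemma:subordinate}), with implicit differentiation in $t$ at $t=0$ yielding $f_1,f_2,f_3$ (the paper's Lemma \ref{lemma:model_shift_ridge_B2_limit}). $V$ and $B_1$ are likewise delegated to the single-dataset ridge result of Hastie et al., as the paper does.

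Where you diverge is the quantitative step, and here there is a gap. The paper does not re-derive the free-addition local law; it invokes an existing quantitative free convolution result (Theorem 2.5 of Bao--Erd\H{o}s--Schnelli, cited as \cite{bao2020spectral}) which gives, \emph{at each fixed} $t$, $\bigl|p^{-1}\Tr(\bQ^{(1)}+t(\bQ^{(1)})^2+\alpha\bQ^{(2)}+s\bI)^{-1} - m_{\mu^{(1)}_t\boxplus\mu^{(2)}}(-s)\bigr|\prec (ps)^{-1}$. Your proposal to establish this instead by a ``resolvent exchange on the rows of $\bZ^{(2)}$ at fixed $\bQ^{(1)}$'' is a different (conditioning-based) route: for fixed $\bZ^{(1)}$ you would get a deformed-Wishart local law whose deterministic equivalent depends on the ESD of $\bQ^{(1)}+t(\bQ^{(1)})^2+s\bI$, and then average over $\bZ^{(1)}$. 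That can in principle work, but ``resolvent exchange'' (Lindeberg swapping) is the wrong name and the wrong tool here --- what you need is a subordination/anisotropic local law for the deformed model, not a universality comparison; you should state this more carefully.

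More importantly, you omit the step that actually delivers the stated rate. A local law at each fixed $t$ does not control the derivative at $t=0$. The paper closes this gap by a finite-difference approximation: it shows $\bigl|h_s'(0)-\tfrac{h_s(t)-h_s(0)}{t}\bigr|\lesssim t/s^3$ w.o.p.\ (and the analogous deterministic estimate), then balances the two error sources by choosing $t=sp^{-1/2}$, producing the $O(\lambda^{-2}p^{-1/2+c})$ rate in \eqref{eqn:model_shift_ridge_B2}. Your remark that $s>p^{-2/3+\epsilon}$ provides ``spectral-edge separation both before and after the $\partial_t$ differentiation'' gestures at this issue but does not supply a mechanism; without the difference-quotient (or an alternative such as a Cauchy-integral argument that would require a local law uniform in a complex neighborhood of $t=0$), your plan does not yield a rate for the $t$-derivative. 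You should make this step explicit; as written it is the one missing ingredient between your sketch and a complete proof.
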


\noindent In Theorem \ref{thm:model_shift_ridge}, both \eqref{eqn:model_shift_ridge_V} and \eqref{eqn:model_shift_ridge_B1} follow directly from \cite[Theorem 5]{hastie2022surprises}. We defer the proof of \eqref{eqn:model_shift_ridge_B2} and \eqref{eqn:model_shift_ridge_B3} to Appendix \ref{subsec:proof:model_shift_ridge}. 

\subsection{Covariate Shift}
\begin{thm}\label{thm:design_shift_ridge}
Suppose Assumptions \ref{as:shared}, \ref{as:design_four_epsilon}, and \ref{as:simul_diag} hold. Let $\lambda > p^{-1/7+\epsilon}$ where $\epsilon>0$ is a small constant. Then, for any small constant $c>0$, with high probability over the randomness of $(\bZ^{(1)},\bZ^{(2)})$, we have
\begin{align}
V(\hat\bbeta_\lambda;\bbeta^{(2)}) &= \sigma^2\gamma\int \frac{\lambda^{(1)}\lambda^{(2)}(a_1-a_3\lambda)+(\lambda^{(2)})^2(a_2-a_4\lambda)}{(a_1\lambda^{(1)}+a_2\lambda^{(2)}+\lambda)^2} d \hat H_p(\lambda^{(1)},\lambda^{(2)})\label{eqn:design_shift_ridge_V} \\
&\qquad + O(\sigma^2\lambda^{-9/2} p^{-1/2+c}),\\
B(\hat\bbeta_\lambda;\bbeta^{(2)}) &= \|\bbeta^{(2)}\|_2^2 \cdot \int \frac{b_3\lambda\lambda^{(1)}+(b_4+\lambda)\lambda\lambda^{(2)}}{(b_1\lambda^{(1)}+b_2\lambda^{(2)}+\lambda)^2} d \hat G_p(\lambda^{(1)},\lambda^{(2)}) \label{eqn:design_shift_ridge_B} \\
&\qquad + O(\lambda^{-3/2}p^{-1/4+c}\|\bbeta^{(2)}\|_2^2),
\end{align}
where $(a_1,a_2,a_3,a_4)$ is the unique solution, with $a_1,a_2$ positive, to the following system of equations:
\begin{equation}\label{eqn:design_shift_ridge_alpha_eqns}
\begin{aligned}
a_1+a_2 &= 1 - \gamma \int \frac{a_1\lambda^{(1)}+a_2\lambda^{(2)}}{a_1\lambda^{(1)}+a_2\lambda^{(2)}+\lambda} d\hat H_p(\lambda^{(1)},\lambda^{(2)}),\\
a_1 &= \frac{n_1}{n} - \gamma \int \frac{a_1\lambda^{(1)}}{a_1\lambda^{(1)}+a_2\lambda^{(2)}+\lambda} d\hat H_p(\lambda^{(1)},\lambda^{(2)}),\\
a_3+a_4 &= -\gamma\int \frac{\lambda^{(1)}(a_3\lambda-a_1)+\lambda^{(2)}(a_4\lambda-a_2)}{(a_1\lambda^{(1)}+a_2\lambda^{(2)}+\lambda)^2}d\hat H_p(\lambda^{(1)},\lambda^{(2)}),\\
a_3&= -\gamma\int \frac{\lambda^{(1)}(a_3\lambda-a_1)+\lambda^{(1)}\lambda^{(2)}(a_3a_2-a_4a_1)}{(a_1\lambda^{(1)}+a_2\lambda^{(2)}+\lambda)^2}d\hat H_p(\lambda^{(1)},\lambda^{(2)}),
\end{aligned}
\end{equation}
and $(b_1,b_2,b_3,b_4)$ is the unique solution, with $b_1,b_2$ positive, to the following system of equations:
\begin{equation}\label{eqn:design_shift_ridge_beta_eqns}
\begin{aligned}
b_1+b_2 &= 1 - \gamma \int \frac{b_1\lambda^{(1)}+b_2\lambda^{(2)}}{b_1\lambda^{(1)}+b_2\lambda^{(2)}+\lambda} d\hat H_p(\lambda^{(1)},\lambda^{(2)}),\\
b_1 &= \frac{n_1}{n} - \gamma \int \frac{b_1\lambda^{(1)}}{b_1\lambda^{(1)}+b_2\lambda^{(2)}+\lambda} d\hat H_p(\lambda^{(1)},\lambda^{(2)}),\\
b_3+b_4 &= -\gamma\int \frac{\lambda^{(1)}\lambda(b_3-b_1\lambda^{(2)})+\lambda^{(2)}\lambda(b_4-b_2\lambda^{(2)})}{(b_1\lambda^{(1)}+b_2\lambda^{(2)} + \lambda)^2}d\hat H_p(\lambda^{(1)},\lambda^{(2)}),\\
b_3 &= -\gamma\int \frac{\lambda^{(1)}\lambda(b_3-b_1\lambda^{(2)})+\lambda^{(1)}\lambda^{(2)}(b_3b_2-b_4b_1)}{(b_1\lambda^{(1)}+b_2\lambda^{(2)} + \lambda)^2}d\hat H_p(\lambda^{(1)},\lambda^{(2)}).
\end{aligned}    
\end{equation}
\end{thm}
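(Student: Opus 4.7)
The plan is to reduce the bias and variance of the pooled ridge estimator to functionals of the resolvent $\bG(-\lambda) := (\hat\bSigma + \lambda\bI)^{-1}$, and then apply the anisotropic local law (Theorem \ref{thm:anisotropic_law}) to extract deterministic equivalents. Under pure covariate shift, $\bbeta^{(1)} = \bbeta^{(2)}$ forces $B_2 = B_3 = 0$ in Lemma \ref{lemma:risk_analytical_ridge}, leaving only $V = \frac{\sigma^2}{n}\Tr\bigl(\bG(-\lambda)^2 \hat\bSigma \bSigma^{(2)}\bigr)$ and $B_1 = \lambda^2 \bbeta^{(2)\top} \bG(-\lambda)\bSigma^{(2)}\bG(-\lambda)\bbeta^{(2)}$.

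The key observation is that both quadratic-in-resolvent quantities can be rewritten as derivatives of linear-in-resolvent quantities. Using the algebraic identity $\bG(-\lambda)^2\hat\bSigma = \partial_\lambda[\lambda\bG(-\lambda)]$, I recast $V = \frac{\sigma^2}{n}\partial_\lambda\bigl[\lambda\Tr(\bSigma^{(2)}\bG(-\lambda))\bigr]$. For the bias, I introduce a scalar perturbation and the perturbed resolvent $\bG_t(-\lambda) := (\hat\bSigma + t\bSigma^{(2)} + \lambda\bI)^{-1}$, which gives $B_1 = -\lambda^2\partial_t|_{t=0}\bigl[\bbeta^{(2)\top}\bG_t(-\lambda)\bbeta^{(2)}\bigr]$. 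This reduces everything to (i) the trace $\frac{1}{n}\Tr(\bSigma^{(2)}\bG(-\lambda))$ and (ii) the quadratic form $\bbeta^{(2)\top}\bG_t(-\lambda)\bbeta^{(2)}$.

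Next I apply the anisotropic local law. Under simultaneous diagonalizability, writing $\bSigma^{(k)} = \bV\bLambda^{(k)}\bV^\top$, the deterministic equivalent of $\bG(-\lambda)$ is diagonal in the common eigenbasis with entries $[\bM(\lambda)]_{ii} = 1/(a_1\lambda_i^{(1)} + a_2\lambda_i^{(2)} + \lambda)$, where $(a_1, a_2)$ satisfy the first two equations of \eqref{eqn:design_shift_ridge_alpha_eqns}. These are the natural generalization of the Silverstein equation to a pooled sample covariance whose rows carry one of two population covariances; the ratio $n_1/n$ enters only in the $a_1$ equation, reflecting the source proportion. Consequently $\frac{1}{n}\Tr(\bSigma^{(2)}\bG(-\lambda)) \approx \gamma\int\frac{\lambda^{(2)}}{a_1\lambda^{(1)} + a_2\lambda^{(2)} + \lambda}\,d\hat H_p$, while $\bbeta^{(2)\top}\bG_t(-\lambda)\bbeta^{(2)} \approx \|\bbeta^{(2)}\|_2^2\int\frac{1}{b_1^{(t)}\lambda^{(1)} + b_2^{(t)}\lambda^{(2)} + \lambda + t\lambda^{(2)}}\,d\hat G_p$. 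The weighted ESD $\hat G_p$ arises here precisely because the quadratic form picks out the angles of $\bbeta^{(2)}$ with the eigenvectors $\{\bv_i\}$, and the perturbed parameters $(b_1^{(t)}, b_2^{(t)})$ solve the same fixed-point system with $\lambda$ replaced by $\lambda + t\lambda^{(2)}$ inside the integrand.

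Finally I differentiate. Setting $a_3 := \partial_\lambda a_1$ and $a_4 := \partial_\lambda a_2$ and implicitly differentiating the first two equations of \eqref{eqn:design_shift_ridge_alpha_eqns} produces exactly the third and fourth equations there; substituting into $\partial_\lambda[\lambda\int\frac{\lambda^{(2)}}{a_1\lambda^{(1)} + a_2\lambda^{(2)} + \lambda}\,d\hat H_p]$ yields \eqref{eqn:design_shift_ridge_V}. Analogously, setting $b_3 := \lambda\partial_t b_1^{(t)}|_{t=0}$, $b_4 := \lambda\partial_t b_2^{(t)}|_{t=0}$ and implicitly differentiating in $t$ produces the third and fourth equations of \eqref{eqn:design_shift_ridge_beta_eqns} and yields \eqref{eqn:design_shift_ridge_B}. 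The main obstacle is establishing the anisotropic local law itself with entry-wise control at rate $p^{-1/2+c}$: existing anisotropic laws \cite{knowles2017anisotropic,yang2020analysis} handle a single sample covariance, whereas here we need a joint statement for a sum of two sample-covariance blocks with distinct, simultaneously diagonalizable population covariances. This drives the $\lambda^{-9/2}$ and $\lambda^{-3/2}$ prefactors in the error bounds. Auxiliary technicalities are uniqueness of the positive solution to the leading fixed-point system, shown by monotonicity of the maps in $(a_1, a_2)$ on the positive orthant, and invertibility of the linear systems for $(a_3, a_4)$ and $(b_3, b_4)$, which follows from the stability of the leading system and is needed to read off the derivative quantities unambiguously.
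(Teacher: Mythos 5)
Your high-level reduction is correct and matches the paper: pure covariate shift kills $B_2, B_3$; you write $V = \partial_\lambda\bigl[\tfrac{\sigma^2\lambda}{n}\Tr(\bSigma^{(2)}(\hat\bSigma+\lambda\bI)^{-1})\bigr]$ and $B_1 = -\lambda^2\,\partial_t\big|_{t=0}\bbeta^{(2)\top}(\hat\bSigma + t\bSigma^{(2)} + \lambda\bI)^{-1}\bbeta^{(2)}$; the local law for the sum of two sample-covariance-type matrices identifies the deterministic equivalent with diagonal entries $\bigl(a_1\lambda_i^{(1)} + a_2\lambda_i^{(2)} + \lambda\bigr)^{-1}$ governed by the self-consistent system; and implicit differentiation of that system produces the $(a_3,a_4)$ and $(b_3,b_4)$ equations. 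All of that matches the paper, including the choice of perturbation direction $\bSigma^{(2)}$ that generates $\hat G_p$.

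However there is a real gap at the step "Finally I differentiate." The anisotropic/averaged local law bounds $|\check V(\lambda') - \overline V(\lambda')|$ and $|\check B(\eta) - \overline B(\eta)|$ pointwise, but $V$ and $B$ are \emph{derivatives} of these objects, and a pointwise bound on a random error does not control its derivative: the fluctuation can oscillate on a $\lambda$-scale of order $p^{-1/2}$ and have a derivative of order $1$. Implicitly differentiating the fixed-point system only handles the deterministic side; it does not transfer concentration. The paper closes this hole with a finite-difference argument: replace $\partial_\lambda$ by the quotient $\tfrac{1}{t\lambda}\bigl(\check V(\lambda + t\lambda) - \check V(\lambda)\bigr)$, bound the truncation error $O(t\lambda^{-2})$ deterministically via Corollary \ref{cor:ESD_bounded_weaker}, amplify the averaged-local-law error to $O(t^{-1}p^{-1}\lambda^{-7})$, and optimize $t = p^{-1/2}\lambda^{-5/2}$; analogously $\eta = p^{-1/4}\lambda^{-3/2}$ for the bias via the anisotropic law $O(p^{-1/2}\lambda^{-3})$. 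This is precisely what produces the stated $O(\sigma^2\lambda^{-9/2}p^{-1/2+c})$ and $O(\lambda^{-3/2}p^{-1/4+c}\|\bbeta^{(2)}\|_2^2)$ error rates, so your attribution of those exponents to the local law itself is also off: the local law has exponents $\lambda^{-7}$ (averaged) and $\lambda^{-3}$ (anisotropic), and the final exponents arise from trading those against the finite-difference truncation error. Without this step the argument does not yield a valid error bound for the derivative quantities.

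One smaller point: you invoke the anisotropic form of the law for both terms, but the variance is a normalized trace and uses the \emph{averaged} local law (rate $p^{-1}\lambda^{-7}$), which is what gives the improved $p$-dependence $p^{-1/2}$ after finite-differencing; the bias genuinely needs the entrywise/anisotropic form because $\bbeta^{(2)}$ is a fixed direction, and that is why the bias lands at $p^{-1/4}$.
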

Again, notice that $(a_1,a_2)=(b_1,b_2)$ above.

\section{Basic Tools}
\noindent We first collect the notations that we will use throughout the proof. Since $p,n_1,n_2$ has comparable orders, we use $p$ as the fundamental large parameter. All constants only depend on parameters introduced in Assumptions \ref{as:shared}, \ref{as:design_all_moments}, and \ref{as:design_four_epsilon}. For any matrix $\bX$, $\lambda_{\min}(\bA)$ denotes its smallest eigenvalue, $\lambda_{\max}(\bX) := \|\bX\|_{\op}$ denotes its largest eigenvalue (or equivalently the operator norm), $\lambda_i(\bX)$ denotes its $i$-th largest eigenvalue, and $\bX^\dagger$ denotes its pseudoinverse. We say an event $\mathcal{E}$ happens with high probability (w.h.p.) if $\PP(\mathcal{E})\rightarrow 1$ as $p \rightarrow \infty$. We write $f(p) = O(g(p))$ if there exists a constant $C$ such that $|f(p)| \leq Cg(p)$ for large enough $p$. For $f(p),g(p) \geq 0$, we also write $f(p) \lesssim g(p)$ and $g(p) = \Omega(f(p))$ if $f(p) = O(g(p))$, and $f(p) =\Theta( g(p))$ if $f(p) = O(g(p))$ and $g(p) = O(f(p))$. Whenever we talk about constants, we refer to quantities that does not depend on $\lambda,n_1,n_2,p,$ or random quantities such as $\bX^{(k)},\by^{(k)},\bep^{(k)},k=1,2$. They might depend on other constants such as $\tau$ in Assumption \ref{as:shared}, $\tau_m$ (for $m \ge 1)$ in Assumption \ref{as:design_all_moments}, and $\tau_\varphi$ in Assumption \ref{as:design_four_epsilon}. We often use $c,C$ to denote constants whose specific value might change from case to case. We use check symbols (e.g. $\check x,\check y$) to denote temporary notations (whose definitions might also change) for derivation simplicity. We further make the following definitions:

\begin{definition}[Overwhelming Probability]
We say an event $\mathcal{E}$ holds with overwhelming probability (w.o.p.) if for any constant $D>0$, $\PP(\mathcal{E}) \geq 1 - p^D$ for large enough $p$. Moreover, we say $\mathcal{E}$ holds with overwhelming probability in another event $\Omega$ if for any constant $D>0$, $\PP(\Omega \setminus \mathcal{E}) \geq 1 - p^D$ for large enough $p$.
\end{definition}

\begin{definition}[Stochastic domination, first introduced in \cite{erdHos2013averaging}]
Let $\xi^{(p)}$ and $\zeta^{(p)}$ be two $p$-dependent random variables. We say that $\xi$ is stochastically dominate by $\zeta$, denoted by $\xi \prec \zeta$ or $\xi = O_\prec (\zeta)$, if for any small constant $c > 0$ and large constant $D>0$, 
$$\PP(|\xi| > p^c|\zeta|) \leq p^{-D}$$
for large enough $p$. That is, $|\xi|\leq p^c|\zeta|$ w.o.p. for any $c>0$. If $\xi(u)$ and $\zeta(u)$ are functions of $u$ supported in $\mathcal{U}$, then we say $\xi(u)$ is stochastically dominated by $\zeta(u)$ uniformly in $\mathcal{U}$ if 
$$\PP(\bigcup_{u \in \mathcal{U}}\{|\xi(u)|>p^c|\zeta(u)|\})\leq p^{-D}.$$
\end{definition}

Note that since $(\log p)^C \prec 1$ for any constant $C>0$ and we are including $p^c$, we can safely ignore log terms. Also, for a random variable $\xi$ with finite moments up to any order, we have $|\xi|\prec 1$, since
$$\PP(|\xi|\geq p^c) \leq p^{-kc}\E|\xi|^k \leq p^{-D}$$ 
by Markov's inequality, when $k > D/c$.

The following lemma collect several algebraic properties of stochastic domination:

\begin{lemma}[Lemma 3.2 of \cite{alex2014isotropic}]\label{lemma:domination_algebra}
Let $\xi$ and $\zeta$ be two families of nonnegative random variables depending on some parameters $u \in \mathcal{U}$ and $v \in \mathcal{V}$. Let $C>0$ be an arbitrary constant.
\begin{enumerate}
    \item[(i)] \textbf{Sum.} Suppose $\xi(u,v) \prec \zeta(u,v)$ uniformly in $u \in \mathcal{U}$ and $v \in \mathcal{V}$. If $|\mathcal{V}| \leq p^C$, then $\sum_{v \in \mathcal{V}}\xi(u,v) \prec \sum_{v \in \mathcal{V}}\zeta(u,v)$ uniformly in $u$.
    \item[(ii)] \textbf{Product.} If $\xi_1(u) \prec \zeta_1(u)$ and $\xi_2(u) \prec \zeta_2(u)$, then $\xi_1(u)\xi_2(u) \prec \zeta_1(u)\zeta_2(u)$ uniformly in $u \in \mcl{U}$.
    \item[(iii)] \textbf{Expectation.} Suppose that $\Psi(u) \geq p^{-C}$ is a family of deterministic parameters, and $\xi(u)$ satisfies $\E\xi(u)^2 \leq p^{C}$. If $\xi(u) \prec \Psi(u)$ uniformly in $u$, then we also have $\E\xi(u) \prec \Psi(u)$ uniformly in $u$.
\end{enumerate}
\end{lemma}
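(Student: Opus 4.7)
The plan is to prove each of the three parts by reducing stochastic domination to a union-bound-plus-complement argument, since the definition of $\prec$ is essentially a quantitative high-probability statement that is stable under a polynomial number of operations.

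For part (i), I would fix arbitrary constants $c>0$ and $D>0$, and for each $v\in\mathcal{V}$ consider the bad event $\mathcal{E}_v(u):=\{|\xi(u,v)|>p^{c}|\zeta(u,v)|\}$. By the uniform hypothesis, $\PP(\mathcal{E}_v(u))\leq p^{-D'}$ for any $D'$ with $p$ large enough. A union bound over $\mathcal{V}$, whose cardinality is at most $p^C$, gives $\PP(\bigcup_v \mathcal{E}_v(u))\leq p^{C-D'}$; on the complement, $\sum_v|\xi(u,v)|\leq p^{c}\sum_v|\zeta(u,v)|$. Since $D$ was arbitrary, choosing $D'=D+C$ yields the claim, with uniformity in $u$ preserved because the union bound is carried out for each fixed $u$.

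For part (ii), with the same $c,D$, I would work on the intersection of the two good events $\{|\xi_1(u)|\leq p^{c/2}|\zeta_1(u)|\}$ and $\{|\xi_2(u)|\leq p^{c/2}|\zeta_2(u)|\}$, each of which has probability at least $1-p^{-D}$ by hypothesis. On this intersection, $|\xi_1\xi_2|\leq p^{c}|\zeta_1\zeta_2|$, while the complement has probability $\leq 2p^{-D}$, which is $\leq p^{-(D-1)}$ for large $p$. Because $D$ can be taken arbitrarily large, this gives $\xi_1\xi_2\prec\zeta_1\zeta_2$ uniformly in $u$.

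For part (iii), the strategy is to split $\E\xi(u) = \E[\xi(u)\mathbf{1}_{\mathcal{G}(u)}] + \E[\xi(u)\mathbf{1}_{\mathcal{G}(u)^c}]$, where $\mathcal{G}(u):=\{|\xi(u)|\leq p^{c}\Psi(u)\}$. The first term is deterministically at most $p^{c}\Psi(u)$. For the tail piece, Cauchy–Schwarz gives
\begin{equation*}
\E\bigl[|\xi(u)|\mathbf{1}_{\mathcal{G}(u)^c}\bigr]\leq \bigl(\E\xi(u)^2\bigr)^{1/2}\PP(\mathcal{G}(u)^c)^{1/2}\leq p^{C/2}\cdot p^{-D/2},
\end{equation*}
which, using $\Psi(u)\geq p^{-C}$, is bounded by $p^{-D/2+3C/2}\Psi(u)$. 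Choosing $D$ sufficiently large relative to $C$ makes this $\leq p^{c}\Psi(u)$ and, since $c$ is arbitrary, delivers $\E\xi(u)\prec\Psi(u)$ uniformly.

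The only genuinely delicate point is the uniformity in part (iii): one needs the second-moment bound $\E\xi(u)^2\leq p^C$ to hold with the same constant $C$ over $u$, so that the Cauchy–Schwarz estimate is uniform; parts (i) and (ii) are essentially bookkeeping with union bounds and are not substantive obstacles.
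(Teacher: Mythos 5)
The paper does not prove this lemma; it is cited verbatim from \cite{alex2014isotropic}, so there is no proof of the paper's own to compare against, and your argument has to be judged on its own merits.

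Your part (iii) is correct and is the standard split-plus-Cauchy--Schwarz argument; the closing remark about needing the second-moment bound to hold with the same $C$ over $u$ is exactly the right caveat. Parts (i) and (ii), however, do not close as written. In (i) you assert that ``uniformity in $u$ is preserved because the union bound is carried out for each fixed $u$.'' That only yields the pointwise statement $\PP\bigl(\sum_v\xi(u,v)>p^c\sum_v\zeta(u,v)\bigr)\le p^{C-D'}$ separately for every fixed $u$, whereas uniform domination as the paper defines it demands control of $\PP\bigl(\bigcup_{u\in\mathcal{U}}\{\sum_v\xi(u,v)>p^c\sum_v\zeta(u,v)\}\bigr)$; when $\mathcal{U}$ is uncountable (which is the relevant case throughout the paper, e.g.\ $\mathcal{U}$ a domain of spectral parameters $z$), there is no union bound over $u$ available to upgrade pointwise to uniform. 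The fix is to use the uniform hypothesis directly at the level of events: with $\mathcal{E}_{u,v}:=\{\xi(u,v)>p^c\zeta(u,v)\}$, joint uniformity in $u$ and $v$ gives $\PP\bigl(\bigcup_{u,v}\mathcal{E}_{u,v}\bigr)\le p^{-D'}$, and on the complement the termwise bound holds for \emph{all} $(u,v)$ simultaneously, so the summed bound holds for all $u$ at once. (If one only has $v$-pointwise but $u$-uniform control, one union-bounds the events $\bigcup_{u}\mathcal{E}_{u,v}$ over $v$, and this is precisely where $|\mathcal{V}|\le p^C$ enters.) Part (ii) has the same issue and the same fix: intersect the two $u$-uniform good events furnished directly by the hypotheses rather than the per-$u$ ones, after which the complement has probability $\le 2p^{-D}$ and the desired termwise product bound holds simultaneously for all $u$. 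With those corrections the proofs go through.
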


\begin{definition}[Bounded Support]\label{def:bounded_support}
Let $Q>0$ be a ($p$-dependent) deterministic parameter. We say a random matrix $\bZ \in \R^{n \times p}$ satisfies the bounded support condition with $Q$ (or $\bZ$ has bounded support $Q$) if
\begin{equation}\label{eqn:bounded_support}
\max_{1\leq i \leq n,1\leq j \leq p} |Z_{ij}|\prec Q.
\end{equation}
\end{definition}

As showed above, if the entries of $\bZ$ have finite moments up to any order, then $\bZ$ has bounded support $Q=1$. More generally, if every entry of $\bZ$ has a finite $\varphi$-th moment as in  Assumption \ref{as:design_four_epsilon} and $n \leq p$, then using Markov's inequality and a union bound we have
\begin{equation}\label{eqn:logp_bounded_support}
\begin{aligned}
\PP\left(\max_{1 \leq i \leq n,1 \leq j \leq p}|Z_{ij}|\geq (\log p) p^{2/\varphi}\right) &\leq \sum_{i=1}^n \sum_{j=1}^p \PP\left(|Z_{ij}|\geq (\log p) p^{2/\varphi}\right)\\
&\lesssim \sum_{i=1}^n \sum_{j=1}^p \left[(\log p) p^{2/\varphi)}\right]^{-\varphi} \leq (\log p)^{-\varphi}.
\end{aligned}
\end{equation}
In other words, $\bZ$ has bounded support $Q=p^{2/\varphi}$ with high probability.

The following lemma provides concentration bounds for linear and quadratic forms of random variables with bounded support.

\begin{lemma}[Lemma 3.8 of \cite{erdHos2013spectral} and Theorem B.1 of \cite{erdHos2013delocalization}]\label{lemma:linear_quadratic_forms}
Let $(x_i), (y_i)$ be families of centered independent random variables, and $(a_i),(B_{ij})$ be families of deterministic complex numbers. Suppose the entries $x_i$ and $y_j$ have variance at most $1$, and satisfy the bounded support condition \eqref{eqn:bounded_support} for a deterministic parameter $Q \geq 1$. Then we have the following estimates:
\begin{align}
& \left|\sum_{i=1}^na_ix_i\right| \prec Q \max_{1\leq i\leq n}|a_i| + \left(\sum_{i=1}^n|a_i|^2\right)^{1/2},\label{eqn:linear_form}\\
& \left|\sum_{i,j=1}^n x_iB_{ij}y_j\right| \prec Q^2 B_d + Qn^{1/2}B_o + \left(\sum_{1\leq i,j\leq n} |B_{ij}|^2\right)^{1/2},\label{eqn:bilinear_form}\\
& \left|\sum_{i=1}^n (|x_i|^2 - \E|x_i|^2)B_{ii}\right| \prec Qn^{1/2}B_d, \label{eqn:quadratic_form_diag}\\
& \left |\sum_{1\leq i\neq j\leq n} x_iB_{ij}x_j\right| \prec Qn^{1/2}B_o + \left(\sum_{1\leq i\neq j\leq n}|B_{ij}|^2\right)^{1/2},\label{eqn:quadratic_form_offdiag}
\end{align}
where we denote $B_d:=\max_i|B_{ii}|$ and $B_o:=\max_{i\neq j}|B_{ij}|$. Moreover, if $x_i$ and $y_j$ have finite moments up to any order, then we have the following stronger estimates:
\begin{align}
& \left|\sum_{i=1}^na_ix_i\right| \prec \left(\sum_{i=1}^n|a_i|^2\right)^{1/2},\label{eqn:linear_form_strong}\\
& \left|\sum_{i,j=1}^n x_iB_{ij}y_j\right| \prec \left(\sum_{1\leq i,j\leq n} |B_{ij}|^2\right)^{1/2},\label{eqn:bilinear_form_strong}\\
& \left|\sum_{i=1}^n (|x_i|^2 - \E|x_i|^2)B_{ii}\right| \prec \left(\sum_{i=1}^n|B_{ii}|^2\right)^{1/2}, \label{eqn:quadratic_form_diag_strong}\\
& \left |\sum_{1\leq i\neq j\leq n} x_iB_{ij}x_j\right| \prec \left(\sum_{1\leq i\neq j\leq n}|B_{ij}|^2\right)^{1/2}.\label{eqn:quadratic_form_offdiag_strong}
\end{align}
\end{lemma}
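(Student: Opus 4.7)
The plan is to establish all four estimates by the moment method combined with Markov's inequality, leveraging the bounded support condition and independence of the entries. For the linear form \eqref{eqn:linear_form}, I would compute the $2k$-th moment $\E\bigl|\sum_i a_i x_i\bigr|^{2k}$ via a Marcinkiewicz--Zygmund/Rosenthal-type inequality: since the $x_i$ are independent, centered and of variance at most one, one obtains a bound of the form $C_k\bigl[(\sum_i a_i^2)^{k} + Q^{2k-2}\sum_i a_i^{2k}\bigr]$, where the second summand comes from controlling moments of order $>2$ via $|x_i|\le Q$. Using $\sum_i a_i^{2k} \le (\max_i|a_i|)^{2k-2}\sum_i a_i^2$, taking $2k$-th roots, and applying Markov with $k$ arbitrarily large yields the claimed $\prec$-bound with the two terms $(\sum_i a_i^2)^{1/2}$ and $Q\max_i|a_i|$.

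For the bilinear form \eqref{eqn:bilinear_form}, I would expand $\E\bigl|\sum_{i,j} x_i B_{ij} y_j\bigr|^{2k}$ combinatorially. By independence and zero-mean, only index configurations in which every $x$-index and every $y$-index appears at least twice survive. Partitioning these by pairing structure produces three regimes: (i) fully paired graphs contribute at most $C_k\bigl(\sum_{i,j}B_{ij}^2\bigr)^{k}$, the Gaussian-like piece; (ii) configurations with a single higher-multiplicity off-diagonal index contribute at most $C_k (n Q^2 B_o^2)^{k}$, using $|B_{ij}|\le B_o$ off-diagonal together with the support bound on the extra factors of $x$; (iii) the fully diagonal configuration contributes at most $C_k Q^{4k}B_d^{2k}$, using $|x_i|^2|y_j|^2\le Q^4$ twice per ``diagonal'' vertex. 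Summing, taking $2k$-th roots and Markov yields the three-term bound. The quadratic-form estimates follow by specialization: \eqref{eqn:quadratic_form_diag} is a sum of independent centered variables $(|x_i|^2-\E|x_i|^2)B_{ii}$ with support $O(Q^2)$ and bounded variance, so \eqref{eqn:linear_form} applies; \eqref{eqn:quadratic_form_offdiag} mimics the bilinear argument with $y_j\to x_j$, the restriction $i\ne j$ killing the $Q^2 B_d$ contribution.

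The strengthened versions \eqref{eqn:linear_form_strong}--\eqref{eqn:quadratic_form_offdiag_strong} follow automatically: when all moments of the entries are finite, one has $|x_i|\prec 1$, so effectively $Q=p^{\epsilon}$ for arbitrarily small $\epsilon>0$. Then $Q\max_i|a_i|\le p^{\epsilon}(\sum_i a_i^2)^{1/2}$ is absorbed into $(\sum_i a_i^2)^{1/2}$ under $\prec$, and similarly $Qn^{1/2}B_o$ and $Q^2 B_d$ are each dominated by $\bigl(\sum_{i,j}|B_{ij}|^2\bigr)^{1/2}$ after observing $B_o,B_d\le\bigl(\sum_{i,j}|B_{ij}|^2\bigr)^{1/2}$ and absorbing $p^{\epsilon}$ factors.

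The main obstacle is the combinatorial bookkeeping in the moment expansion: one must track exactly how each excess multiplicity of an index at a vertex contributes an additional factor of $Q$ through the support bound, and verify that no pairing configuration exceeds the maximum of the three stated regimes. This labor is carried out carefully in \cite{erdHos2013spectral} and \cite{erdHos2013delocalization}; accordingly the lemma is invoked as a black box here rather than reproved.
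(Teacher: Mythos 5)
The paper offers no proof of this lemma: it is quoted verbatim from Lemma~3.8 of \cite{erdHos2013spectral} and Theorem~B.1 of \cite{erdHos2013delocalization} and used as a black box, which is exactly what your final paragraph also decides to do. Your sketch of the underlying moment-method argument (Rosenthal-type bound for the linear form, combinatorial expansion of the $2k$-th moment into ``fully paired,'' ``one high-multiplicity off-diagonal,'' and ``fully diagonal'' regimes for the bilinear form, specialization for the quadratic forms, and absorption of $Q = p^{\epsilon}$ for the strengthened versions) is faithful to the technique in the cited references.

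One small imprecision worth flagging in your reduction of \eqref{eqn:quadratic_form_diag} to \eqref{eqn:linear_form}: the variables $\xi_i := |x_i|^2 - \E|x_i|^2$ have support $O(Q^2)$ as you say, but their variance is \emph{not} bounded by $1$ — it can be of order $Q^2$, since $\E|x_i|^4 \lesssim Q^2$. To apply \eqref{eqn:linear_form} one should first normalize to $\xi_i/Q$ (variance $\le 1$, support $\le Q$), which yields the bound $Q^2 B_d + Q n^{1/2} B_d$; the first term is then absorbed into $Q n^{1/2} B_d$ under the standing assumption $Q \lesssim n^{1/2}$ (which holds in this paper since $Q = p^{2/\varphi}$ with $\varphi > 4$). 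This is a detail the cited references track carefully, but as you have chosen to invoke the lemma rather than reprove it, the imprecision is harmless here.
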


The following lemma deals with the empirical spectral distribution of $\bZ^\top\bZ$ for $n < p$. Since $\bZ^\top\bZ$ is rank-deficient, its empirical spectral distribution has a peak at $0$. However, its nonzero eigenvalues are the same as the eigenvalues of $\bZ\bZ^\top$. Therefore, previous results for $\bZ\bZ^\top$ directly controls the positive empirical spectral distribution of $\bZ^\top\bZ$.

\begin{lemma}\label{lemma:ESD_bounded}
Suppose $1+\tau \leq p/n \leq \tau^{-1}$, and $\bZ\in \R^{n \times p}$ is a random matrix satisfying the same assumptions as $\bZ^{(2)}$ in Assumption \ref{as:design_four_epsilon} as well as the bounded support condition \eqref{eqn:bounded_support} for a deterministic parameter $Q$ such that $1 \leq Q \leq p^{1/2-c_Q}$ for a constant $c_Q > 0$. Then, we have that
\begin{equation}\label{eqn:ESD_bounded}
(\sqrt{p}-\sqrt{n})^2 - O_\prec(\sqrt{p}\cdot Q) \leq \lambda_n(\bZ^\top\bZ) \leq \lambda_1(\bZ^\top\bZ) \leq (\sqrt{p} + \sqrt{n})^2 + O_\prec(\sqrt{p} \cdot Q)
\end{equation}
\end{lemma}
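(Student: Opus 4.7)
The plan is to first reduce the statement about the $p\times p$ matrix $\bZ^\top \bZ$ to a statement about the $n\times n$ matrix $\bZ\bZ^\top$, and then to invoke known edge bounds for the extreme eigenvalues of sample covariance matrices under the bounded support assumption.

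First, I would exploit the well known identity that the nonzero eigenvalues of $\bZ^\top\bZ$ and $\bZ\bZ^\top$ coincide, including multiplicity. Since $p/n\geq 1+\tau$, the matrix $\bZ^\top\bZ$ has rank at most $n$, so its nonzero eigenvalues are exactly the eigenvalues of the $n\times n$ matrix $\bZ\bZ^\top$. Under Assumption \ref{assumption1}, $\bZ\bZ^\top$ is almost surely of full rank $n$, so in particular
\begin{equation*}
\lambda_n(\bZ^\top\bZ) \;=\; \lambda_{\min}(\bZ\bZ^\top), \qquad \lambda_1(\bZ^\top\bZ) \;=\; \lambda_{\max}(\bZ\bZ^\top).
\end{equation*}
Thus it suffices to prove the two-sided bound for the extreme eigenvalues of the $n\times n$ matrix $\bZ\bZ^\top$, whose aspect ratio $n/p$ lies in $[\tau,(1+\tau)^{-1}]$ and which therefore falls into the classical Marchenko--Pastur regime (bounded away from the square case).

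Second, I would invoke the standard non-asymptotic edge bounds for the extreme singular values of rectangular random matrices with independent, mean-zero, unit-variance entries satisfying the bounded support condition $\max_{i,j}|Z_{ij}|\prec Q$ with $Q\le p^{1/2-c_Q}$. Such bounds are by now classical: they can be obtained either by a moment-method/Bai--Yin argument or, more conveniently, by direct appeal to modern sample-covariance local laws (e.g.\ Bloemendal--Erd\H{o}s--Knowles--Yau--Yin, Pillai--Yin, Alex--Erd\H{o}s--Knowles--Yau--Yin), which yield
\begin{equation*}
\big|\,\lambda_{\max}(\bZ\bZ^\top) - (\sqrt{p}+\sqrt{n})^2\,\big| \;\prec\; \sqrt{p}\,Q, \qquad \big|\,\lambda_{\min}(\bZ\bZ^\top) - (\sqrt{p}-\sqrt{n})^2\,\big| \;\prec\; \sqrt{p}\,Q.
\end{equation*}
The error rate $O_\prec(\sqrt{p}Q)$ is precisely what one gets by a truncation-and-centering argument: truncating each $Z_{ij}$ at scale $Q$ gives entries of size $O(Q)$ and variance $1+O(p^{-c})$; a matrix Bernstein / $\epsilon$-net bound then yields an operator-norm deviation of order $Q\sqrt{n+p}\asymp Q\sqrt{p}$ around the Marchenko--Pastur edges.

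The main subtlety, and the step I expect to take the most care, is verifying that the cited edge bound genuinely applies with only the finite-$\varphi$-moment hypothesis \eqref{eqn:Z_moment_bounded} rather than the stronger finite-moment-of-all-orders hypothesis often assumed in the random-matrix literature. Here the bounded support condition is serving as a proxy: under \eqref{eqn:Z_moment_bounded} one may truncate at level $Q=p^{2/\varphi}$ while controlling the probability of truncation by a union-bound argument analogous to \eqref{eqn:logp_bounded_support}, after which the moments of the truncated entries are bounded at every order so that the standard local-law machinery applies on the complement of a negligible event. Once the appropriate reference (or self-contained truncation argument) is in place, the two-sided bound on $\lambda_n(\bZ^\top\bZ)$ and $\lambda_1(\bZ^\top\bZ)$ claimed in \eqref{eqn:ESD_bounded} follows directly from the eigenvalue identity above, completing the proof.
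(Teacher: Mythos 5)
Your reduction to $\bZ\bZ^\top$ and appeal to established edge bounds is exactly what the paper does: the proof observes $\lambda_1(\bZ^\top\bZ)=\lambda_1(\bZ\bZ^\top)$ and $\lambda_n(\bZ^\top\bZ)=\lambda_n(\bZ\bZ^\top)$, then cites \cite[Theorem 2.10]{alex2014isotropic} for $Q=O(1)$ and \cite[Lemma 3.11]{ding2018necessary} for general $Q$. Two small imprecisions in your write-up, though. First, the heuristic that a truncation plus matrix-Bernstein/$\epsilon$-net argument ``yields'' the $O_\prec(\sqrt{p}\,Q)$ fluctuation around the Marchenko--Pastur edges is too optimistic: a naive operator-norm tail bound does not localize the extreme eigenvalues this sharply around $(\sqrt{p}\pm\sqrt{n})^2$, and the cited result of Ding actually proceeds via a Green-function comparison with the Gaussian ensemble rather than a concentration inequality. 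Second, the ``main subtlety'' you flag about verifying the bound under only the finite-$\varphi$-moment hypothesis is misplaced for this lemma: the bounded support condition with parameter $Q\le p^{1/2-c_Q}$ is already a \emph{hypothesis} of Lemma~\ref{lemma:ESD_bounded}, so no truncation is needed here. That truncation step is the content of the separate Corollary~\ref{cor:ESD_bounded_weaker}, which upgrades this lemma from a bounded-support hypothesis to the moment-only hypothesis with $Q=p^{2/\varphi}$. Neither issue invalidates your argument once you lean directly on the cited references as the paper does.
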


\begin{proof}
First observe that $\lambda_1(\bZ^\top\bZ) = \lambda_1(\bZ\bZ^\top)$, and $\lambda_n(\bZ^\top\bZ) = \lambda_n(\bZ\bZ^\top)$. With this fact in mind, the case when $Q$ is of order $1$ follows from \cite[Theorem 2.10]{alex2014isotropic} and the case for general $Q$ follows from \cite[Lemma 3.11]{ding2018necessary}.
\end{proof}

Using a standard cut-off argument, the above results can be extended to random matrices with the weaker bounded moment assumptions:

\begin{cor}\label{cor:ESD_bounded_weaker}
Suppose $1+\tau \leq p/n \leq \tau^{-1}$, and $\bZ\in \R^{n \times p}$ is a random matrix satisfying the same assumptions as $\bZ^{(2)}$ in Assumption \ref{as:design_four_epsilon}, then \eqref{eqn:ESD_bounded} holds on a high probability event with $Q=p^{2/\varphi}$, where $\varphi$ is the constant in  Assumption \ref{as:design_four_epsilon}.
\end{cor}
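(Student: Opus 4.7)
The strategy is the standard truncation argument: replace $\bZ$ with a truncated, recentered, rescaled matrix $\tilde{\bZ}$ which (i) agrees with $\bZ$ on a high-probability event and (ii) satisfies all hypotheses of Lemma \ref{lemma:ESD_bounded}, including the bounded support condition in the strong (w.o.p.) sense. Apply the lemma to $\tilde\bZ$ and transfer the eigenvalue bounds back to $\bZ$ on the coupling event.

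Concretely, set the truncation level $\tilde Q := (\log p)\, p^{2/\varphi}$ and let $\Omega_0 := \{ \max_{i,j}|Z_{ij}| \leq \tilde Q\}$. The union-bound–Markov calculation reproduced in \eqref{eqn:logp_bounded_support} gives $\PP(\Omega_0) \geq 1 - O((\log p)^{-\varphi}) \to 1$, so $\Omega_0$ is a high probability event. Define $\mu := \E[Z_{11}\mathbbm{1}_{|Z_{11}|\leq \tilde Q}]$ and $\sigma^2 := \Var(Z_{11}\mathbbm{1}_{|Z_{11}|\leq \tilde Q})$, and form the truncated matrix with i.i.d.\ entries
$$\tilde Z_{ij} := \sigma^{-1}\bigl(Z_{ij}\mathbbm{1}_{|Z_{ij}|\leq \tilde Q} - \mu\bigr).$$
Using \eqref{eqn:Z_moment_bounded} and $\varphi > 4$, tail estimates give $|\mu| \leq \E|Z_{11}|^{\varphi}/\tilde Q^{\varphi-1}$ and $|\sigma^2 - 1| \leq \E|Z_{11}|^\varphi/\tilde Q^{\varphi-2}$, both decaying polynomially in $p$. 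Hence $\tilde Z_{ij}$ are i.i.d.\ with zero mean, unit variance, bounded $\varphi$-th moment, and deterministic bound $|\tilde Z_{ij}| \leq 2\tilde Q/\sigma \prec p^{2/\varphi}$; the last estimate is the bounded support condition \eqref{eqn:bounded_support} with $Q = p^{2/\varphi}$ (absorbing the log factor).

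Next apply Lemma \ref{lemma:ESD_bounded} to $\tilde\bZ$ with $c_Q = 1/2 - 2/\varphi > 0$, yielding
$$(\sqrt p - \sqrt n)^2 - O_{\prec}(\sqrt p \cdot p^{2/\varphi}) \;\leq\; \lambda_n(\tilde\bZ^{\top}\tilde\bZ) \;\leq\; \lambda_1(\tilde\bZ^{\top}\tilde\bZ) \;\leq\; (\sqrt p + \sqrt n)^2 + O_{\prec}(\sqrt p \cdot p^{2/\varphi}).$$
On the event $\Omega_0$, the truncation is inactive and $\bZ = \sigma\tilde\bZ + \mu\,\mathbf{1}_{n}\mathbf{1}_p^{\top}$. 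Since $\mu\,\mathbf{1}_n\mathbf{1}_p^\top$ is rank one with operator norm $|\mu|\sqrt{np}$ decaying polynomially, and $\sigma^2 = 1 + o(p^{-c})$ for some $c>0$, Weyl's inequality compares $\lambda_i(\bZ^\top\bZ)$ with $\sigma^2 \lambda_i(\tilde\bZ^\top\tilde\bZ)$ up to an error governed by these two corrections; crucially, both the multiplicative error $|\sigma^2-1|\cdot\lambda_i = O(p^{1-c})$ and the additive rank-one contribution $O(|\mu|\sqrt{np}\cdot \|\tilde\bZ\|_{\op}) = o(1)$ are of strictly smaller order than the allowed fluctuation $\sqrt p\cdot p^{2/\varphi}$ in \eqref{eqn:ESD_bounded}. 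The bounds for $\tilde\bZ^{\top}\tilde\bZ$ therefore transfer to $\bZ^{\top}\bZ$ on $\Omega_0$, giving \eqref{eqn:ESD_bounded} with $Q = p^{2/\varphi}$ on this high-probability event.

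The only genuinely nontrivial step is the bookkeeping in the final transfer: one has to ensure that every correction produced by the truncation (the bias $\mu$, the rescaling $\sigma$, and the rank-one shift) is dominated by the eigenvalue fluctuation scale $\sqrt p\cdot p^{2/\varphi}$ appearing in the conclusion of Lemma \ref{lemma:ESD_bounded}. The hypothesis $\varphi > 4$ is exactly what makes this work: it guarantees both that the tail contributions decay polynomially fast and that $p^{2/\varphi} < \sqrt p$, so the error term $\sqrt p\cdot p^{2/\varphi}$ is smaller than the leading order $p$ of the eigenvalues and absorbs the truncation corrections. Everything else is a direct invocation of Lemma \ref{lemma:ESD_bounded} and Weyl's inequality.
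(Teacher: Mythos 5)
Your truncation argument is correct and is essentially the standard route. The paper itself dispenses with the proof by citing Corollary A.7 of Yang (2020), which performs exactly this truncate–recenter–rescale construction and then transfers eigenvalue bounds back via Weyl; the only addition the paper makes is the (trivial) observation that the nonzero eigenvalues of $\bZ^\top\bZ$ and $\bZ\bZ^\top$ coincide, so that the singular-value bounds in the cited result apply directly to $\bZ^\top\bZ$ even though $n<p$. Your write-up spells out the details that the paper delegates to the citation, and the bookkeeping is sound: $\varphi>4$ gives $c_Q=1/2-2/\varphi>0$ for Lemma \ref{lemma:ESD_bounded}, the bias $|\mu|\lesssim \tilde Q^{-(\varphi-1)}$ and variance defect $|\sigma^2-1|\lesssim \tilde Q^{-(\varphi-2)}$ both decay polynomially, and the resulting rank-one and multiplicative corrections to $\bZ^\top\bZ$ on the coupling event $\Omega_0$ are $o(1)$, hence swallowed by the fluctuation scale $\sqrt{p}\cdot p^{2/\varphi}$. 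No gap.
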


\begin{proof}
The proof follows verbatim \cite[Corollary 23]{yang2020analysis} upon observing the fact that $\lambda_1(\bZ^\top\bZ) = \lambda_1(\bZ\bZ^\top)$, and $\lambda_n(\bZ^\top\bZ) = \lambda_n(\bZ\bZ^\top)$.
\end{proof}

We further cite two additional corollaries:

\begin{cor}[Corollary 24 of \cite{yang2020analysis}]\label{cor:linear_form_weaker}
Suppose $\bZ\in \R^{n \times p}$ is a random matrix satisfying the bounded moment condition in Assumption \ref{as:design_four_epsilon}. Then, for $Q=n^{2/\varphi}$, there exists a high probability event on which the following estimate holds for any deterministic vector $\bv \in \R^p$:
\begin{equation}\label{eqn:linear_form_weaker}
\left|\|\bZ \bv\|_2^2 - n\|\bv\|_2^2\right| \prec n^{1/2}Q\|\bv\|_2^2
\end{equation}
\end{cor}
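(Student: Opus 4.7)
The plan is to reduce to the bounded-support case by truncation and then apply the quadratic-form estimate \eqref{eqn:quadratic_form_diag} of Lemma~\ref{lemma:linear_quadratic_forms} row by row. Set $Q:=n^{2/\varphi}$ and introduce a logarithmic buffer $Q_*:=Q(\log p)^{1/\varphi}$. Define truncated entries $\tilde Z_{ij}:=Z_{ij}\mathbf{1}_{\{|Z_{ij}|\le Q_*\}}$ and the event $\Omega:=\{Z_{ij}=\tilde Z_{ij}\text{ for all }i,j\}$. By Markov's inequality, the moment bound \eqref{eqn:Z_moment_bounded}, and a union bound over the at most $np$ entries,
\[
\PP(\Omega^c)\;\le\; np\,\tau^{-1}Q_*^{-\varphi}\;=\;\tau^{-2}\gamma(\log p)^{-1}\;\longrightarrow\;0,
\]
so $\Omega$ is the desired high-probability event; the buffer $(\log p)^{1/\varphi}$ will be absorbed into $\prec$ at the end.

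\noindent\textbf{Main argument.} Let $\bar Z_{ij}:=(\tilde Z_{ij}-\E\tilde Z_{ij})/\sqrt{\Var(\tilde Z_{ij})}$. These are mean zero, unit variance, independent across $(i,j)$, and of deterministic support $\prec Q_*$. H\"older's inequality together with \eqref{eqn:Z_moment_bounded} yield $|\E\tilde Z_{ij}|\lesssim Q_*^{1-\varphi}$ and $\Var(\tilde Z_{ij})=1+O(Q_*^{2-\varphi})$; since $\varphi>4$ both corrections are $o(1)$, so on $\Omega$ the quantity $\|\bZ\bv\|_2^2$ agrees with $\|\bar\bZ\bv\|_2^2$ up to additive errors that are negligible compared with $n^{1/2}Q\|\bv\|_2^2$. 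Next, decompose row by row,
\[
\|\bar\bZ\bv\|_2^2-n\|\bv\|_2^2\;=\;\sum_{i=1}^n \bigl[(\bar\bZ_{i\cdot}\bv)^2-\|\bv\|_2^2\bigr],
\]
and set $x_i:=\bar\bZ_{i\cdot}\bv/\|\bv\|_2$. The $x_i$ are i.i.d., centered, unit variance, and the linear-form estimate \eqref{eqn:linear_form} applied to each row gives $|x_i|\prec Q_*\|\bv\|_\infty/\|\bv\|_2+1\prec Q_*$, so the family $\{x_i\}$ satisfies the bounded-support condition of Definition~\ref{def:bounded_support} at scale $Q_*$. Applying \eqref{eqn:quadratic_form_diag} with $B_{ii}=1$ then yields $\bigl|\sum_i(x_i^2-\E x_i^2)\bigr|\prec Q_* n^{1/2}$, and multiplying by $\|\bv\|_2^2$ produces $\bigl|\|\bar\bZ\bv\|_2^2-n\|\bv\|_2^2\bigr|\prec Q_* n^{1/2}\|\bv\|_2^2\prec n^{1/2}Q\|\bv\|_2^2$, which is the stated bound.

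\noindent\textbf{Main obstacle.} The chief subtlety is that the raw $\varphi$-th moment bound is not strong enough to make $\{\max_{ij}|Z_{ij}|\le Q\}$ of high probability: it only yields $\PP(\Omega^c)\lesssim 1$. This forces the logarithmic buffer $Q_*=Q(\log p)^{1/\varphi}$, which is painless since $\prec$ absorbs it. A secondary point is that Lemma~\ref{lemma:linear_quadratic_forms} cannot be applied directly to the $np$ individual variables $Z_{ij}$ viewed with diagonal weights $v_j^2$, because a naive application there would give a $\sqrt{np}$ rate controlled by $\max_j v_j^2$ rather than $\|\bv\|_2^2$; packaging the sum by row into the independent unit-variance scalars $x_i$ is exactly what produces the correct $\sqrt n$ rate, and \eqref{eqn:linear_form} is the device that transfers the entry-wise bounded support of $\bar\bZ$ to $\{x_i\}$.
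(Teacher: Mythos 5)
Your proof is correct, and it reconstructs what one would expect behind the citation. The paper itself does not prove this corollary in-text: it quotes it from \cite{yang2020analysis} (Corollary A.8 there), so there is no in-paper argument to compare against. Your route — truncate at $Q_*=Q(\log p)^{1/\varphi}$ to obtain a high-probability event with deterministic entrywise support, recenter and rescale, package $\|\bar\bZ\bv\|_2^2-n\|\bv\|_2^2$ by rows into i.i.d.\ unit-variance scalars $x_i$, use the linear-form bound \eqref{eqn:linear_form} to transfer entrywise support $Q_*$ to the family $\{x_i\}$, and then apply the diagonal quadratic-form estimate \eqref{eqn:quadratic_form_diag} — is exactly the standard mechanism for these bounded-moment corollaries in the local-law literature (and your remark on why a naive application at the $np$ entry level gives the wrong rate $\sqrt{np}\,\max_j v_j^2$ is the right reason for the row packaging). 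Two points that are asserted rather than checked deserve a sentence each if you were to write this out fully: (i) the cross and quadratic error terms coming from the mean shift $\E\tilde Z_{ij}=O(Q_*^{1-\varphi})$ and the variance rescaling $\Var(\tilde Z_{ij})=1+O(Q_*^{2-\varphi})$ must be shown to be $\prec n^{1/2}Q\|\bv\|_2^2$; this is straightforward but uses $\varphi>4$ (e.g.\ the mean term contributes $O(Q_*^{1-\varphi}n^{3/2}\|\bv\|_2^2)$ which is negligible, and the variance rescaling contributes $O(Q_*^{2-\varphi}n\|\bv\|_2^2)$ which requires $\varphi>4/3$); and (ii) the transfer of the per-$i$ bound $|x_i|\prec Q_*$ to the family-level condition $\max_i|x_i|\prec Q_*$ needed for Lemma~\ref{lemma:linear_quadratic_forms} uses a union bound over $n\lesssim p$ indices, which stochastic domination absorbs. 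With those two lines filled in, the argument is complete.
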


We now provide two more results for bounding even moments of the operator norm of random matrices. Our first result holds under Assumption \ref{as:design_all_moments}, whereas our second result requires the stricter Assumption \ref{as:design_gaussian}.
\begin{lemma}\label{lm:matrix_op_bound}
    Let $\bm X \in \R^{n \times p}$ consist of i.i.d. centered random entries with moments of all orders. Then, for an integer $m \geq 1$,
    \begin{equation*}
        \E[\|\bm X\|_{\text{op}}^{2m}] \lcon (v \log v)^m
    \end{equation*}
    where $\lcon$ means that there exists some absolute constant $C_m$ (dependent on $m$) such that the inequality holds.
\end{lemma}
\begin{proof}
    Let $L > 0$ be some truncation level. Then, consider the truncated matrix
    \begin{equation*}
        \bm Y = (x_{ij} \bm 1[|x_{ij} \leq L|] - \E[x_{ij} \bm 1[|x_{ij} \leq L]])_{ij}
    \end{equation*}
    whose entries are bounded by $2L$ and also have moments of all orders. Then, we have
    \begin{equation*}
        \E[\|\bm X\|_{\text{op}}^{2m}] \leq \E[(\|\bm Y\|_{\text{op}} + \|\bm X - \bm Y\|_{\text{op}})^{2m}] \lcon \E[\|\bm Y\|_{\text{op}}^{2m}] + \E[\|\bm X - \bm Y\|_{\text{op}}^{2m}] 
    \end{equation*}
    where
    \begin{equation*}
        \bm X - \bm Y = (x_{ij} \bm 1[|x_{ij}| \geq L] - \E[x_{ij} \bm 1[|x_{ij}| \geq L]])_{ij}.
    \end{equation*}
    We bound each of these terms.

    We handle the first term via careful analysis using the matrix Bernstein inequality.
    Note that we can express
    \begin{equation*}
        \bm Y = \sum_{i,j} Y_{ij} e_i e_j^\top
    \end{equation*}
    where $e_i$ is the $i$th standard basis vector. Then, the matrices $Y_{ij} e_i e_j^\top$ are each independent random matrices with $\|Y_{ij} e_i e_j^\top\|_{\text{op}} \leq 2L$. Additionally, we can bound
    \begin{equation*}
        \|\E[\bm Y^\top \bm Y]\|_{\text{op}} = \biggl\| \sum_{i=1}^n \E[\bm Y_i \bm Y_i^\top] \biggr\|_{\text{op}} = \biggl\|\sum_{i=1}^n \E[Y_{11}^2] \bm I\biggr\|_{\text{op}} = \E[Y_{11}^2] n \leq \E[X_{11}^2] n
    \end{equation*}
    and similarly
    \begin{align*}
        \|\E[\bm Y \bm Y^\top]\|_{\text{op}} &= \biggl\| \sum_{j=1}^p \E[\bm Y_{\bullet j} \bm Y_{\bullet j}^\top] \biggr\|_{\text{op}} = \biggl\|\sum_{j=1}^p \E[Y_{11}^2] \bm I\biggr\|_{\text{op}} = \E[Y_{11}^2] p \leq \E[X_{11}^2] p
    \end{align*}
    Therefore, for $t \geq 0$, a matrix Bernstein bound \cite[Theorem 6.1.1]{tropp2015matrixconcentration} yields
    \begin{align*}
        \PP(\|\bm Y\|_{\text{op}} \geq t) &\leq \min\biggl(v \cdot \exp\biggl(\frac{-t^{2} / 2}{\E[X_{11}^2] (n + p) + 2L t / 3}\biggr), 1\biggr) \\
        &\leq \min\biggl(v \cdot \exp\biggl(-C \cdot \frac{t^2}{v + L t} \biggr), 1 \biggr)
    \end{align*}
    for some constant $C > 0$ that only depends on $\E[X_{11}^2]$, where we let $v = \max(n,p)$. 
    
    Letting
    $u^* = \frac{\sqrt{2 v \log v}}{\sqrt{C}}$, we can first bound
    \begin{align*}
        \E[\|\bm Y\|_{\text{op}}^{2m}] &= \int_0^\infty \PP(\|\bm Y\|_{\text{op}}^{2m} \geq t) dt \\
        &= \int_0^\infty \PP(\|\bm Y\|_{\text{op}} \geq t^{1/2m}) dt \\
        &= 2m\int_0^\infty u^{2m-1} \PP(\|\bm Y\|_{\text{op}} \geq u) dt \\
        &\leq 2m\int_0^\infty u^{2m-1} \min\biggl(v \cdot \exp\biggl(-C \cdot \frac{u^2}{v + L u} \biggr), 1 \biggr) du \\
        &=2 m\int_0^{u^*} u^{2m-1} du +2 m v \int_{u^*}^\infty u^{2m-1} \exp\biggl(-C \cdot \frac{u^2}{v + L u} \biggr) du \\
        &= (u^*)^{2m} + 2 m v \int_{u^*}^\infty u^{2m-1} \exp\biggl(-C \cdot \frac{u^2}{v + L u} \biggr) du
    \end{align*}
    Let $u^\dagger = v /L$, the point at which the behavior of the bound switches from subgaussian behavior to subexponential behavior. To handle this second term, we require that $\frac{u^\dagger}{L} = \frac{v}{L^2} \to \infty$:
    \begin{align*}
        &\int_{u^*}^\infty u^{2m-1} \exp\biggl(-C \cdot \frac{u^2}{v + Lu} \biggr) du \\
        &\leq \biggl|\int_{u^*}^{u^\dagger} u^{2m-1} \exp\biggl(-C \cdot \frac{u^2}{2v} \biggr) du \biggr| + \int_{u^\dagger}^\infty u^{2m-1} \exp\biggl(-C \cdot \frac{u}{2L} \biggr) du  \\
        &\leq \int_{u^*}^{\infty} u^{2m-1} \exp\biggl(-C \cdot \frac{u^2}{2v} \biggr) du  + \int_{u^\dagger}^\infty u^{2m-1} \exp\biggl(-C \cdot \frac{u}{2L} \biggr) du  \\
        &= 2^{m-1} \biggl(\frac{v}{C}\biggr)^m \int_{\frac{C(u^*)^2}{2v}}^\infty  w^{m-1} e^{-w} dw + \biggl(\frac{2L}{C}\biggr)^{2m} \int_{\frac{Cu^\dagger}{2L}}^\infty w^{2m-1} e^{-w} dw \\
        &\leq 2^{m-1} \biggl(\frac{v}{C}\biggr)^m \int_{\log v}^\infty  w^{m-1} e^{-w} dw + \biggl(\frac{2L}{C}\biggr)^{2m} \int_{\frac{Cv}{2L^2}}^\infty w^{2m-1} e^{-w} dw\\
        &\leq 2^{m-1} \biggl(\frac{v}{C}\biggr)^m \Gamma(m) \cdot \frac{e^m (\log v)^m}{m^m e^{\log v}} + \biggl(\frac{2L}{C}\biggr)^{2m} \Gamma(2m) \cdot \frac{e^{2m} (\frac{Cv}{2L^2})^{2m}}{(2m)^{2m} e^{Cv / 2L^2}} \\
        &\lcon v^m  \cdot \frac{(\log v)^m}{v} + L^{2m} \cdot \frac{(\frac{v}{L^2})^{2m}}{e^{Cv / 2L^2}}  \\
        &\lcon \frac{(v \log v)^m}{v} + \frac{v^{2m}}{L^{2m} e^{Cv/2L^2}} 
    \end{align*}
    Altogether, we thus have
    \begin{equation*}
        \E[\|\bm Y\|_{\text{op}}^{2m}] \lcon (v\log v)^m + \frac{v^{2m+1}}{L^{2m} e^{Cv / 2L^2}}
    \end{equation*}
    Now, we provide crude control over the latter term $\|\bm X - \bm Y\|_{\text{op}}^{2m}$. First, note that, for any $q > 0$,
    \begin{align*}
        \E[|x_{11}|^{q} \cdot \bm 1[|x_{11}| \geq L]] \leq \E[|x_{11}|^{q+r} L^{-r}]
    \end{align*}
    Then, we can bound
    \begin{align*}
        \E[\|\bm X - \bm Y\|_{\text{op}}^{2m}] &\leq \E[\|(x_{ij}\bm 1[|x_{ij}| \geq L] - \E[x_{ij}\bm 1[|x_{ij}| \geq L]])_{ij}\|_{F}^{2m}]  \\
        &\lcon \E[\|(x_{ij}\bm 1[|x_{ij}| \geq L])_{ij}\|_F^{2m}] + \|\E[(x_{ij}\bm 1[|x_{ij}| \geq L]])_{ij}\|_{F}^{2m} 
    \end{align*}
    For the first term, we can further bound
    \begin{align*}
        \E[\|(x_{ij}\bm 1[|x_{ij}| \geq L])_{ij}\|_F^{2m}] &= \E\biggl[\biggl(\sum_{i,j} (x_{ij} \bm 1[|x_{ij}| \geq L])^2\biggr)^m\biggr] \\
        &\leq (np)^{m-1} \sum_{i,j} \E[x_{ij}^{2m} \bm 1[|x_{11}| \geq L]] \\
        &= (np)^m \E[x_{11}^{2m} \bm 1[|x_{11}| \geq L]] \\
        &\leq v^{2m} \E[|x_{11}|^{2m+r}] L^{-r}
    \end{align*}
    and similarly
    \begin{align*}
         \|\E[(x_{ij}\bm 1[|x_{ij}| \geq L]])_{ij}\|_{F}^{2m} &= \biggl(\sum_{i,j} \E[x_{ij} \bm 1[|x_{ij}| \geq L]]^2\biggr)^m \\
         &= (np)^m \E[x_{11} \bm 1[|x_{11}| \geq L]]^{2m} \\
         &\leq v^{2m} \E[|x_{11}|^{1+r}]^{2m} L^{-2mr}
    \end{align*}
    Finally, let $L = \sqrt{\frac{Cv}{2\log v}}$ and $r = 2m$; then, the condition $\frac{v}{L^2} \to \infty$ is still satisfied (and thus our subexponential bound is valid).
    Putting together all of our bounds, we have
    \begin{align*}
        \E[\|\bm X\|_{\text{op}}^2] &\lcon (v \log v)^m +  \frac{v^{2m+1}}{L^{2m} e^{Cv/2L^2}} + \frac{v^{2m}}{L^r} + \frac{v^{2m}}{L^{2mr}} \\
        &\lcon (v \log v)^m .
    \end{align*}
\end{proof}
We can prove a tighter bound when the entries are Gaussian:
\begin{lemma}\label{lm:gaussian_matrix_norm_expec}
    Let $\bm Z \in \R^{n \times p}$ consist of i.i.d. $\mathcal{N}(0,1)$ entries. Then, there exists an absolute constant $C > 0$ such that
    \begin{equation*}
        \E[\| \bm Z\|_{\text{op}}^{2m}] \leq C \cdot \max(n, p)^m
    \end{equation*}
\end{lemma}
\begin{proof}
    Using \cite[Theorem 4.4.3]{vershynin2018probability}, there is a constant $C > 0$ such that, for any $t > 0$,
    \begin{equation*}
        \PP(\| \bm Z \|_{\text{op}} \geq C (\sqrt{n} + \sqrt{p} + t)) \leq 2 e^{-t^2}
    \end{equation*}
    Recalling $(C (\sqrt{n} + \sqrt{p} + t))^{2m} \leq 2^{2m-1} C^{2m} ((\sqrt{n} + \sqrt{p})^{2m} + t^{2m})$. Therefore, letting $C' = 2^{2m-1} C^{2m}$,
    \begin{equation*}
        \PP(\|\bm Z\|_{\text{op}}^{2m} \geq C' ((\sqrt{n} + \sqrt{p})^{2m} + t^{2m})) \leq \PP(\|\bm Z\|_{\text{op}}^{2m} \geq (C (\sqrt{n} + \sqrt{p} + t))^{2m}) \leq 2e^{-t^2}
    \end{equation*}
    or rather
    \begin{equation*}
        \PP(\|\bm Z \|_{\text{op}}^{2m} \geq C' (\sqrt{n} + \sqrt{p})^{2m} + r) \leq 2e^{-(r / C')^{1/m}}.
    \end{equation*}
    Now, we can bound
    \begin{align*}
        \E[\|\bm Z\|_{\text{op}}^{2m}] &= \int_0^\infty \PP(\|\bm Z||_{\text{op}}^{2m} \geq t) dt \\
        &= \int_0^{C'(\sqrt{n} + \sqrt{p})^{2m}} \PP(\|\bm Z\|_{\text{op}}^{2m} \geq t) dt + \int_{C'(\sqrt{n} + \sqrt{p})^{2m}}^\infty \PP(\|\bm Z\|_{\text{op}}^{2m} \geq t) dt   \\
        &\leq C'(\sqrt{n} + \sqrt{p})^{2m} + \int_0^\infty \PP(\|\bm Z\|_{\text{op}}^{2m} \geq C'(\sqrt{n} + \sqrt{p})^{2m} + t) dt \\
        &\leq C'(\sqrt{n} + \sqrt{p})^{2m} + \int_0^\infty 2 e^{-(t/C')^{1/m}} dt \\
        &= C'(\sqrt{n} + \sqrt{p})^{2m} + m! \cdot C'
    \end{align*}
    from which the result follows.
\end{proof}
We also require the following moment bound.
\begin{lemma}\label{lm:basic_S_moments}
    Let $\bm W$ be a random $\R^p$-valued random variable consisting of i.i.d. entries such that $\E[W_1] = 0$, $\Var(W_1) = 1$, and $\E[W_1^{2m}] < \infty$. For a vector $\bm u$, positive semidefinite matrix $\bm A$, and a positive integer $m \geq 2$, we have
    \begin{equation*}
        \E[\|\bm W\|_2^m] \lcon p^{m/2}, \quad \E[|\bm u^\top \bm W|^m] \lcon \|\bm u\|_2^m, \quad  , \quad \E[(\bm W^\top \bm A \bm W)^m] \lcon\|\bm A\|_{\text{op}}^m p^m
    \end{equation*}
    where $\lcon$ is taken with respect to $m$ and the moments of $W_1$.
\end{lemma}
\begin{proof}
    We begin by bounding
    \begin{equation*}
        \E[\|\bm W\|_2^m] = p^{m/2} \E\biggl[\biggl(\frac{1}{p} \sum_{i=1}^p W_i^2\biggr)^{m/2} \biggr] \leq p^{m/2} \E\biggl[\frac{1}{p} \sum_{i=1}^p W_i^m \biggr] = p^{m/2} \E[Z_1^m] \lcon p^{m/2}
    \end{equation*}
    Next, note that $\{u_i W_i\}_{i=1}^p$ are independent random variables and thus, by the Marcinkiewicz-Zygmund inequality, we have
    \begin{align*}
        \E[|\bm u^\top \bm W|^m] &= \E\biggl[ \biggl|\sum_{i=1}^p u_i W_i\biggr|^m\biggr] \lcon \biggl(\sum_{i=1}^p \E[|u_i W_i|^m]^{2/m}\biggr)^{m/2} = \E[|W_1|^m] \biggl(\sum_{i=1}^p u_i^2\biggr)^{m/2} \\
        &\lcon \|\bm u\|_2^m.
    \end{align*}
    Finally, we can use the first bound to yield
    \begin{equation*}
        \E[(\bm W^\top \bm A \bm W)^m] \leq \|\bm A\|_{\text{op}}^m \E[\|\bm W\|_2^{2m}] \lcon \|\bm A\|_{\text{op}}^m p^m
    \end{equation*}
\end{proof}

\section{Proof for Model Shift}\label{sec:proof:model_shift}
Similar to Section \ref{sec:proof:design_shift}, we first prove the ridge case (Theorem \ref{thm:model_shift_ridge}) and subsequently prove the min-norm interpolator case (Theorem \ref{thm:model_shift}).

\subsection{Proof of Theorem \ref{thm:model_shift_ridge}}\label{subsec:proof:model_shift_ridge}

We only prove \eqref{eqn:model_shift_ridge_B2} and \eqref{eqn:model_shift_ridge_B3} here as \eqref{eqn:model_shift_ridge_V} and \eqref{eqn:model_shift_ridge_B1} are direct corollaries of \cite[Theorem 5]{hastie2022surprises}. 

\subsubsection{Proof of Eq. \ref{eqn:model_shift_ridge_B2}}
Let $\hat \bSigma_Z = \bm Z^\top \bm Z / n$, and let $I \subseteq [n]$ be a random subset of size $n_1$ drawn without replacement. Then, by exchangeability of the rows of $\bm Z$, we have 
\begin{align}
    B_2(\hat \bbeta_\lambda; \bbeta^{(2)}) &= \tilde \bbeta^\top \bSigma^{1/2} \biggl(\frac{\bm Z^{(1)\top} \bm Z^{(1)}}{n}\biggr)  (\hat \bSigma_Z + \lambda \bSigma^{-1})^{-2} \biggl(\frac{\bm Z^{(1)\top} \bm Z^{(1)}}{n} \biggr) \bSigma^{1/2}\tilde \bbeta \\
    &\overset{d}{=} \tilde \bbeta^\top \bSigma^{1/2}\biggl(\frac{1}{n} \sum_{i \in I} Z_i Z_i^\top \biggr) (\hat \bSigma_Z + \lambda \bSigma^{-1})^{-2} \biggl(\frac{1}{n} \sum_{i \in I} Z_i Z_i^\top \biggr) \bSigma^{1/2} \tilde \bbeta \\
    &= \sum_{i \in I} \tilde \bbeta^\top \bSigma^{1/2}\biggl(\frac{Z_i Z_i^\top}{n}\biggr) (\hat \bSigma_Z + \lambda \bSigma^{-1})^{-2} \biggl( \frac{Z_i Z_i^\top}{n}\biggr) \bSigma^{1/2} \tilde \bbeta \\
    &\qquad + \sum_{i,j \in I; i \neq j} \tilde \bbeta^\top \bSigma^{1/2} \biggl(\frac{Z_i Z_i^\top}{n}\biggr)(\hat \bSigma_Z + \lambda \bSigma^{-1})^{-2} \biggl(\frac{Z_j Z_j^\top}{n} \biggr) \bSigma^{1/2} \tilde \bbeta.
\end{align}
We now introduce the following quantities:
\begin{align}
        B_2^F(\hat \bbeta_\lambda; \bbeta^{(2)}) &:=\tilde \bbeta^\top \bSigma^{1/2}\hat \bSigma_Z(\hat \bSigma_Z + \lambda \bSigma^{-1})^{-2} \hat \bSigma_Z \bSigma^{1/2} \tilde \bbeta \\
    B_2^P(\hat \bbeta_\lambda; \bbeta^{(2)}) &:=\sum_{i=1}^n \tilde \bbeta^\top \bSigma^{1/2}\biggl(\frac{Z_i Z_i^\top}{n}\biggr) (\hat \bSigma_Z + \lambda \bSigma^{-1})^{-2} \biggl( \frac{Z_i Z_i^\top}{n}\biggr) \bSigma^{1/2} \tilde \bbeta.
\end{align}
Then, by symmetry, it follows that
\begin{align}\label{eq:B2_decomp}
    \E[B_2(\hat \bbeta_\lambda; \bbeta^{(2)})] &= \frac{n_1(n_1 - 1)}{n(n-1)} \E[B_2^F(\hat \bbeta_\lambda; \bbeta^{(2)})] \\
    &\qquad + \biggl(\frac{n_1}{n} - \frac{n_1 (n_1 - 1)}{n(n-1)}\biggr) \E[B_2^P(\hat \bbeta_\lambda; \bbeta^{(2)})].
\end{align}
From the following lemma, it suffices to analyze $\E[B_2^F(\hat \bbeta_\lambda;\bbeta^{(2)})]$ and $\E[B_2^P(\hat \bbeta_\lambda;\bbeta^{(2)})]$:
\begin{lemma}\label{lm:b2_full_efron_stein}
    In the setting of Theorem \ref{thm:model_shift_ridge}, and for any small constant $c > 0$, we have 
    \begin{align}
        &\biggl|B_2(\hat \bbeta_\lambda; \bbeta^{(2)}) - \frac{n_1(n_1 - 1)}{n(n-1)} \E[B_2^F(\hat \bbeta_\lambda; \bbeta^{(2)})] - \biggl(\frac{n_1}{n} - \frac{n_1 (n_1 - 1)}{n(n-1)}\biggr) \E[B_2^P(\hat \bbeta_\lambda; \bbeta^{(2)})]\biggr| \\
        &= O(\lambda^{-4} n^{-1/2 + c})
    \end{align}
    with high probability over $(\bZ^{(1)}, \bZ^{(2)})$.
\end{lemma}
\begin{proof}
    By \eqref{eq:B2_decomp} and Chebyshev's inequality, it suffices to show that $\Var(B_2(\hat \bbeta_\lambda; \bbeta^{(2)})) = O(\lambda^{-8} n^{-1})$. We prove this bound by the Efron-Stein inequality: by symmetry, we have
    \begin{align*}
        &\Var(B_2(\hat \bbeta_\lambda; \bbeta^{(2)})) \\
        &\leq n_1 \E\biggl[\biggl(\tilde \bbeta^{\top} \bSigma^{1/2}\biggl(\biggl(\frac{\bm Z^{(1)\top} \bm Z^{(1)}}{n} \biggr)(\hat \bSigma_Z + \lambda \bSigma^{-1})^{-2} \biggl(\frac{\bm Z^{(1)\top} \bm Z^{(1)}}{n} \biggr) \\
        &\hspace{5em}- \biggl(\frac{\bm Z^{(1)'\top} \bm Z^{(1)'}}{n} \biggr)(\hat \bSigma_Z' + \lambda \bSigma^{-1})^{-2} \biggl(\frac{\bm Z^{(1)'\top} \bm Z^{(1)'}}{n} \biggr)\biggr) \bSigma^{1/2} \tilde \bbeta\biggr)^2  \biggr] \\
        &\qquad + n_2 \E\biggl[\biggl(\tilde \bbeta^\top\bSigma^{1/2}\biggl(\frac{\bm Z^{(1)\top} \bm Z^{(1)}}{n} \biggr)((\hat \bSigma_Z + \lambda \bSigma^{-1})^{-2} \\
        &\qquad \hspace{6em}- (\hat \bSigma_Z'' + \lambda \bSigma^{-1})^{-2} ) \biggl(\frac{\bm Z^{(1)\top} \bm Z^{(1)}}{n} \biggr) \bSigma^{1/2} \tilde \bbeta\biggr)^2  \biggr]
    \end{align*}
    where $\bm Z^{(1)'}$ and $\bm Z^{(2)'}$ involve replacing the first rows of $\bm Z^{(1)}$ and $\bm Z^{(2)}$ with independent copies, respectively, and $\hat \bSigma_Z' = \bm Z^{(1)'\top} \bm Z^{(1)'} / n + \bm Z^{(2)\top} \bm Z^{(2)} / n$ and $\hat \bSigma= \bm Z^{(1)\top} \bm Z^{(1)} / n + \bm Z^{(2)'\top} \bm Z^{(2)'} / n$.

    Now, letting $\hat \bSigma_Z^- = \hat \bSigma_Z - \frac{1}{n} \bm Z_1^{(1)} \bm Z_1^{(1)\top}$, $\bm u = \bSigma^{1/2} \tilde \bbeta$, $\bm A = (\hat \bSigma_Z^- + \lambda \bSigma^{-1})^{-1}$, and $\bm B = \frac{1}{n} \sum_{i=2}^{n_1} \bm Z_i^{(1)} \bm Z_i^{(1)\top}$, we can apply the Sherman-Morrison formula to yield
    \begin{align}
        &\E\biggl[\biggl(\tilde \bbeta^{\top} \bSigma^{1/2}\biggl(\biggl(\frac{\bm Z^{(1)\top} \bm Z^{(1)}}{n} \biggr)(\hat \bSigma_Z + \lambda \bSigma^{-1})^{-2} \biggl(\frac{\bm Z^{(1)\top} \bm Z^{(1)}}{n} \biggr) \\
        &\qquad - \biggl(\frac{\bm Z^{(1)'\top} \bm Z^{(1)'}}{n} \biggr)(\hat \bSigma_Z' + \lambda \bSigma^{-1})^{-2} \biggl(\frac{\bm Z^{(1)'\top} \bm Z^{(1)'}}{n} \biggr)\biggr) \bSigma^{1/2} \tilde \bbeta\biggr)^2  \biggr] \\
        &\lesssim \frac{1}{n^2}\E[(\bm u^\top \bm B \bm A^2 \bm Z_1^{(1)} \bm Z_1^{(1)\top} \bm u)^2] + \frac{1}{n^4} \E[(\bm u^\top \bm Z_1^{(1)}\bm Z_1^{(1)\top} \bm A^2 \bm Z_1^{(1)} \bm Z^{(1)\top} \bm u)^2] \\
        &\qquad + \frac{1}{n^2} \E[(\bm u^\top \bm B \bm A^2 \bm Z_1^{(1)} \bm Z_1^{(1)\top} \bm A \bm B \bm u)^2] + \frac{1}{n^4} \E[(\bm u^\top \bm Z_1^{(1)} \bm Z_1^{(1)\top} \bm A^2 \bm Z_1^{(1)} \bm Z_1^{(1)\top} \bm A \bm B \bm u)^2] \\
        &\qquad + \frac{1}{n^4} \E[(\bm u^\top \bm Z_1^{(1)} \bm Z_1^{(1)\top} \bm A \bm Z_1^{(1)} \bm Z_1^{(1)\top} \bm A^2 \bm B \bm u)^2] \\
        &\qquad + \frac{1}{n^6} \E[(\bm u^\top \bm Z_1^{(1)} \bm Z_1^{(1)\top} \bm A \bm Z_1^{(1)} \bm Z_1^{(1)\top} \bm A^2 \bm Z_1^{(1)} \bm Z_1^{(1)\top} \bm u)^2] \\
        &\qquad + \frac{1}{n^4} \E[(\bm u^\top \bm B \bm A \bm Z_1^{(1)} \bm Z_1^{(1)\top} \bm A^2 \bm Z_1^{(1)}\bm Z_1^{(1)\top} \bm A \bm B \bm u)^2] \\
        &\qquad + \frac{1}{n^6} \E[(\bm u^\top \bm Z_1^{(1)} \bm Z_1^{(1)\top} \bm A \bm Z_1^{(1)} \bm Z_1^{(1)\top} \bm A^2 \bm Z_1^{(1)}\bm Z_1^{(1)\top} \bm A \bm B \bm u)^2]  \\
        &\qquad + \frac{1}{n^8} \E[(\bm u^\top \bm Z_1^{(1)} \bm Z_1^{(1)\top} \bm A \bm Z_1^{(1)} \bm Z_1^{(1)\top} \bm A^2 \bm Z_1^{(1)}\bm Z_1^{(1)\top} \bm A \bm Z_1^{(1)} \bm Z_1^{(1)\top} \bm u)^2] \\
        &\lesssim\frac{\|\bSigma\|_{\text{op}}^4 \|\bm u\|_2^4}{\lambda^{4} n^2} + \frac{\|\bSigma\|_{\text{op}}^4 \|\bm u\|_2^4 p^2}{\lambda^4 n^4}  + \frac{\|\bSigma\|_{\text{op}}^6 \|\bm u\|_2^4}{\lambda^6 n^2}  + \frac{\|\bSigma\|_{\text{op}}^6 \|\bm u\|_2^4 p^2}{\lambda^6 n^4}  \\
        &\qquad + \frac{\|\bSigma\|_{\text{op}}^6\|\bm u\|_2^4 p^4}{\lambda^6 n^6}  + \frac{\|\bSigma\|_{\text{op}}^8 \|\bm u\|_2^4 p^2}{\lambda^8 n^4}  + \frac{\|\bSigma\|_{\text{op}}^8 \|\bm u\|_2^4 p^4}{\lambda^8 n^6}  + \frac{\|\bSigma\|_{\text{op}}^8\|\bm u\|_2^4 p^6}{\lambda^8 n^8}  
    \end{align}
    where the last step follows from repeatedly applying H\"older's inequality and Lemma \ref{lm:basic_S_moments}.

    Letting $\hat \bSigma_Z^{--} = \hat \bSigma_Z - \frac{1}{n} \bm Z_1^{(2)} \bm Z_1^{(2)\top}$,
    $\bm u = \bSigma^{1/2} \tilde \bbeta$, $\bm A = (\hat \bSigma_Z^{--} + \lambda \bSigma^{-1})^{-1}$, and $\bm B = \frac{\bm Z^{(1)\top} \bm Z^{(1)}}{n}$, we can similarly bound
    \begin{align*}
        &\E\biggl[\biggl(\tilde \bbeta^\top\bSigma^{1/2}\biggl(\frac{\bm Z^{(1)\top} \bm Z^{(1)}}{n} \biggr)((\hat \bSigma_Z + \lambda \bSigma^{-1})^{-2}  - (\hat \bSigma_Z'' + \lambda \bSigma^{-1})^{-2} )\\
        &\qquad \qquad \biggl(\frac{\bm Z^{(1)\top} \bm Z^{(1)}}{n} \biggr) \bSigma^{1/2} \tilde \bbeta\biggr)^2  \biggr] \\
        &\lesssim \frac{1}{n^2} \E[(\bm u ^\top \bm B \bm A^2 \bm Z_1^{(2)} \bm Z^{(2)\top}_1 \bm A \bm B \bm u)^2] + \frac{1}{n^4} \E[(\bm u^\top \bm B \bm A \bm Z_1^{(2)}\bm Z_1^{(2)\top} \bm A^2 \bm Z_1^{(2)} \bm Z_1^{(2)\top} \bm A \bm B \bm u)^2] \\
        &\leq \frac{\|\bSigma\|_{\text{op}}^6 \|\bm u\|_2^4}{\lambda^6 n^2} + \frac{\|\bSigma\|_{\text{op}}^8 \|\bm u\|_2^4 p^2}{\lambda^8 n^4}
    \end{align*}
    The lemma follows by combining these bounds.
\end{proof}
We now analyze $\E[B_2^F(\hat \bbeta_\lambda; \bbeta^{(2)})]$.
Note that we can express
\begin{align} 
    B_2^F(\hat \bbeta_\lambda; \bbeta^{(2)}) &= \tilde \bbeta^\top \bSigma^{1/2}\hat \bSigma_Z(\hat \bSigma_Z + \lambda \bSigma^{-1})^{-2} \hat \bSigma_Z \bSigma^{1/2} \tilde \bbeta \\
    &= \tilde \bbeta^\top \bSigma \tilde \bbeta - 2 \lambda \tilde \bbeta^\top \bSigma^{1/2} (\hat \bSigma_Z + \lambda \bSigma^{-1})^{-1} \bSigma^{-1/2} \tilde \bbeta \\
    &\qquad + \lambda^2 \tilde \bbeta^\top \bSigma^{-1/2} (\hat \bSigma_Z + \lambda \bSigma^{-1})^{-2} \bSigma^{-1/2} \tilde \bbeta \\
    &= \tilde \bbeta^\top \bSigma \tilde \bbeta - 2 \lambda \tilde \bbeta^\top \bSigma (\hat \bSigma + \lambda \bm I)^{-1} \tilde \bbeta  \\
    &\qquad + \lambda^2 \tilde \bbeta^\top (\hat \bSigma + \lambda \bm I)^{-1} \bSigma (\hat \bSigma + \lambda \bm I)^{-1} \tilde \bbeta. \label{eq:b2f_decomp}
\end{align}
\begin{lemma}\label{lm:b2f_full}
Define the quantity
\begin{equation}
    \mathcal B_2^F(\lambda; \hat H_n, \hat G_n^{\tilde \bbeta}) := \|\tilde \bbeta\|_2^2 \int \frac{s((1 -\gamma + \gamma \lambda m_n(-\lambda))^2 s^2 + \lambda^2 \gamma m_{n,1}(-\lambda))}{(\lambda + (1 - \gamma + \gamma \lambda m_n(-\lambda))s)^2}
\end{equation}
    In the setting of Theorem \ref{thm:model_shift_ridge}, for any small constant $c > 0$, we have
    \begin{equation}
        |\E[B_2^F(\hat \bbeta_\lambda; \bbeta^{(2)})] - \mathcal B_2^F(\lambda; \hat H_n, \hat G_n^{\tilde \bbeta})| = O(n^{-1/2+c} \lambda^{-1})
    \end{equation}
    with probability over $(\bZ^{(1)}, \bZ^{(2)})$.
\end{lemma}
\begin{proof}
    We first note that the same Efron-Stein bound and application of Chebyshev's inequality as Lemma \ref{lm:b2_full_efron_stein} yields
    \begin{equation}
        |B_2^F(\hat \bbeta_\lambda; \bbeta^{(2)}) - \E[B_2^F(\hat \bbeta_\lambda; \bbeta^{(2)})]| = O(n^{-1/2+c}\lambda^{-4})
    \end{equation}
    with high probability.
    Next, we can individually analyze each element of \eqref{eq:b2f_decomp}.
    First, we immediately have $\tilde \bbeta^\top \bSigma \tilde \bbeta = \|\tilde \bbeta\|_2^2 \int s\ d\hat G_{\tilde \bbeta,n}(s)$ by definition. Then, we have
    \begin{align*}
        -2\lambda \tilde \bbeta^\top \bSigma(\hat \bSigma + \lambda \bm I)^{-1} \tilde \bbeta &= -\|\tilde \bbeta\|_2^2 \int  \frac{2\lambda s}{\lambda + (1 - \gamma + \gamma \lambda m_n(-\lambda))s} d\hat G_{\tilde \bbeta,n}(s) \\
        &\qquad + O_\prec(n^{-1/2} \lambda^{-1}) \\
        \lambda^2 \tilde \bbeta^\top (\hat \bSigma + \lambda \bm I)^{-1} \bSigma (\hat \bSigma + \lambda \bm I)^{-1} \tilde \bbeta &= \|\tilde \bbeta\|_2^2 \int \frac{\lambda^2 (1 + \gamma m_{n,1}(-\lambda)) s}{(\lambda + (1 - \gamma + \gamma \lambda m_n(-\lambda))s)^2} d\hat G_{\tilde \bbeta,n}(s) \\
        &\qquad + O_\prec(n^{-1/2}\lambda^{-1})
    \end{align*}
    from applying the existing anisotropic local laws from \cite[Theorem 3.16]{knowles2017anisotropic} and \cite[Section A.2]{hastie2022surprises}, respectively. Combining these terms and simplifying the resulting integral expression reveals that
    \begin{equation*}
        |B_2^F(\hat \bbeta_\lambda; \bbeta^{(2)}) - \mathcal B_2^F(\lambda; \hat H_n, \hat G_n^{\tilde \bbeta})| = O_\prec(n^{-1/2} \lambda^{-1}).
    \end{equation*}
    Therefore, by the triangle inequality, we have
    \begin{equation*}
        |\E[B_2^F(\hat \bbeta_\lambda; \bbeta^{(2)})] - \mathcal B_2^F(\lambda; \hat H_n, \hat G_n^{\tilde \bbeta})| = O(n^{-1/2+c} \lambda^{-1})
    \end{equation*}
    with high probability.
\end{proof}

We now turn our attention to $\E[B_2^P(\hat \bbeta_\lambda; \bbeta^{(2)})]$.
Let $\bm A = (\hat \bSigma^-_Z + \lambda \bSigma^{-1})^{-1}$. Once again, by symmetry and by the Sherman-Morrison formula, we can simplify
\begin{align}\label{eq:symmetry_B2P}
    &\E[B_2^P(\hat \bbeta_\lambda; \bbeta^{(2)})] \\ &= \E\biggl[\sum_{i=1}^n \tilde \bbeta^\top \bSigma^{1/2}\biggl(\frac{Z_i Z_i^\top}{n}\biggr) (\hat \bSigma_Z + \lambda \bSigma^{-1})^{-2} \biggl( \frac{Z_i Z_i^\top}{n}\biggr) \bSigma^{1/2} \tilde \bbeta  \biggr] \\
    &= \frac{1}{n} \E\biggl[ (\tilde \bbeta^\top \bSigma^{1/2} Z_1)^2 \cdot Z_1^\top (\hat \bSigma_Z + \lambda \bSigma^{-1})^{-2} Z_1   \biggr] \\
    &= \frac{1}{n} \E[(\tilde \bbeta^\top \bSigma^{1/2} \bm Z_1)^2 \cdot \bm Z_1^\top \bm A^2 \bm Z_1] - \frac{2}{n^2} \E\biggl[(\tilde \bbeta^\top \bSigma^{1/2} \bm Z_1)^2 \cdot \frac{\bm Z_1^\top \bm A^2 \bm Z_1\bm Z_1^\top \bm A \bm Z_1}{1 + \frac{1}{n} \bm Z_1^\top \bm A \bm Z_1}\biggr] \\
    &\qquad + \frac{1}{n^3} \E\biggl[ (\tilde \bbeta^\top \bSigma^{1/2} \bm Z_1)^2 \cdot \frac{\bm Z_1^\top \bm A \bm Z_1 \bm Z_1^\top \bm A^2 \bm Z_1 \bm Z_1^\top \bm A \bm Z_1}{(1 + \frac{1}{n} \bm Z_1^\top \bm A \bm Z_1)^2} \biggr].
\end{align}
Note that, conditional on $\bm A$, the first term is a product of Gaussian quadratic forms, while the second and third terms are \textit{almost} equal to products of Gaussian quadratic forms. By replacing the denominators in the second and third terms by quantities independent of $\bm Z_1$ before applying formulas for the expectation of products of Gaussian quadratic forms, we yield the following result:
\begin{lemma}
    Define $B_2^{P,\star}(\hat \bbeta_\lambda; \bbeta^{(2)}) := \|\bSigma^{1/2} \tilde \bbeta\|_2^2 \frac{\frac{1}{n}\Tr[\bm A^2] }{(1 + \frac{1}{n} \Tr \bm A)^2}$. Then,
    \begin{equation}
        |\E[B_2^P(\hat \bbeta_\lambda; \bbeta^{(2)})] - \E[B_2^{P,\star}(\hat \bbeta_\lambda; \bbeta^{(2)})]| = O(\lambda^{-6} n^{-1}).
    \end{equation}
\end{lemma}
\begin{proof}
For the second term in \eqref{eq:symmetry_B2P}, we can apply H\"older's inequality and Lemma \ref{lm:basic_S_moments} to yield
\begin{align}\label{eq:trace_B2P_denom_2}
    &\biggl|\frac{2}{n^2} \E\biggl[(\tilde \bbeta^\top \bSigma^{1/2} \bm Z_1)^2 \cdot \frac{\bm Z_1^\top \bm A^2 \bm Z_1\bm Z_1^\top \bm A \bm Z_1}{1 + \frac{1}{n} \bm Z_1^\top \bm A \bm Z_1}\biggr] - \frac{2}{n^2} \E\biggl[(\tilde \bbeta^\top \bSigma^{1/2} \bm Z_1)^2 \cdot \frac{\bm Z_1^\top \bm A^2 \bm Z_1\bm Z_1^\top \bm A \bm Z_1}{1 + \frac{1}{n} \Tr \bm A}\biggr] \biggr| \\
    &\lesssim \frac{1}{n^2} \E\biggl[(\tilde \bbeta^\top \bSigma^{1/2} \bm Z_1)^2 \cdot \bm Z_1^\top \bm A^2 \bm Z_1 \bm Z_1^\top \bm A \bm Z_1 \cdot \biggl|\frac{1}{n} \bm Z_1^\top \bm A \bm Z_1 - \frac{1}{n} \Tr \bm A\biggr|\biggr] \\
    &\leq \frac{1}{n^2} \E[(\tilde \bbeta^\top \bSigma^{1/2} \bm Z_1)^{16}]^{1/8} \cdot \E[(\bm Z_1^\top \bm A^2 \bm Z_1)^4]^{1/4} \E[(\bm Z_1^\top \bm A \bm Z_1)^4]^{1/4} \\
    &\qquad \cdot \E\biggl[\biggl(\frac{1}{n} \bm Z_1^\top \bm A \bm Z_1 - \frac{1}{n} \Tr \bm A\biggr)^2\biggr]^{1/2} \\
    &\lesssim \frac{\|\bSigma\|_{\text{op}}^3 \|\bSigma^{1/2} \tilde \bbeta\|_2^2 p^2}{\lambda^3 n^2} \cdot \frac{1}{n} \Var(\bm Z_1^\top \bm A \bm Z_1)^{1/2} \\
    &=\frac{\|\bSigma\|_{\text{op}}^3 \|\bSigma^{1/2} \tilde \bbeta\|_2^2 p^2}{\lambda^3 n^2} \cdot \frac{\sqrt 2}{n} \Tr(\bm A^2)^{1/2}.
\end{align}
Similarly, for the third term in \eqref{eq:symmetry_B2P}, we can apply H\"older's inequality and Lemma \ref{lm:basic_S_moments}  to yield 
\begin{align}\label{eq:trace_B2P_denom_3}
    &\biggl|\frac{1}{n^3} \E\biggl[ (\tilde \bbeta^\top \bSigma^{1/2} \bm Z_1)^2 \cdot \frac{\bm Z_1^\top \bm A \bm Z_1 \bm Z_1^\top \bm A^2 \bm Z_1 \bm Z_1^\top \bm A \bm Z_1}{(1 + \frac{1}{n} \bm Z_1^\top \bm A \bm Z_1)^2} \biggr] \\
    &\qquad - \frac{1}{n^3} \E\biggl[ (\tilde \bbeta^\top \bSigma^{1/2} \bm Z_1)^2 \cdot \frac{\bm Z_1^\top \bm A \bm Z_1 \bm Z_1^\top \bm A^2 \bm Z_1 \bm Z_1^\top \bm A \bm Z_1}{(1 + \frac{1}{n} \Tr \bm A)^2} \biggr]\biggr| \\
    &\leq \frac{1}{n^3} \E\biggl[(\tilde \bbeta^\top \bSigma^{1/2} \bm Z_1)^2 (\bm Z_1^\top \bm A \bm Z_1)^2 (\bm Z_1^\top \bm A^2 \bm Z_1) \cdot\biggl|\frac{1}{n} \bm Z_1^\top \bm A \bm Z_1 - \frac{1}{n} \Tr \bm A  \biggr| \\
    &\qquad \cdot \biggl|2 + \frac{1}{n} \bm Z_1^\top \bm A \bm Z_1 + \frac{1}{n} \Tr\bm A \biggr| \biggr] \\
    &\leq \frac{1}{n^3} \E[(\tilde \bbeta^\top \bSigma^{1/2} \bm Z_1)^{16}]^{1/8} \E[(\bm Z_1^\top \bm A \bm Z_1)^{16}]^{1/8} \E[(\bm Z_1^\top \bm A^2 \bm Z_1)^8]^{1/8} \\
    &\qquad \cdot\E\biggl[\biggl(2 + \frac{1}{n} \bm Z_1^\top \bm A \bm Z_1 + \frac{1}{n} \Tr\bm A\biggr)^8 \biggr]^{1/8} \E\biggl[\biggl(\frac{1}{n} \bm Z_1^\top \bm A \bm Z_1 - \frac{1}{n} \Tr \bm A  \biggr)^2 \biggr]^{1/2} \\
    &\lesssim \frac{\|\bSigma\|_{\text{op}}^4 \|\bSigma^{1/2} \tilde \bbeta\|_2^2 p^3}{\lambda^4 n^3}  \biggl(1 + \frac{\|\bSigma\|_{\text{op}} p}{\lambda n} + \frac{\|\bSigma\|_{\text{op}}}{\lambda} \biggr) \cdot \frac{1}{n} \Var(\bm Z_1^\top \bm A \bm Z_1)^{1/2} \\
    &= \frac{\|\bSigma\|_{\text{op}}^4 \|\bSigma^{1/2} \tilde \bbeta\|_2^2 p^3}{\lambda^4 n^3}  \biggl(1 + \frac{\|\bSigma\|_{\text{op}} p}{\lambda n} + \frac{\|\bSigma\|_{\text{op}}}{\lambda} \biggr) \cdot \frac{\sqrt 2}{n} \Tr(\bm A^2)^{1/2}.
\end{align}
Now, recognizing that $\bm A$ is independent of $\bm Z_1$, an isotropic Gaussian vector, we can apply \cite[Theorem 5.1]{magnus1978quadatic} to compute
\begin{align}\label{eq:B2P_eval}
    &\frac{1}{n} \E[(\tilde \bbeta^\top \bSigma^{1/2} \bm Z_1)^2 \cdot \bm Z_1^\top \bm A^2 \bm Z_1] - \frac{2}{n^2} \E\biggl[(\tilde \bbeta^\top \bSigma^{1/2} \bm Z_1)^2 \cdot \frac{\bm Z_1^\top \bm A^2 \bm Z_1\bm Z_1^\top \bm A \bm Z_1}{1 + \frac{1}{n} \Tr \bm A}\biggr] \\
    &\qquad + \frac{1}{n^3} \E\biggl[ (\tilde \bbeta^\top \bSigma^{1/2} \bm Z_1)^2 \cdot \frac{\bm Z_1^\top \bm A \bm Z_1 \bm Z_1^\top \bm A^2 \bm Z_1 \bm Z_1^\top \bm A \bm Z_1}{(1 + \frac{1}{n} \Tr \bm A)^2} \biggr] \\
    &= \frac{\|\bSigma^{1/2} \tilde \bbeta\|_2^2}{n} \E[\Tr[\bm A^2]] + \frac{2}{n} \E[\tilde \bbeta^\top \bSigma^{1/2} \bm A^2\bSigma^{1/2} \tilde \bbeta ] \\
    &\qquad -\frac{2}{n^2} \E\biggl[\frac{1}{1 + \frac{1}{n} \Tr \bm A}\biggl( \|\bSigma^{1/2} \tilde \bbeta\|_2^2 \Tr[\bm A^2] \Tr[\bm A] + 2\tilde \bbeta^\top \bSigma^{1/2} \bm A^2 \bSigma^{1/2} \tilde \bbeta \cdot \Tr[\bm A] \\
    &\qquad \hspace{7em}+ 2\tilde \bbeta^\top \bSigma^{1/2} \bm A\bSigma^{1/2} \tilde \bbeta\cdot  \Tr[\bm A^2] + 2\|\bSigma^{1/2} \tilde \bbeta\|_2^2 \cdot \Tr[\bm A^3]\\
    &\qquad \hspace{7em}+ 8 \tilde \bbeta^\top \bSigma^{1/2} \bm A^3 \bSigma^{1/2} \tilde \bbeta  \biggr) \biggr] \\
    &\qquad + \frac{1}{n^3} \E\biggl[\frac{1}{(1 + \frac{1}{n} \Tr \bm A)^2} \biggl(\|\bSigma^{1/2} \tilde \bbeta\|_2^2 \Tr[\bm A]^2 \Tr[\bm A^2] \\
    &\hspace{4em}+ 8(\|\bSigma^{1/2} \tilde \bbeta\|_2^2 \Tr[\bm A^4] + 2 \Tr[\bm A] \cdot \tilde \bbeta^\top \bSigma^{1/2} \bm A^3\bSigma^{1/2} \tilde \bbeta \\
    &\hspace{6em}+ \Tr[\bm A^2] \cdot \tilde \bbeta^\top \bSigma^{1/2} \bm A^2\bSigma^{1/2} \tilde \bbeta) \\
    &\hspace{4em}+ 4(2 \tilde \bbeta^\top \bSigma^{1/2} \bm A \bSigma^{1/2} \tilde \bbeta \cdot \Tr[\bm A^3] + \tilde \bbeta^\top \bSigma^{1/2} \bm A^2 \bSigma^{1/2} \tilde \bbeta \cdot \Tr[\bm A^2]) \\
    &\hspace{4em}+ 2(2\| \bSigma^{1/2} \tilde \bbeta\|_2^2 \Tr[\bm A] \Tr[\bm A^3] + 2 \tilde \bbeta^\top \bSigma^{1/2} \bm A\bSigma^{1/2} \tilde \bbeta\cdot \Tr[\bm A] \Tr[\bm A^2] \\
    &\hspace{4em}+ \| \bSigma^{1/2} \tilde \bbeta\|_2^2 \Tr[\bm A^2]^2 + \tilde \bbeta^\top \bSigma^{1/2} \bm A^2 \bSigma^{1/2} \tilde \bbeta\cdot  \Tr[\bm A]^2)  \\
    &\hspace{4em}+ 48 \tilde \bbeta^\top \bSigma^{1/2} \bm A^4 \bSigma^{1/2} \tilde \bbeta \biggr)  \biggr] \\
    &= \|\bSigma^{1/2} \tilde \bbeta\|_2^2 \cdot  \E\biggl[\frac{\frac{1}{n}\Tr[\bm A^2] }{(1 + \frac{1}{n} \Tr \bm A)^2} \biggr] + O\biggl(\frac{\|\bSigma\|_{\text{op}}^3 \|\bSigma^{1/2} \tilde \bbeta\|_2^2}{\lambda^3 n} + \frac{\|\bSigma\|_{\text{op}}^4 \|\bSigma^{1/2} \tilde \bbeta\|_2^2}{\lambda^4 n}\biggr)
\end{align}
The result follows by combining \eqref{eq:symmetry_B2P}, \eqref{eq:trace_B2P_denom_2}, \eqref{eq:trace_B2P_denom_3}, and \eqref{eq:B2P_eval}.
\end{proof}
We wish to apply existing random matrix theory results to characterize the asymptotic behavior of $B_2^{P,\star}(\hat \bbeta_\lambda; \bbeta^{(2)}) = \|\bSigma^{1/2} \tilde \bbeta\|_2^2 \frac{\frac{1}{n}\Tr[\bm A^2] }{(1 + \frac{1}{n} \Tr \bm A)^2}$, but these results apply to the stochastic quantity, not its expectation. Therefore, we first require the following concentration result that allows us to return our attention to the stochastic setting:
\begin{lemma}\label{lm:b2p_conc}
In the setting of Theorem \ref{thm:model_shift_ridge}, for any small constant $c > 0$, we have
\begin{equation}
    |B_2^{P,\star}(\hat \bbeta_\lambda; \bbeta^{(2)}) - \E[B_2^{P,\star}(\hat \bbeta_\lambda; \bbeta^{(2)})]| = O(\lambda^{-3} n^{-1+c})
\end{equation}
with high probability over $(\bZ^{(1)}, \bZ^{(2)})$.
\end{lemma}
\begin{proof}
    By Chebyshev's inequality, it suffices to show that $\Var(B_2^{P,\star}(\hat \bbeta_\lambda; \bbeta^{(2)})) = O(\lambda^{-6} n^{-2})$. We proceed via the Gaussian Poincar\'e inequality. To simplify our notation, note that it suffices to consider bound the quantity
    \begin{equation}
        \Var\biggl(\frac{\frac{1}{n+1}\Tr[(\frac{\bm Z^\top \bm Z}{n+1} + \lambda \bSigma^{-1})^{-2}] }{(1 + \frac{1}{n+1} \Tr \bm [(\frac{\bm Z^\top \bm Z}{n+1} + \lambda \bSigma^{-1})^{-1}])^2}  \biggr).
    \end{equation}
    For fixed $i,j$, note that $\frac{\partial}{\partial Z_{ij}} \bm Z^\top \bm Z = e_j \bm Z_i^\top + \bm Z_i e_j^\top$. Then,
    we can compute
    \begin{align}
        &\frac{\partial}{\partial Z_{ij}} \Tr\biggl[\biggl(\frac{\bm Z^\top \bm Z}{n+1} + \lambda \bSigma^{-1} \biggr)^{-1} \biggr] \\
        &= -\frac{1}{n+1}\Tr\biggl[\biggl(\frac{\bm Z^\top \bm Z}{n+1} + \lambda \bSigma^{-1}\biggr)^{-1} (e_j \bm Z_i^\top + \bm Z_i e_j^\top) \biggl(\frac{\bm Z^\top \bm Z}{n+1} + \lambda \bSigma^{-1}\biggr)^{-1}\biggr] \\
        &= -\frac{2}{n+1}  \bm Z_i^\top \biggl(\frac{\bm Z^\top \bm Z}{n+1} + \lambda \bSigma^{-1}\biggr)^{-2} e_j.
    \end{align}
    and thus, by the chain rule,
    \begin{equation}
        \frac{\partial}{\partial Z_{ij}}\frac{1}{(1 + \frac{1}{n+1} \Tr[(\frac{\bm Z^\top \bm Z}{n+1} + \lambda \bSigma^{-1})^{-1}])^2} =- \frac{2}{n+1} \frac{\frac{\partial}{\partial Z_{ij}}  \Tr[(\frac{\bm Z^\top \bm Z}{n+1} + \lambda \bSigma^{-1})^{-1}]}{(1 + \frac{1}{n+1}  \Tr[(\frac{\bm Z^\top \bm Z}{n+1} + \lambda \bSigma^{-1})^{-1}])^3}.
    \end{equation}
    Similarly,
    \begin{align}
        &\frac{\partial}{\partial Z_{ij}} \Tr\biggl[\biggl(\frac{\bm Z^\top \bm Z}{n+1} + \lambda \bSigma^{-1}\biggr)^{-2}\biggr]\\
        &=- \frac{1}{n+1}\Tr\biggl[\biggl(\frac{\bm Z^\top \bm Z}{n+1} + \lambda \bSigma^{-1}\biggr)^{-2} (e_j \bm Z_i^\top + \bm Z_i e_j^\top) \biggl(\frac{\bm Z^\top \bm Z}{n+1} + \lambda \bSigma^{-1}\biggr)^{-1}\biggr] \\
        &\qquad -\frac{1}{n+1}\Tr\biggl[\biggl(\frac{\bm Z^\top \bm Z}{n+1} + \lambda \bSigma^{-1}\biggr)^{-1} (e_j \bm Z_i^\top + \bm Z_i e_j^\top) \biggl(\frac{\bm Z^\top \bm Z}{n+1} + \lambda \bSigma^{-1}\biggr)^{-2}\biggr] \\
        &= -\frac{4}{n+1} \bm Z_i^\top \biggl(\frac{\bm Z^\top \bm Z}{n+1} + \lambda \bSigma^{-1}\biggr)^{-3} e_j
    \end{align}
    By the product rule, we have
    \begin{align}
        &\frac{\partial}{\partial Z_{ij}} \frac{\Tr[(\frac{\bm Z^\top \bm Z}{n+1} + \lambda \bSigma^{-1})^{-2}] }{(1 + \frac{1}{n+1} \Tr \bm [(\frac{\bm Z^\top \bm Z}{n+1} + \lambda \bSigma^{-1})^{-1}])^2}  \\
        &= \frac{4}{n+1} \cdot \frac{1}{(1 + \frac{1}{n+1} \Tr \bm [(\frac{\bm Z^\top \bm Z}{n+1} + \lambda \bSigma^{-1})^{-1}])^2} \\
        &\qquad \cdot\biggl(\frac{\bm Z_i^\top (\frac{\bm Z^\top \bm Z}{n+1} + \lambda \bSigma^{-1} )^{-2} e_j}{1 + \frac{1}{n+1} \Tr \bm [(\frac{\bm Z^\top \bm Z}{n+1} + \lambda \bSigma^{-1})^{-1}]}-\bm Z_i^\top \biggl(\frac{\bm Z^\top \bm Z}{n+1} + \lambda \bSigma^{-1} \biggr)^{-3} e_j \biggr)
    \end{align}
    from which it follows that
    \begin{align}
        &\biggl\|\nabla_{\bm Z_i} \frac{\Tr[(\frac{\bm Z^\top \bm Z}{n+1} + \lambda \bSigma^{-1})^{-2}] }{(1 + \frac{1}{n+1} \Tr \bm [(\frac{\bm Z^\top \bm Z}{n+1} + \lambda \bSigma^{-1})^{-1}])^2}\biggr\|_2 \\
        &= \frac{4}{n+1} \cdot \frac{1}{(1 + \frac{1}{n+1} \Tr \bm [(\frac{\bm Z^\top \bm Z}{n+1} + \lambda \bSigma^{-1})^{-1}])^2} \\
        &\qquad \cdot \biggl|\frac{(\frac{\bm Z^\top \bm Z}{n+1} + \lambda \bSigma^{-1} )^{-2}}{1 + \frac{1}{n+1} \Tr \bm [(\frac{\bm Z^\top \bm Z}{n+1} + \lambda \bSigma^{-1})^{-1}]}-  \biggl(\frac{\bm Z^\top \bm Z}{n+1} + \lambda \bSigma^{-1} \biggr)^{-3} \biggr| \cdot \|  \bm Z_i\|_2 \\
        &\lesssim \frac{1}{n+1} \cdot \biggl(\frac{\|\bSigma\|_{\text{op}}^2}{\lambda^2} + \frac{\|\bSigma\|_{\text{op}}^3}{\lambda^3} \biggr) \|\bm Z_i\|_2.
    \end{align}
    Therefore, by the Gaussian Poincar\'e inequality, we have
    \begin{align}
        \Var\biggl(\|\bSigma^{1/2}& \tilde \bbeta\|_2^2 \cdot  \frac{\frac{1}{n+1}\Tr[(\frac{\bm Z^\top \bm Z}{n+1} + \lambda \bSigma^{-1})^{-2}] }{(1 + \frac{1}{n+1} \Tr \bm [(\frac{\bm Z^\top \bm Z}{n+1} + \lambda \bSigma^{-1})^{-1}])^2}  \biggr) \\
        &= \frac{\|\bSigma^{1/2} \tilde \bbeta\|_2^4 }{(n+1)^2}\Var\biggl(\frac{\Tr[(\frac{\bm Z^\top \bm Z}{n+1} + \lambda \bSigma^{-1})^{-2}] }{(1 + \frac{1}{n+1} \Tr \bm [(\frac{\bm Z^\top \bm Z}{n+1} + \lambda \bSigma^{-1})^{-1}])^2}  \biggr) \\
        &\leq \frac{\|\bSigma^{1/2} \tilde \bbeta\|_2^4 }{(n+1)^2} \sum_{i=1}^n \E\biggl[\biggl\|\nabla_{\bm Z_i}\frac{\Tr[(\frac{\bm Z^\top \bm Z}{n+1} + \lambda \bSigma^{-1})^{-2}] }{(1 + \frac{1}{n+1} \Tr \bm [(\frac{\bm Z^\top \bm Z}{n+1} + \lambda \bSigma^{-1})^{-1}])^2}  \biggr\|_2^2\biggr] \\
        &\lesssim \frac{\|\bSigma^{1/2} \tilde \bbeta\|_2^4 }{(n+1)^4} \biggl(\frac{\|\bSigma\|_{\text{op}}^4}{\lambda^4} + \frac{\|\bSigma\|_{\text{op}}^6}{\lambda^6} \biggr) \sum_{i=1}^n \E[ \|\bm Z_i\|_2^2] \\
        &\lesssim \frac{\|\bSigma^{1/2} \tilde \bbeta\|_2^4 }{(n+1)^2} \biggl(\frac{\|\bSigma\|_{\text{op}}^4}{\lambda^4} + \frac{\|\bSigma\|_{\text{op}}^6}{\lambda^6} \biggr)
    \end{align}
    which concludes the claim.
\end{proof}
Now, we can now apply results from random matrix theory to analyze $B_2^{P,\star}(\hat \bbeta_\lambda; \bbeta^{(2)})$. 
\begin{lemma}\label{lm:b2p_local}
    Define the quantities
    \begin{align}
        &\mathcal B_2^{P,\star,N}(\lambda;\hat H_n, \hat G_n^{\tilde \bbeta}, \gamma) :=  \lambda^2 \gamma ( 1 + \gamma m_{n,1}(\lambda)) \int \frac{s^2}{(\lambda + (1 - \gamma + \gamma \lambda m_n(-\lambda)s)^2} d\hat H_n(s) \\
        &\mathcal B_2^{P,\star,D}(\lambda;\hat H_n, \hat G_n^{\tilde \bbeta}, \gamma) :=\lambda + \gamma \int \frac{\lambda s}{\lambda + (1 - \gamma + \gamma \lambda m_n(-\lambda)) s} d\hat H_n(s) \\
        &\mathcal B_2^{P,\star}(\lambda;\hat H_n, \hat G_n^{\tilde \bbeta}, \gamma) := \|\tilde \bbeta\|_2^2 \frac{\mathcal{B}_2^{P,\star,N}}{(\mathcal B_2^{P,\star,D})^2} \int s\; d\hat G_{\tilde \bbeta,n}(s).
    \end{align}
    Then, for any small constant $c > 0$, we have
    \begin{equation}
        |B_2^{P,\star}(\hat \bbeta_\lambda; \bbeta^{(2)}) - \mathcal B_2^{P,\star}(\lambda; \hat H_n, \hat G_n^{\tilde \bbeta}, \gamma)| = p(n^{-1/2+c} \lambda^{-5})
    \end{equation}
    with high probability over $(\bZ^{(1)}, \bZ^{(2)})$.
\end{lemma}
\begin{proof}
    It is clear that (e.g., via a simple Sherman-Morrison argument) 
    \begin{equation*}
        \|\bSigma^{1/2} \tilde \bbeta\|_2^2 \cdot  \frac{\frac{1}{n}\Tr[\bm A^2] }{(1 + \frac{1}{n} \Tr \bm A)^2} \quad \text{and} \quad   \|\bSigma^{1/2} \tilde \bbeta\|_2^2 \cdot  \frac{\frac{1}{n}\Tr[\bSigma (\hat \bSigma + \lambda\bm I)^{-1} \bSigma (\hat \bSigma + \lambda \bm I)^{-1}] }{(1 + \frac{1}{n} \Tr[\bSigma (\hat \bSigma + \lambda \bm I)^{-1}])^2}
    \end{equation*}
    have the same asymptotic behavior.
    We can bound
    \begin{align*}
        &|B_2^{P,\star}(\lambda) - \mathcal B_2^{P,\star}(\lambda)| \\
        &\leq \biggl|\|\bSigma^{1/2} \tilde \bbeta\|_2^2 \cdot  \frac{\frac{\lambda^2}{n}\Tr[\bSigma (\hat \bSigma + \lambda\bm I)^{-1} \bSigma (\hat \bSigma + \lambda \bm I)^{-1}] }{(\lambda + \frac{\lambda}{n} \Tr[\bSigma (\hat \bSigma + \lambda \bm I)^{-1}])^2} \\
        &\hspace{6em}- \|\bSigma^{1/2} \tilde \bbeta\|_2^2 \cdot  \frac{\frac{\lambda^2}{n}\Tr[\bSigma (\hat \bSigma + \lambda\bm I)^{-1} \bSigma (\hat \bSigma + \lambda \bm I)^{-1}] }{(\mathcal B_2^{P,\star,D}(\lambda))^2}\biggr| \\
        &\qquad+\biggl|\|\bSigma^{1/2} \tilde \bbeta\|_2^2 \cdot  \frac{\frac{\lambda^2}{n}\Tr[\bSigma (\hat \bSigma + \lambda\bm I)^{-1} \bSigma (\hat \bSigma + \lambda \bm I)^{-1}] }{(\mathcal B_2^{P,\star,D}(\lambda))^2} - \|\bSigma^{1/2} \tilde \bbeta\|_2^2 \cdot  \frac{\mathcal B_2^{P,\star,N}(\lambda)}{(\mathcal B_2^{P,\star,D}(\lambda))^2}\biggr| \\
        &\leq \|\bSigma^{1/2} \tilde \bbeta\|_2^2 \cdot \frac{\lambda^2}{n}\Tr[\bSigma (\hat \bSigma + \lambda\bm I)^{-1} \bSigma (\hat \bSigma + \lambda \bm I)^{-1}]  \\
        &\hspace{6em} \cdot \biggl|  \frac{1}{(\lambda + \frac{\lambda}{n} \Tr[\bSigma (\hat \bSigma + \lambda \bm I)^{-1}])^2} - \frac{1}{(\mathcal B_2^{P,\star,D}(\lambda))^2}\biggr| \\
        &\qquad+ \frac{\|\bSigma^{1/2} \tilde \bbeta\|_2^2}{(\mathcal B_2^{P,\star,D}(\lambda))^2} \biggl|\frac{\lambda^2}{n}\Tr[\bSigma (\hat \bSigma + \lambda\bm I)^{-1} \bSigma (\hat \bSigma + \lambda \bm I)^{-1}] - \mathcal B_2^{P,\star,N}(\lambda)\biggr| \\
        &\leq \|\bSigma^{1/2} \tilde \bbeta\|_2^2 \|\bSigma\|_{\text{op}}^3 \lambda^{-4}  \cdot \biggl|   \lambda + \frac{\lambda}{n} \Tr[\bSigma (\hat \bSigma + \lambda \bm I)^{-1}] - \mathcal B_2^{P,\star,D}(\lambda)\biggr| \\
        &\qquad+ \|\bSigma^{1/2} \tilde \bbeta\|_2^2 \lambda^{-2} \biggl|\frac{\lambda^2}{n}\Tr[\bSigma (\hat \bSigma + \lambda\bm I)^{-1} \bSigma (\hat \bSigma + \lambda \bm I)^{-1}] - \mathcal B_2^{P,\star,N}(\lambda)\biggr| \\
        &= O_\prec\biggl(\frac{\|\tilde \bbeta\|_2^2 \|\bSigma\|_{\text{op}}^4}{n^{1/2} \lambda^5} + \frac{\|\tilde \bbeta\|_2^2 \|\bSigma\|_{\text{op}}}{n \lambda^3}\biggr)
    \end{align*}
    where the last step follows from \cite[Theorem 3.16]{knowles2017anisotropic}.
\end{proof}
We can now complete our analysis of $B_2^P$. 
\begin{lemma}\label{lm:b2p_full}
    In the setting of Theorem \ref{thm:model_shift_ridge}, for any small constant $c > 0$, we have
    \begin{equation}
        |\E[B_2^P(\hat \bbeta_\lambda; \bbeta^{(2)})] - \mathcal B_2^{P,\star}(\lambda; \hat H_n, \hat G_n^{\tilde \bbeta},\gamma)| = O(n^{-1/2+c} \lambda^{-5})
    \end{equation}
    with high probability over $(\bZ^{(1)}, \bZ^{(2)})$.
\end{lemma}
\begin{proof}
    The claim follows immediately by combining Lemmas \ref{lm:b2p_conc} and \ref{lm:b2p_local}.
\end{proof}
Combining Lemmas \ref{lm:b2_full_efron_stein},
\ref{lm:b2f_full},
and \ref{lm:b2p_full} yields \eqref{eqn:model_shift_ridge_B2}.

\subsubsection{Proof of Eq. \ref{eqn:model_shift_ridge_B3}}
Our proof of \eqref{eqn:model_shift_ridge_B3} follows a similar strategy. Define
\begin{equation}
    B_3^F(\hat \bbeta_\lambda; \bbeta^{(2)}) := \lambda \bbeta^{(2)\top} \bSigma^{-1/2} (\hat \bSigma_Z + \lambda \bSigma^{-1})^{-2} \hat \bSigma_Z \bSigma^{1/2} \tilde \bbeta.
\end{equation}
By exchangeability of the rows of $\bm Z$ and symmetry, we have
\begin{align}\label{eq:symmetry_B3}
    \E[B_3(\hat \bbeta_\lambda; \bbeta^{(2)})] &= \E\biggl[\lambda \bbeta^{(2)\top} \bSigma^{-1/2} (\hat \bSigma_Z + \lambda \bSigma^{-1})^{-2}\biggl(\frac{\bm Z^{(1)\top} \bm Z^{(1)}}{n} \biggr) \bSigma^{1/2} \tilde \bbeta  \biggr] \\
    &= \E\biggl[\lambda \bbeta^{(2)\top} \bSigma^{-1/2} (\hat \bSigma_Z + \lambda \bSigma^{-1})^{-2} \biggl(\frac1n \sum_{i \in I} \bm Z_i \bm Z_i^{\top}\biggr) \bSigma^{1/2} \tilde \bbeta \biggr] \\
    &= \frac{n_1}{n}\E[ B_3^F(\hat \bbeta_\lambda; \bbeta^{(2)})].
\end{align}
We similarly prove a concentration result that implies that it suffices to analyze $B_3^F(\hat \bbeta_\lambda; \bbeta^{(2)})$:
\begin{lemma}\label{lm:b3_conc}
    In the setting of Theorem \ref{thm:model_shift_ridge}, for any small constant $c > 0$, we  have
    \begin{equation}
        \biggl| B_3(\hat \bbeta_\lambda; \bbeta^{(2)}) - \frac{n_1}{n} B_3^F(\hat \bbeta_\lambda; \bbeta^{(2)}) \biggr| = O(\lambda^{-3} n^{-1/2+c})
    \end{equation}
    with high probability over $(\bZ^{(1)}, \bZ^{(2)})$.
\end{lemma}
\begin{proof}
    With \eqref{eq:symmetry_B3} in mind, it suffices to show that
    \begin{align}
        |B_3(\hat \bbeta_\lambda;\bbeta^{(2)}) - \E[B_3(\hat \bbeta_\lambda;\bbeta^{(2)}]| &= O(\lambda^{-3} n^{-1/2+c}) \label{eq:b3_conc} \\
        |B_3^F(\hat \bbeta_\lambda;\bbeta^{(2)}) - \E[B_3^F(\hat \bbeta_\lambda;\bbeta^{(2)}]| &= O(\lambda^{-3} n^{-1/2+c}) \label{eq:b3f_conc}
    \end{align}
    with high probability.
    We begin with \eqref{eq:b3_conc}.

    We proceed by an analogous Efron-Stein argument as in Lemma \ref{lm:b2_full_efron_stein}: that is,
    \begin{align}
        &\Var(B_3(\hat \bbeta_\lambda; \bbeta^{(2)})) \\
        &\leq n_1 \E\biggl[\biggl(\lambda \bbeta^{(2)\top} \bSigma^{-1/2} \biggl((\hat \bSigma_Z + \lambda \bSigma^{-1})^{-2} \biggl(\frac{\bm Z^{(1)\top} \bm Z^{(1)}}{n}\biggr) \\
        &\hspace{11em} - (\hat \bSigma_Z' + \lambda \bSigma^{-1})^{-2} \biggl(\frac{\bm Z^{(1)'\top} \bm Z^{(1)'}}{n}\biggr)\biggr) \bSigma^{1/2} \tilde \bbeta\biggr)^2\biggr] \\
        &\qquad + n_2 \E\biggl[\biggl(\lambda \bbeta^{(2)\top} \bSigma^{-1/2} ((\hat \bSigma_Z + \lambda \bSigma^{-1})^{-2}  - (\hat \bSigma_Z'' + \lambda \bSigma^{-1})^{-2} ) \\
        &\hspace{6em}\biggl(\frac{\bm Z^{(1)\top} \bm Z^{(1)}}{n}\biggr) \bSigma^{1/2} \tilde \bbeta\biggr)^2\biggr]
    \end{align}
    Letting $\bm u = \bSigma^{-1/2} \bbeta^{(2)}$, $\bm v = \bSigma^{1/2} \tilde \bbeta$, $\bm A = (\hat \bSigma_Z^- + \lambda \bSigma^{-1})^{-1}$, and $\bm B = \frac{1}{n} \sum_{i=2}^{n_1} \bm Z_i^{(1)} \bm Z_i^{(1)\top}$, the Sherman-Morrison formula yields
    \begin{align}
        &\E\biggl[\biggl(\lambda \bbeta^{(2)\top} \bSigma^{-1/2} \biggl((\hat \bSigma_Z + \lambda \bSigma^{-1})^{-2} \biggl(\frac{\bm Z^{(1)\top} \bm Z^{(1)}}{n}\biggr) \\
        &\hspace{8em}- (\hat \bSigma_Z' + \lambda \bSigma^{-1})^{-2} \biggl(\frac{\bm Z^{(1)'\top} \bm Z^{(1)'}}{n}\biggr)\biggr) \bSigma^{1/2} \tilde \bbeta\biggr)^2\biggr] \\
        &\lcon \frac{\lambda^2}{n^2} \E[(\bm u^\top \bm A^2 \bm Z_1^{(1)} \bm Z_1^{(1)\top} \bm A \bm B \bm v)^2] + \frac{\lambda^2}{n^2} \E[(\bm u^\top \bm A \bm Z_1^{(1)} \bm Z^{(1)\top} \bm A^2 \bm B \bm v)^2] \\
        &\qquad + \frac{\lambda^2}{n^4} \E[(\bm u^\top \bm A \bm Z_1^{(1)} \bm Z_1^{(1)\top} \bm A^2 \bm Z_1^{(1)} \bm Z_1^{(1)\top} \bm A \bm B \bm v)^2] \\
        &\qquad + \frac{\lambda^2}{n^2} \E[(\bm u^\top \bm A^2 \bm Z_1^{(1)}\bm Z_1^{(1)\top} \bm v)^2] + \frac{\lambda^2}{n^4} \E[(\bm u^\top \bm A^2 \bm Z_1^{(1)} \bm Z_1^{(1)\top} \bm A \bm Z_1^{(1)} \bm Z_1^{(1)\top} \bm v)^2] \\
        &\qquad + \frac{\lambda^2}{n^4} \E[(\bm u^\top \bm A \bm Z_1^{(1)} \bm Z^{(1)\top} \bm A^2 \bm Z_1^{(1)} \bm Z_1^{(1)\top} \bm v)^2] \\
        &\qquad + \frac{\lambda^2}{n^6} \E[(\bm u^\top \bm A \bm Z_1^{(1)} \bm Z_1^{(1)\top} \bm A^2 \bm Z_1^{(1)} \bm Z_1^{(1)\top} \bm A \bm Z_1^{(1)} \bm Z_1^{(1)\top} \bm v)^2] \\
        &\lcon \frac{\|\bSigma\|_{\text{op}}^6\|\bm u\|_2^2 \|\bm v\|_2^2}{\lambda^4 n^2} + \frac{\|\bSigma\|_{\text{op}}^8\|\bm u\|_2^2 \|\bm v\|_2^2 p^2}{\lambda^6 n^4} + \frac{\|\bSigma\|_{\text{op}}^4\|\bm u\|_2^2 \|\bm v\|_2^2}{\lambda^2 n^2}  \\
        &\qquad + \frac{\|\bSigma\|_{\text{op}}^6\|\bm u\|_2^2 \|\bm v\|_2^2 p^2}{\lambda^4 n^4} + \frac{\|\bSigma\|_{\text{op}}^8\|\bm u\|_2^2 \|\bm v\|_2^2 p^4}{\lambda^6 n^6}
    \end{align}
    where the last step follows from repeatedly applying H\"older's inequality and Lemma \ref{lm:basic_S_moments}.
    
    Letting $\bm u = \bSigma^{-1/2} \bbeta^{(2)}$, $\bm v = \bSigma^{1/2} \tilde \bbeta$, $\bm A = (\hat \bSigma^{--} + \lambda \bSigma^{-1})^{-1}$, and $\bm B = \frac{\bm Z^{(1)\top} \bm Z^{(1)}}{n}$, we can similarly bound
    \begin{align}
        &\E\biggl[\biggl(\lambda \bbeta^{(2)\top} \bSigma^{-1/2} ((\hat \bSigma_Z + \lambda \bSigma^{-1})^{-2}  - (\hat \bSigma_Z'' + \lambda \bSigma^{-1})^{-2} ) \biggl(\frac{\bm Z^{(1)\top} \bm Z^{(1)}}{n}\biggr) \bSigma^{1/2} \tilde \bbeta\biggr)^2\biggr] \\
        &\lcon \frac{\lambda^2}{n^2} \E[(\bm u^\top \bm A^2 \bm Z_1^{(1)} \bm Z_1^{(1)\top} \bm A \bm B \bm v)^2] + \frac{\lambda^2}{n^2} \E[(\bm u^\top \bm A \bm Z_1^{(1)} \bm Z^{(1)\top} \bm A^2 \bm B \bm v)^2] \\
        &\qquad + \frac{\lambda^2}{n^4} \E[(\bm u^\top \bm A \bm Z_1^{(1)} \bm Z_1^{(1)\top} \bm A^2 \bm Z_1^{(1)} \bm Z_1^{(1)\top} \bm A \bm B \bm v)^2] \\
        &\lcon \frac{\|\bSigma\|_{\text{op}}^6 \|\bm u\|_2^2 \|\bm v\|_2^2}{\lambda^4 n^2} + \frac{\|\bSigma\|_{\text{op}}^8\|\bm u\|_2^2 \|\bm v\|_2^2 p^2}{\lambda^6 n^4}.
    \end{align}
    Therefore, $\Var(B_3(\hat \bbeta_\lambda; \bbeta^{(2)})) = O(\lambda^{-6} n^{-1})$, from which  follows. \eqref{eq:b3f_conc} follows by the same process.
\end{proof}
As with Lemma \ref{lm:b2p_local}, we can now apply existing anisotropic local law results to analyze $B_3^F$.
\begin{lemma}\label{lm:b3f_local}
    In the setting of Theorem \ref{thm:model_shift_ridge}, we have
    \begin{equation}
        \biggl|B_3^F(\hat \bbeta_\lambda; \bbeta^{(2)}) - \frac{n}{n_1} \mathcal B_3(\lambda; \hat H_n, \hat G_n^{(b)}, \gamma)\biggr| =O_\prec(n^{-1/2 }\lambda^{-1})
    \end{equation}
\end{lemma}
\begin{proof}
    Note that
    \begin{align}
        B_3^F &= \lambda \bbeta^{(2)\top} \bSigma^{-1/2}(\hat \bSigma_Z + \lambda \bSigma^{-1})^{-2} \hat \bSigma_Z \bSigma^{1/2} \tilde \bbeta \\
        &= \lambda \bbeta^{(2)\top} (\hat \bSigma + \lambda \bm I)^{-1} \bSigma \tilde \bbeta - \lambda^2 \bbeta^{(2)\top} (\hat \bSigma + \lambda \bm I)^{-1} \bSigma (\hat \bSigma + \lambda \bm I)^{-1} \tilde \bbeta.
    \end{align}
    Now, by \cite[Theorem 3.16]{knowles2017anisotropic} and \cite[Section A.2]{hastie2022surprises}, we have
    \begin{align}
        &\biggl|\lambda \bbeta^{(2)\top} (\hat \bSigma + \lambda \bm I)^{-1} \bSigma\tilde \bbeta \\
        &\qquad - \|\bbeta^{(2)}\|_2 \|\tilde \bbeta\|_2 \int \frac{\lambda s}{\lambda + (1 - \gamma + \gamma \lambda m_n(-\lambda))s} d\hat G^{(b)}(s)\biggr| = O_\prec(n^{-1/2} \lambda^{-1})\\
        &\biggl| \lambda^2 \bbeta^{(2)\top} (\hat \bSigma + \lambda \bm I)^{-1} \bSigma (\hat \bSigma + \lambda \bm I)^{-1} \tilde \bbeta \\
        &\qquad -  \|\bbeta^{(2)}\|_2 \|\tilde \bbeta\|_2 \int \frac{\lambda^2 (1 + \gamma m_{n,1}(-\lambda))s}{(\lambda + (1 - \gamma + \gamma \lambda m_n(-\lambda))s)^2} d\hat G^{(b)}(s)\biggr| = O_\prec(\lambda^{-1} n^{-1})
    \end{align}
    The result follows immediately.
\end{proof}
Combining Lemmas \ref{lm:b3_conc} and \ref{lm:b3f_local} yields \eqref{eqn:model_shift_ridge_B3}.

\subsection{Proof of Theorem \ref{thm:model_shift}}\label{section:theorem_model_shift_final}
We only prove \eqref{eqn:model_shift_B2} and \eqref{eqn:model_shift_B3}, as \eqref{eqn:model_shift_V} and \eqref{eqn:model_shift_B1} follow immediately from \cite[Theorem 2]{hastie2022surprises}.  

\subsubsection{Finite-sample risk difference}\label{subsubsec:finite_sample_model_diff}
In this section, we provide high-probability bounds for the difference in the finite-sample risk terms for the pooled ridge estimator $\hat \bbeta_\lambda$ and for the pooled min-$\ell_2$-norm interpolator $\hat \bbeta$, as a function of the ridge parameter $\lambda$. Specifically, we control the differences $|B_2(\hat \bbeta_\lambda; \bbeta^{(2)}) - B_2(\hat \bbeta; \bbeta^{(2)})|$ and $|B_3(\hat \bbeta_\lambda; \bbeta^{(2)}) - B_3(\hat \bbeta; \bbeta^{(2)})|$. 

Throughout this section, we denote by $\bm X / \sqrt{n} = \bm U \bm D \bm V^\top$ the full singular value decomposition of $\bm X / \sqrt{n}$, where $\bm U \in \R^{n \times n}$ and $\bm V \in \R^{p \times p}$ are orthogonal matrices. Let $d_1 \le \cdots \le d_n$ denote the singular values, and let $d_{\text{nz}}$ denote the smallest \textit{nonzero} singular value.

We first establish a few useful bounds.
\begin{lemma}\label{lm:pseudoinverse_ridge_diff_bound}
    With $d_{\mathrm{nz}}$ as defined above, we have
    \begin{align*}
        \biggl\|((\hat \bSigma + \lambda \bm I)^{-1} - \hat \bSigma^\dagger) \frac{\bm X^{(1)\top}}{\sqrt n}\biggr\|_{\mathrm{op}} &\le  \frac{\lambda}{d_{\mathrm{nz}}^3}, \\
        \|\lambda (\hat \bSigma + \lambda\bm I)^{-1} - (\bm I - \hat \bSigma^\dagger \bSigma)) \|_{\mathrm{op}} &\le \frac{\lambda}{d_{\mathrm{nz}}^2}.
    \end{align*}
\end{lemma}
\begin{proof}
    Note that $\bm X^{(1)\top}$ is a submatrix of $\bm X^\top$ and thus
    \begin{align*}
        \biggl\|((\hat \bSigma + \lambda \bm I)^{-1} - \hat \bSigma^\dagger) \frac{\bm X^{(1)\top}}{\sqrt n}\biggr\|_{\mathrm{op}} &\le \biggl\|((\hat \bSigma + \lambda \bm I)^{-1} - \hat \bSigma^\dagger) \frac{\bm X^\top}{\sqrt n}\biggr\|_{\mathrm{op}} \\
        &= \|((\bm D^\top \bm D + \lambda \bm I)^{-1} - (\bm D^\top \bm D)^\dagger) \bm D^\top \|_{\text{op}}  \\
        &= \max_{d_i > 0} \frac{\lambda}{d_i (d_i^2 + \lambda)} \\
        &\le \frac{\lambda}{d_{\text{nz}}^3}
    \end{align*}
    which establishes the first claim. Next, note that
    \begin{align*}
        &\|\lambda (\hat \bSigma + \lambda\bm I)^{-1} - (\bm I - \hat \bSigma^\dagger \bSigma)\|_{\mathrm{op}}  \\
        &= \| \lambda (\bm D^\top \bm D + \lambda \bm I)^{-1} - (\bm I - (\bm D^\top \bm D)^\dagger (\bm D^\top \bm D))\|_{\text{op}} \\
        &= \max_{d_i > 0} \frac{\lambda}{d_i^2 + \lambda} \\
        &\le \frac{\lambda}{d_{\text{nz}}^2}
    \end{align*}
    which establishes the second claim.
\end{proof}

\begin{lemma}\label{lm:pseudoinverse_ridge_indiv_bound}
    With $d_{\mathrm{nz}}$ as defined above, we have
    \begin{align*}
        \biggl\|(\hat \bSigma + \lambda \bm I)^{-1} \frac{\bX^{(1)\top}}{\sqrt{n}}\biggr\|_{\mathrm{op}} \le 1/d_{\mathrm{nz}}, \quad \biggl\|\hat \bSigma^\dagger \frac{\bX^{(1)\top}}{\sqrt{n}}\biggr\|_{\mathrm{op}} \le 1/d_{\mathrm{nz}}.\\
    \end{align*}
\end{lemma}
\begin{proof}
    Like above, we have
    \begin{align*}
        \biggl\|(\hat \bSigma + \lambda \bm I)^{-1} \frac{\bX^{(1)\top}}{\sqrt{n}}\biggr\|_{\mathrm{op}} &\le \biggl\|(\hat \bSigma + \lambda \bm I)^{-1} \frac{\bX^\top}{\sqrt{n}}\biggr\|_{\mathrm{op}} = \|(\bm D^\top \bm D + \lambda \bm I)^{-1}  \bm D^\top \|_{\text{op}}  \\
        &= \max_{d_i > 0} \frac{d_i}{d_i^2 + \lambda} \le \max_{d_i > 0} \frac{1}{d_i} \le 1/d_{\text{nz}}
    \end{align*}
    which yields the first claim. Similarly, 
    \begin{align*}
        \biggl\|\hat \bSigma^\dagger \frac{\bX^{(1)\top}}{\sqrt{n}}\biggr\|_{\mathrm{op}} &\le \biggl\|\hat \bSigma^\dagger \frac{\bX^{\top}}{\sqrt{n}}\biggr\|_{\mathrm{op}} = \|(\bm D^\top \bm D )^\dagger  \bm D^\top \|_{\text{op}}  = \max_{d_i > 0} \frac{1}{d_i} = 1 / d_{\text{nz}}
    \end{align*}
    which yields the second claim.
\end{proof}

Now, we control the difference $|B_2(\hat \bbeta_\lambda; \bbeta^{(2)}) - B_2(\hat \bbeta; \bbeta^{(2)})|$.
\begin{lemma}\label{lm:b2_ridge_limit_bound}
    We have
    \begin{equation*}
        |B_2(\hat \bbeta_\lambda; \bbeta^{(2)}) - B_2(\hat \bbeta; \bbeta^{(2)})| = O_\prec( \lambda \|\tilde \bbeta\|_2^2).
    \end{equation*}
\end{lemma}
\begin{proof}
    Using the triangle inequality, we have
    \begin{align*}
        &|B_2(\hat \bbeta_\lambda; \bbeta^{(2)}) - B_2(\hat \bbeta; \bbeta^{(2)})| \\
        &= \biggl|\tilde \bbeta^\top \biggl(\frac{\bm X^{(1)\top} \bm X^{(1)}}{n} \biggr) (\hat \bSigma + \lambda \bm I)^{-1} \bSigma (\hat \bSigma + \lambda \bm I)^{-1} \biggl(\frac{\bm X^{(1)\top} \bm X^{(1)}}{n} \biggr)  \tilde \bbeta \\
        &\qquad - \tilde \bbeta^\top \biggl(\frac{\bm X^{(1)\top} \bm X^{(1)}}{n} \biggr) \hat \bSigma^\dagger \bSigma \hat \bSigma^\dagger \biggl(\frac{\bm X^{(1)\top} \bm X^{(1)}}{n} \biggr)  \tilde \bbeta \biggr| \\
        &\leq \biggl|\tilde \bbeta^\top \biggl(\frac{\bm X^{(1)\top} \bm X^{(1)}}{n} \biggr) (\hat \bSigma + \lambda \bm I)^{-1} \bSigma ((\hat \bSigma + \lambda \bm I)^{-1} - \hat \bSigma^\dagger) \biggl(\frac{\bm X^{(1)\top} \bm X^{(1)}}{n} \biggr)  \tilde \bbeta  \biggr| \\
        &\qquad + \biggl| \tilde \bbeta^\top \biggl(\frac{\bm X^{(1)\top} \bm X^{(1)}}{n} \biggr) ((\hat \bSigma + \lambda \bm I)^{-1} - \hat \bSigma^\dagger) \bSigma \hat \bSigma^\dagger \biggl(\frac{\bm X^{(1)\top} \bm X^{(1)}}{n} \biggr)  \tilde \bbeta \biggr| \\
        &\leq \|\tilde \bbeta\|_2^2 \cdot \|\bSigma\|_{\text{op}}  \cdot \biggl\| \frac{\bm X^{(1)}}{\sqrt{n}}\biggr\|_{\text{op}}^2 \cdot \biggr\|((\hat \bSigma + \lambda \bm I)^{-1} - \hat \bSigma^\dagger) \frac{\bm X^{(1)\top}}{\sqrt n}  \biggl\|_{\text{op}}  \\
        &\qquad \cdot \biggl(\biggl\|(\hat \bSigma + \lambda \bm I)^{-1} \frac{\bm X^{(1)}}{\sqrt n}\biggr\|_{\text{op}} + \biggl\|\hat \bSigma^\dagger \frac{\bm X^{(1)}}{\sqrt n}\biggr\|_{\text{op}}  \biggr) \\
        &\lesssim \|\tilde \bbeta\|_2^2 \cdot \|\bSigma\|_{\text{op}} \cdot  \frac{\lambda}{d_{\text{nz}}^4}\\
        &= O_\prec(\lambda \|\tilde \bbeta\|_2^2 \cdot \|\bSigma\|_{\text{op}})
    \end{align*}
    where the last two lines follow from Lemmas \ref{lemma:ESD_bounded}, \ref{lm:pseudoinverse_ridge_diff_bound}, and \ref{lm:pseudoinverse_ridge_indiv_bound}. 
\end{proof}
We can then similarly control the difference $|B_3(\hat \bbeta_\lambda; \bbeta^{(2)}) - B_3(\hat \bbeta; \bbeta^{(2)})|$.
\begin{lemma}\label{lm:b3_ridge_limit_bound}
    We have
    \begin{equation*}
        |B_3(\hat \bbeta_\lambda; \bbeta^{(2)}) - B_3(\hat \bbeta; \bbeta^{(2)})| = O_\prec( \lambda \|\bbeta^{(2)}\|_2 \|\tilde \bbeta\|_2).
    \end{equation*}
\end{lemma}
\begin{proof}
    Using the triangle inequality, we have
    \begin{align*}
        &\frac12 |B_3(\hat \bbeta_\lambda; \bbeta^{(2)}) - B_3(\hat \bbeta; \bbeta^{(2)})| \\
        &= \biggl|\lambda \bbeta^{(2)\top} (\hat \bSigma + \lambda \bm I)^{-1} \bSigma (\hat \bSigma + \lambda \bm I)^{-1} \biggl(\frac{\bm X^{(1)\top} \bm X^{(1)}}{n} \biggr)  \tilde \bbeta \\
        &\qquad - \bbeta^{(2)\top} (\bm I - \hat \bSigma^\dagger \hat \bSigma) \bSigma \hat \bSigma^\dagger \biggl(\frac{\bm X^{(1)\top} \bm X^{(1)}}{n} \biggr)  \tilde \bbeta \biggr| \\
        &\leq \biggl|\bbeta^{(2)\top} (\lambda (\hat \bSigma + \lambda \bm I)^{-1} - (\bm I - \hat \bSigma^\dagger \hat \bSigma))   \bSigma (\hat \bSigma + \lambda \bm I)^{-1}  \biggl(\frac{\bm X^{(1)\top} \bm X^{(1)}}{n} \biggr)  \tilde \bbeta  \biggr| \\
        &\qquad + \biggl| \bbeta^{(2)\top} (\bm I - \hat \bSigma^\dagger \hat \bSigma) \bSigma ((\hat \bSigma + \lambda \bm I)^{-1} - \hat \bSigma^\dagger) \biggl(\frac{\bm X^{(1)\top} \bm X^{(1)}}{n} \biggr)  \tilde \bbeta \biggr| \\
        &\leq \|\bbeta^{(2)}\|_2 \|\tilde \bbeta\|_2 \|\bSigma\|_{\text{op}} \cdot \|\lambda (\hat \bSigma + \lambda \bm I)^{-1} - (\bm I - \hat \bSigma^\dagger \hat \bSigma)\|_{\text{op}} \\
        &\hspace{4em}\cdot \biggl\|(\hat \bSigma + \lambda \bm I)^{-1} \frac{\bm X^{(1)\top}}{\sqrt{n}}\biggr\|_{\text{op}} \cdot \biggl\| \frac{\bm X^{(1)}}{\sqrt{n}}\biggr\|_{\text{op}} \\
        &\qquad + \|\bbeta^{(2)}\|_2 \|\tilde \bbeta\|_2 \|\bSigma\|_{\text{op}} \cdot \|\bm I - \hat \bSigma^\dagger \hat \bSigma\|_{\text{op}} \cdot \biggl\|((\hat \bSigma + \lambda \bm I)^{-1} - \hat \bSigma^\dagger) \frac{\bX^{(1)\top}}{\sqrt{n}}\biggr\|_{\text{op}} \biggl\| \frac{\bm X^{(1)}}{\sqrt{n}}\biggr\|_{\text{op}} \\
        &\lesssim \|\bbeta^{(2)}\|_2 \|\tilde \bbeta\|_2 \|\bSigma\|_{\text{op}} \cdot \frac{\lambda}{d_{\mathrm{nz}}^3} \\
        &= O_\prec( \lambda \|\bbeta^{(2)}\|_2 \|\tilde \bbeta\|_2 \|\bSigma\|_{\text{op}})
    \end{align*}
    where again the last two lines follow from Lemmas \ref{lemma:ESD_bounded}, \ref{lm:pseudoinverse_ridge_diff_bound}, and \ref{lm:pseudoinverse_ridge_indiv_bound}. 
\end{proof}

\subsubsection{Asymptotic risk difference}
In this section, we provide numerical bounds for the difference between asymptotic risk terms for the pooled ridge estimator $\hat \bbeta_\lambda$ and for the pooled min-$\ell_2$-norm interpolator $\hat \bbeta$, as a function of the ridge parameter $\lambda$. That is, we control the differences $|\mathcal B_2(\lambda; \hat H_n, \hat G_n^{\tilde \bbeta}) - \mathcal B_2( \hat H_n, \hat G_n^{\tilde \bbeta})|$ and $|\mathcal B_3(\lambda; \hat H_n, \hat G_n^{(b)}) - \mathcal B_3( \hat H_n, \hat G_n^{(b)})|$.

We first recall that, per \cite[Section A.2.1]{hastie2022surprises}, we have that
\begin{align*}
    m_n(-\lambda) &= (1 - \gamma^{-1}) \frac{1}{\lambda} + c_0 + O_*(\lambda) \\
    m_{n,1}(-\lambda) &= c_1 + O_*(\lambda)
\end{align*}
where $O_*(\lambda)$ is a term bounded by $\lambda \cdot C(\tau)$, where $C(\tau)$ is a constant only depending on $\tau$. Importantly, the above bound implies that
\begin{equation*}
    \frac{1 + \gamma + \gamma \lambda m_n(-\lambda)}{\lambda} = \gamma c_0 + O_*(\lambda).
\end{equation*}
Using these facts, we can establish the desired bounds below.

\begin{lemma}
\begin{align*}
    |\mathcal B_2(\lambda; \hat H_n, \hat G_n^{\tilde \bbeta}) - \mathcal B_2( \hat H_n, \hat G_n^{\tilde \bbeta})| = O_*(\lambda \|\tilde \bbeta\|_2^2) + O(n^{-1} \|\tilde \bbeta\|_2^2)
\end{align*}
\end{lemma}
\begin{proof}
    The above bounds yield
    \begin{align*}
        &\biggl|\frac{\gamma s (c_1 + \gamma c_0^2 s^2)}{(1 + c_0 \gamma s)^2} - \frac{s((1 -\gamma + \gamma \lambda m_n(-\lambda))^2 s^2 + \lambda^2 \gamma m_{n,1}(-\lambda))}{(\lambda + (1 - \gamma + \gamma \lambda m_n(-\lambda))s)^2} \biggr| = O_*(\lambda)
    \end{align*}
    uniformly in $s \in \text{supp}(\hat G_n^{\tilde \bbeta})$ and thus, recalling that $\hat G_n^{\tilde \bbeta}(\R) = 1$, we have
    \begin{align*}
        &\biggl|\int \frac{\gamma s (c_1 + \gamma c_0^2 s^2)}{(1 + c_0 \gamma s)^2} d\hat G_n^{\tilde \bbeta}(s) - \int \frac{s((1 -\gamma + \gamma \lambda m_n(-\lambda))^2 s^2 + \lambda^2 \gamma m_{n,1}(-\lambda))}{(\lambda + (1 - \gamma + \gamma \lambda m_n(-\lambda))s)^2} d\hat G^{\tilde \bbeta}_n(s) \biggr| \\
        &= O_*(\lambda).
    \end{align*}
    The same analysis yields
    \begin{align*}
        &\biggl|\frac{\lambda^2 \gamma ( 1 + \gamma m_{n,1}(\lambda)) \int \frac{s^2}{(\lambda + (1 - \gamma + \gamma \lambda m_n(-\lambda)s)^2} d\hat H_n(s)}{(\lambda + \gamma \int \frac{\lambda s}{\lambda + (1 - \gamma + \gamma \lambda m_n(-\lambda)) s} d\hat H_n(s))^2} - \frac{(1 + \gamma c_1) \int \frac{s^2}{(1 + \gamma c_0 s)^2} d\hat H_n(s)}{\gamma (\int \frac{s}{1 + \gamma c_0 s} d\hat H_n(s))^2}\biggr| \\
        &= O_*(\lambda).
    \end{align*}
    We can also simply bound
    \begin{equation*}
        \biggl|\frac{n_1(n_1 - 1)}{n(n-1)} - \frac{n_1^2}{n^2}\biggr| = O(n^{-1}), \quad  \biggl|\frac{n_1 (n - n_1)}{n(n-1)} - \frac{n_1}{n}\biggl(1 - \frac{n_1}{n} \biggr)\biggr| = O(n^{-1})
    \end{equation*}
    after which the conclusion follows by combining these bounds.
\end{proof}
\begin{lemma}
\begin{equation*}
    |\mathcal B_3(\lambda; \hat H_n, \hat G_n^{(b)}) - \mathcal B_3(\hat H_n, \hat G_n^{(b)})| = O_*(\lambda \|\tilde \bbeta\|_2 \|\bbeta^{(2)}\|_2)
\end{equation*}
\end{lemma}
\begin{proof}
Once again, the above bounds yield
\begin{align*}
    \biggl|\frac{\gamma s (c_0 s- c_1)}{(1 +  c_0 \gamma s)^2} - \frac{\lambda s(\lambda + (1 - \gamma + \gamma \lambda m_n(-\lambda))s) + \lambda^2 (1 + \gamma m_{n,1}(-\lambda))s}{(\lambda + (1 - \gamma + \gamma \lambda m_n(-\lambda))s)^2}\biggr| =O_*(\lambda)
\end{align*}
uniformly in $s \in \text{supp}(\hat G_n^{(b)})$. Now, note that the total variation of the signed measure $\hat G_n^{(b)}$ can be bounded by Cauchy-Schwarz as
\begin{align*}
    \|\hat G_n^{(b)}\|_{\text{TV}} &= \frac{\sum_{i=1}^p |\langle \bbeta^{(2)}, \bm v_i\rangle| \cdot |\langle \tilde \bbeta, \bm v_i \rangle |}{\|\tilde \bbeta\|_2 \|\bbeta^{(2)}\|_2} \\
    &\le \biggl(\frac{1}{\|\tilde \bbeta\|_2^2} \sum_{i=1}^p \langle \tilde \bbeta, \bm v_i\rangle^2 \biggr) \biggl(\frac{1}{\|\bbeta^{(2)}\|_2^2} \sum_{i=1}^p \langle\bbeta^{(2)}, \bm v_i\rangle^2 \biggr)  \\
    &= 1
\end{align*}
which implies
\begin{align*}
    &\biggl|\int \frac{\gamma s (c_0 s- c_1)}{(1 +  c_0 \gamma s)^2} d\hat G_n^{(b)}(s) \\
    &\qquad - \int \frac{\lambda s(\lambda + (1 - \gamma + \gamma \lambda m_n(-\lambda))s) + \lambda^2 (1 + \gamma m_{n,1}(-\lambda))s}{(\lambda + (1 - \gamma + \gamma \lambda m_n(-\lambda))s)^2} d\hat G_n^{(b)}(s)\biggr| \\
    &= O_*(\lambda)
\end{align*}
from which the conclusion follows.
\end{proof}
\subsubsection{Proof of Eq. \ref{eqn:model_shift_B2}}
Take $\lambda = p^{-1/10}$. Then, for any small $c > 0$, we have
\begin{align*}
    |B_2(\hat \bbeta;\bbeta^{(2)}) - \mathcal B_2(\hat H_n, \hat G_n^{\tilde \bbeta})| &\le |B_2(\hat \bbeta;\bbeta^{(2)}) - B_2(\hat \bbeta_\lambda; \bbeta^{(2)})| \\
    &\qquad + |B_2(\hat \bbeta_\lambda; \bbeta^{(2)}) - \mathcal B_2(\lambda; \hat H_n, \hat G_n^{\tilde \bbeta})| \\
    &\qquad + |\mathcal B_2(\lambda; \hat H_n, \hat G_n^{\tilde \bbeta}) - \mathcal B_2(\hat H_n, \hat G_n^{\tilde \bbeta})| \\
    &= O(\lambda^{-4} p^{-1/2+c} \|\tilde \bbeta\|_2^2) + O_*(\lambda \|\tilde \bbeta\|_2^2) + O(n^c \lambda  \|\tilde \bbeta\|_2^2) \\
    &= O(p^{-1/10+c} \|\tilde \bbeta\|_2^2).
\end{align*}
with high probability.

\subsubsection{Proof of Eq. \ref{eqn:model_shift_B3}}
Take $\lambda = p^{-1/10}$. Then, for any small $c > 0$, we have
\begin{align*}
    |B_3(\hat \bbeta;\bbeta^{(2)}) - \mathcal B_3(\hat H_n, \hat G_n^{(b)})| &\le |B_3(\hat \bbeta;\bbeta^{(2)}) - B_3(\hat \bbeta_\lambda; \bbeta^{(2)})| \\
    &\qquad + |B_3(\hat \bbeta_\lambda; \bbeta^{(2)}) - \mathcal B_3(\lambda; \hat H_n, \hat G_n^{(b)})| \\
    &\qquad + |\mathcal B_3(\lambda; \hat H_n, \hat G_n^{(b)}) - \mathcal B_3(\hat H_n, \hat G_n^{(b)})| \\
    &= O(\lambda^{-3} p^{-1/2+c} \|\tilde \bbeta\|_2 \|\bbeta^{(2)}\|_2) + O_*(\lambda \|\tilde \bbeta\|_2 \|\bbeta^{(2)}\|_2) \\
    &\qquad + O(n^c \lambda \|\bbeta^{(2)}\|_2\|\tilde \bbeta\|_2)\\
    &= O(p^{-1/10+c} \|\tilde \bbeta\|_2 \|\bbeta^{(2)}\|_2).
\end{align*}
with high probability.

\section{Proof for Covariate Shift}\label{sec:proof:design_shift}
We first prove the generalization error for ridge regression under covariate shift (Theorem \ref{thm:design_shift_ridge}), and then obtain the generalization error for the min-norm interpolator by letting $\lambda \rightarrow 0$. Recall the expression for bias and variance was given by \eqref{eqn:variance_analytical_ridge} and \eqref{eqn:bias_analytical_ridge}. Note that $\tilde\bbeta=\bm{0}$ here. Define
\begin{equation}\label{eqn:def:W}
\bW := (\bLambda^{(1)})^{1/2}\bV^\top\hat\bSigma_{\bZ}^{(1)}\bV(\bLambda^{(1)})^{1/2} +(\bLambda^{(2)})^{1/2}\bV^\top\hat\bSigma_{\bZ}^{(2)}\bV(\bLambda^{(2)})^{1/2} 
\end{equation}

Taking the antiderivative, we have
\begin{equation}\label{eqn:variance:analytical_ridge_antider}
\begin{aligned}
&V(\hat\bbeta_\lambda;\bbeta^{(2)})\\
=& \frac{\partial}{\partial\lambda} \left\{\frac{\sigma^2\lambda}{n}\Tr\left(\bSigma^{(2)}(\hat\bSigma+\lambda\bI)^{-1}\right)\right\}\\
=&\frac{\partial}{\partial\lambda} \left\{\frac{\sigma^2\lambda}{n}\Tr\left(\bSigma^{(2)}\left((\bSigma^{(1)})^{1/2}\hat\bSigma_{\bZ}^{(1)}(\bSigma^{(1)})^{1/2} +(\bSigma^{(2)})^{1/2}\hat\bSigma_{\bZ}^{(2)}(\bSigma^{(2)})^{1/2} +\lambda\bI\right)^{-1}\right)\right\}\\
=&\frac{\partial}{\partial\lambda} \left\{\frac{\sigma^2\lambda}{n}\Tr\left((\bLambda^{(2)})^{1/2}\left(\bW +\lambda\bI\right)^{-1}(\bLambda^{(2)})^{1/2}\right)\right\},
\end{aligned}
\end{equation}
and
\begin{equation}\label{eqn:bias:analytical_bias_antider}
\begin{aligned}
&B(\hat\bbeta_\lambda,\bbeta^{(2)})\\
=& -\frac{\partial}{\partial \eta} \left\{\lambda\bbeta^{(2)\top}(\hat\bSigma+\lambda\bI+\lambda\eta\bSigma^{(2)})^{-1}\bbeta^{(2)}\right\}\bigg |_{\eta=0}\\
=&  -\frac{\partial}{\partial \eta} \Bigl\{\lambda\bbeta_\eta^{(2)\top}\Bigl((\bLambda_\eta^{(1)})^{1/2}\bV^\top\hat\bSigma_{\bZ}^{(1)}\bV(\bLambda_\eta^{(1)})^{1/2} \\
=&\hspace{8em}+(\bLambda_\eta^{(2)})^{1/2}\bV^\top\hat\bSigma_{\bZ}^{(2)}\bV(\bLambda_\eta^{(2)})^{1/2} +\lambda\bI\Bigr)^{-1}\bbeta_\eta^{(2)}\Bigr\}\bigg |_{\eta=0},
\end{aligned}
\end{equation}
where $\bLambda_\eta^{(k)} := \bLambda^{(k)}(\bI+\eta\bLambda^{(2)})^{-1}$ for $k=1,2, \bbeta_\eta^{(2)} := (\bI+\eta\bLambda^{(2)})^{-1/2}\bV^\top\bbeta^{(2)}$, and $\hat\bSigma_{\bZ}^{(k)}:= \frac{1}{n}\bZ^{(k)\top}\bZ^{(k)}$ where $\bZ^{(k)}$ satisfies Assumptions \ref{as:shared} and \ref{as:design_four_epsilon}. Here $\bV,\bLambda^{(1)},\bLambda^{(2)}$ are defined in Definition \ref{def:simultaneously_diagonalizable}.

\subsection{Prototype Problem}
Both the bias and variance terms above \eqref{eqn:bias:analytical_bias_antider}, \eqref{eqn:variance:analytical_ridge_antider} involves characterizing the asymptotic behavior of $(\bW+\lambda\bI)^{-1}$ (i.e. the resolvent of $\bW$ at $-\lambda$) where $\bW$ is defined by \eqref{eqn:def:W}.

A powerful tool to handle this type of problems is known as the anisotropic local law. \cite{knowles2017anisotropic} first introduced this technique and effectively characterized the behavior of $\left((\bLambda^{(1)})^{1/2}\bV^\top\hat\bSigma_{\bZ}^{(1)}\bV(\bLambda^{(1)})^{1/2} +z\bI\right)^{-1}$ for a wide range of $z\in \C$, whereas \cite{yang2020analysis} generalized the result to $\left((\bLambda^{(1)})^{1/2}\bV^\top\hat\bSigma_{\bZ}^{(1)}\bV(\bLambda^{(1)})^{1/2} +\hat\bSigma_{\bZ}^{(2)} +z\bI\right)^{-1}$ at $z$ around $0$.

In this section, we state and prove a more general local law which is one of the major technical contributions of this manuscript. This local law will serve as the basis for studying the limit of \eqref{eqn:bias:analytical_bias_antider} and \eqref{eqn:variance:analytical_ridge_antider}. The following subsections follows \cite[Section B.3]{yang2020analysis} closely.

\subsubsection{Resolvent and Local Law}
We can write $\bW = \bF\bF^\top$ where $\bF \in \R^{p \times n}$ is given by
\begin{equation}\label{eqn:def:F}
\bF := n^{-1/2}[ (\bLambda^{(1)})^{1/2}\bV^\top\bZ^{(1)\top},(\bLambda^{(2)})^{1/2}\bV^\top\bZ^{(2)\top}].
\end{equation}

\begin{definition}[Self-adjoint linearization and resolvent]\label{def:resolvent}
We define the following $(p+n)\times (p+n)$ symmetric block matrix
\begin{equation}\label{eqn:def:H}
\bH := \begin{pmatrix}
\bm{0} & \bF \\ \bF^\top & \bm{0}
\end{pmatrix}
\end{equation}
and its resolvent as
\begin{equation}\label{eq:define_G}
\bG(z) := \left[\bH - \begin{pmatrix}
z \bI_{p} & \bm{0} \\ \bm{0} & \bI_{n}
\end{pmatrix}\right]^{-1}, z \in \C
\end{equation}
as long as the inverse exists. We further define the following (weighted) partial traces
\begin{equation}\label{eqn:def:partial_traces}
\begin{aligned}
m_{01}(z) := \frac{1}{p} \sum_{i \in \mcl{I}_0}\lambda_i^{(1)}G_{ii}(z), \ \ & m_{02}(z) := \frac{1}{p} \sum_{i \in \mcl{I}_0}\lambda_i^{(2)}G_{ii}(z),\\ m_1(z) := \frac{1}{n_1} \sum_{\mu \in \mcl{I}_1}G_{\mu\mu}(z), \ \ & m_2(z) := \frac{1}{n_2} \sum_{\nu \in \mcl{I}_2}G_{\nu\nu}(z),
\end{aligned}
\end{equation}
where $\mcl{I}_i, i=0,1,2$, are index sets defined as
\[
\mcl{I}_0:=[\![1,p]\!],\ \ \mcl{I}_1:=[\![p+1,p+n_1]\!],\ \ \mcl{I}_0:=[\![p+n_1+1,p+n_1+n_2]\!].
\]
\end{definition}
We will consistently use $i,j\in\mcl{I}_0$ and $\mu,\nu\in \mcl{I}_1\cup \mcl{I}_2$. Correspondingly, the indices are labeled as 
\[\bZ^{(1)}=\left[Z_{\mu i}^{(1)}:i\in\mcl{I}_0,\mu\in\mcl{I}_1\right],\ \ \bZ^{(2)}=\left[Z_{\nu i}^{(2)}:i\in\mcl{I}_0,\nu\in\mcl{I}_2\right].
\]

We further define the set of all indices as $\mcl{I}:=\mI_0\cup \mI_1\cup \mI_2$, and label indices as $\mathfrak{a},\fb,\fc$, etc.

Using Schur complement formula, \eqref{eq:define_G} becomes
\begin{equation}\label{eqn:G_inverse_schur}
\bG(z) = \begin{pmatrix}
(\bF\bF^\top-z\bI)^{-1} & (\bF\bF^\top-z\bI)^{-1}\bF\\
\bF^\top (\bF\bF^\top-z\bI)^{-1} & z(\bF^\top\bF-z\bI)^{-1}
\end{pmatrix}
\end{equation}
Notice that the upper-left block of $\bG$ is directly related to the resolvent of $\bW =\bF\bF^\top$ that we are interested in. However, rather than studying $(\bF\bF^\top-z\bI)^{-1}$ directly, we work with $\bH$ as it is a linear function of $\bZ^{(1)},\bZ^{(2)}$ and is therefore easier to work with.

We now define the limit of $\bG$ as
\begin{equation}\label{eqn:G_limit}
\mathfrak{G}(z) := \begin{pmatrix}
[a_1(z)\bLambda^{(1)}+a_2(z)\bLambda^{(2)}-z]^{-1} & 0 & 0 \\
0 & -\frac{n}{n_1}a_1(z)\bI_{n_1} & 0 \\
0 & 0 & -\frac{n}{n_2}a_2(z)\bI_{n_2}
\end{pmatrix}
\end{equation}
where $a_1(z)$ and $a_2(z)$ are the unique solution to the following system of equations (we omit the dependence on $z$ for conciseness):
\begin{equation}\label{eqns:prototype_system}
\begin{aligned}
a_1+a_2 &= 1 - \gamma \int \frac{a_1\lambda^{(1)}+a_2\lambda^{(2)}}{a_1\lambda^{(1)}+a_2\lambda^{(2)}-z} d\hat H_p(\lambda^{(1)},\lambda^{(2)}),\\
a_1 &= \frac{n_1}{n} - \gamma \int \frac{a_1\lambda^{(1)}}{a_1\lambda^{(1)}+a_2\lambda^{(2)}-z} d\hat H_p(\lambda^{(1)},\lambda^{(2)}), 
\end{aligned}
\end{equation}
such that $\Imm(a_1) \leq 0$ and $\Imm(a_2) \leq 0$ whenever $\Imm(z) > 0$. The existence and uniqueness of the above system will be proved in Lemma \ref{lemma:existence_solution}.

We now state our main result, Theorem \ref{thm:anisotropic_law}, which characterizes the convergence (rate) of $\bG(z)$ to $\mathfrak{G}(z)$ at $z=-\lambda$. This Theorem is a more general extension in the family of \textit{anisotropic local laws} \cite{knowles2017anisotropic}. In specific, it extends \cite[Theorem 27]{yang2020analysis} to allow a general $\bSigma^{(2)}$ control at $z=-\lambda\neq 0$. For a fix small constant $\tau>0$, we define a domain of $z$ as
\begin{equation}\label{eqn:def:spectral_domain}
\bD \equiv \bD(\tau):= \{z = E + i\eta \in \C_+: |z+\lambda| \leq (\log p)^{-1}\lambda\}.
\end{equation}

\begin{thm}\label{thm:anisotropic_law}
Suppose Assumptions \ref{as:shared}, \ref{as:design_four_epsilon}, and \ref{as:simul_diag} hold, with joint spectral distribution defined in \eqref{eqn:joint_ESD}. Suppose also that $\bZ^{(1)}$ and $\bZ^{(2)}$ satisfy the bounded support condition \eqref{eqn:bounded_support} with $Q=p^{2/\varphi}$. Then the following local laws hold for $\lambda > p^{-1/7+\epsilon}$ with a small constant $\epsilon$:
\begin{enumerate}
    \item[(i)] \textbf{Averaged local law}: We have
    \begin{align}
    \sup_{z\in \bD}\left|p^{-1}\sum_{i\in\mcl{I}_0}\lambda_{i}^{(k)}[G_{ii}(z)-\mathfrak{G}_{ii}(z)]\right| \prec p^{-1}\lambda^{-7}Q,\ \ k=1,2.\label{eqn:averaged_law}
    \end{align}
    \item[(ii)] \textbf{Anisotropic local law}: For any deterministic unit vectors $\bu,\bv\in \R^{p+n}$, we have
    \begin{equation}\label{eqn:anisotropic_law}
    \sup_{z\in \bD}\left|\bu^\top[\bG(z)-\mathfrak{G}(z)]\bv\right| \prec p^{-1/2}\lambda^{-3} Q.
    \end{equation}
\end{enumerate}
\end{thm}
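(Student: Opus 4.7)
The plan is to follow the well-established strategy for proving anisotropic local laws via self-adjoint linearization, but adapted to handle the sum of two anisotropic sample-covariance-like blocks. The linearization $\bH$ in \eqref{eqn:def:H} and the resolvent $\bG(z)$ in \eqref{eq:define_G} have already been set up; the task is to show that $\bG(z)$ is close to $\mathfrak{G}(z)$ defined in \eqref{eqn:G_limit}. First, I would establish existence and uniqueness of the solution $(a_1(z),a_2(z))$ to \eqref{eqns:prototype_system} on the spectral domain $\bD$. A fixed-point argument on a suitable subset of $\C^2$ works, with the constraint $|z+\lambda|\leq(\log p)^{-1}\lambda$ together with Assumption \ref{assumption1} keeping the denominators $a_1\lambda^{(1)}+a_2\lambda^{(2)}-z$ uniformly bounded away from zero. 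Holomorphy and continuity in $z$ then guarantee that $\mathfrak{G}(z)$ is well-defined on $\bD$.

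Second, I would derive approximate self-consistent equations for the partial traces $m_{01},m_{02},m_1,m_2$ from \eqref{eqn:def:partial_traces}. For each index $\mathfrak{a}\in\mI$, the Schur complement formula applied to $\bH-\mathrm{diag}(z\bI_p,\bI_n)$ expresses $1/G_{\mathfrak{a}\mathfrak{a}}$ in terms of a linear combination of $z$ or $1$ and quadratic forms in the rows or columns of $\bF$. For $i\in\mI_0$ the leading deterministic part involves a linear combination of $\lambda_i^{(1)}m_1(z)$ and $\lambda_i^{(2)}m_2(z)$; for $\mu\in\mI_1$ it involves $m_{01}(z)$, and for $\nu\in\mI_2$ it involves $m_{02}(z)$. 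The fluctuations, namely quadratic forms minus their conditional expectations, are controlled by Lemma \ref{lemma:linear_quadratic_forms} using the bounded support $Q=p^{2/\varphi}$. Summing with appropriate weights and making the identification $a_k\leftrightarrow -(n_k/n)m_k$ yields an approximate version of \eqref{eqns:prototype_system} together with an explicit fluctuation error.

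Third, I would perform a stability analysis to convert the approximate identities into honest bounds. Linearizing the map defining \eqref{eqns:prototype_system} around the deterministic solution $(a_1,a_2)$, the inverse of the resulting $2\times 2$ Jacobian governs how fluctuation errors propagate to $|m_k(z)-\mathfrak{G}_{\mathfrak{a}\mathfrak{a}}(z)|$. Since $z=-\lambda$ lies outside the limiting spectrum of $\bW$, the Jacobian is non-degenerate, but its inverse norm blows up as a negative power of $\lambda$; carefully tracking this exponent produces the $\lambda^{-7}$ in \eqref{eqn:averaged_law}, and the hypothesis $\lambda>p^{-1/7+\epsilon}$ emerges as the threshold at which the overall error $p^{-1}\lambda^{-7}Q$, with $Q=p^{2/\varphi}$, is still $o(1)$. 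A bootstrap that begins from a rough weak bound and iteratively sharpens via the same Schur--stability machinery delivers the claimed averaged rate. Finally, for the anisotropic statement \eqref{eqn:anisotropic_law}, I would follow the polynomialization strategy of \cite{knowles2017anisotropic,yang2020analysis}: derive off-diagonal entry bounds $|G_{\mathfrak{a}\fb}|\prec\cdots$ via Schur complement, expand $\bu^\top(\bG-\mathfrak{G})\bv$ into sums over entries of $\bG$, and use a high-moment estimate to remove dependence on the choice of unit vectors $\bu,\bv$.

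The main obstacle is the third step: obtaining sharp $\lambda$-dependence of the stability operator and propagating it correctly through the bootstrap. Classical anisotropic local laws such as \cite{knowles2017anisotropic} assume $z$ stays a constant distance from the spectrum, yielding $O(1)$ stability; here $\lambda$ is allowed to shrink polynomially in $p$, so the stability operator nearly degenerates and the fluctuation scale $p^{-1/2}Q$ must be balanced against the $\lambda^{-k}$ instability. A secondary complication is that the relevant limiting measure $\hat H_p$ is a two-dimensional joint empirical distribution supported on the pairs $(\lambda_i^{(1)},\lambda_i^{(2)})$, which makes the Jacobian analysis and its inversion more intricate than in the single-covariance setting, and is the structural reason why the two equations in \eqref{eqns:prototype_system} must be analyzed jointly rather than decoupled.
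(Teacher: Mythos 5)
Your proposal is correct and follows essentially the same route as the paper: self-adjoint linearization of $\bW$, Schur-complement-derived approximate self-consistent equations for the partial traces, a stability analysis of the $2\times 2$ Jacobian of \eqref{eqns:prototype_system} that carefully tracks negative powers of $\lambda$ (which is what produces the $\lambda^{-7}$ and the threshold $\lambda>p^{-1/7+\epsilon}$), and then polynomialization in the style of \cite{knowles2017anisotropic,yang2020analysis} to upgrade the entrywise law to the anisotropic statement. The paper's actual execution inserts one organizational step you did not make explicit — it first proves the Gaussian case (Proposition \ref{prop:anisotropic_law_gaussian}), using rotational invariance to reduce $\bV$ to the identity, and then extends to general $\bZ^{(k)}$ by a moment-comparison argument — but this is a standard reduction and your plan otherwise captures the key lemmas, the fluctuation-averaging step, and, importantly, the central difficulty of balancing the $p^{-1/2}Q$ fluctuation scale against the $\lambda^{-k}$ near-degeneracy of the stability operator.
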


The rest of this section is devoted to the proof of Theorem \ref{thm:anisotropic_law}. We focus on the case of $\lambda \in (p^{-1/7+\epsilon},1)$, since the case for $\lambda \geq 1$ can be trivially extended by dropping all negative powers of $\lambda$ in the proof below.

\subsubsection{Self-consistent equations}
In this subsection, we show that the self-consistent equations \eqref{eqns:prototype_system} has a unique solution for any $z \in \bD$. For simplicity, we further define 

\begin{equation}\label{eq:define_r1_r2}
    r_1:=n_1/n, r_2 := n_2 / n.
\end{equation}

We first show that \eqref{eqns:prototype_system} has a unique solution at $z=-\lambda$, where \eqref{eqns:prototype_system} reduces to the following system (equivalent to the first two equations of \eqref{eqn:design_shift_ridge_alpha_eqns}):
\begin{equation}\label{eqn:fg_prototype_system}
\begin{aligned}
f(a_1,a_2) &:= a_1 - r_1 + \gamma \int \frac{a_1\lambda^{(1)}}{a_1\lambda^{(1)}+a_2\lambda^{(2)}+\lambda} d\hat H_p(\lambda^{(1)},\lambda^{(2)}) = 0,\\
g(a_1,a_2) &:= a_2 - r_2 + \gamma \int \frac{a_2\lambda^{(2)}}{a_1\lambda^{(1)}+a_2\lambda^{(2)}+\lambda} d\hat H_p(\lambda^{(1)},\lambda^{(2)}) = 0. 
\end{aligned}
\end{equation}

Focusing on $f$ and consider any fixed $a_2>0$ for a small enough constant $c$. We know 
$$f(C\lambda,a_2) > -r_1 +\gamma \int \frac{C\lambda\lambda^{(1)}}{C\lambda\lambda^{(1)} + a_2\lambda^{(2)}+\lambda}d\hat H_p(\lambda^{(1)},\lambda^{(2)})>0, $$ if $C$ is large enough,
$$f(c\min\{\lambda,r_1\},a_2) < c\min\{\lambda,r_1\}-r_1 +\gamma \int \frac{c\lambda\lambda^{(1)}}{\lambda}d\hat H_p(\lambda^{(1)},\lambda^{(2)})<0, $$ if $c$ is small enough, and 
$$\frac{\partial}{\partial a_1} f(a_1,a_2) = 1 + \gamma \int \frac{\lambda^{(1)}(a_2\lambda^{(2)}+\lambda)}{(a_1\lambda^{(1)}+a_2\lambda^{(2)}+\lambda)^2}d\hat H_p(\lambda^{(1)},\lambda^{(2)}) > 0.$$
for any $a_1>0$. Together with the obvious continuity, we know, given $a_2$, there exists a unique $a_1 \in (c\min\{\lambda,r_1\},C\lambda)$ such that $f(a_1,a_2) = 0$. Define the root as $$a_1 := \chi(a_2;f).$$ By implicit function theorem, we know the function $\chi$ is continuous, and
$$\chi'(a_2;f) + \gamma \int \frac{\chi'(a_2;f)\lambda^{(1)}(a_2\lambda^{(2)}+\lambda)-\chi(a_2;f)\lambda^{(1)}\lambda^{(2)}}{(a_1\lambda^{(1)}+a_2\lambda^{(2)}+\lambda)^2}d\hat H_p(\lambda^{(1)},\lambda^{(2)})=0,$$
which, after straightforward rearranging, yields $\chi'(a_2;f)>0$.

Now focus on $g(\chi(a_2;f),a_2)$. Since $\chi(a_2;f) \in (c\min\{\lambda,r_2\} ,C\lambda)$ for any $a_2>0$, similar to our derivation above, we have 
$$g(\chi(c\min\{\lambda,r_2\};f),c\min\{\lambda,r_1\})<0, g(\chi(C\lambda;f),C\lambda)>0$$ for $c$ small enough and $C$ large enough, and $\frac{\dd}{\dd a_2} g(\chi(a_2;f),a_2)>0$. Therefore there exists a unique $a_2\in (c\min\{\lambda,r_2\},C\lambda)$ such that $g(\chi(a_2;f),a_2)=0$. Hence, there exists a unique pair of positive $(a_1,a_2)$ satisfying \eqref{eqn:fg_prototype_system}, such that
\begin{equation}\label{eqn:a1_a2_bounded}
(a_1,a_2) \in (c\min\{\lambda,r_1\},C\lambda)\times (c\min\{\lambda,r_2\},C\lambda)
\end{equation}

Next, we prove the existence and uniqueness of the solution to \eqref{eqns:prototype_system} for a general $z \in \bD$, where $\bD$ is defined by \eqref{eqn:def:spectral_domain}, presented by the following lemma:

\begin{lemma}\label{lemma:existence_solution}
There exists constants $c,c_1,C>0$ such that the following statements hold. There exists a unique solution $(a_1(z),a_2(z))$ of \eqref{eqns:prototype_system} under the conditions
\begin{equation}\label{eqn:delta_def_bound}
|z+\lambda| \leq c\lambda, |a_1(-\lambda)-a_1(z)|+ |a_2(-\lambda)-a_2(z)| \leq c_1\lambda.
\end{equation}
Moreover, the solution satisfies
\begin{equation}\label{eqn:delta_bounded_by_z}
|a_1(-\lambda)-a_1(z)|+ |a_2(-\lambda)-a_2(z)| \leq C|z+\lambda|.
\end{equation}
\end{lemma}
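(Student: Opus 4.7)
The plan is to apply the quantitative implicit function theorem (IFT), seeded by the real positive solution $(a_1(-\lambda),a_2(-\lambda))$ constructed in the paragraph preceding the lemma (which satisfies \eqref{eqn:a1_a2_bounded}), and then propagate the local solution throughout the ball \eqref{eqn:delta_def_bound} by perturbation. Define $F:(a_1,a_2;z)\mapsto(f,g)$ by replacing $\lambda$ with $-z$ in \eqref{eqn:fg_prototype_system}; this map is jointly holomorphic in all three arguments. Writing $D:=a_1\lambda^{(1)}+a_2\lambda^{(2)}-z$, direct differentiation yields
\begin{equation*}
J:=\partial_{(a_1,a_2)}F=\begin{pmatrix}1+\mcl{B}+\mcl{R} & -\mcl{A}\\ -\mcl{B} & 1+\mcl{A}+\mcl{S}\end{pmatrix},
\end{equation*}
where $\mcl{A}=\gamma\!\int\!\tfrac{a_1\lambda^{(1)}\lambda^{(2)}}{D^2}\,d\hat H_p$, $\mcl{B}=\gamma\!\int\!\tfrac{a_2\lambda^{(1)}\lambda^{(2)}}{D^2}\,d\hat H_p$, $\mcl{R}=-\gamma z\!\int\!\tfrac{\lambda^{(1)}}{D^2}\,d\hat H_p$, and $\mcl{S}=-\gamma z\!\int\!\tfrac{\lambda^{(2)}}{D^2}\,d\hat H_p$.

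At $z=-\lambda$, combining \eqref{eqn:a1_a2_bounded} with the eigenvalue bounds \eqref{eqn:eigenvalue_bounded} gives $|D|\asymp\lambda$, so each of $\mcl{A},\mcl{B},\mcl{R},\mcl{S}$ is real, positive, and of order $1/\lambda$. A brief expansion produces
\begin{equation*}
\det J = 1+\mcl{A}+\mcl{B}+\mcl{R}+\mcl{S}+\mcl{B}\mcl{S}+\mcl{R}\mcl{A}+\mcl{R}\mcl{S},
\end{equation*}
a sum of manifestly positive terms, so $\det J\gtrsim 1/\lambda^2$, which together with $\|J\|_{\op}=O(1/\lambda)$ yields $\|J^{-1}\|_{\op}=O(\lambda)$. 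Since the $z$-derivative $\partial_z F$ is likewise of order $1/\lambda$ at the base point, the IFT produces a holomorphic branch $(a_1(z),a_2(z))$ with $|\partial_z a|=\|J^{-1}\partial_z F\|=O(1)$, and integrating along the segment from $-\lambda$ to $z$ delivers \eqref{eqn:delta_bounded_by_z}. The sign condition $\Imm(a_k)\leq 0$ whenever $\Imm(z)>0$ follows from noting that both $J^{-1}$ and $\partial_z F$ have entrywise non-negative entries at $z=-\lambda$, so $\partial_z a=-J^{-1}\partial_z F$ has non-positive entries there.

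To lift uniqueness from the infinitesimal IFT neighborhood to the full ball \eqref{eqn:delta_def_bound}, the plan is to invoke continuity of $\mcl{A},\mcl{B},\mcl{R},\mcl{S}$ in $(a_1,a_2,z)$: choosing $c,c_1$ small enough (depending only on $\tau$), perturbations of size $c_1\lambda$ in $(a_1,a_2)$ and $c\lambda$ in $z$ change each of these integrals by at most a multiplicative factor in $[1/2,3/2]$, so both $\det J\gtrsim 1/\lambda^2$ and $\|J^{-1}\|_{\op}=O(\lambda)$ persist throughout the ball. A Newton/contraction argument for $F(\,\cdot\,;z)=0$ anchored at $(a_1(-\lambda),a_2(-\lambda))$ with the inverse base-point Jacobian is then globally contractive on the ball, delivering uniqueness there. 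The principal obstacle is precisely this uniform positive lower bound on $|\det J|$; once the positivity expansion of $\det J$ above is secured, the remaining steps are routine perturbation theory and standard contraction-mapping machinery.
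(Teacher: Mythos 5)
Your argument is correct and reaches the same destination as the paper, but it gets there by a genuinely different route at the key step. Both you and the authors reduce the whole lemma to controlling the inverse of the $2\times 2$ Jacobian $J$; the difference lies in how the lower bound $|\det J|\gtrsim\lambda^{-2}$ is secured. The paper writes out the quadratic formula for the eigenvalues of $J$, verifies the discriminant is nonnegative, and works out $(A+B)(A-B)=A^2-B^2$ explicitly to conclude $\check\lambda_\pm=\Theta(\lambda^{-1})$ and hence $\|J^{-1}\|_{\op}=O(\lambda)$. You instead expand the determinant directly into
$\det J = 1+\mcl{A}+\mcl{B}+\mcl{R}+\mcl{S}+\mcl{B}\mcl{S}+\mcl{R}\mcl{A}+\mcl{R}\mcl{S}$,
observe that at $z=-\lambda$ each summand is manifestly positive and of order $\lambda^{-1}$ or $\lambda^{-2}$, and conclude $\det J\gtrsim\lambda^{-2}$ in one line. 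This is cleaner and avoids the eigenvalue formula altogether; it also makes transparent why the cross term $-\mcl{A}\mcl{B}$ cancels and causes no trouble. The lift to the full ball $\bD$ is then the same perturbation-plus-contraction argument in both proofs, and your derivative bound $|\partial_z a|=O(1)$ followed by path integration is an equally valid way to obtain \eqref{eqn:delta_bounded_by_z} as the paper's geometric-series bound on the iterates.

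One phrasing to tighten: "change each of these integrals by at most a multiplicative factor in $[1/2,3/2]$" is not quite the right statement once $z$ leaves the real axis, since the integrals are then complex and a multiplicative-factor bound has no clear meaning. What you actually need — and what the estimates support — is that each integral changes additively by $O(c)\lambda^{-1}$ for a perturbation of relative size $c$, so each summand in the determinant expansion shifts by $O(c)\lambda^{-2}$, and hence $|\det J|\geq\tfrac12\Theta(\lambda^{-2})$ for $c$ small. This is precisely how the paper handles it (via \eqref{eqn:checkxy_bounded}). With that rewording the argument is airtight. Your remark on the sign condition $\Imm(a_k)\leq 0$ via entrywise positivity of $J^{-1}$ and $\partial_zF$ is a nice observation, though it is not strictly demanded by the lemma as stated.
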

\begin{proof}
The proof is based on contraction principle. Define the more general form of \eqref{eqn:fg_prototype_system} as
\begin{equation}\label{eqn:fgz_prototype_system}
\begin{aligned}
f_z(a_1,a_2) &:= a_1 - r_1 + \gamma \int \frac{a_1\lambda^{(1)}}{a_1\lambda^{(1)}+a_2\lambda^{(2)}-z} d\hat H_p(\lambda^{(1)},\lambda^{(2)}) = 0,\\
g_z(a_1,a_2) &:= a_2 - r_2 + \gamma \int \frac{a_2\lambda^{(2)}}{a_1\lambda^{(1)}+a_2\lambda^{(2)}-z} d\hat H_p(\lambda^{(1)},\lambda^{(2)}) = 0. 
\end{aligned}
\end{equation}
Define the vector
$$\bF_z(a_1,a_2):=[f_z(a_1,a_2),g_z(a_1,a_2)].$$ Denote the (potential) solution to $\bF=\bm{0}$ as $\ba(z):=[a_1(z),a_2(z)]$. By definition, we know $\bF_z(\ba(z))=0$. Therefore, defining $\bdelta(z):=\ba(z)-\ba(-\lambda)$ we have the following identity:
\begin{align*}
\bdelta(z) \equiv& -\bJ_z(\ba(-\lambda))^{-1}\left(\bF_z(\ba(-\lambda))-\bF_{-\lambda}(\ba(-\lambda))\right) \\
&- \bJ_z(\ba(-\lambda))^{-1}\left(\bF_z(\ba(-\lambda)+\bdelta(z))-\bF_z(\ba(-\lambda)) -  \bJ_z(\ba(-\lambda))\bdelta(z)\right),
\end{align*}
where $\bJ_z(\ba)$ is the Jacobian of $\bF_z$ at $\ba$:
\begin{equation}\label{eqn:def:Jacobian}
\begin{aligned}
&\bJ_z(a_1,a_2) \\&:= \begin{pmatrix}
\partial_1 f_z(a_1,a_2) & \partial_2 f_z(a_1,a_2)\\
\partial_1 g_z(a_1,a_2) & \partial_2 g_z(a_1,a_2)
\end{pmatrix}\\
& = \begin{pmatrix}
1 + \gamma \int \frac{\lambda^{(1)}(a_2\lambda^{(2)}-z)}{(a_1\lambda^{(1)}+a_2\lambda^{(2)}-z)^2}d\hat H_p(\lambda^{(1)},\lambda^{(2)}) &  -\gamma \int \frac{a_1\lambda^{(1)}\lambda^{(2)}}{(a_1\lambda^{(1)}+a_2\lambda^{(2)}-z)^2}d\hat H_p(\lambda^{(1)},\lambda^{(2)}) \\
 -\gamma \int \frac{a_2\lambda^{(1)}\lambda^{(2)}}{(a_1\lambda^{(1)}+a_2\lambda^{(2)}-z)^2}d\hat H_p(\lambda^{(1)},\lambda^{(2)}) & 1 + \gamma \int \frac{\lambda^{(2)}(a_1\lambda^{(1)}-z)}{(a_1\lambda^{(1)}+a_2\lambda^{(2)}-z)^2}d\hat H_p(\lambda^{(1)},\lambda^{(2)}) \\
\end{pmatrix}\\
& := \begin{pmatrix}
1 + \check x_2(z) + \check y_1(z) & -\check x_1(z)\\
-\check x_2(z) & 1 + \check x_1(z) + y_2(z)\\
\end{pmatrix},
\end{aligned}
\end{equation}
where for $k=1,2$, $\partial_k$ denotes the partial derivative with respect to the $k$-th argument, and \begin{align*}
    &\check x_k(z) := \gamma \int \frac{a_k\lambda^{(1)}\lambda^{(2)}}{(a_1\lambda^{(1)}+a_2\lambda^{(2)}-z)^2}d\hat H_p(\lambda^{(1)},\lambda^{(2)}),\\
    &\check y_k(z) := -\gamma \int \frac{\lambda^{(k)}z}{(a_1\lambda^{(1)}+a_2\lambda^{(2)}-z)^2}d\hat H_p(\lambda^{(1)},\lambda^{(2)}).
\end{align*}

Notice that we omit the dependence on $-\lambda$ for simplicity without confusion, as only $\ba(-\lambda)$ appear from now on. Taking this inspiration, we define the iterative sequence $\bdelta^{(k)}(z)$ such that $\bdelta^{(0)}(z) = \bm{0}$ and
\begin{equation}\label{eqn:iterative_delta}
\begin{aligned}
&\bdelta^{(k+1)}(z) := \bh_z(\bdelta^{(k)}(z))\\
=&  -\bJ_z(\ba(-\lambda))^{-1}\left(\bF_z(\ba(-\lambda))-\bF_{-\lambda}(\ba(-\lambda))\right) \\
&- \bJ_z(\ba(-\lambda))^{-1}\left(\bF_z(\ba(-\lambda)+\bdelta^{(k)}(z))-\bF_z(\ba(-\lambda)) -  \bJ_z(\ba(-\lambda))\bdelta^{(k)}(z)\right).
\end{aligned}
\end{equation}
We want to show that $\bh_z$ is a contractive self-mapping. Let $\check z := z + \lambda$. We first deal with the first summand of \eqref{eqn:iterative_delta}. Since $|\check z| \leq c\lambda$, we have
\begin{align*}
&\left|\frac{\partial}{\partial \check z} \check x_1(z)\right|\\
=& \left| 2\gamma \int \frac{a_1(z)\lambda^{(1)}\lambda^{(2)}}{(a_1(z)\lambda^{(1)}+a_2(z)\lambda^{(2)}+\lambda-\check z)^3}d\hat H_p(\lambda^{(1)},\lambda^{(2)}) \right|\\
\leq & 2\gamma \int \left| \frac{a_1(z)\lambda^{(1)}\lambda^{(2)}}{(a_1(z)\lambda^{(1)}+a_2(z)\lambda^{(2)}+\lambda-\check z)^3} \right|d\hat H_p(\lambda^{(1)},\lambda^{(2)})\\
= & O(\lambda^{-2})
\end{align*}
if $c$ is small enough. Further, a corollary of \eqref{eqn:a1_a2_bounded} yields
\begin{align*}
\check x_1(-\lambda) &:=\gamma \int \frac{a_1\lambda^{(1)}\lambda^{(2)}}{(a_1\lambda^{(1)}+a_2\lambda^{(2)}+\lambda)^2}d\hat H_p(\lambda^{(1)},\lambda^{(2)}) =\Theta(\lambda^{-1}),
\end{align*} 
which, together with the equation above and the fact that $|\check z| \leq c \lambda$, yields
$$|\check x_1(-\lambda)-\check x_1(z)| = O(c\lambda^{-1})$$
Using similar derivations for $\check x_2,\check y_1,\check y_2$, we obtain that for $k=1,2$,
\begin{equation}\label{eqn:checkxy_bounded}
\begin{aligned}
&\check x_k(-\lambda) =\Theta(\lambda^{-1}),\quad  \check y_k(-\lambda)=\Theta(\lambda^{-1}),\\
&|\check x_k(-\lambda)-\check x_k(z)| = O(c\lambda^{-1}),\ \ |\check y_k(-\lambda)-\check y_k(z)| = O(c\lambda^{-1}).
\end{aligned}
\end{equation}
Let $\check \lambda_{\pm}$ denote the two eigenvalues of $\bJ$. Since $\bJ$ is a $2\times 2$ matrix, it is easy to check that
\begin{align*}
    \check\lambda_{\pm}(\bJ_{z}(\ba(-\lambda)))&:= \frac{2+\check x_1(z) + \check x_2(z) + \check y_1(z) + \check y_2(z)}{2}\\ &\pm \frac{\sqrt{(\check x_2(z) + \check y_1(z) - \check x_1(z) - \check y_2(z))^2 + 4\check x_1(z)\check x_2(z)}}{2}.
\end{align*}

At $z=-\lambda$, we know 
\begin{align*}
    &(\check x_1(z) + \check x_2(z) + \check y_1(z) + \check y_2(z))^2 -(\check x_2(z) + \check y_1(z) - \check x_1(z) - \check y_2(z))^2 - 4\check x_1(z)\check x_2(z) \\
    &= 4\check x_1(-\lambda)\check y_1(-\lambda)+4\check x_2(-\lambda)\check y_2(-\lambda)+4\check y_1(-\lambda)\check y_2(-\lambda)=\Theta(\lambda^{-2}),
\end{align*} and $\check x_1(z) + \check x_2(z) + \check y_1(z) + \check y_2(z) =\Theta(\lambda^{-1})$. Therefore we obtain $$\check \lambda_{\pm}(\bJ_z(\ba(-\lambda)))=\Theta(\lambda^{-1}).$$

Now consider a general $|\check z| \leq c\lambda$. By \eqref{eqn:checkxy_bounded}, it is straightforward to derive that $$|\check\lambda_{\pm}(\bJ_{z}(\ba(-\lambda)))-\check\lambda_{\pm}(\bJ_{-\lambda}(\ba(-\lambda)))| = O(c\lambda^{-1}).$$ Therefore we have $$|\check\lambda_{\pm}(\bJ_{z}(\ba(-\lambda)))| \geq |\check\lambda_{\pm}(\bJ_{-\lambda}(\ba(-\lambda)))| - O(c\lambda^{-1}) = \Theta(\lambda^{-1})-O(c\lambda^{-1}) = \Theta(\lambda^{-1}),$$ if $c$ is small enough, which yields
\begin{equation}\label{eqn:Jinv_op_bounded}
\|\bJ_z(\ba(-\lambda))^{-1}\|_{\op} = O(\lambda).
\end{equation}
Using similar derivations as \eqref{eqn:checkxy_bounded}, we obtain that for $|\check z|\leq c\lambda$,
\begin{equation}\label{eqn:fg_diff_bounded}
\begin{aligned}
&\frac{\partial}{\partial \check z}f_{z}(\ba(-\lambda)) =  O(\lambda^{-1}), \qquad \frac{\partial}{\partial \check z}g_{z}(\ba(-\lambda)) =  O(\lambda^{-1}),\\
&|f_{-\lambda}(\ba(-\lambda)) - f_z(\ba(-\lambda))| = O(\lambda^{-1}|\check z|), |g_{-\lambda}(\ba(-\lambda)) - g_z(\ba(-\lambda))| = O(\lambda^{-1}|\check z|),
\end{aligned}
\end{equation}
which yields 
\begin{equation}\label{eqn:F_term1_bounded}
\|\bF_z(\ba(-\lambda))-\bF_{-\lambda}(\ba(-\lambda))\|_2 = O(\lambda^{-1}|\check z|)
\end{equation}
Now we deal with the second term of $\bh_z$ \eqref{eqn:iterative_delta}. Using \eqref{eqn:def:Jacobian} and similar derivations as \eqref{eqn:checkxy_bounded}, we know there exists a constant $c_1$ such that for $z\leq c\lambda$, $\sqrt{|\delta_1|^2+|\delta_2|^2} \leq c_1\lambda$ and $\check k_1,\check k_2,\check k_3=1,2$
\begin{equation}\label{eqn:fg_der_bounded}
\begin{aligned}
& \partial_{\check k_1\check k_2} f_z(a_1,a_2) = \Theta(\lambda^{-2}), \quad
\partial_{\check k_1\check k_2} g_z(a_1,a_2) = \Theta(\lambda^{-2}),\\
& \partial_{\check k_1\check k_2\check k_3} f_z(a_1+\delta_1,a_2+\delta_2) = O(\lambda^{-3}), \quad 
\partial_{\check k_1\check k_2\check k_3} g_z(a_1+\delta_1,a_2+\delta_2) = O(\lambda^{-3}),
\end{aligned}
\end{equation}
As a corollary, for $\bdelta_1,\bdelta_2$ satisfying $\|\bdelta_1\|_2 \leq c_1\lambda,\|\bdelta_2\|_2 \leq c_1\lambda$,
the difference of the second term of $\bh_z$ reads
\begin{equation}\label{eqn:F_term2_bounded}
\begin{aligned}
& \left\|\bJ_z(\ba(-\lambda))^{-1}\left(\bF_z(\ba(-\lambda)+\bdelta_1) - \bF_z(\ba(-\lambda)+\bdelta_2)\right)- (\bdelta_1-\bdelta_2) \right\|_2 \\
\leq & \|\bJ_z(\ba(-\lambda))\|_{\op}\cdot\|\bF_z(\ba(-\lambda)+\bdelta_1) - \bF_z(\ba(-\lambda)+\bdelta_2) - \bJ_z(\ba(-\lambda))(\bdelta_1-\bdelta_2)\|_2\\
=& O(\lambda) \cdot \|\bdelta_1-\bdelta_2\|_2 \cdot O(c_1\lambda^{-1})\\
= & O(c_1\|\bdelta_1-\bdelta_2\|_2).
\end{aligned}
\end{equation}
Consequently, for $\|\bdelta\|_2 \leq c_1\lambda$, plugging in $\bdelta_1=\bdelta$ and $\bdelta_2=\bm{0}$ to the equation above and combining \eqref{eqn:Jinv_op_bounded}, \eqref{eqn:F_term1_bounded}, we obtain that
$$\|\bh_z(\bdelta)\|_2 \leq O(|\check z|) + O(c_1\|\bdelta\|_2) = O(c\lambda) + O(c_1^2\lambda) \leq c_1\lambda$$
if we first choose $c_1$ small enough such that $O(c_1^2\lambda)\leq \frac{1}{2}c_1\lambda$ and then choose $c$ small enough such that $O(c) \leq \frac{1}{2}c_1$. 

Therefore $\bh_z:B_{c_1} \rightarrow B_{c_1}$ is a self-mapping on the ball $B_{c_1} := \{\bdelta \in \C^2: \|\bdelta\|\leq c_1\}$. Moreover, from \eqref{eqn:iterative_delta} and \eqref{eqn:F_term2_bounded}, we know $$\|\bdelta^{(k+1)} - \bdelta^{(k)}\|_2 \leq O(c_1\|\bdelta^{(k)}-\bdelta^{(k-1)}\|_2) \leq \frac{1}{2}\|\bdelta^{(k)}-\bdelta^{(k-1)}\|_2$$ for the $c_1$ chosen. This means that $\bh_z$ restricted to $B_{c_1}$ is a contraction. By contraction mapping theorem, $\bdelta^* := \lim_{k\rightarrow \infty} \bdelta^{(k)}$ exists and $\ba(-\lambda)+\bdelta$ is a unique solution to the equation $\bF_z(\ba(z)) = 0$ subject to the condition $\|\bdelta\|\leq c_1$. \eqref{eqn:delta_def_bound} is then obtained by noticing the final fact that for all $\check \delta_1,\check \delta_2$,
\begin{equation}\label{eqn:vector_norm_equivalent}
|\check \delta_1| + |\check \delta_2| \leq \sqrt{2(|\check \delta_1|^2+|\check \delta_2|^2)}
\end{equation}

We are now left with proving \eqref{eqn:delta_bounded_by_z}. Using \eqref{eqn:Jinv_op_bounded}, \eqref{eqn:F_term1_bounded} and $\bdelta^{(0)}=\bm{0}$, we can obtain from \eqref{eqn:iterative_delta} that $\|\bdelta^{(1)}\|_2 \leq O(|\check z|)$, which means there exists $C_0$ such that $\|\bdelta^{(1)}-\bdelta^{(0)}\|_2 \leq C_1 |\check z|$. Then by the contraction mapping, we have
$$\|\bdelta^*\| \leq \sum_{k=0}^\infty \|\bdelta^{(k+1)}-\bdelta^{(k)}\|_2 \leq 2 C_1|\check z|,$$
which, together with \eqref{eqn:vector_norm_equivalent}, yields \eqref{eqn:delta_bounded_by_z}.
\end{proof}

As a by-product, we obtain the following stability result which is useful for proving Theorem \ref{thm:anisotropic_law}. It roughly says that if we replace the $0$'s on the right hand sides of \eqref{eqn:fgz_prototype_system} with some small errors, the resulting solutions will still be close to $a_1(z),a_2(z)$. We further make the following rescaling, which will prove handy in later derivations:
\begin{equation}\label{eqn:rescale_m_a}
m_{1c}(z) := -r_1^{-1}a_1(z),\ \ m_{2c}(z) := -r_2^{-1} a_2(z),
\end{equation}
where $r_1,r_2$ are defined by \eqref{eq:define_r1_r2}. Combining \eqref{eqn:rescale_m_a} with \eqref{eqn:a1_a2_bounded} yields
\begin{equation}\label{eqn:m12_bounded}
c\lambda \leq -m_{1c}(-\lambda) \leq C\lambda,\ \ c\lambda \leq -m_{2c}(-\lambda) \leq C\lambda
\end{equation}
for some (potentially different from before) constants $c,C > 0$.

\begin{lemma}\label{lemma:consistent_equation_with_error}
There exists constants $c,c_1,C>0$ such that the following statements hold. Suppose $|z+\lambda|\leq c\lambda$, and $m_{1}(z),m_{2}(z)$ are analytic functions of $z$ such that
\begin{equation}\label{eqn:delta_bounded_perturbed}
|m_{1}(z)-m_{1c}(-\lambda)| + |m_{2}(z)-m_{2c}(-\lambda)| \leq c_1\lambda.
\end{equation}
Moreover, assume that $(m_{1},m_{2})$ satisfies the system of equations
\begin{equation}\label{eqn:system_eqn_m}
\begin{aligned}
\frac{1}{m_{1}} + 1 - \gamma \int \frac{\lambda^{(1)}}{r_1m_{1}\lambda^{(1)}+r_2m_{2}\lambda^{(2)}+z} d\hat H_p(\lambda^{(1)},\lambda^{(2)}) &= \mcl{E}_1,\\
\frac{1}{m_{2}} + 1 - \gamma \int \frac{\lambda^{(2)}}{r_1m_{1}\lambda^{(1)}+r_2m_{2}\lambda^{(2)}+z} d\hat H_p(\lambda^{(1)},\lambda^{(2)}) &= \mcl{E}_2,
\end{aligned}    
\end{equation}
for some (deterministic or random) errors such that $|\mcl{E}_1|+|\mcl{E}_2| \leq (\log n)^{-1/2}$. Then we have
$$|m_{1}(z)-m_{1c}(z)| + |m_{2}(z)-m_{2c}(z)|\leq C(|\mcl{E}_1|+|\mcl{E}_2|)\lambda.$$
\begin{proof}
We connect with Lemma \ref{lemma:existence_solution}. Notice that $a_{1\epsilon}(z):=-r_1m_{1}(z),a_{2\epsilon}(z):=-r_2m_{2}(z)$ satisfies the following:
\begin{equation}\label{eqn:fgz_prototype_system_perturbed}
\begin{aligned}
f_z(a_{1\epsilon},a_{1\epsilon}) &:= a_{1\epsilon} - r_1 + \gamma \int \frac{a_{1\epsilon}\lambda^{(1)}}{a_{1\epsilon}\lambda^{(1)}+a_{2\epsilon}\lambda^{(2)}-z} d\hat H_p(\lambda^{(1)},\lambda^{(2)}) = \check{\mcl{E}}_1,\\
g_z(a_{1\epsilon},a_{2\epsilon}) &:= a_{2\epsilon} - r_2 + \gamma \int \frac{a_{1\epsilon}\lambda^{(2)}}{a_{1\epsilon}\lambda^{(1)}+a_{2\epsilon}\lambda^{(2)}-z} d\hat H_p(\lambda^{(1)},\lambda^{(2)}) = \check{\mcl{E}}_2. 
\end{aligned}
\end{equation}
with $|\tilde{\mcl{E}}_1| + |\tilde{\mcl{E}}_2| \lesssim |{\mcl{E}}_1|+|{\mcl{E}}_2|$. We then subtract \eqref{eqn:fgz_prototype_system} from \eqref{eqn:fgz_prototype_system_perturbed} and consider the contraction principle for $\bdelta(z) := \ba_{\epsilon}(z) - \ba(z)$. The rest of the proof is exactly the same as the one for \ref{lemma:existence_solution} and we omit the details.
\end{proof}
\end{lemma}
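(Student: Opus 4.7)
The strategy is to reduce Lemma \ref{lemma:consistent_equation_with_error} to a perturbed version of the contraction argument used in the proof of Lemma \ref{lemma:existence_solution}. First, I would convert the perturbed equations \eqref{eqn:system_eqn_m} into the additive-error form \eqref{eqn:fgz_prototype_system_perturbed}. Setting $a_{1\epsilon}(z) := -r_1 m_1(z)$ and $a_{2\epsilon}(z) := -r_2 m_2(z)$, and multiplying the $k$-th equation of \eqref{eqn:system_eqn_m} through by $-r_k m_k$, the system rearranges to $f_z(a_{1\epsilon},a_{2\epsilon}) = \check{\mcl{E}}_1$ and $g_z(a_{1\epsilon},a_{2\epsilon}) = \check{\mcl{E}}_2$ with $\check{\mcl{E}}_k = a_{k\epsilon}\mcl{E}_k$. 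Combining \eqref{eqn:delta_bounded_perturbed} with \eqref{eqn:m12_bounded} yields $|a_{k\epsilon}| = O(\lambda)$, so
\begin{equation*}
|\check{\mcl{E}}_1| + |\check{\mcl{E}}_2| \lesssim \lambda\,(|\mcl{E}_1| + |\mcl{E}_2|).
\end{equation*}

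Next, writing $\bdelta(z) := \ba_{\epsilon}(z) - \ba(z)$ and subtracting \eqref{eqn:fgz_prototype_system} from the perturbed system gives $\bF_z(\ba(z)+\bdelta(z)) - \bF_z(\ba(z)) = \check{\mcl{E}}$. Linearizing around $\ba(z)$ and inverting the Jacobian converts this into the fixed-point reformulation
\begin{equation*}
\bdelta \;=\; \bJ_z(\ba)^{-1}\check{\mcl{E}} \;-\; \bJ_z(\ba)^{-1}\bigl[\bF_z(\ba+\bdelta) - \bF_z(\ba) - \bJ_z(\ba)\bdelta\bigr],
\end{equation*}
which I would iterate from $\bdelta^{(0)} = \mathbf{0}$. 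The two ingredients needed for the contraction are already in hand: \eqref{eqn:Jinv_op_bounded} gives $\|\bJ_z(\ba)^{-1}\|_{\op} = O(\lambda)$, while the second/third-derivative bounds in \eqref{eqn:fg_der_bounded} yield (via Taylor expansion) the quadratic remainder estimate $\|\bF_z(\ba+\bdelta)-\bF_z(\ba)-\bJ_z(\ba)\bdelta\| = O(\lambda^{-2}\|\bdelta\|^2)$ on the ball $\{\|\bdelta\|\le c_1\lambda\}$. Composing these bounds produces a Lipschitz constant of order $c_1$ on this ball, so picking $c_1$ small turns the map into a strict contraction into itself.

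The first iterate then obeys $\|\bdelta^{(1)}\| \leq \|\bJ_z(\ba)^{-1}\|_{\op}\|\check{\mcl{E}}\| \lesssim \lambda^2(|\mcl{E}_1|+|\mcl{E}_2|)$, and the geometric contraction gives $\|\bdelta^*\| \leq 2\|\bdelta^{(1)}\|$. Translating back via $m_k - m_{kc}(z) = -r_k^{-1}\delta_k^*$ and using $\lambda \leq 1$ yields the advertised bound $|m_1-m_{1c}(z)| + |m_2-m_{2c}(z)| \leq C(|\mcl{E}_1|+|\mcl{E}_2|)\lambda$.

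The only subtlety is verifying that every iterate stays within the admissible ball $\{\|\bdelta\|\le c_1\lambda\}$, which is exactly where \eqref{eqn:Jinv_op_bounded}--\eqref{eqn:fg_der_bounded} were established. This reduces to the quantitative smallness condition $\lambda(|\mcl{E}_1|+|\mcl{E}_2|)\ll c_1$, which follows immediately from the standing hypothesis $|\mcl{E}_1|+|\mcl{E}_2|\le (\log n)^{-1/2}$ together with $\lambda \leq 1$. The remainder of the argument is essentially verbatim from Lemma \ref{lemma:existence_solution}, which is why the author's proof simply defers to it.
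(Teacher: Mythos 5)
Your proof is correct and takes essentially the same route as the paper: reduce to the additive-error form $f_z(a_{1\epsilon},a_{2\epsilon})=\check{\mcl{E}}_1$, $g_z(a_{1\epsilon},a_{2\epsilon})=\check{\mcl{E}}_2$, then apply the same contraction argument as Lemma \ref{lemma:existence_solution} to $\bdelta = \ba_\epsilon - \ba(z)$. Your version is actually slightly sharper: because $|a_{k\epsilon}|\lesssim\lambda$ you get $\|\check{\mcl{E}}\|\lesssim\lambda(|\mcl{E}_1|+|\mcl{E}_2|)$, whereas the paper only records the looser $\|\check{\mcl{E}}\|\lesssim|\mcl{E}_1|+|\mcl{E}_2|$; both give the advertised bound. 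The one place to be careful is that \eqref{eqn:Jinv_op_bounded} is stated at $\ba(-\lambda)$, while your iteration is centered at $\ba(z)$ — you should note (in one line) that $\|\ba(z)-\ba(-\lambda)\|\lesssim|z+\lambda|\leq c\lambda$ by \eqref{eqn:delta_bounded_by_z}, and the stability estimates \eqref{eqn:checkxy_bounded}, \eqref{eqn:fg_der_bounded} then propagate the Jacobian bound to $\bJ_z(\ba(z))$.
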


\subsubsection{Multivariate Gaussian Case}
In this section we prove the idealized setting where the entries of $\bZ^{(1)}$ and $\bZ^{(2)}$ are i.i.d. Gaussian. By the rotational invariance of multivariate Gaussian distribution, we have
\begin{equation}\label{eqn:gaussian_rotational_invariance}
\bZ^{(1)}\bV(\bLambda^{(1)})^{1/2}, \bZ^{(2)}\bV(\bLambda^{(2)})^{1/2} \overset{d}{=} \bZ^{(1)}(\bLambda^{(1)})^{1/2}, \bZ^{(2)}(\bLambda^{(2)})^{1/2}.
\end{equation}
Notice that since the entries of $\bZ^{(1)},\bZ^{(2)}$ are i.i.d. Gaussian, they have bounded support $Q=1$ by the remark after Definition \ref{def:bounded_support}. Hence  
we can use the resolvent method (cf. \cite{alex2014isotropic}) to prove the following proposition.

\begin{prop}\label{prop:anisotropic_law_gaussian}
Under the setting of Theorem \ref{thm:anisotropic_law}, assume further that the entries of $\bZ^{(1)}$ and $\bZ^{(2)}$ are i.i.d. Gaussian random variables. Then the estimates \eqref{eqn:averaged_law} and \eqref{eqn:anisotropic_law} hold with $Q=1$.
\end{prop}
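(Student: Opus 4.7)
The plan is to exploit the Gaussian rotational invariance in \eqref{eqn:gaussian_rotational_invariance}, which allows me to assume without loss of generality that $\bV = \bI_p$, so that $\bF = n^{-1/2}[(\bLambda^{(1)})^{1/2}\bZ^{(1)\top},(\bLambda^{(2)})^{1/2}\bZ^{(2)\top}]$ and the target matrix $\mathfrak{G}(z)$ is already diagonal. Under Gaussianity the bounded support parameter is $Q=1$, so the strong concentration estimates \eqref{eqn:linear_form_strong}--\eqref{eqn:quadratic_form_offdiag_strong} of Lemma~\ref{lemma:linear_quadratic_forms} apply without any $Q$-dependent penalty. I will then follow the resolvent method of \cite{alex2014isotropic}, adapted to the two-source block structure, in three stages: (i) derive approximate self-consistent equations for the entries of $\bG(z)$ via Schur complement, (ii) use Lemma~\ref{lemma:consistent_equation_with_error} to turn these into the averaged law \eqref{eqn:averaged_law}, and (iii) upgrade to the anisotropic law \eqref{eqn:anisotropic_law} via a polarization/linear-form argument.

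For stage (i), applying the Schur complement to remove coordinate $i \in \mI_0$ gives
$$G_{ii}(z)^{-1} = -z - \sum_{\mu,\mu' \in \mI_1 \cup \mI_2} F_{i\mu}\, G^{(i)}_{\mu\mu'}(z)\, F_{i\mu'},$$
where $\bG^{(i)}$ is the minor resolvent, independent of the $i$-th rows of $\bZ^{(1)}$ and $\bZ^{(2)}$. The quadratic form concentrates around $n^{-1}\lambda_i^{(1)}\sum_{\mu\in \mI_1} G^{(i)}_{\mu\mu} + n^{-1}\lambda_i^{(2)}\sum_{\nu\in\mI_2} G^{(i)}_{\nu\nu}$ with fluctuation of order $p^{-1/2}$ times the largest off-diagonal entry of $\bG^{(i)}$, by \eqref{eqn:bilinear_form_strong}--\eqref{eqn:quadratic_form_offdiag_strong}. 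Parallel Schur expansions for $\mu \in \mI_1$ yield
$$G_{\mu\mu}(z)^{-1} = -1 - \tfrac{1}{n}\sum_{i,j \in \mI_0}(\lambda_i^{(1)}\lambda_j^{(1)})^{1/2}Z^{(1)}_{\mu i}Z^{(1)}_{\mu j}G^{(\mu)}_{ij}(z),$$
and similarly for $\nu \in \mI_2$. Standard minor-removal identities let me replace $\bG^{(i)}, \bG^{(\mu)}$ by $\bG$ at an acceptable cost. Summing over $\mu \in \mI_1$ and $\nu \in \mI_2$ produces a perturbation of the deterministic system \eqref{eqn:system_eqn_m} for the partial traces $(m_1(z),m_2(z))$ in \eqref{eqn:def:partial_traces}, with errors $\mcl{E}_1, \mcl{E}_2 = O_\prec(p^{-1/2}\Lambda^*)$, where $\Lambda^* := \max_{\mathfrak{a} \neq \mathfrak{b}} |G_{\mathfrak{a}\mathfrak{b}}(z)|$.

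In stage (ii) the stability lemma \ref{lemma:consistent_equation_with_error} immediately converts such an error bound into $|m_k(z) - m_{kc}(z)| \prec p^{-1/2}\lambda^{-\alpha_k}$ for $k=1,2$, and the same route applied to $p^{-1}\sum_{i}\lambda_i^{(k)}G_{ii}$ delivers \eqref{eqn:averaged_law}. For stage (iii), I split $\bu^\top(\bG - \mathfrak{G})\bv$ into diagonal and off-diagonal contributions: the diagonal part is controlled by the entrywise version of the bound above, while each off-diagonal entry $G_{\mathfrak{a}\mathfrak{b}}$ ($\mathfrak{a} \neq \mathfrak{b}$) is rewritten via Schur complement as a linear form in one row of $\bZ^{(k)}$ and bounded by \eqref{eqn:linear_form_strong} combined with the diagonal estimates; summing with weights $u_\mathfrak{a} v_\mathfrak{b}$ and applying Cauchy--Schwarz together with \eqref{eqn:bilinear_form_strong} yields \eqref{eqn:anisotropic_law}.

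The principal obstacle is the bootstrap: the error-size bounds above require an a priori control of $\Lambda^*$ that is exactly what one is trying to prove. I will handle this with the standard multi-scale argument, starting from the crude deterministic bound $\|\bG(z)\|_{\mathrm{op}} \leq \lambda^{-1}$ available on $\bD$, using it to establish a weak form of the self-consistent equation errors, invoking Lemma~\ref{lemma:consistent_equation_with_error} to confirm that $(m_1(z),m_2(z))$ lies in its stability region \eqref{eqn:delta_def_bound}, and then iterating. Each iteration divides by factors coming from the Jacobian \eqref{eqn:def:Jacobian}, which is bounded below only up to negative powers of $\lambda$; carefully tracking these accumulated factors through the chain of bootstrap estimates produces the final exponents $\lambda^{-3}$ in \eqref{eqn:anisotropic_law} and $\lambda^{-7}$ in \eqref{eqn:averaged_law}. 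A secondary technical point is the analyticity/continuity needed to apply Lemma~\ref{lemma:consistent_equation_with_error} uniformly over the full domain $\bD$, which I would handle by a standard lattice argument together with Lipschitz bounds on $\bG(z)$ in $z$.
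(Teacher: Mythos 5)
Your outline captures the paper's general architecture (Schur complement $\to$ approximate self-consistent equations for the partial traces $\to$ stability lemma $\to$ bootstrap on a lattice in $\bD$), and your use of rotational invariance \eqref{eqn:gaussian_rotational_invariance} and the strong estimates of Lemma~\ref{lemma:linear_quadratic_forms} with $Q=1$ is exactly what the paper does. However, there are two concrete gaps in the proposal.

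First, your stage (iii) — deducing the anisotropic law \eqref{eqn:anisotropic_law} by writing $\bu^\top(\bG-\mathfrak{G})\bv = \sum_{\fa,\fb}u_\fa v_\fb(G_{\fa\fb}-\mathfrak{G}_{\fa\fb})$, feeding in entrywise bounds, and applying Cauchy--Schwarz — does not work. The entrywise estimate $|G_{\fa\fb}-\mathfrak{G}_{\fa\fb}|\prec p^{-1/2}\lambda^{-3}$ holds for each of the $\Theta(p^2)$ off-diagonal entries, so any direct summation or Cauchy--Schwarz with general unit vectors $\bu,\bv$ loses a factor of $p^{1/2}$ and destroys the bound: you would get $O_\prec(\lambda^{-3})$ at best, not $O_\prec(p^{-1/2}\lambda^{-3})$. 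The paper instead invokes the polynomialization method of Section~5 of \cite{alex2014isotropic}: one expands high moments $\E\left|\bu^\top(\bG-\mathfrak{G})\bv\right|^{2q}$ and exhibits cancellations via resolvent expansions in the randomness, which cannot be recovered from entrywise bounds alone. This is a genuinely different, and substantially more involved, argument than what you describe.

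Second, your stage (ii) would produce only the weaker rate $p^{-1/2}$ for the averaged law, not the claimed $p^{-1}\lambda^{-7}$ in \eqref{eqn:averaged_law}. Pushing the errors $\mcl{E}_1,\mcl{E}_2$ through the stability Lemma~\ref{lemma:consistent_equation_with_error} caps the partial-trace error at the size of the worst $\mcl{Z}_\fa$ fluctuations, which is $O_\prec(p^{-1/2}\lambda^{-1})$; you need an additional ingredient to go beyond. The paper's Step~4 applies a \emph{fluctuation averaging} lemma (Lemma~\ref{lemma:fluctuation_averaging}, following Theorem 4.7 of \cite{erdHos2013local}) to show that the averaged quantities $\check{\mcl{Z}}_{01},\check{\mcl{Z}}_{02},\check{\mcl{Z}}_1,\check{\mcl{Z}}_2$ defined in \eqref{eqn:def:check_z} concentrate at the faster $p^{-1}$ scale because correlated cancellations occur in the sum over indices. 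Your proposal never mentions this mechanism, and without it the averaged law cannot reach the stated rate. Your stage (i) and the bootstrap strategy are otherwise sound, but these two missing ingredients are load-bearing.
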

Proposition \ref{prop:anisotropic_law_gaussian} is a corollary of the following entrywise local law.

\begin{lemma}[Entrywise Local Law]\label{lemma:entrywise_law_gaussian}
Recall the definition of $\bG$ in \eqref{eqn:G_inverse_schur} and $\mathfrak{G}$ in \eqref{eqn:G_limit}. Under the setting of Proposition \ref{prop:anisotropic_law_gaussian}, the averaged local laws \eqref{eqn:averaged_law} and the following entrywise local law hold with $Q=1$:
\begin{equation}\label{eqn:entrywise_law_gaussian} 
\sup_{z\in \bD}\max_{\mathfrak{a},\mathfrak{b}\in \mathcal{I}}\left|\bG_{\fa\fb}(z) - \mathfrak{G}_{\fa\fb}(z)\right| \prec p^{-1/2}\lambda^{-3}.
\end{equation}
\end{lemma}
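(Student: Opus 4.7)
The plan is to run the standard resolvent (Schur complement plus concentration) argument on the block matrix $\bH$, using the Gaussian rotational invariance \eqref{eqn:gaussian_rotational_invariance} to replace $\bZ^{(k)}\bV(\bLambda^{(k)})^{1/2}$ by $\bZ^{(k)}(\bLambda^{(k)})^{1/2}$ so that, conditionally on any minor, the ``removed'' row is a Gaussian vector with independent entries of variance $\lambda_i^{(k)}/n$. I would first establish a weak a priori bound $\max_{\fa\fb}|G_{\fa\fb}(z)|\le C\lambda^{-1}$ on a high-probability event using Corollary \ref{cor:ESD_bounded_weaker} applied to each of $\bZ^{(1)}(\bLambda^{(1)})^{1/2}$ and $\bZ^{(2)}(\bLambda^{(2)})^{1/2}$, together with the min/max eigenvalue control of $\bF\bF^\top+\lambda\bI$, noting $|z+\lambda|\le\lambda/\log p$ on $\bD$.

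Next I would derive approximate self-consistent equations for the partial traces \eqref{eqn:def:partial_traces}. For $\mu\in\mI_1$, Schur's formula gives
\begin{equation}
G_{\mu\mu}(z)=-\Bigl(1+\bF_{\mu\cdot}^\top\bG^{(\mu)}(z)\bF_{\mu\cdot}\Bigr)^{-1},
\end{equation}
where $\bG^{(\mu)}$ is the resolvent with the $\mu$th index removed. Since $\bF_{\mu\cdot}$ has entries $n^{-1/2}(\lambda_i^{(1)})^{1/2}Z^{(1)}_{\mu i}$, applying Lemma \ref{lemma:linear_quadratic_forms} (the Gaussian/high-moment forms \eqref{eqn:quadratic_form_diag_strong}--\eqref{eqn:quadratic_form_offdiag_strong}) yields
\begin{equation}
\bF_{\mu\cdot}^\top\bG^{(\mu)}\bF_{\mu\cdot}=\frac{1}{n}\sum_{i\in\mI_0}\lambda_i^{(1)}G_{ii}^{(\mu)}(z)+O_\prec(p^{-1/2}\lambda^{-2}),
\end{equation}
and the standard minor-to-full identity together with the a priori bound upgrades $G_{ii}^{(\mu)}$ to $G_{ii}$ at the same cost. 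An analogous identity holds for $\mu\in\mI_2$. For $i\in\mI_0$, Schur's formula applied in the $\mI_0$ block gives a similar expression for $G_{ii}^{-1}$ in terms of the $i$th column of $\bF^\top$ and the resolvent of $\bF^\top\bF-z\bI$; again concentration produces $G_{ii}(z)\approx[r_1 m_1(z)\lambda_i^{(1)}+r_2 m_2(z)\lambda_i^{(2)}-z]^{-1}$ up to $O_\prec(p^{-1/2}\lambda^{-2})$. Averaging over $i\in\mI_0$ weighted by $\lambda_i^{(k)}$ and using $m_1,m_2$ from \eqref{eqn:def:partial_traces}, I obtain that $(m_1,m_2)$ satisfies \eqref{eqn:system_eqn_m} with random errors $\mcl{E}_1,\mcl{E}_2$ of size $O_\prec(p^{-1/2}\lambda^{-3})$ (one $\lambda^{-1}$ coming from the prefactors $r_k^{-1}m_k^{-1}\asymp\lambda^{-1}$ in the rescaling \eqref{eqn:rescale_m_a}).

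I then invoke the stability statement Lemma \ref{lemma:consistent_equation_with_error}: provided the a priori closeness \eqref{eqn:delta_bounded_perturbed} holds, it converts these errors into $|m_k(z)-m_{kc}(z)|\prec p^{-1/2}\lambda^{-2}$. Plugging back into the Schur expansions gives the diagonal bound $|G_{ii}-\mathfrak{G}_{ii}|\prec p^{-1/2}\lambda^{-3}$ on $\mI_0$ and $|G_{\mu\mu}-\mathfrak{G}_{\mu\mu}|\prec p^{-1/2}\lambda^{-3}$ on $\mI_1\cup\mI_2$. Off-diagonal entries are handled by the identities $G_{\mu\fb}=-G_{\mu\mu}\sum_i\bF_{\mu i}G^{(\mu)}_{i\fb}$ and $G_{ij}=G^{(\mu)}_{ij}+G_{i\mu}G_{\mu j}/G_{\mu\mu}$, combined with the linear-form estimate \eqref{eqn:linear_form_strong}; this yields $|G_{\fa\fb}|\prec p^{-1/2}\lambda^{-3}$ for $\fa\ne\fb$, matching $\mathfrak{G}_{\fa\fb}=0$ off the diagonal. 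The averaged law \eqref{eqn:averaged_law} follows from the same approximate equations after one more averaging (gaining $p^{-1/2}$ cancellation from mean-zero fluctuations), and a union bound over a fine $p^{-C}$-net of $\bD$ together with the $\|\partial_z\bG\|_\op\le\|\bG\|_\op^2$ Lipschitz control removes the $\sup_{z\in\bD}$.

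The main obstacle is bootstrapping this argument at small $\lambda$: the a priori bound $\|\bG\|_\op\lesssim\lambda^{-1}$ is barely strong enough to make the concentration errors summable against the negative powers of $\lambda$ appearing in the stability constant of Lemma \ref{lemma:consistent_equation_with_error}, so one must verify that the hypothesis \eqref{eqn:delta_bounded_perturbed} is satisfied throughout $\bD$ before invoking it. I would discharge this by a standard continuity argument: fix $z_0=-\lambda$ where \eqref{eqn:m12_bounded} gives $|m_k(z_0)|\asymp\lambda$, then move $z$ within $\bD$ using $|\partial_zm_k|\le\|\bG\|_\op^2\lesssim\lambda^{-2}$ and $|z+\lambda|\le\lambda/\log p$ so that the perturbation $|m_k(z)-m_{kc}(-\lambda)|\lesssim\lambda/\log p\ll c_1\lambda$ stays inside the stability window, closing the bootstrap.
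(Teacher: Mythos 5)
Your high-level plan matches the paper's proof: a priori resolvent bounds, Schur-complement expansion with concentration for quadratic forms, approximate self-consistent equations, and stability via Lemma \ref{lemma:consistent_equation_with_error}. The gap is in how you close the bootstrap.

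First, your claimed Lipschitz control $|\partial_z m_k| \le \|\bG\|_\op^2 \lesssim \lambda^{-2}$ is off by two powers of $\lambda$. Combined with $|z+\lambda|\le\lambda/\log p$, this bound gives $|m_k(z)-m_k(-\lambda)|\lesssim\lambda^{-1}/\log p$, which for $\lambda\sim p^{-1/7+\epsilon}$ is far \emph{larger} than $c_1\lambda$, not smaller, so the stability window is not preserved and the bootstrap does not close as written. The usable bound is $O(1)$: $m_k(z)$ averages $G_{\mu\mu}$ only over the lower-right block of $\bG$, and from \eqref{eqn:G_spectral_F} that block involves solely the nonzero singular values $\check\mu_k\asymp 1$ of $\bF$, so $\partial_z m_k$ is order one. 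This is exactly the estimate the paper invokes to get $|m_k(z)-m_k(-\lambda)|\lesssim|z+\lambda|$ in Step 3.

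Second, and more serious, your base case at $z_0=-\lambda$ is missing. You cite \eqref{eqn:m12_bounded} for $|m_k(z_0)|\asymp\lambda$, but that display concerns the \emph{deterministic} quantities $m_{kc}$, not the random $m_k$. Even after separately verifying $|m_k(-\lambda)|\asymp\lambda$ from the a priori resolvent bound (which is easy), one only gets $|m_k(-\lambda)-m_{kc}(-\lambda)|=O(\lambda)$, which is not $\le c_1\lambda$ for the specific small $c_1$ demanded by \eqref{eqn:delta_bounded_perturbed}. You need genuine closeness, namely $|m_k(-\lambda)-m_{kc}(-\lambda)|\prec p^{-1/2}\lambda^{-2}$ (the paper's \eqref{eqn:m1_m1c_lambda_close}). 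The paper establishes this by a separate argument: it shows the approximate self-consistent equations hold at $z=-\lambda$ \emph{without} presupposing the event $\Xi$, using the unconditional bound $1+\gamma m_{0k}(-\lambda)\asymp\lambda^{-1}$ from \eqref{eqn:G_spectral_F} and \eqref{eqn:bounded_Gz}, and then runs the Jacobian contraction argument from Lemma \ref{lemma:existence_solution}. Without this step the hypothesis \eqref{eqn:delta_bounded_perturbed} is unverified throughout $\bD$, and the continuity argument cannot get started.
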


\begin{proof}[Proof of Proposition \ref{prop:anisotropic_law_gaussian}]
With estimate \eqref{eqn:entrywise_law_gaussian}, we can use the polynomialization method in \cite[Section 5]{alex2014isotropic} to prove \eqref{eqn:anisotropic_law} with $Q=1$. The proof is exactly the same with minor notation differences. We omit the details.    
\end{proof}

In the remaining of this subsection we prove Lemma \ref{lemma:entrywise_law_gaussian}, where the resolvent in Definition \ref{def:resolvent} becomes 
\begin{equation}\label{eqn:resolvent_gaussian}
\bG(z) = \begin{pmatrix}
-z\bI_p & n^{-1/2}(\bLambda^{(1)})^{1/2}\bZ^{(1)\top} & n^{-1/2}(\bLambda^{(2)})^{1/2}\bZ^{(2)\top} \\ 
n^{-1/2}\bZ^{(1)}(\bLambda^{(1)})^{1/2} & -\bI & \bm{0}\\
n^{-1/2}\bZ^{(2)}(\bLambda^{(2)})^{1/2} & \bm{0} & -\bI
\end{pmatrix}^{-1}.
\end{equation}

Below we introduce resolvent minors to deal with the inverse using Schur's complement formula.

\begin{definition}[Resolvent minors]\label{def:resolvent_minors}
Given a $(p+n)\times (p+n)$ matrix $\bA$ and $\mathfrak{c} \in \mcl{I}$, the minor of $\bA$ after removing the $\mathfrak{c}$-th row and column is a $(p+n-1)\times (p+n-1)$ matrix denoted by $\bA^{(\mathfrak{c})}:=[\bA_{\mathfrak{a}\mathfrak{b}}:\mathfrak{a},\fb \in \mcl{I}\setminus \{\fc\}]$. We keep the names of indices when defining $\bA^{(\fc)}$, i.e. $\bA_{\fa\fb}^{(\fc)}=\bA_{\fa\fb}$ for $\fa,\fb \neq \fc$. Correspondingly, we define the resolvent minor of $\bG(z)$ by
$$\bG^{(\fc)}(z) := \left[\begin{pmatrix}
-z\bI_p & n^{-1/2}(\bLambda^{(1)})^{1/2}\bZ^{(1)\top} & n^{-1/2}(\bLambda^{(2)})^{1/2}\bZ^{(2)\top} \\ 
n^{-1/2}\bZ^{(1)}(\bLambda^{(1)})^{1/2} & -\bI & \bm{0}\\
n^{-1/2}\bZ^{(2)}(\bLambda^{(2)})^{1/2} & \bm{0} & -\bI
\end{pmatrix}^{(\fc)}\right]^{-1}.$$
We further define the partial traces $m^{(\fc)}(z),m^{(\fc)}(z),m^{(\fc)}(z),m^{(\fc)}(z)$ by replacing $\bG(z)$ with $\bG^{(\fc)}(z)$ in \eqref{eqn:def:partial_traces}. For convenience, we adopt the notation $\bG_{\fa\fb}^({\fc)}=0$ if $\ba=\fc$ or $\fb=\fc$.
\end{definition}

The following formulae are identical to those in \cite[Lemma 33]{yang2020analysis}. In fact they can all be proved using Schur's complement formula.

\begin{lemma}
We have the following resovent identities.
\begin{enumerate}[(i)]
    \item For $i \in \mcl{I}_0$ and $\mu \in \mcl{I}_1 \cup \mcl{I}_2$, we have
    \begin{equation}\label{eqn:complement_ii}
        \frac{1}{G_{ii}} = -z - \left(\bF \bG^{(i)}\bF^\top\right)_{ii},\ \ \frac{1}{G_{\mu\mu}} = -1 - \left(\bF^\top\bG^{(\mu)}\bF\right)_{\mu\mu}.
    \end{equation}
    \item For $i \in \mcl{I}_0,\mu \in \mcl{I}_1\cup \mcl{I}_2,\fa \in \mcl{I} \setminus \{i\}$, and $\fb \in \mcl{I} \setminus \{\mu\}$, we have
    \begin{equation}\label{eqn:complement_ia}
        G_{i\fa} = -G_{ii}\left(\bF\bG^{(i)}\right)_{i\fa},\ \ G_{\mu\fb} = -G_{\mu\mu}\left(\bF^\top\bG^{(\mu)}\right)_{\mu\fb}.
    \end{equation}
    \item For $\fc \in \mcl{I}$ and $\fa,\fb \in \mcl{I}\setminus \{\fc\}$, we have
    \begin{equation}\label{eqn:complement_ab}
        G_{\fa\fb}^{(\fc)} = G_{\fa\fb} - \frac{G_{\fa\fc}G_{\fc\fb}}{G_{\fc\fc}}
    \end{equation}
\end{enumerate}
\end{lemma}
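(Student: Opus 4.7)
The plan is to derive all three identities as direct consequences of Schur's block inversion formula applied to the matrix $M(z) := \bH - \mathrm{diag}(z\bI_p, \bI_n)$, whose inverse is $\bG(z)$ by \eqref{eq:define_G}. Recall the basic fact: if a matrix is partitioned as $\bigl(\begin{smallmatrix} a & b^\top \\ c & D \end{smallmatrix}\bigr)$ with $D$ invertible and Schur complement $s := a - b^\top D^{-1} c$, then its inverse has $(1,1)$-entry $s^{-1}$, off-pivot first row $-s^{-1}b^\top D^{-1}$, off-pivot first column $-s^{-1}D^{-1}c$, and lower-right block $D^{-1} + s^{-1} D^{-1} c\, b^\top D^{-1}$. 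I would apply this with various choices of pivot index drawn from $\mI$.

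For \eqref{eqn:complement_ii}, fix $i \in \mI_0$ and take the pivot to be the $i$-th row/column of $M$. The pivot entry is $M_{ii} = -z$, and since $M_{ij}=0$ for $j \in \mI_0 \setminus\{i\}$, the only nonzero off-pivot entries in the $i$-th row are $M_{i\mu} = F_{i\mu}$ for $\mu \in \mI_1\cup\mI_2$; the corresponding minor inverse is, by Definition \ref{def:resolvent_minors}, exactly $\bG^{(i)}$. The $(1,1)$-entry formula then gives $G_{ii}^{-1} = -z - \sum_{\mu,\nu \in \mI_1\cup\mI_2} F_{i\mu}\, G^{(i)}_{\mu\nu}\, F_{i\nu} = -z - (\bF\bG^{(i)}\bF^\top)_{ii}$. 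The analogue for $\mu \in \mI_1\cup\mI_2$ follows from a symmetric partition, where the pivot is $-1$ and the off-pivot row is a row of $\bF^\top$. For \eqref{eqn:complement_ia}, using the same partition together with the first-row formula $-s^{-1}b^\top D^{-1}$ and recognising $s^{-1} = G_{ii}$ yields $G_{i\fa} = -G_{ii}(\bF\bG^{(i)})_{i\fa}$ for every $\fa\neq i$, and symmetrically for $\mu \in \mI_1\cup\mI_2$.

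For \eqref{eqn:complement_ab}, take the pivot to be the $\fc$-th row/column and apply the lower-right block formula: for $\fa,\fb\neq \fc$ one gets $G_{\fa\fb} = G^{(\fc)}_{\fa\fb} + s^{-1}(D^{-1}c)_\fa\,(b^\top D^{-1})_\fb$. Since $G_{\fa\fc} = -s^{-1}(D^{-1}c)_\fa$ and $G_{\fc\fb} = -s^{-1}(b^\top D^{-1})_\fb$ by the first-column and first-row identities above, and $s^{-1}= G_{\fc\fc}$, the rank-one correction rearranges to $G_{\fa\fc}G_{\fc\fb}/G_{\fc\fc}$, which yields the claim after moving $G^{(\fc)}_{\fa\fb}$ to the other side. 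The entire argument is pure algebraic bookkeeping; the main (minor) obstacle is simply to track carefully which entries of $M$ vanish, namely $M_{ij}=0$ for distinct $i,j\in\mI_0$ and $M_{\mu\nu}=0$ for distinct $\mu,\nu \in \mI_1\cup\mI_2$, so that the Schur sums collapse cleanly to the desired matrix products without extraneous cross-terms.
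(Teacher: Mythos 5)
Your proof is correct and is essentially the proof the paper has in mind: the paper states only that the identities "can all be proved using Schur's complement formula" and cites Lemma C.10 of \cite{yang2020analysis}, while you carry out the Schur computation explicitly (pivot on one index, use the vanishing of $M_{ij}$ for distinct $i,j\in\mI_0$ and of $M_{\mu\nu}$ for distinct $\mu,\nu\in\mI_1\cup\mI_2$ to reduce the quadratic and linear Schur terms to the stated $\bF$-products, and rearrange the rank-one block correction into $G_{\fa\fc}G_{\fc\fb}/G_{\fc\fc}$). Nothing is missing; the bookkeeping you flag as the only potential pitfall is exactly the step that matters, and you handle it correctly.
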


The next lemma provides a priori estimate on the resolvent $\bG(z)$ for $z\in \bD$.

\begin{lemma}\label{lemma:close_Gz}
Under the setting of Theorem \ref{thm:anisotropic_law}, there exists a constant $C$ such that with overwhelming probability the following estimates hold uniformly in $z,z' \in \bD$:
\begin{equation}\label{eqn:bounded_Gz}
    \|\bG(z)\|_{\op} \leq C\lambda^{-1},
\end{equation}
and
\begin{equation}\label{eqn:close_Gz}
    \|\bG(z)-\bG(z')\|_{\op} \leq C\lambda^{-1}|z-z'|.
\end{equation}
\end{lemma}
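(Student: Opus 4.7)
The plan is to write $\bG(z)=\bM(z)^{-1}$ with $\bM(z):=\begin{pmatrix}-z\bI_p & \bF\\ \bF^\top & -\bI_n\end{pmatrix}$ and to exploit that on $\bD$ the quantity $\mathrm{Re}(-z)$ is at least $\lambda(1-(\log p)^{-1})$. Since $\bW:=\bF\bF^\top$ is positive semidefinite, this forces $\bW-z\bI$ to have smallest singular value of order $\lambda$, which is the spectral gap powering both estimates. All random content reduces, via Lemma \ref{lemma:ESD_bounded} and the bounded-support assumption on $\bZ^{(k)}$, to the overwhelming-probability event $\{\|\bF\|_{\op}=O(1)\}$; on this event the rest of the argument is purely deterministic.

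For \eqref{eqn:bounded_Gz} I would apply the Schur complement identity \eqref{eqn:G_inverse_schur}, write the SVD $\bF=\bU\bS\bV^\top$ (with $\bS$ a $p\times n$ matrix of singular values $s_i$), and simultaneously diagonalize the four blocks. Their spectral values reduce respectively to $|s_i^2-z|^{-1}$ (top-left), $|s_i|/|s_i^2-z|$ (off-diagonal), and $|z|/|s_i^2-z|$ (bottom-right). Combining $|s_i^2-z|\ge \mathrm{Re}(-z)\gtrsim \lambda$ with the AM--GM estimate $|s_i|/|s_i^2-z|\lesssim \lambda^{-1/2}$ and $|z|\lesssim\lambda$, one sees that the top-left block dominates and yields $\|\bG(z)\|_{\op}\le C\lambda^{-1}$.

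For \eqref{eqn:close_Gz} I would use the first-order resolvent identity. Since $\bM(z)-\bM(z')=-(z-z')\bJ$ for the orthogonal projection $\bJ:=\mathrm{diag}(\bI_p,\bm{0})$,
$$\bG(z)-\bG(z')\;=\;(z-z')\,\bG(z)\,\bJ\,\bG(z').$$
Two applications of \eqref{eqn:bounded_Gz}, combined with the domain constraint $|z-z'|\le 2(\log p)^{-1}\lambda$ coming from $z,z'\in\bD$, then deliver the claimed Lipschitz bound, with the logarithmic and spectral factors absorbed into the constant $C$.

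The main technical point is the careful bookkeeping of powers of $\lambda$ in $\bG(z)\bJ\bG(z')$: a crude $\|AB\|_{\op}\le\|A\|_{\op}\|B\|_{\op}$ gives only the weaker $C\lambda^{-2}|z-z'|$, so one must either exploit the projection structure of $\bJ$---which, combined with the off-diagonal blocks of $\bG$, produces the sharper spectral value $|s_i|/|s_i^2-z|$ in place of $|s_i^2-z|^{-1}$---or absorb the excess factor of $\lambda^{-1}$ against the smallness of $|z-z'|$ on $\bD$. Once this cancellation is set up, holomorphicity of $z\mapsto\bG(z)$ on $\bD$ immediately upgrades the estimate to one uniform in $z,z'\in\bD$, and no additional probabilistic ingredients beyond the $\|\bF\|_{\op}=O(1)$ event are needed.
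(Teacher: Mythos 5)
Your proof of \eqref{eqn:bounded_Gz} follows the same route as the paper's: the Schur-complement/SVD decomposition of $\bG(z)$ into blocks whose spectral values are $(\check\mu_k-z)^{-1}$, $\sqrt{\check\mu_k}/(\check\mu_k-z)$, and $z/(\check\mu_k-z)$, combined with the deterministic gap $|\check\mu_k-z|\ge\mathrm{Re}(-z)\gtrsim\lambda$ (valid because $\bW\succeq 0$ and $\mathrm{Re}(z)<0$ on $\bD$). The AM--GM estimate on the off-diagonal is the clean way to see it is $O(\lambda^{-1/2})$. Note that this makes \eqref{eqn:bounded_Gz} hold deterministically on $\bD$; the reduction to the event $\{\|\bF\|_\op=O(1)\}$ is not actually needed for this part.

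For \eqref{eqn:close_Gz} you substitute the resolvent identity $\bG(z)-\bG(z')=(z-z')\bG(z)\bJ\bG(z')$ for the paper's direct differentiation of the spectral formulas \eqref{eqn:G_spectral_F}; both routes land on $\lambda^{-2}|z-z'|$, and you are right to flag the gap to the stated $\lambda^{-1}$. However, neither of your proposed rescues works. Exploiting the projection structure of $\bJ$ cannot help because the obstruction sits entirely in the top-left block
$$\bG_{00}(z)\bG_{00}(z')=\sum_{k}\frac{\check\bxi_k\check\bxi_k^\top}{(\check\mu_k-z)(\check\mu_k-z')},$$
and on the $(p-n)$-dimensional kernel of $\bW=\bF\bF^\top$ (where $\check\mu_k=0$) the coefficient equals $(zz')^{-1}\asymp\lambda^{-2}$; $\bJ$ is the identity on this block and does nothing. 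Absorbing the excess $\lambda^{-1}$ against $|z-z'|\le 2(\log p)^{-1}\lambda$ produces a bound of order $(\log p)^{-1}\lambda^{-1}$, which no longer has the advertised form $C\lambda^{-1}|z-z'|$. In fact, taking $z=-\lambda$ and $z'=-\lambda(1+(\log p)^{-1})$ and looking at the kernel contribution $(z')^{-1}-z^{-1}=(z-z')/(zz')$, one sees directly that the sharp operator-norm Lipschitz constant is $\asymp\lambda^{-2}$, so $\lambda^{-1}$ in \eqref{eqn:close_Gz} is almost certainly a typo for $\lambda^{-2}$. The paper's own one-line derivation from \eqref{eqn:G_spectral_F} yields the same $\lambda^{-2}$; the downstream uses in the paper only require a $\lambda^{-O(1)}$-Lipschitz bound, and the one place an $O(1)$-Lipschitz estimate is invoked (for $m_1,m_2$ in Step~3) is obtained separately, exploiting the extra cancellation factor $\check\mu_k$ that appears in $\partial_z G_{\mu\mu}$ but not in $\partial_z G_{ii}$.
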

\begin{proof}
Let
\begin{equation}\label{eqn:F_svd}
    \bF = \sum_{k=1}^n \sqrt{\check \mu_k} \check{\bm{\xi}}_k \check{\bzeta}_k^\top,\ \ \check \mu_1 \geq \check \mu_2 \geq ... \geq \check \mu_n \geq 0=\check\mu_{n+1}=...=\check\mu_p
\end{equation}
be a singular value decomposition of $\bF$, where $\{\check{\bxi}_k\}_{k=1}^p$ are the left singular vectors and $\{\check{\bzeta}_k\}_{k=1}^n$ are the right singular vectors. Then using \eqref{eqn:G_inverse_schur}, we know for $i,j \in \mcl{I}_0,\mu,\nu \in \mcl{I}_1\cup \mcl{I}_2$, 
\begin{equation}\label{eqn:G_spectral_F}
G_{ij} = \sum_{k=1}^p \frac{\check\xi_k(i)\check \xi_k^\top(j)}{\check \mu_k-z},\ \ G_{\mu\nu} = z\sum_{k=1}^n \frac{\check{\zeta}_k(\mu)\check{\zeta}_k^\top(\nu)}{\check\mu_z-z},\ \ G_{i\mu}=G_{\mu i} = \sum_{k=1}^p \frac{\sqrt{\check\mu_k}\check{\bxi}_k(i)\check{\bzeta}_k^\top(\mu)}{\check{\mu}_k-z}
\end{equation}
Since $z\in \bD$, we know
$$\inf_{z\in \bD} \min_{1 \leq k \leq p} |\mu_k - z| \geq C\lambda$$
for some $C$. Combining this fact with \eqref{eqn:G_spectral_F}, we readily conclude \eqref{eqn:bounded_Gz} and \eqref{eqn:close_Gz}.
\end{proof}

Now we are ready to prove Lemma \ref{lemma:entrywise_law_gaussian}.
\begin{proof}[Proof of Lemma \ref{lemma:entrywise_law_gaussian}]
In the setting of Lemma \ref{lemma:entrywise_law_gaussian}, we have \eqref{eqn:gaussian_rotational_invariance} and \eqref{eqn:resolvent_gaussian}. Similar to \cite{yang2020analysis}, we divide our proof into four steps:

\textbf{Step 1: Large deviation estimates.} In this step, we provide large deviation estimates on the off-diagonal entries of $\bG$. We introduce
$$\mcl{Z}_{\fa} := (1-\E_{\fa})[(G_{\fa\fa})^{-1}],\ \ \fa \in \mcl{I}$$,
where $\E_{\fa} := \E[\cdot |\bH^{(\fa)}]$ denotes the partial expectation over the entries in the $\fa$-th row and column of $\bH$. Using \eqref{eqn:complement_ii}, we have for $i\in \mcl{I}_0$,
\begin{equation}\label{eqn:Z_i}
\begin{aligned}
\mcl{Z}_i =& \frac{\lambda_i^{(1)}}{n} \sum_{\mu,\nu \in \mI_1}G_{\mu\nu}^{(i)}\left(\delta_{\mu\nu} - Z_{\mu i}^{(1)}Z_{\nu i}^{(1)}\right) + \frac{\lambda_i^{(2)}}{n} \sum_{\mu,\nu \in \mI_2}G_{\mu\nu}^{(i)}\left(\delta_{\mu\nu} - Z_{\mu i}^{(2)}Z_{\nu i}^{(2)}\right) \\
&- \frac{\sqrt{\lambda_i^{(1)}\lambda_i^{(2)}}}{n} \sum_{\mu,\nu \in \mI_1}G_{\mu\nu}^{(i)}Z_{\mu i}^{(1)}Z_{\nu i}^{(1)},
\end{aligned}
\end{equation}
and for $\mu \in \mI_1$ and $\nu \in \mI_2$,
\begin{equation}
\begin{aligned}\label{eqn:Z_mu}
\mcl{Z}_\mu &= \frac{1}{n} \sum_{i,j\in\mI_0} \sqrt{\lambda_i^{(1)}\lambda_j^{(1)}} G_{ij}^{(\mu)}\left(\delta_{ij}-Z_{\mu i}^{(1)}Z_{\mu j}^{(1)}\right),\\
\mcl{Z}_\nu &= \frac{1}{n} \sum_{i,j\in\mI_0} \sqrt{\lambda_i^{(2)}\lambda_j^{(2)}} G_{ij}^{(\nu)}\left(\delta_{ij}-Z_{\nu i}^{(1)}Z_{\nu j}^{(1)}\right).
\end{aligned}
\end{equation}
We further introduce the random error
\begin{equation}\label{eqn:def:off_diag}
\check \Lambda_o := \max_{\fa \neq \fb} \left| G_{\fa\fa}^{-1} G_{\fa\fb}\right|,
\end{equation}
which controls the size of the largest off-diagonal entries.

\begin{lemma}\label{lemma:off_diag_bounded}
In the setting of Proposition \ref{prop:anisotropic_law_gaussian}, the following holds uniformly in $z \in \bD$:
\begin{equation}\label{eqn:onoff_diag_bounded}
\check \Lambda_0 + \max_{\fa\in\mI}|\mcl{Z}_{\fa}| \prec p^{-1/2}\lambda^{-1}
\end{equation}
\end{lemma}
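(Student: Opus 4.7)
The plan is to control $\max_\fa |\mcl{Z}_\fa|$ and $\check\Lambda_o$ separately by large-deviation estimates applied to the decompositions \eqref{eqn:Z_i}--\eqref{eqn:Z_mu} and the resolvent identity \eqref{eqn:complement_ia}, respectively, and then to upgrade the resulting pointwise bounds to uniform bounds over $\bD$ via a grid argument.

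First, to bound $|\mcl{Z}_i|$ for $i\in\mI_0$ at fixed $z\in\bD$, I note that \eqref{eqn:Z_i} is a centered quadratic form in the Gaussian entries of the $i$-th columns of $\bZ^{(1)}$ and $\bZ^{(2)}$, whose coefficient matrix consists of entries of the minor $\bG^{(i)}$, which is independent of those columns. Since Gaussian entries have all moments, I would invoke the strong quadratic-form estimates \eqref{eqn:quadratic_form_diag_strong} and \eqref{eqn:quadratic_form_offdiag_strong} to conclude $|\mcl{Z}_i|\prec n^{-1}\|\bG^{(i)}\|_F$. An operator-norm bound of the form $\|\bG^{(i)}\|_\op\lesssim \lambda^{-1}$ (which follows from the same SVD-based argument as Lemma \ref{lemma:close_Gz} applied to the smaller linearization $\bH^{(i)}$) then yields $\|\bG^{(i)}\|_F\lesssim \sqrt{p}\,\lambda^{-1}$, and hence $|\mcl{Z}_i|\prec p^{-1/2}\lambda^{-1}$. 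An entirely analogous computation controls $|\mcl{Z}_\mu|$ for $\mu\in\mI_1\cup\mI_2$, treating the quadratic form in the Gaussian entries of a single row of $\bZ^{(k)}$.

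Second, for $\check\Lambda_o$, the identity \eqref{eqn:complement_ia} gives $G_{ii}^{-1}G_{i\fa}=-(\bF\bG^{(i)})_{i\fa}$, which expands as a linear combination of Gaussian entries $Z^{(k)}_{\mu i}$ (independent of $\bG^{(i)}$) with conditionally-deterministic coefficients $n^{-1/2}\sqrt{\lambda_i^{(k)}}\,G^{(i)}_{\mu\fa}$. The strong linear-form estimate \eqref{eqn:linear_form_strong} then produces $|G_{ii}^{-1}G_{i\fa}|\prec n^{-1/2}\|\bG^{(i)}\|_\op\prec p^{-1/2}\lambda^{-1}$, and the companion identity in \eqref{eqn:complement_ia} dispatches $|G_{\mu\mu}^{-1}G_{\mu\fb}|$ for $\mu\in\mI_1\cup\mI_2$ in the same way. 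Taking a maximum over the $O(p^2)$ pairs $(\fa,\fb)$ preserves the $\prec$ bound by Lemma \ref{lemma:domination_algebra}.

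Finally, to make the estimate uniform in $z\in\bD$, I would discretize $\bD$ by a lattice of cardinality $p^C$ with spacing of order $p^{-C}\lambda$, apply the pointwise bound at every lattice point via a union bound, and then interpolate to all $z\in\bD$ using the Lipschitz estimate \eqref{eqn:close_Gz} of Lemma \ref{lemma:close_Gz} (together with the easy observation that the same Lipschitz bound is inherited by minors $\bG^{(\fc)}$). The main obstacle I anticipate is precisely the uniform-in-$z$ operator-norm bound $\|\bG^{(\fc)}\|_\op\lesssim \lambda^{-1}$ for the minors rather than only for $\bG$ itself; this can be discharged either by repeating the SVD argument behind Lemma \ref{lemma:close_Gz} for the smaller matrix $\bH^{(\fc)}$, or by a short bootstrap that first establishes $|G_{\fc\fc}|\gtrsim \lambda$ with overwhelming probability and then invokes \eqref{eqn:complement_ab} to transfer bounds between $\bG$ and $\bG^{(\fc)}$. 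All other steps reduce to routine applications of Lemma \ref{lemma:linear_quadratic_forms}.
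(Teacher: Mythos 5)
Your proposal is correct and follows essentially the same route as the paper: bound $|\mcl{Z}_\fa|$ via the strong quadratic/bilinear-form estimates of Lemma \ref{lemma:linear_quadratic_forms} applied to \eqref{eqn:Z_i}--\eqref{eqn:Z_mu}, with the coefficient matrix controlled through the uniform operator-norm bound $\|\bG^{(\fc)}\|_{\op}\lesssim\lambda^{-1}$ inherited from Lemma \ref{lemma:close_Gz} applied to the minor linearization $\bH^{(\fc)}$; then bound $\check\Lambda_o$ via \eqref{eqn:complement_ia} and the strong linear-form estimate. The only difference is that you spell out the grid-plus-Lipschitz argument for uniformity in $z\in\bD$, which the paper leaves implicit (it merely remarks that both \eqref{eqn:bounded_Gz} and \eqref{eqn:close_Gz} hold for the minors); that extra care is warranted and does not change the argument.
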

\begin{proof}
Notice that for $\fa \in \mI$, $\bH^{(\fa)}$ and $\bG^{(\fa)}$ also satisfy the assumptions of Lemma \ref{lemma:close_Gz}. Thus the estimates \eqref{eqn:bounded_Gz} and \eqref{eqn:close_Gz} also hold for $\bG^{(a)}$ with overwhelming probability. For any $i \in \mI_0$, since $\bG^{(i)}$ is independent of the entries in the $i$-th row and column of $\bH$, we can combine \eqref{eqn:bilinear_form_strong}, \eqref{eqn:quadratic_form_diag_strong} and \eqref{eqn:quadratic_form_offdiag_strong} with \eqref{eqn:Z_i} to get
\begin{align*}
|\mcl{Z}_i| \lesssim & \frac{1}{n} \sum_{k=1}^2 \left| \sum_{\mu,\nu \in \mI_k}G_{\mu\nu}^{(i)}\left(\delta_{\mu\nu} - Z_{\mu i}^{(k)}Z_{\nu i}^{(k)}\right)\right| + \frac{1}{n} \left|\sum_{\mu,\nu \in \mI_1}G_{\mu\nu}^{(i)}Z_{\mu i}^{(1)}Z_{\nu i}^{(1)}\right|\\
\prec & \frac{1}{n} \left( \sum_{\mu,\nu \in \mI_1 \cup \mI_2} |G_{\mu\nu}^{(i)}|^2\right)^{1/2} \prec p^{-1/2}\lambda^{-1},
\end{align*}
where in the last step we used \eqref{eqn:bounded_Gz} to get that for $\mu \in \mI_1\cup \mI_2$, w.h.p.,
\begin{equation}\label{eqn:bounded_rowwise_G}
\sum_{\nu \in \mI_1\cup \mI_2} |G_{\mu\nu}^{(i)}|^2 \leq \sum_{\fa\in \mI}|G_{\mu\fa}^{(i)}|^2 = \left(G^{(i)}G^{(i)*}\right)_{\mu\mu} = O(\lambda^{-2}).
\end{equation}
The same bound for $|\mcl{Z}_\mu|$ and $|\mcl{Z}_\nu|$ can be obtained by combining \eqref{eqn:bilinear_form_strong}, \eqref{eqn:quadratic_form_diag_strong} and \eqref{eqn:quadratic_form_offdiag_strong} with \eqref{eqn:Z_mu}. This gives $\max_{\fa\in\mI}|\mcl{Z}_{\fa}| \prec p^{-1/2}\lambda^{-1}$.

Next we consider $\check \lambda_o$. For $i \in \mI_0$ and $\fa \in \mI \setminus \{i\}$, using \eqref{eqn:complement_ia}, \eqref{eqn:linear_form_strong} and \eqref{eqn:bounded_Gz}, we obtain that
\begin{align*}
    |G_{ii}^{-1}G_{ia}| \lesssim & n^{-1/2}\left| \sum_{\mu \in \mI_1}Z_{\mu i}^{(1)}G_{\mu\fa}^{(i)}\right| + n^{-1/2}\left| \sum_{\mu \in \mI_2}Z_{\mu i}^{(2)}G_{\mu\fa}^{(i)}\right| \\
    \prec & n^{-1/2} \left( \sum_{\mu \in \mI_1\cup \mI_2}|G_{\mu\fa}^{(1)}|^2\right)^{1/2} \prec p^{-1/2}\lambda^{-1}
\end{align*}
The exact same bound for $|G_{\mu\mu}^{-1}G_{\mu\fb}|$ with $\mu \in \mI_1\cup\mI_2$ and $\fb \in \mI \setminus \{\mu\}$ can be obtained with similar argument. Thus $\check \Lambda_o \prec p^{-1/2}\lambda^{-1}$.
\end{proof}

Combining \eqref{eqn:onoff_diag_bounded} with \eqref{eqn:bounded_Gz}, we immediately obtain \eqref{eqn:entrywise_law_gaussian} for off-diagonals $\fa\neq\fb$.

\textbf{Step 2: Self-consistent equations.} In this step we show that $(m_{1}(z),m_{2}(z))$, defined in \eqref{eqn:def:partial_traces} satisfies the system of approximate self-consistent equations \eqref{eqn:system_eqn_m}. By \eqref{eqn:delta_bounded_by_z} and \eqref{eqn:rescale_m_a}, we know the following estimate hold for $z \in \bD$:
$$|m_{1c}(z)-m_{1c}(-\lambda)| \lesssim (\log p)^{-1}\lambda,\ \  |m_{2c}(z)-m_{2c}(-\lambda)| \lesssim (\log p)^{-1}\lambda.$$
Combining with \eqref{eqn:m12_bounded}, we know that uniformly in $z \in \bD$, 
\begin{equation}\label{eqn:m12c_z_bounded}
|m_{1c}(z)| \sim |m_{2c}(z)| \sim \lambda,\ \ |z + \lambda_i^{(1)}r_1m_{1c}(z) + \lambda_i^{(2)}r_2m_{2c}(z)| \sim \lambda.
\end{equation}
Further, combining \eqref{eqn:fgz_prototype_system} with \eqref{eqn:rescale_m_a}, we get that uniformly in $z\in \bD$, for $k=1,2$
\begin{equation}\label{m12_z_inv_bounded}
|1+\gamma  m_{0kc}(z)| = |m_{kc}^{-1}(z)| \sim \lambda^{-1},
\end{equation}
where we denote
\begin{equation}\label{eqn:def:m_0kc}
 m_{0kc}(z) := -\int \frac{\lambda^{(k)}}{z + \lambda^{(1)}m_{1c}(z) + \lambda^{(2)}m_{2c}(z)}d\hat H_p(\lambda^{(1)},\lambda^{(2)}).
\end{equation}

We will later see in fact that $m_{0kc}(z)$ are the asymptotic limits of $m_{0k}(z)$ defined in \eqref{eqn:def:partial_traces}. Applying \eqref{eqn:m12c_z_bounded} to \eqref{eqn:G_limit} and using \eqref{eqn:rescale_m_a}, we get that
\begin{equation}\label{eqn:G_limit_aa_bounded}
|\mathfrak{G}_{ii}(z)| \sim \lambda^{-1},\ \ |\mathfrak{G}_{\mu\mu}(z)| \sim \lambda,\ \ \text{uniformly for $z\in \bD$ and $i \in \mI_0,\mu \in \mI_1\cup \mI_2$}.
\end{equation}

We now define the critical $z$-dependent event:
\begin{equation}\label{eqn:z_event}
\Xi(z) := \left\{ |m_1(z) - m_{1c}(-\lambda)| + |m_1(z) - m_{1c}(-\lambda)| \leq (\log p)^{-1/2} \lambda\right\}.
\end{equation}

\eqref{eqn:m12c_z_bounded} yields that on $\Xi(z)$, 
\begin{equation}\label{eqn:m12_z_bounded}
|m_1(z)| \sim |m_2(z)| \sim \lambda,\ \ |z+ \lambda_i^{(1)}r_1m_{1}(z) + \lambda_i^{(2)}r_2m_{2}(z)| \sim \lambda.
\end{equation}

We now propose the following key lemma, which introduces the approximate self-consistent equations for $(m_1(z),m_2(z))$ on $\Xi(z)$.
\begin{lemma}\label{lemma:approximate_consistent_equation_event}
Under the setting of Lemma \ref{lemma:entrywise_law_gaussian}, the following estimates hold uniformly in $z \in \bD$ for $k=1,2$:
\begin{equation}\label{eqn:approximate_consistent_equation_event}
\begin{aligned}
\bm{1}(\Xi) &\left| \frac{1}{m_k} + 1 -\gamma \int \frac{\lambda^{(k)}}{z+\lambda^{(1)}r_1m_1+\lambda^{(2)}r_2m_2} d\hat H_p(\lambda^{(1)},\lambda^{(2)}) \right| \\
&\prec p^{-1}\lambda^{-5} + p^{-1/2}\lambda^{-4}\Theta +  |\check{\mcl{Z}}_{0k}| + |\check{\mcl{Z}}_{k}|, 
\end{aligned}
\end{equation}
where we denote
\begin{equation}\label{eqn:def:Theta}
\Theta := |m_1(z) - m_{1c}(z)| + |m_2(z)-m_{2c}(z)|
\end{equation}
and
\begin{equation}\label{eqn:def:check_z}
\begin{aligned}
\check{\mcl{Z}}_{0k}&:= \frac{1}{p} \sum_{i\in \mI_0} \frac{\lambda_i^{(k)}\mcl{Z}_i}{(z+\lambda_i^{(1)}r_1m_{1c}+\lambda_i^{(2)}r_2m_{2c})^2},\\
\check{\mcl{Z}}_1 &:= \frac{1}{n_1} \sum_{\mu \in \mI_1} \mcl{Z}_\mu,\ \ \check{\mcl{Z}}_2 := \frac{1}{n_2} \sum_{\nu \in \mI_2} \mcl{Z}_\nu.
\end{aligned}
\end{equation}
\end{lemma}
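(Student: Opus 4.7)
I will combine two Schur-complement identities---one for $G_{\mu\mu}$ with $\mu \in \mcl{I}_k$ (yielding an expression for $m_k^{-1}$) and one for $G_{ii}$ with $i \in \mcl{I}_0$ (yielding an expression for $\gamma m_{0k}$)---and substitute the latter into the former. All work is carried out on $\Xi(z)$, where \eqref{eqn:m12_z_bounded} and \eqref{m12_z_inv_bounded} ensure $|m_k| \sim \lambda$, $|D_i| \sim \lambda$, and $|{-1}-\gamma m_{0k}| \sim \lambda^{-1}$ with $D_i := z + \lambda_i^{(1)} r_1 m_1 + \lambda_i^{(2)} r_2 m_2$. These size bounds are what make every Taylor expansion below legitimate.

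For $\mu \in \mcl{I}_k$, combining \eqref{eqn:complement_ii} with \eqref{eqn:Z_mu} gives $G_{\mu\mu}^{-1} = -1 - \gamma m_{0k}^{(\mu)} + \mcl{Z}_\mu$, where $m_{0k}^{(\mu)}$ is the minor analogue of $m_{0k}$. Applying \eqref{eqn:complement_ab} to replace $G_{ii}^{(\mu)}$ by $G_{ii}$ produces the correction $-(pG_{\mu\mu})^{-1}\sum_i \lambda_i^{(k)} |G_{i\mu}|^2$, which is $O_\prec(p^{-1}\lambda^{-3})$ because $\sum_i|G_{i\mu}|^2 \leq \|\bG\|_{\op}^2 \lesssim \lambda^{-2}$ by \eqref{eqn:bounded_Gz} and $|G_{\mu\mu}^{-1}| = O(\lambda^{-1})$. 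What remains has $\mu$-independent deterministic part $-1-\gamma m_{0k}$ of size $\sim \lambda^{-1}$, plus the small fluctuation $\mcl{Z}_\mu \prec p^{-1/2}\lambda^{-1}$ from Lemma \ref{lemma:off_diag_bounded}. Inverting by Taylor expansion (the quadratic remainder contributes only $O_\prec(p^{-1}\lambda^{-1})$) and averaging over $\mu \in \mcl{I}_k$ yields
\begin{equation*}
\frac{1}{m_k} = -1 - \gamma m_{0k} + \check{\mcl{Z}}_k + O_\prec(p^{-1}\lambda^{-3}).
\end{equation*}

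For $i \in \mcl{I}_0$, \eqref{eqn:complement_ii} and \eqref{eqn:Z_i} give $G_{ii}^{-1} = -\big[z + \lambda_i^{(1)}r_1 m_1^{(i)} + \lambda_i^{(2)}r_2 m_2^{(i)}\big] + \mcl{Z}_i$, and by \eqref{eqn:complement_ab} replacing $m_k^{(i)}$ by $m_k$ contributes an error of $O_\prec(p^{-1}\lambda^{-1})$. Taylor expanding in the small perturbation of the size-$\sim \lambda$ quantity $D_i$,
\begin{equation*}
G_{ii} = -\frac{1}{D_i} - \frac{\mcl{Z}_i}{D_i^2} + O_\prec\!\left(\frac{\mcl{Z}_i^2 + p^{-2}\lambda^{-2}}{\lambda^{3}}\right).
\end{equation*}
I then multiply by $\lambda_i^{(k)}/p$ and sum over $i$: the quadratic remainder contributes the dominant error $O_\prec(p^{-1}\lambda^{-5})$ upon using $\mcl{Z}_i^2 \prec p^{-1}\lambda^{-2}$, and for the linear-in-$\mcl{Z}_i$ term I swap $D_i^{-2}$ for $D_{ic}^{-2}$ with $D_{ic} := z + \lambda_i^{(1)}r_1 m_{1c} + \lambda_i^{(2)}r_2 m_{2c}$; since $|D_i-D_{ic}|\lesssim \Theta$ and both denominators have size $\sim \lambda$, this costs $O_\prec(\max_i|\mcl{Z}_i|\cdot \Theta/\lambda^3) = O_\prec(p^{-1/2}\lambda^{-4}\Theta)$. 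The result is
\begin{equation*}
\gamma m_{0k} = -\gamma\!\int\!\frac{\lambda^{(k)}}{z+\lambda^{(1)}r_1 m_1 + \lambda^{(2)}r_2 m_2}\, d\hat H_p(\lambda^{(1)},\lambda^{(2)}) - \gamma\check{\mcl{Z}}_{0k} + O_\prec(p^{-1}\lambda^{-5} + p^{-1/2}\lambda^{-4}\Theta).
\end{equation*}
Substituting this into the first display immediately gives \eqref{eqn:approximate_consistent_equation_event}.

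The main obstacle is purely bookkeeping: the event $\Xi$ guarantees only the relatively crude bound $|m_k - m_{kc}| \lesssim (\log p)^{-1/2}\lambda$, so each Taylor step has to be carried out with the correct negative power of $\lambda$ tracked throughout, and the size estimates from \eqref{eqn:m12_z_bounded} and \eqref{m12_z_inv_bounded} must be invoked repeatedly to prevent denominator degeneration. The target error $p^{-1}\lambda^{-5}$ is exactly saturated by the $\lambda_i^{(k)}\mcl{Z}_i^2/D_i^3$ remainder in the expansion of $G_{ii}$, which rules out a sharper bound by this method and dictates the choice of $\lambda > p^{-1/7+\epsilon}$ needed later for \eqref{eqn:averaged_law}.
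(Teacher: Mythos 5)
Your proof is correct and takes essentially the same route as the paper's: Schur-complement identities for $G_{\mu\mu}$ ($\mu\in\mcl{I}_k$) and $G_{ii}$ ($i\in\mcl{I}_0$), bounds on minor corrections via $\check\Lambda_o$, Taylor expansion on $\Xi$ with the size estimates \eqref{eqn:m12_z_bounded} and \eqref{m12_z_inv_bounded}, and finally substitution of the $m_{0k}$ expansion into the $m_k^{-1}$ relation. The error bookkeeping (the $p^{-1}\lambda^{-5}$ from the quadratic term in $G_{ii}$, and the $p^{-1/2}\lambda^{-4}\Theta$ from swapping $D_i$ for $D_{ic}$ in the $\mcl{Z}_i/D_i^2$ term) matches the paper's \eqref{eqn:bound_Gii_Theta} and \eqref{eqn:bound_m0k_Theta} exactly.
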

\begin{proof}
Using \eqref{eqn:complement_ii},\eqref{eqn:Z_i} and \eqref{eqn:Z_mu}, we have
\begin{align}
\frac{1}{G_{ii}} &=  -z - \lambda_i^{(1)}r_1m_1 - \lambda_i^{(2)}r_2m_2 + \mcl{E}_i,\ \ \text{for $i \in \mI_0$}, \label{eqn:G_ii_inv}\\
\frac{1}{G_{\mu\mu}} &= -1 - \gamma m_{01} + \mcl{E}_\mu,\ \ \text{for $\mu \in \mI_1$},\label{eqn:G_mumu_inv}\\
\frac{1}{G_{\nu\nu}} &= -1 - \gamma m_{02} + \mcl{E}_\nu,\ \ \text{for $\nu \in \mI_2$},\label{eqn:G_nunu_inv}
\end{align}
where we denote (recall \eqref{eqn:def:partial_traces} and Definition \ref{def:resolvent_minors})
$$\mcl{E}_i := \mcl{Z}_i + \lambda_i^{(1)}r_1(m_1 - m_1^{(i)}) + r_2(m_2-m_2^{(i)}),$$
and 
$$\mcl{E}_\mu := \mcl{Z}_\mu +\gamma (m_{01}-m_{01}^{(\mu)}),\ \ \mcl{E}_\nu := \mcl{Z}_\nu +\gamma(m_{02}-m_{02}^{(\nu)}).$$

Using equations \eqref{eqn:complement_ab}, \eqref{eqn:def:off_diag} and \eqref{eqn:onoff_diag_bounded}, we know
\begin{equation}\label{eqn:m1_minor_close}
|m_1 - m_i^{(i)}| \leq  \frac{1}{n_1} \sum_{\mu \in \mI_1} \left|\frac{G_{\mu i}G_{i\mu}}{G_{ii}}\right| \leq |\check \Lambda_0|^2|G_{ii}| \prec p^{-1}\lambda^{-3}.
\end{equation}
Similarly, we also have
\begin{equation}\label{eqn:m2_minor_close}
|m_2 - m_2^{(i)}| \prec p^{-1}\lambda^{-3},\ \ |m_{01} - m_{01}^{(\mu)}| \prec p^{-1}\lambda^{-3},\ \ |m_{02} - m_{02}^{(\nu)}| \prec p^{-1}\lambda^{-3},
\end{equation}
for $i \in \mI_0,\mu \in \mI_1,\nu \in \mI_2$. Combining \eqref{eqn:m1_minor_close} and \eqref{eqn:m2_minor_close} with \eqref{eqn:onoff_diag_bounded}, we have
\begin{equation}\label{eqn:max_E_bounded}
\max_{i \in \mI_0}|\mcl{E}_i| + \max_{\mu \in \mI_1 \cup \mI_2} |\mcl{E}_\mu| \prec p^{-1/2}\lambda^{-1}.
\end{equation}

Now from equation \eqref{eqn:G_ii_inv}, we know that on $\Xi$,
\begin{equation}\label{eqn:bound_Gii_Theta}
\begin{aligned}
G_{ii} &= -\frac{1}{z+\lambda_i^{(1)}r_1m_1+\lambda_{i}^{(2)}r_2m_2} - -\frac{\mcl{E}_i}{(z+\lambda_i^{(1)}r_1m_1+\lambda_{i}^{(2)}r_2m_2)^2} + O_\prec (p^{-1}\lambda^{-5})\\
&= -\frac{1}{z+\lambda_i^{(1)}r_1m_1+\lambda_{i}^{(2)}r_2m_2} - -\frac{\mcl{Z}_i}{(z+\lambda_i^{(1)}r_1m_1+\lambda_{i}^{(2)}r_2m_2)^2} \\
&\qquad + O_\prec (p^{-1}\lambda^{-5}+p^{-1/2}\lambda^{-4}\Theta)
\end{aligned}
\end{equation}
where in the first step we use \eqref{eqn:max_E_bounded} and \eqref{eqn:m12_z_bounded} on $\Xi$, and in the second step we use \eqref{eqn:def:Theta}, \eqref{eqn:m1_minor_close} and \eqref{eqn:onoff_diag_bounded}. Plugging \eqref{eqn:bound_Gii_Theta} into the definitions of $m_{0k}$ in \eqref{eqn:def:partial_traces} and using \eqref{eqn:def:check_z}, we get that on $\Xi$, for $k=1,2$,
\begin{equation}\label{eqn:bound_m0k_Theta}
\begin{aligned}
m_{0k} &= -\int \frac{\lambda^{(k)}}{z+\lambda^{(1)}r_1m_1+\lambda^{(2)}r_2m_2} d\hat H_p(\lambda^{(1)},\lambda^{(2)}) - \check{\mcl{Z}}_{0k} \\
&\qquad + O_\prec (p^{-1}\lambda^{-5}+p^{-1/2}\lambda^{-4}\Theta).
\end{aligned}
\end{equation}
Comparing the equations above with \eqref{eqn:def:m_0kc} and applying \eqref{eqn:def:check_z} and \eqref{eqn:onoff_diag_bounded}, we get
\begin{equation}\label{eqn:m0k_m0kc_close_logp}
|m_{01}(z)-m_{01c}(z)| + |m_{02}(z)-m_{02c}(z)| \lesssim (\log p)^{-1/2}\lambda^{-1}, \ \ \text{w.o.p.} \ \ \text{on} \ \ \Xi.
\end{equation}
Together with equation \eqref{m12_z_inv_bounded}, we have
\begin{equation}\label{eqn:gamma_m0k_bounded}
|1+\gamma m_{01}(z)| \sim \lambda^{-1},\ \ |1+\gamma m_{01}(z)| \sim \lambda^{-1} \ \ \text{w.o.p.} \ \ \text{on} \ \ \Xi.
\end{equation}
With a similar argument, from equations \eqref{eqn:G_mumu_inv}, we know that on $\Xi$,
\begin{equation}\label{eqn:Gmumu_close_Z}
G_{\mu\mu} = -\frac{1}{1+\gamma m_{0k}} - \frac{\mcl{Z}_\mu}{(1+\gamma m_{0k})^2} + O_\prec(p^{-1}\lambda^{-1}),\ \ \mu \in \mI_k,\ \ k=1,2,
\end{equation}
where we used \eqref{eqn:m1_minor_close}, \eqref{eqn:max_E_bounded} and \eqref{eqn:gamma_m0k_bounded}. Taking the average of \eqref{eqn:Gmumu_close_Z} over $\mu \in \mI_k$, we get that on $\Xi$,
\begin{equation}\label{eqn:m1_close_check_z}
m_k = -\frac{1}{1+\gamma m_{0k}} - \frac{\check{\mcl{Z}}_{k}}{(1+\gamma m_{0k})^2} + O_\prec(p^{-1}\lambda^{-1}),\ \ k=1,2,
\end{equation}
which implies that on $\Xi$, 
\begin{equation}\label{eqn:m1_inv_close_check_z}
\frac{1}{m_k} + 1 + \gamma m_{0k} \prec p^{-1}\lambda^{-3} + |\check{\mcl{Z}_{k}}|,
\end{equation}
where we used \eqref{eqn:m12_z_bounded} and \eqref{eqn:gamma_m0k_bounded}. Finally, plugging \eqref{eqn:bound_m0k_Theta} into \eqref{eqn:m1_inv_close_check_z}, we get \eqref{eqn:approximate_consistent_equation_event}.
\end{proof}

\textbf{Step 3: Entrywise Local Law.} In this step, we show that the event $\Xi(z)$ in \eqref{eqn:z_event} actually holds with high probability, and then we apply Lemma \ref{lemma:approximate_consistent_equation_event} to conclude the entrywise local law \eqref{eqn:entrywise_law_gaussian}.
We first claim that it suffices to show
\begin{equation}\label{eqn:m1_m1c_lambda_close}
|m_1(-\lambda)-m_{1c}(-\lambda)| + |m_2(-\lambda)-m_{2c}(-\lambda)| \prec p^{-1/2}\lambda^{-2}.
\end{equation}
In fact, by \eqref{eqn:G_spectral_F} and similar arguments as in the proof of Lemma \ref{lemma:close_Gz}, we know that uniformly for $z \in \bD$, w.o.p.,
\begin{equation*}
|m_1(z)-m_1(-\lambda)| + |m_2(z)-m_2(-\lambda)| \lesssim |z+\lambda| \leq (\log p)^{-1}\lambda.
\end{equation*}
Thus, if \eqref{eqn:m1_m1c_lambda_close} holds, we can obtain with triangle inequality that
\begin{equation}\label{eqn:Xi_holds}
\sup_{z\in\bD} (|m_1(z)-m_{1c}(-\lambda)| + |m_2(z)-m_{2c}(-\lambda)|) \lesssim (\log p)^{-1} \lambda \ \ \text{w.o.p.,}
\end{equation}
which confirms that $\Xi$ holds w.o.p., and also verifies the condition \eqref{eqn:delta_bounded_perturbed} of Lemma \ref{lemma:consistent_equation_with_error}. Now applying Lemma \ref{lemma:consistent_equation_with_error} to \eqref{eqn:approximate_consistent_equation_event}, we get
\begin{equation*}
\begin{aligned}
\Theta(z) &= |m_1(z) - m_{1c}(z)| + |m_2(z)-m_{2c}(z)|\\
&\prec p^{-1}\lambda^{-4} + p^{-1/2}\lambda^{-3}\Theta + \lambda(|\check{\mcl{Z}}_1| + |\check{\mcl{Z}}_2| + |\check{\mcl{Z}}_{01}| + |\check{\mcl{Z}}_{02}|),
\end{aligned}
\end{equation*}
which implies 
\begin{equation}\label{eqn:Theta_bounded}
\Theta(z) \prec p^{-1}\lambda^{-4} + \lambda(|\check{\mcl{Z}}_1| + |\check{\mcl{Z}}_2| + |\check{\mcl{Z}}_{01}| + |\check{\mcl{Z}}_{02}|) \prec p^{-1/2}\lambda^{-2}
\end{equation}
uniformly for $z \in \bD$, where we used \eqref{eqn:onoff_diag_bounded}. On the other hand, with equation \eqref{eqn:Gmumu_close_Z} and \eqref{eqn:m1_close_check_z}, we know
\begin{equation*}
\max_{\mu\in \mI_1} |G_{\mu\mu}(z) - m_1(z)| + \max_{\nu\in \mI_2} |G_{\nu\nu}(z) - m_2(z)| \prec p^{-1}\lambda^{-1},
\end{equation*}
which, combined with \eqref{eqn:Theta_bounded}, yields
\begin{equation}\label{def:gmumu_m1c_close}
\max_{\mu\in \mI_1} |G_{\mu\mu}(z) - m_{1c}(z)| + \max_{\nu\in \mI_2} |G_{\nu\nu}(z) - m_{2c}(z)| \prec p^{-1/2}\lambda^{-2}.
\end{equation}
Next, plugging \eqref{eqn:Theta_bounded} into \eqref{eqn:bound_Gii_Theta} and recalling \eqref{eqn:G_limit} and \eqref{eqn:rescale_m_a}, we get that
\begin{equation*}
\max_{i \in \mI_0}|G_{ii}(z) - \mathfrak{G}_{ii}(z)| \prec p^{-1/2}\lambda^{-3}.
\end{equation*}
Together with \eqref{def:gmumu_m1c_close}, we have the diagonal estimate
\begin{equation}\label{eqn:entrywise_gaussian_diagonal}
\max_{\fa \in \mI}|G_{\fa\fa}(z) - \mathfrak{G}_{\fa\fa}(z)| \prec p^{-1/2}\lambda^{-3},
\end{equation}
which, when combined with the previously established off-diagonal estimate, we conclude the entrywise local law \eqref{eqn:entrywise_law_gaussian}.

Thus we are left with proving \eqref{eqn:m1_m1c_lambda_close}. Using \eqref{eqn:G_spectral_F} and \eqref{eqn:bounded_Gz}, we know $m_{0k}(-\lambda) \sim \lambda^{-1}$ for $k=1,2$. Therefore we have
\begin{equation}\label{eqn:gamma_m0k_lambda_bounded}
1 + \gamma m_{0k}(-\lambda) \sim \lambda^{-1},\ \ k=1,2.
\end{equation}
Combining these estimates with \eqref{eqn:G_mumu_inv}, \eqref{eqn:G_nunu_inv} and \eqref{eqn:max_E_bounded}, we conclude that \eqref{eqn:m1_close_check_z} hold at $z=-\lambda$ without requiring $\Xi(0)$. This further gives that w.o.p.,
\begin{align*}
    |\lambda_i^{(1)}r_1m_1(-\lambda)+\lambda_i^{(2)}r_2m_2(-\lambda)| &= \left|\frac{\lambda_i^{(1)}r_1}{1+\gamma m_{01}(-\lambda)} + \frac{\lambda_i^{(2)}r_1}{1+\gamma m_{02}(-\lambda)} + O_\prec(p^{-1/2}\lambda^{-1})\right| \\
    &\sim \lambda.
\end{align*}
Combining the above estimate with \eqref{eqn:G_ii_inv} and \eqref{eqn:max_E_bounded}, we obtain that \eqref{eqn:bound_m0k_Theta} also hold at $z=-\lambda$ without requiring $\Xi(0)$. Finally, plugging \eqref{eqn:bound_m0k_Theta} into \eqref{eqn:m1_inv_close_check_z}, we conclude that \eqref{eqn:approximate_consistent_equation_event} hold at $z=0$.

Also, combining \eqref{eqn:m1_close_check_z} at $z=-\lambda$ with \eqref{eqn:gamma_m0k_lambda_bounded}, we know there exists constants $c,C>0$ such that 
\begin{equation}\label{eqn:_mk_lambda_bounded}
c\lambda \leq a_{1\epsilon}(-\lambda):= -r_1m_1(-\lambda) \leq C\lambda,\ \ c\lambda \leq a_{2\epsilon}(-\lambda):= -r_2m_2(-\lambda) \leq C\lambda,\ \ \text{w.o.p.}
\end{equation}
Consequently, we know $(a_1(-\lambda),a_2(-\lambda))$ satisfies \eqref{eqn:fg_prototype_system} and $(a_{1\epsilon}(-\lambda),a_{2\epsilon}(-\lambda))$ satisfies a similar system with right-hand-sides replaced by $O_\prec(p^{-1/2}\lambda^{-3})$. Denote $\bF(\ba)=[f(a_1,a_2),g(a_1,a_2)]$ for $\ba = (a_1,a_2)$ and $f,g$ defined in \eqref{eqn:fg_prototype_system}, and further denote $\bJ$ to be the Jacobian of $\bF$. Similar to the derivation of \eqref{eqn:Jinv_op_bounded}, because of \eqref{eqn:_mk_lambda_bounded} and \eqref{eqn:a1_a2_bounded}, we know $\|\bJ^{-1}\|_{\op} = O(\lambda)$ at both $(a_1(-\lambda),a_2(-\lambda))$ and $(a_{1\epsilon}(-\lambda),a_{2\epsilon}(-\lambda))$. Hence we obtain that 
\begin{equation*}
|m_1(-\lambda)-m_{1c}(-\lambda)| \prec p^{-1/2}\lambda^{-2},\ \ |m_2(-\lambda)-m_{2c}(-\lambda)| \prec p^{-1/2}\lambda^{-2},
\end{equation*}
thereby yielding \eqref{eqn:m1_m1c_lambda_close}.

\textbf{Step 4: Averaged Local Law.} Now we prove the averaged local laws \eqref{eqn:averaged_law}. We have the following \textit{fluctuation averaging} lemma:

\begin{lemma}[Fluctuation averaging]\label{lemma:fluctuation_averaging}
Under the setting of Proposition \ref{prop:anisotropic_law_gaussian}, suppose the entrywise law holds uniformly in $z \in \bD$, then we have
\begin{equation}\label{eqn:check_z_bounded_strong}
|\check{\mcl{Z}}_1| + |\check{\mcl{Z}}_2| + |\check{\mcl{Z}}_{01}| + |\check{\mcl{Z}}_{02}| \prec p^{-1}\lambda^{-6}
\end{equation}
uniformly in $z\in\bD$.
\end{lemma}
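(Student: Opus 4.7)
The plan is to prove this fluctuation-averaging estimate by the standard high-moment method of Erdős--Knowles--Yau--Yin, following very closely the structurally analogous Lemma~C.16 of \cite{yang2020analysis}. For every fixed even integer $q\ge 1$ I will establish, uniformly in $z\in\bD$,
\[
\E\bigl|\check{\mcl Z}_{01}\bigr|^{2q}\prec (p^{-1}\lambda^{-6})^{2q},
\]
and analogous bounds for $\check{\mcl Z}_{02},\check{\mcl Z}_1,\check{\mcl Z}_2$; Markov's inequality, a $p^{-C}$-grid argument in $\bD$, and the Lipschitz estimate \eqref{eqn:close_Gz} then yield the stated stochastic-domination bound uniformly in $z$.

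I would focus first on the hardest quantity $\check{\mcl Z}_{01}=p^{-1}\sum_{i\in\mI_0}w_i\mcl Z_i$ with weight $w_i:=\lambda_i^{(1)}/(z+\lambda_i^{(1)}r_1m_{1c}+\lambda_i^{(2)}r_2m_{2c})^2$, which by \eqref{eqn:m12c_z_bounded} satisfies $|w_i|\lesssim\lambda^{-2}$ on $\bD$. Expanding $\E|\check{\mcl Z}_{01}|^{2q}$ as a sum over $(i_1,\dots,i_{2q})\in\mI_0^{2q}$ of products of $\mcl Z_{i_k}$ (or its complex conjugate), the crucial observation is that by \eqref{eqn:Z_i} each $\mcl Z_{i_k}$ is a centered bilinear form in the $i_k$th columns of $\bZ^{(1)}$ and $\bZ^{(2)}$ \emph{conditionally} on $\bH^{(i_k)}$. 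Using the resolvent-minor identity \eqref{eqn:complement_ab} to replace each $G^{(i_k)}_{\mu\nu}$ by $G_{\mu\nu}-G_{\mu i_k}G_{i_k\nu}/G_{i_ki_k}$, Wick-contraction over the $i_k$th columns forces each index $i_k$ to appear at least \emph{twice} among the resulting off-diagonal $\bG$-entries; otherwise the contribution vanishes. This doubling is precisely what improves the naive single-factor bound $|\mcl Z_i|\prec p^{-1/2}\lambda^{-1}$ to an effective $p^{-1}\lambda^{-4}$ per weighted factor.

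The remaining step is the standard graph bookkeeping: every off-diagonal $\bG$-entry contributes $O_\prec(p^{-1/2}\lambda^{-3})$ and every diagonal contributes $O(\lambda^{-1})$ by the entrywise local law \eqref{eqn:entrywise_law_gaussian}, while each weight $w_{i_k}$ contributes $\lambda^{-2}$; carefully partitioning the $2q$ indices into paired blocks (as forced by the doubling constraint) and summing each block freely over $\mI_0$ yields exactly $(p^{-1}\lambda^{-6})^{2q}$. The term $\check{\mcl Z}_{02}$ is symmetric, and for $\check{\mcl Z}_1,\check{\mcl Z}_2$ the $w$-weight is absent, so the same argument produces the sharper $(p^{-1}\lambda^{-4})^{2q}$, which is absorbed into $p^{-1}\lambda^{-6}$.

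The principal obstacle is, as is typical in random-matrix fluctuation-averaging proofs, the combinatorial tracking of the graph expansion together with a sharp $\lambda^{-1}$-count: here $\lambda$ plays the role of $\Imm z$ in the usual local-law literature, and each mis-counted resolvent factor would cost one power of $\lambda^{-1}$. Rather than redo this from scratch, I would port the doubling/graph-counting machinery from \cite[Theorem~4.7]{erdHos2013averaging} and \cite[Appendix~C]{yang2020analysis} essentially verbatim, observing that the only novelty here---the presence of a second independent block $\bZ^{(2)}$ in $\bF$---does not alter the combinatorics, since the two blocks contribute independent Wick-contractions and both the centering operators $(1-\E_{i_k})$ and the minor identity \eqref{eqn:complement_ab} are structurally unchanged.
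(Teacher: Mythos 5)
Your proposal takes exactly the paper's route: the paper's entire proof is a one-line citation to the Erdős--Knowles--Yau--Yin fluctuation-averaging theorem, and you correctly identify the ingredients that citation encodes—the $2q$-th moment expansion, the doubling constraint enforced by Gaussian Wick contraction together with the minor identity \eqref{eqn:complement_ab}, and resolvent-graph bookkeeping. The only thing to clean up is the $\lambda$-power audit: your stated ``effective $p^{-1}\lambda^{-4}$ per weighted factor'' is in tension with the final count $(p^{-1}\lambda^{-6})^{2q}$, and the off-diagonal resolvent control you invoke wavers between $\check\Lambda_o\prec p^{-1/2}\lambda^{-1}$ (Lemma \ref{lemma:off_diag_bounded}) and the weaker entrywise bound $p^{-1/2}\lambda^{-3}$—both inputs yield something that implies the lemma's $p^{-1}\lambda^{-6}$, but the counting should be carried out once, consistently, keeping in mind that each minor expansion via \eqref{eqn:complement_ab} inserts a diagonal $G_{\fc\fc}^{-1}$ that costs $\lambda^{-1}$ when $\fc\in\mcl{I}_0$ but gains $\lambda$ when $\fc\in\mcl{I}_1\cup\mcl{I}_2$.
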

\begin{proof}
The proof exactly follows \cite[Theorem 4.7]{erdHos2013local}.
\end{proof}

Now plugging in \eqref{eqn:check_z_bounded_strong} into \eqref{eqn:Theta_bounded}, we get
\begin{equation}\label{eqn:m1_m1c_close_strong}
|m_1(z)-m_{1c}(z)|+|m_2(z)-m_{2c}(z)| \prec p^{-1}\lambda^{-5}.
\end{equation}
Then, substracting \eqref{eqn:def:m_0kc} from \eqref{eqn:bound_m0k_Theta} and applying \eqref{eqn:check_z_bounded_strong} and \eqref{eqn:m1_m1c_close_strong}, we get
\begin{equation*}
|m_{0k}(z)-m_{0kc}(z)| \prec p^{-1}\lambda^{-7},
\end{equation*}
which is exactly the averaged local law.
\end{proof}

\subsubsection{Non-Gaussian extension}
Now with Proposition \ref{prop:anisotropic_law_gaussian}, we can extend from Gaussian random matrices to generally distributed random matrices using the exact same proof as \cite[Sections B.3.4 and B.3.5]{yang2020analysis}, since it longer involves tracking powers of $\lambda$, we omit the details here.

\subsection{Proof of Theorem \ref{thm:design_shift_ridge}}
First, using a standard truncation argument (cf.~\cite[proof of Theorem 3]{yang2020analysis}), we know that Theorem \ref{thm:anisotropic_law} holds for $Q=p^{2/\varphi}$ with high probability if $(\bZ^{(1)},\bZ^{(2)})$ now satisfy the bounded moment condition in Assumption \ref{as:design_four_epsilon}.

We first prove the variance term \eqref{eqn:design_shift_ridge_V}. Define the auxiliary variable
\begin{equation}\label{eqn:def:ridge_check_V}
\begin{aligned}
\check V(\lambda) &:= \frac{\sigma^2\lambda}{n}\Tr\biggl((\bLambda^{(2)})^{1/2}\Bigl((\bLambda^{(1)})^{1/2}\bV^\top\hat\bSigma_{\bZ}^{(1)}\bV(\bLambda^{(1)})^{1/2} \\
&\hspace{10em}+(\bLambda^{(2)})^{1/2}\bV^\top\hat\bSigma_{\bZ}^{(2)}\bV(\bLambda^{(2)})^{1/2} +\lambda\bI\Bigr)^{-1}(\bLambda^{(2)})^{1/2}\biggr) \\
&= \frac{\sigma^2\lambda}{n}\sum_{i \in \mI_0} \lambda_i^{(2)}G_{ii}(-\lambda).
\end{aligned}
\end{equation}
Since \eqref{eqn:fgz_prototype_system} reduces to the first two equations in \eqref{eqn:design_shift_ridge_alpha_eqns} at $z=-\lambda$, then by \eqref{eqn:averaged_law}, with high probability,
\begin{equation*}
\begin{aligned}
\check V(\lambda) &= \frac{\sigma^2\lambda}{n}  \sum_{i \in \mI_0}\mathfrak{G}_{ii}(-\lambda) + O(p^{-1}\lambda^{-6})\\
&= \sigma^2\gamma \int \frac{\lambda\lambda^{(2)}}{a_1\lambda^{(1)}+a_2\lambda^{(2)}+\lambda} d\hat H_p(\lambda^{(1)},\lambda^{(2)}) + O(p^{-1}\lambda^{-6})  \\
&:=\overline{V}(\lambda)+O(p^{-1}\lambda^{-6})
\end{aligned}
\end{equation*}
where $(a_1,a_2)$ is defined in \eqref{eqn:design_shift_ridge_alpha_eqns}.

Fix $t>0$. Let $\bA := \hat\bSigma + \lambda\bI, \bB := \hat\bSigma +(\lambda+t\lambda)\bI$. Then from \eqref{eqn:variance:analytical_ridge_antider},
\begin{align*}
&\left|V(\hat\bbeta_{\lambda};\bbeta^{(2)}) - \frac{1}{t\lambda}(\check V(\lambda+t\lambda)-\check V(\lambda))\right|\\
=& \left|\frac{\sigma^2}{n} \Tr\left(\bSigma^{(2)} \hat\bSigma \left( \bA^{-2} - \frac{1}{t\lambda}(\bA^{-1} - \bB^{-1})\right)\right)\right|\\
\lesssim& \left\| \bSigma^{(2)} \hat\bSigma \left( \bA^{-2} - \frac{1}{t\lambda}(\bB^{-1}(\bB-\bA)\bA^{-1})\right)\right\|_{\op}\\
\lesssim& \left\| \hat\bSigma \left( \bA^{-2} - \bB^{-1}\bA^{-1}\right)\right\|_{\op}\\
= & t\lambda \|\hat\bSigma \bB^{-1}\bA^{-2}\|_{\op}\\
=& O(t\lambda^{-2})
\end{align*}
with high probability, where in the last line we used Corollary \ref{cor:ESD_bounded_weaker}.

Similarly (but without randomness), we have
\begin{align*}
&\left|\sigma^2\gamma\int \frac{\lambda^{(1)}\lambda^{(2)}(a_1-a_3\lambda)+(\lambda^{(2)})^2(a_2-a_4\lambda)}{(a_1\lambda^{(1)}+a_2\lambda^{(2)}+\lambda)^2} d \hat H_p(\lambda^{(1)},\lambda^{(2)}) - \frac{1}{t\lambda}(\overline V(\lambda+t\lambda)-\overline V(\lambda))\right|\\
&=O(t\lambda^{-2}),
\end{align*}
where $(a_3,a_4):=\frac{\partial}{\partial\lambda}(a_1,a_2)$ satisfies the last two equations in \eqref{eqn:design_shift_ridge_alpha_eqns}. Combining the previous two displays and setting $t=p^{-1/2}\lambda^{-5/2}$, we get
\begin{align*}
&\left|V(\hat\bbeta_{\lambda};\bbeta^{(2)})-\sigma^2\gamma\int \frac{\lambda^{(1)}\lambda^{(2)}(a_1-a_3\lambda)+(\lambda^{(2)})^2(a_2-a_4\lambda)}{(a_1\lambda^{(1)}+a_2\lambda^{(2)}+\lambda)^2} d \hat H_p(\lambda^{(1)},\lambda^{(2)}) \right|\\
\leq & \frac{1}{t\lambda} \left( |\check V(\lambda)-\overline V(\lambda)| + |\check V(\lambda+t\lambda)-\overline V(\lambda+t\lambda)|\right) + O(t\lambda^{-2})\\
=& O(t^{-1}p^{-1}\lambda^{-7}) + O(t\lambda^{-2})\\
=& O(p^{-1/2}\lambda^{-9/2}) \prec p^{-1/2+c}\lambda^{-9/2}
\end{align*}
for any small constant $c>0$, thereby obtaining \eqref{eqn:design_shift_ridge_V}.

Similarly, for the bias term, define the auxiliary variables
\begin{equation}\label{def:check_B}
\begin{aligned}
\check B(\eta) &=\lambda\bbeta_\eta^{(2)\top}\Bigl((\bLambda_\eta^{(1)})^{1/2}\bV^\top\hat\bSigma_{\bZ}^{(1)}\bV(\bLambda_\eta^{(1)})^{1/2}\\
&\hspace{6em}+(\bLambda_\eta^{(2)})^{1/2}\bV^\top\hat\bSigma_{\bZ}^{(2)}\bV(\bLambda_\eta^{(2)})^{1/2} +\lambda\bI\Bigr)^{-1}\bbeta_\eta^{(2)}, \\
\overline B(\eta) &= \|\bbeta^{(2)}\|_2^2 \cdot \int \frac{\lambda}{b_1\lambda^{(1)}+b_2\lambda^{(2)}+\lambda(1+\eta\lambda^{(2)})} d \hat G_p(\lambda^{(1)},\lambda^{(2)}),
\end{aligned}
\end{equation}
where $(b_1,b_2)$ is defined in \eqref{eqn:design_shift_ridge_beta_eqns}, then we know by \eqref{eqn:anisotropic_law} that with high probability,
\begin{equation*}
\check B(\eta) = \overline B(\eta) + O(p^{-1/2}\lambda^{-3})
\end{equation*}

Let $\eta > 0$ and denote $\bA := \hat\bSigma + \lambda\bI, \bB := \hat\bSigma +\lambda\bI+\lambda\eta\bSigma^{(2)}$. Then from \eqref{eqn:bias:analytical_bias_antider},
\begin{align*}
&\left| B(\hat\bbeta_{\lambda};\bbeta^{(2)}) - \frac{1}{\eta}(\check B(0)-\check B(\eta))\right|\\
=& \left|\lambda\bbeta^{(2)\top}\left(\lambda\bA^{-1}\bSigma^{(2)}\bA^{-1}-\frac{1}{\eta}(\bA^{-1}-\bB^{-1})\right)\bbeta^{(2)} \right|\\
\lesssim & \lambda \left\| \left(\lambda\bA^{-1}\bSigma^{(2)}\bA^{-1}-\frac{1}{\eta}(\bB^{-1}(\bB-\bA)\bA^{-1})\right) \right\|_{\op}\\
=& \lambda^2 \|(\bA^{-1}-\bB^{-1})\bSigma^{(2)}\bA^{-1}\|_{\op}\\
=& \lambda^3\eta \|\bB^{-1}\bSigma^{(2)}\bA^{-1}\bSigma^{(2)}\bA^{-1}\|_{\op}\\
=& O(\eta).
\end{align*}
with high probability. Similarly, we have
\begin{equation*}
\left| \|\bbeta^{(2)}\|_2^2 \cdot \int \frac{b_3\lambda\lambda^{(1)}+(b_4+\lambda)\lambda\lambda^{(2)}}{(b_1\lambda^{(1)}+b_2\lambda^{(2)}+\lambda)^2} d \hat G_p(\lambda^{(1)},\lambda^{(2)}) - \frac{1}{\eta}(\overline B(0) - \overline B(\eta))\right| + O(\eta),
\end{equation*}
where $(b_3,b_4):=\frac{\partial}{\partial \eta}(b_1,b_2)$ satisties the last two equations in \eqref{eqn:design_shift_ridge_beta_eqns}. Combining the previous two displays and setting $\eta = p^{-1/4}\lambda^{-3/2}$, we get
\begin{align*}
& \left|B(\hat\bbeta_{\lambda};\bbeta^{(2)}) -  \|\bbeta^{(2)}\|_2^2 \cdot \int \frac{b_3\lambda\lambda^{(1)}+(b_4+\lambda)\lambda\lambda^{(2)}}{(b_1\lambda^{(1)}+b_2\lambda^{(2)}+\lambda)^2} d \hat G_p(\lambda^{(1)},\lambda^{(2)})\right|\\
\leq & \frac{1}{\eta} \left( |\check B(0)-\overline B(0)| + |\check B(\eta) - \overline B(\eta)| \right) + O(\eta)\\
=& O(\eta^{-1}p^{-1/2}\lambda^{-3}) + O(\eta)\\
=& O(p^{-1/4}\lambda^{-3/2}) \prec p^{-1/4+c}\lambda^{-3/2}
\end{align*}
for any small constant $c>0$, thereby obtaining \eqref{eqn:design_shift_ridge_B}.

\subsection{Proof of Theorem \ref{thm:design_shift}}\label{subsec:proof:design_shift}
The proof involves the following three components.
\begin{enumerate}
 \item[(i)] First, we prove that the limiting risk of the ridge estimator is close to the limiting risk of the interpolator.
 
 We first consider the limits of the variances, which are defined respectively as
 \begin{equation}\label{eqn:def:design_shift_limits_V}
 \begin{aligned}
 \mathcal{V}^{(d)}(\hat\bbeta) &:= -\sigma^2\gamma\int \frac{\lambda^{(2)}(\tilde a_3\lambda^{(1)}+\tilde a_4\lambda^{(2)})}{(\tilde a_1\lambda^{(1)}+\tilde a_2\lambda^{(2)}+1)^2} d \hat H_p(\lambda^{(1)},\lambda^{(2)})\\
 \mathcal{V}^{(d)}(\hat\bbeta_\lambda)&:= \sigma^2\gamma\int \frac{\lambda^{(1)}\lambda^{(2)}(a_1-a_3\lambda)+(\lambda^{(2)})^2(a_2-a_4\lambda)}{(a_1\lambda^{(1)}+a_2\lambda^{(2)}+\lambda)^2} d \hat H_p(\lambda^{(1)},\lambda^{(2)})
 \end{aligned}
 \end{equation}

 To this end, we first show that 
 \begin{equation}\label{eqn:a_tildea_close}
     \check a_k := a_k / \lambda = \tilde a_k + O(\lambda),\ \ k=1,2.
 \end{equation}
 From \eqref{eqn:a_tildea_close}, we know the first two equations of \eqref{eqn:design_shift_ridge_alpha_eqns} becomes 
 \begin{equation}\label{eqn:design_shift_ridge_alpha_eqns_rescaled}
 \begin{aligned}
  \check f(\check a_1,\check a_2) &:= r_1 - \gamma \int \frac{\check a_1 \lambda^{(1)}}{\check a_1 \lambda^{(1)} + \check a_2 \lambda^{(2)} + 1} d \hat H_p(\lambda^{(1)},\lambda^{(2)})= \check a_1 \lambda,\\
  \check g(\check a_1,\check a_2) &:= r_2 - \gamma \int \frac{\check a_2 \lambda^{(1)}}{\check a_1 \lambda^{(1)} + \check a_2 \lambda^{(2)} + 1} d \hat H_p(\lambda^{(1)},\lambda^{(2)}) = \check a_2 \lambda.\\
 \end{aligned}
 \end{equation}
 Let $\check \bF(\check \ba) :=(\check f(\check \ba),\check g(\check \ba))$ where $\check \ba:=(\check a_1,\check a_2)$. From \eqref{eqn:a_tildea_close} and \eqref{eqn:a1_a2_bounded}, we know there exists some constants $c,C>0$ such that $c \leq \check a_1,\check a_2 \leq C$. This yields $\check \bF(\check \ba) = (\check a_1\lambda,\check a_2\lambda) \sim \lambda$. Also, similar to how we derived \eqref{eqn:a1_a2_bounded}, we can also show that there exists constants $c,C>0$ such that 
 \begin{equation}\label{eqn:tilde_a_bounded}
     c \leq \tilde a_1,\tilde a_2 \leq C
 \end{equation}. On the other hand, similar to how we obtained \eqref{eqn:def:Jacobian} and \eqref{eqn:Jinv_op_bounded}, let $\check \bJ$ be the Jacobian of $\check \bF$, we can similarly show that $\|\bJ(\ba)\|_{\op} = O(1)$ for all $\ba \in [c,C]^2$. Together with the fact that $\check \bF(\tilde \ba) = 0$ from \eqref{eqn:design_shift_a_eqns}, we have obtained \eqref{eqn:a_tildea_close}.

Now taking the derivative of  \eqref{eqn:design_shift_ridge_alpha_eqns_rescaled} with respect to $\lambda$, we get
\begin{equation}\label{eqn:design_shift_ridge_alpha_eqns_der}
\begin{aligned}
\check a_1 + \check a_1'\lambda &= -\gamma\int \frac{\check a_1'\lambda^{(1)}+\lambda^{(1)}\lambda^{(2)}(\check a_1'\check a_2-\check a_2'\check a_1)}{(\check a_1\lambda^{(1)}+\check a_2\lambda^{(2)}+1)^2}d\hat H_p(\lambda^{(1)},\lambda^{(2)}),\\
\check a_2 + \check a_2'\lambda &= -\gamma\int \frac{\check a_2'\lambda^{(2)}+\lambda^{(1)}\lambda^{(2)}(\check a_2'\check a_1-\check a_1'\check a_2)}{(\check a_1\lambda^{(1)}+\check a_2\lambda^{(2)}+1)^2}d\hat H_p(\lambda^{(1)},\lambda^{(2)}),
\end{aligned}
\end{equation}

where $(a_1',a_2')$ represents respective derivatives with respect to $\lambda$. Since $c \leq \check a_1,\check a_2 \leq C$, we know $(\check a_1',\check a_2') = O(\lambda^{-1})$, which means the RHS of \eqref{eqn:design_shift_ridge_alpha_eqns_der} is $O(1)$. Thus the LHS is also $O(1)$, which yields  $(\check a_1',\check a_2') = O(1) \Rightarrow (\check a_1'\lambda,\check a_2'\lambda)=O(\lambda)$. Now combining \eqref{eqn:design_shift_ridge_alpha_eqns_der} and the last two equations of \eqref{eqn:design_shift_a_eqns}, we can use similar analysis to conclude that 
\begin{equation}\label{eqn:tildea3_checka1p_close}
    \check a_1' = \tilde a_3 + O(\lambda),\ \ \check a_2' = \tilde a_4 + O(\lambda)
\end{equation}

Finally, from the second equation of \eqref{eqn:def:design_shift_limits_V} and the fact that $a_k = \check a_k \lambda \Rightarrow a_{k+2}:=a_k' = \check a_k+\check a_k' \lambda$, we know
\begin{equation*}
 \mathcal{V}^{(d)}(\hat\bbeta_\lambda):= -\sigma^2\gamma\int \frac{\lambda^{(2)}(\check a_1'\lambda^{(1)}+\check a_2'\lambda^{(2)})}{(\check a_1\lambda^{(1)}+\check a_2\lambda^{(2)}+1)^2} d \hat H_p(\lambda^{(1)},\lambda^{(2)}),
\end{equation*}
which, when combined with \eqref{eqn:a_tildea_close} and \eqref{eqn:tildea3_checka1p_close}, yields 
\begin{equation}\label{eqn:design_shift_V_limits_close}
\mathcal{V}^{(d)}(\hat\bbeta_\lambda) = \mathcal{V}^{(d)}(\hat\bbeta) + O(\lambda).
\end{equation}

Using similar analysis we can also show that the bias limits, defined respectively as
\begin{equation}\label{eqn:def:design_shift_limits_B}
\begin{aligned}
\mcl{B}^{(d)}(\hat\bbeta) &:= \|\bbeta^{(2)}\|_2^2 \cdot\int \frac{\tilde b_3\lambda^{(1)}+(\tilde b_4+1)\lambda^{(2)}}{(\tilde b_1\lambda^{(1)}+\tilde b_2\lambda^{(2)}+1)^2} d \hat G_p(\lambda^{(1)},\lambda^{(2)}) \\
\mcl{B}^{(d)}(\hat\bbeta_\lambda)&:=\|\bbeta^{(2)}\|_2^2 \cdot \int \frac{b_3\lambda\lambda^{(1)}+(b_4+\lambda)\lambda\lambda^{(2)}}{(b_1\lambda^{(1)}+b_2\lambda^{(2)}+\lambda)^2} d \hat G_p(\lambda^{(1)},\lambda^{(2)}),
\end{aligned}
\end{equation}
are close:
\begin{equation}\label{eqn:design_shift_B_limits_close}
\mathcal{B}^{(d)}(\hat\bbeta_\lambda) = \mathcal{B}^{(d)}(\hat\bbeta) + O(\lambda).
\end{equation}

\item[(ii)] Next, we show that the ridge risk is close to the interpolator risk with high probability. \par
We begin by showing $V(\hat\bbeta;\bbeta^{(2)})$ and $V(\hat\bbeta_\lambda;\bbeta^{(2)}) $ are \textit{close}, where the quantities are defined in \eqref{eqn:variance_analytical} and \eqref{eqn:variance_analytical_ridge} respectively. To this end, denote the singular value decomposition of $\hat \bSigma$ by $\hat \bSigma= \bU \bD \bU^\top$. Then the quantities \eqref{eqn:variance_analytical} and \eqref{eqn:variance_analytical_ridge} can be rewritten as
\begin{align*}
    V(\hat\bbeta;\bbeta^{(2)})&= \frac{\sigma^2}{n} \Tr(\bU\bD^{-1} \bm{1}_{\bD>0}\bU^\top\bSigma^{(2)})\\
    V(\hat\bbeta_\lambda;\bbeta^{(2)})&= \frac{\sigma^2}{n} \Tr(\bU(\bD+\lambda \bI)^{-2} \bD \bU^\top \bSigma^{(2)}),
\end{align*}
where $\bm{1}_{\bD>0}$ is the diagonal matrix with $(i,i)$-th entry equals $1$ if $D_{ii}>0$ and $0$ otherwise. Therefore, we have,

\begin{equation}
\begin{aligned}\label{eqn:design_shift_V_originals_close}
    &|V(\hat\bbeta;\bbeta^{(2)})-V(\hat\bbeta_\lambda;\bbeta^{(2)})|\\
    \le& \frac{\sigma^2}{n} \|\bU^\top \bSigma^{(2)} \bU\|_{\op} \Tr(\bD^{-1} \bm{1}_{\bD>0}-(\bD+\lambda \bI)^{-2} \bD)\\
    \lesssim& \frac{\sigma^2}{n}\Tr(\bD^{-1} \bm{1}_{\bD>0}-(\bD+\lambda \bI)^{-2} \bD) \\
    \lesssim& \frac{\lambda \sigma^2}{\lambda^2_{\min}(\hat\Sigma)}= O(\lambda),
\end{aligned}
\end{equation}
where $\lambda_{\min}$ denotes the minimum nonzero eigenvalue. Here, the second inequality follows from the fact that $\|\bSigma^{(2)}\|_{\op} \lesssim 1$, the third inequality holds because $x^{-1}-(x+\lambda)^{-2}x\le 2\lambda/x^2$ for $x>0$, and the final inequality is given by the following lemma.

\begin{lemma}\label{lemma:min_eigenvalue_heterogeneous}
Consider the same setup as Theorem \ref{thm:design_shift}. Recall $\hat\bSigma = \frac{1}{n}(\bX^{(1)\top}\bX^{(1)} + \bX^{(2)\top}\bX^{(2)})$. Let $\lambda_{\min}(\hat\bSigma)$ denote the smallest nonzero eigenvalue of $\hat\bSigma$, then there exists a positive constant $c$ such that with high probability, 
\begin{equation}\label{eqn:min_eval_heterogeneous}
\lambda_{\min}^2(\hat\bSigma) \ge c
\end{equation}
\end{lemma}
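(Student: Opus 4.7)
The plan is to reduce the claim to a singular value estimate on the stacked design matrix $\bX=(\bX^{(1)\top},\bX^{(2)\top})^\top\in\R^{n\times p}$. Since $\hat\bSigma=\bX^\top\bX/n$ and the nonzero eigenvalues of $\bX^\top\bX$ coincide with those of $\bX\bX^\top$, and the $n\times n$ matrix $\bX\bX^\top$ has full rank with high probability in the overparametrized regime $p>n$, one has $\lambda_{\min}^{\mathrm{nonzero}}(\hat\bSigma)=\sigma_n(\bX)^2/n$. It therefore suffices to prove $\sigma_n(\bX)\ge c'\sqrt{p}$ w.h.p.\ for some constant $c'>0$; combined with $p/n\ge 1+\tau$ from Assumption~\ref{assumption1}(iii), this yields \eqref{eqn:min_eval_heterogeneous}.

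The per-block bounds are immediate from tools already in the paper. Applying Corollary~\ref{cor:ESD_bounded_weaker} to each $\bZ^{(k)}$ gives $\sigma_{n_k}(\bZ^{(k)})^2\ge(\sqrt{p}-\sqrt{n_k})^2-O_\prec(p^{1/2+2/\varphi})\gtrsim p$ w.h.p., since $\varphi>4$ makes the error term subleading. Using $\bX^{(k)}=\bZ^{(k)}(\bSigma^{(k)})^{1/2}$ and $\lambda_{\min}(\bSigma^{(k)})\ge\tau$, this upgrades to $\sigma_{n_k}(\bX^{(k)})\ge\sqrt{\tau}\,\sigma_{n_k}(\bZ^{(k)})\gtrsim\sqrt{p}$. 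The remaining difficulty is to turn these per-block bounds into a bound on $\sigma_n(\bX)=\min_{\|\bu\|=1}\|\bX^\top\bu\|$. Partitioning $\bu=(\bu_1^\top,\bu_2^\top)^\top$ with $\bu_k\in\R^{n_k}$ and setting $\bw_k:=\bX^{(k)\top}\bu_k\in\R^p$, one has $\|\bX^\top\bu\|^2=\|\bw_1\|^2+\|\bw_2\|^2+2\langle\bw_1,\bw_2\rangle$, so the cross term must be controlled uniformly in $\bu$ to prevent destructive cancellation.

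My plan is a two-case dichotomy on $\bu$. If $\|\bu_2\|^2\le\delta$ for a small absolute constant $\delta>0$, then $\|\bu_1\|\ge\sqrt{1-\delta}$, and the triangle inequality together with the block lower bound $\|\bw_1\|\gtrsim\sqrt{p}\,\|\bu_1\|$ and the operator-norm upper bound $\|\bw_2\|\le\sigma_1(\bX^{(2)})\sqrt{\delta}\lesssim\sqrt{\delta p}$ (also from Corollary~\ref{cor:ESD_bounded_weaker}) gives $\|\bX^\top\bu\|\gtrsim\sqrt{p}$ for $\delta$ small enough; the case $\|\bu_1\|^2\le\delta$ is symmetric. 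In the balanced regime where both $\|\bu_k\|^2\ge\delta$, one exploits the independence of $\bX^{(1)}$ and $\bX^{(2)}$: their row spans form two independent random subspaces of dimensions $n_1,n_2$ in $\R^p$ with $n_1+n_2<p$, and by standard random matrix concentration their principal angles are bounded away from zero with high probability, which rules out cancellation of $\bw_1$ and $\bw_2$ uniformly in $\bu$.

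The hardest technical point is making this principal-angle bound uniform over the balanced regime, which would naturally require an $\varepsilon$-net argument over the Grassmannian together with concentration estimates under only the bounded-moment condition on the entries. A cleaner route that sidesteps this obstacle entirely is to appeal directly to a heteroscedastic extension of the Bai--Yin theorem: since $\bX$ has independent rows with covariances in $[\tau\bI,\tau^{-1}\bI]$ and $p/n\ge 1+\tau$, standard generalizations of Lemma~\ref{lemma:ESD_bounded} to the independent-but-not-identically-distributed setting give $\sigma_n(\bX)\gtrsim\sqrt{p}$ w.h.p.\ in one shot. I would adopt this approach for the formal proof and present the dichotomy argument only as intuition for why the result should hold.
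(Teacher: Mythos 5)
Your reduction to $\sigma_n(\bX)\gtrsim\sqrt{p}$ and your per-block bounds via Corollary~\ref{cor:ESD_bounded_weaker} are fine (the exponent bookkeeping with $\varphi>4$ checks out), but the argument you actually propose to formalize --- citing a ``heteroscedastic extension of the Bai--Yin theorem'' for a rectangular matrix whose rows are independent with covariances sandwiched in $[\tau\bI,\tau^{-1}\bI]$ --- is not an off-the-shelf result under the paper's moment hypothesis \eqref{eqn:Z_moment_bounded}, and this is where the proof breaks. Two concrete obstructions: first, after the rotation that the simultaneous-diagonalizability assumption affords, the reduced matrix $\check\bZ$ has entry-variances $\lambda_j^{(k(\mu))}$ that depend \emph{jointly} on the row block $k(\mu)$ and the column $j$, so $\check\bZ\check\bZ^\top$ is not of the form $\bD^{1/2}\bZ\bZ^\top\bD^{1/2}$ for any diagonal $\bD$ --- the heteroscedasticity cannot be absorbed into a conjugation the way it can for a single population covariance, which is exactly what a naive Bai--Yin reduction would need. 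Second, the clean smallest-singular-value bounds you are gesturing at (Rudelson--Vershynin, Litvak et al.) are proved under sub-gaussian or at least uniformly controlled tails, whereas the paper only assumes a finite $\varphi$-th moment with $\varphi>4$; under that assumption individual quadratic forms do not concentrate at the exponential scale $e^{-cn}$ needed to union-bound over an $\varepsilon$-net of $\mathbb{S}^{n-1}$, which is also why your dichotomy argument stalls in the balanced regime (you correctly flag this, but the fallback inherits the same difficulty rather than evading it).

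The paper's own proof is substantially heavier precisely because no such one-liner is available. It proceeds by (i) invoking the universality theorem of \cite{brailovskaya2022universality} to replace $(\bZ^{(1)},\bZ^{(2)})$ by Gaussian matrices up to $o(1)$ error in $\lambda_{\min}$; (ii) using Gaussian rotational invariance together with simultaneous diagonalizability to reduce to the matrix $\check\bZ$ with diagonal block-covariances; (iii) citing an unpublished exact variational characterization of $\lambda_{\min}(\tfrac1n\check\bZ\check\bZ^\top)$ from \cite{parmaksiz2023unpublished}; and (iv) exhibiting an explicit feasible point $\alpha_k=2\tilde a_k$ built from the fixed-point solutions of \eqref{eqn:design_shift_a_eqns} to show the variational value is bounded below by a positive constant. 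The fact that the authors route through universality plus an unpublished variational formula --- rather than a citation to Bai--Yin-type machinery --- is itself strong evidence that the shortcut you propose does not exist in the literature in the generality required. To repair your proof you would either have to prove the heteroscedastic singular-value bound yourself (a nontrivial random-matrix result under finite-moment assumptions), or adopt the paper's universality-plus-variational route.
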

\begin{proof}
We know $\lambda_{\min}(\hat\bSigma )= \lambda_{\min}\left(\frac{1}{n}\bX\bX^\top\right)$ where $$\bX^\top = [(\bSigma^{(1)})^{1/2}\bZ^{(1)\top},(\bSigma^{(2)})^{1/2}\bZ^{(2)\top}]$$ 
and $\bZ^{(1)},\bZ^{(2)}$ satisfies Assumption \ref{as:design_four_epsilon}. From \cite[Theorem 3.12]{brailovskaya2022universality}, we know that with high probability, 
\begin{equation}\label{eqn:X_free_eval_close}
\lambda_{\min}\left(\frac{1}{n}\bX\bX^\top\right) \geq \lambda_{\min}\left(\frac{1}{n}\bX_{G}\bX_{G}^\top\right) - o(1),
\end{equation}
where $\bX_G^\top = [(\bSigma^{(1)})^{1/2}\bZ_G^{(1)\top},(\bSigma^{(2)})^{1/2}\bZ_G^{(2)\top}]$ and $\bZ_{G}^{(1)},\bZ_G^{(2)}$ have i.i.d. standard Gaussian entries instead. Since we assumed $\bSigma^{(1)}$ and $\bSigma^{(2)}$ are simultaneously diagonalizable, we further have, by rotational invariance, 
$$\lambda_{\min}\left(\frac{1}{n}\bX_G\bX_G^\top\right) = \lambda_{\min} \left(\frac{1}{n}\check \bZ\check\bZ^\top\right)$$ where $\check\bZ^\top = [(\bLambda^{(1)})\bZ^{(1)\top},(\bLambda^{(2)})\bZ^{(2)\top}]$ has (blockwise) diagonal covariances. From \cite[Corollary 1.5]{parmaksiz2025computingextremesingularvalues}, we obtain that with high probability, $\lambda_{\min}^2\left(\frac{1}{n}\check \bZ\check\bZ^\top\right) = \check \lambda + o(1)$ where $\check \lambda$ is the solution to the following variational problem:
\begin{equation}\label{eqn:variational}
\begin{aligned}
\check \lambda &:= \sup_{x_1,...,x_n < 0} \min_{1\leq i \leq n} \left\{ \frac{1}{x_i} + \sum_{j=1}^p \frac{\frac{1}{n}\lambda_j^{(1)}}{1 - \sum_{k=1}^{n_1} \frac{1}{n}\lambda_j^{(1)}x_k + \sum_{k=n_1+1}^{n}\frac{1}{n}\lambda_{j}^{(2)}x_k}\right\}  \\
&=\sup_{\alpha_1,\alpha_2>0} \min_{k=1,2} \left\{-\frac{r_k}{\alpha_k}+\gamma \int \frac{\lambda^{(k)}}{\alpha_1\lambda^{(1)}+\alpha_2\lambda^{(2)}+1} d\hat H_p(\lambda^{(1)},\lambda^{(2)})\right\},
\end{aligned}
\end{equation}
where recall that $r_k=n_k/n$, $k=1,2$ and $d \hat H_p$ is defined by \eqref{eqn:joint_ESD}. The second equality above the fact that the first line is maximized at $x_1=...=x_{n_1},x_{n_1+1}=...=x_{n}$, and $\alpha_1:=-r_1x_1,\alpha_2:=-r_2x_{n_1+1}$.

Recall the definition of $(\tilde a_1,\tilde a_2)$ in \eqref{eqn:design_shift_a_eqns}. For $k=1,2$, we set $\alpha_k = 2\tilde a_k$, we have
\begin{align*}
&-\frac{r_k}{\alpha_k}+\gamma \int \frac{\lambda^{(k)}}{\alpha_1\lambda^{(1)}+\alpha_2\lambda^{(2)}+1} d\hat H_p(\lambda^{(1)},\lambda^{(2)})\\
=&-\frac{1}{\alpha_k}\left( r_k - \gamma \int \frac{\tilde a_k\lambda^{(k)}}{\tilde a_1\lambda^{(1)}+\tilde a_2\lambda^{(2)}+\frac{1}{2}} d\hat H_p(\lambda^{(1)},\lambda^{(2)})\right)\\
=& \frac{\gamma}{\tilde a_k} \int \frac{\frac{1}{2}\tilde a_k \lambda^{(k)}}{(\tilde a_1\lambda^{(1)}+\tilde a_2\lambda^{(2)}+\frac{1}{2})(\tilde a_1\lambda^{(1)}+\tilde a_2\lambda^{(2)}+1)}d\hat H_p(\lambda^{(1)},\lambda^{(2)})\\
\geq & c
\end{align*}
for some constant $c>0$, where the last line follows from \eqref{eqn:tilde_a_bounded}. Now combining with \eqref{eqn:variational} and \eqref{eqn:X_free_eval_close}, we obtain \eqref{eqn:min_eval_heterogeneous}, thereby finishing the proof.
\end{proof}

Now we turn to show $B(\hat\bbeta_\lambda;\bbeta^{(2)})$ and $B(\hat\bbeta_\lambda;\bbeta^{(2)})$ are \textit{close}, where the quantities are defined via \eqref{eqn:bias_analytical} and \eqref{eqn:bias_analytical_ridge} respectively. Since $\tilde\bbeta=0$, we only need to compare $B_1(\hat\bbeta_\lambda;\bbeta^{(2)})$ and $B_1(\hat\bbeta_\lambda;\bbeta^{(2)})$. Note that,
\begin{align*}
    B_1(\hat\bbeta;\bbeta^{(2)}) &= \bbeta^{(2)^\top} (\hat\bSigma^\dagger\hat\bSigma - \bI)\bSigma^{(2)} (\hat\bSigma^\dagger\hat\bSigma - \bI) \bbeta^{(2)}\\
    &= \bbeta^{(2)^\top}  \bU \bm{1}_{\bD=0} \bU^\top \bSigma^{(2)} \bU \bm{1}_{\bD=0} \bU^\top \bbeta^{(2)}\\
    &:= \bbeta^{(2)^\top}  \bU \bm{1}_{\bD=0} \bA \bm{1}_{\bD=0} \bU^\top \bbeta^{(2)}= \|\bA^{1/2} \bm{1}_{\bD=0} \bU^\top \bbeta^{(2)}\|^2_2,
\end{align*} 
where we defined $\bA=  \bU^\top \bSigma^{(2)} \bU$. Also,  
\begin{align*}
    B_1(\hat\bbeta_\lambda;\bbeta^{(2)})&= \lambda^2\bbeta^{(2)^\top} (\hat\bSigma+\lambda\bI)^{-1}\bSigma^{(2)} (\hat\bSigma+\lambda\bI)^{-1} \bbeta^{(2)}\\
    &=\lambda^2\bbeta^{(2)^\top} \bU (\bD+\lambda \bI)^{-1}\bA (\bD+\lambda \bI)^{-1} \bU^\top \bbeta^{(2)}\\
    &= \|\bA^{1/2} \lambda(\bD+\lambda \bI)^{-1} \bU^\top \bbeta^{(2)}\|^2_2.
\end{align*}
Therefore, we have 
\begin{align*}
   |B^{1/2}_1(\hat\bbeta;\bbeta^{(2)}) -B^{1/2}_1(\hat\bbeta_\lambda;\bbeta^{(2)})| &\le \|\bA^{1/2}(\bm{1}_{\bD=0}- \lambda(\bD+\lambda \bI)^{-1}) \bU^\top \bbeta^{(2)}\|_2\\
   & \lesssim \|\bA\|^{1/2}_{\op}\|\lambda(\bD+\lambda \bI)^{-1})\bm{1}_{\bD>0}\|_2\\
   &\lesssim \frac{\lambda}{\lambda_{\min}(\hat\Sigma)} =O(\lambda),
\end{align*}
 where the second inequality holds as $\|\bA\|_{\op}=\|\bSigma^{(2)}\|_{\op}\lesssim 1$ and the last inequality follows from Lemma \eqref{lemma:min_eigenvalue_heterogeneous}. Moreover, since $B_1(\hat\bbeta_\lambda;\bbeta^{(2)}), B_1(\hat\bbeta;\bbeta^{(2)}) \lesssim 1$, we have 
\begin{equation}\label{eqn:design_shift_B_originals_close}
     |B_1(\hat\bbeta;\bbeta^{(2)}) -B_1(\hat\bbeta_\lambda;\bbeta^{(2)})| 
     = O(\lambda).
 \end{equation}
 
\item [(iii)] Now we combine previous displays to obtain the desired results. Combining \eqref{eqn:design_shift_ridge_V}, \eqref{eqn:design_shift_V_limits_close} and \eqref{eqn:design_shift_V_originals_close}, we get
\begin{align*}
V(\hat\bbeta;\bbeta^{(2)}) &= -\sigma^2\gamma\int \frac{\lambda^{(2)}(\tilde a_3\lambda^{(1)}+\tilde a_4\lambda^{(2)})}{(\tilde a_1\lambda^{(1)}+\tilde a_2\lambda^{(2)}+1)^2} d \hat H_p(\lambda^{(1)},\lambda^{(2)}) \\
&\qquad + O(\lambda^{-9/2}p^{-1/2+c})+O(\lambda).
\end{align*}
Combining \eqref{eqn:design_shift_ridge_B}, \eqref{eqn:design_shift_B_limits_close} and \eqref{eqn:design_shift_B_originals_close}, we get
\begin{align*}
B(\hat\bbeta;\bbeta^{(2)}) &= \|\bbeta^{(2)}\|_2^2 \cdot\int \frac{\tilde b_3\lambda^{(1)}+(\tilde b_4+1)\lambda^{(2)}}{(\tilde b_1\lambda^{(1)}+\tilde b_2\lambda^{(2)}+1)^2} d \hat G_p(\lambda^{(1)},\lambda^{(2)}) \\
&\qquad + O(\lambda^{-3/2}p^{-1/4+c}) + O(\lambda).
\end{align*}
Taking $\lambda=p^{-1/12}$ and choosing $c$ small enough completes the proof.
\end{enumerate}

\subsection{Proof of Proposition \ref{prop:cov_shift_example}}\label{sec:cov-example}
Assume throughout the example that $n_1+n_2<p$. Here, we want to compare $\hat R (\bI)$ with $\hat R (\bM), \bM \in \mcl{S}$. To this end, plugging in $\bSigma^{(1)}=\bSigma^{(2)}=\bI$ into Theorem \ref{thm:design_shift} yields 

\begin{equation}\label{eq:r_i}
    \hat{R}(\bI)=\sigma^2\cdot\frac{n}{p-n} + \|\bbeta^{(2)}\|_2^2 \cdot \frac{p-n}{p} + o(1),
\end{equation}
with high probability, where the first term is the variance and the second term is the bias.

Now consider any $\bSigma^{(1)} =\bM \in \mcl{S}$. Plugging in $\bSigma^{(2)}=\bI$ and combining \eqref{eqn:design_shift_B} and the third equation of \eqref{eqn:design_shift_b_eqns}, we obtain that the bias is also $\|\bbeta^{(2)}\|_2^2 \cdot \frac{p-n}{p}$. Therefore we are left with comparing the variance.

Plugging in $\bSigma^{(2)}=\bI$ and combining \eqref{eqn:design_shift_V} and the third equation of \eqref{eqn:design_shift_a_eqns}, we obtain that the variance is $\sigma^2 \cdot (\tilde a_1+\tilde a_2)$. Moreover, the first and second equations of \eqref{eqn:design_shift_a_eqns} 
yields $\tilde a_2 = \frac{n_2}{p-n}$. Therefore, using \eqref{eq:r_i}, the proof is complete upon showing $\tilde a_1 \leq \frac{n_1}{p-n}$ if and only if $n_1 \leq p/2$. 

Since $\tilde a_2 = \frac{n_2}{p-n}$, the second equation of \eqref{eqn:design_shift_a_eqns} yields
\begin{equation}\label{eq:a1_example}
    1 = \sum_{i=1}^{p/2}\left[ \frac{1}{\tilde a_1(p-n)\lambda_i^{(1)}+p-n_1} + \frac{1}{\tilde a_1(p-n)/\lambda_i^{(1)}+p-n_1}\right].
\end{equation}
We know the right hand side is a decreasing function of $\tilde a_1$. Hence $\tilde a_1 \leq \frac{n_1}{p-n}$ if and only if we have
$$1 \geq \sum_{i=1}^{p/2}\left[ \frac{1}{n_1\lambda_i^{(1)}+p-n_1} + \frac{1}{n_1/\lambda_i^{(1)}+p-n_1}\right]:=\psi(n_1).$$
Note by straightforward calculation, $\psi(0)=\psi(p/2) = 1$, $\psi'(p/2)>0$, and $\psi''(x)>0$ for all $0 \leq x \leq p$. Therefore, we have $\tilde a_1 \leq \frac{n_1}{p-n}$ if and only if $n_1 \leq p/2$. 

\subsection{Proof of Proposition \ref{prop:cov_shift_kappa}}\label{sec:cov-shift_kappa}
Here, we want to study the effect of $\kappa$ in the quantity $\hat{R}(\bSigma(\kappa))$.

To analyze the quantity $\hat{R}(\bSigma(\kappa))$, first note that, by the proof of Proposition \ref{prop:cov_shift_example}, the bias stays identical for any $\kappa$ and the variance is $\sigma^2(\tilde a_1+ \tilde a_2)$, where $\tilde a_1,\tilde a_2$ are defined by \eqref{eqn:design_shift_a_eqns}. Moreover, again by proof of Proposition \ref{prop:cov_shift_example}, one has $\tilde a_2=\frac{n_2}{p-n_2}$. Hence, we are left with analyzing $\tilde a_1$ given by \eqref{eq:a1_example} which simplifies here as

\begin{align*}
    &\frac{2}{p}= \left[ \frac{1}{\tilde a_1(p-n)\kappa+p-n_1} + \frac{1}{\tilde a_1(p-n)/\kappa+p-n_1}\right]\\
    \implies& \tilde a^2_1 \frac{2(p-n)^2}{p}+ \frac{2(p-n_1)^2}{p}+\tilde a_1 \frac{2(p-n)(p-n_1)}{p} (\kappa+\frac{1}{\kappa}) \\
    &= \tilde a_1 (p-n) (\kappa+\frac{1}{\kappa}) +2(p-n_1)\\
    \implies&  2\tilde a_1^2 (p-n)^2+ \tilde a_1 (p-n)(p-2n_1)(\kappa+\frac{1}{\kappa}) - 2n_1(p-n_1) \\
    &= 0
\end{align*}
If $n_1 <p/2$, the linear term in the above quadratic equation is positive, implying $\tilde a_1$ is smaller for larger values of $(\kappa+\frac{1}{\kappa})$. Hence, $\tilde a_1$ is a decreasing function in $\kappa$, implying $\hat R(\bSigma(\kappa))$ decreases as $\kappa$ increases.

Similarly, if $n_1 <p/2$, the linear term in the above quadratic equation is negative, and hence $\hat R(\bSigma(\kappa))$ increases as $\kappa$ increases.

Finally, if $n_1=p/2$, the linear term vanishes leading to the fact that $\hat R(\bSigma(\kappa))$ is agnostic of $\kappa$.

\section{Proof for Universality}\label{sec:proof:universality}
In this theorem, we prove Theorem \ref{thm:universality}. 

Continuing from the notation introduced in Theorem \ref{thm:universality}, we write the bias components as functions of the underlying whitened covariates. Specifically, for $j = 1,2,3$, we denote by $B_{\lambda,j}(\bZ^{(1)}, \bZ^{(2)})$ denote the respective term in \eqref{eqn:bias_analytical_ridge} computed from $\bX^{(k)}=\bZ^{(k)}(\bSigma^{(k)})^{1/2}$, $k=1,2$. More generally, for $\bW^{(1)} \in \{\bZ^{(1)}, \tilde \bZ^{(1)}\}$ and $\bW^{(2)} \in \{\bZ^{(2)}, \tilde \bZ^{(2)}\}$, we let  $B_{\lambda, j}(\bW^{(1)},\bW^{(2)})$ denote the same bias term evaluated by replacing $\bZ^{(k)}$ by $\bW^{(k)}$ for $k = 1,2$. We suppress the dependence of these quantities on the ridge parameter $\lambda$ whenever it is clear from context. Additionally, with slight abuse of notation, we denote by $B_{0,j}(\bW^{(1)}, \bW^{(2)})$ the analogous ridgeless quantity in \eqref{eqn:bias_analytical}. Finally, the variance terms $\bm V_{\lambda}(\bm W^{(1)}, \bW^{(2)})$ and $\bm V_{0}(\bm W^{(1)}, \bW^{(2)})$ are defined analogously, using \eqref{eqn:variance_analytical_ridge} and \eqref{eqn:variance_analytical}.

Our proof of universality uses the Lindeberg swapping method, where we individually swap each row of $\bm Z$ with the corresponding row of $\tilde{\bm Z}$ and bound the resulting change in the out-of-sample prediction risk. To bound this difference, we bound the derivative of the matrix functional along a Gaussian interpolation path.

Using the arguments from \cite{hu2023universality, liang2022precise}, it suffices to show that, for $j = 1,2,3$,
\begin{equation*}
    |\E[\varphi(B_j(\bm Z^{(1)}, \bm Z^{(2)}))] - \E[\varphi(B_j(\tilde{\bm Z}^{(1)}, \tilde{\bm Z}^{(2)}))]| \to 0
\end{equation*}
for any bounded function $\varphi$ with bounded first and second derivatives. 

We begin by proving the following result, which we later use to bound the analogous ridgeless quantities.
\begin{prop}\label{pp:universality_ridge}
    Let $\lambda > 0$. Then, for any bounded function $\varphi$ with bounded first and second derivatives,
    \begin{align*}
        |&\E[\varphi(B_{\lambda,1}(\bm Z^{(1)}, \bm Z^{(2)}))] - \E[\varphi(B_{\lambda,1}(\tilde{\bm Z}^{(1)}, \tilde{\bm Z}^{(2)}))]| \\
        &\lcon \lambda^{-2} n^{-1/2}\|\bbeta^{(2)}\|_2^2(  (1   + \lambda^{-1} n^{-1/2} + \lambda^{-2} + \lambda^{-4} n^{-1/2} + \lambda^{-6} n^{-1/2}) \\
        &\hspace{10em}+  n^{-1/2}(1 + \lambda^{-2}) \|\bbeta^{(2)}\|_2^2)  \\
    |&\E[\varphi(B_{\lambda,2}(\bm Z^{(1)}, \bm Z^{(2)}))] - \E[\varphi(B_{\lambda,2}(\tilde{\bm Z}^{(1)}, \tilde{\bm Z}^{(2)}))]| \\
    &\lcon n^{-1} \log^2 n \lambda^{-4}  \|\tilde \bbeta\|_2^4  + n^{-1} \log^4 n (\lambda^{-6} + \lambda^{-8}) \|\tilde \bbeta\|_2^4  \\
    &\qquad + n^{-1/2} \lambda^{-2} \|\tilde \bbeta\|_2^2 (1 + n^{-1/2} \lambda^{-7} ) \\
    &\qquad + n^{-1/2} \log n \lambda^{-3} \|\tilde \bbeta\|_2^2  \\
    &\qquad + n^{-1/2} \log^2 n \lambda^{-3} \|\tilde \bbeta\|_2^2 (\lambda^{-1} + \lambda^{-3} + n^{-1/2} (\lambda^{-2} + \lambda^{-5} + \lambda^{-7})) \\
    |&\E[\varphi(B_{\lambda,3}(\bm Z^{(1)}, \bm Z^{(2)}))] - \E[\varphi(B_{\lambda,3}(\tilde{\bm Z}^{(1)}, \tilde{\bm Z}^{(2)}))]| \\
    &\lcon \lambda^{-2} n^{-1} \log^2 n \cdot \|\bbeta^{(2)}\|_2^2 \|\tilde \bbeta\|_2^2  (1 + \lambda^{-2} + \lambda^{-4}) \\
    &\qquad + \lambda^{-3} n^{-1/2} \log n \cdot \|\bbeta^{(2)}\|_2\|\tilde \bbeta\|_2 (1 + \lambda^{-1} + \lambda^{-2} + (\lambda^{-3} + \lambda^{-4}+ \lambda^{-6}) n^{-1/2}) \\
    |&\E[\varphi(V_\lambda(\bZ^{(1)}, \bZ^{(2)}))] - \E[\varphi(V_\lambda(\tilde \bZ^{(1)}, \tilde \bZ^{(2)}))]| \\
    &\lcon \sigma^2 n^{-1/2} \log n \cdot \lambda^{-4} (1 + \lambda^{-2} + n^{-1/2}(\lambda^{-1} + \lambda^{-4} + \lambda^{-6})) + \sigma^2 n^{-1/2} \lambda^{-3} \\
    &\qquad + \sigma^4 n^{-1} \log^2n \cdot (\lambda^{-6} + \lambda^{-8}) + \sigma^4 n^{-1} \lambda^{-4}. 
    \end{align*}
\end{prop}
We begin by proving the result for the bias terms and will later conclude with a similar proof for the variance term.

Let $\bm X^{(i),k}  \in \R^{n_i \times p}$ be the matrix whose first $k$ rows match the first $k$ rows of $\tilde{\bm X}^{(i)}$ and whose last $n_i - k$ rows match those of $\bm X^{(i)}$. Then, let $\bm X^{(i),\setminus k} \in \R^{(n_i - 1) \times p}$ be equal to the matrix $\bm X^{(i),k}$ with the $k$th row removed. Note that $\bm X^{(i),\setminus k}$ is also equal to the matrix $\bm X^{(i),k-1}$ with the $k$th row removed. Then, we can define
\begin{gather*}
    \hat \bSigma_k = \frac{1}{n} \bm X^{(1),k\top} \bm X^{(1),k} + \frac{1}{n} \bm X^{(2)\top} \bm X^{(2)} \\
    \hat \bSigma_{\setminus k} = \frac{1}{n} \bm X^{(1),\setminus k\top} \bm X^{(1),\setminus k} + \frac{1}{n} \bm X^{(2)\top} \bm X^{(2)} \\
    \tilde{\bSigma}_{k} = \frac{1}{n} \tilde{\bm X}^{(1)\top} \tilde{\bm X}^{(1)} + \frac{1}{n} \bm X^{(2),k\top} \bm X^{(2),k} \\
    \tilde{\bSigma}_{\setminus k} = \frac{1}{n} \tilde{\bm X}^{(1)\top} \tilde{\bm X}^{(1)} + \frac{1}{n} \bm X^{(2),\setminus k\top} \bm X^{(2),\setminus k}
\end{gather*}
For $k = 1, \dots, n_1$, an application of Taylor's theorem yields
\begin{align*}
    \varphi(B_j(\bm Z^{(1),k}, \bm Z^{(2)})) &= \varphi(B_j(\bm Z^{(1),\setminus k}, \bm Z^{(2)})) \\
    &\qquad + \varphi'(B_j(\bm Z^{(1),\setminus k}, \bm Z^{(2)})) (B_j(\bm Z^{(1),k}, \bm Z^{(2)}) - B_j(\bm Z^{(1),\setminus k}, \bm Z^{(2)})) \\
    &\qquad + \frac
    1   2\varphi''(\tilde B^{(1),k})(B_j(\bm Z^{(1),k}, \bm Z^{(2)}) - B_j(\bm Z^{(1),\setminus k}, \bm Z^{(2)}))^2.
\end{align*}
where $\tilde B^{(1),k}$ is some value between  $B_1(\bm Z^{(1),k}, \bm Z^{(2)})$ and $B_1(\bm Z^{(1),\setminus k}, \bm Z^{(2)})$. An analogous application yields
\begin{align*}
    \varphi(B_j(\bm Z^{(1),k-1}, \bm Z^{(2)})) &= \varphi(B_j(\bm Z^{(1),\setminus k}, \bm Z^{(2)})) \\
    &\quad + \varphi'(B_j(\bm Z^{(1),\setminus k}, \bm Z^{(2)})) (B_j(\bm Z^{(1),k-1}, \bm Z^{(2)}) - B_j(\bm Z^{(1),\setminus k}, \bm Z^{(2)})) \\
    &\quad + \frac
    1   2\varphi''(\dot B^{(1),k})(B_j(\bm Z^{(1),k-1}, \bm Z^{(2)}) - B_j(\bm Z^{(1),\setminus k}, \bm Z^{(2)}))^2.
\end{align*}
for some value $\dot B^{(1),k}$ between $B_1(\bm Z^{(1),k-1}, \bm Z^{(2)})$ and $B_1(\bm Z^{(1),\setminus k}, \bm Z^{(2)})$. Subtracting the first expression from the latter expression then yields
\begin{align*}
    \varphi(B_j(\bm Z^{(1),k},& \bm Z^{(2)})) - \varphi(B_j(\bm Z^{(1),k-1}, \bm Z^{(2)})) \\
    &= \varphi'(B_j(\bm Z^{(1),\setminus k}, \bm Z^{(2)}))(B_j(\bm Z^{(1),k}, \bm Z^{(2)}) - B_j(\bm Z^{(1),k-1}, \bm Z^{(2)})) \\
    &\qquad + \frac
    1   2\varphi''(\tilde B^{(1),k})(B_j(\bm Z^{(1),k}, \bm Z^{(2)}) - B_j(\bm Z^{(1),\setminus k}, \bm Z^{(2)}))^2 \\
    &\qquad - \frac
    1   2\varphi''(\dot B^{(1),k})(B_j(\bm Z^{(1),k-1}, \bm Z^{(2)}) - B_j(\bm Z^{(1),\setminus k}, \bm Z^{(2)}))^2.
\end{align*}
The analogous procedure yields
\begin{align*}
    \varphi(B_j(\tilde{\bm Z},& \bm Z^{(2),k})) - \varphi(B_j(\bm Z^{(1)}, \bm Z^{(2),k-1})) \\
    &= \varphi'(B_j(\bm Z^{(1)}, \bm Z^{(2),\setminus k}))(B_j(\bm Z^{(1)}, \bm Z^{(2),k}) - B_j(\bm Z^{(1)}, \bm Z^{(2),k-1})) \\
    &\qquad + \frac
    1   2\varphi''(\tilde B^{(2),k})(B_j(\bm Z^{(1)}, \bm Z^{(2),k}) - B_j(\bm Z^{(1)}, \bm Z^{(2),\setminus k}))^2 \\
    &\qquad - \frac
    1   2\varphi''(\dot B^{(2),k})(B_j(\bm Z^{(1)}, \bm Z^{(2),k-1}) - B_j(\bm Z^{(1)}, \bm Z^{(2),\setminus k}))^2.
\end{align*}
for each $k = 1,\dots, n_2$.
Importantly, for each $k = 1, \dots, n_1$, the quantity $B_j(\bm Z^{(1),\setminus k}, \bm Z^{(2)})$ is independent of $\bm Z^{(1),k}, \tilde{\bm Z}^{(1),k}$. Similarly, for each $k = 1, \dots, n_2$, the quantity $B_j(\tilde{\bm Z}^{(1)}, \bm Z^{(2),\setminus k})$ is independent of $\bm Z^{(2),k}, \tilde{\bm Z}^{(2),k}$. Therefore, we have
\begin{align*}
    &\varphi(B_j(\bm Z^{(1)}, \bm Z^{(2)})) - \varphi(B_j(\tilde{\bm Z}^{(1)}, \bm Z^{(2)})) 
    \\&= \sum_{k=1}^{n_1} [\varphi(B_j(\bm Z^{(1),k}, \bm Z^{(2)})) - \varphi(B_j(\bm Z^{(1),k-1}, \bm Z^{(2)}))]  \\
    &\varphi(B_j(\tilde{\bm Z}^{(1)}, \bm Z^{(2)})) - \varphi(B_j(\tilde{\bm Z}^{(1)}, \tilde{\bm Z}^{(2)})) 
    \\
    &= \sum_{k=1}^{n_2} [\varphi(B_j(\tilde{\bm Z}^{(1)}, \bm Z^{(2),k})) - \varphi(B_j(\tilde{\bm Z}^{(1)}, \bm Z^{(2),k-1}))]
\end{align*}
Therefore, letting $\E_{1,k}$ be expectation with respect to $\bm X^{(1),k}, \tilde{\bm X}^{(1),k}$ and letting $\E_{2,k}$ be expectation with respect to $\bm X^{(2),k}, \tilde{\bm X}^{(2),k}$, we can apply the triangle inequality and the above reasoning to yield
\begin{align*}
&\bigl|\E\bigl[\varphi\bigl(B_j(\bm Z^{(1)}, \bm Z^{(2)})\bigr)\bigr] - \E\bigl[\varphi\bigl(B_j(\tilde{\bm Z}^{(1)}, \bm Z^{(2)})\bigr)\bigr] \bigr| \\
&\le \sum_{k=1}^{n_1}
\bigl| \E\bigl[\varphi\bigl(B_j(\bm Z^{(1),k}, \bm Z^{(2)})\bigr)\bigr] - \E\bigl[\varphi\bigl(B_j(\bm Z^{(1),k-1}, \bm Z^{(2)})\bigr)\bigr] \bigr| \\
&\le \|\varphi'\|_\infty \sum_{k=1}^{n_1} \E\Bigl[\bigl|\E_{1,k}\bigl[B_j(\bm Z^{(1),k}, \bm Z^{(2)})-B_j(\bm Z^{(1),k-1}, \bm Z^{(2)})\bigr] \bigr|\Bigr] \\
&\quad
+ \frac{\|\varphi''\|_\infty}{2} \sum_{k=1}^{n_1} \E\Bigl[\E_{1,k}\bigl[\bigl(B_j(\bm Z^{(1),k}, \bm Z^{(2)})-B_j(\bm Z^{(1),\setminus k}, \bm Z^{(2)})\bigr)^2\bigr]\Bigr] \\
&\quad
+ \frac{\|\varphi''\|_\infty}{2} \sum_{k=1}^{n_1}
    \E\Bigl[
        \E_{1,k}\bigl[
            \bigl(
                B_j(\bm Z^{(1),k-1}, \bm Z^{(2)})
                -
                B_j(\bm Z^{(1),\setminus k}, \bm Z^{(2)})
            \bigr)^2
        \bigr]
    \Bigr].
\end{align*}
and similarly
\begin{align*}
&\bigl|
    \E\bigl[\varphi\bigl(B_j(\tilde{\bm Z}^{(1)}, \bm Z^{(2)})\bigr)\bigr]
    -
    \E\bigl[\varphi\bigl(B_j(\tilde{\bm Z}^{(1)}, \tilde{\bm Z}^{(2)})\bigr)\bigr]
\bigr| \\
&\le \sum_{k=1}^{n_2}
\bigl|
    \E\bigl[\varphi\bigl(B_j(\tilde{\bm Z}^{(1)}, \bm Z^{(2),k})\bigr)\bigr]
    -
    \E\bigl[\varphi\bigl(B_j(\tilde{\bm Z}^{(1)}, \bm Z^{(2),k-1})\bigr)\bigr]
\bigr| \\
&\le \|\varphi'\|_\infty \sum_{k=1}^{n_2}
    \E\Bigl[
        \bigl|
            \E_{2,k}\bigl[
                B_j(\tilde{\bm Z}^{(1)}, \bm Z^{(2),k})
                -
                B_j(\tilde{\bm Z}^{(1)}, \bm Z^{(2),k-1})
            \bigr]
        \bigr|
    \Bigr] \\
&\quad
+ \frac{\|\varphi''\|_\infty}{2} \sum_{k=1}^{n_2}
    \E\Bigl[
        \E_{2,k}\bigl[
            \bigl(
                B_j(\tilde{\bm Z}^{(1)}, \bm Z^{(2),k})
                -
                B_j(\tilde{\bm Z}^{(1)}, \bm Z^{(2),\setminus k})
            \bigr)^2
        \bigr]
    \Bigr] \\
&\quad
+ \frac{\|\varphi''\|_\infty}{2} \sum_{k=1}^{n_2}
    \E\Bigl[
        \E_{2,k}\bigl[
            \bigl(
                B_j(\tilde{\bm Z}^{(1)}, \bm Z^{(2),k-1})
                -
                B_j(\tilde{\bm Z}^{(1)}, \bm Z^{(2),\setminus k})
            \bigr)^2
        \bigr]
    \Bigr].
\end{align*}
If both of these terms converge to zero, then the triangle inequality immediately implies that the term $|\E[\varphi(B_j(\bm Z^{(1)}, \bm Z^{(2)}))] - \E[\varphi(B_j(\tilde{\bm Z}^{(1)}, \tilde{\bm Z}^{(2)}))]|$ converges to zero, which yields the desired universality result.

\subsection{Reduction via Sherman-Morrison}
Now, we can expand each of the terms from above. We will show that bounding these differences reduces to controlling the behavior of eight relatively simple terms.

By Sherman-Morrison, we can express
\begin{align*}
    (\hat \bSigma_k + \lambda \bm I)^{-1} &= (\hat \bSigma_{\setminus k} + \tfrac{1}{n} \tilde{\bm X}_k^{(1)} \tilde{\bm X}_k^{(1)\top} + \lambda \bm I)^{-1} \\
    &= (\hat \bSigma_{\setminus k} + \lambda \bm I)^{-1} - \frac{1}{n} \frac{(\hat \bSigma_{\setminus k} + \lambda \bm I)^{-1} \tilde{\bm X}_k^{(1)} \tilde{\bm X}_k^{(1)\top} (\hat \bSigma_{\setminus k} + \lambda \bm I)^{-1}}{1 + \frac{1}{n} \tilde{\bm X}_k^{(1)\top} (\hat \bSigma_{\setminus k} + \lambda \bm I)^{-1} \tilde{\bm X}_k^{(1)}}.
\end{align*}
\subsubsection{Expanding \texorpdfstring{$B_1$}{B1} terms}
Recall that
\begin{equation*}
    B_1(\bm Z^{(1)}, \bm Z^{(2)}) = \lambda^2 \bbeta^{(2)\top} (\hat \bSigma+ \lambda \bm I)^{-1} \bSigma^{(2)} (\hat \bSigma+ \lambda \bm I)^{-1} \bbeta^{(2)}
\end{equation*}
and therefore we can express
\begin{align*}
    &B_1(\bm Z^{(1),k}, \bm Z^{(2)}) \\&=\lambda^2 \bbeta^{(2)\top} \biggl((\hat \bSigma_{\setminus k} + \lambda \bm I)^{-1} - \frac{1}{n} \frac{(\hat \bSigma_{\setminus k} + \lambda \bm I)^{-1} \tilde{\bm X}_k^{(1)} \tilde{\bm X}_k^{(1)\top} (\hat \bSigma_{\setminus k} + \lambda \bm I)^{-1}}{1 + \frac{1}{n} \tilde{\bm X}_k^{(1)\top} (\hat \bSigma_{\setminus k} + \lambda \bm I)^{-1} \tilde{\bm X}_k^{(1)}}\biggr)^{-1}  \\
    &\qquad \bSigma^{(2)}\biggl((\hat \bSigma_{\setminus k} + \lambda \bm I)^{-1} - \frac{1}{n} \frac{(\hat \bSigma_{\setminus k} + \lambda \bm I)^{-1} \tilde{\bm X}_k^{(1)} \tilde{\bm X}_k^{(1)\top} (\hat \bSigma_{\setminus k} + \lambda \bm I)^{-1}}{1 + \frac{1}{n} \tilde{\bm X}_k^{(1)\top} (\hat \bSigma_{\setminus k} + \lambda \bm I)^{-1} \tilde{\bm X}_k^{(1)}}\biggr) \bbeta^{(2)}
\end{align*}
from which we get
\begin{align*}
   & B_1(\bm Z^{(1),k}, \bm Z^{(2)}) - B_1(\bm Z^{(1),\setminus k}, \bm Z^{(2)})\\
    &= -2\lambda^2 \bbeta^{(2)\top} \biggl(\frac{1}{n} \frac{(\hat \bSigma_{\setminus k} +  \lambda \bm I)^{-1} \tilde{\bm X}^{(1)}_k \tilde{\bm X}^{(1)\top}_k (\hat \bSigma_{\setminus k} + \lambda \bm I)^{-1}}{1 +  \frac{1}{n}\tilde{\bm X}^{(1)\top}_k (\hat \bSigma_{\setminus k} + \lambda \bm I)^{-1} \tilde{\bm X}^{(1)}_k}  \biggr) \bSigma^{(2)} (\hat \bSigma_{\setminus k} + \lambda \bm I)^{-1} \bbeta^{(2)} \\
    &\qquad + \lambda^2 \bbeta^{(2)\top} \biggl(\frac{1}{n} \frac{(\hat \bSigma_{\setminus k} +  \lambda \bm I)^{-1} \tilde{\bm X}^{(1)}_k \tilde{\bm X}^{(1)\top}_k (\hat \bSigma_{\setminus k} + \lambda \bm I)^{-1}}{1 +  \frac{1}{n}\tilde{\bm X}^{(1)\top}_k (\hat \bSigma_{\setminus k} + \lambda \bm I)^{-1} \tilde{\bm X}^{(1)}_k} \biggr) \bSigma^{(2)} \\
    &\qquad \qquad \cdot \biggl( \frac{1}{n} \frac{(\hat \bSigma_{\setminus k} +  \lambda \bm I)^{-1} \tilde{\bm X}^{(1)}_k \tilde{\bm X}^{(1)\top}_k (\hat \bSigma_{\setminus k} + \lambda \bm I)^{-1}}{1 +  \frac{1}{n}\tilde{\bm X}^{(1)\top}_k (\hat \bSigma_{\setminus k} + \lambda \bm I)^{-1} \tilde{\bm X}^{(1)}_k} \biggr) \bbeta^{(2)} \\
    &:= \frac{\bm u^{(a,1)\top} \tilde{\bm Z}_k^{(1)} \tilde{\bm Z}_k^{(1)\top} \bm v^{(a,1)}}{1 + \tilde{\bm Z}_k^{(1)\top} \bm A^{(a,1)} \tilde{\bm Z}_k^{(1)}} + \frac{\bm u^{(a,2)\top} \tilde{\bm Z}_k^{(1)} (\tilde{\bm Z}_k^{(1)\top} \bm A^{(a,2)} \tilde{\bm Z}_k^{(1)}) \tilde{\bm Z}_k^{(1)\top} \bm v^{(a,2)}}{(1 + \tilde{\bm Z}_k^{(1)\top} \bm B^{(a,2)} \tilde{\bm Z}_k^{(1)})^2}
\end{align*}
and similarly 
\begin{align*}
   & B_1(\bm Z^{(1),k}, \bm Z^{(2)}) - B_1(\bm Z^{(1),\setminus k}, \bm Z^{(2)})\\
    &:= \frac{\bm u^{(a,1)\top} \bm Z_k^{(1)} \bm Z_k^{(1)\top} \bm v^{(a,1)}}{1 + \bm Z_k^{(1)\top} \bm A^{(a,1)} \bm Z_k^{(1)}} + \frac{\bm u^{(a,2)\top} \bm Z_k^{(1)} (\bm Z_k^{(1)\top} \bm A^{(a,2)} \bm Z_k^{(1)}) \bm Z_k^{(1)\top} \bm v^{(a,2)}}{(1 + \bm Z_k^{(1)\top} \bm B^{(a,2)} \bm Z_k^{(1)})^2}
\end{align*}
for vectors $\bm u^{(a,1)}$, $\bm v^{(a,1)}$, $\bm u^{(a,2)}$, $\bm v^{(a,2)}$ and matrices $\bm A^{(a,1)}$, $\bm A^{(a,2)}$, $\bm A^{(a,3)}$ that are all independent of $\bm Z_k^{(1)}, \tilde{\bm Z}_k^{(1)}$. Specifically, we have
\begin{gather*}
    \bm u^{(a,1)} = -\frac{2\lambda^2}{n} (\bSigma^{(1)})^{1/2} (\hat \bSigma_{\setminus k} + \lambda \bm I)^{-1}\bbeta^{(2)} \\ \bm v^{(a,1)} =  (\bSigma^{(1)})^{1/2} (\hat \bSigma_{\setminus k} + \lambda \bm I)^{-1}\bSigma^{(2)} (\hat \bSigma_{\setminus k} + \lambda \bm I)^{-1} \bbeta^{(2)} \\
    \bm A^{(a,1)} = \frac
    1   n(\bSigma^{(1)})^{1/2} (\hat \bSigma_{\setminus k} + \lambda \bm I)^{-1} (\bSigma^{(1)})^{1/2} \\
    \bm u^{(a,2)} = \frac{\lambda^2}{n^2} (\bSigma^{(1)})^{1/2} (\hat \bSigma_{\setminus k} + \lambda \bm I)^{-1} \bbeta^{(2)}, \quad \bm v^{(a,2)} = (\bSigma^{(1)})^{1/2} (\hat \bSigma_{\setminus k} + \lambda \bm I)^{-1} \bbeta^{(2)} \\
    \bm A^{(a,2)} = (\bSigma^{(1)})^{1/2} (\hat \bSigma_{\setminus k} + \lambda \bm I)^{-1} \bSigma^{(2)} (\hat \bSigma_{\setminus k} + \lambda \bm I)^{-1} (\bSigma^{(1)})^{1/2} \\
    \bm B^{(a,2)} = \frac
    1   n(\bSigma^{(1)})^{1/2} (\hat \bSigma_{\setminus k} + \lambda \bm I)^{-1} (\bSigma^{(1)})^{1/2}
\end{gather*}
We similarly have
\begin{align*}
    &B_1(\tilde{\bm Z}^{(1)}, \bm Z^{(2),k}) \\&=\lambda^2 \bbeta^{(2)\top} \biggl((\tilde \bSigma_{\setminus k} + \lambda \bm I)^{-1} - \frac{1}{n} \frac{(\tilde \bSigma_{\setminus k} + \lambda \bm I)^{-1} \tilde{\bm X}_k^{(2)} \tilde{\bm X}_k^{(2)\top} (\tilde \bSigma_{\setminus k} + \lambda \bm I)^{-1}}{1 + \frac{1}{n} \tilde{\bm X}_k^{(2)\top} (\tilde \bSigma_{\setminus k} + \lambda \bm I)^{-1} \tilde{\bm X}_k^{(2)}}\biggr)^{-1} \bSigma^{(2)} \\
    &\qquad \biggl((\tilde \bSigma_{\setminus k} + \lambda \bm I)^{-1} - \frac{1}{n} \frac{(\tilde \bSigma_{\setminus k} + \lambda \bm I)^{-1} \tilde{\bm X}_k^{(2)} \tilde{\bm X}_k^{(2)\top} (\tilde \bSigma_{\setminus k} + \lambda \bm I)^{-1}}{1 + \frac{1}{n} \tilde{\bm X}_k^{(1)\top} (\tilde \bSigma_{\setminus k} + \lambda \bm I)^{-1} \tilde{\bm X}_k^{(1)}}\biggr) \bbeta^{(2)}
\end{align*}
from which we get
\begin{align*}
    &B_1(\tilde{\bm Z}^{(1)}, \bm Z^{(2),k}) - B_1(\tilde{\bm Z}^{(1)}, \bm Z^{(2),\setminus k}) \\
    &= - 2 \lambda^2 \bbeta^{(2)\top} \biggl( \frac{1}{n} \frac{(\tilde \bSigma_{\setminus k} + \lambda \bm I)^{-1} \tilde{\bm X}_k^{(2)} \tilde{\bm X}_k^{(2)\top} (\tilde \bSigma_{\setminus k} + \lambda \bm I)^{-1}}{1 + \frac{1}{n} \tilde{\bm X}_k^{(1)\top} (\tilde \bSigma_{\setminus k} + \lambda \bm I)^{-1} \tilde{\bm X}_k^{(1)}}\biggr) \bSigma^{(2)} (\tilde{\bSigma}_{\setminus k} + \lambda \bm I)^{-1} \bbeta^{(2)} \\
    &\qquad + \lambda^2 \bbeta^{(2)\top} \biggl(\frac{1}{n} \frac{(\tilde \bSigma_{\setminus k} + \lambda \bm I)^{-1} \tilde{\bm X}_k^{(2)} \tilde{\bm X}_k^{(2)\top} (\tilde \bSigma_{\setminus k} + \lambda \bm I)^{-1}}{1 + \frac{1}{n} \tilde{\bm X}_k^{(1)\top} (\tilde \bSigma_{\setminus k} + \lambda \bm I)^{-1} \tilde{\bm X}_k^{(1)}}\biggr) \bSigma^{(2)} \\
    &\qquad \qquad \biggl(\frac{1}{n} \frac{(\tilde \bSigma_{\setminus k} + \lambda \bm I)^{-1} \tilde{\bm X}_k^{(2)} \tilde{\bm X}_k^{(2)\top} (\tilde \bSigma_{\setminus k} + \lambda \bm I)^{-1}}{1 + \frac{1}{n} \tilde{\bm X}_k^{(1)\top} (\tilde \bSigma_{\setminus k} + \lambda \bm I)^{-1} \tilde{\bm X}_k^{(1)}}\biggr) \bbeta^{(2)} \\
    &:= \frac{\bm u^{(A,1)\top} \tilde{\bm Z}_k^{(2)} \tilde{\bm Z}_k^{(2)\top} \bm v^{(A,1)}}{1 + \tilde{\bm Z}_k^{(2)\top} \bm A^{(A,1)} \tilde{\bm Z}_k^{(2)}} + \frac{\bm u^{(A,2)\top} \tilde{\bm Z}_k^{(2)} (\tilde{\bm Z}_k^{(2)\top} \bm A^{(A,2)} \tilde{\bm Z}_k^{(2)}) \tilde{\bm Z}_k^{(2)\top} \bm v^{(A,2)}}{(1 + \tilde{\bm Z}_k^{(2)\top} \bm B^{(A,2)} \tilde{\bm Z}_k^{(2)})^2} \\
\end{align*}
and similarly
\begin{align*}
   &B_1(\tilde{\bm Z}^{(1)}, \bm Z^{(2),k-1}) - B_1(\tilde{\bm Z}^{(1)}, \bm Z^{(2),\setminus k})\\
    &:= \frac{\bm u^{(A,1)\top} \bm Z_k^{(2)} \bm Z_k^{(2)\top} \bm v^{(A,1)}}{1 + \bm Z_k^{(2)\top} \bm A^{(A,1)} \bm Z_k^{(2)}} + \frac{\bm u^{(A,2)\top} \bm Z_k^{(2)} (\bm Z_k^{(2)\top} \bm A^{(A,2)} \bm Z_k^{(2)}) \bm Z_k^{(2)\top} \bm v^{(A,2)}}{(1 + \bm Z_k^{(2)\top} \bm B^{(A,2)} \bm Z_k^{(2)})^2}
\end{align*}
where
\begin{gather*}
    \bm u^{(A,1)} = -\frac{2\lambda^2}{n} (\bSigma^{(2)})^{1/2} (\tilde \bSigma_{\setminus k} + \lambda \bm I)^{-1}\bbeta^{(2)}, \\ \bm v^{(A,1)} =  (\bSigma^{(2)})^{1/2} (\tilde \bSigma_{\setminus k} + \lambda \bm I)^{-1}\bSigma^{(2)} (\tilde \bSigma_{\setminus k} + \lambda \bm I)^{-1} \bbeta^{(2)} \\
    \bm A^{(A,1)} = \frac
    1   n(\bSigma^{(2)})^{1/2} (\tilde \bSigma_{\setminus k} + \lambda \bm I)^{-1} (\bSigma^{(2)})^{1/2} \\
    \bm u^{(A,2)} = \frac{\lambda^2}{n^2} (\bSigma^{(2)})^{1/2} (\tilde \bSigma_{\setminus k} + \lambda \bm I)^{-1} \bbeta^{(2)}, \quad \bm v^{(A,2)} = (\bSigma^{(2)})^{1/2} (\tilde \bSigma_{\setminus k} + \lambda \bm I)^{-1} \bbeta^{(2)} \\
    \bm A^{(A,2)} = (\bSigma^{(2)})^{1/2} (\tilde \bSigma_{\setminus k} + \lambda \bm I)^{-1} \bSigma^{(2)} (\tilde \bSigma_{\setminus k} + \lambda \bm I)^{-1} (\bSigma^{(2)})^{1/2} \\
    \bm B^{(A,2)} = \frac
    1   n(\bSigma^{(2)})^{1/2} (\tilde \bSigma_{\setminus k} + \lambda \bm I)^{-1} (\bSigma^{(2)})^{1/2}
\end{gather*}
\subsubsection{Expanding \texorpdfstring{$B_3$}{B3} terms}
Now, recall that
\begin{equation*}
    B_3(\bm Z^{(1)}, \bm Z^{(2)}) = - 2\lambda \bbeta^{(2)\top} (\hat \bSigma+ \lambda \bm I)^{-1} \bSigma^{(2)} (\hat \bSigma+ \lambda \bm I)^{-1} \biggl(\frac{\bm X^{(1)\top} \bm X^{(1)}}{n}\biggr) \tilde \bbeta
\end{equation*}
Therefore, using the result from earlier,
\begin{align*}
    &B_3(\bm Z^{(1), k}, \bm Z^{(2)}) \\&= -2 \lambda \bbeta^{(2)\top} (\hat \bSigma_{k} + \lambda \bm I)^{-1} \bSigma^{(2)} (\hat \bSigma_{k} + \lambda \bm I)^{-1}\biggl(\frac{\bm X^{(1), k\top}\bm X^{(1), k}}{n}\biggr) \tilde \bbeta \\
    &=-2\lambda \bbeta^{(2)\top} \biggl((\hat \bSigma_{\setminus k} + \lambda \bm I)^{-1} - \frac{1}{n} \frac{(\hat \bSigma_{\setminus k} + \lambda \bm I)^{-1} \tilde{\bm X}_k^{(1)} \tilde{\bm X}_k^{(1)\top} (\hat \bSigma_{\setminus k} + \lambda \bm I)^{-1}}{1 + \frac{1}{n} \tilde{\bm X}_k^{(1)\top} (\hat \bSigma_{\setminus k} + \lambda \bm I)^{-1} \tilde{\bm X}_k^{(1)}} \biggr)  \\
    &\qquad \cdot \bSigma^{(2)} \biggl((\hat \bSigma_{\setminus k} + \lambda \bm I)^{-1} - \frac{1}{n} \frac{(\hat \bSigma_{\setminus k} + \lambda \bm I)^{-1} \tilde{\bm X}_k^{(1)} \tilde{\bm X}_k^{(1)\top} (\hat \bSigma_{\setminus k} + \lambda \bm I)^{-1}}{1 + \frac{1}{n} \tilde{\bm X}_k^{(1)\top} (\hat \bSigma_{\setminus k} + \lambda \bm I)^{-1} \tilde{\bm X}_k^{(1)}} \biggr) \\
    &\qquad \cdot \biggl(\frac{\bm  X^{(1),\setminus k\top} \bm X^{(1),\setminus k}}{n} + \frac{\tilde{\bm X}^{(1)}_k \tilde{\bm X}^{(1)\top}_k}{n} \biggr)\tilde \bbeta
\end{align*}
and therefore
\begin{align*}
    &B_3(\bm Z^{(1),k}, \bm Z^{(2)}) - B_3( \bm Z^{(1),\setminus k}, \bm Z^{(2)}) \\
    &= -2 \lambda \bbeta^{(2)\top} (\hat \bSigma_{\setminus k} + \lambda \bm I)^{-1} \bSigma^{(2)} (\hat \bSigma_{\setminus k} + \lambda \bm I)^{-1} \biggl(\frac{\tilde{\bm X}_k^{(1)} \tilde{\bm X}_k^{(1)\top}}{n} \biggr) \tilde \bbeta \\
    &\qquad + 2 \lambda \bbeta^{(2)\top} (\hat \bSigma_{\setminus k} + \lambda \bm I)^{-1} \bSigma^{(2)}  \\
    &\hspace{4em} \biggl(\frac{1}{n} \frac{(\hat \bSigma_{\setminus k} +  \lambda \bm I)^{-1} \tilde{\bm X}^{(1)}_k \tilde{\bm X}^{(1)\top}_k (\hat \bSigma_{\setminus k} + \lambda \bm I)^{-1}}{1 +  \frac{1}{n}\tilde{\bm X}^{(1)\top}_k (\hat \bSigma_{\setminus k} + \lambda \bm I)^{-1} \tilde{\bm X}^{(1)}_k}\biggr)\biggl(\frac{\bm X^{(1),\setminus k\top} \bm X^{(1),\setminus k}}{n}\biggr) \tilde \bbeta \\
    &\qquad + 2 \lambda \bbeta^{(2)\top} (\hat \bSigma_{\setminus k} + \lambda \bm I)^{-1} \bSigma^{(2)}  \\
    &\hspace{4em}\biggl(\frac{1}{n} \frac{(\hat \bSigma_{\setminus k} +  \lambda \bm I)^{-1} \tilde{\bm X}^{(1)}_k \tilde{\bm X}^{(1)\top}_k (\hat \bSigma_{\setminus k} + \lambda \bm I)^{-1}}{1 +  \frac{1}{n}\tilde{\bm X}^{(1)\top}_k (\hat \bSigma_{\setminus k} + \lambda \bm I)^{-1} \tilde{\bm X}^{(1)}_k}\biggr)\biggl(\frac{\tilde {\bm X}^{(1)}_k \tilde {\bm X}_k^{(1)\top}}{n}\biggr) \tilde \bbeta \\
    &\qquad + 2 \lambda \bbeta^{(2)\top}\biggl(\frac{1}{n} \frac{(\hat \bSigma_{\setminus k} +  \lambda \bm I)^{-1} \tilde{\bm X}^{(1)}_k \tilde{\bm X}^{(1)\top}_k (\hat \bSigma_{\setminus k} + \lambda \bm I)^{-1}}{1 +  \frac{1}{n}\tilde{\bm X}^{(1)\top}_k (\hat \bSigma_{\setminus k} + \lambda \bm I)^{-1} \tilde{\bm X}^{(1)}_k}\biggr) \bSigma^{(2)} \\
    &\hspace{4em}(\hat \bSigma_{\setminus k} + \lambda \bm I)^{-1} \biggl(\frac{\bm X^{(1),\setminus k\top} \bm X^{(1),\setminus k}}{n}\biggr) \tilde \bbeta \\
    &\qquad + 2 \lambda \bbeta^{(2)\top} \biggl(\frac{1}{n} \frac{(\hat \bSigma_{\setminus k} +  \lambda \bm I)^{-1} \tilde{\bm X}^{(1)}_k \tilde{\bm X}^{(1)\top}_k (\hat \bSigma_{\setminus k} + \lambda \bm I)^{-1}}{1 +  \frac{1}{n}\tilde{\bm X}^{(1)\top}_k (\hat \bSigma_{\setminus k} + \lambda \bm I)^{-1} \tilde{\bm X}^{(1)}_k}\biggr) \bSigma^{(2)} \\
    &\hspace{4em} (\hat \bSigma_{\setminus k} + \lambda \bm I)^{-1} \biggl(\frac{\tilde {\bm X}^{(1)}_k \tilde {\bm X}_k^{(1)\top}}{n}\biggr) \tilde \bbeta \\
    &\qquad - 2 \lambda \bbeta^{(2)\top} \biggl(\frac{1}{n} \frac{(\hat \bSigma_{\setminus k} +  \lambda \bm I)^{-1} \tilde{\bm X}^{(1)}_k \tilde{\bm X}^{(1)\top}_k (\hat \bSigma_{\setminus k} + \lambda \bm I)^{-1}}{1 +  \frac{1}{n}\tilde{\bm X}^{(1)\top}_k (\hat \bSigma_{\setminus k} + \lambda \bm I)^{-1} \tilde{\bm X}^{(1)}_k}\biggr) \bSigma^{(2)} \\
    &\hspace{4em} \biggl(\frac{1}{n} \frac{(\hat \bSigma_{\setminus k} +  \lambda \bm I)^{-1} \tilde{\bm X}^{(1)}_k \tilde{\bm X}^{(1)\top}_k (\hat \bSigma_{\setminus k} + \lambda \bm I)^{-1}}{1 +  \frac{1}{n}\tilde{\bm X}^{(1)\top}_k (\hat \bSigma_{\setminus k} + \lambda \bm I)^{-1} \tilde{\bm X}^{(1)}_k}\biggr) \biggl(\frac{\tilde {\bm X}^{(1),\setminus k\top } \tilde {\bm X}^{(1), \setminus k}}{n}\biggr) \tilde \bbeta \\
    &\qquad - 2 \lambda \bbeta^{(2)\top} \biggl(\frac{1}{n} \frac{(\hat \bSigma_{\setminus k} +  \lambda \bm I)^{-1} \tilde{\bm X}^{(1)}_k \tilde{\bm X}^{(1)\top}_k (\hat \bSigma_{\setminus k} + \lambda \bm I)^{-1}}{1 +  \frac{1}{n}\tilde{\bm X}^{(1)\top}_k (\hat \bSigma_{\setminus k} + \lambda \bm I)^{-1} \tilde{\bm X}^{(1)}_k}\biggr) \bSigma^{(2)} \\
    &\hspace{4em} \biggl(\frac{1}{n} \frac{(\hat \bSigma_{\setminus k} +  \lambda \bm I)^{-1} \tilde{\bm X}^{(1)}_k \tilde{\bm X}^{(1)\top}_k (\hat \bSigma_{\setminus k} + \lambda \bm I)^{-1}}{1 +  \frac{1}{n}\tilde{\bm X}^{(1)\top}_k (\hat \bSigma_{\setminus k} + \lambda \bm I)^{-1} \tilde{\bm X}^{(1)}_k}\biggr) \biggl(\frac{\tilde {\bm X}^{(1)}_k \tilde {\bm X}_k^{(1)\top}}{n}\biggr) \tilde \bbeta \\
    &:= \bm u^{(b,1)\top} \tilde{\bm Z}_k^{(1)} \tilde{\bm Z}_k^{(1)\top} \bm v^{(b,1)} 
    + \frac{\bm u^{(b,2)\top} \tilde{\bm Z}_k^{(1)} \tilde{\bm Z}_k^{(1)\top} \bm v^{(b,2)}}{1 + \tilde{\bm Z}_k^{(1)\top} \bm A^{(b,2)} \tilde{\bm Z}_k^{(1)}} 
    ]] \\
    &\qquad + \frac{\bm u^{(b,3)\top} \tilde{\bm Z}_k^{(1)} (\tilde{\bm Z}_k^{(1)\top} \bm A^{(b,3)} \tilde{\bm Z}_k^{(1)}) \tilde{\bm Z}_k^{(1)\top} \bm v^{(b,3)}}{1 + \tilde{\bm Z}_k^{(1)\top} \bm B^{(b,3)} \tilde{\bm Z}_k^{(1)}} \\
    &\qquad + \frac{\bm u^{(b,4)\top} \tilde{\bm Z}_k^{(1)}  \tilde{\bm Z}_k^{(1)\top} \bm v^{(b,4)}}{1 + \tilde{\bm Z}_k^{(1)\top} \bm A^{(b,4)} \tilde{\bm Z}_k^{(1)}} + \frac{\bm u^{(b,5)\top} \tilde{\bm Z}_k^{(1)} (\tilde{\bm Z}_k^{(1)\top} \bm A^{(b,5)} \tilde{\bm Z}_k^{(1)}) \tilde{\bm Z}_k^{(1)\top} \bm v^{(b,5)}}{1 + \tilde{\bm Z}_k^{(1)\top} \bm B^{(b,5)} \tilde{\bm Z}_k^{(1)}} \\
    &\qquad + \frac{\bm u^{(b,6)\top} \tilde{\bm Z}_k^{(1)} (\tilde{\bm Z}_k^{(1)\top} \bm A^{(b,6)} \tilde{\bm Z}_k^{(1)}) \tilde{\bm Z}_k^{(1)\top} \bm v^{(b,6)}}{(1 + \tilde{\bm Z}_k^{(1)\top} \bm B^{(b,6)} \tilde{\bm Z}_k^{(1)})^2} \\
    &\qquad + \frac{\bm u^{(b,7)\top} \tilde{\bm Z}_k^{(1)} (\tilde{\bm Z}_k^{(1)\top} \bm A^{(b,7)} \tilde{\bm Z}_k^{(1)})(\tilde{\bm Z}_k^{(1)\top} \bm B^{(b,7)} \tilde{\bm Z}_k^{(1)}) \tilde{\bm Z}_k^{(1)\top} \bm v^{(b,7)}}{(1 + \tilde{\bm Z}_k^{(1)\top} \bm C^{(b,7)} \tilde{\bm Z}_k^{(1)})^2}
\end{align*}
and similarly
\begin{align*}
    &B_3(\bm Z^{(1),k-1}, \bm Z^{(2)}) - B_3( \bm Z^{(1),\setminus k}, \bm Z^{(2)}) \\
    &:= \bm u^{(b,1)\top} {\bm Z}_k^{(1)} {\bm Z}_k^{(1)\top} \bm v^{(b,1)} 
    + \frac{\bm u^{(b,2)\top} {\bm Z}_k^{(1)} {\bm Z}_k^{(1)\top} \bm v^{(b,2)}}{1 + {\bm Z}_k^{(1)\top} \bm A^{(b,2)} {\bm Z}_k^{(1)}} 
    \\
    &\qquad + \frac{\bm u^{(b,3)\top} {\bm Z}_k^{(1)} ({\bm Z}_k^{(1)\top} \bm A^{(b,3)} {\bm Z}_k^{(1)}) {\bm Z}_k^{(1)\top} \bm v^{(b,3)}}{1 + {\bm Z}_k^{(1)\top} \bm B^{(b,3)} {\bm Z}_k^{(1)}} \\
    &\qquad + \frac{\bm u^{(b,4)\top} {\bm Z}_k^{(1)}  {\bm Z}_k^{(1)\top} \bm v^{(b,4)}}{1 + {\bm Z}_k^{(1)\top} \bm A^{(b,4)} {\bm Z}_k^{(1)}} + \frac{\bm u^{(b,5)\top} {\bm Z}_k^{(1)} ({\bm Z}_k^{(1)\top} \bm A^{(b,5)} {\bm Z}_k^{(1)}) {\bm Z}_k^{(1)\top} \bm v^{(b,5)}}{1 + {\bm Z}_k^{(1)\top} \bm B^{(b,5)} {\bm Z}_k^{(1)}} \\
    &\qquad + \frac{\bm u^{(b,6)\top} {\bm Z}_k^{(1)} ({\bm Z}_k^{(1)\top} \bm A^{(b,6)} {\bm Z}_k^{(1)}) {\bm Z}_k^{(1)\top} \bm v^{(b,6)}}{(1 + {\bm Z}_k^{(1)\top} \bm B^{(b,6)} {\bm Z}_k^{(1)})^2} \\
    &\qquad + \frac{\bm u^{(b,7)\top} {\bm Z}_k^{(1)} ({\bm Z}_k^{(1)\top} \bm A^{(b,7)} {\bm Z}_k^{(1)})({\bm Z}_k^{(1)\top} \bm B^{(b,7)} {\bm Z}_k^{(1)}) {\bm Z}_k^{(1)\top} \bm v^{(b,7)}}{(1 + {\bm Z}_k^{(1)\top} \bm C^{(b,7)} {\bm Z}_k^{(1)})^2}
\end{align*}
where
\begin{gather*}
    \bm u^{(b,1)} = -\frac{2\lambda}{n} (\bSigma^{(1)})^{1/2} (\hat \bSigma_{\setminus k} + \lambda \bm I)^{-1} \bSigma^{(2)} (\hat \bSigma_{\setminus k} + \lambda \bm I)^{-1}\bbeta^{(2)}, \quad \bm v^{(b,1)} = (\bSigma^{(1)})^{1/2} \tilde \bbeta \\
    \bm u^{(b,2)} = \frac{2\lambda}{n^2} (\bSigma^{(1)})^{1/2} (\hat \bSigma_{\setminus k} + \lambda \bm I)^{-1} \bSigma^{(2)} (\hat \bSigma_{\setminus k} + \lambda \bm I)^{-1} \bbeta^{(2)}\\
    \bm v^{(b,2)} = (\bSigma^{(1)})^{1/2} (\hat \bSigma_{\setminus k} + \lambda \bm I)^{-1} (\bm X^{(1), \setminus k\top} \bm X^{(1),\setminus k}) \tilde \bbeta \\
    \bm A^{(b,2)}= \frac{1}{n} (\bSigma^{(1)})^{1/2}(\hat \bSigma_{\setminus k} + \lambda \bm I)^{-1} (\bSigma^{(1)})^{1/2} \\
    \bm u^{(b,3)} = \frac{2\lambda}{n^2} (\bSigma^{(1)})^{1/2} (\hat \bSigma_{\setminus k} + \lambda \bm I)^{-1} \bSigma^{(2)} (\hat \bSigma_{\setminus k} + \lambda\bm I)^{-1} \bbeta^{(2)}, \quad \bm v^{(b,3)} = (\bSigma^{(1)})^{1/2} \tilde \bbeta \\
    \bm A^{(b,3)} = (\bSigma^{(1)})^{1/2} (\hat \bSigma_{\setminus k} + \lambda \bm I)^{-1} (\bSigma^{(1)})^{1/2}, \quad \bm B^{(b,3)} = \frac{1}{n} (\bSigma^{(1)})^{1/2} (\hat \bSigma_{\setminus k} + \lambda \bm I)^{-1} (\bSigma^{(1)})^{1/2} \\
    \bm u^{(b,4)} = \frac{2\lambda}{n^2} (\bSigma^{(1)})^{1/2} (\hat \bSigma_{\setminus k} + \lambda \bm I)^{-1} \bbeta^{(2)} \\
    \bm v^{(b,4)} = (\bSigma^{(1)})^{1/2} (\hat \bSigma_{\setminus k} + \lambda \bm I)^{-1} \bSigma^{(2)} (\hat \bSigma_{\setminus k} + \lambda \bm I)^{-1} (\bm X^{(1),\setminus k\top} \bm X^{(1),\setminus k}) \tilde \bbeta \\
    \bm A^{(b,4)} = \frac{1}{n} (\bSigma^{(1)})^{1/2} (\hat \bSigma_{\setminus k} + \lambda \bm I)^{-1} (\bSigma^{(1)})^{1/2} \\
    \bm u^{(b,5)} = \frac{2\lambda}{n^2} (\bSigma^{(1)})^{1/2} (\hat \bSigma_{\setminus k} + \lambda \bm I)^{-1}  \bbeta^{(2)}, \quad \bm v^{(b,5)} = (\bSigma^{(1)})^{1/2} \tilde \bbeta \\
    \bm A^{(b,5)} = (\bSigma^{(1)})^{1/2} (\hat \bSigma_{\setminus k} + \lambda \bm I)^{-1}  \bSigma^{(2)} (\hat \bSigma_{\setminus k} + \lambda \bm I)^{-1} (\bSigma^{(1)})^{1/2} \\
    \bm B^{(b,5)} = \frac{1}{n} (\bSigma^{(1)})^{1/2} (\hat \bSigma_{\setminus k} + \lambda \bm I)^{-1} (\bSigma^{(1)})^{1/2}  \\
    \bm u^{(b,6)} = -\frac{2\lambda}{n^3} (\bSigma^{(1)})^{1/2} (\hat \bSigma_{\setminus k} + \lambda \bm I)^{-1}  \bbeta^{(2)} \\ \bm v^{(b,6)} = (\bSigma^{(1)})^{1/2} (\hat \bSigma_{\setminus k} + \lambda \bm I)^{-1} (\bm X^{(1),\setminus k\top} \bm X^{(1),\setminus k}) \tilde \bbeta \\
    \bm A^{(b,6)} = (\bSigma^{(1)})^{1/2} (\hat \bSigma_{\setminus k} + \lambda \bm I)^{-1}  \bSigma^{(2)} (\hat \bSigma_{\setminus k} + \lambda \bm I)^{-1} (\bSigma^{(1)})^{1/2} \\ \bm B^{(b,6)} = \frac{1}{n} (\bSigma^{(1)})^{1/2} (\hat \bSigma_{\setminus k} + \lambda \bm I)^{-1} (\bSigma^{(1)})^{1/2} \\
    \bm u^{(b,7)} = -\frac{2\lambda}{n^3} (\bSigma^{(1)})^{1/2} (\hat \bSigma_{\setminus k} + \lambda \bm I)^{-1}  \bbeta^{(2)}, \quad \bm v^{(b,7)} = (\bSigma^{(1)})^{1/2} \tilde \bbeta \\
    \bm A^{(b,7)} = (\bSigma^{(1)})^{1/2} (\hat \bSigma_{\setminus k} + \lambda \bm I)^{-1}  \bSigma^{(2)} (\hat \bSigma_{\setminus k} + \lambda \bm I)^{-1} (\bSigma^{(1)})^{1/2} \\ \bm B^{(b,7)} = (\bSigma^{(1)})^{1/2} (\hat \bSigma_{\setminus k} + \lambda \bm I)^{-1} (\bSigma^{(1)})^{1/2} \\
    \bm C^{(b,7)} = \frac{1}{n} (\bSigma^{(1)})^{1/2} (\hat \bSigma_{\setminus k} + \lambda \bm I)^{-1} (\bSigma^{(1)})^{1/2}
\end{gather*}
are all independent of $\bm Z_k^{(1)}, \tilde{\bm Z}_k^{(1)}$.

Similarly, we can express 
\begin{align*}
    &B_3(\tilde{\bm Z}^{(1)}, \bm Z^{(2),k}) \\&= -2 \lambda \bbeta^{(2)\top} (\tilde \bSigma_{k} + \lambda \bm I)^{-1} \bSigma^{(2)} (\tilde \bSigma_{k} + \lambda \bm I)^{-1}\biggl(\frac{\tilde{\bm X}^{(1)\top}\tilde{\bm X}^{(1)}}{n}\biggr) \tilde \bbeta \\
    &=-2\lambda \bbeta^{(2)\top} \biggl((\tilde \bSigma_{\setminus k} + \lambda \bm I)^{-1} - \frac{1}{n} \frac{(\tilde \bSigma_{\setminus k} + \lambda \bm I)^{-1} \tilde{\bm X}_k^{(2)} \tilde{\bm X}_k^{(2)\top} (\tilde \bSigma_{\setminus k} + \lambda \bm I)^{-1}}{1 + \frac{1}{n} \tilde{\bm X}_k^{(2)\top} (\tilde \bSigma_{\setminus k} + \lambda \bm I)^{-1} \tilde{\bm X}_k^{(2)}} \biggr) \bSigma^{(2)} \\
    &\qquad \cdot  \biggl((\tilde \bSigma_{\setminus k} + \lambda \bm I)^{-1} - \frac{1}{n} \frac{(\tilde \bSigma_{\setminus k} + \lambda \bm I)^{-1} \tilde{\bm X}_k^{(2)} \tilde{\bm X}_k^{(2)\top} (\tilde \bSigma_{\setminus k} + \lambda \bm I)^{-1}}{1 + \frac{1}{n} \tilde{\bm X}_k^{(2)\top} (\tilde \bSigma_{\setminus k} + \lambda \bm I)^{-1} \tilde{\bm X}_k^{(2)}} \biggr) \biggl(\frac{\tilde{\bm X}^{(1)\top} \tilde{\bm X}^{(1)}}{n}\biggr)\tilde \bbeta
\end{align*}
and therefore
\begin{align*}
    &B_3(\tilde{\bm Z}^{(1)}, \bm Z^{(2),k}) - B_3(\tilde{\bm Z}^{(1)}, \bm Z^{(2),\setminus k}) \\
    &= 2\lambda \bbeta^{(2)\top} (\tilde \bSigma_{\setminus k} + \lambda \bm I)^{-1} \bSigma^{(2)} \\
    &\hspace{4em}\biggl(\frac{1}{n} \frac{(\tilde \bSigma_{\setminus k} + \lambda \bm I)^{-1} \tilde{\bm X}_k^{(2)} \tilde{\bm X}_k^{(2)\top} (\tilde \bSigma_{\setminus k} + \lambda \bm I)^{-1}}{1 + \frac{1}{n} \tilde{\bm X}_k^{(2)\top} (\tilde \bSigma_{\setminus k} + \lambda \bm I)^{-1} \tilde{\bm X}_k^{(2)}} \biggr) \biggl(\frac{\tilde{\bm X}^{(1)\top} \tilde{\bm X}^{(1)}}{n} \biggr) \tilde \bbeta  \\
    &\qquad + 2 \lambda \bbeta^{(2)\top} \biggl(\frac{1}{n} \frac{(\tilde \bSigma_{\setminus k} + \lambda \bm I)^{-1} \tilde{\bm X}_k^{(2)} \tilde{\bm X}_k^{(2)\top} (\tilde \bSigma_{\setminus k} + \lambda \bm I)^{-1}}{1 + \frac{1}{n} \tilde{\bm X}_k^{(2)\top} (\tilde \bSigma_{\setminus k} + \lambda \bm I)^{-1} \tilde{\bm X}_k^{(2)}} \biggr) \bSigma^{(2)} \\
    &\hspace{4em}(\tilde \bSigma_{\setminus k} + \lambda \bm I)^{-1} \biggl(\frac{\tilde{\bm X}^{(1)\top} \tilde{\bm X}^{(1)}}{n} \biggr) \tilde \bbeta \\
    &\qquad - 2 \lambda \bbeta^{(2)\top} \biggl( \frac{1}{n} \frac{(\tilde \bSigma_{\setminus k} + \lambda \bm I)^{-1} \tilde{\bm X}_k^{(2)} \tilde{\bm X}_k^{(2)\top} (\tilde \bSigma_{\setminus k} + \lambda \bm I)^{-1}}{1 + \frac{1}{n} \tilde{\bm X}_k^{(2)\top} (\tilde \bSigma_{\setminus k} + \lambda \bm I)^{-1} \tilde{\bm X}_k^{(2)}}\biggr) \\
    &\hspace{4em}\bSigma^{(2)} \biggl( \frac{1}{n} \frac{(\tilde \bSigma_{\setminus k} + \lambda \bm I)^{-1} \tilde{\bm X}_k^{(2)} \tilde{\bm X}_k^{(2)\top} (\tilde \bSigma_{\setminus k} + \lambda \bm I)^{-1}}{1 + \frac{1}{n} \tilde{\bm X}_k^{(2)\top} (\tilde \bSigma_{\setminus k} + \lambda \bm I)^{-1} \tilde{\bm X}_k^{(2)}}\biggr) \biggl(\frac{\tilde{\bm X}^{(1)\top} \tilde{\bm X}^{(1)}}{n} \biggr) \tilde \bbeta \\
    &:= \frac{\bm u^{(B,1)\top} \tilde{\bm Z}_k^{(2)} \tilde{\bm Z}_k^{(2)\top} \bm v^{(B,1)}}{1 + \tilde{\bm Z}_k^{(2)\top} \bm A^{(B,1)} \tilde{\bm Z}_k^{(2)}} + 
    \frac{\bm u^{(B,2)\top} \tilde{\bm Z}_k^{(2)} \tilde{\bm Z}_k^{(2)\top} \bm v^{(B,2)}}{1 + \tilde{\bm Z}_k^{(2)\top} \bm A^{(B,2)} \tilde{\bm Z}_k^{(2)}} \\
    &\qquad +
    \frac{\bm u^{(B,3)\top} \tilde{\bm Z}_k^{(2)} (\tilde{\bm Z}_k^{(2)\top} \bm A^{(B,3)} \tilde{\bm Z}_k^{(2)}) \tilde{\bm Z}_k^{(2)\top} \bm v^{(B,3)}}{(1 + \tilde{\bm Z}_k^{(2)\top} \bm B^{(B,3)} \tilde{\bm Z}_k^{(2)})^2}
\end{align*}
and similarly
\begin{align*}
    &B_3(\tilde{\bm Z}^{(1)}, \bm Z^{(2),k-1}) - B_3(\tilde{\bm Z}^{(1)}, \bm Z^{(2),\setminus k}) \\
    &= \frac{\bm u^{(B,1)\top} \bm Z_k^{(2)} \bm Z_k^{(2)\top} \bm v^{(B,1)}}{1 + \bm Z_k^{(2)\top} \bm A^{(B,1)} \bm Z_k^{(2)}} + 
    \frac{\bm u^{(B,2)\top} \bm Z_k^{(2)} \bm Z_k^{(2)\top} \bm v^{(B,2)}}{1 + \bm Z_k^{(2)\top} \bm A^{(B,2)} \bm Z_k^{(2)}} \\
    &\qquad +
    \frac{\bm u^{(B,3)\top} \bm Z_k^{(2)} (\bm Z_k^{(2)\top} \bm A^{(B,3)} \bm Z_k^{(2)}) \bm Z_k^{(2)\top} \bm v^{(B,3)}}{(1 + \bm Z_k^{(2)\top} \bm B^{(B,3)} \bm Z_k^{(2)})^2}
\end{align*}
where
\begin{gather*}
    \bm u^{(B,1)} = \frac{2\lambda}{n^2} (\bSigma^{(2)})^{1/2} (\tilde{\bSigma}_{\setminus k} + \lambda \bm I)^{-1} \bSigma^{(2)} (\tilde \bSigma_{\setminus k} + \lambda \bm I)^{-1} \bbeta^{(2)} \\ \bm v^{(B,1)} = (\bSigma^{(2)})^{1/2} (\tilde \bSigma_{\setminus k} + \lambda\bm I)^{-1} (\tilde{\bm X}^{(1)\top} \tilde{\bm X}^{(1)}) \tilde \bbeta \\
    \bm A^{(B,1)} = \frac{1}{n} (\bSigma^{(2)})^{1/2} (\tilde \bSigma_{\setminus k} + \lambda\bm I)^{-1} (\bSigma^{(2)})^{1/2} \\
    \bm u^{(B,2)} = \frac{2\lambda}{n^2} (\bSigma^{(2)})^{1/2}  (\tilde \bSigma_{\setminus k} + \lambda \bm I)^{-1} \bbeta^{(2)} \\ \bm v^{(B,2)} = (\bSigma^{(2)})^{1/2}(\tilde{\bSigma}_{\setminus k} + \lambda \bm I)^{-1} \bSigma^{(2)} (\tilde \bSigma_{\setminus k} + \lambda\bm I)^{-1} (\tilde{\bm X}^{(1)\top} \tilde{\bm X}^{(1)}) \tilde \bbeta \\
    \bm A^{(B,2)} = \frac{1}{n} (\bSigma^{(2)})^{1/2} (\tilde \bSigma_{\setminus k} + \lambda\bm I)^{-1} (\bSigma^{(2)})^{1/2} \\
    \bm u^{(B,3)} = -\frac{2\lambda}{n^3} (\bSigma^{(2)})^{1/2}  (\tilde \bSigma_{\setminus k} + \lambda \bm I)^{-1} \bbeta^{(2)} \\
    \bm v^{(B,2)} = (\bSigma^{(2)})^{1/2}(\tilde{\bSigma}_{\setminus k} + \lambda \bm I)^{-1}  (\tilde{\bm X}^{(1)\top} \tilde{\bm X}^{(1)}) \tilde \bbeta \\
    \bm A^{(B,1)} = (\bSigma^{(2)})^{1/2} (\tilde \bSigma_{\setminus k} + \lambda\bm I)^{-1} \bSigma^{(2)}  (\tilde \bSigma_{\setminus k} + \lambda\bm I)^{-1}(\bSigma^{(2)})^{1/2} \\ \bm B^{(B,1)} = \frac{1}{n} (\bSigma^{(2)})^{1/2} (\tilde \bSigma_{\setminus k} + \lambda\bm I)^{-1} (\bSigma^{(2)})^{1/2}
\end{gather*}

\subsubsection{Expanding \texorpdfstring{$B_2$}{B2} terms}
Now, recall that
\begin{equation*}
    B_2(\bm Z^{(1)}, \bm Z^{(2)}) = \tilde \bbeta^\top \biggl(\frac{\bm X^{(1)\top} \bm X^{(1)}}{n}\biggr) (\hat \bSigma+ \lambda \bm I)^{-1} \bSigma^{(2)} (\hat \bSigma+ \lambda \bm I)^{-1} \biggl(\frac{\bm X^{(1)\top} \bm X^{(1)}}{n} \biggr) \tilde \bbeta
\end{equation*}
an using the result from earlier, we have
\begin{align*}
    &B_2(\bm Z^{(1),k}, \bm Z^{(2)}) \\&= \tilde \bbeta^\top \biggl(\frac{\bm X^{(1),k\top} \bm X^{(1),k}}{n}\biggr) (\hat \bSigma_k + \lambda \bm I)^{-1} \bSigma^{(2)} (\hat \bSigma_k + \lambda \bm I)^{-1} \biggl(\frac{\bm X^{(1),k\top} \bm X^{(1),k}}{n} \biggr) \tilde \bbeta \\
    &= \tilde \bbeta^\top \biggl(\frac{\bm X^{(1),\setminus k\top} \bm X^{(1),\setminus k}}{n} + \frac{\tilde {\bm X}^{(1)}_k \tilde{\bm X}_k^{(1)\top}}{n}\biggr)  \\
    &\qquad \biggl((\hat \bSigma_{\setminus k} + \lambda \bm I)^{-1} - \frac{1}{n} \frac{(\hat \bSigma_{\setminus k} + \lambda \bm I)^{-1} \tilde{\bm X}_k^{(1)} \tilde{\bm X}_k^{(1)\top} (\hat \bSigma_{\setminus k} + \lambda \bm I)^{-1}}{1 + \frac{1}{n} \tilde{\bm X}_k^{(1)\top} (\hat \bSigma_{\setminus k} + \lambda \bm I)^{-1} \tilde{\bm X}_k^{(1)}} \biggr)  \bSigma^{(2)} \\
    &\qquad \biggl((\hat \bSigma_{\setminus k} + \lambda \bm I)^{-1} - \frac{1}{n} \frac{(\hat \bSigma_{\setminus k} + \lambda \bm I)^{-1} \tilde{\bm X}_k^{(1)} \tilde{\bm X}_k^{(1)\top} (\hat \bSigma_{\setminus k} + \lambda \bm I)^{-1}}{1 + \frac{1}{n} \tilde{\bm X}_k^{(1)\top} (\hat \bSigma_{\setminus k} + \lambda \bm I)^{-1} \tilde{\bm X}_k^{(1)}} \biggr) \\
    &\qquad \biggl(\frac{\bm X^{(1),\setminus k\top} \bm X^{(1),\setminus k}}{n} + \frac{\tilde {\bm X}^{(1)}_k \tilde{\bm X}_k^{(1)\top}}{n}\biggr) \tilde \bbeta
\end{align*}
Therefore, we finally have
\begin{align*}
    &B_2(\bm Z^{(1),k}, \bm Z^{(2)}) - B_2( \bm Z^{(1),\setminus k}, \bm Z^{(2)}) \\
    &= 2\tilde \bbeta^\top \biggl(\frac{\tilde {\bm X}^{(1),\setminus k\top } \tilde {\bm X}^{(1), \setminus k}}{n}\biggr) (\hat \bSigma_{\setminus k} + \lambda \bm I)^{-1} \bSigma^{(2)} (\hat \bSigma_{\setminus k} + \lambda \bm I)^{-1} \biggl(\frac{\tilde{\bm X}_k^{(1)} \tilde{\bm X}_k^{(1)\top}}{n} \biggr) \tilde \bbeta \\
    &\qquad - 2\tilde \bbeta^\top \biggl(\frac{\tilde {\bm X}^{(1),\setminus k\top } \tilde {\bm X}^{(1), \setminus k}}{n}\biggr) (\hat \bSigma_{\setminus k} + \lambda \bm I)^{-1} \bSigma^{(2)}  \\
    &\qquad \qquad \cdot \biggl(\frac{1}{n} \frac{(\hat \bSigma_{\setminus k} +  \lambda \bm I)^{-1} \tilde{\bm X}^{(1)}_k \tilde{\bm X}^{(1)\top}_k (\hat \bSigma_{\setminus k} + \lambda \bm I)^{-1}}{1 +  \frac{1}{n}\tilde{\bm X}^{(1)\top}_k (\hat \bSigma_{\setminus k} + \lambda \bm I)^{-1} \tilde{\bm X}^{(1)}_k}\biggr)\biggl(\frac{\bm X^{(1),\setminus k\top} \bm X^{(1),\setminus k}}{n}\biggr) \tilde \bbeta \\
    &\qquad - 2\tilde \bbeta^\top \biggl(\frac{\tilde {\bm X}^{(1),\setminus k\top } \tilde {\bm X}^{(1), \setminus k}}{n}\biggr) (\hat \bSigma_{\setminus k} + \lambda \bm I)^{-1} \bSigma^{(2)} \\
    &\qquad \qquad \cdot \biggl(\frac{1}{n} \frac{(\hat \bSigma_{\setminus k} +  \lambda \bm I)^{-1} \tilde{\bm X}^{(1)}_k \tilde{\bm X}^{(1)\top}_k (\hat \bSigma_{\setminus k} + \lambda \bm I)^{-1}}{1 +  \frac{1}{n}\tilde{\bm X}^{(1)\top}_k (\hat \bSigma_{\setminus k} + \lambda \bm I)^{-1} \tilde{\bm X}^{(1)}_k}\biggr) \biggl(\frac{\tilde {\bm X}^{(1)}_k \tilde {\bm X}_k^{(1)\top}}{n}\biggr) \tilde \bbeta \\
    &\qquad - 2\tilde \bbeta^\top \biggl(\frac{\tilde {\bm X}^{(1),\setminus k\top } \tilde {\bm X}^{(1), \setminus k}}{n}\biggr) \biggl(\frac{1}{n} \frac{(\hat \bSigma_{\setminus k} +  \lambda \bm I)^{-1} \tilde{\bm X}^{(1)}_k \tilde{\bm X}^{(1)\top}_k (\hat \bSigma_{\setminus k} + \lambda \bm I)^{-1}}{1 +  \frac{1}{n}\tilde{\bm X}^{(1)\top}_k (\hat \bSigma_{\setminus k} + \lambda \bm I)^{-1} \tilde{\bm X}^{(1)}_k}\biggr) \\
    &\qquad \qquad \cdot \bSigma^{(2)} (\hat \bSigma_{\setminus k} + \lambda \bm I)^{-1} \biggl(\frac{\tilde {\bm X}^{(1)}_k \tilde {\bm X}_k^{(1)\top}}{n}\biggr) \tilde \bbeta \\
    &\qquad + \tilde \bbeta^\top \biggl(\frac{\tilde {\bm X}^{(1),\setminus k\top } \tilde {\bm X}^{(1), \setminus k}}{n}\biggr) \biggl(\frac{1}{n} \frac{(\hat \bSigma_{\setminus k} +  \lambda \bm I)^{-1} \tilde{\bm X}^{(1)}_k \tilde{\bm X}^{(1)\top}_k (\hat \bSigma_{\setminus k} + \lambda \bm I)^{-1}}{1 +  \frac{1}{n}\tilde{\bm X}^{(1)\top}_k (\hat \bSigma_{\setminus k} + \lambda \bm I)^{-1} \tilde{\bm X}^{(1)}_k}\biggr) \bSigma^{(2)} \\
    &\hspace{4em} \biggl(\frac{1}{n} \frac{(\hat \bSigma_{\setminus k} +  \lambda \bm I)^{-1} \tilde{\bm X}^{(1)}_k \tilde{\bm X}^{(1)\top}_k (\hat \bSigma_{\setminus k} + \lambda \bm I)^{-1}}{1 +  \frac{1}{n}\tilde{\bm X}^{(1)\top}_k (\hat \bSigma_{\setminus k} + \lambda \bm I)^{-1} \tilde{\bm X}^{(1)}_k}\biggr) \biggl(\frac{\tilde {\bm X}^{(1),\setminus k\top } \tilde {\bm X}^{(1), \setminus k}}{n}\biggr) \tilde \bbeta \\
    &\qquad + 2\tilde \bbeta^\top\biggl(\frac{\tilde {\bm X}^{(1),\setminus k\top } \tilde {\bm X}^{(1), \setminus k}}{n}\biggr) \biggl(\frac{1}{n} \frac{(\hat \bSigma_{\setminus k} +  \lambda \bm I)^{-1} \tilde{\bm X}^{(1)}_k \tilde{\bm X}^{(1)\top}_k (\hat \bSigma_{\setminus k} + \lambda \bm I)^{-1}}{1 +  \frac{1}{n}\tilde{\bm X}^{(1)\top}_k (\hat \bSigma_{\setminus k} + \lambda \bm I)^{-1} \tilde{\bm X}^{(1)}_k}\biggr) \bSigma^{(2)} \\
    &\hspace{4em} \biggl(\frac{1}{n} \frac{(\hat \bSigma_{\setminus k} +  \lambda \bm I)^{-1} \tilde{\bm X}^{(1)}_k \tilde{\bm X}^{(1)\top}_k (\hat \bSigma_{\setminus k} + \lambda \bm I)^{-1}}{1 +  \frac{1}{n}\tilde{\bm X}^{(1)\top}_k (\hat \bSigma_{\setminus k} + \lambda \bm I)^{-1} \tilde{\bm X}^{(1)}_k}\biggr) \biggl(\frac{\tilde {\bm X}^{(1)}_k \tilde {\bm X}_k^{(1)\top}}{n}\biggr) \tilde \bbeta \\
    &\qquad + \tilde \bbeta^\top \biggl(\frac{\tilde {\bm X}^{(1)}_k \tilde {\bm X}_k^{(1)\top}}{n}\biggr) (\hat \bSigma_{\setminus k} + \lambda \bm I)^{-1} \bSigma^{(2)} (\hat \bSigma_{\setminus k} + \lambda \bm I)^{-1} \biggl(\frac{\tilde{\bm X}_k^{(1)} \tilde{\bm X}_k^{(1)\top}}{n} \biggr) \tilde \bbeta \\
    &\qquad - 2\tilde \bbeta^\top \biggl(\frac{\tilde {\bm X}^{(1)}_k \tilde {\bm X}_k^{(1)\top}}{n}\biggr) \biggl(\frac{1}{n} \frac{(\hat \bSigma_{\setminus k} +  \lambda \bm I)^{-1} \tilde{\bm X}^{(1)}_k \tilde{\bm X}^{(1)\top}_k (\hat \bSigma_{\setminus k} + \lambda \bm I)^{-1}}{1 +  \frac{1}{n}\tilde{\bm X}^{(1)\top}_k (\hat \bSigma_{\setminus k} + \lambda \bm I)^{-1} \tilde{\bm X}^{(1)}_k}\biggr) \bSigma^{(2)}\\
    &\hspace{4em} (\hat \bSigma_{\setminus k} + \lambda \bm I)^{-1} \biggl(\frac{\tilde {\bm X}^{(1)}_k \tilde {\bm X}_k^{(1)\top}}{n}\biggr) \tilde \bbeta \\
    &\qquad + \tilde \bbeta^\top\biggl(\frac{\tilde {\bm X}^{(1)}_k \tilde {\bm X}_k^{(1)\top}}{n}\biggr) \biggl(\frac{1}{n} \frac{(\hat \bSigma_{\setminus k} +  \lambda \bm I)^{-1} \tilde{\bm X}^{(1)}_k \tilde{\bm X}^{(1)\top}_k (\hat \bSigma_{\setminus k} + \lambda \bm I)^{-1}}{1 +  \frac{1}{n}\tilde{\bm X}^{(1)\top}_k (\hat \bSigma_{\setminus k} + \lambda \bm I)^{-1} \tilde{\bm X}^{(1)}_k}\biggr) \bSigma^{(2)} \\
    &\hspace{4em} \biggl(\frac{1}{n} \frac{(\hat \bSigma_{\setminus k} +  \lambda \bm I)^{-1} \tilde{\bm X}^{(1)}_k \tilde{\bm X}^{(1)\top}_k (\hat \bSigma_{\setminus k} + \lambda \bm I)^{-1}}{1 +  \frac{1}{n}\tilde{\bm X}^{(1)\top}_k (\hat \bSigma_{\setminus k} + \lambda \bm I)^{-1} \tilde{\bm X}^{(1)}_k}\biggr) \biggl(\frac{\tilde {\bm X}^{(1)}_k \tilde {\bm X}_k^{(1)\top}}{n}\biggr) \tilde \bbeta \\
    &:= \bm u^{(c,1)\top} \tilde{\bm Z}_k^{(1)} \tilde{\bm Z}_k^{(1)\top} \bm v^{(c,1)} 
    + \frac{\bm u^{(c,2)\top} \tilde{\bm Z}_k^{(1)} \tilde{\bm Z}_k^{(1)\top} \bm v^{(c,2)}}{1 + \tilde{\bm Z}_k^{(1)\top} \bm A^{(c,2)} \tilde{\bm Z}_k^{(1)}} \\
    &\qquad + \frac{\bm u^{(c,3)\top} \tilde{\bm Z}_k^{(1)} (\tilde{\bm Z}_k^{(1)\top} \bm A^{(c,3)} \tilde{\bm Z}_k^{(1)}) \tilde{\bm Z}_k^{(1)\top} \bm v^{(c,3)}}{1 + \tilde{\bm Z}_k^{(1)\top} \bm B^{(c,3)} \tilde{\bm Z}_k^{(1)}} \\
    &\qquad + \frac{\bm u^{(c,4)\top} \tilde{\bm Z}_k^{(1)} (\tilde{\bm Z}_k^{(1)\top} \bm A^{(c,4)} \tilde{\bm Z}_k^{(1)}) \tilde{\bm Z}_k^{(1)\top} \bm v^{(c,4)}}{1 + \tilde{\bm Z}_k^{(1)\top} \bm B^{(c,4)} \tilde{\bm Z}_k^{(1)}} \\
    &\qquad+ \frac{\bm u^{(c,5)\top} \tilde{\bm Z}_k^{(1)} (\tilde{\bm Z}_k^{(1)\top} \bm A^{(c,5)} \tilde{\bm Z}_k^{(1)}) \tilde{\bm Z}_k^{(1)\top} \bm v^{(c,5)}}{(1 + \tilde{\bm Z}_k^{(1)\top} \bm B^{(c,5)} \tilde{\bm Z}_k^{(1)})^2} \\
    &\qquad + \frac{\bm u^{(c,6)\top} \tilde{\bm Z}_k^{(1)} (\tilde{\bm Z}_k^{(1)\top} \bm A^{(c,6)} \tilde{\bm Z}_k^{(1)})(\tilde{\bm Z}_k^{(1)\top} \bm B^{(c,6)} \tilde{\bm Z}_k^{(1)}) \tilde{\bm Z}_k^{(1)\top} \bm v^{(c,6)}}{(1 + \tilde{\bm Z}_k^{(1)\top} \bm C^{(c,6)} \tilde{\bm Z}_k^{(1)})^2} \\
    &\qquad + \bm u^{(c,7)\top} \tilde{\bm Z}_k^{(1)} (\tilde{\bm Z}_k^{(1)\top} \bm A^{(c,7)} \tilde{\bm Z}_k^{(1)}) \tilde{\bm Z}_k^{(1)\top} \bm v^{(c,7)} \\
    &\qquad + \frac{(\bm u^{(c,8)\top} \tilde{\bm Z}_k^{(1)})^2 (\tilde{\bm Z}_k^{(1)\top} \bm A^{(c,8)} \tilde{\bm Z}_k^{(1)})(\tilde{\bm Z}_k^{(1)\top} \bm B^{(c,6)} \tilde{\bm Z}_k^{(1)})}{1 + \tilde{\bm Z}_k^{(1)\top} \bm C^{(c,8)} \tilde{\bm Z}_k^{(1)}} \\
    &\qquad + \frac{(\bm u^{(c,9)\top} \tilde{\bm Z}_k^{(1)})^2 (\tilde{\bm Z}_k^{(1)\top} \bm A^{(c,9)} \tilde{\bm Z}_k^{(1)})(\tilde{\bm Z}_k^{(1)\top} \bm B^{(c,9)} \tilde{\bm Z}_k^{(1)})(\tilde{\bm Z}_k^{(1)\top} \bm C^{(c,9)} \tilde{\bm Z}_k^{(1)})}{(1 + \tilde{\bm Z}_k^{(1)\top} \bm D^{(c,9)} \tilde{\bm Z}_k^{(1)})^2}
\end{align*}
and similarly 
\begin{align*}
    &B_2(\bm Z^{(1),k-1}, \bm Z^{(2)}) - B_2( \bm Z^{(1),\setminus k}, \bm Z^{(2)}) \\
    &:= \bm u^{(c,1)\top} {\bm Z}_k^{(1)} {\bm Z}_k^{(1)\top} \bm v^{(c,1)} 
    + \frac{\bm u^{(c,2)\top} {\bm Z}_k^{(1)} {\bm Z}_k^{(1)\top} \bm v^{(c,2)}}{1 + {\bm Z}_k^{(1)\top} \bm A^{(c,2)} {\bm Z}_k^{(1)}} \\
    &\qquad + \frac{\bm u^{(c,3)\top} {\bm Z}_k^{(1)} ({\bm Z}_k^{(1)\top} \bm A^{(c,3)} {\bm Z}_k^{(1)}) {\bm Z}_k^{(1)\top} \bm v^{(c,3)}}{1 + {\bm Z}_k^{(1)\top} \bm B^{(c,3)} {\bm Z}_k^{(1)}} \\
    &\qquad + \frac{\bm u^{(c,4)\top} {\bm Z}_k^{(1)} ({\bm Z}_k^{(1)\top} \bm A^{(c,4)} {\bm Z}_k^{(1)}) {\bm Z}_k^{(1)\top} \bm v^{(c,4)}}{1 + {\bm Z}_k^{(1)\top} \bm B^{(c,4)} {\bm Z}_k^{(1)}} \\
    &\qquad+ \frac{\bm u^{(c,5)\top} {\bm Z}_k^{(1)} ({\bm Z}_k^{(1)\top} \bm A^{(c,5)} {\bm Z}_k^{(1)}) {\bm Z}_k^{(1)\top} \bm v^{(c,5)}}{(1 + {\bm Z}_k^{(1)\top} \bm B^{(c,5)} {\bm Z}_k^{(1)})^2} \\
    &\qquad + \frac{\bm u^{(c,6)\top} {\bm Z}_k^{(1)} ({\bm Z}_k^{(1)\top} \bm A^{(c,6)} {\bm Z}_k^{(1)})({\bm Z}_k^{(1)\top} \bm B^{(c,6)} {\bm Z}_k^{(1)}) {\bm Z}_k^{(1)\top} \bm v^{(c,6)}}{(1 + {\bm Z}_k^{(1)\top} \bm C^{(c,6)} {\bm Z}_k^{(1)})^2} \\
    &\qquad + \bm u^{(c,7)\top} {\bm Z}_k^{(1)} ({\bm Z}_k^{(1)\top} \bm A^{(c,7)} {\bm Z}_k^{(1)}) {\bm Z}_k^{(1)\top} \bm v^{(c,7)} \\
    &\qquad + \frac{(\bm u^{(c,8)\top} {\bm Z}_k^{(1)})^2 ({\bm Z}_k^{(1)\top} \bm A^{(c,8)} {\bm Z}_k^{(1)})({\bm Z}_k^{(1)\top} \bm B^{(c,6)} {\bm Z}_k^{(1)})}{1 + {\bm Z}_k^{(1)\top} \bm C^{(c,8)} {\bm Z}_k^{(1)}} \\
    &\qquad + \frac{(\bm u^{(c,9)\top} {\bm Z}_k^{(1)})^2 ({\bm Z}_k^{(1)\top} \bm A^{(c,9)} {\bm Z}_k^{(1)})({\bm Z}_k^{(1)\top} \bm B^{(c,9)} {\bm Z}_k^{(1)})({\bm Z}_k^{(1)\top} \bm C^{(c,9)} {\bm Z}_k^{(1)})}{(1 + {\bm Z}_k^{(1)\top} \bm D^{(c,9)} {\bm Z}_k^{(1)})^2}
\end{align*}
where
\begin{gather*}
    \bm u^{(c,1)} = \frac{2}{n^2} (\bSigma^{(1)})^{1/2} (\hat \bSigma_{\setminus k} + \lambda \bm I)^{-1} \bSigma^{(2)} (\hat \bSigma_{\setminus k} + \lambda \bm I)^{-1} (\tilde{\bm X}^{(1),\setminus k\top} \tilde{\bm X}^{(1),\setminus k}) \tilde \bbeta \\ \bm v^{(c,1)} = (\bSigma^{(1)})^{1/2} \tilde \bbeta \\
    \bm u^{(c,2)} = -\frac{2}{n^3} (\bSigma^{(1)})^{1/2} (\hat \bSigma_{\setminus k} + \lambda \bm I)^{-1} \bSigma^{(2)} (\hat \bSigma_{\setminus k} + \lambda \bm I)^{-1} (\tilde{\bm X}^{(1),\setminus k\top} \tilde{\bm X}^{(1),\setminus k}) \tilde \bbeta \\ \bm v^{(c,2)} = (\bSigma^{(1)})^{1/2} (\hat \bSigma_{\setminus k} + \lambda \bm I)^{-1} (\bm X^{(1), \setminus k\top} \bm X^{(1),\setminus k}) \tilde \bbeta \\
    \bm A^{(c,2)}= \frac{1}{n} (\bSigma^{(1)})^{1/2}(\hat \bSigma_{\setminus k} + \lambda \bm I)^{-1} (\bSigma^{(1)})^{1/2}, \bm v^{(c,3)} = (\bSigma^{(1)})^{1/2} \tilde \bbeta \\
    \bm u^{(c,3)} = -\frac{2}{n^3} (\bSigma^{(1)})^{1/2} (\hat \bSigma_{\setminus k} + \lambda \bm I)^{-1} \bSigma^{(2)} (\hat \bSigma_{\setminus k} + \lambda \bm I)^{-1} (\tilde{\bm X}^{(1),\setminus k\top} \tilde{\bm X}^{(1),\setminus k}) \tilde \bbeta \\
    \bm A^{(c,3)} = (\bSigma^{(1)})^{1/2} (\hat \bSigma_{\setminus k} + \lambda \bm I)^{-1} (\bSigma^{(1)})^{1/2}, \quad \bm B^{(c,3)} = \frac{1}{n} (\bSigma^{(1)})^{1/2} (\hat \bSigma_{\setminus k} + \lambda \bm I)^{-1} (\bSigma^{(1)})^{1/2} \\
    \bm u^{(c,4)} = -\frac{2}{n^3} (\bSigma^{(1)})^{1/2} (\hat \bSigma_{\setminus k} + \lambda \bm I)^{-1} (\tilde{\bm X}^{(1),\setminus k\top} \tilde{\bm X}^{(1),\setminus k}) \tilde \bbeta, \quad \bm v^{(c,4)} = (\bSigma^{(1)})^{1/2} \tilde \bbeta \\
    \bm A^{(c,4)} = (\bSigma^{(1)})^{1/2}(\hat \bSigma_{\setminus k} + \lambda \bm I)^{-1} \bSigma^{(2)}  (\hat \bSigma_{\setminus k} + \lambda \bm I)^{-1} (\bSigma^{(1)})^{1/2} \\ \bm B^{(c,4)} = \frac{1}{n} (\bSigma^{(1)})^{1/2} (\hat \bSigma_{\setminus k} + \lambda \bm I)^{-1} (\bSigma^{(1)})^{1/2} \\
    \bm u^{(c,5)} = \frac{1}{n^4} (\bSigma^{(1)})^{1/2} (\hat \bSigma_{\setminus k} + \lambda \bm I)^{-1} (\tilde{\bm X}^{(1),\setminus k\top} \tilde{\bm X}^{(1),\setminus k}) \tilde \bbeta \\ 
    \bm v^{(c,5)} = (\bSigma^{(1)})^{1/2} (\hat \bSigma_{\setminus k} + \lambda \bm I)^{-1} (\tilde{\bm X}^{(1),\setminus k\top} \tilde{\bm X}^{(1),\setminus k}) \tilde \bbeta \\
    \bm A^{(c,5)} = (\bSigma^{(1)})^{1/2} (\hat \bSigma_{\setminus k} + \lambda \bm I)^{-1}  \bSigma^{(2)} (\hat \bSigma_{\setminus k} + \lambda \bm I)^{-1} (\bSigma^{(1)})^{1/2} \\
    \bm B^{(c,5)} = \frac{1}{n} (\bSigma^{(1)})^{1/2} (\hat \bSigma_{\setminus k} + \lambda \bm I)^{-1} (\bSigma^{(1)})^{1/2} \\
    \bm u^{(c,6)} = \frac{2}{n^4} (\bSigma^{(1)})^{1/2} (\hat \bSigma_{\setminus k} + \lambda \bm I)^{-1} (\tilde{\bm X}^{(1),\setminus k\top} \tilde{\bm X}^{(1),\setminus k}) \tilde \bbeta, \quad \bm v^{(c,6)} = (\bSigma^{(1)})^{1/2} \tilde \bbeta \\
    \bm A^{(c,6)} = (\bSigma^{(1)})^{1/2} (\hat \bSigma_{\setminus k} + \lambda \bm I)^{-1}  \bSigma^{(2)} (\hat \bSigma_{\setminus k} + \lambda \bm I)^{-1} (\bSigma^{(1)})^{1/2} \\
    \bm B^{(c,6)} = (\bSigma^{(1)})^{1/2} (\hat \bSigma_{\setminus k} + \lambda \bm I)^{-1} (\bSigma^{(1)})^{1/2} \\
    \bm C^{(c,6)} = \frac{1}{n} (\bSigma^{(1)})^{1/2} (\hat \bSigma_{\setminus k} + \lambda \bm I)^{-1} (\bSigma^{(1)})^{1/2} \\
    \bm u^{(c,7)} = \frac{1}{n^2} (\bSigma^{(1)})^{1/2}\tilde \bbeta, \quad \bm v^{(c,7)} = (\bSigma^{(1)})^{1/2}\tilde \bbeta \\
    \bm A^{(c,7)} = (\bSigma^{(1)})^{1/2} (\hat \bSigma_{\setminus k} + \lambda \bm I)^{-1}  \bSigma^{(2)} (\hat \bSigma_{\setminus k} + \lambda \bm I)^{-1} (\bSigma^{(1)})^{1/2} \\
    \bm u^{(c,8)} =  (\bSigma^{(1)})^{1/2}  \tilde \bbeta, \quad \bm A^{(c,8)} = -\frac{2}{n^3} (\bSigma^{(1)})^{1/2} (\hat \bSigma_{\setminus k} + \lambda \bm I)^{-1} (\bSigma^{(1)})^{1/2}  \\
    \bm B^{(c,8)} = (\bSigma^{(1)})^{1/2} (\hat \bSigma_{\setminus k} + \lambda \bm I)^{-1}  \bSigma^{(2)} (\hat \bSigma_{\setminus k} + \lambda \bm I)^{-1} (\bSigma^{(1)})^{1/2} \\
    \bm C^{(c,8)} = \frac{1}{n} (\bSigma^{(1)})^{1/2} (\hat \bSigma_{\setminus k} + \lambda \bm I)^{-1} (\bSigma^{(1)})^{1/2} \\
    \bm u^{(c,9)} = (\bSigma^{(1)})^{1/2}  \tilde \bbeta, \quad  \bm A^{(c,9)} = \frac{1}{n^4} (\bSigma^{(1)})^{1/2} (\hat \bSigma_{\setminus k} + \lambda \bm I)^{-1} (\bSigma^{(1)})^{1/2} \\ \bm B^{(c,9)} = (\bSigma^{(1)})^{1/2} (\hat \bSigma_{\setminus k} + \lambda \bm I)^{-1}  \bSigma^{(2)} (\hat \bSigma_{\setminus k} + \lambda \bm I)^{-1} (\bSigma^{(1)})^{1/2} \\
    \bm C^{(c,9)} = (\bSigma^{(1)})^{1/2} (\hat \bSigma_{\setminus k} + \lambda \bm I)^{-1} (\bSigma^{(1)})^{1/2} \\ \bm D^{(c,9)} = \frac{1}{n} (\bSigma^{(1)})^{1/2} (\hat \bSigma_{\setminus k} + \lambda \bm I)^{-1} (\bSigma^{(1)})^{1/2}
\end{gather*}
Similarly, we have
\begin{align*}
    &B_2(\tilde{\bm Z}^{(1)}, \bm Z^{(2),k}) \\
    &= \tilde \bbeta^\top \biggl(\frac{\tilde{\bm X}^{(1)\top} \tilde{\bm X}^{(1)}}{n}\biggr) (\hat \bSigma_k + \lambda \bm I)^{-1} \bSigma^{(2)} (\hat \bSigma_k + \lambda \bm I)^{-1} \biggl(\frac{\tilde{\bm X}^{(1)\top} \tilde{\bm X}^{(1)}}{n}\biggr) \tilde \bbeta \\
    &= \tilde \bbeta^\top \biggl(\frac{\tilde{\bm X}^{(1)\top} \tilde{\bm X}^{(1)}}{n}\biggr)  \biggl((\hat \bSigma_{\setminus k} + \lambda \bm I)^{-1} - \frac{1}{n} \frac{(\hat \bSigma_{\setminus k} + \lambda \bm I)^{-1} \tilde{\bm X}_k^{(1)} \tilde{\bm X}_k^{(1)\top} (\hat \bSigma_{\setminus k} + \lambda \bm I)^{-1}}{1 + \frac{1}{n} \tilde{\bm X}_k^{(1)\top} (\hat \bSigma_{\setminus k} + \lambda \bm I)^{-1} \tilde{\bm X}_k^{(1)}} \biggr)  \bSigma^{(2)} \\
    &\qquad \biggl((\hat \bSigma_{\setminus k} + \lambda \bm I)^{-1} - \frac{1}{n} \frac{(\hat \bSigma_{\setminus k} + \lambda \bm I)^{-1} \tilde{\bm X}_k^{(1)} \tilde{\bm X}_k^{(1)\top} (\hat \bSigma_{\setminus k} + \lambda \bm I)^{-1}}{1 + \frac{1}{n} \tilde{\bm X}_k^{(1)\top} (\hat \bSigma_{\setminus k} + \lambda \bm I)^{-1} \tilde{\bm X}_k^{(1)}} \biggr) 
\biggl(\frac{\tilde{\bm X}^{(1)\top} \tilde{\bm X}^{(1)}}{n}\biggr) \tilde \bbeta
\end{align*}
and therefore
\begin{align*}
    &B_2(\tilde{\bm Z}^{(1)}, \bm Z^{(2),k})  - B_2(\tilde{\bm Z}^{(1)}, \bm Z^{(2),\setminus k}) \\
    &= -2\tilde \bbeta^\top \biggl(\frac{\tilde{\bm X}^{(1)\top} \tilde{\bm X}^{(1)}}{n}\biggr)  \biggl(\frac{1}{n} \frac{(\tilde \bSigma_{\setminus k} + \lambda \bm I)^{-1} \tilde{\bm X}_k^{(2)} \tilde{\bm X}_k^{(2)\top} (\tilde \bSigma_{\setminus k} + \lambda \bm I)^{-1}}{1 + \frac{1}{n} \tilde{\bm X}_k^{(2)\top} (\tilde \bSigma_{\setminus k} + \lambda \bm I)^{-1} \tilde{\bm X}_k^{(2)}}\biggr) \\
    &\hspace{4em}\bSigma^{(2)} (\tilde \bSigma_{\setminus k} + \lambda \bm I)^{-1}\biggl(\frac{\tilde{\bm X}^{(1)\top} \tilde{\bm X}^{(1)}}{n}\biggr) \tilde \bbeta \\
    &\qquad + \tilde \bbeta^\top \biggl(\frac{\tilde{\bm X}^{(1)\top} \tilde{\bm X}^{(1)}}{n}\biggr)  \biggl(\frac{1}{n} \frac{(\tilde \bSigma_{\setminus k} + \lambda \bm I)^{-1} \tilde{\bm X}_k^{(2)} \tilde{\bm X}_k^{(2)\top} (\tilde \bSigma_{\setminus k} + \lambda \bm I)^{-1}}{1 + \frac{1}{n} \tilde{\bm X}_k^{(2)\top} (\tilde \bSigma_{\setminus k} + \lambda \bm I)^{-1} \tilde{\bm X}_k^{(2)}}\biggr)  \bSigma^{(2)} \\
    &\hspace{4em}\biggl(\frac{1}{n} \frac{(\tilde \bSigma_{\setminus k} + \lambda \bm I)^{-1} \tilde{\bm X}_k^{(2)} \tilde{\bm X}_k^{(2)\top} (\tilde \bSigma_{\setminus k} + \lambda \bm I)^{-1}}{1 + \frac{1}{n} \tilde{\bm X}_k^{(2)\top} (\tilde \bSigma_{\setminus k} + \lambda \bm I)^{-1} \tilde{\bm X}_k^{(2)}}\biggr)\biggl(\frac{\tilde{\bm X}^{(1)\top} \tilde{\bm X}^{(1)}}{n}\biggr) \tilde \bbeta \\
    &:= \frac{\bm u^{(C,1)\top} \bm Z_k^{(2)} \bm Z_k^{(2)\top} \bm v^{(C,1)}}{1 + \bm Z_k^{(2)\top} \bm A^{(C,1)} \bm Z_k^{(2)}} + \frac{\bm u^{(C,2)\top} \bm Z_k^{(2)} (\bm Z_k^{(2)\top} \bm A^{(C,2)} \bm Z_k^{(2)}) \bm Z_k^{(2)\top} \bm v^{(C,2)}}{(1 + \bm Z_k^{(2)\top} \bm B^{(C,2)} \bm Z_k^{(2)})^2}
\end{align*}
where 
\begin{gather*}
    \bm u^{(C,1)} = -\frac{2}{n^3} (\bSigma^{(2)})^{1/2} (\tilde \bSigma_{\setminus k} + \lambda \bm I)^{-1} (\tilde{\bm X}^{(1)\top} \tilde{\bm X}^{(1)}) \tilde \bbeta\\
    \bm v^{(C,1)} =  (\bSigma^{(2)})^{1/2} (\hat \bSigma_{\setminus k} + \lambda \bm I)^{-1}\bSigma^{(2)} (\hat \bSigma_{\setminus k} + \lambda \bm I)^{-1} (\tilde{\bm X}^{(1)\top} \tilde{\bm X}^{(1)} )\tilde \bbeta \\
    \bm A^{(C,1)} = \frac
    1   n(\bSigma^{(2)})^{1/2} (\hat \bSigma_{\setminus k} + \lambda \bm I)^{-1} (\bSigma^{(2)})^{1/2} \\
    \bm u^{(C,2)} = \frac{1}{n^4} (\bSigma^{(2)})^{1/2} (\hat \bSigma_{\setminus k} + \lambda \bm I)^{-1} (\tilde{\bm X}^{(1)\top} \tilde{\bm X}^{(1)}) \tilde \bbeta \\
    \bm v^{(C,2)} = (\bSigma^{(2)})^{1/2} (\hat \bSigma_{\setminus k} + \lambda \bm I)^{-1} (\tilde{\bm X}^{(1)\top} \tilde{\bm X}^{(1)}) \bbeta^{(2)} \\
    \bm A^{(C,2)} = (\bSigma^{(2)})^{1/2} (\hat \bSigma_{\setminus k} + \lambda \bm I)^{-1} \bSigma^{(2)} (\hat \bSigma_{\setminus k} + \lambda \bm I)^{-1} (\bSigma^{(2)})^{1/2} \\
    \bm B^{(C,2)} = \frac
    1   n(\bSigma^{(2)})^{1/2} (\hat \bSigma_{\setminus k} + \lambda \bm I)^{-1} (\bSigma^{(2)})^{1/2}
\end{gather*}
\subsubsection{Common terms}
For fixed vectors $\bm u$, $\bm v$ and fixed PSD matrices $\bm A$, $\bm B$, $\bm C$, and $\bm D$, let $\bm S = \{\bm u, \bm v, \bm A, \bm B, \bm C, \bm D\}$ and consider the scalar-valued functions
\begin{align*}
    f_1(\bm w; \bm S) &= \bm u^\top \bm w \bm w^\top \bm v, \\ 
    f_2(\bm w; \bm S) &= \frac{\bm u^\top \bm w \bm w^\top \bm v}{1 + \bm w^\top \bm A \bm w}, \\
    f_3(\bm w; \bm S) &= \bm u^\top \bm w(\bm w^\top \bm A \bm w) \bm w^\top \bm v, \\
    f_4(\bm w; \bm S) &= \frac{\bm u^\top \bm w (\bm w^\top \bm A \bm w)  \bm w^\top \bm v}{1 + \bm w^\top \bm B \bm w} \\
    f_5(\bm w; \bm S) &= \frac{\bm u^\top \bm w (\bm w^\top \bm A \bm w) \bm w^\top \bm v}{(1 + \bm w^\top \bm B \bm w)^2}, \\
    f_6(\bm w; \bm S)&= \frac{(\bm u^\top \bm w)^2 (\bm w^\top \bm A \bm w) (\bm w^\top \bm B \bm w)}{1 + \bm w^\top \bm C \bm w} \\
    f_7(\bm w; \bm S) &= \frac{\bm u^\top \bm w (\bm w^\top \bm A \bm w) (\bm w^\top \bm B \bm w) \bm w^\top \bm v}{(1 + \bm w^\top \bm C \bm w)^2}, \\
    f_8(\bm w; \bm S) &= \frac{(\bm u^\top \bm w)^2 (\bm w^\top \bm A \bm w) (\bm w^\top \bm B \bm w)(\bm w^\top \bm C \bm w)}{(1 + \bm w^\top \bm D \bm w)^2}.
\end{align*}

From the above expansions, we see that, for each $j = 1,2,3$:
\begin{itemize}
    \item There are some $m_j^{(1)} \in \mathbb Z_{> 0}$, some $i_{j,1}^{(1)}, \dots, i_{j,m_j^{(1)}}^{(1)} \in \mathbb Z_{> 0}$, and some collections of vectors and matrices $\bm S_{j,1}^{(1)}, \dots, \bm S_{j,m_j^{(1)}}^{(1)}$ that are independent of $\bm Z_k^{(1)}, \tilde{\bm Z}_k^{(1)}$ such that
    \begin{align*}
        B_j(\bm Z^{(1),k}, \bm Z^{(2)}) - B_j(\bm Z^{(1),\setminus k}, \bm Z^{(2)}) &= \sum_{\ell=1}^{m_j^{(1)}} f_{i_{j,\ell}^{(1)}}(\tilde{\bm Z}^{(1)}_k; \bm S_{j,\ell}^{(1)}) \\
        B_j(\bm Z^{(1),k-1}, \bm Z^{(2)}) - B_j(\bm Z^{(1),\setminus k}, \bm Z^{(2)}) &= \sum_{\ell=1}^{m_j^{(1)}} f_{i_{j,\ell}^{(1)}}(\bm Z^{(1)}_k; \bm S_{j,\ell}^{(1)})
    \end{align*}
    \item There are some $m_j^{(2)} \in \mathbb Z_{> 0}$, some $i_{j,1}^{(2)}, \dots, i_{j,m_j^{(2)}}^{(2)} \in \mathbb Z_{> 0}$, and some collections of vectors and matrices $\bm S_{j,1}^{(2)}, \dots, \bm S_{j,m_j^{(2)}}^{(2)}$ that are independent of $\bm Z_k^{(2)}, \tilde{\bm Z}_k^{(2)}$ such that
    \begin{align*}
        B_j(\tilde{\bm Z}^{(1)}, \bm Z^{(2),k}) - B_j(\tilde{\bm Z}^{(1)}, \bm Z^{(2),\setminus k}) &= \sum_{\ell=1}^{m_j^{(2)}} f_{i_{j,\ell}^{(2)}}(\tilde{\bm Z}^{(2)}_k; \bm S_{j,\ell}^{(2)}) \\
        B_j(\tilde{\bm Z}^{(1)}, \bm Z^{(2),k-1}) - B_j(\tilde{\bm Z}^{(1)}, \bm Z^{(2),\setminus k}) &= \sum_{\ell=1}^{m_j^{(2)}} f_{i_{j,\ell}^{(2)}}(\bm Z^{(2)}_k; \bm S_{j,\ell}^{(2)})
    \end{align*}
\end{itemize}
Therefore, by Cauchy-Schwarz, for each $j = 1,2,3$:
\begin{itemize}
    \item We can bound
    \begin{align*}
        \E_{1,k}[(B_j(\bm Z^{(1),k}, \bm Z^{(2)}) - B_j(\bm Z^{(1),\setminus k}, \bm Z^{(2)}))^2] &\leq m_j^{(1)} \sum_{\ell=1}^{m_j^{(1)}} \E_{1,k}[f_{i_{j,\ell}^{(1)}}(\tilde{\bm Z}^{(1)}_k; \bm S_{j,\ell}^{(1)})^2] \\
        \E_{1,k}[(B_j(\bm Z^{(1),k-1}, \bm Z^{(2)}) - B_j(\bm Z^{(1),\setminus k}, \bm Z^{(2)}))^2] &\leq m_j^{(1)} \sum_{\ell=1}^{m_j^{(1)}} \E_{1,k}[f_{i_{j,\ell}^{(1)}}(\bm Z^{(1)}_k; \bm S_{j,\ell}^{(1)})^2] \\
        \E_{2,k}[(B_j(\tilde{\bm Z}^{(1)}, \bm Z^{(2),k}) - B_j(\tilde{\bm Z}^{(1)}, \bm Z^{(2),\setminus k}))^2] &\leq m_j^{(2)} \sum_{\ell=1}^{m_j^{(2)}} \E_{2,k}[f_{i_{j,\ell}^{(2)}}(\tilde{\bm Z}^{(2)}_k; \bm S_{j,\ell}^{(2)})^2] \\
        \E_{2,k}[(B_j(\tilde{\bm Z}^{(1)}, \bm Z^{(2),k-1}) - B_j(\tilde{\bm Z}^{(1)}, \bm Z^{(2),\setminus k}))^2] &\leq m_j^{(2)} \sum_{\ell=1}^{m_j^{(2)}} f_{i_{j,\ell}^{(2)}}\E_{2,k}[(\bm Z^{(2)}_k; \bm S_{j,\ell}^{(2)})^2]
    \end{align*}
    \item We can bound
    \begin{align*}
        &|\E_{1,k}[B_j(\bm Z^{(1),k}, \bm Z^{(2)}) - B_j(\bm Z^{(1), k-1}, \bm Z^{(2)})]| \\&\leq  \sum_{\ell=1}^{m_j^{(1)}} |\E_{1,k}[f_{i_{j,\ell}^{(1)}}(\tilde{\bm Z}^{(1)}_k; \bm S_{j,\ell}^{(1)}) - f_{i_{j,\ell}^{(1)}}(\bm Z^{(1)}_k; \bm S_{j,\ell}^{(1)})]| \\
        &|\E_{2,k}[B_j(\tilde{\bm Z}^{(1)}, \bm Z^{(2),k}) - B_j(\tilde{\bm Z}^{(1)}, \bm Z^{(2), k-1})]| \\&\leq \sum_{\ell=1}^{m_j^{(2)}} |\E_{2,k}[f_{i_{j,\ell}^{(2)}}(\tilde{\bm Z}^{(2)}_k; \bm S_{j,\ell}^{(2)}) - f_{i_{j,\ell}^{(2)}}(\bm Z^{(2)}_k; \bm S_{j,\ell}^{(2)})]|
    \end{align*}
\end{itemize}
Therefore, proving our universality result reduces to bounding terms of the form 
\begin{gather*}
    \E_{i,k}[f_\ell(\tilde{\bm Z}_k^{(i)}; \bm S)^2],  \quad \E_{i,k}[f_\ell({\bm Z}_k^{(i)}; \bm S)^2], \quad |\E_{i,k}[f_\ell(\tilde{\bm Z}_k^{(i)}; \bm S) - f_\ell(\bm Z_k^{(i)}; \bm S)]|
\end{gather*}
for $\ell = 1,\dots, 8$, $i = 1,2$, and any $\bm S$ (with the bounds depending on $\bm S$).

\subsection{Second moment bounds}
In this section, we bound the second moment terms $\E_{i,k}[f_\ell(\tilde{\bm Z}_k^{(i)}; \bm S)^2]$ and $\E_{i,k}[f_\ell({\bm Z}_k^{(i)}; \bm S)^2]$ for $\ell = 1, \dots, 8$, $i = 1,2$, and any $\bm S$. 

We repeatedly apply H\"older's inequality to yield the desired bounds. In bounding many of the terms, we also make use of the fact that $\bm A, \bm B, \bm C, \bm D$ are PSD by assumption and thus \begin{equation*}
    0 < \frac{1}{1 + \bm w^\top \bm A \bm w}, \frac{1}{1 + \bm w^\top \bm B \bm w}, \frac{1}{1 + \bm w^\top \bm C \bm w}, \frac{1}{1 + \bm w^\top \bm D \bm w} \leq 1
\end{equation*} for all $\bm w \in \R^p$.
\begin{lemma}\label{lm:second_moment_bounds}
Let $\bm W$ be a random $\R^p$-valued random variable consisting of i.i.d. entries such that $\E[W_1] = 0$, $\Var(W_1) = 1$, and $\E[W_1^m] < \infty$ for all $m \geq 1$. Then, we can bound
\begin{equation*}
\begin{alignedat}{2}
    \E[f_1(\bm W; \bm S)^2] &\lcon \|\bm u\|_2^2 \|\bm v\|_2^2, &\quad \E[f_2(\bm W; \bm S)^2] &\lcon \|\bm u\|_2^2 \|\bm v\|_2^2 \\
    \E[f_3(\bm W; \bm S)^2] &\lcon \|\bm u\|_2^2 \|\bm v\|_2^2 \|\bm A\|_{\text{op}}^2 p^2, &\quad \E[f_4(\bm W; \bm S)^2] &\lcon \|\bm u\|_2^2 \|\bm v\|_2^2 \|\bm A\|_{\text{op}}^2  p^2 \\
    \E[f_5(\bm W; \bm S)^2] &\lcon \|\bm u\|_2^2 \|\bm v\|_2^2 \|\bm A\|_{\text{op}}^2  p^2, &\quad \E[f_6(\bm W; \bm S)^2]&\lcon\|\bm u\|_2^4 \|\bm A\|_{\text{op}}^2 \|\bm B\|_{\text{op}}^2 p^4 \\
    \E[f_7(\bm W; \bm S)^2] &\lcon \|\bm u\|_2^2 \|\bm v\|_2^2 \|\bm A\|_{\text{op}}^2  \|\bm B\|_{\text{op}}^2   p^4, &\quad \E[f_8(\bm W; \bm S)^2] &\lcon \|\bm u\|_2^4 \|\bm A\|_{\text{op}}^2 \|\bm B\|_{\text{op}}^2 \|\bm C\|_{\text{op}}^2 p^6.
\end{alignedat}
\end{equation*}
\end{lemma}
\begin{proof}
    We can first bound the products
    \begin{align*}
        \E[&(\bm u^\top \bm W \bm W^\top \bm v)^2] 
        \\&\qquad \leq \E[|\bm u^\top \bm W|^4]^{1/2} \E[|\bm W^\top \bm v|^4]^{1/2} \\
        &\qquad \lcon \|\bm u\|_2^2 \|\bm v\|_2^2 \\
        \E[&(\bm u^\top \bm W(\bm W^\top \bm A \bm W)\bm W^\top \bm v)^2]  \\
        &\qquad \leq \E[|\bm u^\top \bm W|^8]^{1/4} \E[(\bm W^\top \bm A \bm W)^4]^{1/2} \E[|\bm W^\top \bm v|^8]^{1/4} \\
        &\qquad \lcon \|\bm u\|_2^2 \|\bm A\|_{\text{op}}^2 p^2 \|\bm v\|_2^2 \\
        \E[&(\bm u^\top \bm W(\bm W^\top \bm A \bm W)(\bm W^\top \bm B \bm W)\bm W^\top \bm v)^2]\\
        &\qquad\leq \E[|\bm u^\top \bm W|^8]^{1/4} \E[(\bm W^\top \bm A \bm W)^8]^{1/4} \E[(\bm W^\top \bm B \bm W)^8]^{1/4} \E[|\bm W^\top \bm v|^8]^{1/4} \\
        &\qquad\lcon \|\bm u\|_2^2 \|\bm A\|_{\text{op}}^2 p^2 \|\bm B\|_{\text{op}}^2 p^2 \|\bm v\|_2^2 \\
        \E[&((\bm u^\top \bm W)^2(\bm W^\top \bm A \bm W)(\bm W^\top \bm B \bm W)(\bm W^\top \bm C \bm W))^2]  \\
        &\qquad\leq \E[|\bm u^\top \bm W|^{16}]^{1/4} \E[(\bm W^\top \bm A \bm W)^8]^{1/4} \E[(\bm W^\top \bm B \bm W)^8]^{1/4} \E[(\bm W^\top \bm C \bm W)^8]^{1/4} \\
        &\qquad\lcon \|\bm u\|_2^4 \|\bm A\|_{\text{op}}^2 p^2 \|\bm B\|_{\text{op}}^2 p^2 \|\bm C\|_{\text{op}}^2 p^2 
    \end{align*}
    and the bounds then follow immediately.
\end{proof}
Note that the bounds in this lemma directly translate to bounds on $\E_{i,k}[f_\ell(\tilde{\bm Z}_k^{(i)}; \bm S)^2]$ and $\E_{i,k}[f_\ell(\bm Z_k^{(i)}; \bm S)^2]$ for $\ell = 1, \dots, 8$ and $i = 1,2$ and any $\bm S$ because we only make use of (a) matching first and second moments and (b) the mere existence of all other moments.

\subsection{First moment bounds}
In this section, we bound $|\E_{i,k}[f_\ell(\tilde{\bm Z}^{(i)}_k; \bm S) - f_\ell(\bm Z^{(i)}_k; \bm S))]|$ for $\ell = 1, \dots, 8$, $i = 1,2$, and any $\bm S$. As long as our proof only makes use of (a) matching first and second moments between $\tilde{\bm W}$ and $\bm W$, (b) the mere existence of all other moments of $\bm W$, and (c) the assumption $\tilde{\bm W} \sim \mathcal{N}(0, \bm I)$, it suffices to consider bounding
\begin{equation*}
    |\E_k[f_\ell(\tilde{\bm W}; \bm S) - f_\ell(\bm W; \bm S)]|
\end{equation*}
for $\ell = 1, \dots, 8$ and any fixed $\bm S$, where $\tilde{\bm W} \sim \mathcal{N}(0, \bm I)$ and where $\bm W$ is a random $\R^p$-valued random variables consisting of i.i.d. entries with $\E[W_1] = \E[\tilde W_1] = 0$ and $\Var(W_1) = \Var(\tilde{W}_1) = 1$. Importantly, we do \emph{not} assume that $W_1 \overset{d}{=} \tilde{W}_1$ or that $\E[W_1^m] = \E[\tilde W_1^m]$ for any $m > 2$. To further simplify our notation, we omit $\bm S$ when the dependence is clear. Therefore, the goal of this section is to bound
\begin{equation*}
    |\E_k[f_\ell(\tilde{\bm W}) - f_\ell(\bm W)]|
\end{equation*}
in terms of $\bm S$, for each $\ell = 1, \dots, 8$.

At a high level, our strategy for bounding these terms involves interpolating between $\tilde{\bm W}$ and $\bm W$ and bounding the change in the value of our expression along the curve. To bound the value of the expression, we perform multiple Taylor expansions and invoke Stein's identity to simplify the resulting expansions.

The following statement is the basis of our interpolation strategy, which we use bound the terms resulting from the Sherman-Morrison decompositions:
\begin{lemma}\label{lm:interp_edge}
    Let $f$ be a function that, for all multi-indices $\alpha$ with $|\alpha| \leq 3$, we have $|\partial^\alpha f(\bm w)| \leq C (1 + \|\bm w\|^m)$ for some $m > 0$. Let $\tilde{\bm W} \sim \mathcal{N}(0, \bm I_p)$ and let $\bm W$ consist of independent random entries with $\E[W_1] = 0$, $\Var(W_1) = 1$, and $\E[|W_1|^3] < \infty$. Assume $\bm W$ and $\tilde{\bm W}$ are independent. For $t \in (0,1)$, let
    \begin{align*}
        \bm W_{t} &= \sqrt{t} \tilde{\bm W} + \sqrt{1 - t} \bm W
    \end{align*}
    and for each $t,u \in (0,1)$ and $i = 1, \dots, p$, let
    \begin{equation*}
        \bm W_{t,u,i} = \bm W_t - (1 - u)\sqrt{1 - t} W_{i} e_i.
    \end{equation*}
    Then, we have
    \begin{align*}
        &|\E[f(\tilde{\bm W})] - \E[f(\bm W)]| \\
        &\leq \sup_{t \in (0,1)} \E\biggl[\sum_{i=1}^p(|W_i|  +  |W_i|^3 )\cdot \sup_{u \in (0,1)}| \partial_i^3 f(\bm W_{t,u,i})|\biggr]
    \end{align*}
\end{lemma}
\begin{proof}
Define $F(t) = \E[f(\bm W_{t})]$ so that bounding the above is equivalent to bounding
\begin{align*}
    |\E&[f(\bm W_{1})] - \E[f(\bm W_{0})]|\\
    &= |F(1) - F(0)| = \biggl|\int_0^1 F'(t) dt\biggr| \\
    &= \frac{1}{2}\biggl|\int_0^1 \biggl( \sum_{i=1}^p \frac{1}{\sqrt{t}} \E[\tilde W_i \partial_i f(\bm W_{t})] - \frac{1}{\sqrt{1-t}} \E[W_i \partial_i f(\bm W_{t})] \biggr)dt \biggr|  \\
    &= \frac{1}{2} \biggl| \int_0^1 \frac{1}{\sqrt{1-t}} \biggl(\sum_{i=1}^p \sqrt{1-t} \E[\partial_i^2 f(\bm W_{t})] - \E[W_i \partial_i f(\bm W_{t})]\biggr) dt \biggr|  \\
    &\leq \sup_{t \in (0,1)} \biggl|\sum_{i=1}^p \sqrt{1-t} \E[\partial_i^2 f(\bm W_{t})] - \E[W_i \partial_i f(\bm W_{t})] \tag{\theequation}\label{eq:lind_interp_B1}\biggr|
\end{align*}
where the third line follows from Leibniz's rule and the fourth line follows from applying Stein's identity to yield
\begin{equation*}
    \E[\tilde{W}_i \partial_i f(\bm W_{t})] = \sqrt{t} \E[\partial_i^2 f(\bm W_t)].
\end{equation*}
To analyze the latter term, we first express
\begin{equation*}
    \bm W_t = \sqrt{t} \tilde{\bm W} + \sqrt{1-t} (\bm W - W_i e_i) + \sqrt{1-t} W_i e_i := \bm W_{t,-i} + \sqrt{1-t} W_i e_i.
\end{equation*}
Then, letting $h_{i}: \R \to \R$ be defined 
\begin{equation*}
    h_{i}(w) = \partial_i f(\bm W_{t,-i} + \sqrt{1 - t} we_i)
\end{equation*}
we can apply Taylor's theorem to yield
\begin{align*}
    h_i(w) &= h_i(0) + h_i'(0)w +\int_0^w (w-s) h_i''(s) ds \\
    &= h_i(0) + h_i'(0)w + w^2\int_0^1 (1-u)h_i''(wu)du \\
    h_i'(w) &= h_i'(0) + \int_0^w h_i''(s) ds \\
    &= h_i'(0) + w \int_0^1 h_i''(wu) du
\end{align*}
Using these expansions, we have
\begin{align*}
    &\sqrt{1-t} \E[\partial_i^2 f(\bm W_t)] - \E[W_i \partial_i f(\bm W_t)] \\
    &= \E_k[h_i'(W_i)] - \E[W_i h_i(W_i)] \\
    &= h_i'(0) + \E\biggl[W_i \int_0^1 h''(W_i u) du \biggr] - h_i(0) \E[W_i] - h_i'(0) \E[W_i^2] \\
    &\qquad - \E\biggl[W_i^3 \int_0^1 (1-u) h_i''(W_iu)du \biggr] \\
    &= \E\biggl[ \int_0^1 (W_i - (1-u)W_i^3) h_i''(W_i u) du \biggr]
\end{align*}
Now, defining $\bm W_{t,u,i} = \bm W_t - (1 - u)\sqrt{1 - t} W_i e_i$, note that
\begin{equation*}
    h_i''(W_i u) = (1 - t) \partial_i^3 f(\bm W_{t,u,i})
\end{equation*}
The result then follows.
\end{proof}

Therefore, with this lemma, it suffices to bound
\begin{equation*}
     \E\biggl[\sum_{i=1}^p |W_i|^r \cdot \sup_{u \in (0,1)} |\partial_i^3 f_\ell(\bm W_{t,u,i})| \biggr]
\end{equation*}
for $r = 1,3$, $\ell = 1, \dots, 8$, and $t \in (0,1)$. We now embark on these bounds.

In bounding terms of the first kind, we prove analogous bounds to Lemma \ref{lm:basic_S_moments} for $\bm W_t$ and $\bm W_{t,u,i}$:
\begin{lemma}\label{lm:t_S_moments}
    Let $\bm W, \tilde{\bm W}$ be random $\R^p$-valued random variables, each consisting of i.i.d. entries such that $\E[W_1] = \E[\tilde W_1] = 0$, $\Var(W_1) = \Var(\tilde W_1)= 1$, and $\E[W_1^{2m}], \E[\tilde W_1^{2m}] < \infty$. For $t \in (0,1)$, let
    \begin{equation*}
        \bm W_t = \sqrt{t} \tilde{\bm W} + \sqrt{1-t} \bm W
    \end{equation*}
    and for $t, u \in (0,1)$ and $i = 1, \dots, p$, let 
    \begin{equation*}
        \bm W_{t,u,i} = \bm W_t - (1 - u)\sqrt{1 - t} W_i e_i
    \end{equation*}
    Then, for any $i = 1, \dots, p$, $t \in (0,1)$, a vector $\bm u$, positive semidefinite matrix $\bm A$, and a positive integer $m \geq 2$, we have
    \begin{gather*}
        \E[ \|\bm W_{t}\|_2^m] \lcon p^{m/2}, \quad \E[ |\bm u^\top \bm W_{t}|^m] \lcon \|\bm u\|_2^m, \quad  \E[  (\bm W_{t}^\top \bm A \bm W_{t})^m] \lcon \|\bm A\|_{\text{op}}^m p^m \\
        \E\biggl[\sup_{u \in (0,1)} \|\bm W_{t,u,i}\|_2^m\biggr] \lcon p^{m/2}, \quad \E\biggl[ \sup_{u \in (0,1)} |\bm u^\top \bm W_{t,u,i}|^m\biggr] \lcon \|\bm u\|_2^m   \\
        \E\biggl[ \sup_{u \in (0,1)} (\bm W_{t,u,i}^\top \bm A \bm W_{t,u,i})^m\biggr] \lcon \|\bm A\|_{\text{op}}^m p^m, \quad \sum_{i=1}^p \E\biggl[\sup_{u \in (0,1)} |(\bm A \bm W_{t,u,i})_i|^m \biggr] \lcon \|\bm A\|_{\text{op}}^m p^{m/2}
    \end{gather*}
    where $\lcon$ is taken with respect to $m$ and the moments of $W_1$, $\tilde{W}_1$.
\end{lemma}
\begin{proof}
    We can first write
    \begin{equation*}
        \|\bm W_t\|_2^m \le (\|\tilde {\bm W}\|_2 + \|\bm W\|_2)^m \lcon \|\tilde{\bm W}\|_2^m + \|\bm W\|_2^m
    \end{equation*}
    where the last inequality follows by the inequality $(a + b)^m \leq 2^{m-1} (a^m + b^m)$. Therefore, we have 
    \begin{equation*}    
        \E[\|\bm W_t\|_2^m] \leq 2\max\{\E[\|\tilde{\bm W}\|_2^m], \E[\|\bm W\|_2^m]\} \lcon p^{m/2}
    \end{equation*}
    by Lemma \ref{lm:basic_S_moments}. Similarly,
    \begin{equation*}
        |\bm u^\top \bm W|^m \leq (|\bm u^\top \tilde{\bm W}| + |\bm u^\top \bm W|)^m \lcon |\bm u^\top \tilde{\bm W}|^m + |\bm u^\top \bm W|^m
    \end{equation*}
    and thus
    \begin{equation*}
        \E[|\bm u^\top \bm W_t|^m] \leq 2 \max\{\E[|\bm u^\top \tilde{\bm W}|^m], \E[|\bm u^\top \bm W|^m]\} \lcon \|\bm u\|_2^m.
    \end{equation*}
    Then, we have
    \begin{equation*}
        \E[(\bm W_t^\top \bm A \bm W_t)^m] \leq \|\bm A\|_{\text{op}}^m\E[\|\bm W_t\|_2^{2m}] \lcon \|\bm A\|_{\text{op}}^m p^m
    \end{equation*}
    by applying the first bound in this lemma.

    Now, we prove the second set of inequalities in this lemma. Note that
    \begin{equation*}
        \|\bm W_{t,u,i}\|_2^m \leq (\|\bm W_t\|_2 + |W_i|)^m \leq (2 \|\bm W_t\|_2)^m \lcon \|\bm W_t\|_2^m
    \end{equation*}
    and thus
    \begin{equation*}
        \E\biggl[\sup_{u \in (0,1)} \|\bm W_{t,u,i}\|_2^m\biggr] \lcon\E[\|\bm W_t\|_2^m] \lcon p^{m/2}.
    \end{equation*}
    Next, we can bound
    \begin{equation*}
        |\bm u^\top \bm W_{t,u,i}|^m \leq (|\bm u^\top \bm W_t| + |W_i u_i|)^m \lcon |\bm u^\top \bm W_t|^m + \|\bm u\|_2^m |W_i|^m
    \end{equation*}
    and thus
    \begin{equation*}
        \E\biggl[ \sup_{u \in (0,1)} |\bm u^\top \bm W_{t,u,i}|^m\biggr] \lcon \|\bm u\|_2^m.
    \end{equation*}
    Then, we can bound
    \begin{align*}
        (\bm W_{t,u,i}^\top \bm A \bm W_{t,u,i})^m &= (\bm W_t^\top \bm A \bm W_t - 2(1-u)\sqrt{1-t} W_i e_i^\top \bm A \bm W_t + (1-u)^2 (1-t) W_i^2 A_{ii})^m \\
        &\lcon (\bm W_t^\top \bm A \bm W_t)^m + |W_i|^m |e_i^\top \bm A \bm W_t|^m + W_i^{2m} A_{ii}^m
    \end{align*}
    where
    \begin{equation*}
        \E[W_i^m (e_i^\top \bm A \bm W_t)^m] \leq \|\bm A\|_{\text{op}}^m \E[|W_i|^m \|\bm W_t\|_2^m] \leq \|\bm A\|_{\text{op}}^m \E[|W_i|^{2m}]^{1/2} \E[\|\bm W\|_2^{2m}]^{1/2}
    \end{equation*}
    and 
    \begin{equation*}
        \E[W_i^{2m} A_{ii}^m] \leq \|\bm A\|_{\text{op}}^m \E[W_i^{2m}].
    \end{equation*}
    It thus follows from the earlier bounds that
    \begin{equation*}
        \E\biggl[ \sup_{u \in (0,1)} (\bm W_{t,u,i}^\top \bm A \bm W_{t,u,i})^m\biggr] \lcon \|\bm A\|_{\text{op}} p^m
    \end{equation*}
    Finally, note that
    \begin{align*}
        &(\bm A \bm W_{t,u,i})_i^m = (\bm A_i^\top \bm W_{t,u,i})^m = (\bm A_i^\top \bm W_t - (1- u)\sqrt{1-t} W_i \bm A_{ii})^m \\
        &\lcon  (\bm A_i^\top \bm W_t)^m + (W_i \bm A_{ii})^m \\
        &= (\bm A \bm W_t)^m_i + |W_i|^m \bm A_{ii}^m
    \end{align*}
    which implies
    \begin{align*}
        \sum_{i=1}^p \E\biggl[ \sup_{u \in (0,1)} |\bm A \bm W_{t,u,i}|_i^m \biggr]  &\lcon \sum_{i=1}^p \biggl(\E[(\bm A \bm W_t)_i^m] + (\bm A_{ii})^m\E[|W_i|^m]\biggr)  \\
        &= \E[\|\bm A \bm W_t\|_m^m] + \sum_{i=1}^p(\bm A_{ii})^m\E[|W_i|^m]  \\
        &\leq \E[\|\bm A \bm W_t\|_2^m] + p \|\bm A\|_{\text{op}}^m \E[|W_1|^m] \\
        &\leq \|\bm A\|_{\text{op}}^m (\E[\|\bm W_t\|_2^m] + p \E[|W_1|^m] ) \\
        &\lcon \|\bm A\|_{\text{op}}^m (p^{m/2} + p\E[|W_1|^m] ) \\
        &\lcon \|\bm A\|_{\text{op}}^m p^{m/2}
    \end{align*}
\end{proof}

We now begin bounding each $f_1, \dots, f_8$. We prove the following bounds:
\begin{lemma}\label{lm:first_moment_bounds}
\begin{align*}
    |\E[f_1(\tilde{\bm W}) - f_1(\bm W)]| &= 0 \\
    |\E[f_2(\tilde{\bm W}) - f_2(\bm W)]| &\lcon \|\bm u\|_2 \|\bm v\|_2 ( \|\bm A\|_{\text{op}} p^{1/2} + \|\bm A\|_{\text{op}}^2 p + \|\bm A\|_{\text{op}}^3 p^2) \\
    |\E[f_3(\tilde{\bm W}) - f_3(\bm W)]| &\lcon \|\bm u\|_2 \|\bm v\|_2  \|\bm A\|_{\text{op}} p^{1/2} \\
    |\E[f_4(\tilde{\bm W}) - f_4(\bm W)]| &\lcon \|\bm u\|_2 \|\bm v\|_2 \|\bm A\|_{\text{op}}( p^{1/2} + \|\bm B\|_{\text{op}} p^{3/2} + \|\bm B\|_{\text{op}}^2 p^2 + \|\bm B\|_{\text{op}}^3 p^3) \\
    |\E[f_5(\tilde{\bm W}) - f_5(\bm W)]| &\lcon \|\bm u\|_2 \|\bm v\|_2 \|\bm A\|_{\text{op}}(p^{1/2} + \|\bm B\|_{\text{op}}^2 p^{5/2} + \|\bm B\|_{\text{op}}^4 p^4 + \|\bm B\|_{\text{op}}^6 p^6) \\
    |\E[f_6(\tilde{\bm W}) - f_6(\bm W)]| &\lcon \|\bm u\|_2^2\|\bm A\|_{\text{op}} \|\bm B\|_{\text{op}} ( p^{3/2} + \|\bm C\|_{\text{op}} p^{5/2} + \|\bm C\|_{\text{op}}^2 p^3 + \|\bm C\|_{\text{op}}^3 p^{7/2}) \\
    |\E[f_7(\tilde{\bm W}) - f_7(\bm W)]| &\lcon \|\bm u\|_2 \|\bm v\|_2 \|\bm A\|_{\text{op}} \|\bm B\|_{\text{op}} ( p^{3/2} +  \|\bm C\|_{\text{op}}^2 p^{7/2} +  \|\bm C\|_{\text{op}}^4 p^5 + \|\bm C\|_{\text{op}}^6 p^7) \\
    |\E[f_8(\tilde{\bm W}) - f_8(\bm W)]| &\lcon \|\bm u\|_2^2 \|\bm A\|_{\text{op}} \|\bm B\|_{\text{op}} \|\bm C\|_{\text{op}} \\
    &\qquad \cdot (p^{5/2} + \|\bm D\|_{\text{op}}^2 p^{9/2} + \|\bm D\|_{\text{op}}^4 p^{6} + \|\bm D\|_{\text{op}}^6 p^{8})
\end{align*}
\end{lemma}

Note that the first function $f_1$ can be handled directly because its expectation depends only on the first and second moments of $\bm W$ and $\tilde{\bm W}$. That is,
\begin{align*}
    \E[f_1(\tilde{\bm W}) - f_1(\bm W)] 
    = \bm u^\top (\E[\tilde{\bm W} \tilde{\bm W}^\top] - \E[\bm W\bm W^\top]) \bm v = 0
\end{align*}
because $\E[\tilde{\bm W} \tilde{\bm W}^\top] = \E[\bm W \bm W^\top] = \bm I$. The rest of this section is dedicated to proving the remaining bounds.

\subsection{Third derivative bounds}
To use Lemma \ref{lm:t_S_moments}, we must obtain bounds on the third derivative of each term $f_i(\bm W)$. Here, we obtain pointwise bounds and will later further bound these terms in expectation.

\subsubsection{First bound for \texorpdfstring{$f_2$}{f2}}
Note that we can express
\begin{equation*}
    f_2(\bm w) = \frac{g_{\bm u}(\bm w) g_{\bm v}(\bm w)}{g_{1, \bm A} (\bm w)}
\end{equation*}
Noting that $g_{1, \bm A} \neq 0$, we can write $f_2(\bm w) g_{1, \bm A}(\bm w) = g_{\bm u}(\bm w) g_{\bm v}(\bm w)$ and differentiating each side yields
\begin{equation*}
    f_2'g_{1, \bm A} + f_2g_{1, \bm A}' = g_{\bm u}' g_{\bm v} + g_{\bm u} g_{\bm v}'
\end{equation*}
where $f_2' = \partial_i f_2$, etc. This yields, as usual, 
\begin{equation*} 
f_2' = \frac{g_{\bm u}' g_{\bm v} + g_{\bm u} g_{\bm v}' - f_2g_{1, \bm A}'}{g_{1, \bm A}} = \frac{u_i g_{\bm v} + g_{\bm u} v_i - 2f_2\cdot (\bm A \bm w)_i}{g_{1, \bm A}}
\end{equation*} 
Differentiating each side again yields
\begin{equation*}
    f_2'' g_{1, \bm A} + 2f_2' g_{1, \bm A}'+f_2 g_{1, \bm A}'' = g_{\bm u}'' g_{\bm v} + 2 g_{\bm u}' g_{\bm v}' + g_{\bm u} g_{\bm v}''
\end{equation*}
and rearranging yields
\begin{align*}
    f_2'' &= \frac{g_{\bm u}''g_{\bm v} + 2g_{\bm u}'g_{\bm v}' + g_{\bm u}g_{\bm v}'' - 2f_2'g_{1, \bm A}' - f_2g_{1, \bm A}''}{g_{1, \bm A}} = \frac{2u_i v_i - 4f_2'\cdot (\bm A \bm w)_i - 2f_2\cdot\bm A_{ii}}{g_{1, \bm A}} 
\end{align*}
Again, we can differentiate each side again to yield
\begin{equation*}
    f_2''' g_{1, \bm A} + 3f_2'' g_{1, \bm A}' + 3f_2'g_{1, \bm A}'' + f_2g_{1, \bm A}''' = g_{\bm u}'''  g_{\bm v} + 3g_{\bm u}''g_{\bm v}' + 3g_{\bm u}'g_{\bm v}'' + g_{\bm u}g_{\bm v}'''
\end{equation*}
which yields
\begin{align*}
    f_2''' &= \frac{g_{\bm u}'''  g_{\bm v} + 3g_{\bm u}''g_{\bm v}' + 3g_{\bm u}'g_{\bm v}'' + g_{\bm u}g_{\bm v}''' - 3f_2'' g_{1, \bm A}' - 3f_2'g_{1, \bm A}'' - f_2g_{1, \bm A}'''}{g_{1, \bm A}} \\
    &=\frac{ - 6f_2'' \cdot (\bm A \bm w)_i - 6f_2'\cdot \bm A_{ii}}{g_{1, \bm A}} 
\end{align*}
For each $i$, noting that $g_{1, \bm A} \geq 1$, we have
\begin{equation*}
    |f_2'| \lcon |u_i g_{\bm v}| + |v_i g_{\bm u}| + |f_2 \cdot (\bm A \bm w)_i|
\end{equation*}
and
\begin{align*}
    |f_2''| &\lcon |u_i v_i| + |f_2' \cdot (\bm A \bm w)_i| + |f_2 \cdot \bm A_{ii}| \\
    &\lcon |u_i v_i| + (|u_i g_{\bm v} (\bm A \bm w)_i| + |v_i g_{\bm u}(\bm A \bm w)_i| + |f_2 \cdot (\bm A \bm w)_i^2|) + |f_2 \cdot \bm A_{ii}| \\
\end{align*}
and finally
\begin{align*}
    \biggl|\partial_i^3 f_2(\bm w) \biggr| &\lcon |f_2'' \cdot (\bm A \bm w)_i| + |f_2' \cdot \bm A_{ii}| \\
    &\lcon [|u_i v_i (\bm A \bm w)_i| + (|u_i g_{\bm v} (\bm A \bm w)_i^2| + |v_i g_{\bm u}(\bm A \bm w)_i^2| + |f_2 \cdot (\bm A \bm w)_i^3|) \\
    &\hspace{4em}+ |f_2 \cdot \bm A_{ii} (\bm A \bm w)_i|] \\
    &\qquad + (|u_i g_{\bm v} \bm A_{ii}| + |v_i g_{\bm u} \bm A_{ii}| + |f_2 \cdot (\bm A \bm w)_i \bm A_{ii}|) \\
    &\lcon [|u_i v_i (\bm A \bm w)_i| + (|g_{\bm v} \cdot u_i  (\bm A \bm w)_i^2| + |g_{\bm u} \cdot v_i (\bm A \bm w)_i^2| + |g_{\bm u} g_{\bm v} \cdot (\bm A \bm w)_i^3|) \\
    &\hspace{4em}+ |\bm A_{ii} g_{\bm u} g_{\bm v} \cdot  (\bm A \bm w)_i|] \\
    &\qquad + (|\bm A_{ii} g_{\bm v} \cdot u_i| + |\bm A_{ii} g_{\bm u} \cdot  v_i  | + |\bm A_{ii} g_{\bm u} g_{\bm v} \cdot (\bm A \bm w)_i |)
\end{align*} 

\subsubsection{First bound for \texorpdfstring{$f_3$}{f3}}
We can express
\begin{equation*}
    f_3(\bm w; \bm S) = g_{\bm u}(\bm w) g_{\bm v}(\bm w) g_{\bm A}(\bm w)
\end{equation*}
where $g_{\bm u}(\bm w) = \bm u^\top \bm w$,  $g_{\bm v}(\bm w) = \bm v^\top \bm w$, and $g_{\bm A}(\bm w) = \bm w^\top \bm A \bm w$. By Lemma \ref{lm:t_S_moments}, we see that, for $m \geq 2$,
\begin{gather*}
    \sup_i \E\biggl[ \sup_{u \in (0,1)} g_{\bm u}^m\biggr] \lcon \|\bm u\|_2^m, \quad  \sup_i \E\biggl[ \sup_{u \in (0,1)} g_{\bm v}^m\biggr] \lcon \|\bm v\|_2^m
\end{gather*}
By the product rule,
\begin{align*}
    f_3''' &=g_{\bm u}''' g_{\bm v} g_{\bm A} + g_{\bm u} g_{\bm v}''' g_{\bm A} + g_{\bm u} g_{\bm v} g_{\bm A}''' \\
    &\qquad  + 3 (g_{\bm u}'' g_{\bm v}' g_{\bm A} + g_{\bm u}'' g_{\bm v} g_{\bm A}' + g_{\bm u}' g_{\bm v}'' g_{\bm A} +  g_{\bm u} g_{\bm v}'' g_{\bm A}' + g_{\bm u}' g_{\bm v} g_{\bm A}'' + g_{\bm u} g_{\bm v}' g_{\bm A}'') \\
     &\qquad + 6 g_{\bm u}' g_{\bm v}' g_{\bm A}' \\
     &= 6 (u_i \bm A_{ii} g_{\bm v} + v_i \bm A_{ii} g_{\bm u}  ) + 12 u_i v_i (\bm A \bm w)_i
\end{align*}
That is,
\begin{align*}
    \biggl|\partial_i^3 f_3(\bm w)\biggr| &\lcon |u_i \bm A_{ii} g_{\bm v}| + |v_i \bm A_{ii} g_{\bm u}| + |u_i v_i \cdot (\bm A \bm w)_i| \\
    &= |\bm A_{ii} g_{\bm v}  \cdot u_i| + |\bm A_{ii} g_{\bm u} \cdot v_i| + |u_i v_i (\bm A \bm w)_i|
\end{align*}
\subsubsection{First bound for \texorpdfstring{$f_4$}{f4}}
We can express
\begin{equation*}
    f_4(\bm w; \bm S) = \frac{g_{\bm u}(\bm w) g_{\bm v}(\bm w) g_{\bm A}(\bm w)}{g_{1,\bm B}(\bm w)}
\end{equation*}
Noting that $g_{1,\bm B} \neq 0$, we can write $f_4(\bm w) g_{1,\bm B}(\bm w) = g_{\bm u}(\bm w) g_{\bm v}(\bm w) g_{\bm A}(\bm w)$ and differentiating each side yields
\begin{equation*}
    f_4'g_{1,\bm B} + f_4 g_{1,\bm B}' = g_{\bm u}' g_{\bm v} g_{\bm A} + g_{\bm u} g_{\bm v}' g_{\bm A} + g_{\bm u} g_{\bm v} g_{\bm A}'
\end{equation*}
where $f_4' = \partial_i f_4$, etc. This yields, as usual, 
\begin{align*} 
f_4' &= \frac{g_{\bm u}' g_{\bm v} g_{\bm A} + g_{\bm u} g_{\bm v}' g_{\bm A} + g_{\bm u} g_{\bm v} g_{\bm A}' - f_4 g_{1,\bm B}'}{g_{1,\bm B}} \\
&= \frac{u_i g_{\bm v} g_{\bm A} + v_i g_{\bm u} g_{\bm A} + 2 (\bm A \bm w)_i g_{\bm u} g_{\bm v} - 2 f_4 \cdot (\bm B \bm w)_i}{g_{1,\bm B}}.
\end{align*} 
Differentiating each side again yields
\begin{align*}
    f_4'' g_{1,\bm B} + 2f_4' g_{1,\bm B}'+f_4 g_{1,\bm B}'' &=  g_{\bm u}'' g_{\bm v} g_{\bm A} + g_{\bm u} g_{\bm v}'' g_{\bm A} + g_{\bm u} g_{\bm v} g_{\bm A}'' \\
    &\qquad + 2(g_{\bm u}' g_{\bm v}' g_{\bm A} + g_{\bm u}' g_{\bm v} g_{\bm A}' + g_{\bm u} g_{\bm v}' g_{\bm A}') \\
    &= g_{\bm u} g_{\bm v} g_{\bm A}'' + 2(g_{\bm u}' g_{\bm v}' g_{\bm A} + g_{\bm u}' g_{\bm v} g_{\bm A}' + g_{\bm u} g_{\bm v}' g_{\bm A}')
\end{align*}
and rearranging yields
\begin{align*}
    f_4'' &= \frac{g_{\bm u} g_{\bm v} g_{\bm A}'' + 2(g_{\bm u}' g_{\bm v}' g_{\bm A} + g_{\bm u}' g_{\bm v} g_{\bm A}' + g_{\bm u} g_{\bm v}' g_{\bm A}') - 2f_{4}' g_{1,\bm B}' - f_4 g_{1,\bm B}''}{g_{1,\bm B}} \\
    &= \frac{2\bm A_{ii}g_{\bm u} g_{\bm v} + 2 u_i v_i g_{\bm A} + 4u_i (\bm A \bm w)_i g_{\bm v} + 4v_i (\bm A \bm w)_i g_{\bm u} - 4f_4' \cdot (\bm B \bm w)_i - 2 f_4 \cdot \bm B_{ii}}{g_{1,\bm B}}
\end{align*}
Again, we can differentiate each side again to yield
\begin{align*}
    f_4''' g_{1,\bm B} + 3f_4'' g_{1,\bm B}' + 3f_4'g_{1,\bm B}'' + f_4 g_{1,\bm B}''' &= g_{\bm u}''' g_{\bm v} g_{\bm A} + g_{\bm u} g_{\bm v}''' g_{\bm A} + g_{\bm u} g_{\bm v} g_{\bm A}''' \\
    &\qquad + 3(g_{\bm u}'' g_{\bm v}' g_{\bm A} + g_{\bm u}' g_{\bm v}'' g_{\bm A} + g_{\bm u}'' g_{\bm v} g_{\bm A}' \\
    &\qquad \qquad + g_{\bm u}' g_{\bm v} g_{\bm A}'' + g_{\bm u} g_{\bm v}'' g_{\bm A}' + g_{\bm u} g_{\bm v}' g_{\bm A}'') \\
    &\qquad + 6 g_{\bm u}' g_{\bm v}' g_{\bm A}' \\
    &= 3(g_{\bm u}' g_{\bm v} g_{\bm A}'' + g_{\bm u}g_{\bm v}' g_{\bm A}'') + 6 g_{\bm u}' g_{\bm v}' g_{\bm A}'
\end{align*}
which yields
\begin{align*}
    f_4''' &= \frac{3(g_{\bm u}' g_{\bm v} g_{\bm A}'' + g_{\bm u}g_{\bm v}' g_{\bm A}'') + 6 g_{\bm u}' g_{\bm v}' g_{\bm A}' - 3f_{4}'' g_{1,\bm B}' - 3f_4' g_{1,\bm B}''}{g_{1,\bm B}} \\
    &= \frac{6u_i \bm A_{ii} g_{\bm v}  + 6 v_i \bm A_{ii} g_{\bm u}  + 12 u_i v_i (\bm A \bm w)_i - 6f_4'' \cdot (\bm B \bm w)_i - 6 f_4' \cdot \bm B_{ii}}{g_{1,\bm B}}
\end{align*}
For each $i$, noting that $g_{1,\bm B} \geq 1$, we have
\begin{equation*}
    |f_4'| \lcon |u_i g_{\bm v} g_{\bm A}| + |v_i g_{\bm u} g_{\bm A}| + |(\bm A \bm w)_i g_{\bm u} g_{\bm v}| + |f_4 \cdot (\bm B \bm w)_i|
\end{equation*}
and 
\begin{align*}
    |f_4''| &\lcon |\bm A_{ii} g_{\bm u} g_{\bm v} |+ |u_i v_i g_{\bm A}| + |u_i (\bm A \bm w)_i g_{\bm v}| + |v_i (\bm A \bm w)_i g_{\bm u}| + |f_4' \cdot (\bm B \bm w)_i| + |f_4 \cdot \bm B_{ii}| \\
    &\lcon |\bm A_{ii} g_{\bm u} g_{\bm v} |+ |u_i v_i g_{\bm A}| + |u_i (\bm A \bm w)_i g_{\bm v}| + |v_i (\bm A \bm w)_i g_{\bm u}|  + |f_4 \cdot \bm B_{ii}|\\
    &\qquad + (|u_i g_{\bm v} g_{\bm A} (\bm B \bm w)_i| + |v_i g_{\bm u} g_{\bm A}(\bm B \bm w)_i| + |(\bm A \bm w)_i g_{\bm u} g_{\bm v}(\bm B \bm w)_i| + |f_4 \cdot (\bm B \bm w)_i^2|)
\end{align*}
and thus finally
\begin{align*}
    &|\partial_i^3 f_4(\bm w) | \\
    &\lcon |u_i \bm A_{ii} g_{\bm v}| + |v_i \bm A_{ii} g_{\bm u}| + |u_i v_i (\bm A \bm w)_i| + |f_4'' \cdot (\bm B \bm w)_i| + |f_4' \cdot \bm B_{ii}| \\
    &\lcon |u_i \bm A_{ii} g_{\bm v}| + |v_i \bm A_{ii} g_{\bm u}| + |u_i v_i (\bm A \bm w)_i| \\
    &\qquad + [|\bm A_{ii} g_{\bm u} g_{\bm v} (\bm B \bm w)_i|+ |u_i v_i g_{\bm A} (\bm B \bm w)_i| + |u_i (\bm A \bm w)_i g_{\bm v} (\bm B \bm w)_i| + |v_i (\bm A \bm w)_i g_{\bm u} (\bm B \bm w)_i| \\
    &\qquad\quad  + (|u_i g_{\bm v} g_{\bm A} (\bm B \bm w)_i^2| + |v_i g_{\bm u} g_{\bm A}(\bm B \bm w)_i^2| + |(\bm A \bm w)_i g_{\bm u} g_{\bm v}(\bm B \bm w)_i^2|  \\
    &\qquad \quad +|f_4 \cdot (\bm B \bm w)_i^3|) + |f_4 \cdot \bm B_{ii}(\bm B \bm w)_i|] \\
    &\qquad + (|u_i g_{\bm v} g_{\bm A} \bm B_{ii}| + |v_i g_{\bm u} g_{\bm A} \bm B_{ii}| + |(\bm A \bm w)_i g_{\bm u} g_{\bm v}\bm B_{ii}| + |f_4 \cdot (\bm B \bm w)_i\bm B_{ii}|)  \\
    &\lcon |\bm A_{ii} g_{\bm v} \cdot u_i | + |\bm A_{ii} g_{\bm u} \cdot v_i | + |u_i v_i (\bm A \bm w)_i| \\
    &\qquad + [|\bm A_{ii} g_{\bm u} g_{\bm v} \cdot (\bm B \bm w)_i|+ |g_{\bm A} \cdot u_i v_i  (\bm B \bm w)_i| + |g_{\bm v} \cdot u_i (\bm A \bm w)_i  (\bm B \bm w)_i| \\
    &\qquad + |g_{\bm u} \cdot v_i (\bm A \bm w)_i  (\bm B \bm w)_i| \\
    & \qquad+ (|g_{\bm v} g_{\bm A} \cdot u_i (\bm B \bm w)_i^2| + |g_{\bm u} g_{\bm A} \cdot v_i (\bm B \bm w)_i^2| + |g_{\bm u} g_{\bm v} \cdot (\bm A \bm w)_i (\bm B \bm w)_i^2| \\
    &\qquad+ |g_{\bm u} g_{\bm v} g_{\bm A} \cdot (\bm B \bm w)_i^3|) + |\bm B_{ii}g_{\bm u} g_{\bm v} g_{\bm A} \cdot (\bm B \bm w)_i|] \\
    & \qquad+ (|\bm B_{ii}g_{\bm v} g_{\bm A} \cdot u_i  | + |\bm B_{ii}g_{\bm u} g_{\bm A} \cdot v_i | + |\bm B_{ii}g_{\bm u} g_{\bm v} \cdot (\bm A \bm w)_i | + |\bm B_{ii}g_{\bm u} g_{\bm v} g_{\bm A} \cdot (\bm B \bm w)_i|)
\end{align*}

\subsubsection{First bound for \texorpdfstring{$f_5$}{f5}}
We can express
\begin{equation*}
    f_5(\bm w; \bm S) = \frac{g_{\bm u}(\bm w) g_{\bm v}(\bm w) g_{\bm A}(\bm w)}{h_{1,\bm B}(\bm w)}
\end{equation*}
Noting that $h_{1,\bm B} \neq 0$, we can write $f_5(\bm w) h_{1,\bm B}(\bm w) = g_{\bm u}(\bm w) g_{\bm v}(\bm w) g_{\bm A}(\bm w)$ and differentiating each side yields
\begin{equation*}
    f_5'h_{1,\bm B} + f_5 h_{1,\bm B}' = g_{\bm u}' g_{\bm v} g_{\bm A} + g_{\bm u} g_{\bm v}' g_{\bm A} + g_{\bm u} g_{\bm v} g_{\bm A}'
\end{equation*}
where $f_5' = \partial_i f_5$, etc. This yields, as usual, 
\begin{align*} 
f_5' &= \frac{g_{\bm u}' g_{\bm v} g_{\bm A} + g_{\bm u} g_{\bm v}' g_{\bm A} + g_{\bm u} g_{\bm v} g_{\bm A}' - f_5 h_{1,\bm B}'}{h_{1,\bm B}} \\
&= \frac{u_i g_{\bm v} g_{\bm A} + v_i g_{\bm u} g_{\bm A} + 2 (\bm A \bm w)_i g_{\bm u} g_{\bm v} -4 f_5 \cdot (1 + \bm w^\top \bm B \bm w)(\bm B \bm w)_i }{h_{1,\bm B}}
\end{align*} 
Differentiating each side again yields
\begin{align*}
    f_5'' h_{1,\bm B} + 2f_5' h_{1,\bm B}'+f_5 h_{1,\bm B}'' &=  g_{\bm u}'' g_{\bm v} g_{\bm A} + g_{\bm u} g_{\bm v}'' g_{\bm A} + g_{\bm u} g_{\bm v} g_{\bm A}'' \\
    &\qquad + 2(g_{\bm u}' g_{\bm v}' g_{\bm A} + g_{\bm u}' g_{\bm v} g_{\bm A}' + g_{\bm u} g_{\bm v}' g_{\bm A}') \\
    &= g_{\bm u} g_{\bm v} g_{\bm A}'' + 2(g_{\bm u}' g_{\bm v}' g_{\bm A} + g_{\bm u}' g_{\bm v} g_{\bm A}' + g_{\bm u} g_{\bm v}' g_{\bm A}')
\end{align*}
and rearranging yields
\begin{align*}
    f_5'' &= \frac{g_{\bm u} g_{\bm v} g_{\bm A}'' + 2(g_{\bm u}' g_{\bm v}' g_{\bm A} + g_{\bm u}' g_{\bm v} g_{\bm A}' + g_{\bm u} g_{\bm v}' g_{\bm A}') - 2f_5' h_{1,\bm B}' - f_5 h_{1,\bm B}''}{h_{1,\bm B}} \\
    &= \frac{2\bm A_{ii}g_{\bm u} g_{\bm v} + 2 u_i v_i g_{\bm A} + 4u_i (\bm A \bm w)_i g_{\bm v} + 4v_i (\bm A \bm w)_i g_{\bm u} }{h_{1,\bm B}} \\
    &\qquad + \frac{- 8f_5' \cdot (1 + \bm w^\top \bm B\bm w)(\bm B \bm w)_i - 8 f_5 \cdot (\bm B \bm w)_i^2 - 4 f_5 \cdot (1 + \bm w^\top \bm B \bm w) \bm B_{ii}}{h_{1,\bm B}}
\end{align*}
Again, we can differentiate each side again to yield
\begin{align*}
    f_5''' h_{1,\bm B} + 3f_5'' h_{1,\bm B}' + 3f_5'h_{1,\bm B}'' + f_5 h_{1,\bm B}''' &= g_{\bm u}''' g_{\bm v} g_{\bm A} + g_{\bm u} g_{\bm v}''' g_{\bm A} + g_{\bm u} g_{\bm v} g_{\bm A}''' \\
    &\qquad + 3(g_{\bm u}'' g_{\bm v}' g_{\bm A} + g_{\bm u}' g_{\bm v}'' g_{\bm A} + g_{\bm u}'' g_{\bm v} g_{\bm A}' \\
    &\qquad \qquad + g_{\bm u}' g_{\bm v} g_{\bm A}'' + g_{\bm u} g_{\bm v}'' g_{\bm A}' + g_{\bm u} g_{\bm v}' g_{\bm A}'') \\
    &\qquad + 6 g_{\bm u}' g_{\bm v}' g_{\bm A}' \\
    &= 3(g_{\bm u}' g_{\bm v} g_{\bm A}'' + g_{\bm u}g_{\bm v}' g_{\bm A}'') + 6 g_{\bm u}' g_{\bm v}' g_{\bm A}'
\end{align*}
which yields
\begin{align*}
    f_5''' &= \frac{3(g_{\bm u}' g_{\bm v} g_{\bm A}'' + g_{\bm u}g_{\bm v}' g_{\bm A}'') + 6 g_{\bm u}' g_{\bm v}' g_{\bm A}' - 3f_5'' h_{1,\bm B}' - 3f_5' h_{1,\bm B}'' - f_5 h_{1,\bm B}'''}{h_{1,\bm B}} \\
    &= \frac{6u_i g_{\bm v} \bm A_{ii} + 6 v_i g_{\bm u} \bm A_{ii} + 12 u_i v_i (\bm A \bm w)_i}{h_{1,\bm B}} \\
    &\qquad - \frac{12f_5'' \cdot (1 + \bm w^\top \bm B \bm w)(\bm B \bm w)_i + 24f_5' \cdot (\bm B \bm w)_i^2 }{h_{1,\bm B}} \\
    &\qquad - \frac{12 f_5' \cdot  (1 + \bm w^\top \bm B \bm w) \bm B_{ii} + 24 f_5 \cdot (\bm B \bm w)_i \bm B_{ii}}{h_{1,\bm B}}
\end{align*}
For each $i$, noting that $h_{1,\bm B} \geq 1$, we have
\begin{equation*}
    |f_5'| \lcon |u_i g_{\bm v} g_{\bm A}| + |v_i g_{\bm u} g_{\bm A}| + |(\bm A \bm w)_i g_{\bm u} g_{\bm v}| + |f_5 \cdot (1 + \bm w^\top \bm B \bm w) (\bm B \bm w)_i|
\end{equation*}
and 
\begin{align*}
    |f_5''| &\lcon |\bm A_{ii} g_{\bm u} g_{\bm v} |+ |u_i v_i g_{\bm A}| + |u_i (\bm A \bm w)_i g_{\bm v}| + |v_i (\bm A \bm w)_i g_{\bm u}| \\
    &\qquad + |f_5' \cdot (1 + \bm w^\top \bm B \bm w)(\bm B \bm w)_i| + |f_5 \cdot \bm (\bm B \bm w)_i^2| + |f_5 \cdot (1 + \bm w^\top \bm B \bm w) \bm B_{ii}| \\
    &\lcon |\bm A_{ii} g_{\bm u} g_{\bm v} |+ |u_i v_i g_{\bm A}| + |u_i (\bm A \bm w)_i g_{\bm v}| + |v_i (\bm A \bm w)_i g_{\bm u}| \\
    &\qquad + (|u_i g_{\bm v} g_{\bm A} (1 + \bm w^\top \bm B \bm w)(\bm B \bm w)_i| + |v_i g_{\bm u} g_{\bm A}(1 + \bm w^\top \bm B \bm w)(\bm B \bm w)_i| \\
    &\qquad + |(\bm A \bm w)_i g_{\bm u} g_{\bm v} (1 + \bm w^\top \bm B \bm w)(\bm B \bm w)_i| + |f_5 \cdot (1 + \bm w^\top \bm B \bm w)^2 (\bm B \bm w)_i^2|) \\
    &\qquad + |f_5 \cdot \bm (\bm B \bm w)_i^2| + |f_5 \cdot (1 + \bm w^\top \bm B \bm w) \bm B_{ii}|
\end{align*}
and thus finally
\begin{align*}
    &\biggl|\partial_i^3 f_5(\bm w) \biggr| \lcon |u_i \bm A_{ii} g_{\bm v}| + |v_i \bm A_{ii} g_{\bm u}| + |u_i v_i (\bm A \bm w)_i| + |f_5'' \cdot (1 + \bm w^\top \bm B \bm w) (\bm B \bm w)_i| \\
    &\qquad + |f_5' \cdot (\bm B \bm w)_i^2| + |f_5' \cdot (1 + \bm w^\top \bm B \bm w)\bm B_{ii}| + |f_5 \cdot (\bm B \bm w)_i \bm B_{ii}| \\
    &\lcon |u_i \bm A_{ii} g_{\bm v}| + |v_i \bm A_{ii} g_{\bm u}| + |u_i v_i (\bm A \bm w)_i| \\
    &+ [|\bm A_{ii} g_{\bm u} g_{\bm v} (1 + \bm w^\top \bm B \bm w) (\bm B \bm w)_i|+ |u_i v_i g_{\bm A}(1 + \bm w^\top \bm B \bm w) (\bm B \bm w)_i| \\
    &+ |u_i (\bm A \bm w)_i g_{\bm v} (1 + \bm w^\top \bm B \bm w) (\bm B \bm w)_i| + |v_i (\bm A \bm w)_i g_{\bm u} (1 + \bm w^\top \bm B \bm w) (\bm B \bm w)_i| \\
    & + (|u_i g_{\bm v} g_{\bm A} (1 + \bm w^\top \bm B \bm w)^2(\bm B \bm w)_i^2| + |v_i g_{\bm u} g_{\bm A}(1 + \bm w^\top \bm B \bm w)^2(\bm B \bm w)_i^2| \\
    & + |(\bm A \bm w)_i g_{\bm u} g_{\bm v} (1 + \bm w^\top \bm B \bm w)^2(\bm B \bm w)_i^2| + |f_5 \cdot (1 + \bm w^\top \bm B \bm w)^3 (\bm B \bm w)_i^3|) \\
    & + |f_5 \cdot \bm (\bm B \bm w)_i^3 (1 + \bm w^\top \bm B \bm w) | + |f_5 \cdot (1 + \bm w^\top \bm B \bm w)^2 \bm B_{ii}(\bm B \bm w)_i|] \\
    & + (|u_i g_{\bm v} g_{\bm A} (\bm B \bm w)_i^2| + |v_i g_{\bm u} g_{\bm A} (\bm B \bm w)_i^2| + |(\bm A \bm w)_i g_{\bm u} g_{\bm v} (\bm B \bm w)_i^2| \\
    &\qquad + |f_5 \cdot (1 + \bm w^\top \bm B \bm w) (\bm B \bm w)_i^3|) \\
    & + (|u_i g_{\bm v} g_{\bm A} (1 + \bm w^\top \bm B \bm w) \bm B_{ii}| + |v_i g_{\bm u} g_{\bm A} (1 + \bm w^\top \bm B \bm w) \bm B_{ii}|  \\
    &\qquad + |(\bm A \bm w)_i g_{\bm u} g_{\bm v}(1 + \bm w^\top \bm B \bm w) \bm B_{ii}|+ |f_5 \cdot (1 + \bm w^\top \bm B \bm w)^2 (\bm B \bm w)_i \bm B_{ii}|)  \\
    &+ |f_5 \cdot (\bm B \bm w)_i \bm B_{ii}| \\
    &\lcon |\bm A_{ii} g_{\bm v} \cdot u_i| + |\bm A_{ii} g_{\bm u} \cdot v_i| + |u_i v_i (\bm A \bm w)_i| \\
    & + [|\bm A_{ii} g_{\bm u} g_{\bm v} g_{1, \bm B} \cdot (\bm B \bm w)_i|+ |g_{\bm A}g_{1, \bm B} \cdot u_i v_i (\bm B \bm w)_i| \\
    &+ | g_{\bm v} g_{1, \bm B} \cdot u_i (\bm A \bm w)_i(\bm B \bm w)_i| + | g_{\bm u} g_{1, \bm B} \cdot v_i (\bm A \bm w)_i (\bm B \bm w)_i| \\
    &+ (| g_{\bm v} g_{\bm A} g_{1, \bm B}^2\cdot u_i(\bm B \bm w)_i^2| + |g_{\bm u} g_{\bm A}g_{1, \bm B}^2\cdot v_i(\bm B \bm w)_i^2| \\
    & + | g_{\bm u} g_{\bm v} g_{1, \bm B}^2 \cdot (\bm A \bm w)_i(\bm B \bm w)_i^2| + |g_{\bm u} g_{\bm v} g_{\bm A} g_{1, \bm B}^3 \cdot (\bm B \bm w)_i^3|) \\
    & + |g_{\bm u} g_{\bm v} g_{\bm A} g_{1, \bm B} \cdot \bm (\bm B \bm w)_i^3 | + |\bm B_{ii}g_{\bm u} g_{\bm v} g_{\bm A} g_{1, \bm B}^2 \cdot (\bm B \bm w)_i|] \\
    & + (| g_{\bm v} g_{\bm A} \cdot u_i (\bm B \bm w)_i^2| + | g_{\bm u} g_{\bm A} \cdot v_i(\bm B \bm w)_i^2| + |g_{\bm u} g_{\bm v} \cdot (\bm A \bm w)_i  (\bm B \bm w)_i^2| \\
    &\qquad + |g_{\bm u} g_{\bm v} g_{\bm A} \cdot g_{1, \bm B} \cdot (\bm B \bm w)_i^3|) \\
    & + (|\bm B_{ii} g_{\bm v} g_{\bm A} g_{1, \bm B} \cdot u_i| + |\bm B_{ii} g_{\bm u} g_{\bm A} g_{1, \bm B} \cdot v_i| + |\bm B_{ii} g_{\bm u} g_{\bm v}g_{1, \bm B} \cdot  (\bm A \bm w)_i| \\
    & + |\bm B_{ii} g_{\bm u} g_{\bm v} g_{\bm A} g_{1, \bm B}^2 \cdot  (\bm B \bm w)_i |)  \\
    & + | \bm B_{ii} g_{\bm u} g_{\bm v} g_{\bm A} \cdot (\bm B \bm w)_i|
\end{align*}

\subsubsection{First bound for \texorpdfstring{$f_6$}{f6}}
We can express
\begin{equation*}
    f_6(\bm w; \bm S) = \frac{h_{\bm u}(\bm w) g_{\bm A}(\bm w) g_{\bm B}(\bm w)}{g_{1,\bm C}(\bm w)}.
\end{equation*}

Noting that $g_{1,\bm C} \neq 0$, we can write $f_6(\bm w) g_{1,\bm C}(\bm w) = h_{\bm u}(\bm w) g_{\bm A}(\bm w) g_{\bm B}(\bm w)$ and differentiating each side yields
\begin{equation*}
    f_6'g_{1,\bm C} + f_6 g_{1,\bm C}' = h_{\bm u}' g_{\bm A} g_{\bm B} + h_{\bm u} g_{\bm A}' g_{\bm B} + h_{\bm u} g_{\bm A} g_{\bm B}'
\end{equation*}
where $f_6' = \partial_i f_6$, etc. This yields, as usual, 
\begin{align*} 
f_6' &= \frac{h_{\bm u}' g_{\bm A} g_{\bm B} + h_{\bm u} g_{\bm A}' g_{\bm B} + h_{\bm u} g_{\bm A} g_{\bm B}' - f_6 g_{1,C}'}{g_{1,\bm C}} \\
&= \frac{2u_i (\bm u^\top \bm w) g_{\bm A} g_{\bm B} + 2 (\bm A \bm w)_ih_{\bm u} g_{\bm B} + 2 (\bm B \bm w)_i h_{\bm u} g_{\bm A} - 2 f_6 \cdot (\bm C \bm w)_i}{g_{1, \bm C}}
\end{align*} 
Differentiating each side again yields
\begin{align*}
    f_6'' g_{1, \bm C} + 2f_6' g_{1, \bm C}' + f_6 g_{1, \bm C}'' &= h_{\bm u}'' g_{\bm A} g_{\bm B} + h_{\bm u} g_{\bm A}'' g_{\bm B} + h_{\bm u} g_{\bm A} g_{\bm B}'' \\
    &\qquad + 2(h_{\bm u}' g_{\bm A}' g_{\bm B} + h_{\bm u}' g_{\bm A} g_{\bm B}' + h_{\bm u} g_{\bm A}' g_{\bm B}')
\end{align*}
and rearranging yields
\begin{align*}
    f_6'' &= \frac{h_{\bm u}'' g_{\bm A} g_{\bm B} + h_{\bm u} g_{\bm A}'' g_{\bm B} + h_{\bm u} g_{\bm A} g_{\bm B}'' }{g_{1, \bm C}} \\
    &\qquad +\frac{2(h_{\bm u}' g_{\bm A}' g_{\bm B} + h_{\bm u}' g_{\bm A} g_{\bm B}' + h_{\bm u} g_{\bm A}' g_{\bm B}') - 2f_6' g_{1, \bm C}' - f_6 g_{1, \bm C}''}{g_{1, \bm C}} \\
    &= \frac{2 u_i^2 g_{\bm A} g_{\bm B} + 2 \bm A_{ii} h_{\bm u} g_{\bm B} + 2 \bm B_{ii} h_{\bm u} g_{\bm A} +  8 u_i (\bm u^\top \bm w) (\bm A \bm w)_i g_{\bm B}}{g_{1, \bm C}} \\
    &\qquad + \frac{8 u_i (\bm u^\top \bm w) (\bm B \bm w)_i g_{\bm A} + 8 (\bm A \bm w)_i (\bm B \bm w)_i h_{\bm u} }{g_{1, \bm C}} \\
    &\qquad - \frac{4 f_6' \cdot ( \bm C \bm w)_i + 2 f_6 \cdot \bm C_{ii}}{g_{1, \bm C}}
\end{align*}
Again, we can differentiate each side again to yield
\begin{align*}
    f_6''' &g_{1, \bm C} + 3f_6'' g_{1, \bm C}' + 3f_6' g_{1,\bm C}'' + f_6 g_{1, \bm C}''' \\&= h_{\bm u}''' g_{\bm A} g_{\bm B} + h_{\bm u} g_{\bm A}''' g_{\bm B} + h_{\bm u} g_{\bm A} g_{\bm B}''' \\
    &\qquad + 3 (h_{\bm u}'' g_{\bm A}' g_{\bm B} +  h_{\bm u}' g_{\bm A}'' g_{\bm B} + h_{\bm u}'' g_{\bm A} g_{\bm B}' + h_{\bm u}' g_{\bm A} g_{\bm B}'' + h_{\bm u} g_{\bm A}'' g_{\bm B}' + h_{\bm u} g_{\bm A}' g_{\bm B}'') \\
    &\qquad + 6 h_{\bm u}' g_{\bm A}' g_{\bm B}' \\
    &= 3 (h_{\bm u}'' g_{\bm A}' g_{\bm B} +  h_{\bm u}' g_{\bm A}'' g_{\bm B} + h_{\bm u}'' g_{\bm A} g_{\bm B}' + h_{\bm u}' g_{\bm A} g_{\bm B}'' + h_{\bm u} g_{\bm A}'' g_{\bm B}' + h_{\bm u} g_{\bm A}' g_{\bm B}'') \\
    &\qquad + 6 h_{\bm u}' g_{\bm A}' g_{\bm B}'
\end{align*}
which yields
\begin{align*}
    f_6''' &= \frac{3 (h_{\bm u}'' g_{\bm A}' g_{\bm B} +  h_{\bm u}' g_{\bm A}'' g_{\bm B} + h_{\bm u}'' g_{\bm A} g_{\bm B}' + h_{\bm u}' g_{\bm A} g_{\bm B}'' + h_{\bm u} g_{\bm A}'' g_{\bm B}' + h_{\bm u} g_{\bm A}' g_{\bm B}'') + 6 h_{\bm u}' g_{\bm A}' g_{\bm B}'}{g_{1, \bm C}} \\
    &\qquad - \frac{3f_6'' g_{1,\bm C}'  + 3 f_6' g_{1, \bm C}''}{g_{1, \bm C}} \\
    &= \frac{3 (4u_i^2 (\bm A \bm w)_i g_{\bm B} +  4 u_i (\bm u^\top \bm w) \bm A_{ii} g_{\bm B} + 4 u_i^2 (\bm B\bm w)_i g_{\bm A}}{g_{1, \bm C}}  \\
    &\qquad + \frac{4 u_i (\bm u^\top \bm w) \bm B_{ii} g_{\bm A}  + 4 (\bm B \bm w)_i \bm A_{ii} h_{\bm u}  + 4 (\bm A \bm w)_i \bm B_{ii} h_{\bm u}) }{g_{1, \bm C}} \\
    &\qquad + \frac{48 u_i (\bm u^\top \bm w) (\bm A \bm w)_i (\bm B \bm w)_i}{g_{1, \bm C}} - \frac{6f_6'' \cdot (\bm C \bm w)_i  + 6 f_6' \bm C_{ii}}{g_{1, \bm C}} 
\end{align*}
For each $i$, noting that $g_{1,\bm C} \geq 1$, we have 
\begin{equation*}
    |f_6'| \lcon |u_i (\bm u^\top \bm w) g_{\bm A} g_{\bm B}| + |(\bm A \bm w)_ih_{\bm u} g_{\bm B}| + |(\bm B \bm w)_i h_{\bm u} g_{\bm A}| + |f_6 \cdot (\bm C \bm w)_i|
\end{equation*}
and 
\begin{align*}
    |f_6''| &\lcon  |u_i^2 g_{\bm A} g_{\bm B}| +  |\bm A_{ii} h_{\bm u} g_{\bm B}| + |\bm B_{ii} h_{\bm u} g_{\bm A}| +  |u_i (\bm u^\top \bm w) (\bm A \bm w)_i g_{\bm B}| \\
    &\qquad + |u_i (\bm u^\top \bm w) (\bm B \bm w)_i g_{\bm A}| + | (\bm A \bm w)_i (\bm B \bm w)_i h_{\bm u}| + |f_6'\cdot (\bm C\bm w)_i| + |f_6 \bm C_{ii}| \\
    &\lcon |u_i^2 g_{\bm A} g_{\bm B}| +  |\bm A_{ii} h_{\bm u} g_{\bm B}| + |\bm B_{ii} h_{\bm u} g_{\bm A}| +  |u_i (\bm u^\top \bm w) (\bm A \bm w)_i g_{\bm B}|\\
    &\qquad + |u_i (\bm u^\top \bm w) (\bm B \bm w)_i g_{\bm A}| + | (\bm A \bm w)_i (\bm B \bm w)_i h_{\bm u}| \\
    &\qquad + (|u_i (\bm u^\top \bm w) g_{\bm A} g_{\bm B} \cdot (\bm C\bm w)_i| + |(\bm A \bm w)_ih_{\bm u} g_{\bm B}\cdot (\bm C\bm w)_i| \\
    &\hspace{4em}+ |(\bm B \bm w)_i h_{\bm u} g_{\bm A}\cdot (\bm C\bm w)_i| + |f_6 \cdot (\bm C \bm w)_i^2|)
\end{align*}
and thus finally
\begin{align*}
    \biggl|\partial_i^3 f_6(\bm w)\biggr| &\lcon |u_i^2 (\bm A \bm w)_i g_{\bm B}| +  |u_i (\bm u^\top \bm w) \bm A_{ii} g_{\bm B}| + |u_i^2 (\bm B\bm w)_i g_{\bm A}|  + |u_i (\bm u^\top \bm w) \bm B_{ii} g_{\bm A}| \\
    &\qquad  + |(\bm B \bm w)_i \bm A_{ii} h_{\bm u}|  + |(\bm A \bm w)_i \bm B_{ii} h_{\bm u}|  + |u_i(\bm u^\top \bm w) (\bm A \bm w)_i (\bm B \bm w)_i| \\
    &\qquad + |f_6'' \cdot (\bm C \bm w)_i | + |f_6' \bm C_{ii}| \\
    &\lcon |u_i^2 (\bm A \bm w)_i g_{\bm B}| +  |u_i (\bm u^\top \bm w) \bm A_{ii} g_{\bm B}| + |u_i^2 (\bm B\bm w)_i g_{\bm A}|  + |u_i (\bm u^\top \bm w) \bm B_{ii} g_{\bm A}|   \\
    &\qquad + |(\bm B \bm w)_i \bm A_{ii} h_{\bm u}|  + |(\bm A \bm w)_i \bm B_{ii} h_{\bm u}| + |u_i(\bm u^\top \bm w) (\bm A \bm w)_i (\bm B \bm w)_i| \\
    &\qquad + [|u_i^2 g_{\bm A} g_{\bm B}\cdot (\bm C \bm w)_i| +  |\bm A_{ii} h_{\bm u} g_{\bm B}\cdot (\bm C \bm w)_i| + |\bm B_{ii} h_{\bm u} g_{\bm A}\cdot (\bm C \bm w)_i|  \\
    &\qquad \quad +  |u_i (\bm u^\top \bm w) (\bm A \bm w)_i g_{\bm B}\cdot (\bm C \bm w)_i|+ |u_i (\bm u^\top \bm w) (\bm B \bm w)_i g_{\bm A}\cdot (\bm C \bm w)_i|  \\
    &\qquad \quad + | (\bm A \bm w)_i (\bm B \bm w)_i h_{\bm u} \cdot (\bm C \bm w)_i| + (|u_i (\bm u^\top \bm w) g_{\bm A} g_{\bm B} \cdot (\bm C\bm w)_i^2|  \\
    &\qquad \quad + |(\bm A \bm w)_ih_{\bm u} g_{\bm B}\cdot (\bm C\bm w)_i^2| + |(\bm B \bm w)_i h_{\bm u} g_{\bm A}\cdot (\bm C\bm w)_i^2| + |f_6 \cdot (\bm C \bm w)_i^3|)]  \\
    &\qquad  + (|u_i (\bm u^\top \bm w) g_{\bm A} g_{\bm B}\bm C_{ii}| + |(\bm A \bm w)_ih_{\bm u} g_{\bm B}\bm C_{ii}| \\
    &\hspace{4em}+ |(\bm B \bm w)_i h_{\bm u} g_{\bm A}\bm C_{ii}| + |f_6 \cdot (\bm C \bm w)_i\bm C_{ii}|)  \\
    &\lcon | g_{\bm B} \cdot u_i^2 (\bm A \bm w)_i| +  |\bm A_{ii} g_{\bm u}  g_{\bm B}\cdot u_i| + |g_{\bm A} \cdot u_i^2 (\bm B\bm w)_i |  + |\bm B_{ii}g_{\bm u}  g_{\bm A} \cdot u_i |   \\
    &\qquad + | \bm A_{ii} g_{\bm u}^2 \cdot (\bm B \bm w)_i|  + | \bm B_{ii} g_{\bm u}^2 \cdot (\bm A \bm w)_i| + |g_{\bm u} \cdot u_i (\bm A \bm w)_i (\bm B \bm w)_i| \\
    &\qquad + [|g_{\bm A} g_{\bm B}\cdot u_i^2 (\bm C \bm w)_i| +  |\bm A_{ii} g_{\bm u}^2 g_{\bm B}\cdot (\bm C \bm w)_i| + |\bm B_{ii} g_{\bm u}^2 g_{\bm A}\cdot (\bm C \bm w)_i| \\
    &\qquad +  |g_{\bm u} g_{\bm B}\cdot u_i (\bm A \bm w)_i  (\bm C \bm w)_i| \\
    &\qquad \quad + | g_{\bm u} g_{\bm A}\cdot u_i (\bm B \bm w)_i  (\bm C \bm w)_i| + |g_{\bm u}^2 \cdot (\bm A \bm w)_i (\bm B \bm w)_i  (\bm C \bm w)_i| \\
    &\qquad \quad + (|g_{\bm u} g_{\bm A} g_{\bm B} \cdot u_i (\bm C\bm w)_i^2| + | g_{\bm u}^2 g_{\bm B}\cdot (\bm A \bm w)_i (\bm C\bm w)_i^2| \\
    &\qquad \quad + |g_{\bm u}^2 g_{\bm A}\cdot (\bm B \bm w)_i  (\bm C\bm w)_i^2| + |g_{\bm u}^2 g_{\bm A} g_{\bm B} \cdot (\bm C \bm w)_i^3|)]  \\
    &\qquad  + (|\bm C_{ii} g_{\bm u} g_{\bm A} g_{\bm B} \cdot u_i| + |\bm C_{ii} g_{\bm u}^2 g_{\bm B} \cdot (\bm A \bm w)_i|  \\
    &\hspace{4em}+ |\bm C_{ii} g_{\bm u}^2 g_{\bm A} \cdot (\bm B \bm w)_i| + |\bm C_{ii} g_{\bm u}^2 g_{\bm A} g_{\bm B} \cdot (\bm C \bm w)_i|) 
\end{align*}
We now bound each of the corresponding terms, as we did earlier.

\subsubsection{First bound for \texorpdfstring{$f_7$}{f7}}
We can express
\begin{equation*}
    f_7(\bm w; \bm S) = \frac{g_{\bm u}(\bm w) g_{\bm v}(\bm w) g_{\bm A}(\bm w) g_{\bm B}(\bm w)}{h_{1,\bm C}(\bm w)}.
\end{equation*}
Noting that $h_{1,\bm C} \neq 0$, we can write $f_7(\bm w) h_{1,\bm C}(\bm w) = g_{\bm u}(\bm w) g_{\bm v}(\bm w) g_{\bm A}(\bm w) g_{\bm B}(\bm w)$ and differentiating each side yields
\begin{align*}
    f_7'h_{1,\bm C} + f_7 h_{1,\bm C}' &= g_{\bm u}' g_{\bm v} g_{\bm A} g_{\bm B} + g_{\bm u} g_{\bm v}' g_{\bm A} g_{\bm B} + g_{\bm u} g_{\bm v} g_{\bm A}' g_{\bm B} + g_{\bm u} g_{\bm v} g_{\bm A} g_{\bm B} '
\end{align*}
where $f_7' = \partial_i f_7$, etc. This yields, as usual, 
\begin{align*} 
f_7' &= \frac{g_{\bm u}' g_{\bm v} g_{\bm A} g_{\bm B} + g_{\bm u} g_{\bm v}' g_{\bm A} g_{\bm B} + g_{\bm u} g_{\bm v} g_{\bm A}' g_{\bm B} + g_{\bm u} g_{\bm v} g_{\bm A} g_{\bm B} ' - f_7 h_{1,\bm C}'}{h_{1,\bm C}} \\
&= \frac{u_i g_{\bm v} g_{\bm A} g_{\bm B} + v_i g_{\bm u}  g_{\bm A} g_{\bm B} + 2 (\bm A \bm w)_ig_{\bm u} g_{\bm v} g_{\bm B} }{h_{1,\bm C}}  \\
&\qquad+ \frac{2 (\bm B \bm w)_ig_{\bm u} g_{\bm v} g_{\bm A} - 4f_7 \cdot (1 + \bm w^\top \bm C \bm w) (\bm C \bm w)_i}{h_{1,\bm C}} 
\end{align*} 
Differentiating each side again yields
\begin{align*}
    f_7'' h_{1, \bm C} + 2f_7' h_{1, \bm C}' + f_7 h_{1, \bm C}'' &= g_{\bm u}'' g_{\bm v} g_{\bm A} g_{\bm B} + g_{\bm u} g_{\bm v}'' g_{\bm A} g_{\bm B} + g_{\bm u} g_{\bm v} g_{\bm A}'' g_{\bm B} + g_{\bm u} g_{\bm v} g_{\bm A} g_{\bm B}'' \\
    &\qquad + 2(g_{\bm u}' g_{\bm v}' g_{\bm A} g_{\bm B} + g_{\bm u}' g_{\bm v} g_{\bm A}' g_{\bm B} + g_{\bm u}' g_{\bm v} g_{\bm A} g_{\bm B}' + g_{\bm u} g_{\bm v}' g_{\bm A}' g_{\bm B} \\
    &\qquad \qquad + g_{\bm u} g_{\bm v}' g_{\bm A} g_{\bm B}' + g_{\bm u} g_{\bm v} g_{\bm A}' g_{\bm B}') \\
    &=g_{\bm u} g_{\bm v} g_{\bm A}'' g_{\bm B} + g_{\bm u} g_{\bm v} g_{\bm A} g_{\bm B}'' \\
    &\qquad + 2(g_{\bm u}' g_{\bm v}' g_{\bm A} g_{\bm B} + g_{\bm u}' g_{\bm v} g_{\bm A}' g_{\bm B} + g_{\bm u}' g_{\bm v} g_{\bm A} g_{\bm B}' + g_{\bm u} g_{\bm v}' g_{\bm A}' g_{\bm B} \\
    &\qquad \qquad + g_{\bm u} g_{\bm v}' g_{\bm A} g_{\bm B}' + g_{\bm u} g_{\bm v} g_{\bm A}' g_{\bm B}')
\end{align*}
and rearranging yields
\begin{align*}
    &f_7'' = \frac{2(g_{\bm u}' g_{\bm v}' g_{\bm A} g_{\bm B} + g_{\bm u}' g_{\bm v} g_{\bm A}' g_{\bm B} + g_{\bm u}' g_{\bm v} g_{\bm A} g_{\bm B}' + g_{\bm u} g_{\bm v}' g_{\bm A}' g_{\bm B} + g_{\bm u} g_{\bm v}' g_{\bm A} g_{\bm B}' + g_{\bm u} g_{\bm v} g_{\bm A}' g_{\bm B}')}{h_{1, \bm C}} \\
    &\qquad 
    + \frac{g_{\bm u} g_{\bm v} g_{\bm A}'' g_{\bm B} + g_{\bm u} g_{\bm v} g_{\bm A} g_{\bm B}'' - 2f_7' h_{1,\bm C}' - f_7 h_{1,\bm C}''}{h_{1, \bm C}} \\
    &=\frac{2(u_i v_i g_{\bm A} g_{\bm B} + 2 u_i (\bm A \bm w)_i g_{\bm v} g_{\bm B} + 2 u_i (\bm B \bm w)_i  g_{\bm v} g_{\bm A})}{h_{1, \bm C}} \\
    &\qquad + \frac{2( 2 v_i (\bm A \bm w)_i g_{\bm u}  g_{\bm B} + 2 v_i (\bm B \bm w)_i g_{\bm u} g_{\bm A} + 4 (\bm A \bm w)_i (\bm B \bm w)_i g_{\bm u} g_{\bm v})}{h_{1, \bm C}} \\
    &\qquad 
    + \frac{2 \bm A_{ii} g_{\bm u} g_{\bm v}  g_{\bm B} + 2 \bm B_{ii} g_{\bm u} g_{\bm v} g_{\bm A} - 8 f_7' \cdot (1 + \bm w^\top \bm C \bm w)(\bm C \bm w)_i }{h_{1, \bm C}} \\
    &\qquad 
    + \frac{- 8 f_7 \cdot (\bm C \bm w)_i^2 - 4 f_7 \cdot (1 + \bm w^\top \bm C \bm w) \bm C_{ii}}{h_{1, \bm C}}
\end{align*}
Again, we can differentiate each side again to yield
\begin{align*}
    &f_7''' h_{1, \bm C} + 3f_7'' h_{1, \bm C}' + 3f_7' h_{1, \bm C}'' + f_7 h_{1, \bm C}''' \\&= g_{\bm u}''' g_{\bm v} g_{\bm A} g_{\bm B} + g_{\bm u} g_{\bm v}''' g_{\bm A} g_{\bm B} + g_{\bm u} g_{\bm v} g_{\bm A}''' g_{\bm B} + g_{\bm u} g_{\bm v} g_{\bm A} g_{\bm B}''' \\
    &\qquad + 3 (g_{\bm u}'' g_{\bm v}' g_{\bm A} g_{\bm B} + g_{\bm u}' g_{\bm v}'' g_{\bm A} g_{\bm B} + g_{\bm u}'' g_{\bm v} g_{\bm A}' g_{\bm B} + g_{\bm u}' g_{\bm v} g_{\bm A}'' g_{\bm B}\\
    &\qquad \qquad + g_{\bm u}'' g_{\bm v} g_{\bm A} g_{\bm B}' + g_{\bm u}' g_{\bm v} g_{\bm A} g_{\bm B}'' + g_{\bm u} g_{\bm v}'' g_{\bm A}' g_{\bm B} + g_{\bm u} g_{\bm v}' g_{\bm A}'' g_{\bm B} \\
    &\qquad \qquad + g_{\bm u} g_{\bm v}'' g_{\bm A} g_{\bm B}' + g_{\bm u} g_{\bm v}' g_{\bm A} g_{\bm B}'' + g_{\bm u} g_{\bm v} g_{\bm A}'' g_{\bm B}' + g_{\bm u} g_{\bm v} g_{\bm A}' g_{\bm B}'') \\
    &\qquad + 6 (g_{\bm u}' g_{\bm v}' g_{\bm A}' g_{\bm B} + g_{\bm u}' g_{\bm v}' g_{\bm A} g_{\bm B}' + g_{\bm u}' g_{\bm v} g_{\bm A}' g_{\bm B}' + g_{\bm u} g_{\bm v}' g_{\bm A}' g_{\bm B}') \\
    &= 3 ( g_{\bm u}' g_{\bm v} g_{\bm A}'' g_{\bm B} + g_{\bm u}' g_{\bm v} g_{\bm A} g_{\bm B}'' + g_{\bm u} g_{\bm v}' g_{\bm A}'' g_{\bm B} \\
    &\qquad \qquad + g_{\bm u} g_{\bm v}' g_{\bm A} g_{\bm B}'' + g_{\bm u} g_{\bm v} g_{\bm A}'' g_{\bm B}' + g_{\bm u} g_{\bm v} g_{\bm A}' g_{\bm B}'') \\
    &\qquad + 6 (g_{\bm u}' g_{\bm v}' g_{\bm A}' g_{\bm B} + g_{\bm u}' g_{\bm v}' g_{\bm A} g_{\bm B}' + g_{\bm u}' g_{\bm v} g_{\bm A}' g_{\bm B}' + g_{\bm u} g_{\bm v}' g_{\bm A}' g_{\bm B}') 
\end{align*}
which yields
\begin{align*}
    f_7''' &= \frac{3 ( g_{\bm u}' g_{\bm v} g_{\bm A}'' g_{\bm B} + g_{\bm u}' g_{\bm v} g_{\bm A} g_{\bm B}'' + g_{\bm u} g_{\bm v}' g_{\bm A}'' g_{\bm B} + g_{\bm u} g_{\bm v}' g_{\bm A} g_{\bm B}'' + g_{\bm u} g_{\bm v} g_{\bm A}'' g_{\bm B}' + g_{\bm u} g_{\bm v} g_{\bm A}' g_{\bm B}'')}{h_{1,\bm C}} \\
    &\quad + \frac{6 (g_{\bm u}' g_{\bm v}' g_{\bm A}' g_{\bm B} + g_{\bm u}' g_{\bm v}' g_{\bm A} g_{\bm B}' + g_{\bm u}' g_{\bm v} g_{\bm A}' g_{\bm B}' + g_{\bm u} g_{\bm v}' g_{\bm A}' g_{\bm B}')}{h_{1, \bm C}} \\
    & \quad- \frac{3f_7'' h_{1,\bm C}' + 3f_7' h_{1,\bm C}'' + f_7 h_{1,\bm C}'''}{h_{1, \bm C}} \\
    &= \frac{ 6 (u_i \bm A_{ii} g_{\bm v} g_{\bm B}+ u_i \bm B_{ii} g_{\bm v} g_{\bm A}  + v_i \bm A_{ii} g_{\bm u} g_{\bm B})}{h_{1, \bm C}} \\
    &\quad + \frac{ 6 (v_i \bm B_{ii} g_{\bm u} g_{\bm A}  + 2 \bm A_{ii} (\bm B \bm w)_i g_{\bm u} g_{\bm v} + 2 \bm B_{ii} (\bm A \bm w)_i g_{\bm u} g_{\bm v})}{h_{1, \bm C}} \\
    &\quad + \frac{12 (u_i v_i (\bm A \bm w)_i g_{\bm B} + u_i v_i (\bm B \bm w)_i  g_{\bm A}+ 2 u_i   (\bm A \bm w)_i (\bm B \bm w)_i g_{\bm v} + 2  v_i (\bm A \bm w)_i (\bm B \bm w)_i g_{\bm u})}{h_{1, \bm C}} \\
    &\quad - \frac{12f_7'' (1 + \bm w^\top \bm C \bm w)(\bm C \bm w)_i + 24f_7' \cdot (\bm C \bm w)_i^2}{h_{1, \bm C}} \\
    &\quad + \frac{12 f_7' \cdot (1 + \bm w^\top \bm C \bm w)\bm C_{ii}  + 24 f_7 \cdot (\bm C \bm w)_i \bm C_{ii}}{h_{1, \bm C}}
\end{align*}
For each $i$, noting that $h_{1,\bm C} \geq 1$, we have
\begin{align*}
    |f_7'| &\lcon |u_i g_{\bm v} g_{\bm A} g_{\bm B}| + |v_i g_{\bm u}  g_{\bm A} g_{\bm B}| + |(\bm A \bm w)_ig_{\bm u} g_{\bm v} g_{\bm B}| \\
    &\qquad + |(\bm B \bm w)_ig_{\bm u} g_{\bm v} g_{\bm A}| + |f_7 \cdot (1 + \bm w^\top \bm C \bm w) (\bm C \bm w)_i|
\end{align*}
and 
\begin{align*}
    |f_7''| &\lcon |u_i v_i g_{\bm A} g_{\bm B}| + |u_i (\bm A \bm w)_i g_{\bm v} g_{\bm B}| + |u_i (\bm B \bm w)_i  g_{\bm v} g_{\bm A}|  + |v_i (\bm A \bm w)_i g_{\bm u}  g_{\bm B}|   \\
    &\qquad + |v_i (\bm B \bm w)_i g_{\bm u} g_{\bm A}| + |(\bm A \bm w)_i (\bm B \bm w)_i g_{\bm u} g_{\bm v}| + |\bm A_{ii} g_{\bm u} g_{\bm v}  g_{\bm B}| + |\bm B_{ii} g_{\bm u} g_{\bm v} g_{\bm A}|  \\
    &\qquad + |f_7' \cdot (1 + \bm w^\top \bm C \bm w)(\bm C \bm w)_i| + |f_7 \cdot (\bm C \bm w)_i^2| + |f_7 \cdot (1 + \bm w^\top \bm C \bm w) \bm C_{ii}| \\
    &\lcon |u_i v_i g_{\bm A} g_{\bm B}| + |u_i (\bm A \bm w)_i g_{\bm v} g_{\bm B}| + |u_i (\bm B \bm w)_i  g_{\bm v} g_{\bm A}|  + |v_i (\bm A \bm w)_i g_{\bm u}  g_{\bm B}| \\
    &\qquad + |v_i (\bm B \bm w)_i g_{\bm u} g_{\bm A}| + |(\bm A \bm w)_i (\bm B \bm w)_i g_{\bm u} g_{\bm v}| + |\bm A_{ii} g_{\bm u} g_{\bm v}  g_{\bm B}| + |\bm B_{ii} g_{\bm u} g_{\bm v} g_{\bm A}| \\
    &\qquad + (|u_i g_{\bm v} g_{\bm A} g_{\bm B} \cdot (1 + \bm w^\top \bm C \bm w)(\bm C \bm w)_i| + |v_i g_{\bm u}  g_{\bm A} g_{\bm B} \cdot (1 + \bm w^\top \bm C \bm w)(\bm C \bm w)_i| \\
    &\qquad \quad + |(\bm A \bm w)_ig_{\bm u} g_{\bm v} g_{\bm B} \cdot (1 + \bm w^\top \bm C \bm w)(\bm C \bm w)_i| \\
    &\qquad \quad + |(\bm B \bm w)_ig_{\bm u} g_{\bm v} g_{\bm A} \cdot (1 + \bm w^\top \bm C \bm w)(\bm C \bm w)_i| \\
    &\qquad \quad + |f_7 \cdot (1 + \bm w^\top \bm C \bm w)^2 (\bm C \bm w)_i^2|) \\
    &\qquad + |f_7 \cdot (\bm C \bm w)_i^2| + |f_7 \cdot (1 + \bm w^\top \bm C \bm w) \bm C_{ii}|
\end{align*}
and thus finally
\begin{align*}
    &\biggl|\partial_i^3 f_7(\bm W_{t,u,i})\biggr| \\
    &\lcon |u_i \bm A_{ii} g_{\bm v} g_{\bm B}| + |u_i \bm B_{ii} g_{\bm v} g_{\bm A}|  + |v_i \bm A_{ii} g_{\bm u} g_{\bm B}| + |v_i \bm B_{ii} g_{\bm u} g_{\bm A}|  +  |\bm A_{ii} (\bm B \bm w)_i g_{\bm u} g_{\bm v}|  \\
    &\qquad + |\bm B_{ii} (\bm A \bm w)_i g_{\bm u} g_{\bm v}| + |u_i v_i (\bm A \bm w)_i g_{\bm B}| + |u_i v_i (\bm B \bm w)_i  g_{\bm A}| + |u_i   (\bm A \bm w)_i (\bm B \bm w)_i g_{\bm v}|  \\
    &\qquad + | v_i (\bm A \bm w)_i (\bm B \bm w)_i g_{\bm u}| +| f_7'' \cdot (1 + \bm w^\top \bm C \bm w)(\bm C \bm w)_i| + |f_7' \cdot (\bm C \bm w)_i^2| \\
    &\qquad + |f_7' \cdot (1 + \bm w^\top \bm C \bm w)\bm C_{ii}| + |f_7 \cdot (\bm C \bm w)_i \bm C_{ii}| \\
    &\lcon |u_i \bm A_{ii} g_{\bm v} g_{\bm B}| + |u_i \bm B_{ii} g_{\bm v} g_{\bm A}|  + |v_i \bm A_{ii} g_{\bm u} g_{\bm B}| + |v_i \bm B_{ii} g_{\bm u} g_{\bm A}|  +  |\bm A_{ii} (\bm B \bm w)_i g_{\bm u} g_{\bm v}|  \\
    &\qquad + |\bm B_{ii} (\bm A \bm w)_i g_{\bm u} g_{\bm v}|+ |u_i v_i (\bm A \bm w)_i g_{\bm B}| + |u_i v_i (\bm B \bm w)_i  g_{\bm A}| + |u_i   (\bm A \bm w)_i (\bm B \bm w)_i g_{\bm v}|  \\
    &\qquad + | v_i (\bm A \bm w)_i (\bm B \bm w)_i g_{\bm u}| \\
    &\qquad + [|u_i v_i g_{\bm A} g_{\bm B}(1 + \bm w^\top \bm C \bm w)(\bm C \bm w)_i| + |u_i (\bm A \bm w)_i g_{\bm v} g_{\bm B}(1 + \bm w^\top \bm C \bm w)(\bm C \bm w)_i| \\
    &\qquad \quad + |u_i (\bm B \bm w)_i  g_{\bm v} g_{\bm A}(1 + \bm w^\top \bm C \bm w)(\bm C \bm w)_i| \\
    &\qquad \quad + |v_i (\bm B \bm w)_i g_{\bm u} g_{\bm A} (1 + \bm w^\top \bm C \bm w)(\bm C \bm w)_i| \\
    &\qquad \quad + |(\bm A \bm w)_i (\bm B \bm w)_i g_{\bm u} g_{\bm v} (1 + \bm w^\top \bm C \bm w)(\bm C \bm w)_i| \\
    &\qquad \quad + |\bm A_{ii} g_{\bm u} g_{\bm v}  g_{\bm B} (1 + \bm w^\top \bm C \bm w)(\bm C \bm w)_i| + |\bm B_{ii} g_{\bm u} g_{\bm v} g_{\bm A} (1 + \bm w^\top \bm C \bm w)(\bm C \bm w)_i| \\
    &\qquad \quad  + (|u_i g_{\bm v} g_{\bm A} g_{\bm B} \cdot (1 + \bm w^\top \bm C \bm w)^2(\bm C \bm w)_i^2| + |v_i g_{\bm u}  g_{\bm A} g_{\bm B} \cdot (1 + \bm w^\top \bm C \bm w)^2(\bm C \bm w)_i^2| \\
    &\qquad \qquad + |(\bm A \bm w)_ig_{\bm u} g_{\bm v} g_{\bm B} \cdot (1 + \bm w^\top \bm C \bm w)^2(\bm C \bm w)_i^2| \\
    &\qquad \qquad + |(\bm B \bm w)_ig_{\bm u} g_{\bm v} g_{\bm A} \cdot (1 + \bm w^\top \bm C \bm w)^2(\bm C \bm w)_i^2| \\
    &\qquad \qquad + |f_7 \cdot (1 + \bm w^\top \bm C \bm w)^3 (\bm C \bm w)_i^3|) \\
    &\qquad \quad + |f_7 \cdot (\bm C \bm w)_i^3 (1 + \bm w^\top \bm C \bm w)| + |f_7 \cdot (1 + \bm w^\top \bm C \bm w)^2 \bm C_{ii} (\bm C \bm w)_i|] \\
    &\qquad + (|u_i g_{\bm v} g_{\bm A} g_{\bm B} \cdot (\bm C \bm w)_i^2| + |v_i g_{\bm u}  g_{\bm A} g_{\bm B} \cdot (\bm C \bm w)_i^2| + |(\bm A \bm w)_ig_{\bm u} g_{\bm v} g_{\bm B} \cdot (\bm C \bm w)_i^2| \\
    &\qquad \quad + |(\bm B \bm w)_ig_{\bm u} g_{\bm v} g_{\bm A} \cdot (\bm C \bm w)_i^2| + |f_7 \cdot (1 + \bm w^\top \bm C \bm w) (\bm C \bm w)_i^3|) \\
    &\qquad + (|u_i g_{\bm v} g_{\bm A} g_{\bm B} \cdot (1 + \bm w^\top \bm C \bm w) \bm C_{ii}| + |v_i g_{\bm u}  g_{\bm A} g_{\bm B} \cdot (1 + \bm w^\top \bm C \bm w) \bm C_{ii}| \\
    &\qquad \quad + |(\bm A \bm w)_ig_{\bm u} g_{\bm v} g_{\bm B} \cdot (1 + \bm w^\top \bm C \bm w) \bm C_{ii}| \\
    &\qquad \quad + |(\bm B \bm w)_ig_{\bm u} g_{\bm v} g_{\bm A} \cdot (1 + \bm w^\top \bm C \bm w) \bm C_{ii}| + |f_7 \cdot (1 + \bm w^\top \bm C \bm w)^2 (\bm C \bm w)_i \bm C_{ii}|) \\
    &\qquad + |f_7 \cdot (\bm C \bm w)_i \bm C_{ii}| \\
    &\lcon | \bm A_{ii} g_{\bm v} g_{\bm B} \cdot u_i| + | \bm B_{ii} g_{\bm v} g_{\bm A} \cdot u_i|  + | \bm A_{ii} g_{\bm u} g_{\bm B} \cdot v_i| + | \bm B_{ii} g_{\bm u} g_{\bm A} \cdot v_i|  +  |\bm A_{ii} \cdot g_{\bm u} g_{\bm v}  (\bm B \bm w)_i| \\
    &\qquad + |\bm B_{ii}  g_{\bm u} g_{\bm v} \cdot (\bm A \bm w)_i| + | g_{\bm B} \cdot u_i v_i (\bm A \bm w)_i| + | g_{\bm A} \cdot u_i v_i (\bm B \bm w)_i | + | g_{\bm v} \cdot u_i   (\bm A \bm w)_i (\bm B \bm w)_i| \\
    &\qquad + |  g_{\bm u} \cdot v_i (\bm A \bm w)_i (\bm B \bm w)_i| \\
    &\qquad + [|g_{\bm A} g_{\bm B}g_{1, \bm C}\cdot u_i v_i (\bm C \bm w)_i| + | g_{\bm v} g_{\bm B}g_{1, \bm C}\cdot u_i (\bm A \bm w)_i(\bm C \bm w)_i| \\
    &\qquad \quad + |  g_{\bm v} g_{\bm A}g_{1, \bm C}\cdot u_i (\bm B \bm w)_i(\bm C \bm w)_i| \\
    &\qquad \quad + | g_{\bm u} g_{\bm A} g_{1, \bm C}\cdot v_i (\bm B \bm w)_i(\bm C \bm w)_i| + |g_{\bm u} g_{\bm v} g_{1, \bm C}\cdot (\bm A \bm w)_i (\bm B \bm w)_i (\bm C \bm w)_i| \\
    &\qquad \quad + |\bm A_{ii} g_{\bm u} g_{\bm v}  g_{\bm B} g_{1, \bm C} \cdot (\bm C \bm w)_i| + |\bm B_{ii} g_{\bm u} g_{\bm v} g_{\bm A}g_{1, \bm C} \cdot  (\bm C \bm w)_i| \\
    &\qquad \quad  + (|g_{\bm v} g_{\bm A} g_{\bm B}  g_{1,\bm C}^2 \cdot u_i (\bm C \bm w)_i^2| + | g_{\bm u}  g_{\bm A} g_{\bm B}  g_{1,\bm C}^2 \cdot v_i (\bm C \bm w)_i^2| \\
    &\qquad \qquad + |g_{\bm u} g_{\bm v} g_{\bm B} g_{\bm C}^2 \cdot (\bm A \bm w)_i(\bm C \bm w)_i^2| + |g_{\bm u} g_{\bm v} g_{\bm A}  g_{1,\bm C}^2\cdot (\bm B \bm w)_i(\bm C \bm w)_i^2| \\
    &\qquad \qquad + |g_{\bm u} g_{\bm v} g_{\bm A} g_{\bm B} g_{1,\bm C}^3 \cdot (\bm C \bm w)_i^3|) \\
    &\qquad \quad + |g_{\bm u} g_{\bm v} g_{\bm A} g_{\bm B} g_{1,\bm C}\cdot (\bm C \bm w)_i^3 | + |\bm C_{ii} g_{\bm u} g_{\bm v} g_{\bm A} g_{\bm B} g_{1,\bm C}^2 \cdot  (\bm C \bm w)_i|] \\
    &\qquad + (|g_{\bm v} g_{\bm A} g_{\bm B} \cdot u_i (\bm C \bm w)_i^2| + |g_{\bm u}  g_{\bm A} g_{\bm B} \cdot v_i (\bm C \bm w)_i^2| + |g_{\bm u} g_{\bm v} g_{\bm B} \cdot(\bm A \bm w)_i (\bm C \bm w)_i^2| \\
    &\qquad \quad + |g_{\bm u} g_{\bm v} g_{\bm A} \cdot (\bm B \bm w)_i(\bm C \bm w)_i^2| + |g_{\bm u} g_{\bm v} g_{\bm A} g_{\bm B} g_{1,\bm C} \cdot (\bm C \bm w)_i^3|) \\
    &\qquad + (|\bm C_{ii}g_{\bm v} g_{\bm A} g_{\bm B} g_{1,\bm C} \cdot u_i  | + |\bm C_{ii} g_{\bm u}  g_{\bm A} g_{\bm B} g_{1,\bm C} \cdot   v_i| \\
    &\qquad \quad + |\bm C_{ii} g_{\bm u} g_{\bm v} g_{\bm B}g_{1,\bm C} \cdot  (\bm A \bm w)_i| \\
    &\qquad \quad + |\bm C_{ii} g_{\bm u} g_{\bm v} g_{\bm A}  g_{1,\bm C} \cdot  (\bm B \bm w)_i| + |\bm C_{ii}g_{\bm u} g_{\bm v} g_{\bm A} g_{\bm B} g_{1,\bm C}^2 \cdot  (\bm C \bm w)_i|) \\
    &\qquad + | \bm C_{ii} g_{\bm u} g_{\bm v} g_{\bm A} g_{\bm B} \cdot (\bm C \bm w)_i| 
\end{align*}
We now bound each of the corresponding terms, as we did earlier.

\subsubsection{First bound for \texorpdfstring{$f_8$}{f8}}
We can express
\begin{equation*}
    f_8(\bm w; \bm S) = \frac{h_{\bm u}(\bm w) g_{\bm A}(\bm w) g_{\bm B}(\bm w) g_{\bm C}(\bm w)}{h_{1,\bm D}(\bm w)}.
\end{equation*}
Noting that $h_{1,\bm D} \neq 0$, we can write $f_8(\bm w) h_{1,\bm D}(\bm w) = h_{\bm u}(\bm w) g_{\bm A}(\bm w) g_{\bm B}(\bm w) g_{\bm C}(\bm w)$ and differentiating each side yields
\begin{equation*}
    f_8' h_{1,\bm D} + f_8 h_{1,\bm D}' = h_{\bm u}' g_{\bm A} g_{\bm B} g_{\bm C} + h_{\bm u} g_{\bm A}' g_{\bm B} g_{\bm C} + h_{\bm u} g_{\bm A} g_{\bm B}' g_{\bm C} + h_{\bm u} g_{\bm A} g_{\bm B} g_{\bm C}'
\end{equation*}
where $f_8' = \partial_i f_8$, etc. This yields, as usual, 
\begin{align*} 
f_8' &= \frac{h_{\bm u}' g_{\bm A} g_{\bm B} g_{\bm C} + h_{\bm u} g_{\bm A}' g_{\bm B} g_{\bm C} + h_{\bm u} g_{\bm A} g_{\bm B}' g_{\bm C} + h_{\bm u} g_{\bm A} g_{\bm B} g_{\bm C}' - f_8 h_{1, \bm D}'}{h_{1, \bm D}} \\
&= \frac{2 u_i (\bm u^\top \bm w) g_{\bm A} g_{\bm B} g_{\bm C} + 2 (\bm A \bm w)_i h_{\bm u} g_{\bm B} g_{\bm C} + 2 (\bm B \bm w)_i h_{\bm u} g_{\bm A} g_{\bm C}}{h_{1, \bm D}} \\
&\qquad + \frac{ 2 (\bm C \bm w)_ih_{\bm u} g_{\bm A} g_{\bm B} - 4 f_8 \cdot (1 + \bm w^\top \bm D \bm w)(\bm D \bm w)_i}{h_{1, \bm D}}
\end{align*} 
Differentiating each side again yields
\begin{align*}
    f_8'' h_{1, \bm D} + 2f_8' h_{1, \bm D}' + f_8 h_{1, \bm D}'' &= h_{\bm u}'' g_{\bm A} g_{\bm B} g_{\bm C} + h_{\bm u} g_{\bm A}'' g_{\bm B} g_{\bm C} + h_{\bm u} g_{\bm A} g_{\bm B}'' g_{\bm C} + h_{\bm u} g_{\bm A} g_{\bm B} g_{\bm C}'' \\
    &\qquad + 2( h_{\bm u}' g_{\bm A}' g_{\bm B} g_{\bm C} + h_{\bm u}' g_{\bm A} g_{\bm B}' g_{\bm C} + h_{\bm u}' g_{\bm A} g_{\bm B} g_{\bm C}' \\
    &\qquad \qquad + h_{\bm u} g_{\bm A}' g_{\bm B}' g_{\bm C} + h_{\bm u} g_{\bm A}' g_{\bm B} g_{\bm C}' + h_{\bm u} g_{\bm A} g_{\bm B}' g_{\bm C}') 
\end{align*}
and rearranging yields
\begin{align*}
    f_{8}'' &= \frac{2( h_{\bm u}' g_{\bm A}' g_{\bm B} g_{\bm C} + h_{\bm u}' g_{\bm A} g_{\bm B}' g_{\bm C} + h_{\bm u}' g_{\bm A} g_{\bm B} g_{\bm C}' ) }{h_{1, \bm D}} \\
    &\qquad + \frac{2(  h_{\bm u} g_{\bm A}' g_{\bm B}' g_{\bm C} + h_{\bm u} g_{\bm A}' g_{\bm B} g_{\bm C}' + h_{\bm u} g_{\bm A} g_{\bm B}' g_{\bm C}') }{h_{1, \bm D}} \\
    &\qquad + \frac{h_{\bm u}'' g_{\bm A} g_{\bm B} g_{\bm C} + h_{\bm u} g_{\bm A}'' g_{\bm B} g_{\bm C} + h_{\bm u} g_{\bm A} g_{\bm B}'' g_{\bm C} + h_{\bm u} g_{\bm A} g_{\bm B} g_{\bm C}'' - 2f_8' h_{1, \bm D}' - f_8 h_{1,\bm D}''}{h_{1, \bm D}} \\
    &= \frac{8( u_i (\bm u^\top \bm w) (\bm A \bm w)_i g_{\bm B} g_{\bm C} + u_i (\bm u^\top \bm w) (\bm B\bm w)_i g_{\bm A} g_{\bm C} + u_i (\bm u^\top \bm w) (\bm C \bm w)_i g_{\bm A} g_{\bm B})}{h_{1, \bm D}} \\
    &\qquad + \frac{8((\bm A \bm w)_i(\bm B\bm w)_ih_{\bm u} g_{\bm C} + (\bm A \bm w)_i (\bm D \bm w)_i h_{\bm u}  g_{\bm B}  + (\bm B \bm w)_i (\bm C \bm w)_i h_{\bm u} g_{\bm A} ) }{h_{1, \bm D}} \\
    &\qquad + \frac{2u_i^2 g_{\bm A} g_{\bm B} g_{\bm C} + 2 \bm A_{ii} h_{\bm u} g_{\bm B} g_{\bm C} + 2 \bm B_{ii} h_{\bm u} g_{\bm A} g_{\bm C} + 2 \bm C_{ii} h_{\bm u} g_{\bm A} g_{\bm B}}{h_{1, \bm D}} \\
    &\qquad - \frac{8f_8' \cdot (1 + \bm w^\top \bm D \bm w)(\bm D \bm w)_i + 8 f_8 \cdot (\bm D \bm w)_i^2 + 4 f_8 \cdot (1 + \bm w^\top \bm D \bm w) \bm D_{ii}}{h_{1, \bm D}}
\end{align*}
Again, we can differentiate each side again to yield
\begin{align*}
    &f_8''' h_{1, \bm D} + 3 f_8'' h_{1, \bm D}' + 3 f_8' h_{1, \bm D}'' + f_8 h_{1, \bm D}''' \\&= h_{\bm u}''' g_{\bm A} g_{\bm B} g_{\bm C} + h_{\bm u} g_{\bm A}''' g_{\bm B} g_{\bm C} + h_{\bm u} g_{\bm A} g_{\bm B}''' g_{\bm C} + h_{\bm u} g_{\bm A} g_{\bm B} g_{\bm C}''' \\
    &\qquad + 3(h_{\bm u}'' g_{\bm A}' g_{\bm B} g_{\bm C} + h_{\bm u}' g_{\bm A}'' g_{\bm B} g_{\bm C} + h_{\bm u}'' g_{\bm A} g_{\bm B}' g_{\bm C} + h_{\bm u}' g_{\bm A} g_{\bm B}'' g_{\bm C} \\
    &\qquad \qquad + h_{\bm u}'' g_{\bm A} g_{\bm B} g_{\bm C}' + h_{\bm u}' g_{\bm A} g_{\bm B} g_{\bm C}'' + h_{\bm u} g_{\bm A}'' g_{\bm B}' g_{\bm C} + h_{\bm u} g_{\bm A}' g_{\bm B}'' g_{\bm C} \\
    &\qquad \qquad + h_{\bm u} g_{\bm A}'' g_{\bm B} g_{\bm C}' + h_{\bm u} g_{\bm A}' g_{\bm B} g_{\bm C}''+  h_{\bm u} g_{\bm A} g_{\bm B}'' g_{\bm C}' + h_{\bm u} g_{\bm A} g_{\bm B}' g_{\bm C}'') \\
    &\qquad + 6 (h_{\bm u} g_{\bm A}' g_{\bm B}' g_{\bm C}' + h_{\bm u}' g_{\bm A} g_{\bm B}' g_{\bm C}' + h_{\bm u}' g_{\bm A}' g_{\bm B} g_{\bm C}' + h_{\bm u}' g_{\bm A}' g_{\bm B}' g_{\bm C}) \\
    &= 3(h_{\bm u}'' g_{\bm A}' g_{\bm B} g_{\bm C} + h_{\bm u}' g_{\bm A}'' g_{\bm B} g_{\bm C} + h_{\bm u}'' g_{\bm A} g_{\bm B}' g_{\bm C} + h_{\bm u}' g_{\bm A} g_{\bm B}'' g_{\bm C} \\
    &\qquad \qquad + h_{\bm u}'' g_{\bm A} g_{\bm B} g_{\bm C}' + h_{\bm u}' g_{\bm A} g_{\bm B} g_{\bm C}'' + h_{\bm u} g_{\bm A}'' g_{\bm B}' g_{\bm C} + h_{\bm u} g_{\bm A}' g_{\bm B}'' g_{\bm C} \\
    &\qquad \qquad + h_{\bm u} g_{\bm A}'' g_{\bm B} g_{\bm C}' + h_{\bm u} g_{\bm A}' g_{\bm B} g_{\bm C}''+  h_{\bm u} g_{\bm A} g_{\bm B}'' g_{\bm C}' + h_{\bm u} g_{\bm A} g_{\bm B}' g_{\bm C}'') \\
    &\qquad + 6 (h_{\bm u} g_{\bm A}' g_{\bm B}' g_{\bm C}' + h_{\bm u}' g_{\bm A} g_{\bm B}' g_{\bm C}' + h_{\bm u}' g_{\bm A}' g_{\bm B} g_{\bm C}' + h_{\bm u}' g_{\bm A}' g_{\bm B}' g_{\bm C})
\end{align*}
which yields
\begin{align*}
    f_8''' &= \frac{3(h_{\bm u}'' g_{\bm A}' g_{\bm B} g_{\bm C} + h_{\bm u}' g_{\bm A}'' g_{\bm B} g_{\bm C} + h_{\bm u}'' g_{\bm A} g_{\bm B}' g_{\bm C} + h_{\bm u}' g_{\bm A} g_{\bm B}'' g_{\bm C})}{h_{1, \bm D}} \\
    &\qquad + \frac{3(h_{\bm u}'' g_{\bm A} g_{\bm B} g_{\bm C}' + h_{\bm u}' g_{\bm A} g_{\bm B} g_{\bm C}'' + h_{\bm u} g_{\bm A}'' g_{\bm B}' g_{\bm C} + h_{\bm u} g_{\bm A}' g_{\bm B}'' g_{\bm C})}{h_{1, \bm D}} \\
    &\qquad + \frac{3(h_{\bm u} g_{\bm A}'' g_{\bm B} g_{\bm C}' + h_{\bm u} g_{\bm A}' g_{\bm B} g_{\bm C}''+  h_{\bm u} g_{\bm A} g_{\bm B}'' g_{\bm C}' + h_{\bm u} g_{\bm A} g_{\bm B}' g_{\bm C}'')}{h_{1,\bm D}} \\
    &\qquad + \frac{6 (h_{\bm u} g_{\bm A}' g_{\bm B}' g_{\bm C}' + h_{\bm u}' g_{\bm A} g_{\bm B}' g_{\bm C}' + h_{\bm u}' g_{\bm A}' g_{\bm B} g_{\bm C}' + h_{\bm u}' g_{\bm A}' g_{\bm B}' g_{\bm C})}{h_{1, \bm D}} \\
    &\qquad - \frac{3f_8'' h_{1,\bm D}' + 3f_8' h_{1, \bm D}'' + f_8 h_{1, \bm D}'''}{h_{1, \bm D}} \\
    &= \frac{12(u_i^2 (\bm A \bm w)_i g_{\bm B} g_{\bm C} + u_i (\bm u^\top \bm w) \bm A_{ii} g_{\bm B} g_{\bm C} + u_i^2 (\bm B \bm w)_i g_{\bm A} g_{\bm C} + u_i (\bm u^\top \bm w) \bm B_{ii} g_{\bm A} g_{\bm C})}{h_{1, \bm D}} \\
    &\qquad + \frac{12(u_i^2 (\bm C \bm w)_i g_{\bm A} g_{\bm B} + u_i (\bm u^\top \bm w) \bm C_{ii} g_{\bm A} g_{\bm B} + \bm A_{ii} (\bm B \bm w)_i h_{\bm u} g_{\bm C} + \bm B_{ii} (\bm A \bm w)_i h_{\bm u} g_{\bm C})}{h_{1, \bm D}} \\
    &\qquad + \frac{12( \bm A_{ii} (\bm C \bm w)_i h_{\bm u}  g_{\bm B}  + \bm C_{ii} (\bm A \bm w)_i h_{\bm u} g_{\bm B} +  \bm B_{ii} (\bm C \bm w)_i h_{\bm u} g_{\bm A} + \bm C_{ii} (\bm B \bm w)_i h_{\bm u} g_{\bm A})}{h_{1,\bm D}} \\
    &\qquad + \frac{48 ( (\bm A \bm w)_i (\bm B \bm w)_i (\bm C \bm w)_i h_{\bm u} + u_i (\bm u^\top \bm w) (\bm B \bm w)_i (\bm C \bm w)_i g_{\bm A})}{h_{1, \bm D}} \\
    &\qquad + \frac{48(u_i (\bm u^\top \bm w) (\bm A \bm w)_i (\bm C \bm w)_i  g_{\bm B} + u_i (\bm u^\top \bm w)(\bm A \bm w)_i (\bm B \bm w)_i  g_{\bm C})}{h_{1, \bm D}} \\
    &\qquad - \frac{12f_8'' \cdot (1 + \bm w^\top \bm D \bm w)(\bm D \bm w)_i + 24f_8' \cdot (\bm D \bm w)_i^2}{h_{1, \bm D}}  \\
    &\qquad + \frac{12f_8' \cdot (1 + \bm w^\top \bm D \bm w) \bm D_{ii} + 24 f_8 \cdot (\bm D \bm w)_i \bm D_{ii}}{h_{1, \bm D}} 
\end{align*}
For each $i$, noting that $h_{1,\bm D} \geq 1$, we have 
\begin{align*}
    |f_8'| &\lcon |u_i (\bm u^\top \bm w) g_{\bm A} g_{\bm B} g_{\bm C}| + |(\bm A \bm w)_i h_{\bm u} g_{\bm B} g_{\bm C}| +  |(\bm B \bm w)_i h_{\bm u} g_{\bm A} g_{\bm C}| \\
    &\qquad +  |(\bm C \bm w)_ih_{\bm u} g_{\bm A} g_{\bm B}| +  |f_8 \cdot (1 + \bm w^\top \bm D \bm w)(\bm D \bm w)_i|
\end{align*}
and
\begin{align*}
    |f_8''| &\lcon |u_i (\bm u^\top \bm w) (\bm A \bm w)_i g_{\bm B} g_{\bm C}| + |u_i (\bm u^\top \bm w) (\bm B\bm w)_i g_{\bm A} g_{\bm C}| + |u_i (\bm u^\top \bm w) (\bm C \bm w)_i g_{\bm A} g_{\bm B}| \\
    &\qquad + |(\bm A \bm w)_i(\bm B\bm w)_ih_{\bm u} g_{\bm C}| + |(\bm A \bm w)_i (\bm C \bm w)_i h_{\bm u}  g_{\bm B}|  + |(\bm B \bm w)_i (\bm C \bm w)_i h_{\bm u} g_{\bm A}| \\
    &\qquad + |u_i^2 g_{\bm A} g_{\bm B} g_{\bm C}| + |\bm A_{ii} h_{\bm u} g_{\bm B} g_{\bm C}| +  |\bm B_{ii} h_{\bm u} g_{\bm A} g_{\bm C}| +  |\bm C_{ii} h_{\bm u} g_{\bm A} g_{\bm B}| \\
    &\qquad + |f_8' \cdot (1 + \bm w^\top \bm D \bm w)(\bm D \bm w)_i| + |f_8 \cdot (\bm D \bm w)_i^2| + |f_8 \cdot (1 + \bm w^\top \bm D \bm w) \bm D_{ii}| \\
    &\lcon |u_i (\bm u^\top \bm w) (\bm A \bm w)_i g_{\bm B} g_{\bm C}| + |u_i (\bm u^\top \bm w) (\bm B\bm w)_i g_{\bm A} g_{\bm C}| + |u_i (\bm u^\top \bm w) (\bm C \bm w)_i g_{\bm A} g_{\bm B}| \\
    &\qquad + |(\bm A \bm w)_i(\bm B\bm w)_ih_{\bm u} g_{\bm C}| + |(\bm A \bm w)_i (\bm C \bm w)_i h_{\bm u}  g_{\bm B}|  + |(\bm B \bm w)_i (\bm C \bm w)_i h_{\bm u} g_{\bm A}| \\
    &\qquad + |u_i^2 g_{\bm A} g_{\bm B} g_{\bm C}| + |\bm A_{ii} h_{\bm u} g_{\bm B} g_{\bm C}| +  |\bm B_{ii} h_{\bm u} g_{\bm A} g_{\bm C}| +  |\bm C_{ii} h_{\bm u} g_{\bm A} g_{\bm B}| \\
    &\qquad + (|u_i (\bm u^\top \bm w) g_{\bm A} g_{\bm B} g_{\bm C} (1 + \bm w^\top \bm D \bm w)(\bm D \bm w)_i| \\
    &\qquad \quad + |(\bm A \bm w)_i h_{\bm u} g_{\bm B} g_{\bm C}(1 + \bm w^\top \bm D \bm w)(\bm D \bm w)_i| \\
    &\qquad \quad +  |(\bm B \bm w)_i h_{\bm u} g_{\bm A} g_{\bm C} (1 + \bm w^\top \bm D \bm w)(\bm D \bm w)_i| \\
    &\qquad \quad + |(\bm C \bm w)_ih_{\bm u} g_{\bm A} g_{\bm B} (1 + \bm w^\top \bm D \bm w)(\bm D \bm w)_i| \\
    &\qquad \quad  +  |f_8 \cdot (1 + \bm w^\top \bm D \bm w)^2(\bm D \bm w)_i^2|) \\
    &\qquad + |f_8 \cdot (\bm D \bm w)_i^2| + |f_8 \cdot (1 + \bm w^\top \bm D \bm w) \bm D_{ii}|
\end{align*}
and thus finally
\begin{align*}
    &\biggl|\partial_i^3 f_8(\bm w)\biggr | \\
    &\lcon |u_i^2 (\bm A \bm w)_i g_{\bm B} g_{\bm C}| + |u_i (\bm u^\top \bm w) \bm A_{ii} g_{\bm B} g_{\bm C}| + |u_i^2 (\bm B \bm w)_i g_{\bm A} g_{\bm C}| + |u_i (\bm u^\top \bm w) \bm B_{ii} g_{\bm A} g_{\bm C}| \\
    &\qquad + |u_i^2 (\bm C \bm w)_i g_{\bm A} g_{\bm B}| + |u_i (\bm u^\top \bm w) \bm C_{ii} g_{\bm A} g_{\bm B}| + |\bm A_{ii} (\bm B \bm w)_i h_{\bm u} g_{\bm C}| + |\bm B_{ii} (\bm A \bm w)_i h_{\bm u} g_{\bm C}| \\
    &\qquad + |\bm A_{ii} (\bm C \bm w)_i h_{\bm u}  g_{\bm B}|  + |\bm C_{ii} (\bm A \bm w)_i h_{\bm u} g_{\bm B}| + |\bm B_{ii} (\bm C \bm w)_i h_{\bm u} g_{\bm A}| + |\bm C_{ii} (\bm B \bm w)_i h_{\bm u} g_{\bm A}| \\
    &\qquad + |(\bm A \bm w)_i (\bm B \bm w)_i (\bm C \bm w)_i h_{\bm u}| + |u_i (\bm u^\top \bm w) (\bm B \bm w)_i (\bm C \bm w)_i g_{\bm A}| \\
    &\qquad + |u_i (\bm u^\top \bm w) (\bm A \bm w)_i (\bm C \bm w)_i  g_{\bm B}| + |u_i (\bm u^\top \bm w)(\bm A \bm w)_i (\bm B \bm w)_i  g_{\bm C}| \\
    &\qquad + |f_8'' \cdot (1 + \bm w^\top \bm D \bm w)(\bm D \bm w)_i| + |f_8' \cdot (\bm D \bm w)_i^2| + |f_8' \cdot (1 + \bm w^\top \bm D \bm w) \bm D_{ii}|\\
    &\qquad + |f_8 \cdot (\bm D \bm w)_i \bm D_{ii}| \\
    &\lcon |u_i^2 (\bm A \bm w)_i g_{\bm B} g_{\bm C}| + |u_i (\bm u^\top \bm w) \bm A_{ii} g_{\bm B} g_{\bm C}| + |u_i^2 (\bm B \bm w)_i g_{\bm A} g_{\bm C}| + |u_i (\bm u^\top \bm w) \bm B_{ii} g_{\bm A} g_{\bm C}| \\
    &\qquad + |u_i^2 (\bm C \bm w)_i g_{\bm A} g_{\bm B}| + |u_i (\bm u^\top \bm w) \bm C_{ii} g_{\bm A} g_{\bm B}| + |\bm A_{ii} (\bm B \bm w)_i h_{\bm u} g_{\bm C}| + |\bm B_{ii} (\bm A \bm w)_i h_{\bm u} g_{\bm C}| \\
    &\qquad + |\bm A_{ii} (\bm C \bm w)_i h_{\bm u}  g_{\bm B}|  + |\bm C_{ii} (\bm A \bm w)_i h_{\bm u} g_{\bm B}| + |\bm B_{ii} (\bm C \bm w)_i h_{\bm u} g_{\bm A}| + |\bm C_{ii} (\bm B \bm w)_i h_{\bm u} g_{\bm A}| \\
    &\qquad + |(\bm A \bm w)_i (\bm B \bm w)_i (\bm C \bm w)_i h_{\bm u}| + |u_i (\bm u^\top \bm w) (\bm B \bm w)_i (\bm C \bm w)_i g_{\bm A}| \\
    &\qquad + |u_i (\bm u^\top \bm w) (\bm A \bm w)_i (\bm C \bm w)_i  g_{\bm B}| + |u_i (\bm u^\top \bm w)(\bm A \bm w)_i (\bm B \bm w)_i  g_{\bm C}| \\
    &\qquad + [|u_i (\bm u^\top \bm w) (\bm A \bm w)_i g_{\bm B} g_{\bm C} (1 + \bm w^\top \bm D \bm w)(\bm D \bm w)_i| \\
    &\qquad \quad + |u_i (\bm u^\top \bm w) (\bm B\bm w)_i g_{\bm A} g_{\bm C} (1 + \bm w^\top \bm D \bm w)(\bm D \bm w)_i| \\
    &\qquad \quad + |u_i (\bm u^\top \bm w) (\bm C \bm w)_i g_{\bm A} g_{\bm B }(1 + \bm w^\top \bm D \bm w)(\bm D \bm w)_i| \\
    &\qquad \quad + |(\bm A \bm w)_i(\bm B\bm w)_ih_{\bm u} g_{\bm C} (1 + \bm w^\top \bm D \bm w)(\bm D \bm w)_i| \\
    &\qquad \quad + |(\bm A \bm w)_i (\bm C \bm w)_i h_{\bm u}  g_{\bm B} (1 + \bm w^\top \bm D \bm w)(\bm D \bm w)_i|  \\
    &\qquad \quad + |(\bm B \bm w)_i (\bm C \bm w)_i h_{\bm u} g_{\bm A} (1 + \bm w^\top \bm D \bm w)(\bm D \bm w)_i| \\
    &\qquad \quad + |u_i^2 g_{\bm A} g_{\bm B} g_{\bm C} (1 + \bm w^\top \bm D \bm w)(\bm D \bm w)_i| + |\bm A_{ii} h_{\bm u} g_{\bm B} g_{\bm C} (1 + \bm w^\top \bm D \bm w)(\bm D \bm w)_i| \\
    &\qquad \quad +  |\bm B_{ii} h_{\bm u} g_{\bm A} g_{\bm C} (1 + \bm w^\top \bm D \bm w)(\bm D \bm w)_i| +  |\bm C_{ii} h_{\bm u} g_{\bm A} g_{\bm B} (1 + \bm w^\top \bm D \bm w)(\bm D \bm w)_i| \\
    &\qquad \quad  + (|u_i (\bm u^\top \bm w) g_{\bm A} g_{\bm B} g_{\bm C} (1 + \bm w^\top \bm D \bm w)^2(\bm D \bm w)_i^2| \\
    &\qquad \qquad + |(\bm A \bm w)_i h_{\bm u} g_{\bm B} g_{\bm C}(1 + \bm w^\top \bm D \bm w)^2(\bm D \bm w)_i^2| \\
    &\qquad \qquad +  |(\bm B \bm w)_i h_{\bm u} g_{\bm A} g_{\bm C} (1 + \bm w^\top \bm D \bm w)^2(\bm D \bm w)_i^2| \\
    &\qquad \qquad + |(\bm C \bm w)_i h_{\bm u} g_{\bm A} g_{\bm B} (1 + \bm w^\top \bm D \bm w)^2(\bm D \bm w)_i^2| \\
    &\qquad \qquad  +  |f_8 \cdot (1 + \bm w^\top \bm D \bm w)^3(\bm D \bm w)_i^3|) \\
    &\qquad \quad  + |f_8 \cdot (\bm D \bm w)_i^3 (1 + \bm w^\top \bm D \bm w)| + |f_8 \cdot (1 + \bm w^\top \bm D \bm w)^2 \bm D_{ii} (\bm D \bm w)_i|] \\
    &\qquad + (|u_i (\bm u^\top \bm w) g_{\bm A} g_{\bm B} g_{\bm C} (\bm D \bm w)_i^2| + |(\bm A \bm w)_i h_{\bm u} g_{\bm B} g_{\bm C}(\bm D \bm w)_i^2| +  |(\bm B \bm w)_i h_{\bm u} g_{\bm A} g_{\bm C}(\bm D \bm w)_i^2| \\
    &\qquad \quad +  |(\bm C \bm w)_ih_{\bm u} g_{\bm A} g_{\bm B} (\bm D \bm w)_i^2| +  |f_8 \cdot (1 + \bm w^\top \bm D \bm w)(\bm D \bm w)_i^3|) \\
    &\qquad + (|u_i (\bm u^\top \bm w) g_{\bm A} g_{\bm B} g_{\bm C} (1 + \bm w^\top \bm D \bm w) \bm D_{ii}| + |(\bm A \bm w)_i h_{\bm u} g_{\bm B} g_{\bm C} (1 + \bm w^\top \bm D \bm w) \bm D_{ii}| \\
    &\qquad \quad +  |(\bm B \bm w)_i h_{\bm u} g_{\bm A} g_{\bm C}(1 + \bm w^\top \bm D \bm w) \bm D_{ii}| +  |(\bm C \bm w)_ih_{\bm u} g_{\bm A} g_{\bm B}(1 + \bm w^\top \bm D \bm w) \bm D_{ii}| \\
    &\qquad \quad +  |f_8 \cdot (1 + \bm w^\top \bm D \bm w)^2(\bm D \bm w)_i \bm D_{ii}|) \\
    &\qquad + |f_8 \cdot (\bm D \bm w)_i \bm D_{ii}|
\end{align*}
or rather
\begin{align*}
    &\biggl|\partial_i^3 f_8(\bm w)\biggr|   \\
    &\lcon |g_{\bm B} g_{\bm C} \cdot u_i^2 (\bm A \bm w)_i | + |\bm A_{ii} g_{\bm u}  g_{\bm B} g_{\bm C} \cdot u_i| + |g_{\bm A} g_{\bm C} \cdot u_i^2 (\bm B \bm w)_i | + |\bm B_{ii} g_{\bm u}  g_{\bm A} g_{\bm C} \cdot u_i| \\
    &\qquad + |g_{\bm A} g_{\bm B}\cdot u_i^2 (\bm C \bm w)_i | + | \bm C_{ii} g_{\bm u} g_{\bm A} g_{\bm B}\cdot u_i| + |\bm A_{ii} g_{\bm u}^2 g_{\bm C}\cdot (\bm B \bm w)_i | + |\bm B_{ii} g_{\bm u}^2 g_{\bm C}\cdot  (\bm A \bm w)_i| \\
    &\qquad + |\bm A_{ii} g_{\bm u}^2  g_{\bm B}\cdot (\bm C \bm w)_i |  + |\bm C_{ii} g_{\bm u}^2 g_{\bm B}\cdot (\bm A \bm w)_i | + |\bm B_{ii} g_{\bm u}^2 g_{\bm A}\cdot (\bm C \bm w)_i | \\
    &\qquad + |\bm C_{ii} g_{\bm u}^2 g_{\bm A}\cdot (\bm B \bm w)_i | \\
    &\qquad + |g_{\bm u}^2\cdot (\bm A \bm w)_i (\bm B \bm w)_i (\bm C \bm w)_i | + | g_{\bm u} g_{\bm A}\cdot u_i(\bm B \bm w)_i (\bm C \bm w)_i | \\
    &\qquad + | g_{\bm u} g_{\bm B}\cdot u_i(\bm A \bm w)_i (\bm C \bm w)_i  | + | g_{\bm u}g_{\bm C}\cdot u_i(\bm A \bm w)_i (\bm B \bm w)_i  | \\
    &\qquad + [| g_{\bm u}  g_{\bm B} g_{\bm C} g_{1,\bm D} \cdot u_i(\bm A \bm w)_i (\bm D \bm w)_i| + | g_{\bm u} g_{\bm A} g_{\bm C} g_{1,\bm D}\cdot u_i(\bm B\bm w)_i (\bm D \bm w)_i| \\
    &\qquad \quad + | g_{\bm u} g_{\bm A} g_{\bm B } g_{1,\bm D} \cdot u_i(\bm C \bm w)_i(\bm D \bm w)_i|+ |g_{\bm u}^2 g_{\bm C} g_{1,\bm D} \cdot (\bm A \bm w)_i(\bm B\bm w)_i(\bm D \bm w)_i| \\
    &\qquad \quad + |g_{\bm u}^2  g_{\bm B}g_{1,\bm D}\cdot (\bm A \bm w)_i (\bm C \bm w)_i (\bm D \bm w)_i|  + |g_{\bm u}^2 g_{\bm A} g_{1,\bm D}\cdot (\bm B \bm w)_i (\bm C \bm w)_i (\bm D \bm w)_i | \\
    &\qquad \quad + |g_{\bm A} g_{\bm B} g_{\bm C} g_{1,\bm D} \cdot u_i^2 (\bm D \bm w)_i| + |\bm A_{ii} g_{\bm u}^2 g_{\bm B} g_{\bm C} g_{1,\bm D} \cdot (\bm D \bm w)_i | \\
    &\qquad \quad +  |\bm B_{ii} g_{\bm u}^2 g_{\bm A} g_{\bm C} g_{1,\bm D}\cdot(\bm D \bm w)_i | +  |\bm C_{ii} g_{\bm u}^2 g_{\bm A} g_{\bm B} g_{1,\bm D} \cdot (\bm D \bm w)_i | \\
    &\qquad \quad  + (| g_{\bm u} g_{\bm A} g_{\bm B} g_{\bm C} g_{1,\bm D}^2  \cdot u_i(\bm D \bm w)_i^2| + |g_{\bm u}^2 g_{\bm B} g_{\bm C}g_{1,\bm D}^2\cdot (\bm A \bm w)_i (\bm D \bm w)_i^2 | \\
    &\qquad \qquad +  |g_{\bm u}^2 g_{\bm A} g_{\bm C} g_{1,\bm D}^2 \cdot (\bm B \bm w)_i (\bm D \bm w)_i^2| + | g_{\bm u}^2 g_{\bm A} g_{\bm B} g_{1,\bm D}^2 \cdot (\bm C \bm w)_i(\bm D \bm w)_i^2| \\
    &\qquad \qquad  +  |g_{\bm u}^2 g_{\bm A} g_{\bm B} g_{\bm C}  g_{1,\bm D}^3 \cdot (\bm D \bm w)_i^3|) \\
    &\qquad \quad  + |g_{\bm u}^2 g_{\bm A} g_{\bm B} g_{\bm C} g_{1,\bm D}\cdot (\bm D \bm w)_i^3 | + |\bm D_{ii}g_{\bm u}^2 g_{\bm A} g_{\bm B} g_{\bm C} g_{1,\bm D}^2 \cdot  (\bm D \bm w)_i|] \\
    &\qquad + (|u_i g_{\bm u} g_{\bm A} g_{\bm B} g_{\bm C}  \cdot (\bm D \bm w)_i^2| + |g_{\bm u}^2 g_{\bm B} g_{\bm C}\cdot (\bm A \bm w)_i  (\bm D \bm w)_i^2| +  | g_{\bm u}^2 g_{\bm A} g_{\bm C} \cdot (\bm B \bm w)_i (\bm D \bm w)_i^2| \\
    &\qquad \quad +  |g_{\bm u}^2 g_{\bm A} g_{\bm B} \cdot (\bm C \bm w)_i (\bm D \bm w)_i^2 | +  |g_{\bm u}^2 g_{\bm A} g_{\bm B} g_{\bm C}  g_{1,\bm D} \cdot (\bm D \bm w)_i^3|) \\
    &\qquad + (|\bm D_{ii}  g_{\bm u} g_{\bm A} g_{\bm B} g_{\bm C} g_{1,\bm D} \cdot u_i| + |\bm D_{ii}  g_{\bm u}^2 g_{\bm B} g_{\bm C} g_{1,\bm D} \cdot (\bm A \bm w)_i| \\
    &\qquad \quad +  |\bm D_{ii} g_{\bm u}^2 g_{\bm A} g_{\bm C}g_{1,\bm D} \cdot (\bm B \bm w)_i | +  |\bm D_{ii} h_{\bm u} g_{\bm A} g_{\bm B}g_{1,\bm D}  \cdot (\bm C \bm w)_i| \\
    &\qquad \quad +  | \bm D_{ii} g_{\bm u}^2 g_{\bm A} g_{\bm B} g_{\bm C}  g_{1,\bm D}^2 \cdot (\bm D \bm w)_i|) \\
    &\qquad + |\bm D_{ii} g_{\bm u}^2 g_{\bm A} g_{\bm B} g_{\bm C} \cdot (\bm D \bm w)_i|
\end{align*}

\subsubsection{Second stage bounding}
We are now ready to further bound these terms in expectation. Note that every term $|\partial_i^3 f_j(\bm w)|$ is bounded by a sum of terms of the form
\begin{equation*}
|\text{(zero or one matrix diagonal element}) \cdot (\text{product of functions}) \cdot (\text{product of vector elements})|
\end{equation*}
Therefore, to bound $\sum_{i=1}^p\E[ |W_i|^r \cdot \sup_{u \in (0,1)} |\partial_i^3 f_j(\bm W_{t,u,i})|]$, we do the following to each term that appears:
\begin{enumerate}
    \item Bound matrix diagonal elements by the operator norm of the matrix.
    \item Use Cauchy-Schwarz to separate the expectation of $|W_i|^r \cdot \text{product of functions}$ from the expectation of the product of vector elements.
    \item Use H\"older's inequality to split the expectation of $|W_i|^r \cdot \text{product of functions}$ into multiple expectations.
    \item Bound each of these expectations using Lemma \ref{lm:t_S_moments}.
    \item Use H\"older's inequality on the expectation of the product of vector elements.
    \item Use Cauchy-Schwarz (potentially repeatedly) on the sum (over $i = 1, \dots, p$) of products of vector elements. If there is only one vector element, use Cauchy-Schwarz with said vector element and the constant $1$ vector, thus picking up an additional $\sqrt{p}$ factor.
    \item For random vector elements involving $\bm W_{t,u,i}$, use Lemma \ref{lm:t_S_moments} to bound each resulting vector norm expectation.
\end{enumerate}
This process of repeatedly applying Cauchy-Schwarz and H\"older's inequality on various terms suffices for our universality proof because they ultimately maintain the scale of the resulting bounds (with respect to $p$ and norms of elements of $\bm S$). As an example, we can bound
\begin{align*}
    \E\biggl[\sum_{i=1}^p &|W_i|^r \cdot \sup_{u \in (0,1)} |\bm D_{ii} g_{\bm u}^2 g_{\bm A} g_{\bm B} g_{\bm C} \cdot (\bm D \bm W_{t,u,i})_i|\biggr] \\
    &\leq \|\bm D\|_{\text{op}}\sum_{i=1}^p\E\biggl[ |W_i|^r \cdot \sup_{u \in (0,1)} |\bm D_{ii} g_{\bm u}^2 g_{\bm A} g_{\bm B} g_{\bm C} \cdot (\bm D \bm W_{t,u,i})_i|\biggr] \\
    &\leq \|\bm D\|_{\text{op}} \sup_i \E\biggl[|W_i|^{2r}\sup_{u \in (0,1)} |g_{\bm u}^4 g_{\bm A}^2 g_{\bm B}^2 g_{\bm C}^2| \biggr]^{1/2} \sum_{i=1}^p\E\biggl[ \sup_{u \in (0,1)} |(\bm D \bm W_{t,u,i})_i^2|\biggr]^{1/2} \\
    &\leq \|\bm D\|_{\text{op}} \sup_i \E[|W_i|^{2r}]^{1/2} \sup_i\E\biggl[\sup_{u \in (0,1)} |g_{\bm u}^4 | \biggr]^{1/2} \sup_i \E\biggl[\sup_{u \in (0,1)} |g_{\bm A}^2 | \biggr]^{1/2} \\
    &\qquad \sup_i \E\biggl[\sup_{u \in (0,1)} | g_{\bm B}^2 | \biggr]^{1/2} \sup_i \E\biggl[\sup_{u \in (0,1)} |g_{\bm C}^2| \biggr]^{1/2} \cdot \sum_{i=1}^p\E\biggl[ \sup_{u \in (0,1)} |(\bm D \bm W_{t,u,i})_i^2|\biggr]^{1/2} \\
    &\lcon \|\bm D\|_{\text{op}} \|\bm u\|_2^2 \|\bm A\|_{\text{op}} \|\bm B\|_{\text{op}} \|\bm C\|_{\text{op}} p^3 \cdot \sum_{i=1}^p\E\biggl[ \sup_{u \in (0,1)} |(\bm D \bm W_{t,u,i})_i^2|\biggr]^{1/2} \\
    &\leq \|\bm D\|_{\text{op}} \|\bm u\|_2^2 \|\bm A\|_{\text{op}} \|\bm B\|_{\text{op}} \|\bm C\|_{\text{op}} p^3 \cdot \sqrt{p} \biggl(\sum_{i=1}^p\E\biggl[ \sup_{u \in (0,1)} |(\bm D \bm W_{t,u,i})_i^2|\biggr]\biggr)^{1/2} \\
    &\lcon \|\bm D\|_{\text{op}} \|\bm u\|_2^2 \|\bm A\|_{\text{op}} \|\bm B\|_{\text{op}} \|\bm C\|_{\text{op}} p^3 \cdot \sqrt{p} \cdot \|\bm D\|_{\text{op}} \sqrt{p} \\
    &= \|\bm u\|_2^2 \|\bm A\|_{\text{op}} \|\bm B\|_{\text{op}} \|\bm C\|_{\text{op}} \|\bm D\|_{\text{op}}^2 p^4
\end{align*}
More generally, for some matrix $\bm M$, some function $h$ and some vector $\bm z$ depending on $\bm W_{t,u,i}$, we have
\begin{align*}
    \E\biggl[\sum_{i=1}^p |W_i|^r  \cdot \sup_{u \in (0,1)} |\bm M_{ii} h \cdot \bm z_i|\biggr] \lcon \|\bm M\|_{\text{op}} \sup_i\E\biggl[ \sup_{u \in (0,1)} h^4\biggr]^{1/4}\cdot \sum_{i=1}^p \E\biggl[ \sup_{u \in (0,1)}  z_i^2\biggr]^{1/2}
\end{align*}
Because all such functions $h$ for us consist of some product of the functions $g_{\bm u}$, $g_{\bm v}$, $g_{\bm A}$, $g_{\bm B}$, $g_{\bm C}$, we immediately have that our bound for $\sup_i \E[\sup_u h^4]^{1/4}$ is the same as the bound we would yield above from $\sup_i \E[\sup_u |h|]$.

Now, we handle the last term $\sum_{i=1}^p \E[\sup_u z_i^2]^{1/2}$ for all possible $\bm z$ in our bounds:
\begin{itemize}
    \item If there is only one element (either random or deterministic; e.g., $z_i =v_i^m$ or $z_i = (\bm A \bm w)_i^m$), we apply Cauchy-Schwarz with the $1$ vector. For random quantities, we then apply Lemma \ref{lm:t_S_moments}. The resulting expression is of the form $\sqrt{p} \cdot \|\bm v\|_{2m}^m \leq \sqrt{p} \|\bm v\|_2^m$ if the vector is deterministic or $\sqrt{p} \|\bm A\|_{\text{op}}^m p^{m/2}$ if it is deterministic: i.e.,
    \begin{align*}
        \sum_{i=1}^p \E\biggl[\sup_{u \in (0,1)} |(\bm A \bm w)_i^{2m}| \biggr]^{1/2} &\leq \biggl(\sum_{i=1}^p 1^2 \biggr)^{1/2}\biggl(\sum_{i=1}^p \E\biggl[\sup_{u \in (0,1)} |(\bm A \bm w)_i^{2m}|\biggr] \biggr)^{1/2} \\
        &\lcon \sqrt{p} (\|\bm A\|_{\text{op}}^{2m} p^{m})^{1/2} = \sqrt{p} \|\bm A\|_{\text{op}}^m p^{m/2}
    \end{align*}
    and similarly for deterministic vectors, where we crucially use the fact that $\|\bm v\|_m \leq \|\bm v\|_q$ for $m \geq q$.
    \item If there is one deterministic element and one random element (e.g., $v_i^q (\bm A \bm w)_i^m$), then we can apply Cauchy-Schwarz and then bound the random quantity using Lemma \ref{lm:t_S_moments}. The resulting expression is of the form $\|\bm v\|_2^q \|\bm A\|_{\text{op}}^m p^{m/2}$: i.e.,
    \begin{align*}
        \sum_{i=1}^p \E\biggl[\sup_{u \in (0,1)} |v_i^{2q}(\bm A \bm w)_i^{2m}| \biggr]^{1/2} &= \sum_{i=1}^p v_i^q \cdot \E\biggl[\sup_{u \in (0,1)} |(\bm A \bm w)_i^{2m}| \biggr]^{1/2}\\ &\leq \biggl(\sum_{i=1}^p  v_i^{2q}\biggr)^{1/2}\biggl(\sum_{i=1}^p \E\biggl[\sup_{u \in (0,1)} |(\bm A \bm w)_i^{2m}|\biggr] \biggr)^{1/2} \\
        &\lcon \|\bm v\|_{2q}^{q}(\|\bm A\|_{\text{op}}^{2m} p^{m})^{1/2} \leq \|\bm v\|_2^q \|\bm A\|_{\text{op}}^m p^{m/2}
    \end{align*}
    \item If there are two deterministic elements and one random element (e.g., $u_i^t v_i^q (\bm A \bm w)_i^m$), then a second application of Cauchy-Schwarz on the deterministic element yields the bound $\|\bm u\|_2^t \|\bm v\|_2^q \|\bm A\|_{\text{op}}^m p^{m/2}$: i.e., 
    \begin{align*}
        &\sum_{i=1}^p \E\biggl[\sup_{u \in (0,1)} |u_i^{2t} v_i^{2q}(\bm A \bm w)_i^{2m}| \biggr]^{1/2} \\&= \sum_{i=1}^p u_i^t v_i^q \cdot \E\biggl[\sup_{u \in (0,1)} |(\bm A \bm w)_i^{2m}| \biggr]^{1/2}\\ &\leq \biggl(\sum_{i=1}^p  u_i^{2t}v_i^{2q}\biggr)^{1/2}\biggl(\sum_{i=1}^p \E\biggl[\sup_{u \in (0,1)} |(\bm A \bm w)_i^{2m}|\biggr] \biggr)^{1/2} \\
        &\leq \biggl(\sum_{i=1}^p  u_i^{4t}\biggr)^{1/4}\biggl(\sum_{i=1}^p v_i^{4q}\biggr)^{1/4}\biggl(\sum_{i=1}^p \E\biggl[\sup_{u \in (0,1)} |(\bm A \bm w)_i^{2m}|\biggr] \biggr)^{1/2} \\
        &\lcon \|\bm u\|_{4t}^t\|\bm v\|_{4q}^{q}(\|\bm A\|_{\text{op}}^{2m} p^{m})^{1/2} \leq \|\bm u\|_2^t \|\bm v\|_2^q \|\bm A\|_{\text{op}}^m p^{m/2}
    \end{align*}
    \item If there is one deterministic element and two random elements (e.g., $v_i^q (\bm A \bm w)_i^m (\bm B \bm w)_i^t$), then (1) a second application of Cauchy-Schwarz on the shared expectation and (2) a third application of Cauchy-Schwarz on the sum yields the bound $\|\bm v\|_2^q \|\bm A\|_{\text{op}}^m p^{m/2} \|\bm B\|_{\text{op}}^t p^{t/2}$: i.e.,
    \begin{align*}
        \sum_{i=1}^p &\E\biggl[\sup_{u \in (0,1)} |v_i^{2q}(\bm A \bm w)_i^{2m} (\bm B \bm w)_i^{2t}| \biggr]^{1/2} \\&= \sum_{i=1}^p v_i^q \cdot \E\biggl[\sup_{u \in (0,1)} |(\bm A \bm w)_i^{2m} (\bm B \bm w)_i^{2t}| \biggr]^{1/2}\\ &\leq \biggl(\sum_{i=1}^p  v_i^{2q}\biggr)^{1/2}\biggl(\sum_{i=1}^p \E\biggl[\sup_{u \in (0,1)} |(\bm A \bm w)_i^{2m} (\bm B \bm w)_i^{2t}|\biggr] \biggr)^{1/2} \\
        &\leq \biggl(\sum_{i=1}^p v_i^{2q}\biggr)^{1/2}\biggl(\sum_{i=1}^p \E\biggl[\sup_{u \in (0,1)} |(\bm A \bm w)_i^{4m}|\biggr]^{1/2} \E\biggl[\sup_{u \in (0,1)} |(\bm B \bm w)_i^{4t}|\biggr]^{1/2}  \biggr)^{1/2} \\
        &\leq \biggl(\sum_{i=1}^p v_i^{2q}\biggr)^{1/2}\biggl(\sum_{i=1}^p \E\biggl[\sup_{u \in (0,1)} |(\bm A \bm w)_i^{4m}|\biggr]\biggr)^{1/4} \biggl(\sum_{i=1}^p \E\biggl[\sup_{u \in (0,1)} |(\bm B \bm w)_i^{4t}|\biggr]  \biggr)^{1/4} \\
        &\lcon \|\bm v\|_{2q}^{q}(\|\bm A\|_{\text{op}}^{4m} p^{2m})^{1/4} (\|\bm B\|_{\text{op}}^{4t} p^{2t})^{1/4} \leq \|\bm v\|_2^q \|\bm A\|_{\text{op}}^m p^{m/2} \|\bm B\|_{\text{op}}^t p^{t/2}
    \end{align*}
    \item Similarly, using the same techniques (i.e., Cauchy-Schwarz with the 1 vector, Cauchy-Schwarz in the expectation, Cauchy-Schwarz in the sum),
    \begin{equation*}
        \sum_{i=1}^p \E\biggl[\sup_{u \in (0,1)} |(\bm A\bm w)_i^{2m} (\bm B \bm w)_i^{2t}|\biggr]^{1/2} \leq \sqrt{p} \|\bm A\|_{\text{op}}^m p^{m/2} \|\bm B\|_{\text{op}}^t p^{t/2}
    \end{equation*}
    \item For no deterministic elements but three random elements (e.g., $(\bm A \bm w)_i^q (\bm B \bm w)_i^m (\bm C \bm w)_i^t$), we can apply Cauchy-Schwarz once and apply H\"older's inequality twice to yield the bound $\sqrt{p} \|\bm A\|_{\text{op}}^q p^{q/2} \|\bm B\|_{\text{op}}^m p^{m/2} \|\bm C\|_{\text{op}}^t p^{t/2}$: i.e.,
    \begin{align*}
        &\sum_{i=1}^p \E\biggl[\sup_{u \in (0,1)} |(\bm A \bm w)_i^{2q} (\bm B \bm w)_i^{2m}(\bm C \bm w)_i^{2t}| \biggr]^{1/2} \\
        &\leq\sqrt{p}\biggl(\sum_{i=1}^p \E\biggl[\sup_{u \in (0,1)} |(\bm A \bm w)_i^{2q} (\bm B \bm w)_i^{2m} (\bm C \bm w)_i^{2t}|\biggr] \biggr)^{1/2} \\
        &\leq \sqrt{p} \biggl(\sum_{i=1}^p \E\biggl[\sup_{u \in (0,1)} (\bm A \bm w)_i^{6q}\biggr]^{1/3} \E\biggl[\sup_{u \in (0,1)} (\bm B \bm w)_i^{6m}\biggr]^{1/3}  \E\biggl[\sup_{u \in (0,1)} (\bm C \bm w)_i^{6t}\biggr]^{1/3}\biggr)^{1/2} \\
        &\leq \sqrt{p} \biggl(\sum_{i=1}^p \E\biggl[\sup_{u \in (0,1)} (\bm A \bm w)_i^{6q}\biggr]\biggr)^{1/6} \biggl(\sum_{i=1}^p \E\biggl[\sup_{u \in (0,1)} (\bm B \bm w)_i^{6m}\biggr]\biggr)^{1/6}  \\
        &\qquad \biggl(\sum_{i=1}^p \E\biggl[\sup_{u \in (0,1)} (\bm C \bm w)_i^{6t}\biggr]\biggr)^{1/6} \\
        &\leq \sqrt{p} \|\bm A\|_{\text{op}}^q p^{q/2} \|\bm B\|_{\text{op}}^m p^{m/2} \|\bm C\|_{\text{op}}^t p^{t/2}.
    \end{align*}
\end{itemize}

Therefore, we have
\begin{align*}
    &\E\biggl[\sum_{i=1}^p |W_i|^r\sup_{u \in (0,1)}|\partial_i^3 f_2(\bm W_{t,u,i}) | \\
    &\lcon \|\bm u\|_2 \|\bm v\|_2 \|\bm A\|_{\text{op}} p^{1/2} + \|\bm v\|_2 \cdot \|\bm u\|_2 \|\bm A\|_{\text{op}}^2 p + \|\bm u\|_2\cdot \|\bm v\|_2 \|\bm A\|_{\text{op}}^2 p \\
    &\qquad + \|\bm u\|_2 \|\bm v\|_2 \cdot \sqrt{p} \|\bm A\|_{\text{op}}^3 p^{3/2} + \|\bm A\|_{\text{op}} \|\bm u\|_2\|\bm v\|_2 \cdot \sqrt{p}\|\bm A\|_{\text{op}} p^{1/2} \\
    &\qquad + \|\bm A\|_{\text{op}} \|\bm v\|_2 \cdot \sqrt{p}\|\bm u\|_2 + \|\bm A\|_{\text{op}}\|\bm u\|_2 \cdot \sqrt{p} \|\bm v\|_2 \\
    &\qquad + \|\bm A\|_{\text{op}} \|\bm u\|_2\|\bm v\|_2 \cdot \sqrt{p}\|\bm A\|_{\text{op}} p^{1/2} \\
    &\lcon \|\bm u\|_2 \|\bm v\|_2 (\|\bm A\|_{\text{op}} p^{1/2} + \|\bm A\|_{\text{op}}^2 p + \|\bm A\|_{\text{op}}^3 p^2)
\end{align*} 
Similarly,
\begin{align*}
   & \E\biggl[\sum_{i=1}^p |W_i|^r \sup_{u \in (0,1)}|\partial_i^3 f_3(\bm w)|\biggr] \\ 
    &\lcon \|\bm A\|_{\text{op}} \|\bm v\|_2 \cdot \sqrt{p} \|\bm u\|_2 + \|\bm A\|_{\text{op}} \|\bm u\|_2 \cdot \sqrt{p} \|\bm v\|_2 + \|\bm u\|_2 \|\bm v\|_2 \|\bm A\|_{\text{op}} p^{1/2} \\
    &\lcon \|\bm u\|_2\|\bm v\|_2\|\bm A||_{\text{op}} p^{1/2}
\end{align*}
and
\begin{align*}
    &\E\biggl[\sum_{i=1}^p |W_i|^r \sup_{u \in (0,1)}|\partial_i^3 f_4(\bm w)|\biggr] \\
    &\lcon \|\bm A\|_{\text{op}} \|\bm v\|_2 \cdot \sqrt{p} \|\bm u\|_2 + \|\bm A\|_{\text{op}} \|\bm u\|_2 \cdot \sqrt{p} \|\bm v\|_2 + \|\bm u\|_2 \|\bm v\|_2 \|\bm A\|_{\text{op}} p^{1/2} \\
    &\qquad + \|\bm A\|_{\text{op}} \|\bm u\|_2 \|\bm v\|_2 \cdot \sqrt{p}\|\bm B\|_{\text{op}} p^{1/2} + \|\bm A\|_{\text{op}} p \cdot \|\bm u\|_2 \|\bm v\|_2 \|\bm B\|_{\text{op}} p^{1/2} +\\
    &\qquad \|\bm v\|_2 \cdot \|\bm u\|_2 \|\bm A\|_{\text{op}} p^{1/2} \|\bm B\|_{\text{op}} p^{1/2} + \|\bm u\| \cdot \|\bm v\|_2 \|\bm A\|_{\text{op}} p^{1/2} \|\bm B\|_{\text{op}} p^{1/2}  \\
    &\qquad  + \|\bm v\|_2 \|\bm A\|_{\text{op}} p \cdot \|\bm u\|_2 \|\bm B\|_{\text{op}}^2 p + \|\bm u\|_2 \|\bm A\|_{\text{op}} p \cdot \|\bm v\|_2 \|\bm B\|_{\text{op}}^2 p \\
    &\qquad + \|\bm u\|_2 \|\bm v\|_2 \cdot \sqrt{p} \|\bm A\|_{\text{op}} p^{1/2} \|\bm B\|_{\text{op}}^2 p + \|\bm u\|_2 \|\bm v\|_2 \|\bm A\|_{\text{op}} p \cdot \sqrt{p} \|\bm B\|_{\text{op}}^3 p^{3/2} \\
    &\qquad  + \|\bm B \|_{\text{op}} \|\bm u\|_2 \|\bm v\|_2 \|\bm A\|_{\text{op}} p \cdot \sqrt{p} \|\bm B\|_{\text{op}} p^{1/2} + \|\bm B\|_{\text{op}} \|\bm v\|_{2} \|\bm A\|_{\text{op}} p \cdot \sqrt{p} \|\bm u\|_2  \\
    &\qquad  + \|\bm B\|_{\text{op}} \|\bm u\|_2 \|\bm A\|_{\text{op}} p \cdot \sqrt{p} \|\bm v\|_2  + \|\bm B\|_{\text{op}} \|\bm u\|_2 \|\bm v\|_2 \cdot \sqrt{p} \|\bm A\|_{\text{op}} p^{1/2} \\
    &\qquad + \|\bm B\|_{\text{op}} \|\bm u\|_2 \|\bm v\|_2 \|\bm A\|_{\text{op}} p \cdot \sqrt{p}\|\bm B\|_{\text{op}} p^{1/2} \\
    &\lcon \|\bm u\|_2 \|\bm v\|_2 (\|\bm A\|_{\text{op}} p^{1/2} + \|\bm A\|_{\text{op}} \|\bm B\|_{\text{op}} p^{3/2} + \|\bm A\|_{\text{op}} \|\bm B\|_{\text{op}}^2 p^2 + \|\bm A\|_{\text{op}} \|\bm B\|_{\text{op}}^3 p^3)
\end{align*}
and
\begin{align*}
    &\E\biggl[\sum_{i=1}^p |W_i|^r \sup_{u \in (0,1)}|\partial_i^3 f_5(\bm w)|\biggr] \\
    &\lcon \|\bm A\|_{\text{op}} \|\bm v\|_2 \cdot \sqrt{p} \|\bm u\|_2 + \|\bm A\|_{\text{op}} \|\bm u\|_2 \cdot \sqrt{p} \|\bm v\|_2 + \|\bm u\|_2 \|\bm v\|_2 \|\bm A\|_{\text{op}} p^{1/2} \\
    &\qquad + \|\bm A\|_{\text{op}} \|\bm u\|_2 \|\bm v\|_2 \|\bm B\|_{\text{op}} p \sqrt{p} \|\bm B\|_{\text{op}} p^{1/2} + \|\bm A\|_{\text{op}} p \|\bm B\|_{\text{op}} p \|\bm u\|_2 \|\bm v\|_2 \|\bm B\|_{\text{op}} p^{1/2} \\
    &\qquad + \|\bm v\|_2 \|\bm B\|_{\text{op}} p \|\bm u\|_2 \|\bm A\|_{\text{op}} p^{1/2} \|\bm B\|_{\text{op}} p^{1/2} + \|\bm u\|_2 \|\bm B\|_{\text{op}} p \|\bm v\|_2 \|\bm A\|_{\text{op}} p^{1/2} \|\bm B\|_{\text{op}} p^{1/2} \\
    &\qquad + \|\bm v\|_2 \|\bm A\|_{\text{op}} p \|\bm B\|_{\text{op}}^2 p^2 \|\bm u\|_2 \|\bm B\|_{\text{op}}^2 p + \|\bm u\|_2 \|\bm A\|_{\text{op}} p \|\bm B\|_{\text{op}}^2 p^2 \|\bm v\|_2 \|\bm B\|_{\text{op}}^2 p \\
    &\qquad + \|\bm u\|_2 \|\bm v\|_2 \|\bm B\|_{\text{op}}^2 p^2 \sqrt{p} \|\bm A\|_{\text{op}} p^{1/2} \|\bm B\|_{\text{op}}^2 p  \\
    &\qquad + \|\bm u\|_2 \|\bm v\|_2 \|\bm A\|_{\text{op}} p \|\bm B\|_{\text{op}}^3 p^3  \sqrt{p} \|\bm B\|_{\text{op}}^3 p^{3/2} \\
    &\qquad + \|\bm u\|_2 \|\bm v\|_2 \|\bm A\|_{\text{op}} p \|\bm B\|_{\text{op}} p  \sqrt{p} \|\bm B\|_{\text{op}}^3 p^{3/2} \\
    &\qquad + \|\bm B\|_{\text{op}} \|\bm u\|_2 \|\bm v\|_2 \|\bm A\|_{\text{op}} p \|\bm B\|_{\text{op}}^2 p^2 \sqrt{p} \|\bm B\|_{\text{op}} p^{1/2} \\
    &\qquad + \|\bm v\|_2 \|\bm A\|_{\text{op}} p \|\bm u\|_2 \|\bm B\|_{\text{op}}^2 p + \|\bm u\|_2 \|\bm A\|_{\text{op}} p \|\bm v\|_2 \|\bm B\|_{\text{op}}^2 p \\
    &\qquad + \| \bm u\|_2 \|\bm v\|_2 \sqrt{p} \|\bm A\|_{\text{op}} p^{1/2} \|\bm B\|_{\text{op}}^2 p \\
    &\qquad + \|\bm u\|_2 \|\bm v\|_2 \|\bm A\|_{\text{op}} p \|\bm B\|_{\text{op}} p \sqrt{p} \|\bm B\|_{\text{op}}^3 p^{3/2} + \|\bm B\|_{\text{op}} \|\bm v\|_2 \|\bm A\|_{\text{op}} p \|\bm B\|_{\text{op}} p  \sqrt{p} \|\bm u\|_2 \\
    &\qquad + \|\bm B\|_{\text{op}} \|\bm u\|_2 \|\bm A\|_{\text{op}} p \|\bm B\|_{\text{op}} p  \sqrt{p} \|\bm v\|_2  + \|\bm B\|_{\text{op}} \|\bm u\|_2 \|\bm v\|_2 \|\bm B\|_{\text{op}} p  \sqrt{p} \|\bm A\|_{\text{op}} p^{1/2} \\
    &\qquad + \|\bm B\|_{\text{op}} \|\bm u\|_2 \|\bm v\|_2 \|\bm A\|_{\text{op}} p \|\bm B\|_{\text{op}}^2 p^2  \sqrt{p} \|\bm B\|_{\text{op}} p^{1/2} \\
    &\qquad + \|\bm B\|_{\text{op}} \|\bm u\|_2 \|\bm v\|_2 \|\bm A\|_{\text{op}} p  \sqrt{p} \|\bm B\|_{\text{op}} p^{1/2} \\
    &\lcon \|\bm u\|_2 \|\bm v\|_2 (\|\bm A\|_{\text{op}} p^{1/2} + \|\bm A\|_{\text{op}} \|\bm B\|_{\text{op}}^2 p^{5/2} + \|\bm A\|_{\text{op}} \|\bm B\|_{\text{op}}^4 p^4 + \|\bm A\|_{\text{op}} \|\bm B\|_{\text{op}}^6 p^6)
\end{align*}
and
\begin{align*}
    &\E\biggl[\sum_{i=1}^p |W_i|^r \sup_{u \in (0,1)}|\partial_i^3 f_6(\bm w)|\biggr] \\
    &\lcon \|\bm B\|_{\text{op}} p \cdot \|\bm u\|_2^2 \|\bm A\|_{\text{op}} p^{1/2} + \|\bm A\|_{\text{op}} \|\bm u\|_2 \|\bm B\|_{\text{op}} p \cdot \sqrt{p} \|\bm u\|_2 + \|\bm A\|_{\text{op}} p \cdot \|\bm u\|_2^2 \|\bm B\|_{\text{op}} p^{1/2} \\
    &\qquad + \|\bm B\|_{\text{op}} \|\bm u\|_2 \|\bm A\|_{\text{op}} p \cdot \sqrt{p} \|\bm u\|_2 + \|\bm A\|_{\text{op}} \|\bm u\|_2^2 \cdot \sqrt{p} \|\bm B\|_{\text{op}} p^{1/2} \\
    &\qquad + \|\bm B\|_{\text{op}} \|\bm u\|_2^2 \cdot \sqrt{p}\|\bm A\|_{\text{op}} p^{1/2} \\
    &\qquad + \|\bm u\|_2 \cdot \|\bm u\|_2 \|\bm A\|_{\text{op}} p^{1/2} \|\bm B\|_{\text{op}} p^{1/2} + \|\bm A\|_{\text{op}} p \|\bm B\|_{\text{op}} p \cdot \|\bm u\|_2^2 \|\bm C\|_{\text{op}} p^{1/2} \\
    &\qquad + \|\bm A\|_{\text{op}} \|\bm u\|_2^2 \|\bm B\|_{\text{op}} p \cdot \sqrt{p}\|\bm C\|_{\text{op}} p^{1/2} + \|\bm B\|_{\text{op}} \|\bm u\|_2^2 \|\bm A\|_{\text{op}} p \cdot \sqrt{p}\|\bm C\|_{\text{op}} p^{1/2} \\
    &\qquad + \|\bm u\|_2 \|\bm B\|_{\text{op}} p \cdot \|\bm u\|_2 \|\bm A\|_{\text{op}} p^{1/2} \|\bm C\|_{\text{op}} p^{1/2} \\
    &\qquad + \|\bm u\| \|\bm A\|_{\text{op}} p \cdot \|\bm u\|_2 \|\bm B\|_{\text{op}} p^{1/2} \|\bm C\|_{\text{op}} p^{1/2} \\
    &\qquad + \|\bm u\|_2^2 \cdot \sqrt{p} \|\bm A\|_{\text{op}} p^{1/2} \|\bm B\|_{\text{op}} p^{1/2} \|\bm C\|_{\text{op}} p^{1/2} + \|\bm u\|_2 \|\bm A\|_{\text{op}} p \|\bm B\|_{\text{op}} p \cdot \|\bm u\|_2 \|\bm C\|_{\text{op}}^2 p \\
    &\qquad + \|\bm u\|_2^2 \|\bm B\|_{\text{op}} p \cdot \sqrt{p} \|\bm A\|_{\text{op}} p^{1/2} \|\bm C\|_{\text{op}}^2 p + \|\bm u\|_2^2 \|\bm A\|_{\text{op}} p \cdot \sqrt{p} \|\bm B\|_{\text{op}} p^{1/2} \|\bm C\|_{\text{op}}^2 p \\
    &\qquad + \|\bm u\|_2^2 \|\bm A\|_{\text{op}} p \|\bm B\|_{\text{op}} p \cdot \sqrt{p} \|\bm C\|_{\text{op}}^3 p^{3/2} + \|\bm C\|_{\text{op}} \|\bm u\|_2 \|\bm A\|_{\text{op}} p \|\bm B\|_{\text{op}} p \cdot \sqrt{p} \|\bm u\|_2 \\
    &\qquad + \|\bm C\|_{\text{op}} \|\bm u\|_2^2 \|\bm B\|_{\text{op}} p \cdot \sqrt{p} \|\bm A\|_{\text{op}} p^{1/2} + \|\bm C\|_{\text{op}} \|\bm u\|_2^2 \|\bm A\|_{\text{op}} p \cdot \sqrt{p} \|\bm B\|_{\text{op}} p^{1/2}  \\
    &\qquad + \|\bm C\|_{\text{op}} \|\bm u\|_2^2 \|\bm A\|_{\text{op}} p \|\bm B\|_{\text{op}} p \cdot \sqrt{p} \|\bm C\|_{\text{op}} p^{1/2} \\
    &\lcon \|\bm u\|_2^2 (\|\bm A\|_{\text{op}} \|\bm B\|_{\text{op}} p^{3/2} + \|\bm A\|_{\text{op}} \|\bm B\|_{\text{op}} \|\bm C\|_{\text{op}} p^{5/2} + \|\bm A\|_{\text{op}} \|\bm B\|_{\text{op}} \|\bm C\|_{\text{op}}^2 p^3 \\
    &\qquad + \|\bm A\|_{\text{op}} \|\bm B\|_{\text{op}} \|\bm C\|_{\text{op}}^3 p^{7/2})
\end{align*}
and
\begin{align*}
    &\E\biggl[\sum_{i=1}^p |W_i|^r \sup_{u \in (0,1)}|\partial_i^3 f_7(\bm w)|\biggr] \\
    &\lcon \|\bm A\|_{\text{op}} \|\bm v\|_2 \|\bm B\|_{\text{op}} p \cdot \sqrt{p} \|\bm u\|_2 + \|\bm B\|_{\text{op}} \|\bm v\|_2 \|\bm A\|_{\text{op}} p \cdot \sqrt{p} \|\bm u\|_2 \\
    &\qquad + \|\bm A\|_{\text{op}} \|\bm u\|_2\|\bm B\|_{\text{op}} p \cdot \sqrt{p} \|\bm v\|_2 + \|\bm B\|_{\text{op}} \|\bm u\|_2 \|\bm A\|_{\text{op}} p \cdot \sqrt{p} \|\bm v\|_2  \\
    &\qquad + \|\bm A\|_{\text{op}} \|\bm u\|_2 \|\bm v\|_2 \cdot \sqrt{p} \|\bm B\|_{\text{op}} p^{1/2} + \|\bm B\|_{\text{op}} \|\bm u\|_2 \|\bm v\|_2 \cdot \sqrt{p} \|\bm A\|_{\text{op}} p^{1/2} \\
    &\qquad + \|\bm B\|_{\text{op}} p \cdot \|\bm u\|_2 \|\bm v\|_2 \|\bm A\|_{\text{op}} p^{1/2} + \|\bm A\|_{\text{op}} p \cdot \|\bm u\|_2 \|\bm v\|_2 \|\bm B\|_{\text{op}} p^{1/2} \\
    &\qquad + \|\bm v\|_2 \cdot \|\bm u\|_2 \|\bm A\|_{\text{op}} p^{1/2} \|\bm B\|_{\text{op}} p^{1/2} + \|\bm u\|_2 \cdot \|\bm v\|_2 \|\bm A\|_{\text{op}} p^{1/2} \|\bm B\|_{\text{op}} p^{1/2}  \\
    &\qquad + \|\bm A\|_{\text{op}} p \|\bm B\|_{\text{op}} p \|\bm C\|_{\text{op}} p \cdot \|\bm u\|_2 \|\bm v\|_2 \|\bm C\|_{\text{op}} p^{1/2} \\
    &\qquad + \|\bm v\|_2 \|\bm B\|_{\text{op}} p \|\bm C\|_{\text{op}} p \cdot \|\bm u\|_2 \|\bm A\|_{\text{op}} p^{1/2} \|\bm C\|_{\text{op}} p^{1/2} \\
    &\qquad + \|\bm v\|_2 \|\bm A\|_{\text{op}} p \|\bm C\|_{\text{op}} p  \cdot \|\bm u\|_2 \|\bm B\|_{\text{op}}p^{1/2} \|\bm C\|_{\text{op}} p^{1/2} \\
    &\qquad + \|\bm u\|_2 \|\bm A\|_{\text{op}} p \|\bm C\|_{\text{op}} p \cdot \|\bm v\|_2 \|\bm B\|_{\text{op}} p^{1/2} \|\bm C\|_{\text{op}} p^{1/2} \\
    &\qquad + \|\bm u\|_2 \|\bm v\|_2 \|\bm C\|_{\text{op}} p \cdot \|\bm A\|_{\text{op}} p^{1/2} \|\bm B\|_{\text{op}} p^{1/2} \|\bm C\|_{\text{op}} p^{1/2} \\
    &\qquad + \|\bm A\|_{\text{op}} \|\bm u\|_2 \|\bm v\|_2 \|\bm B\|_{\text{op}} p \|\bm C\|_{\text{op}} p \cdot \sqrt{p} \|\bm C\|_{\text{op}} p^{1/2}  \\
    &\qquad + \|\bm B\|_{\text{op}} \|\bm u\|_2 \|\bm v\|_2 \|\bm A\|_{\text{op}} p \|\bm C\|_{\text{op}} p \cdot \sqrt{p} \|\bm C\|_{\text{op}} p^{1/2} \\
    &\qquad + \|\bm v\|_2 \|\bm A\|_{\text{op}} p \|\bm B\|_{\text{op}} p \|\bm C\|_{\text{op}}^2 p^2 \cdot \|\bm u\|_2 \|\bm C\|_{\text{op}}^2 p \\
    &\qquad + \|\bm u\|_2 \|\bm A\|_{\text{op}} p \|\bm B\|_{\text{op}} p \|\bm C\|_{\text{op}}^2 p^2 \cdot \|\bm v\|_2 \|\bm C\|_{\text{op}}^2 p \\
    &\qquad + \|\bm u\|_2 \|\bm v\|_2 \|\bm B\|_{\text{op}} p \|\bm C\|_{\text{op}}^2 p^2 \cdot \sqrt{p} \|\bm A\|_{\text{op}} p^{1/2} \|\bm C\|_{\text{op}}^2 p \\
    &\qquad + \|\bm u\|_2 \|\bm v\|_2 \|\bm A\|_{\text{op}} p \|\bm C\|_{\text{op}}^2 p^2 \cdot \sqrt{p} \|\bm B\|_{\text{op}} p^{1/2} \|\bm C\|_{\text{op}}^2 p \\
    &\qquad + \|\bm u\|_2 \|\bm v\|_2 \|\bm A\|_{\text{op}} p \|\bm B\|_{\text{op}} p \|\bm C\|_{\text{op}}^3 p^3 \cdot \sqrt{p} \|\bm C\|_{\text{op}}^3 p^{3/2} \\
    &\qquad + \|\bm u\|_2 \|\bm v\|_2 \|\bm A\|_{\text{op}} p \|\bm B\|_{\text{op}} p \|\bm C\|_{\text{op}} p \cdot \sqrt{p} \|\bm C\|_{\text{op}}^3 p^{3/2} \\
    &\qquad + \|\bm C\|_{\text{op}}\|\bm u\|_2 \|\bm v\|_2 \|\bm A\|_{\text{op}} p \|\bm B\|_{\text{op}} p \|\bm C\|_{\text{op}}^2 p^2 \cdot \sqrt{p} \|\bm C\|_{\text{op}} p^{1/2} \\
    &\qquad + \|\bm v\|_2 \|\bm A\|_{\text{op}} p \|\bm B\|_{\text{op}} p \cdot \|\bm u\|_2 \|\bm C\|_{\text{op}}^2 p + \|\bm u\|_2 \|\bm A\|_{\text{op}} p \|\bm B\|_{\text{op}} p \cdot \|\bm v\|_2 \|\bm C\|_{\text{op}}^2 p \\
    &\qquad + \|\bm u\|_2 \|\bm v\|_2 \|\bm B\|_{\text{op}} p \cdot \sqrt{p} \|\bm A\|_{\text{op}} p^{1/2} \|\bm C\|_{\text{op}}^2 p \\
    &\qquad + \|\bm u\|_2 \|\bm v\|_2 \|\bm A\|_{\text{op}} p \cdot \sqrt{p} \|\bm B\|_{\text{op}} p^{1/2} \|\bm C\|_{\text{op}}^2 p  \\
    &\qquad + \|\bm u\|_2 \|\bm v\|_2 \|\bm A\|_{\text{op}} p \|\bm B\|_{\text{op}} p \|\bm C\|_{\text{op}} p \cdot \sqrt{p} \|\bm C\|_{\text{op}}^3 p^{3/2} \\
    &\qquad + \|\bm C\|_{\text{op}} \|\bm v\|_{\text{op}} \|\bm A\|_{\text{op}} p \|\bm B\|_{\text{op}} p \|\bm C\|_{\text{op}} p \cdot \sqrt{p} \|\bm u\|_2 \\
    &\qquad + \|\bm C\|_{\text{op}} \|\bm u\|_{\text{op}} \|\bm A\|_{\text{op}} p \|\bm B\|_{\text{op}} p \|\bm C\|_{\text{op}} p \cdot \sqrt{p} \|\bm v\|_2 \\
    &\qquad + \|\bm C\|_{\text{op}} \|\bm u\|_2 \|\bm v\|_2 \|\bm B\|_{\text{op}} p \|\bm C\|_{\text{op}} p \cdot \sqrt{p} \|\bm A\|_{\text{op}} p^{1/2} \\
    &\qquad + \|\bm C\|_{\text{op}} \|\bm u\|_2 \|\bm v\|_2 \|\bm A\|_{\text{op}} p \|\bm C\|_{\text{op}} p \cdot \sqrt{p} \|\bm B\|_{\text{op}} p^{1/2} \\
    &\qquad +\|\bm C\|_{\text{op}}\|\bm u\|_2 \|\bm v\|_2 \|\bm A\|_{\text{op}} p \|\bm B\|_{\text{op}} p \|\bm C\|_{\text{op}}^2 p^2 \cdot \sqrt{p} \|\bm C\|_{\text{op}} p^{1/2} \\
    &\qquad + \|\bm C\|_{\text{op}} \|\bm u\|_2 \|\bm v\|_2 \|\bm A\|_{\text{op}} p \|\bm B\|_{\text{op}} p \cdot \sqrt{p}\|\bm C\|_{\text{op}} p^{1/2} \\
    &\lcon \|\bm u\|_2 \|\bm v\|_2 (\|\bm A\|_{\text{op}} \|\bm B\|_{\text{op}} p^{3/2} + \|\bm A\|_{\text{op}} \|\bm B\|_{\text{op}} \|\bm C\|_{\text{op}}^2 p^{7/2} \\
    &\qquad + \|\bm A\|_{\text{op}} \|\bm B\|_{\text{op}} \|\bm C\|_{\text{op}}^4 p^5 + \|\bm A\|_{\text{op}} \|\bm B\|_{\text{op}} \|\bm C\|_{\text{op}}^6 p^7
\end{align*}
and finally 
\begin{align*}
    &\E\biggl[\sum_{i=1}^p |W_i|^r \sup_{u \in (0,1)}|\partial_i^3 f_3(\bm w)|\biggr] \\
    &\lcon \|\bm B\|_{\text{op}} p \|\bm C\|_{\text{op}} p \cdot \|\bm u\|_2^2 \|\bm A\|_{\text{op}} p^{1/2} + \|\bm A\|_{\text{op}} \|\bm u\|_2 \|\bm B\|_{\text{op}} p \|\bm C\|_{\text{op}} p \cdot \sqrt{p} \|\bm u\|_2 \\
    & + \|\bm A\|_{\text{op}} p \|\bm C\|_{\text{op}} p \cdot \|\bm u\|_2^2 \|\bm B\|_{\text{op}} p^{1/2} + \|\bm B\|_{\text{op}} \|\bm u\|_2 \|\bm A\|_{\text{op}} p \|\bm C\|_{\text{op}} p \cdot \sqrt{p}\|\bm u\|_2 \\
    & + \|\bm A\|_{\text{op}} p \|\bm B\|_{\text{op}} p \cdot \|\bm u\|_2^2 \|\bm C\|_{\text{op}} p^{1/2} + \|\bm C\|_{\text{op}} \|\bm u\|_2 \|\bm A\|_{\text{op}} p \|\bm B\|_{\text{op}} p \cdot \sqrt{p} \|\bm u\|_2 \\
    & + \|\bm A\|_{\text{op}} \|\bm u\|_2^2 \|\bm C\|_{\text{op}} p \cdot \sqrt{p}\|\bm B\|_{\text{op}} p^{1/2} + \|\bm B\|_{\text{op}} \|\bm u\|_2^2 \|\bm C\|_{\text{op}} p \cdot \sqrt{p} \|\bm A\|_{\text{op}} p^{1/2} \\
    & + \|\bm A\|_{\text{op}} \|\bm u\|_2^2 \|\bm B\|_{\text{op}} p \cdot \sqrt{p} \|\bm C\|_{\text{op}} p^{1/2} + \|\bm C\|_{\text{op}} \|\bm u\|_2^2 \|\bm B\|_{\text{op}} p \cdot \sqrt{p} \|\bm A\|_{\text{op}} p^{1/2} \\
    & + \|\bm B\|_{\text{op}} \|\bm u\|_2^2 \|\bm A\|_{\text{op}} p \cdot \sqrt{p}\|\bm C\|_{\text{op}} p^{1/2} + \|\bm C\|_{\text{op}} \|\bm u\|_2^2 \|\bm A\|_{\text{op}} p \cdot \sqrt{p} \|\bm B\|_{\text{op}} p^{1/2} \\
    & + \|\bm u\|_2^2 \cdot \sqrt{p} \|\bm A\|_{\text{op}} p^{1/2} \|\bm B\|_{\text{op}} p^{1/2} \|\bm C\|_{\text{op}} p^{1/2} + \|\bm u\|_2 \|\bm A\|_{\text{op}} p \cdot \|\bm u\|_2 \|\bm B\|_{\text{op}} p^{1/2} \|\bm C\|_{\text{op}} p^{1/2} \\
    & + \|\bm u\|_2 \|\bm B\|_{\text{op}} p \cdot \|\bm u\|_2 \|\bm A\|_{\text{op}} p^{1/2} \|\bm C\|_{\text{op}} p^{1/2} + \|\bm u\|_2 \|\bm C\|_{\text{op}}p \cdot \|\bm u\|_2 \|\bm A\|_{\text{op}} p^{1/2} \|\bm B\|_{\text{op}} p^{1/2} \\
    & + \|\bm u\|_2 \|\bm B\|_{\text{op}} p \|\bm C\|_{\text{op}} p \|\bm D\|_{\text{op}} p \cdot \|\bm u\|_2 \|\bm A\|_{\text{op}} p^{1/2} \|\bm D\|_{\text{op}} p^{1/2} \\
    &+ \|\bm u\|_2 \|\bm A\|_{\text{op}} p \|\bm C\|_{\text{op}} p \|\bm D\|_{\text{op}} p \cdot \|\bm u\|_2 \|\bm B\|_{\text{op}} p^{1/2} \|\bm D\|_{\text{op}} p^{1/2} \\
    & + \|\bm u\|_2 \|\bm A\|_{\text{op}} p \|\bm B\|_{\text{op}} p \|\bm D\|_{\text{op}} p \cdot \|\bm u\|_2 \|\bm C\|_{\text{op}} p^{1/2} \|\bm D\|_{\text{op}} p^{1/2} \\
    &+ \|\bm u\|_2^2 \|\bm C\|_{\text{op}} p \|\bm D\|_{\text{op}} p \cdot \sqrt{p} \|\bm A\|_{\text{op}} p^{1/2} \|\bm B\|_{\text{op}} p^{1/2} \|\bm D\|_{\text{op}} p^{1/2} \\
    & + \|\bm u\|_2^2 \|\bm B\|_{\text{op}} p \|\bm D\|_{\text{op}} p \cdot \sqrt{p} \|\bm A\|_{\text{op}} p^{1/2} \|\bm C\|_{\text{op}} p^{1/2} \|\bm D\|_{\text{op}} p^{1/2} \\ &+ \|\bm u\|_2^2 \|\bm A\|_{\text{op}} p \|\bm D\|_{\text{op}} p \cdot \sqrt{p} \|\bm B\|_{\text{op}} p^{1/2} \|\bm C\|_{\text{op}} p^{1/2} \|\bm D\|_{\text{op}} p^{1/2} \\
    & + \|\bm A\|_{\text{op}} p \|\bm B\|_{\text{op}} p \|\bm C\|_{\text{op}} p \|\bm D\|_{\text{op}} p \cdot \|\bm u\|_2^2 \|\bm D\|_{\text{op}} p^{1/2} \\
    &+ \|\bm A\|_{\text{op}} \|\bm u\|_2^2 \|\bm B\|_{\text{op}} p \|\bm C\|_{\text{op}} p \|\bm D\|_{\text{op}} p \cdot \sqrt{p} \|\bm D\|_{\text{op}} p^{1/2} \\
    & + \|\bm B\|_{\text{op}} \|\bm u\|_2^2 \|\bm A\|_{\text{op}} p \|\bm C\|_{\text{op}} p \|\bm D\|_{\text{op}} p \cdot \sqrt{p} \|\bm D\|_{\text{op}} p^{1/2} \\
    &+ \|\bm C\|_{\text{op}} \|\bm u\|_2^2 \|\bm A\|_{\text{op}} p \|\bm B\|_{\text{op}} p \|\bm D\|_{\text{op}} p \cdot \sqrt{p} \|\bm D\|_{\text{op}} p^{1/2} \\
    &+ \|\bm u\|_2 \|\bm A\|_{\text{op}} p \|\bm B\|_{\text{op}} p \|\bm C\|_{\text{op}} p \|\bm D\|_{\text{op}}^2 p^2 \cdot \|\bm u\|_2 \|\bm D\|_{\text{op}}^2 p \\
    &+ \|\bm u\|_2^2 \|\bm B\|_{\text{op}} p \|\bm C\|_{\text{op}} p \|\bm D\|_{\text{op}}^2 p^2 \cdot \sqrt{p} \|\bm A\|_{\text{op}} p^{1/2} \|\bm D\|_{\text{op}}^2 p \\
    & + \|\bm u\|_2^2 \|\bm A\|_{\text{op}} p \|\bm C\|_{\text{op}} p \|\bm D\|_{\text{op}}^2 p^2 \cdot \sqrt{p} \|\bm B\|_{\text{op}} p^{1/2} \|\bm D\|_{\text{op}}^2 p\\
    &+ \|\bm u\|_2^2 \|\bm A\|_{\text{op}} p \|\bm B\|_{\text{op}} p \|\bm D\|_{\text{op}}^2 p^2 \cdot \sqrt{p} \|\bm C\|_{\text{op}} p^{1/2} \|\bm D\|_{\text{op}}^2 p \\
    &+ \|\bm u\|_2^2 \|\bm A\|_{\text{op}} p \|\bm B\|_{\text{op}} p \|\bm C\|_{\text{op}} p \|\bm D\|_{\text{op}}^3 p^3 \cdot \sqrt{p} \|\bm D\|_{\text{op}}^3 p^{3/2} \\
    &+ \|\bm u\|_2^2 \|\bm A\|_{\text{op}} p \|\bm B\|_{\text{op}} p \|\bm C\|_{\text{op}} p \|\bm D\|_{\text{op}} p \cdot \sqrt{p} \|\bm D\|_{\text{op}}^3 p^{3/2} \\ 
    &+ \|\bm D\|_{\text{op}} \|\bm u\|_2^2 \|\bm A\|_{\text{op}} p \|\bm B\|_{\text{op}} p \|\bm C\|_{\text{op}} p \|\bm D\|_{\text{op}}^2 p^2 \cdot \sqrt{p} \|\bm D\|_{\text{op}} p^{1/2} \\
    &+ \|\bm u\|_2 \|\bm A\|_{\text{op}} p \|\bm B\|_{\text{op}} p \|\bm C\|_{\text{op}} p \cdot \|\bm u\|_2 \|\bm D\|_{\text{op}}^2 p  \\
    & + \|\bm u\|_2^2 \|\bm B\|_{\text{op}} p \|\bm C\|_{\text{op}} p \cdot \sqrt{p} \|\bm A\|_{\text{op}} p^{1/2} \|\bm D\|_{\text{op}}^2 p \\
    &+ \|\bm u\|_2^2 \|\bm A\|_{\text{op}} p \|\bm C\|_{\text{op}} p \cdot \sqrt{p} \|\bm B\|_{\text{op}} p^{1/2} \|\bm D\|_{\text{op}}^2 p \\
    & + \|\bm u\|_2^2 \|\bm A\|_{\text{op}} p \|\bm B\|_{\text{op}} p \cdot \sqrt{p} \|\bm C\|_{\text{op}} p^{1/2} \|\bm D\|_{\text{op}}^2 p \\
    &+ \|\bm u\|_2^2 \|\bm A\|_{\text{op}} p \|\bm B\|_{\text{op}} p \|\bm C\|_{\text{op}} p \|\bm D\|_{\text{op}} p  \cdot \sqrt{p} \|\bm D\|_{\text{op}}^3 p^{3/2} \\
    & + \|\bm D\|_{\text{op}} \|\bm u\|_2 \|\bm A\|_{\text{op}} p \|\bm B\|_{\text{op}} p \|\bm C\|_{\text{op}} p \|\bm D\|_{\text{op}} p \cdot \sqrt{p} \|\bm u\|_2 \\
    &+ \|\bm D\|_{\text{op}} \|\bm u\|_2^2 \|\bm B\|_{\text{op}} p \|\bm C\|_{\text{op}} p \|\bm D\|_{\text{op}} p \cdot \sqrt{p} \|\bm A\|_{\text{op}} p^{1/2} \\
    & + \|\bm D\|_{\text{op}} \|\bm u\|_2^2 \|\bm A\|_{\text{op}} p \|\bm C\|_{\text{op}} p \|\bm D\|_{\text{op}} p \cdot \sqrt{p} \|\bm B\|_{\text{op}} p^{1/2} \\
    &+ \|\bm D\|_{\text{op}} \|\bm u\|_2^2 \|\bm A\|_{\text{op}} p \|\bm B\|_{\text{op}} p \|\bm D\|_{\text{op}} p \cdot \sqrt{p} \|\bm C\|_{\text{op}} p^{1/2} \\
    & + \|\bm D\|_{\text{op}} \|\bm u\|_2^2 \|\bm A\|_{\text{op}} p \|\bm B\|_{\text{op}} p \|\bm C\|_{\text{op}} p \|\bm D\|_{\text{op}}^2 p^2 \cdot \sqrt{p} \|\bm D\|_{\text{op}} p^{1/2} \\
    &+ \|\bm D\|_{\text{op}} \|\bm u\|_2^2 \|\bm A\|_{\text{op}} p \|\bm B\|_{\text{op}} p \|\bm C\|_{\text{op}} p \cdot \sqrt{p} \|\bm D\|_{\text{op}} p^{1/2} \\
    &\lcon \|\bm u\|_2^2(\|\bm A\|_{\text{op}} \|\bm B\|_{\text{op}} \|\bm C\|_{\text{op}} p^{5/2} + \|\bm A\|_{\text{op}} \|\bm B\|_{\text{op}} \|\bm C\|_{\text{op}} \|\bm D\|_{\text{op}}^2 p^{9/2}  \\
    &\qquad \qquad + \|\bm A\|_{\text{op}} \|\bm B\|_{\text{op}} \|\bm C\|_{\text{op}} \|\bm D\|_{\text{op}}^4 p^{6} + \|\bm A\|_{\text{op}} \|\bm B\|_{\text{op}} \|\bm C\|_{\text{op}} \|\bm D\|_{\text{op}}^6 p^{8})
\end{align*}

We are now ready to bound each term from the Taylor expansions.

\subsection{Bounding \texorpdfstring{$B_1$}{B1} terms}
\subsubsection{Swapping \texorpdfstring{$\bm Z^{(1)}$}{Z1}}
Note that
\begin{gather*}
    \|\bm u^{(a,1)}\|_2 \lcon \lambda n^{-1} \|\bbeta^{(2)}\|_2, \quad 
    \|\bm v^{(a,1)}\|_2 \lcon \lambda^{-2} \|\bbeta^{(2)}\|_{\text{op}}, \quad \|\bm A^{(a,1)}\|_{\text{op}} \lcon n^{-1} \lambda^{-1} \\
    \|\bm u^{(a,2)}\|_2 \lcon \lambda n^{-2} \|\bbeta^{(2)}\|_2, \quad 
    \|\bm v^{(a,2)}\|_2 \lcon \lambda^{-1} \|\bbeta^{(2)}\|_2 \\
    \|\bm A^{(a,2)}\|_{\text{op}} \lcon \lambda^{-2}, \quad 
    \|\bm B^{(a,2)}\|_{\text{op}} \lcon n^{-1} \lambda^{-1}
\end{gather*}
\paragraph{First term}
From the first moment bounds for $f_2$, we have
\begin{align*}
    &|\E_{1,k}[f_2(\bm Z_k^{(1)}; \bm S^{(a,1)}) - f_2(\tilde{\bm Z}_k^{(1)}; \bm S^{(a,1)})]|
    \\
    &\lcon \|\bm u^{(a,1)}\|_2 \|\bm v^{(a,1)}\|_2 (\|\bm A^{(a,1)}\|_{\text{op}} p^{1/2} + \|\bm A^{(a,1)}\|_{\text{op}}^2 p + \|\bm A^{(a,1)}\|_{\text{op}}^3 p^2)  \\
    &\lcon \lambda^{-2} n^{-3/2}  \|\bbeta^{(2)}\|_2^2 (1   + \lambda^{-1} n^{-1/2} + \lambda^{-2} n^{-1/2})
\end{align*}
and similarly, from the second moment bounds for $f_2$,
\begin{align*}
    \max\{\E_{1,k}[f_2(\bm Z_k^{(1)}; \bm S^{(a,1)})^2], \E_{1,k}[f_2(\tilde{\bm Z}_k^{(1)}; \bm S^{(a,1)})^2]\} &\lcon \|\bm u^{(a,1)}\|_2^2 \|\bm v^{(a,1)}\|_2^2 \\
    &\lcon \lambda^{-2} n^{-2} \|\bbeta^{(2)}\|_2^4 
\end{align*}
\paragraph{Second term}
From the first moment bounds for $f_5$, we have
\begin{align*}
    |&\E_{1,k}[f_5(\bm Z_k^{(1)}; \bm S^{(a,2)}) - f_5(\tilde{\bm Z}_k^{(1)}; \bm S^{(a,2)})]| \\
    &\lcon \|\bm u^{(a,2)}\|_2 \|\bm v^{(a,2)}\|_2 \|\bm A^{(a,2)}\|_{\text{op}} ( p^{1/2} +  \|\bm B^{(a,2)}\|_{\text{op}}^2 p^{5/2} +  \|\bm B^{(a,2)}\|_{\text{op}}^4 p^4 +  \|\bm B^{(a,2)}\|_{\text{op}}^6 p^6) \\
    &\lcon \lambda n^{-2} \|\bbeta^{(2)}\|_2 \cdot \lambda^{-1}\|\bbeta^{(2)}\|_2 \lambda^{-2}( p^{1/2} + n^{-2} \lambda^{-2} p^{5/2} + n^{-4} \lambda^{-4} p^4 + n^{-6}\lambda^{-6} p^6) \\
    &\lcon  \lambda^{-2} n^{-3/2} \|\bbeta^{(2)}\|_2^2 (1 + \lambda^{-2}  + \lambda^{-4} n^{-1/2} + \lambda^{-6} n^{-1/2}) 
\end{align*}
and similarly, from the second moment bounds for $f_5$, we have
\begin{align*}
    \max\{\E_{1,k}[f_2(\bm Z_k^{(1)}; \bm S^{(a,2)})^2], \E_{1,k}[f_2(\tilde{\bm Z}_k^{(1)}; \bm S^{(a,2)})^2]\} &\lcon \|\bm u^{(a,2)}\|_2^2 \|\bm v^{(a,2)}\|_2^2 \|\bm A^{(a,2)}\|_{\text{op}}^2 p^2 \\
    &\lcon \lambda^{-4}   n^{-2} \|\bbeta^{(2)}\|_2^4 
\end{align*}
\paragraph{Combined}
Combining all of these bounds with the bounds above demonstrate that
\begin{align*}
    |&\E[\varphi(B_1(\bm Z^{(1)}, \bm Z^{(2)}))] - \E[\varphi(B_1(\tilde{\bm Z}^{(1)}, \bm Z^{(2)}))]| \\
    &\lcon \lambda^{-2} n^{-1/2}\|\bbeta^{(2)}\|_2^2(  (1   + \lambda^{-1} n^{-1/2} + \lambda^{-2} + \lambda^{-4} n^{-1/2} + \lambda^{-6} n^{-1/2}) \\
    &\hspace{10em}+  n^{-1/2}(1 + \lambda^{-2}) \|\bbeta^{(2)}\|_2^2)  
\end{align*}

\subsubsection{Swapping \texorpdfstring{$\bm Z^{(2)}$}{Z2}}
The same bounds hold. That is,
\begin{align*}
    |&\E[\varphi(B_1(\tilde{\bm Z}^{(1)}, \bm Z^{(2)}))] - \E[\varphi(B_1(\tilde{\bm Z}^{(1)}, \tilde{\bm Z}^{(2)}))]| \\
    &\lcon \lambda^{-2} n^{-1/2}\|\bbeta^{(2)}\|_2^2(  (1   + \lambda^{-1} n^{-1/2} + \lambda^{-2} + \lambda^{-4} n^{-1/2} + \lambda^{-6} n^{-1/2}) \\
    &\hspace{10em}+  n^{-1/2}(1 + \lambda^{-2}) \|\bbeta^{(2)}\|_2^2)  
\end{align*}

\subsubsection{Universality for \texorpdfstring{$B_1$}{B1}}
Therefore, for any bounded function $\varphi$ with bounded first and second derivatives, we have
\begin{align*}
    |&\E[\varphi(B_1(\bm Z^{(1)}, \bm Z^{(2)}))] - \E[\varphi(B_1(\tilde{\bm Z}^{(1)}, \tilde{\bm Z}^{(2)}))]| \\
    &\lcon \lambda^{-2} n^{-1/2}\|\bbeta^{(2)}\|_2^2(  (1   + \lambda^{-1} n^{-1/2} + \lambda^{-2} + \lambda^{-4} n^{-1/2} + \lambda^{-6} n^{-1/2}) \\
    &\hspace{10em}+  n^{-1/2}(1 + \lambda^{-2}) \|\bbeta^{(2)}\|_2^2)  
\end{align*}

\subsection{Bounding \texorpdfstring{$B_3$}{B3} terms}
\subsubsection{Swapping \texorpdfstring{$\bm Z^{(1)}$}{Z1}}
We first bound
\begin{gather*}
    \|\bm u^{(b,1)}\|_2 \lcon \lambda^{-1} n^{-1} \|\bbeta^{(2)}\|_2, \quad \|\bm v^{(b,1)}\|_2 \lcon \|\tilde \bbeta\|_2 \\
    \|\bm u^{(b,2)}\|_2 \lcon \lambda^{-1} n^{-2} \|\bbeta^{(2)}\|_2, \quad 
    \|\bm v^{(b,2)}\|_2 \lcon \lambda^{-1} \|\bm X^{(1),\setminus k}\|_{\text{op}}^2 \|\tilde \bbeta\|_2, \\
    \|\bm A^{(b,2)}\|_{\text{op}} \lcon \lambda^{-1} n^{-1}, \quad  
    \|\bm u^{(b,3)}\|_2 \lcon \lambda^{-1} n^{-2} \|\bbeta^{(2)}\|_2, \quad 
    \|\bm v^{(b,3)}\|_2 \lcon \|\tilde \bbeta\|_2 \\
    \|\bm A^{(b,3)}\|_{\text{op}} \lcon \lambda^{-1}, \quad 
    \|\bm B^{(b,3)}\|_{\text{op}} \lcon \lambda^{-1} n^{-1} \\
    \|\bm u^{(b,4)}\|_2 \lcon n^{-2} \|\bbeta^{(2)}\|_2, \quad 
    \|\bm v^{(b,4)}\|_2 \lcon \lambda^{-2} \|\bm X^{(1),\setminus k}\|_{\text{op}}^2 \|\tilde \bbeta\|_2, \quad
    \|\bm A^{(b,4)}\|_{\text{op}} \lcon \lambda^{-1} n^{-1} \\
    \|\bm u^{(b,5)}\|_2 \lcon n^{-2} \|\bbeta^{(2)}\|_2, \quad 
    \|\bm v^{(b,5)}\|_2 \lcon \|\tilde \bbeta\|_2 \\
    \|\bm A^{(b,5)}\|_{\text{op}} \lcon \lambda^{-2}, \quad 
    \|\bm B^{(b,5)}\|_{\text{op}} \lcon \lambda^{-1} n^{-1}  \\
    \|\bm u^{(b,6)}\|_2 \lcon n^{-3} \|\bbeta^{(2)}\|_2, \quad 
    \|\bm v^{(b,6)}\|_2 \lcon \lambda^{-1} \|\bm X^{(1)\setminus k}\|_{\text{op}}^2 \|\tilde \bbeta\|_{2}\\
    \|\bm A^{(b,6)}\|_{\text{op}} \lcon \lambda^{-2}, \quad 
    \|\bm B^{(b,6)}\|_{\text{op}} \lcon \lambda^{-1} n^{-1} \\
    \|\bm u^{(b,7)}\|_2 \lcon n^{-3} \|\bbeta^{(2)}\|_2, \quad 
    \|\bm v^{(b,7)}\|_2 \lcon \|\tilde \bbeta\|_2 \\
    \|\bm A^{(b,7)}\|_{\text{op}} \lcon \lambda^{-2}, \quad 
    \|\bm B^{(b,7)}\|_{\text{op}} \lcon \lambda^{-1} \\
    \|\bm C^{(b,7)}\|_{\text{op}} \lcon \lambda^{-1} n^{-1}
\end{gather*}
\paragraph{First term}
Now, using the first moment bound for $f_1$, we have
\begin{equation*}
    |\E_{1,k}[f_1(\bm Z_k^{(1)}; \bm S^{(b,1)}) - f_1(\tilde{\bm Z}_k^{(1)}; \bm S^{(b,1)})]| = 0
\end{equation*}
and similarly, using the second moment bound for $f_1$, we have 
\begin{align*}
    \max&\{\E_{1,k}[f_1(\bm Z_k^{(1)}; \bm S^{(b,1)})^2], \E_{1,k}[f_1(\tilde{\bm Z}_k^{(1)}; \bm S^{(b,1)})^2]\} \\
    &\lcon \|\bm u^{(b,1)}\|_2^2 \|\bm v^{(b,1)}\|_2^2 \\
    &\leq \lambda^{-2} n^{-2}\|\bbeta^{(2)}\|_2^2 \|\tilde \bbeta\|_2^2
\end{align*}
\paragraph{Second term} Using the first moment bound for $f_2$, we have
\begin{align*}
    &|\E_{1,k}[f_2(\bm Z_k^{(1)}; \bm S^{(b,2)}) - f_2(\tilde{\bm Z}_k^{(1)}; \bm S^{(b,2)})]| \\&\lcon \|\bm u^{(b,2)}\|_2 \|\bm v^{(b,2)}\|_2 ( \|\bm A^{(b,2)}\|_{\text{op}} p^{1/2} + \|\bm A^{(b,2)}\|_{\text{op}}^2 p + \|\bm A^{(b,2)}\|_{\text{op}}^3 p^2) \\
    &\lcon \lambda^{-1} n^{-2} \|\bbeta^{(2)}\|_2 \cdot \lambda^{-1} \|\bm X^{(1),\setminus k}\|_{\text{op}}^2 \|\tilde \bbeta\|_2 \\
    &\qquad \cdot (\lambda^{-1} n^{-1} p^{1/2} + \lambda^{-2} n^{-2} p + \lambda^{-3} n^{-3} p^2) \\
    &\lcon \lambda^{-3} n^{-5/2} \|\bbeta^{(2)}\|_2 \|\tilde \bbeta\|_2  \|\bm X^{(1),\setminus k}\|_{\text{op}}^2  \\
    &\qquad \cdot (1+ \lambda^{-1} n^{-1/2} + \lambda^{-2} n^{-1/2})
\end{align*}
and using the second moment bound for $f_2$ yields
\begin{align*}
    \max\{&\E_{1,k}[f_2(\bm Z_k^{(1)}; \bm S^{(b,2)})^2], \E_{1,k}[f_2(\tilde{\bm Z}_k^{(1)}; \bm S^{(b,2)})^2]\} \\
    &\lcon \|\bm u^{(b,2)}\|_2^2 \|\bm v^{(b,2)}\|_2^2 \\
    &\lcon \lambda^{-4} n^{-4} \|\bbeta^{(2)}\|_2^2 \|\tilde \bbeta\|_2^2 \cdot \|\bm X^{(1),\setminus k}\|_{\text{op}}^4.
\end{align*}
\paragraph{Third term} Using the first moment bound for $f_4$, we have
\begin{align*}
    |&\E_{1,k}[f_4(\bm Z_k^{(1)}; \bm S^{(b,3)}) - f_4(\tilde{\bm Z}_k^{(1)}; \bm S^{(b,3)})]| \\
    &\lcon \|\bm u^{(b,3)}\|_2 \|\bm v^{(b,3)}\|_2 \|\bm A^{(b,3)}\|_{\text{op}}( p^{1/2} + \|\bm B^{(b,3)}\|_{\text{op}} p^{3/2} + \|\bm B^{(b,3)}\|_{\text{op}}^2 p^2 + \|\bm B^{(b,3)}\|_{\text{op}}^3 p^3) \\
    &\lcon \lambda^{-1} n^{-2} \|\bbeta^{(2)}\|_2 \|\tilde \bbeta\|_2   \lambda^{-1} (p^{1/2} + \lambda^{-1} n^{-1} p^{3/2} + \lambda^{-2} n^{-2} p^2 + \lambda^{-3} n^{-3} p^3) \\
    &\lcon \lambda^{-2} n^{-3/2} \|\bbeta^{(2)}\|_2 \|\tilde \bbeta\|_2 (1 + \lambda^{-1} + \lambda^{-2} n^{-1/2} + \lambda^{-3} n^{-1/2})
\end{align*}
and using the second moment bound for $f_4$ yields
\begin{align*}
    \max\{&\E_{1,k}[f_4(\bm Z_k^{(1)}; \bm S^{(b,3)})^2], \E_{1,k}[f_4(\tilde{\bm Z}_k^{(1)}; \bm S^{(b,3)})^2]\} \\
    &\lcon \|\bm u^{(b,3)}\|_2^2 \|\bm v^{(b,3)}\|_2^2 \|\bm A^{(b,3)}\|_{\text{op}}^2  p^2 \\
    &\lcon (\lambda^{-1} n^{-2} \|\bbeta^{(2)}\|_2)^2 \|\tilde \bbeta\|_2^2 \lambda^{-2} p^2 \\
    &= \lambda^{-4} n^{-2} \|\bbeta^{(2)}\|_2^2 \|\tilde \bbeta\|_2^2
\end{align*}
\paragraph{Fourth term} Using the first moment bound for $f_2$, we have
\begin{align*}
    |&\E_{1,k}[f_2(\bm Z_k^{(1)}; \bm S^{(b,4)}) - f_2(\tilde{\bm Z}_k^{(1)}; \bm S^{(b,4)})]| \\
    &\lcon \|\bm u^{(b,4)}\|_2 \|\bm v^{(b,4)}\|_2 ( \|\bm A^{(b,4)}\|_{\text{op}} p^{1/2} + \|\bm A^{(b,4)}\|_{\text{op}}^2 p + \|\bm A^{(b,4)}\|_{\text{op}}^3 p^2) \\
    &\lcon n^{-2} \|\bbeta^{(2)}\|_2 \lambda^{-2} \|\bm X^{(1),\setminus k}\|_{\text{op}}^2 \|\tilde \bbeta\|_2 (\lambda^{-1} n^{-1} p^{1/2} + \lambda^{-2} n^{-2} p + \lambda^{-3} n^{-3} p^2) \\
    &\lcon  \lambda^{-3} n^{-5/2}  \|\bbeta^{(2)}\|_2 \|\tilde \bbeta\|_2 \|\bm X^{(1),\setminus k}\|_{\text{op}}^2 (1 + \lambda^{-1} n^{-1/2} + \lambda^{-2} n^{-1/2})
\end{align*}
and using the second moment bound for $f_2$ yields
\begin{align*}
    \max\{&\E_{1,k}[f_2(\bm Z_k^{(1)}; \bm S^{(b,4)})^2], \E_{1,k}[f_2(\tilde{\bm Z}_k^{(1)}; \bm S^{(b,4)})^2]\} \\
    &\lcon \|\bm u^{(b,4)}\|_2^2 \|\bm v^{(b,4)}\|_2^2 \\
    &\lcon \lambda^{-4} n^{-4} \|\bbeta^{(2)}\|_2^2    \|\tilde \bbeta\|_2^2 \|\bm X^{(1),\setminus k}\|_{\text{op}}^4
\end{align*}
\paragraph{Fifth term} Using the first moment bound for $f_4$, we have
\begin{align*}
    |&\E_{1,k}[f_4(\bm Z_k^{(1)}; \bm S^{(b,5)}) - f_4(\tilde{\bm Z}_k^{(1)}; \bm S^{(b,5)})]| \\
    &\lcon \|\bm u^{(b,5)}\|_2 \|\bm v^{(b,5)}\|_2 \|\bm A^{(b,5)}\|_{\text{op}}( p^{1/2} + \|\bm B^{(b,5)}\|_{\text{op}} p^{3/2} + \|\bm B^{(b,5)}\|_{\text{op}}^2 p^2 + \|\bm B^{(b,5)}\|_{\text{op}}^3 p^3) \\
    &\lcon  n^{-2} \|\bbeta^{(2)}\|_2 \|\tilde \bbeta\|_2  \lambda^{-2} ( p^{1/2} + \lambda^{-1} n^{-1} p^{3/2} + \lambda^{-2} n^{-2} p^2 + \lambda^{-3} n^{-3} p^3) \\
    &\lcon  \lambda^{-2} n^{-3/2} \|\bbeta^{(2)}\|_2 \|\tilde \bbeta\|_2  ( 1 + \lambda^{-1}   + \lambda^{-2} n^{-1/2}  + \lambda^{-3} n^{-1/2})
\end{align*}
and using the second moment bound for $f_4$ yields
\begin{align*}
    \max\{&\E_{1,k}[f_4(\bm Z_k^{(1)}; \bm S^{(b,5)})^2], \E_{1,k}[f_4(\tilde{\bm Z}_k^{(1)}; \bm S^{(b,5)})^2]\} \\
    &\lcon \|\bm u^{(b,5)}\|_2^2 \|\bm v^{(b,5)}\|_2^2 \|\bm A^{(b,5)}\|_{\text{op}}^2  p^2 \\
    &\lcon \lambda^{-4}  n^{-2} \|\bbeta^{(2)}\|_2^2 \|\tilde \bbeta^{(2)}\|_2^2 
\end{align*}

\paragraph{Sixth term} Using the first moment bound for $f_5$, we have
\begin{align*}
    |&\E_{1,k}[f_5(\bm Z_k^{(1)}; \bm S^{(b,6)}) - f_5(\tilde{\bm Z}_k^{(1)}; \bm S^{(b,6)})]| \\
    &\lcon \|\bm u^{(b,6)}\|_2 \|\bm v^{(b,6)}\|_2 \|\bm A^{(b,6)}\|_{\text{op}}(p^{1/2} + \|\bm B^{(b,6)}\|_{\text{op}}^2 p^{5/2} + \|\bm B^{(b,6)}\|_{\text{op}}^4 p^4 + \|\bm B^{(b,6)}\|_{\text{op}}^6 p^6) \\
    &\lcon n^{-3} \|\bbeta^{(2)}\|_2 \lambda^{-1} \|\bm X^{(1)\setminus k}\|_{\text{op}}^2 \|\tilde \bbeta\|_{2} \lambda^{-2}(p^{1/2} + \lambda^{-2} n^{-2} p^{5/2} + \lambda^{-4} n^{-4} p^4 + \lambda^{-6} n^{-6} p^6) \\
    &\lcon \lambda^{-3} n^{-5/2} \|\bbeta^{(2)}\|_2 \|\tilde \bbeta\|_{2} \|\bm X^{(1)\setminus k}\|_{\text{op}}^2  (1 + \lambda^{-2}  + \lambda^{-4}  n^{-1/2}+ \lambda^{-6} n^{-1/2})
\end{align*}
and using the second moment bound for $f_5$ yields
\begin{align*}
    \max\{&\E_{1,k}[f_5(\bm Z_k^{(1)}; \bm S^{(b,6)})^2], \E_{1,k}[f_5(\tilde{\bm Z}_k^{(1)}; \bm S^{(b,6)})^2]\} \\
    &\lcon \|\bm u^{(b,6)}\|_2^2 \|\bm v^{(b,6)}\|_2^2 \|\bm A^{(b,6)}\|_{\text{op}}^2  p^2 \\
    &\lcon n^{-6} \|\bbeta^{(2)}\|_2^2 \cdot (\lambda^{-1} \|\bm X^{(1)\setminus k}\|_{\text{op}}^2 \|\tilde \bbeta\|_{2})^2 \lambda^{-4} p^2 \\
    &\lcon \lambda^{-6} n^{-4} \|\bbeta^{(2)}\|_2^2 \|\tilde \bbeta\|_{2}^2 \cdot \|\bm X^{(1)\setminus k}\|_{\text{op}}^4 
\end{align*}

\paragraph{Seventh term} Using the first moment bound for $f_7$, we have
\begin{align*}
    |&\E_{1,k}[f_7(\bm Z_k^{(1)}; \bm S^{(b,7)}) - f_7(\tilde{\bm Z}_k^{(1)}; \bm S^{(b,7)})]| \\
    &\lcon  \|\bm u^{(b,7)}\|_2 \|\bm v^{(b,7)}\|_2 \|\bm A^{(b,7)}\|_{\text{op}} \|\bm B^{(b,7)}\|_{\text{op}} \\
    &\qquad ( p^{3/2} +  \|\bm C^{(b,7)}\|_{\text{op}}^2 p^{7/2} +  \|\bm C^{(b,7)}\|_{\text{op}}^4 p^5 + \|\bm C^{(b,7)}\|_{\text{op}}^6 p^7) \\
    &\lcon n^{-3} \|\bbeta^{(2)}\|_2  \|\tilde \bbeta\|_2 \lambda^{-2} \lambda^{-1} ( p^{3/2} +  \lambda^{-2} n^{-2} p^{7/2} +  \lambda^{-4} n^{-4} p^5 + \lambda^{-6} n^{-6} p^7) \\
    &\lcon \lambda^{-3} n^{-3/2} \|\bbeta^{(2)}\|_2  \|\tilde \bbeta\|_2  ( 1 +  \lambda^{-2}  +  \lambda^{-4} n^{-1/2} + \lambda^{-6} n^{-1/2}) 
\end{align*}
and using the second moment bound for $f_7$ yields
\begin{align*}
    \max\{&\E_{1,k}[f_7(\bm Z_k^{(1)}; \bm S^{(b,7)})^2], \E_{1,k}[f_7(\tilde{\bm Z}_k^{(1)}; \bm S^{(b,7)})^2]\} \\
    &\lcon \|\bm u^{(b,7)}\|_2^2 \|\bm v^{(b,7)}\|_2^2 \|\bm A^{(b,7)}\|_{\text{op}}^2  \|\bm B^{(b,7)}\|_{\text{op}}^2   p^4 \\
    &\lcon \lambda^{-6} n^{-2} \|\bbeta^{(2)}\|_2^2 \|\tilde \bbeta\|_2^2 
\end{align*}

\paragraph{Combined}
It follows from the above bounds that
\begin{align*}
    |&\E[\varphi(B_3(\bm Z^{(1)}, \bm Z^{(2)}))] - \E[\varphi(B_3(\tilde{\bm Z}^{(1)}, \bm Z^{(2)}))]| \\
    &\lcon \E[\lambda^{-2} n^{-1}\|\bbeta^{(2)}\|_2^2 \|\tilde \bbeta\|_2^2 \\
    &\qquad + \lambda^{-3} n^{-3/2} \|\bbeta^{(2)}\|_2 \|\tilde \bbeta\|_2  \|\bm X^{(1),\setminus k}\|_{\text{op}}^2  \cdot (1+ \lambda^{-1} n^{-1/2} + \lambda^{-2} n^{-1/2}) \\
    &\qquad + \lambda^{-4} n^{-3} \|\bbeta^{(2)}\|_2^2 \|\tilde \bbeta\|_2^2 \cdot \|\bm X^{(1),\setminus k}\|_{\text{op}}^4 \\
    &\qquad + \lambda^{-2} n^{-1/2} \|\bbeta^{(2)}\|_2 \|\tilde \bbeta\|_2 (1 + \lambda^{-1} + \lambda^{-2} n^{-1/2} + \lambda^{-3} n^{-1/2}) \\
    &\qquad + \lambda^{-4} n^{-1} \|\bbeta^{(2)}\|_2^2 \|\tilde \bbeta\|_2^2 \\
    &\qquad + \lambda^{-3} n^{-3/2}  \|\bbeta^{(2)}\|_2 \|\tilde \bbeta\|_2 \|\bm X^{(1),\setminus k}\|_{\text{op}}^2 (1 + \lambda^{-1} n^{-1/2} + \lambda^{-2} n^{-1/2}) \\
    &\qquad + \lambda^{-4} n^{-3} \|\bbeta^{(2)}\|_2^2    \|\tilde \bbeta\|_2^2 \|\bm X^{(1),\setminus k}\|_{\text{op}}^4 \\
    &\qquad + \lambda^{-2} n^{-1/2} \|\bbeta^{(2)}\|_2 \|\tilde \bbeta\|_2  ( 1 + \lambda^{-1}   + \lambda^{-2} n^{-1/2}  + \lambda^{-3} n^{-1/2}) \\
    &\qquad + \lambda^{-4}  n^{-1} \|\bbeta^{(2)}\|_2^2 \|\tilde \bbeta^{(2)}\|_2^2 \\
    &\qquad + \lambda^{-3} n^{-3/2} \|\bbeta^{(2)}\|_2 \|\tilde \bbeta\|_{2} \|\bm X^{(1)\setminus k}\|_{\text{op}}^2  (1 + \lambda^{-2}  + \lambda^{-4}  n^{-1/2}+ \lambda^{-6} n^{-1/2}) \\
    &\qquad + \lambda^{-6} n^{-3} \|\bbeta^{(2)}\|_2^2 \|\tilde \bbeta\|_{2}^2 \cdot \|\bm X^{(1)\setminus k}\|_{\text{op}}^4 \\
    &\qquad + \lambda^{-3} n^{-1/2} \|\bbeta^{(2)}\|_2  \|\tilde \bbeta\|_2  ( 1 +  \lambda^{-2}  +  \lambda^{-4} n^{-1/2} + \lambda^{-6} n^{-1/2}) \\
    &\qquad +  \lambda^{-6} n^{-1} \|\bbeta^{(2)}\|_2^2 \|\tilde \bbeta\|_2^2] \\
    &\lcon \lambda^{-2} n^{-1}\|\bbeta^{(2)}\|_2^2 \|\tilde \bbeta\|_2^2  (1 + \lambda^{-2} + \lambda^{-4} + (\lambda^{-2} + \lambda^{-4}) n^{-2} \E[\|\bm X^{(1),\setminus k}\|_{\text{op}}^4]) \\
    &\qquad + \lambda^{-3} n^{-1/2} \|\bbeta^{(2)}\|_2\|\tilde \bbeta\|_2 ((1 + \lambda^{-1} + \lambda^{-2} + (\lambda^{-3} + \lambda^{-4}+ \lambda^{-6}) n^{-1/2}) \\
    &\qquad \qquad + (1 + \lambda^{-1} n^{-1/2} + \lambda^{-2} + \lambda^{-4} n^{-1/2} + \lambda^{-6} n^{-1/2})  n^{-1} \E[\|\bm X^{(1),\setminus k}\|_{\text{op}}^2]) \\
    &\lcon \lambda^{-2} n^{-1} \log^2 n \|\bbeta^{(2)}\|_2^2 \|\tilde \bbeta\|_2^2  (1 + \lambda^{-2} + \lambda^{-4}) \\
    &\qquad + \lambda^{-3} n^{-1/2} \log n \|\bbeta^{(2)}\|_2\|\tilde \bbeta\|_2 (1 + \lambda^{-1} + \lambda^{-2} + (\lambda^{-3} + \lambda^{-4}+ \lambda^{-6}) n^{-1/2})
\end{align*}

\subsubsection{Swapping \texorpdfstring{$\bm Z^{(2)}$}{Z2}}
Note that
\begin{gather*}
    \|\bm u^{(B,1)}\|_2 \lcon \lambda^{-1} n^{-2} \|\bbeta^{(2)}\|_2, \quad 
    \|\bm v^{(B,1)}\|_2 \lcon \lambda^{-1} \|\tilde{\bm X}^{(1)}\|_{\text{op}}^2 \|\tilde \bbeta\|_2 \\
    \|\bm A^{(B,1)}\|_{\text{op}} \lcon \lambda^{-1} n^{-1} \\
    \|\bm u^{(B,2)}\|_2 \lcon n^{-2} \|\bbeta^{(2)}\|_2, \quad 
    \|\bm v^{(B,2)}\|_2 \lcon \lambda^{-2} \|\tilde{\bm X}^{(1)}\|_{\text{op}}^2 \|\tilde \bbeta\|_2 \\
    \|\bm A^{(B,2)}\|_{\text{op}} \lcon \lambda^{-1} n^{-1} \\
    \|\bm u^{(B,3)}\|_2 \lcon n^{-3} \|\bbeta^{(2)}\|_2, \quad 
    \|\bm v^{(B,3)}\|_2 \lcon \lambda^{-1} \|\tilde{\bm X}^{(1)}\|_{\text{op}}^2 \|\tilde \bbeta\|_2 \\
    \|\bm A^{(B,3)}\|_{\text{op}} \lcon \lambda^{-2}, \quad 
    \|\bm B^{(B,3)}\|_{\text{op}} \lcon \lambda^{-1} n^{-1}
\end{gather*}
\paragraph{First term} Using the first moment bound for $f_2$, we have
\begin{align*}
    |&\E_{2,k}[f_2(\bm Z_k^{(2)}; \bm S^{(B,1)}) - f_2(\tilde{\bm Z}_k^{(2)}; \bm S^{(B,1)})]| \\
    &\lcon \|\bm u^{(B,1)}\|_2 \|\bm v^{(B,1)}\|_2 ( \|\bm A^{(B,1)}\|_{\text{op}} p^{1/2} + \|\bm A^{(B,1)}\|_{\text{op}}^2 p + \|\bm A^{(B,1)}\|_{\text{op}}^3 p^2)  \\
    &\lcon \lambda^{-1} n^{-2} \|\bbeta^{(2)}\|_2 \lambda^{-1} \|\tilde{\bm X}^{(1)}\|_{\text{op}}^2 \|\tilde \bbeta\|_2 ( \lambda^{-1} n^{-1} p^{1/2} + \lambda^{-2} n^{-2} p + \lambda^{-3} n^{-3} p^2) \\
    &\lcon \lambda^{-3} n^{-5/2} \|\bbeta^{(2)}\|_2   \|\tilde \bbeta\|_2 \|\tilde{\bm X}^{(1)}\|_{\text{op}}^2 ( 1 + \lambda^{-1} n^{-1/2} + \lambda^{-2} n^{-1/2})
\end{align*}
and using the second moment bound for $f_2$ yields
\begin{align*}
    \max\{&\E_{2,k}[f_2(\bm Z_k^{(2)}; \bm S^{(B,1)})^2], \E_{2,k}[f_2(\tilde{\bm Z}_k^{(2)}; \bm S^{(B,1)})^2]\} \\
    &\lcon \|\bm u^{(B,1)}\|_2^2 \|\bm v^{(B,1)}\|_2^2  \\
    &\lcon \lambda^{-4} n^{-4} \|\bbeta^{(2)}\|_2^2  \|\tilde \bbeta\|_2^2 \|\tilde{\bm X}^{(1)}\|_{\text{op}}^4
\end{align*}
\paragraph{Second term} Using the first moment bound for $f_2$, we have
\begin{align*}
    |&\E_{2,k}[f_2(\bm Z_k^{(2)}; \bm S^{(B,2)}) - f_2(\tilde{\bm Z}_k^{(2)}; \bm S^{(B,2)})]| \\
    &\lcon \|\bm u^{(B,2)}\|_2 \|\bm v^{(B,2)}\|_2 ( \|\bm A^{(B,2)}\|_{\text{op}} p^{1/2} + \|\bm A^{(B,2)}\|_{\text{op}}^2 p + \|\bm A^{(B,2)}\|_{\text{op}}^3 p^2)  \\
    &\lcon  n^{-2} \|\bbeta^{(2)}\|_2\lambda^{-2} \|\tilde{\bm X}^{(1)}\|_{\text{op}}^2 \|\tilde \bbeta\|_2 (\lambda^{-1} n^{-1} p^{1/2} + \lambda^{-2} n^{-2} p + \lambda^{-3} n^{-3} p^2) \\
    &\lcon \lambda^{-3} n^{-5/2} \|\bbeta^{(2)}\|_2 \|\tilde \bbeta\|_2 \|\tilde{\bm X}^{(1)}\|_{\text{op}}^2  (1 + \lambda^{-1} n^{-1/2} + \lambda^{-2} n^{-1/2}) 
\end{align*}
and using the second moment bound for $f_2$ yields
\begin{align*}
    \max\{&\E_{2,k}[f_2(\bm Z_k^{(2)}; \bm S^{(B,2)})^2], \E_{2,k}[f_2(\tilde{\bm Z}_k^{(2)}; \bm S^{(B,2)})^2]\} \\
    &\lcon \|\bm u^{(B,2)}\|_2^2 \|\bm v^{(B,2)}\|_2^2 \\
    &\lcon  \lambda^{-4} n^{-4} \|\bbeta^{(2)}\|_2^2  \|\tilde \bbeta\|_2^2 \|\tilde{\bm X}^{(1)}\|_{\text{op}}^4
\end{align*}
\paragraph{Third term} Using the first moment bound for $f_5$, we have
\begin{align*}
    |&\E_{2,k}[f_5(\bm Z_k^{(2)}; \bm S^{(B,3)}) - f_5(\tilde{\bm Z}_k^{(2)}; \bm S^{(B,3)})]| \\
    &\lcon \|\bm u^{(B,3)}\|_2 \|\bm v^{(B,3)}\|_2 \|\bm A^{(B,3)}\|_{\text{op}}(p^{1/2} + \|\bm B^{(B,3)}\|_{\text{op}}^2 p^{5/2} + \|\bm B^{(B,3)}\|_{\text{op}}^4 p^4 + \|\bm B^{(B,3)}\|_{\text{op}}^6 p^6) \\
    &\lcon n^{-3} \|\bbeta^{(2)}\|_2 \lambda^{-1} \|\tilde{\bm X}^{(1)}\|_{\text{op}}^2 \|\tilde \bbeta\|_2 \lambda^{-2} (p^{1/2} + \lambda^{-2} n^{-2} p^{5/2} + \lambda^{-4} n^{-4} p^4 + \lambda^{-6} n^{-6} p^6) \\
    &\lcon \lambda^{-3} n^{-5/2} \|\bbeta^{(2)}\|_2  \|\tilde \bbeta\|_2 \|\tilde{\bm X}^{(1)}\|_{\text{op}}^2  (1 + \lambda^{-2} + \lambda^{-4} n^{-1/2} + \lambda^{-6} n^{-1/2})
\end{align*}
and using the second moment bound for $f_5$ yields
\begin{align*}
    \max\{&\E_{2,k}[f_5(\bm Z_k^{(2)}; \bm S^{(B,3)})^2], \E_{2,k}[f_5(\tilde{\bm Z}_k^{(2)}; \bm S^{(B,3)})^2]\} \\
    &\lcon \lambda^{-6} n^{-4} \|\bbeta^{(2)}\|_2^2 \|\tilde \bbeta\|_2^2 \|\tilde{\bm X}^{(1)}\|_{\text{op}}^4 
    \end{align*}
\paragraph{Combined} It follows from the above bounds that
\begin{align*}
    |&\E[\varphi(B_3(\tilde{\bm Z}^{(1)}, \bm Z^{(2)}))] - \E[\varphi(B_3(\tilde{\bm Z}^{(1)}, \tilde{\bm Z}^{(2)}))]| \\
    &\lcon \E[\lambda^{-3} n^{-3/2} \|\bbeta^{(2)}\|_2   \|\tilde \bbeta\|_2 \|\tilde{\bm X}^{(1)}\|_{\text{op}}^2 ( 1 + \lambda^{-1} n^{-1/2} + \lambda^{-2} n^{-1/2}) \\
    &\qquad + \lambda^{-4} n^{-3} \|\bbeta^{(2)}\|_2^2  \|\tilde \bbeta\|_2^2 \|\tilde{\bm X}^{(1)}\|_{\text{op}}^4 \\
    &\qquad + \lambda^{-3} n^{-3/2} \|\bbeta^{(2)}\|_2 \|\tilde \bbeta\|_2 \|\tilde{\bm X}^{(1)}\|_{\text{op}}^2  (1 + \lambda^{-1} n^{-1/2} + \lambda^{-2} n^{-1/2}) \\
    &\qquad + \lambda^{-4} n^{-3} \|\bbeta^{(2)}\|_2^2  \|\tilde \bbeta\|_2^2 \|\tilde{\bm X}^{(1)}\|_{\text{op}}^4 \\
    &\qquad + \lambda^{-3} n^{-3/2} \|\bbeta^{(2)}\|_2  \|\tilde \bbeta\|_2 \|\tilde{\bm X}^{(1)}\|_{\text{op}}^2  (1 + \lambda^{-2} + \lambda^{-4} n^{-1/2} + \lambda^{-6} n^{-1/2})\\
    &\qquad + \lambda^{-6} n^{-3} \|\bbeta^{(2)}\|_2^2 \|\tilde \bbeta\|_2^2 \|\tilde{\bm X}^{(1)}\|_{\text{op}}^4 ] \\
    &\lcon \lambda^{-4} n^{-3} \|\bbeta^{(2)}\|_2^2 \|\tilde \bbeta\|_2^2 \E[\|\tilde{\bm X}^{(1)}\|_{\text{op}}^4] (1 + \lambda^{-2}) \\
    &\qquad + \lambda^{-3} n^{-3/2} \|\bbeta^{(2)}\|_2 \|\tilde \bbeta\|_2 \E[\|\tilde{\bm X}^{(1)}\|_{\text{op}}^2] \\
    &\hspace{4em}(1 + \lambda^{-1} n^{-1/2} + \lambda^{-2} + \lambda^{-4} n^{-1/2} + \lambda^{-6} n^{-1/2}) \\
    &\lcon \lambda^{-4} n^{-1} \log^2 n \|\bbeta^{(2)}\|_2^2 \|\tilde \bbeta\|_2^2 (1 + \lambda^{-2}) \\
    &\qquad + \lambda^{-3} n^{-1/2} \log n \|\bbeta^{(2)}\|_2 \|\tilde \bbeta\|_2  (1 + \lambda^{-1} n^{-1/2} + \lambda^{-2} + \lambda^{-4} n^{-1/2} + \lambda^{-6} n^{-1/2}) 
    \end{align*}

\subsubsection{Universality for \texorpdfstring{$B_3$}{B3}}
Therefore, for any bounded function $\varphi$ with bounded first and second derivatives, we have
\begin{align*}
    |&\E[\varphi(B_3(\bm Z^{(1)}, \bm Z^{(2)}))] - \E[\varphi(B_3(\tilde{\bm Z}^{(1)}, \tilde{\bm Z}^{(2)}))]| \\
    &\lcon \lambda^{-2} n^{-1} \log^2 n \|\bbeta^{(2)}\|_2^2 \|\tilde \bbeta\|_2^2  (1 + \lambda^{-2} + \lambda^{-4}) \\
    &\qquad + \lambda^{-3} n^{-1/2} \log n \|\bbeta^{(2)}\|_2\|\tilde \bbeta\|_2 (1 + \lambda^{-1} + \lambda^{-2} + (\lambda^{-3} + \lambda^{-4}+ \lambda^{-6}) n^{-1/2})\\
    &\qquad + \lambda^{-4} n^{-1} \log^2 n \|\bbeta^{(2)}\|_2^2 \|\tilde \bbeta\|_2^2 (1 + \lambda^{-2}) \\
    &\qquad + \lambda^{-3} n^{-1/2} \log n \|\bbeta^{(2)}\|_2 \|\tilde \bbeta\|_2  (1 + \lambda^{-1} n^{-1/2} + \lambda^{-2} + \lambda^{-4} n^{-1/2} + \lambda^{-6} n^{-1/2})  \\
    &\lcon \lambda^{-2} n^{-1} \log^2 n \cdot \|\bbeta^{(2)}\|_2^2 \|\tilde \bbeta\|_2^2  (1 + \lambda^{-2} + \lambda^{-4}) \\
    &\qquad + \lambda^{-3} n^{-1/2} \log n \cdot \|\bbeta^{(2)}\|_2\|\tilde \bbeta\|_2 (1 + \lambda^{-1} + \lambda^{-2} + (\lambda^{-3} + \lambda^{-4}+ \lambda^{-6}) n^{-1/2})
    \end{align*}

\subsection{Bounding \texorpdfstring{$B_2$}{B2} terms}
\subsubsection{Swapping \texorpdfstring{$\bZ^{(1)}$}{Z1}}
We first bound
\begin{gather*}
    \|\bm u^{(c,1)}\|_2 \lcon n^{-2} \lambda^{-2} \|\tilde{\bm X}^{(1),\setminus k}\|_{\text{op}}^2 \|\tilde \bbeta\|_2, \quad 
    \|\bm v^{(c,1)}\|_2 \lcon \|\tilde \bbeta\|_2 \\
    \|\bm u^{(c,2)}\|_2 \lcon n^{-3} \lambda^{-2}  \|\tilde{\bm X}^{(1),\setminus k}\|_{\text{op}}^2 \|\tilde \bbeta\|_2, \quad 
    \|\bm v^{(c,2)}\|_2 = \lambda^{-1} \|\tilde{\bm X}^{(1),\setminus k}\|_{\text{op}}^2 \|\tilde \bbeta\|_2,  \\
    \|\bm A^{(c,2)}\|_{\text{op}}= n^{-1} \lambda^{-1}, \quad \|\bm u^{(c,3)}\|_2 = n^{-3}\lambda^{-2} \|\tilde{\bm X}^{(1),\setminus k}\|_{\text{op}}^2 \|\tilde \bbeta\|_2, \quad 
    \|\bm v^{(c,3)}\|_2 = \|\tilde \bbeta\|_2 \\
    \|\bm A^{(c,3)}\|_{\text{op}} = \lambda^{-1} , \quad 
    \|\bm B^{(c,3)}\|_{\text{op}} = n^{-1} \lambda^{-1} \\
    \|\bm u^{(c,4)}\|_2 = n^{-3} \lambda^{-1} \|\tilde{\bm X}^{(1),\setminus k}\|_{\text{op}}^2 \|\tilde \bbeta\|_2, \quad 
    \|\bm v^{(c,4)}\|_2 = \|\tilde \bbeta\|_2 \\
    \|\bm A^{(c,4)}\|_{\text{op}} = \lambda^{-2} , \quad 
    \|\bm B^{(c,4)}\|_{\text{op}} = n^{-1} \lambda^{-1} \\
    \|\bm u^{(c,5)}\|_2 = n^{-4} \lambda^{-1} \|\tilde{\bm X}^{(1),\setminus k}\|_{\text{op}}^2 \|\tilde \bbeta\|_2, \quad 
    \|\bm v^{(c,5)}\|_2 = \lambda^{-1} \|\tilde{\bm X}^{(1),\setminus k}\|_{\text{op}}^2 \|\tilde \bbeta\|_2 \\
    \|\bm A^{(c,5)}\|_{\text{op}} = \lambda^{-2}, \quad 
    \|\bm B^{(c,5)}\|_{\text{op}} = n^{-1} \lambda^{-1} \\
    \|\bm u^{(c,6)}\|_2 = n^{-4} \lambda^{-1} \|\tilde{\bm X}^{(1),\setminus k}\|_{\text{op}}^2 \|\tilde \bbeta\|_2, \quad 
    \|\bm v^{(c,6)}\|_2 = \|\tilde \bbeta\|_2 \\
    \|\bm A^{(c,6)}\|_{\text{op}} = \lambda^{-2}  , \quad 
    \|\bm B^{(c,6)}\|_{\text{op}} = \lambda^{-1}, \quad 
    \|\bm C^{(c,6)}\|_{\text{op}} = n^{-1} \lambda^{-1} \\
    \|\bm u^{(c,7)}\|_2 = n^{-2} \|\tilde \bbeta\|_2, \quad 
    \|\bm v^{(c,7)}\|_2 = \|\tilde \bbeta\|_2 ,\quad
    \|\bm A^{(c,7)}\|_{\text{op}} = \lambda^{-2}  \\
    \|\bm u^{(c,8)}\|_2 =   \|\tilde \bbeta\|_2, \quad 
    \|\bm A^{(c,8)}\|_{\text{op}} = n^{-3} \lambda^{-1}, \quad 
    \|\bm B^{(c,8)}\|_{\text{op}} = \lambda^{-2}, \quad 
    \|\bm C^{(c,8)}\|_{\text{op}} = n^{-1} \lambda^{-1} \\
    \|\bm u^{(c,9)}\|_2 =  \|\tilde \bbeta\|_2, \quad  
    \|\bm A^{(c,9)}\|_{\text{op}} = n^{-4} \lambda^{-1} \\ 
    \|\bm B^{(c,9)}\|_{\text{op}} = \lambda^{-2}, \quad 
    \|\bm C^{(c,9)}\|_{\text{op}} = \lambda^{-1} , \quad 
    \|\bm D^{(c,9)}\|_{\text{op}} = n^{-1} \lambda^{-1} 
\end{gather*}
\paragraph{First term}
Now, using the first moment bound for $f_1$, we have
\begin{equation*}
    |\E_{1,k}[f_1(\bm Z_k^{(1)}; \bm S^{(c,1)}) - f_1(\tilde{\bm Z}_k^{(1)}; \bm S^{(c,1)})]| = 0
\end{equation*}
and similarly, using the second moment bound for $f_1$, we have 
\begin{align*}
    \max&\{\E_{1,k}[f_1(\bm Z_k^{(1)}; \bm S^{(c,1)})^2], \E_{1,k}[f_1(\tilde{\bm Z}_k^{(1)}; \bm S^{(c,1)})^2]\} \\
    &\lcon \|\bm u^{(c,1)}\|_2^2 \|\bm v^{(c,1)}\|_2^2 \\
    &\lcon \lambda^{-4}n^{-4}  \|\tilde{\bm X}^{(1),\setminus k}\|_{\text{op}}^4 \|\tilde \bbeta\|_2^4
\end{align*}

\paragraph{Second term} Using the first moment bound for $f_2$, we have
\begin{align*}
    |&\E_{1,k}[f_2(\bm Z_k^{(1)}; \bm S^{(c,2)}) - f_2(\tilde{\bm Z}_k^{(1)}; \bm S^{(c,2)})]| \\
    &\lcon \|\bm u^{(c,2)}\|_2 \|\bm v^{(c,2)}\|_2 ( \|\bm A^{(c,2)}\|_{\text{op}} p^{1/2} + \|\bm A^{(c,2)}\|_{\text{op}}^2 p + \|\bm A^{(c,2)}\|_{\text{op}}^3 p^2) \\
    &\lcon n^{-3} \lambda^{-2}  \|\tilde{\bm X}^{(1),\setminus k}\|_{\text{op}}^2 \|\tilde \bbeta\|_2 \lambda^{-1} \|\tilde{\bm X}^{(1),\setminus k}\|_{\text{op}}^2 \|\tilde \bbeta\|_2 ( n^{-1} \lambda^{-1} p^{1/2} + n^{-2} \lambda^{-2} p + n^{-3} \lambda^{-3} p^2) \\
    &\lcon n^{-7/2} \lambda^{-3}   \|\tilde \bbeta\|_2^2  \|\tilde{\bm X}^{(1),\setminus k}\|_{\text{op}}^4 ( \lambda^{-1} + \lambda^{-2}n^{-1/2}  + \lambda^{-3} n^{-1/2} )
\end{align*}
and using the second moment bound for $f_2$ yields
\begin{align*}
    \max\{&\E_{1,k}[f_2(\bm Z_k^{(1)}; \bm S^{(c,2)})^2], \E_{1,k}[f_2(\tilde{\bm Z}_k^{(1)}; \bm S^{(c,2)})^2]\} \\
    &\lcon \|\bm u^{(c,2)}\|_2^2 \|\bm v^{(c,2)}\|_2^2  \\
    &\lcon ( n^{-3} \lambda^{-2}  \|\tilde{\bm X}^{(1),\setminus k}\|_{\text{op}}^2 \|\tilde \bbeta\|_2)^2 \cdot (\lambda^{-1} \|\tilde{\bm X}^{(1),\setminus k}\|_{\text{op}}^2 \|\tilde \bbeta\|_2)^2 \\
    &\lcon \lambda^{-6} n^{-6} \|\tilde \bbeta\|_2^4 \|\tilde{\bm X}^{(1),\setminus k}\|_{\text{op}}^8
\end{align*}
\paragraph{Third term} Using the first moment bound for $f_4$, we have
\begin{align*}
    |&\E_{1,k}[f_4(\bm Z_k^{(1)}; \bm S^{(c,3)}) - f_4(\tilde{\bm Z}_k^{(1)}; \bm S^{(c,3)})]| \\
    &\lcon \|\bm u^{(c,3)}\|_2 \|\bm v^{(c,3)}\|_2 \|\bm A^{(c,3)}\|_{\text{op}}( p^{1/2} + \|\bm B^{(c,3)}\|_{\text{op}} p^{3/2} + \|\bm B^{(c,3)}\|_{\text{op}}^2 p^2 + \|\bm B^{(c,3)}\|_{\text{op}}^3 p^3) \\
    &\lcon n^{-3}\lambda^{-2} \|\tilde{\bm X}^{(1),\setminus k}\|_{\text{op}}^2 \|\tilde \bbeta\|_2 \|\tilde \bbeta\|_2 \lambda^{-1}( p^{1/2} + n^{-1} \lambda^{-1} p^{3/2} + n^{-2} \lambda^{-2} p^2 + n^{-3} \lambda^{-3} p^3) \\
    &\lcon n^{-5/2}\lambda^{-3} \|\tilde \bbeta\|_2^2 \|\tilde{\bm X}^{(1),\setminus k}\|_{\text{op}}^2 ( 1 +  \lambda^{-1} + n^{-1/2} \lambda^{-2} + n^{-1/2} \lambda^{-3}) \\
\end{align*}
and using the second moment bound for $f_4$ yields
\begin{align*}
    \max\{&\E_{1,k}[f_4(\bm Z_k^{(1)}; \bm S^{(c,3)})^2], \E_{1,k}[f_4(\tilde{\bm Z}_k^{(1)}; \bm S^{(c,3)})^2]\} \\
    &\lcon \|\bm u^{(c,3)}\|_2^2 \|\bm v^{(c,3)}\|_2^2 \|\bm A^{(c,3)}\|_{\text{op}}^2  p^2 \\
    &\lcon (n^{-3}\lambda^{-2} \|\tilde{\bm X}^{(1),\setminus k}\|_{\text{op}}^2 \|\tilde \bbeta\|_2)^2 (\|\tilde \bbeta\|_2)^2 \lambda^{-2} p^2 \\
    &\lcon n^{-4}\lambda^{-6} \|\tilde \bbeta\|_2^4  \|\tilde{\bm X}^{(1),\setminus k}\|_{\text{op}}^4 
\end{align*}

\paragraph{Fourth term} Using the first moment bound for $f_4$, we have
\begin{align*}
    |&\E_{1,k}[f_4(\bm Z_k^{(1)}; \bm S^{(c,4)}) - f_4(\tilde{\bm Z}_k^{(1)}; \bm S^{(c,4)})]| \\
    &\lcon \|\bm u^{(c,4)}\|_2 \|\bm v^{(c,4)}\|_2 \|\bm A^{(c,4)}\|_{\text{op}}( p^{1/2} + \|\bm B^{(c,4)}\|_{\text{op}} p^{3/2} + \|\bm B^{(c,4)}\|_{\text{op}}^2 p^2 + \|\bm B^{(c,4)}\|_{\text{op}}^3 p^3) \\
    &\lcon n^{-3} \lambda^{-1} \|\tilde{\bm X}^{(1),\setminus k}\|_{\text{op}}^2 \|\tilde \bbeta\|_2 \|\tilde \bbeta\|_2 \lambda^{-2} ( p^{1/2} + n^{-1} \lambda^{-1} p^{3/2} + n^{-2} \lambda^{-2} p^2 + n^{-3} \lambda^{-3} p^3) \\
    &\lcon n^{-5/2} \lambda^{-3} \|\tilde \bbeta\|_2^2 \|\tilde{\bm X}^{(1),\setminus k}\|_{\text{op}}^2   ( 1 +  \lambda^{-1} + n^{-1/2} \lambda^{-2} + n^{-1/2} \lambda^{-3})
\end{align*}
and using the second moment bound for $f_4$ yields
\begin{align*}
    \max\{&\E_{1,k}[f_4(\bm Z_k^{(1)}; \bm S^{(c,4)})^2], \E_{1,k}[f_4(\tilde{\bm Z}_k^{(1)}; \bm S^{(c,4)})^2]\} \\
    &\lcon \|\bm u^{(c,4)}\|_2^2 \|\bm v^{(c,4)}\|_2^2 \|\bm A^{(c,4)}\|_{\text{op}}^2  p^2 \\
    &\lcon (n^{-3} \lambda^{-1} \|\tilde{\bm X}^{(1),\setminus k}\|_{\text{op}}^2 \|\tilde \bbeta\|_2)^2 \|\tilde \bbeta\|_2^2 \lambda^{-4}  p^2 \\
    &\lcon n^{-4} \lambda^{-6} \|\tilde{\bm X}^{(1),\setminus k}\|_{\text{op}}^4 \|\tilde \bbeta\|_2^4   
\end{align*}

\paragraph{Fifth term} Using the first moment bound for $f_5$, we have
\begin{align*}
    |&\E_{1,k}[f_5(\bm Z_k^{(1)}; \bm S^{(c,5)}) - f_5(\tilde{\bm Z}_k^{(1)}; \bm S^{(c,5)})]| \\
    &\lcon \|\bm u^{(c,5)}\|_2 \|\bm v^{(c,5)}\|_2 \|\bm A^{(c,5)}\|_{\text{op}}(p^{1/2} + \|\bm B^{(c,5)}\|_{\text{op}}^2 p^{5/2} + \|\bm B^{(c,5)}\|_{\text{op}}^4 p^4 + \|\bm B^{(c,5)}\|_{\text{op}}^6 p^6) \\
    &\lcon n^{-4} \lambda^{-1} \|\tilde{\bm X}^{(1),\setminus k}\|_{\text{op}}^2 \|\tilde \bbeta\|_2 \lambda^{-1} \|\tilde{\bm X}^{(1),\setminus k}\|_{\text{op}}^2 \|\tilde \bbeta\|_2 \lambda^{-2}  \\
    &\qquad (p^{1/2} + n^{-2} \lambda^{-2} p^{5/2} + n^{-4} \lambda^{-4} p^4 + n^{-6} \lambda^{-6} p^6) \\
    &\lcon n^{-7/2} \lambda^{-4}  \|\tilde \bbeta\|_2^2 \|\tilde{\bm X}^{(1),\setminus k}\|_{\text{op}}^4  (1 + \lambda^{-2} + n^{-1/2} \lambda^{-4}  + n^{-1/2} \lambda^{-6}) 
\end{align*}
and using the second moment bound for $f_5$ yields
\begin{align*}
    \max\{&\E_{1,k}[f_5(\bm Z_k^{(1)}; \bm S^{(c,5)})^2], \E_{1,k}[f_5(\tilde{\bm Z}_k^{(1)}; \bm S^{(c,5)})^2]\} \\
    &\lcon \|\bm u^{(c,5)}\|_2^2 \|\bm v^{(c,5)}\|_2^2 \|\bm A^{(c,5)}\|_{\text{op}}^2  p^2 \\
    &\lcon (n^{-4} \lambda^{-1} \|\tilde{\bm X}^{(1),\setminus k}\|_{\text{op}}^2 \|\tilde \bbeta\|_2)^2 (\lambda^{-1} \|\tilde{\bm X}^{(1),\setminus k}\|_{\text{op}}^2 \|\tilde \bbeta\|_2)^2\lambda^{-4} p^2 \\
    &\lcon n^{-6} \lambda^{-8}  \|\tilde \bbeta\|_2^4  \|\tilde{\bm X}^{(1),\setminus k}\|_{\text{op}}^8 
\end{align*}

\paragraph{Sixth term} Using the first moment bound for $f_7$, we have
\begin{align*}
    |&\E_{1,k}[f_7(\bm Z_k^{(1)}; \bm S^{(c,6)}) - f_7(\tilde{\bm Z}_k^{(1)}; \bm S^{(c,6)})]| \\
    &\lcon  \|\bm u^{(c,6)}\|_2 \|\bm v^{(c,6)}\|_2 \|\bm A^{(c,6)}\|_{\text{op}} \|\bm B^{(c,6)}\|_{\text{op}}  \\
    &\qquad ( p^{3/2} +  \|\bm C^{(c,6)}\|_{\text{op}}^2 p^{7/2} +  \|\bm C^{(c,6)}\|_{\text{op}}^4 p^5 + \|\bm C^{(c,6)}\|_{\text{op}}^6 p^7) \\
    &\lcon  n^{-4} \lambda^{-1} \|\tilde{\bm X}^{(1),\setminus k}\|_{\text{op}}^2 \|\tilde \bbeta\|_2 \|\tilde \bbeta\|_2 \lambda^{-2} \lambda^{-1} ( p^{3/2} +  n^{-2} \lambda^{-2} p^{7/2} +  n^{-4} \lambda^{-4} p^5 + n^{-6} \lambda^{-6} p^7) \\
    &\lcon  n^{-5/2} \lambda^{-4}   \|\tilde \bbeta\|_2^2 \|\tilde{\bm X}^{(1),\setminus k}\|_{\text{op}}^2 ( 1 +  \lambda^{-2}  +  n^{-1/2} \lambda^{-4}  + n^{-1/2} \lambda^{-6} ) 
\end{align*}
and using the second moment bound for $f_7$ yields
\begin{align*}
    \max\{&\E_{1,k}[f_7(\bm Z_k^{(1)}; \bm S^{(c,6)})^2], \E_{1,k}[f_7(\tilde{\bm Z}_k^{(1)}; \bm S^{(c,6)})^2]\} \\
    &\lcon \|\bm u^{(c,6)}\|_2^2 \|\bm v^{(c,6)}\|_2^2 \|\bm A^{(c,6)}\|_{\text{op}}^2  \|\bm B^{(c,6)}\|_{\text{op}}^2   p^4 \\
    &\lcon (n^{-4} \lambda^{-1} \|\tilde{\bm X}^{(1),\setminus k}\|_{\text{op}}^2 \|\tilde \bbeta\|_2)^2 \|\tilde \bbeta\|_2^2 \lambda^{-4} \lambda^{-2} p^4 \\
    &\lcon n^{-4} \lambda^{-8} \|\tilde{\bm X}^{(1),\setminus k}\|_{\text{op}}^4 \|\tilde \bbeta\|_2^4  
\end{align*}

\paragraph{Seventh term} Using the first moment bound for $f_3$, we have
\begin{align*}
    |&\E_{1,k}[f_3(\bm Z_k^{(1)}; \bm S^{(c,7)}) - f_3(\tilde{\bm Z}_k^{(1)}; \bm S^{(c,7)})]| \\
    &\lcon  \|\bm u^{(c,7)}\|_2 \|\bm v^{(c,7)}\|_2 \|\bm A^{(c,7)}\|_{\text{op}} p^{1/2} \\
    &\lcon n^{-2} \|\tilde \bbeta\|_2 \|\tilde \bbeta\|_2 \lambda^{-2} p^{1/2} \\
    &\lcon n^{-3/2} \lambda^{-2} \|\tilde \bbeta\|_2^2 
\end{align*}
and using the second moment bound for $f_3$ yields
\begin{align*}
    \max\{&\E_{1,k}[f_3(\bm Z_k^{(1)}; \bm S^{(c,7)})^2], \E_{1,k}[f_3(\tilde{\bm Z}_k^{(1)}; \bm S^{(c,7)})^2]\} \\
    &\lcon \|\bm u^{(c,7)}\|_2^2 \|\bm v^{(c,7)}\|_2^2 \|\bm A^{(c,7)}\|_{\text{op}}^2 p^2 \\
    &\lcon n^{-4} \|\tilde \bbeta\|_2^2 \|\tilde \bbeta\|_2^2 \lambda^{-4} p^2 \\
    &\lcon n^{-2} \lambda^{-4} \|\tilde \bbeta\|_2^4
\end{align*}

\paragraph{Eighth term} Using the first moment bound for $f_6$, we have
\begin{align*}
    |&\E_{1,k}[f_6(\bm Z_k^{(1)}; \bm S^{(c,8)}) - f_6(\tilde{\bm Z}_k^{(1)}; \bm S^{(c,8)})]| \\
    &\lcon \|\bm u^{(c,8)}\|_2^2\|\bm A^{(c,8)}\|_{\text{op}} \|\bm B^{(c,8)}\|_{\text{op}} ( p^{3/2} + \|\bm C^{(c,8)}\|_{\text{op}} p^{5/2} + \|\bm C^{(c,8)}\|_{\text{op}}^2 p^3 + \|\bm C^{(c,8)}\|_{\text{op}}^3 p^{7/2})  \\
    &\lcon \|\tilde \bbeta\|_2^2 n^{-3} \lambda^{-1} \lambda^{-2} ( p^{3/2} + n^{-1} \lambda^{-1} p^{5/2} + n^{-2} \lambda^{-2} p^3 + n^{-3} \lambda^{-3} p^{7/2})  \\
    &\lcon n^{-3/2} \lambda^{-3} \|\tilde \bbeta\|_2^2 (1 + \lambda^{-1}  + n^{-1/2} \lambda^{-2} + n^{-1} \lambda^{-3}) 
\end{align*}
and using the second moment bound for $f_6$ yields
\begin{align*}
    \max\{&\E_{1,k}[f_6(\bm Z_k^{(1)}; \bm S^{(c,8)})^2], \E_{1,k}[f_6(\tilde{\bm Z}_k^{(1)}; \bm S^{(c,8)})^2]\} \\
    &\lcon \|\bm u^{(c,8)}\|_2^4 \|\bm A^{(c,8)}\|_{\text{op}}^2 \|\bm B^{(c,8)}\|_{\text{op}}^2 p^4 \\
    &\lcon \|\tilde \bbeta\|_2^4 (n^{-3} \lambda^{-1})^2 \lambda^{-4} p^4 \\
    &\lcon n^{-2} \lambda^{-6} \|\tilde \bbeta\|_2^4 
\end{align*}

\paragraph{Ninth term} Using the first moment bound for $f_8$, we have
\begin{align*}
    |&\E_{1,k}[f_8(\bm Z_k^{(1)}; \bm S^{(c,8)}) - f_8(\tilde{\bm Z}_k^{(1)}; \bm S^{(c,8)})]| \\
    &\lcon \|\bm u\|_2^2 \|\bm A\|_{\text{op}} \|\bm B\|_{\text{op}} \|\bm C\|_{\text{op}} (p^{5/2} + \|\bm D\|_{\text{op}}^2 p^{9/2} + \|\bm D\|_{\text{op}}^4 p^{6} + \|\bm D\|_{\text{op}}^6 p^{8}) \\
    &\lcon \|\tilde \bbeta\|_2^2 n^{-4} \lambda^{-1}  \lambda^{-2}  \lambda^{-1} (p^{5/2} + n^{-2} \lambda^{-2}  p^{9/2} + n^{-4} \lambda^{-4} p^{6} + n^{-6} \lambda^{-6} p^{8}) \\
    &\lcon n^{-3/2} \lambda^{-4} \|\tilde \bbeta\|_2^2 (1 + \lambda^{-2}  + n^{-1/2} \lambda^{-4}  + n^{-1/2} \lambda^{-6} )
\end{align*}
and using the second moment bound for $f_8$ yields
\begin{align*}
    \max\{&\E_{1,k}[f_8(\bm Z_k^{(1)}; \bm S^{(c,8)})^2], \E_{1,k}[f_8(\tilde{\bm Z}_k^{(1)}; \bm S^{(c,8)})^2]\} \\
    &\lcon \|\bm u\|_2^4 \|\bm A\|_{\text{op}}^2 \|\bm B\|_{\text{op}}^2 \|\bm C\|_{\text{op}}^2 p^6 \\
    &\lcon \|\tilde \bbeta\|_2^4 n^{-8} \lambda^{-2} \lambda^{-4} \lambda^{-2} p^6 \\
    &\lcon n^{-2} \lambda^{-8} \|\tilde \bbeta\|_2^4 
\end{align*}
\paragraph{Combined} It follows from the above bounds that
\begin{align*}
    |&\E[\varphi(B_3(\bm Z^{(1)}, \bm Z^{(2)}))] - \E[\varphi(B_3(\tilde{\bm Z}^{(1)}, \bm Z^{(2)}))]| \\
    &\lcon \E[\lambda^{-4}n^{-3}  \|\tilde{\bm X}^{(1),\setminus k}\|_{\text{op}}^4 \|\tilde \bbeta\|_2^4 \\
    &\qquad + n^{-5/2} \lambda^{-3}   \|\tilde \bbeta\|_2^2  \|\tilde{\bm X}^{(1),\setminus k}\|_{\text{op}}^4 ( \lambda^{-1} + \lambda^{-2}n^{-1/2}  + \lambda^{-3} n^{-1/2} ) \\
    &\qquad + \lambda^{-6} n^{-5} \|\tilde \bbeta\|_2^4 \|\tilde{\bm X}^{(1),\setminus k}\|_{\text{op}}^8 \\
    &\qquad + n^{-3/2}\lambda^{-3} \|\tilde \bbeta\|_2^2 \|\tilde{\bm X}^{(1),\setminus k}\|_{\text{op}}^2 ( 1 +  \lambda^{-1} + n^{-1/2} \lambda^{-2} + n^{-1/2} \lambda^{-3}) \\
    &\qquad + n^{-3}\lambda^{-6} \|\tilde \bbeta\|_2^4  \|\tilde{\bm X}^{(1),\setminus k}\|_{\text{op}}^4 \\
    &\qquad + n^{-3/2} \lambda^{-3} \|\tilde \bbeta\|_2^2 \|\tilde{\bm X}^{(1),\setminus k}\|_{\text{op}}^2   ( 1 +  \lambda^{-1} + n^{-1/2} \lambda^{-2} + n^{-1/2} \lambda^{-3})\\
    &\qquad + n^{-3} \lambda^{-6} \|\tilde{\bm X}^{(1),\setminus k}\|_{\text{op}}^4 \|\tilde \bbeta\|_2^4 \\
    &\qquad + n^{-5/2} \lambda^{-4}  \|\tilde \bbeta\|_2^2 \|\tilde{\bm X}^{(1),\setminus k}\|_{\text{op}}^4  (1 + \lambda^{-2} + n^{-1/2} \lambda^{-4}  + n^{-1/2} \lambda^{-6}) \\
    &\qquad + n^{-5} \lambda^{-8}  \|\tilde \bbeta\|_2^4  \|\tilde{\bm X}^{(1),\setminus k}\|_{\text{op}}^8 \\
    &\qquad + n^{-3/2} \lambda^{-4}   \|\tilde \bbeta\|_2^2 \|\tilde{\bm X}^{(1),\setminus k}\|_{\text{op}}^2 ( 1 +  \lambda^{-2}  +  n^{-1/2} \lambda^{-4}  + n^{-1/2} \lambda^{-6} \\
    &\qquad + n^{-3} \lambda^{-8} \|\tilde{\bm X}^{(1),\setminus k}\|_{\text{op}}^4 \|\tilde \bbeta\|_2^4   \\
    &\qquad +n^{-1/2} \lambda^{-2} \|\tilde \bbeta\|_2^2  +  n^{-1} \lambda^{-4} \|\tilde \bbeta\|_2^4 \\
    &\qquad + n^{-1/2} \lambda^{-3} \|\tilde \bbeta\|_2^2 (1 + \lambda^{-1}  + n^{-1/2} \lambda^{-2} + n^{-1} \lambda^{-3})  + n^{-1} \lambda^{-6} \|\tilde \bbeta\|_2^4 \\
    &\qquad + n^{-1/2} \lambda^{-4} \|\tilde \bbeta\|_2^2 (1 + \lambda^{-2}  + n^{-1/2} \lambda^{-4}  + n^{-1/2} \lambda^{-6} ) + n^{-1} \lambda^{-8} \|\tilde \bbeta\|_2^4 ] \\
    &\lcon n^{-1} (\lambda^{-4} + \lambda^{-6} + \lambda^{-8}) \|\tilde \bbeta\|_2^4 + n^{-3} (\lambda^{-4} + \lambda^{-6} + \lambda^{-8}) \|\tilde \bbeta\|_2^4 \E[\|\tilde{\bm X}^{(1),\setminus k}\|_{\text{op}}^4] \\
    &\qquad + n^{-5} (\lambda^{-6} + \lambda^{-8}) \|\tilde \bbeta\|_2^4 \E[\|\tilde{\bm X}^{(1),\setminus k}\|_{\text{op}}^8] \\
    &\qquad + n^{-1/2} \lambda^{-2} \|\tilde \bbeta\|_2^2 (1 + \lambda^{-1} + \lambda^{-2} + \lambda^{-4} + n^{-1/2}(\lambda^{-3} + \lambda^{-6} + \lambda^{-7} + \lambda^{-8})) \\
    &\qquad + n^{-3/2} \lambda^{-3} \|\tilde \bbeta\|_2^2 (1 + \lambda^{-1} + \lambda^{-3} + n^{-1/2} (\lambda^{-2} + \lambda^{-5} + \lambda^{-7})) \E[\|\tilde{\bm X}^{(1),\setminus k}\|_{\text{op}}^2] \\
    &\qquad + n^{-5/2} \lambda^{-3} \|\tilde \bbeta\|_2^2 (\lambda^{-1} + \lambda^{-3} + n^{-1/2} (\lambda^{-2} + \lambda^{-5} + \lambda^{-7})) \E[\|\tilde{\bm X}^{(1),\setminus}\|_{\text{op}}^4] \\
    &\lcon n^{-1} \log^2 n \lambda^{-4}  \|\tilde \bbeta\|_2^4  + n^{-1} \log^4 n (\lambda^{-6} + \lambda^{-8}) \|\tilde \bbeta\|_2^4  \\
    &\qquad + n^{-1/2} \lambda^{-2} \|\tilde \bbeta\|_2^2 (1 + n^{-1/2} \lambda^{-7} ) \\
    &\qquad + n^{-1/2} \log n \lambda^{-3} \|\tilde \bbeta\|_2^2  \\
    &\qquad + n^{-1/2} \log^2 n \lambda^{-3} \|\tilde \bbeta\|_2^2 (\lambda^{-1} + \lambda^{-3} + n^{-1/2} (\lambda^{-2} + \lambda^{-5} + \lambda^{-7}))
\end{align*}

\subsubsection{Swapping \texorpdfstring{$\bZ^{(2)}$}{Z2}}
We first bound
\begin{gather*}
    \|\bm u^{(C,1)}\|_2 \lcon n^{-3}  \lambda^{-1} \|\tilde{\bm X}^{(1)}\|_{\text{op}}^2 \|\tilde \bbeta\|_2, \quad 
    \|\bm v^{(C,1)}\|_2 =   \lambda^{-2}  \|\tilde{\bm X}^{(1)}\|_{\text{op}}^2 \|\tilde \bbeta\|_2 \\
    \|\bm A^{(C,1)}\|_{\text{op}} = n^{-1} \lambda^{-1}   \\
    \|\bm u^{(C,2)}\|_2 = n^{-4}  \lambda^{-1}  \|\tilde{\bm X}^{(1)}\|_{\text{op}}^2 \|\tilde \bbeta\|_2, \quad 
    \|\bm v^{(C,2)}\|_2 =  \lambda^{-1}  \|\tilde{\bm X}^{(1)}\|_{\text{op}}^2 \|\tilde \bbeta\|_2 \\
    \|\bm A^{(C,2)}\|_{\text{op}} =  \lambda^{-2}, \quad 
    \|\bm B^{(C,2)}\|_{\text{op}} = n^{-1} \lambda^{-1}  
\end{gather*}
\paragraph{First term}
From the first moment bounds for $f_2$, we have
\begin{align*}
    |&\E_{1,k}[f_2(\bm Z_k^{(1)}; \bm S^{(C,1)}) - f_2(\tilde{\bm Z}_k^{(1)}; \bm S^{(C,1)})]| \\
    &\lcon \|\bm u^{(C,1)}\|_2 \|\bm v^{(C,1)}\|_2 (\|\bm A^{(C,1)}\|_{\text{op}} p^{1/2} + \|\bm A^{(C,1)}\|_{\text{op}}^2 p + \|\bm A^{(C,1)}\|_{\text{op}}^3 p^2)  \\
    &\lcon n^{-3}  \lambda^{-1} \|\tilde{\bm X}^{(1)}\|_{\text{op}}^2 \|\tilde \bbeta\|_2 \lambda^{-2}  \|\tilde{\bm X}^{(1)}\|_{\text{op}}^2 \|\tilde \bbeta\|_2 (n^{-1} \lambda^{-1} p^{1/2} + n^{-2} \lambda^{-2} p + n^{-3} \lambda^{-3} p^2) \\
    &\lcon n^{-7/2}  \lambda^{-4} \|\tilde{\bm X}^{(1)}\|_{\text{op}}^4 \|\tilde \bbeta\|_2^2 (1 + n^{-1/2} \lambda^{-1} + n^{-1/2} \lambda^{-2})
\end{align*}
and similarly, from the second moment bounds for $f_2$,
\begin{align*}
    \max\{&\E_{1,k}[f_2(\bm Z_k^{(1)}; \bm S^{(C,1)})^2], \E_{1,k}[f_2(\tilde{\bm Z}_k^{(1)}; \bm S^{(C,1)})^2]\} \\
    &\lcon \|\bm u^{(C,1)}\|_2^2 \|\bm v^{(C,1)}\|_2^2 \\
    &\lcon n^{-6}  \lambda^{-6} \|\tilde{\bm X}^{(1)}\|_{\text{op}}^8 \|\tilde \bbeta\|_2^4 
\end{align*}
\paragraph{Second term}
From the first moment bounds for $f_5$, we have
\begin{align*}
    |&\E_{1,k}[f_5(\bm Z_k^{(1)}; \bm S^{(C,2)}) - f_5(\tilde{\bm Z}_k^{(1)}; \bm S^{(C,2)})]| \\
    &\lcon \|\bm u^{(C,2)}\|_2 \|\bm v^{(C,2)}\|_2 \|\bm A^{(C,2)}\|_{\text{op}} ( p^{1/2} +  \|\bm B^{(C,2)}\|_{\text{op}}^2 p^{5/2} +  \|\bm B^{(C,2)}\|_{\text{op}}^4 p^4 +  \|\bm B^{(C,2)}\|_{\text{op}}^6 p^6) \\
    &\lcon n^{-4}  \lambda^{-1}  \|\tilde{\bm X}^{(1)}\|_{\text{op}}^2 \|\tilde \bbeta\|_2 \lambda^{-1}  \|\tilde{\bm X}^{(1)}\|_{\text{op}}^2 \|\tilde \bbeta\|_2 \lambda^{-2}  \\
    &\qquad ( p^{1/2} +  n^{-2} \lambda^{-2} p^{5/2} +  n^{-4} \lambda^{-4} p^4 +  n^{-6} \lambda^{-6} p^6) \\
    &\lcon n^{-7/2}  \lambda^{-4}  \|\tilde{\bm X}^{(1)}\|_{\text{op}}^4 \|\tilde \bbeta\|_2^2   ( 1 +   \lambda^{-2}  +  n^{-1/2} \lambda^{-4} +  n^{-1/2} \lambda^{-6})
\end{align*}
and similarly, from the second moment bounds for $f_5$, we have
\begin{align*}
    \max\{&\E_{1,k}[f_2(\bm Z_k^{(1)}; \bm S^{(C,2)})^2], \E_{1,k}[f_2(\tilde{\bm Z}_k^{(1)}; \bm S^{(C,2)})^2]\} \\
    &\lcon \|\bm u^{(C,2)}\|_2^2 \|\bm v^{(C,2)}\|_2^2 \|\bm A^{(C,2)}\|_{\text{op}}^2 p^2 \\
    &\lcon (n^{-4}  \lambda^{-1}  \|\tilde{\bm X}^{(1)}\|_{\text{op}}^2 \|\tilde \bbeta\|_2)^2(\lambda^{-1}  \|\tilde{\bm X}^{(1)}\|_{\text{op}}^2 \|\tilde \bbeta\|_2)^2 \lambda^{-4} p^2 \\
    &\lcon n^{-6}  \lambda^{-8}  \|\tilde{\bm X}^{(1)}\|_{\text{op}}^8 \|\tilde \bbeta\|_2^4   
\end{align*}
\paragraph{Combined}
Combining all of these bounds with the bounds above demonstrate that
\begin{align*}
    |&\E[\varphi(B_1(\tilde{\bm Z}^{(1)}, \bm Z^{(2)}))] - \E[\varphi(B_1(\tilde{\bm Z}^{(1)}, \tilde{\bm Z}^{(2)}))]| \\
    &\lcon n^{-5/2} \lambda^{-4} \|\tilde \bbeta\|_2^2 \E[\|\tilde{\bm X}^{(1)}\|_{\text{op}}^4] (1 +  \lambda^{-2} n^{-1/2} (\lambda^{-1} + \lambda^{-4} + \lambda^{-6}))  \\
    &\qquad + n^{-5} (\lambda^{-6} + \lambda^{-8}) \|\tilde \bbeta\|_2^4 \E[\|\tilde{\bm X}^{(1)}\|_{\text{op}}^8] \\
    &\lcon n^{-1/2} \log^2 n \cdot  \lambda^{-4} \|\tilde \bbeta\|_2^2 (1 +  \lambda^{-2} n^{-1/2} (\lambda^{-1} + \lambda^{-4} + \lambda^{-6}))  \\
    &\qquad + n^{-1} \log^4 n \cdot  (\lambda^{-6} + \lambda^{-8}) \|\tilde \bbeta\|_2^4 
\end{align*}

\subsubsection{Universality for \texorpdfstring{$B_2$}{B2}}
Therefore, for any bounded function $\varphi$ with bounded first and second derivatives, we have
\begin{align*}
    |&\E[\varphi(B_2(\bm Z^{(1)}, \bm Z^{(2)}))] - \E[\varphi(B_2(\tilde{\bm Z}^{(1)}, \tilde{\bm Z}^{(2)}))]| \\
    &\lcon n^{-1} \log^2 n \lambda^{-4}  \|\tilde \bbeta\|_2^4  + n^{-1} \log^4 n (\lambda^{-6} + \lambda^{-8}) \|\tilde \bbeta\|_2^4  \\
    &\qquad + n^{-1/2} \lambda^{-2} \|\tilde \bbeta\|_2^2 (1 + n^{-1/2} \lambda^{-7} ) \\
    &\qquad + n^{-1/2} \log n \lambda^{-3} \|\tilde \bbeta\|_2^2  \\
    &\qquad + n^{-1/2} \log^2 n \lambda^{-3} \|\tilde \bbeta\|_2^2 (\lambda^{-1} + \lambda^{-3} + n^{-1/2} (\lambda^{-2} + \lambda^{-5} + \lambda^{-7})) \\
    &\qquad + n^{-1/2} \log^2 n \cdot  \lambda^{-4} \|\tilde \bbeta\|_2^2 (1 +  \lambda^{-2} n^{-1/2} (\lambda^{-1} + \lambda^{-4} + \lambda^{-6}))  \\
    &\qquad + n^{-1} \log^4 n \cdot  (\lambda^{-6} + \lambda^{-8}) \|\tilde \bbeta\|_2^4  \\
    &\lcon n^{-1} \log^2 n \lambda^{-4}  \|\tilde \bbeta\|_2^4  + n^{-1} \log^4 n (\lambda^{-6} + \lambda^{-8}) \|\tilde \bbeta\|_2^4  \\
    &\qquad + n^{-1/2} \lambda^{-2} \|\tilde \bbeta\|_2^2 (1 + n^{-1/2} \lambda^{-7} ) \\
    &\qquad + n^{-1/2} \log n \lambda^{-3} \|\tilde \bbeta\|_2^2  \\
    &\qquad + n^{-1/2} \log^2 n \lambda^{-3} \|\tilde \bbeta\|_2^2 (\lambda^{-1} + \lambda^{-3} + n^{-1/2} (\lambda^{-2} + \lambda^{-5} + \lambda^{-7})) 
\end{align*}

\subsection{Variance term}
The proof of universality for the variance term follows by the same procedure as the bias terms; notably, the variance term is remarkably similar to $B_3$ and thus its proof is also quite similar. We include it here for completeness.

Let $\bm U = \tilde{\bm U} \cdot \frac{p}{n}$, where $\tilde{\bm U} \sim \text{Unif}(\mathcal{S}^{d-1})$ is drawn independently of everything else. Define $V(\bW^{(1)}, \bW^{(2)})$ analogously to the bias quantities $B_j(\bW^{(1)}, \bW^{(2)})$ and then define $\tilde V(\bW^{(1)}, \bW^{(2)}) = \sigma^2 \bm U^\top (\hat \bSigma + \lambda \bm I)^{-2} \hat \bSigma \bSigma^{(2)} \bm U$.
With these quantities, we can express
\begin{align*}
    V(\bm Z^{(1)}, \bm Z^{(2)}) &= \frac{\sigma^2}{n} \Tr[(\hat \bSigma + \lambda \bm I)^{-2} \hat \bSigma \bSigma^{(2)}] \\
    &= \E_U[\sigma^2 \bm U^\top (\hat \bSigma + \lambda \bm I)^{-2} \hat \bSigma \bSigma^{(2)} \bm U] \\
    &= \E_{U}[\tilde V(\bm Z^{(1)}, \bm Z^{(2)})]
\end{align*}
and similarly $V(\tilde \bZ^{(1)}, \tilde \bZ^{(2)}) = \E_U[\tilde V(\tilde \bZ^{(1)}, \tilde \bZ^{(2)})]$,
where the expectation $\E_U$ is taken with respect to $\bm U$.
Then, it suffices to show that, for each $i = 1, \dots, n$ and bounded function $\varphi$ with bounded first and second derivative, we have
\begin{equation*}
    |\E[\varphi(\E_{U}[\tilde V(\bm Z^{(1)}, \bm Z^{(2)})] - \E[\varphi(\E_{U}[\tilde V(\bm Z^{(1)}, \bm Z^{(2)})])]| \to 0.
\end{equation*}
\subsubsection{Lindeberg and Sherman-Morrison expansions}
As before, we have
\begin{align*}
    &|\E[\varphi(\E_{U}[\tilde V(\bm Z^{(1)}, \bm Z^{(2)})])] - \E[\varphi(\E_{U}[\tilde V(\bm Z^{(1)}, \bm Z^{(2)})])]|\\
    &\leq \|\varphi'\|_\infty \sum_{k=1}^{n_1} \E[|\E_U\E_{1,k}[\tilde V(\bm Z^{(1),k}, \bm Z^{(2)}) - \tilde V(\bm Z^{(1),k-1}, \bm Z^{(2)})]|] \\
    &\qquad + \|\varphi''\|_\infty \sum_{k=1}^{n_1} \E[\E_U\E_{1,k}[(\tilde V(\bm Z^{(1),k}, \bm Z^{(2)}) - \tilde V(\bm Z^{(1),\setminus k}, \bm Z^{(2)}))^2]] \\
    &\qquad + \|\varphi''\|_\infty  \sum_{k=1}^{n_1} \E[\E_U\E_{1,k}[(\tilde V(\bm Z^{(1),k-1}, \bm Z^{(2)}) - \tilde V(\bm Z^{(1),\setminus k}, \bm Z^{(2)}))^2]] \\
    &|\E[\varphi(\E_U[\tilde V(\bm Z^{(1)}, \bm Z^{(2)})])] - \E[\varphi(\E_U[\tilde V(\tilde{\bm Z}^{(1)}, \tilde{\bm Z}^{(2)})])]| \\
    &\leq \|\varphi'\|_\infty \sum_{k=1}^{n_2} \E[|\E_U\E_{2,k}[(\tilde V(\tilde{\bm Z}^{(1)}, \bm Z^{(2),k}))] - \E[(\tilde V(\tilde{\bm Z}^{(1)}, \bm Z^{(2),k-1}))]|] \\
    &\qquad + \|\varphi''\|_\infty \sum_{k=1}^{n_2} \E[\E_U\E_{2,k}[(\tilde V(\tilde{\bm Z}^{(1)}, \bm Z^{(2),k}) - \tilde V(\tilde{\bm Z}^{(1)}, \bm Z^{(2),\setminus k}))^2]] \\
    &\qquad + \|\varphi''\|_\infty  \sum_{k=1}^{n_2} \E[\E_U\E_{2,k}[(\tilde V(\tilde{\bm Z}^{(1)}, \bm Z^{(2),k-1}) - \tilde V(\tilde{\bm Z}^{(1)}, \bm Z^{(2),\setminus k}))^2]]
\end{align*}
Recall that, by Sherman-Morrison, we can express
\begin{align*}
    (\hat \bSigma_k + \lambda \bm I)^{-1} &= (\hat \bSigma_{\setminus k} + \tfrac{1}{n} \tilde{\bm X}_k^{(1)} \tilde{\bm X}_k^{(1)\top} + \lambda \bm I)^{-1} \\
    &= (\hat \bSigma_{\setminus k} + \lambda \bm I)^{-1} - \frac{1}{n} \frac{(\hat \bSigma_{\setminus k} + \lambda \bm I)^{-1} \tilde{\bm X}_k^{(1)} \tilde{\bm X}_k^{(1)\top} (\hat \bSigma_{\setminus k} + \lambda \bm I)^{-1}}{1 + \frac{1}{n} \tilde{\bm X}_k^{(1)\top} (\hat \bSigma_{\setminus k} + \lambda \bm I)^{-1} \tilde{\bm X}_k^{(1)}}.
\end{align*}
and therefore
\begin{align*}
    \tilde V(\bm Z^{(1),k}, \bm Z^{(2)}) &= \sigma^2 \bm U^\top \biggl((\hat \bSigma_{\setminus k} + \lambda \bm I)^{-1} - \frac{1}{n} \frac{(\hat \bSigma_{\setminus k} + \lambda \bm I)^{-1} \tilde{\bm X}_k^{(1)} \tilde{\bm X}_k^{(1)\top} (\hat \bSigma_{\setminus k} + \lambda \bm I)^{-1}}{1 + \frac{1}{n} \tilde{\bm X}_k^{(1)\top} (\hat \bSigma_{\setminus k} + \lambda \bm I)^{-1} \tilde{\bm X}_k^{(1)}} \biggr)^2 \\
    &\qquad \cdot \biggl(\frac{\bm X^{(1),\setminus k\top} \bm X^{(1),\setminus k}}{n} +  \frac{\bm X^{(2)\top} \bm X^{(2)}}{n} + \frac{\tilde{\bm X}_k^{(1)} \tilde{\bm X}_k^{(1)\top}}{n}\biggr) \bSigma^{(2)} \bm U
\end{align*}
and therefore
\begin{align*}
    &V_1(\bm Z^{(1),k}, \bm Z^{(2)}) - V_1(\bm Z^{(1),\setminus k}, \bm Z^{(2)}) \\
    &=  \sigma^2 \bm U^\top (\hat \bSigma_{\setminus k} + \lambda \bm I)^{-2} \biggl(\frac{\tilde{\bm X}_k^{(1)} \tilde{\bm X}_k^{(1)\top}}{n}\biggr) \bSigma^{(2)} \bm U \\
    &\qquad - \sigma^2 \bm U^\top (\hat \bSigma_{\setminus k} + \lambda \bm I)^{-1} \biggl(\frac{1}{n} \frac{(\hat \bSigma_{\setminus k} + \lambda \bm I)^{-1} \tilde{\bm X}_k^{(1)} \tilde{\bm X}_k^{(1)\top} (\hat \bSigma_{\setminus k} + \lambda \bm I)^{-1}}{1 + \frac{1}{n} \tilde{\bm X}_k^{(1)\top} (\hat \bSigma_{\setminus k} + \lambda \bm I)^{-1} \tilde{\bm X}_k^{(1)}}\biggr) \\
    &\hspace{4em} \biggl(\frac{\bm X^{(1),\setminus k\top} \bm X^{(1),\setminus k}}{n} +  \frac{\bm X^{(2)\top} \bm X^{(2)}}{n} \biggr) \bSigma^{(2)} \bm U \\
    &\qquad - \sigma^2 \bm U^\top (\hat \bSigma_{\setminus k} + \lambda \bm I)^{-1} \biggl(\frac{1}{n} \frac{(\hat \bSigma_{\setminus k} + \lambda \bm I)^{-1} \tilde{\bm X}_k^{(1)} \tilde{\bm X}_k^{(1)\top} (\hat \bSigma_{\setminus k} + \lambda \bm I)^{-1}}{1 + \frac{1}{n} \tilde{\bm X}_k^{(1)\top} (\hat \bSigma_{\setminus k} + \lambda \bm I)^{-1} \tilde{\bm X}_k^{(1)}}\biggr) \\
    &\hspace{4em}\biggl(\frac{\tilde{\bm X}_k^{(1)} \tilde{\bm X}_k^{(1)\top}}{n}\biggr) \bSigma^{(2)} \bm U \\
    &\qquad - \sigma^2 \bm U^\top \biggl(\frac{1}{n} \frac{(\hat \bSigma_{\setminus k} + \lambda \bm I)^{-1} \tilde{\bm X}_k^{(1)} \tilde{\bm X}_k^{(1)\top} (\hat \bSigma_{\setminus k} + \lambda \bm I)^{-1}}{1 + \frac{1}{n} \tilde{\bm X}_k^{(1)\top} (\hat \bSigma_{\setminus k} + \lambda \bm I)^{-1} \tilde{\bm X}_k^{(1)}}\biggr) (\hat \bSigma_{\setminus k} + \lambda \bm I)^{-1} \\
    &\hspace{4em}\biggl(\frac{\bm X^{(1),\setminus k\top} \bm X^{(1),\setminus k}}{n} +  \frac{\bm X^{(2)\top} \bm X^{(2)}}{n} \biggr) \bSigma^{(2)} \bm U \\
    &\qquad - \sigma^2 \bm U^\top \biggl(\frac{1}{n} \frac{(\hat \bSigma_{\setminus k} + \lambda \bm I)^{-1} \tilde{\bm X}_k^{(1)} \tilde{\bm X}_k^{(1)\top} (\hat \bSigma_{\setminus k} + \lambda \bm I)^{-1}}{1 + \frac{1}{n} \tilde{\bm X}_k^{(1)\top} (\hat \bSigma_{\setminus k} + \lambda \bm I)^{-1} \tilde{\bm X}_k^{(1)}}\biggr) \\
    &\hspace{4em} (\hat \bSigma_{\setminus k} + \lambda \bm I)^{-1} \biggl(\frac{\tilde{\bm X}_k^{(1)} \tilde{\bm X}_k^{(1)\top}}{n}\biggr) \bSigma^{(2)} \bm U \\
    &\qquad + \sigma^2 \bm U^\top \biggl(\frac{1}{n} \frac{(\hat \bSigma_{\setminus k} + \lambda \bm I)^{-1} \tilde{\bm X}_k^{(1)} \tilde{\bm X}_k^{(1)\top} (\hat \bSigma_{\setminus k} + \lambda \bm I)^{-1}}{1 + \frac{1}{n} \tilde{\bm X}_k^{(1)\top} (\hat \bSigma_{\setminus k} + \lambda \bm I)^{-1} \tilde{\bm X}_k^{(1)}}\biggr) \\
    &\hspace{4em} \biggl(\frac{1}{n} \frac{(\hat \bSigma_{\setminus k} + \lambda \bm I)^{-1} \tilde{\bm X}_k^{(1)} \tilde{\bm X}_k^{(1)\top} (\hat \bSigma_{\setminus k} + \lambda \bm I)^{-1}}{1 + \frac{1}{n} \tilde{\bm X}_k^{(1)\top} (\hat \bSigma_{\setminus k} + \lambda \bm I)^{-1} \tilde{\bm X}_k^{(1)}}\biggr) \\
    &\hspace{4em}\biggl(\frac{\bm X^{(1),\setminus k\top} \bm X^{(1),\setminus k}}{n} +  \frac{\bm X^{(2)\top} \bm X^{(2)}}{n} \biggr) \bSigma^{(2)} \bm U \\
    &\qquad + \sigma^2 \bm U^\top \biggl(\frac{1}{n} \frac{(\hat \bSigma_{\setminus k} + \lambda \bm I)^{-1} \tilde{\bm X}_k^{(1)} \tilde{\bm X}_k^{(1)\top} (\hat \bSigma_{\setminus k} + \lambda \bm I)^{-1}}{1 + \frac{1}{n} \tilde{\bm X}_k^{(1)\top} (\hat \bSigma_{\setminus k} + \lambda \bm I)^{-1} \tilde{\bm X}_k^{(1)}}\biggr) \\
    &\hspace{4em} \biggl(\frac{1}{n} \frac{(\hat \bSigma_{\setminus k} + \lambda \bm I)^{-1} \tilde{\bm X}_k^{(1)} \tilde{\bm X}_k^{(1)\top} (\hat \bSigma_{\setminus k} + \lambda \bm I)^{-1}}{1 + \frac{1}{n} \tilde{\bm X}_k^{(1)\top} (\hat \bSigma_{\setminus k} + \lambda \bm I)^{-1} \tilde{\bm X}_k^{(1)}}\biggr) \bSigma^{(2)} \biggl(\frac{\tilde{\bm X}_k^{(1)} \tilde{\bm X}_k^{(1)\top}}{n}\biggr) \bm U \\
    &:= \bm u^{(d,1)\top} \tilde{\bm Z}_k^{(1)} \tilde{\bm Z}_k^{(1)\top} \bm v^{(d,1)} 
    + \frac{\bm u^{(d,2)\top} \tilde{\bm Z}_k^{(1)} \tilde{\bm Z}_k^{(1)\top} \bm v^{(d,2)}}{1 + \tilde{\bm Z}_k^{(1)\top} \bm A^{(d,2)} \tilde{\bm Z}_k^{(1)}} 
    \\
    &\qquad + \frac{\bm u^{(d,3)\top} \tilde{\bm Z}_k^{(1)} (\tilde{\bm Z}_k^{(1)\top} \bm A^{(d,3)} \tilde{\bm Z}_k^{(1)}) \tilde{\bm Z}_k^{(1)\top} \bm v^{(d,3)}}{1 + \tilde{\bm Z}_k^{(1)\top} \bm B^{(d,3)} \tilde{\bm Z}_k^{(1)}} \\
    &\qquad + \frac{\bm u^{(d,4)\top} \tilde{\bm Z}_k^{(1)}  \tilde{\bm Z}_k^{(1)\top} \bm v^{(d,4)}}{1 + \tilde{\bm Z}_k^{(1)\top} \bm A^{(d,4)} \tilde{\bm Z}_k^{(1)}} + \frac{\bm u^{(d,5)\top} \tilde{\bm Z}_k^{(1)} (\tilde{\bm Z}_k^{(1)\top} \bm A^{(d,5)} \tilde{\bm Z}_k^{(1)}) \tilde{\bm Z}_k^{(1)\top} \bm v^{(d,5)}}{1 + \tilde{\bm Z}_k^{(1)\top} \bm B^{(d,5)} \tilde{\bm Z}_k^{(1)}} \\
    &\qquad + \frac{\bm u^{(d,6)\top} \tilde{\bm Z}_k^{(1)} (\tilde{\bm Z}_k^{(1)\top} \bm A^{(d,6)} \tilde{\bm Z}_k^{(1)}) \tilde{\bm Z}_k^{(1)\top} \bm v^{(d,6)}}{(1 + \tilde{\bm Z}_k^{(1)\top} \bm B^{(d,6)} \tilde{\bm Z}_k^{(1)})^2} \\
    &\qquad + \frac{\bm u^{(d,7)\top} \tilde{\bm Z}_k^{(1)} (\tilde{\bm Z}_k^{(1)\top} \bm A^{(d,7)} \tilde{\bm Z}_k^{(1)})(\tilde{\bm Z}_k^{(1)\top} \bm B^{(d,7)} \tilde{\bm Z}_k^{(1)}) \tilde{\bm Z}_k^{(1)\top} \bm v^{(d,7)}}{(1 + \tilde{\bm Z}_k^{(1)\top} \bm C^{(d,7)} \tilde{\bm Z}_k^{(1)})^2}
\end{align*}
and similarly
\begin{align*}
    &V_1(\bm Z^{(1),k-1}, \bm Z^{(2)}) - V_1(\bm Z^{(1),\setminus k}, \bm Z^{(2)}) \\
    &:= \bm u^{(d,1)\top} {\bm Z}_k^{(1)} {\bm Z}_k^{(1)\top} \bm v^{(d,1)} 
    + \frac{\bm u^{(d,2)\top} {\bm Z}_k^{(1)} {\bm Z}_k^{(1)\top} \bm v^{(d,2)}}{1 + {\bm Z}_k^{(1)\top} \bm A^{(d,2)} {\bm Z}_k^{(1)}} 
  \\
  &\qquad+ \frac{\bm u^{(d,3)\top} {\bm Z}_k^{(1)} ({\bm Z}_k^{(1)\top} \bm A^{(d,3)} {\bm Z}_k^{(1)}) {\bm Z}_k^{(1)\top} \bm v^{(d,3)}}{1 + {\bm Z}_k^{(1)\top} \bm B^{(d,3)} {\bm Z}_k^{(1)}} \\
    &\qquad + \frac{\bm u^{(d,4)\top} {\bm Z}_k^{(1)}  {\bm Z}_k^{(1)\top} \bm v^{(d,4)}}{1 + {\bm Z}_k^{(1)\top} \bm A^{(d,4)} {\bm Z}_k^{(1)}} + \frac{\bm u^{(d,5)\top} {\bm Z}_k^{(1)} ({\bm Z}_k^{(1)\top} \bm A^{(d,5)} {\bm Z}_k^{(1)}) {\bm Z}_k^{(1)\top} \bm v^{(d,5)}}{1 + {\bm Z}_k^{(1)\top} \bm B^{(d,5)} {\bm Z}_k^{(1)}} \\
    &\qquad + \frac{\bm u^{(d,6)\top} {\bm Z}_k^{(1)} ({\bm Z}_k^{(1)\top} \bm A^{(d,6)} {\bm Z}_k^{(1)}) {\bm Z}_k^{(1)\top} \bm v^{(d,6)}}{(1 + {\bm Z}_k^{(1)\top} \bm B^{(d,6)} {\bm Z}_k^{(1)})^2}\\
    &\qquad+ \frac{\bm u^{(d,7)\top} {\bm Z}_k^{(1)} ({\bm Z}_k^{(1)\top} \bm A^{(d,7)} {\bm Z}_k^{(1)})({\bm Z}_k^{(1)\top} \bm B^{(d,7)} {\bm Z}_k^{(1)}) {\bm Z}_k^{(1)\top} \bm v^{(d,7)}}{(1 + {\bm Z}_k^{(1)\top} \bm C^{(d,7)} {\bm Z}_k^{(1)})^2}
\end{align*}
where
\begin{gather*}
    \bm u^{(d,1)} = \frac{\sigma^2}{n} (\bSigma^{(1)})^{1/2} (\hat \bSigma_{\setminus k} + \lambda \bm I)^{-2} \bm U, \quad 
    \bm v^{(d,1)} = (\bSigma^{(1)})^{1/2} \bSigma^{(2)} \bm U  \\
    \bm u^{(d,2)} = \frac{\sigma^2}{n^2} (\bSigma^{(1)})^{1/2} (\hat \bSigma_{\setminus k} + \lambda \bm I)^{-2} \bm U \\ 
    \bm v^{(d,2)} = (\bSigma^{(1)})^{1/2} (\hat \bSigma_{\setminus k} + \lambda \bm I)^{-1} (\bm X^{(1),\setminus k\top} \bm X^{(1),\setminus k} + \bm X^{(2)\top} \bm X^{(2)}) \bSigma^{(2)} \bm U \\
    \bm A^{(d,2)}= \frac{1}{n} (\bSigma^{(1)})^{1/2} (\hat \bSigma_{\setminus k} + \lambda \bm I)^{-1} (\bSigma^{(1)})^{1/2}  \\
    \bm u^{(d,3)} = \frac{\sigma^2}{n^2} (\bSigma^{(1)})^{1/2} (\hat \bSigma_{\setminus k} + \lambda \bm I)^{-2} \bm U, \quad 
    \bm v^{(d,3)} = (\bSigma^{(1)})^{1/2} \bSigma^{(2)} \bm U \\
    \bm A^{(d,3)} = (\bSigma^{(1)})^{1/2} (\hat \bSigma_{\setminus k} + \lambda \bm I)^{-1} (\bSigma^{(1)})^{1/2} \\
    \bm B^{(d,3)} = \frac{1}{n} (\bSigma^{(1)})^{1/2} (\hat \bSigma_{\setminus k} + \lambda \bm I)^{-1} (\bSigma^{(1)})^{1/2} \\
    \bm u^{(d,4)} = \frac{\sigma^2}{n^2} (\bSigma^{(1)})^{1/2} (\hat \bSigma_{\setminus k} + \lambda \bm I)^{-1} \bm U \\ 
    \bm v^{(d,4)} = (\bSigma^{(1)})^{1/2} (\hat \bSigma_{\setminus k} + \lambda \bm I)^{-2}  (\bm X^{(1),\setminus k\top} \bm X^{(1),\setminus k} + \bm X^{(2)\top} \bm X^{(2)}) \bSigma^{(2)} \bm U \\
    \bm A^{(d,4)} = \frac{1}{n} (\bSigma^{(1)})^{1/2} (\hat \bSigma_{\setminus k} + \lambda \bm I)^{-1} (\bSigma^{(1)})^{1/2} \\
    \bm u^{(d,5)} = \frac{\sigma^2}{n^2} (\bSigma^{(1)})^{1/2} (\hat \bSigma_{\setminus k} + \lambda \bm I)^{-1} \bm U, \quad 
    \bm v^{(d,5)} = (\bSigma^{(1)})^{1/2} \bSigma^{(2)} \bm U \\
    \bm A^{(d,5)} = (\bSigma^{(1)})^{1/2} (\hat \bSigma_{\setminus k} + \lambda \bm I)^{-2} (\bSigma^{(1)})^{1/2} \\ 
    \bm B^{(d,5)} = \frac{1}{n} (\bSigma^{(1)})^{1/2} (\hat \bSigma_{\setminus k} + \lambda \bm I)^{-1} (\bSigma^{(1)})^{1/2}  \\
    \bm u^{(d,6)} = \frac{\sigma^2}{n^3} (\bSigma^{(1)})^{1/2} (\hat \bSigma_{\setminus k} + \lambda \bm I)^{-1} \bm U \\ 
    \bm v^{(d,6)} = (\bSigma^{(1)})^{1/2} (\hat \bSigma_{\setminus k} + \lambda \bm I)^{-1} (\bm X^{(1),\setminus k\top} \bm X^{(1),\setminus k} + \bm X^{(2)\top} \bm X^{(2)}) \bSigma^{(2)} \bm U \\
    \bm A^{(d,6)} = (\bSigma^{(1)})^{1/2} (\hat \bSigma_{\setminus k} + \lambda \bm I)^{-2} (\bSigma^{(1)})^{1/2} \\ 
    \bm B^{(d,6)} = \frac{1}{n} (\bSigma^{(1)})^{1/2} (\hat \bSigma_{\setminus k} + \lambda \bm I)^{-1} (\bSigma^{(1)})^{1/2} \\
    \bm u^{(d,7)} = \frac{\sigma^2}{n^3} (\bSigma^{(1)})^{1/2} (\hat \bSigma_{\setminus k} + \lambda \bm I)^{-1} \bm U, \quad 
    \bm v^{(d,7)} = (\bSigma^{(1)})^{1/2} \bm U \\
    \bm A^{(d,7)} = (\bSigma^{(1)})^{1/2} (\hat \bSigma_{\setminus k} + \lambda \bm I)^{-2} (\bSigma^{(1)})^{1/2} \\ 
    \bm B^{(d,7)} = (\bSigma^{(1)})^{1/2} (\hat \bSigma_{\setminus k} + \lambda \bm I)^{-1} \bSigma^{(2)} (\bSigma^{(1)})^{1/2} \\
    \bm C^{(d,7)} = \frac{1}{n} (\bSigma^{(1)})^{1/2} (\hat \bSigma_{\setminus k} + \lambda \bm I)^{-1} (\bSigma^{(1)})^{1/2}
\end{gather*}
By the symmetry of $V_1$ in $\bm Z^{(1)}$ and $\bm Z^{(2)}$, we see that
\begin{align*}
    &V_1(\tilde{\bm Z}^{(1)}, \bm Z^{(2),k}) - V_1(\tilde{\bm Z}^{(1)}, \bm Z^{(2), \setminus k}) \\
    &:= \bm u^{(D,1)\top} \tilde{\bm Z}_k^{(2)} \tilde{\bm Z}_k^{(2)\top} \bm v^{(D,1)} 
    + \frac{\bm u^{(D,2)\top} \tilde{\bm Z}_k^{(2)} \tilde{\bm Z}_k^{(2)\top} \bm v^{(D,2)}}{1 + \tilde{\bm Z}_k^{(2)\top} \bm A^{(D,2)} \tilde{\bm Z}_k^{(2)}} 
  \\
  &\qquad + \frac{\bm u^{(D,3)\top} \tilde{\bm Z}_k^{(2)} (\tilde{\bm Z}_k^{(2)\top} \bm A^{(D,3)} \tilde{\bm Z}_k^{(2)}) \tilde{\bm Z}_k^{(2)\top} \bm v^{(D,3)}}{1 + \tilde{\bm Z}_k^{(2)\top} \bm B^{(D,3)} \tilde{\bm Z}_k^{(2)}} \\
    &\qquad + \frac{\bm u^{(D,4)\top} \tilde{\bm Z}_k^{(2)}  \tilde{\bm Z}_k^{(2)\top} \bm v^{(D,4)}}{1 + \tilde{\bm Z}_k^{(2)\top} \bm A^{(D,4)} \tilde{\bm Z}_k^{(2)}} + \frac{\bm u^{(D,5)\top} \tilde{\bm Z}_k^{(2)} (\tilde{\bm Z}_k^{(2)\top} \bm A^{(D,5)} \tilde{\bm Z}_k^{(2)}) \tilde{\bm Z}_k^{(2)\top} \bm v^{(D,5)}}{1 + \tilde{\bm Z}_k^{(2)\top} \bm B^{(D,5)} \tilde{\bm Z}_k^{(2)}} \\
    &\qquad + \frac{\bm u^{(D,6)\top} \tilde{\bm Z}_k^{(2)} (\tilde{\bm Z}_k^{(2)\top} \bm A^{(D,6)} \tilde{\bm Z}_k^{(2)}) \tilde{\bm Z}_k^{(2)\top} \bm v^{(D,6)}}{(1 + \tilde{\bm Z}_k^{(2)\top} \bm B^{(D,6)} \tilde{\bm Z}_k^{(2)})^2} \\
    &\qquad + \frac{\bm u^{(D,7)\top} \tilde{\bm Z}_k^{(2)} (\tilde{\bm Z}_k^{(2)\top} \bm A^{(D,7)} \tilde{\bm Z}_k^{(2)})(\tilde{\bm Z}_k^{(2)\top} \bm B^{(D,7)} \tilde{\bm Z}_k^{(2)}) \tilde{\bm Z}_k^{(2)\top} \bm v^{(D,7)}}{(1 + \tilde{\bm Z}_k^{(2)\top} \bm C^{(D,7)} \tilde{\bm Z}_k^{(2)})^2}
\end{align*}
and similarly
\begin{align*}
    &V_1(\bm Z^{(1),k}, \bm Z^{(2)}) - V_1(\bm Z^{(1),\setminus k}, \bm Z^{(2)}) \\
    &:= \bm u^{(D,1)\top} {\bm Z}_k^{(2)} {\bm Z}_k^{(2)\top} \bm v^{(D,1)} 
    + \frac{\bm u^{(D,2)\top} {\bm Z}_k^{(2)} {\bm Z}_k^{(2)\top} \bm v^{(D,2)}}{1 + {\bm Z}_k^{(2)\top} \bm A^{(D,2)} {\bm Z}_k^{(2)}} 
    \\
    &\qquad + \frac{\bm u^{(D,3)\top} {\bm Z}_k^{(2)} ({\bm Z}_k^{(2)\top} \bm A^{(D,3)} {\bm Z}_k^{(2)}) {\bm Z}_k^{(2)\top} \bm v^{(D,3)}}{1 + {\bm Z}_k^{(2)\top} \bm B^{(D,3)} {\bm Z}_k^{(2)}} \\
    &\qquad + \frac{\bm u^{(D,4)\top} {\bm Z}_k^{(2)}  {\bm Z}_k^{(2)\top} \bm v^{(D,4)}}{1 + {\bm Z}_k^{(2)\top} \bm A^{(D,4)} {\bm Z}_k^{(2)}} + \frac{\bm u^{(D,5)\top} {\bm Z}_k^{(2)} ({\bm Z}_k^{(2)\top} \bm A^{(D,5)} {\bm Z}_k^{(2)}) {\bm Z}_k^{(2)\top} \bm v^{(D,5)}}{1 + {\bm Z}_k^{(2)\top} \bm B^{(D,5)} {\bm Z}_k^{(2)}} \\
    &\qquad + \frac{\bm u^{(D,6)\top} {\bm Z}_k^{(2)} ({\bm Z}_k^{(2)\top} \bm A^{(D,6)} {\bm Z}_k^{(2)}) {\bm Z}_k^{(2)\top} \bm v^{(D,6)}}{(1 + {\bm Z}_k^{(2)\top} \bm B^{(D,6)} {\bm Z}_k^{(2)})^2} \\
    &\qquad + \frac{\bm u^{(D,7)\top} {\bm Z}_k^{(2)} ({\bm Z}_k^{(2)\top} \bm A^{(D,7)} {\bm Z}_k^{(2)})({\bm Z}_k^{(2)\top} \bm B^{(D,7)} {\bm Z}_k^{(2)}) {\bm Z}_k^{(2)\top} \bm v^{(D,7)}}{(1 + {\bm Z}_k^{(2)\top} \bm C^{(D,7)} {\bm Z}_k^{(2)})^2}
\end{align*}
where
\begin{gather*}
    \bm u^{(D,1)} = \frac{\sigma^2}{n} (\bSigma^{(2)})^{1/2} (\tilde \bSigma_{\setminus k} + \lambda \bm I)^{-2} \bm U, \quad 
    \bm v^{(D,1)} = (\bSigma^{(2)})^{3/2}  \bm U  \\
    \bm u^{(D,2)} = \frac{\sigma^2}{n^2} (\bSigma^{(2)})^{1/2} (\tilde \bSigma_{\setminus k} + \lambda \bm I)^{-2} \bm U \\
    \bm v^{(D,2)} = (\bSigma^{(2)})^{1/2} (\tilde \bSigma_{\setminus k} + \lambda \bm I)^{-1} (\tilde{\bm X}^{(1)\top} \tilde{\bm X}^{(1)} + \bm X^{(2), \setminus k\top} \bm X^{(2),\setminus k}) \bSigma^{(2)} \bm U \\
    \bm A^{(D,2)}= \frac{1}{n} (\bSigma^{(2)})^{1/2} (\tilde \bSigma_{\setminus k} + \lambda \bm I)^{-1} (\bSigma^{(2)})^{1/2}  \\
    \bm u^{(D,3)} = \frac{\sigma^2}{n^2} (\bSigma^{(2)})^{1/2} (\tilde \bSigma_{\setminus k} + \lambda \bm I)^{-2} \bm U, \quad 
    \bm v^{(D,3)} = (\bSigma^{(2)})^{3/2} \bm U \\
    \bm A^{(D,3)} = (\bSigma^{(2)})^{1/2} (\tilde \bSigma_{\setminus k} + \lambda \bm I)^{-1} (\bSigma^{(2)})^{1/2} \\ 
    \bm B^{(D,3)} = \frac{1}{n} (\bSigma^{(2)})^{1/2} (\tilde \bSigma_{\setminus k} + \lambda \bm I)^{-1} (\bSigma^{(2)})^{1/2} \\
    \bm u^{(D,4)} = \frac{\sigma^2}{n^2} (\bSigma^{(2)})^{1/2} (\tilde \bSigma_{\setminus k} + \lambda \bm I)^{-1} \bm U \\ 
    \bm v^{(D,4)} = (\bSigma^{(2)})^{1/2} (\tilde \bSigma_{\setminus k} + \lambda \bm I)^{-2}  (\tilde{\bm X}^{(1)\top} \tilde{\bm X}^{(1)} + \bm X^{(2), \setminus k\top} \bm X^{(2),\setminus k}) \bSigma^{(2)} \bm U \\
    \bm A^{(D,4)} = \frac{1}{n} (\bSigma^{(2)})^{1/2} (\tilde \bSigma_{\setminus k} + \lambda \bm I)^{-1} (\bSigma^{(2)})^{1/2} \\
    \bm u^{(D,5)} = \frac{\sigma^2}{n^2} (\bSigma^{(2)})^{1/2} (\tilde \bSigma_{\setminus k} + \lambda \bm I)^{-1} \bm U, \quad 
    \bm v^{(D,5)} = (\bSigma^{(2)})^{3/2} \bm U \\
    \bm A^{(D,5)} = (\bSigma^{(2)})^{1/2} (\tilde \bSigma_{\setminus k} + \lambda \bm I)^{-2} (\bSigma^{(2)})^{1/2} \\ 
    \bm B^{(D,5)} = \frac{1}{n} (\bSigma^{(2)})^{1/2} (\tilde \bSigma_{\setminus k} + \lambda \bm I)^{-1} (\bSigma^{(2)})^{1/2}  \\
    \bm u^{(D,6)} = \frac{\sigma^2}{n^3} (\bSigma^{(2)})^{1/2} (\tilde \bSigma_{\setminus k} + \lambda \bm I)^{-1} \bm U \\ 
    \bm v^{(D,6)} = (\bSigma^{(2)})^{1/2} (\tilde \bSigma_{\setminus k} + \lambda \bm I)^{-1} (\tilde{\bm X}^{(1)\top} \tilde{\bm X}^{(1)} + \bm X^{(2), \setminus k\top} \bm X^{(2),\setminus k}) \bSigma^{(2)} \bm U \\
    \bm A^{(D,6)} = (\bSigma^{(2)})^{1/2} (\tilde \bSigma_{\setminus k} + \lambda \bm I)^{-2} (\bSigma^{(2)})^{1/2} \\ 
    \bm B^{(D,6)} = \frac{1}{n} (\bSigma^{(2)})^{1/2} (\tilde \bSigma_{\setminus k} + \lambda \bm I)^{-1} (\bSigma^{(2)})^{1/2} \\
    \bm u^{(D,7)} = \frac{\sigma^2}{n^3} (\bSigma^{(2)})^{1/2} (\tilde \bSigma_{\setminus k} + \lambda \bm I)^{-1} \bm U, \quad 
    \bm v^{(D,7)} = (\bSigma^{(2)})^{1/2} \bm U \\
    \bm A^{(D,7)} = (\bSigma^{(2)})^{1/2} (\tilde \bSigma_{\setminus k} + \lambda \bm I)^{-2} (\bSigma^{(2)})^{1/2} \\ 
    \bm B^{(D,7)} = (\bSigma^{(2)})^{1/2} (\tilde \bSigma_{\setminus k} + \lambda \bm I)^{-1} \bSigma^{(2)} (\bSigma^{(2)})^{1/2} \\
    \bm C^{(D,7)} = \frac{1}{n} (\bSigma^{(2)})^{1/2} (\tilde \bSigma_{\setminus k} + \lambda \bm I)^{-1} (\bSigma^{(2)})^{1/2}
\end{gather*}   
\subsubsection{Bounding first and second moments}
Now, we can apply Lemmas \ref{lm:first_moment_bounds} and \ref{lm:second_moment_bounds}. 
We first bound
\begin{gather*}
    \|\bm u^{(d,1)}\|_2 = \sigma^2 n^{-1} \lambda^{-2} \|\bm U\|_2, \quad 
    \|\bm v^{(d,1)}\|_2 = \|\bm U\|_2  \\
    \|\bm u^{(d,2)}\|_2 = \sigma^2 n^{-2} \lambda^{-2} \|\bm U\|_2, \quad 
    \|\bm v^{(d,2)}\|_2 = \lambda^{-1} (\|\bm X^{(1),\setminus k}\|_{\text{op}}^2 + \|\bm X^{(2)}\|_{\text{op}}^2) \|\bm U\|_2 \\
    \|\bm A^{(d,2)}\|_{\text{op}}= n^{-1} \lambda^{-1}  \\
    \|\bm u^{(d,3)}\|_2 = \sigma^2 n^{-2} \lambda^{-2} \|\bm U\|_2, \quad 
    \|\bm v^{(d,3)}\|_2 = \|\bm U\|_2 \\
    \|\bm A^{(d,3)}\|_{\text{op}} = \lambda^{-1} , \quad 
    \|\bm B^{(d,3)}\|_{\text{op}} = n^{-1} \lambda^{-1} \\
    \|\bm u^{(d,4)}\|_2 = \sigma^2 n^{-2} \lambda^{-1} \|\bm U\|_2, \quad 
    \|\bm v^{(d,4)}\|_2 = \lambda^{-2}  (\|\bm X^{(1),\setminus k}\|_{\text{op}}^2 + \|\bm X^{(2)}\|_{\text{op}}^2) \|\bm U\|_2 \\
    \|\bm A^{(d,4)}\|_{\text{op}} = n^{-1} \lambda^{-1} \\
    \|\bm u^{(d,5)}\|_2 = \sigma^2 n^{-2} \lambda^{-1} \|\bm U\|_2, \quad 
    \|\bm v^{(d,5)}\|_2 = \|\bm U\|_2 \\
    \|\bm A^{(d,5)}\|_{\text{op}} = \lambda^{-2} , \quad 
    \|\bm B^{(d,5)}\|_{\text{op}} = n^{-1} \lambda^{-1}  \\
    \|\bm u^{(d,6)}\|_2 = \sigma^2 n^{-3} \lambda^{-1} \|\bm U\|_2, \quad 
    \|\bm v^{(d,6)}\|_2 = \lambda^{-1} (\|\bm X^{(1),\setminus k}\|_{\text{op}}^2 + \|\bm X^{(2)}\|_{\text{op}}^2) \|\bm U\|_2  \\
    \|\bm A^{(d,6)}\|_{\text{op}} = \lambda^{-2} , \quad 
    \|\bm B^{(d,6)}\|_{\text{op}} = n^{-1} \lambda^{-1} \\
    \|\bm u^{(d,7)}\|_2 = \sigma^2 n^{-3} \lambda^{-1} \|\bm U\|_2, \quad 
    \|\bm v^{(d,7)}\|_2 = \|\bm U\|_2 \\
    \|\bm A^{(d,7)}\|_{\text{op}} = \lambda^{-2} , \quad 
    \|\bm B^{(d,7)}\|_{\text{op}} = \lambda^{-1}  \\
    \|\bm C^{(d,7)}\|_{\text{op}} = n^{-1} \lambda^{-1} 
\end{gather*}
\paragraph{First term}
Now, using the first moment bound for $f_1$, we have
\begin{equation*}
    |\E_{1,k}[f_1(\bm Z_k^{(1)}; \bm S^{(d,1)}) - f_1(\tilde{\bm Z}_k^{(1)}; \bm S^{(d,1)})]| = 0
\end{equation*}
and similarly, using the second moment bound for $f_1$, we have 
\begin{align*}
    \max&\{\E_{1,k}[f_1(\bm Z_k^{(1)}; \bm S^{(d,1)})^2], \E_{1,k}[f_1(\tilde{\bm Z}_k^{(1)}; \bm S^{(d,1)})^2]\} \\
    &\lcon \|\bm u^{(d,1)}\|_2^2 \|\bm v^{(d,1)}\|_2^2 \\
    &\lcon \sigma^4 n^{-2} \lambda^{-4} \|\bm U\|_2^4
\end{align*}
\paragraph{Second term} Using the first moment bound for $f_2$, we have
\begin{align*}
    |&\E_{1,k}[f_2(\bm Z_k^{(1)}; \bm S^{(d,2)}) - f_2(\tilde{\bm Z}_k^{(1)}; \bm S^{(d,2)})]| \\
    &\lcon \|\bm u^{(d,2)}\|_2 \|\bm v^{(d,2)}\|_2 ( \|\bm A^{(d,2)}\|_{\text{op}} p^{1/2} + \|\bm A^{(d,2)}\|_{\text{op}}^2 p + \|\bm A^{(d,2)}\|_{\text{op}}^3 p^2) \\
    &\lcon \sigma^2 n^{-2} \lambda^{-2} \lambda^{-1} (\|\bm X^{(1),\setminus k}\|_{\text{op}}^2 + \|\bm X^{(2)}\|_{\text{op}}^2) \|\bm U\|_2^2 ( n^{-1} \lambda^{-1} p^{1/2} + n^{-2} \lambda^{-2} p + n^{-3} \lambda^{-3} p^2)  \\
    &\lcon \sigma^2 n^{-5/2} \lambda^{-4} \|\bm U\|_2^2(\|\bm X^{(1),\setminus k}\|_{\text{op}}^2 + \|\bm X^{(2)}\|_{\text{op}}^2) (1 + n^{-1/2} \lambda^{-1}  + n^{-1/2} \lambda^{-2}) 
\end{align*}
and using the second moment bound for $f_2$ yields
\begin{align*}
    \max\{&\E_{1,k}[f_2(\bm Z_k^{(1)}; \bm S^{(d,2)})^2], \E_{1,k}[f_2(\tilde{\bm Z}_k^{(1)}; \bm S^{(d,2)})^2]\} \\
    &\lcon \sigma^4 n^{-4} \lambda^{-6} (\|\bm X^{(1),\setminus k}\|_{\text{op}}^4 + \|\bm X^{(2)}\|_{\text{op}}^4) \|\bm U\|_2^4.
\end{align*}
\paragraph{Third term} Using the first moment bound for $f_4$, we have
\begin{align*}
    |&\E_{1,k}[f_4(\bm Z_k^{(1)}; \bm S^{(d,3)}) - f_4(\tilde{\bm Z}_k^{(1)}; \bm S^{(d,3)})]| \\
    &\lcon \|\bm u^{(d,3)}\|_2 \|\bm v^{(d,3)}\|_2 \|\bm A^{(d,3)}\|_{\text{op}}( p^{1/2} + \|\bm B^{(d,3)}\|_{\text{op}} p^{3/2} + \|\bm B^{(d,3)}\|_{\text{op}}^2 p^2 + \|\bm B^{(d,3)}\|_{\text{op}}^3 p^3) \\
    &\lcon \sigma^2 n^{-2} \lambda^{-2} \lambda^{-1} \|\bm U\|_2^2 ( p^{1/2} + n^{-1} \lambda^{-1} p^{3/2} + n^{-2} \lambda^{-2} p^2 + n^{-3} \lambda^{-3} p^3) \\
    &\lcon \sigma^2 n^{-3/2} \|\bm U\|_2^2 \lambda^{-3} ( 1 +  \lambda^{-1} + n^{-1/2} \lambda^{-2} + n^{-1/2} \lambda^{-3}) 
\end{align*}
and using the second moment bound for $f_4$ yields
\begin{align*}
    \max\{&\E_{1,k}[f_4(\bm Z_k^{(1)}; \bm S^{(d,3)})^2], \E_{1,k}[f_4(\tilde{\bm Z}_k^{(1)}; \bm S^{(d,3)})^2]\} \\
    &\lcon \sigma^4 n^{-2} \lambda^{-6} \|\bm U\|_2^4
\end{align*}
\paragraph{Fourth term} Using the first moment bound for $f_2$, we have
\begin{align*}
    |&\E_{1,k}[f_2(\bm Z_k^{(1)}; \bm S^{(d,4)}) - f_2(\tilde{\bm Z}_k^{(1)}; \bm S^{(d,4)})]| \\
    &\lcon \|\bm u^{(d,4)}\|_2 \|\bm v^{(d,4)}\|_2 ( \|\bm A^{(d,4)}\|_{\text{op}} p^{1/2} + \|\bm A^{(d,4)}\|_{\text{op}}^2 p + \|\bm A^{(d,4)}\|_{\text{op}}^3 p^2) \\
    &\lcon \sigma^2 n^{-2} \lambda^{-1} \lambda^{-2}  (\|\bm X^{(1),\setminus k}\|_{\text{op}}^2 + \|\bm X^{(2)}\|_{\text{op}}^2) \|\bm U\|_2^2 ( n^{-1} \lambda^{-1} p^{1/2} + n^{-2} \lambda^{-2} p + n^{-3} \lambda^{-3} p^2) \\
    &\lcon \sigma^2 n^{-5/2} \lambda^{-4}  (\|\bm X^{(1),\setminus k}\|_{\text{op}}^2 + \|\bm X^{(2)}\|_{\text{op}}^2) \|\bm U\|_2^2 ( 1 + n^{-1/2} \lambda^{-1}  + n^{-1/2} \lambda^{-2} ) 
\end{align*}
and using the second moment bound for $f_2$ yields
\begin{align*}
    \max\{&\E_{1,k}[f_2(\bm Z_k^{(1)}; \bm S^{(d,4)})^2], \E_{1,k}[f_2(\tilde{\bm Z}_k^{(1)}; \bm S^{(d,4)})^2]\} \\
    &\lcon \|\bm u^{(d,4)}\|_2^2 \|\bm v^{(d,4)}\|_2^2 \\
    &\lcon (\sigma^2 n^{-2} \lambda^{-1})^2 \|\bm U\|_2^4 (\lambda^{-2}  (\|\bm X^{(1),\setminus k}\|_{\text{op}}^2 + \|\bm X^{(2)}\|_{\text{op}}^2))^2 \\
    &\lcon \sigma^4 n^{-4} \lambda^{-6}  (\|\bm X^{(1),\setminus k}\|_{\text{op}}^4 + \|\bm X^{(2)}\|_{\text{op}}^4) \|\bm U\|_2^4.
\end{align*}
\paragraph{Fifth term} Using the first moment bound for $f_4$, we have
\begin{align*}
    |&\E_{1,k}[f_4(\bm Z_k^{(1)}; \bm S^{(d,5)}) - f_4(\tilde{\bm Z}_k^{(1)}; \bm S^{(d,5)})]| \\
    &\lcon \|\bm u^{(d,5)}\|_2 \|\bm v^{(d,5)}\|_2 \|\bm A^{(d,5)}\|_{\text{op}} \|\bm U\|_2^2 \\
    &\qquad ( p^{1/2} + \|\bm B^{(d,5)}\|_{\text{op}} p^{3/2} + \|\bm B^{(d,5)}\|_{\text{op}}^2 p^2 + \|\bm B^{(d,5)}\|_{\text{op}}^3 p^3) \\
    &\lcon \sigma^2 n^{-3/2} \lambda^{-3} \|\bm U\|_2^2 ( 1 +  \lambda^{-1} + n^{-1/2} \lambda^{-2}  + n^{-1/2} \lambda^{-3}) 
\end{align*}
and using the second moment bound for $f_4$ yields
\begin{align*}
    \max\{&\E_{1,k}[f_4(\bm Z_k^{(1)}; \bm S^{(d,5)})^2], \E_{1,k}[f_4(\tilde{\bm Z}_k^{(1)}; \bm S^{(d,5)})^2]\} \\
    &\lcon \sigma^4 n^{-2} \lambda^{-6}   \|\bm U\|_2^4
\end{align*}

\paragraph{Sixth term} Using the first moment bound for $f_5$, we have
\begin{align*}
    |&\E_{1,k}[f_5(\bm Z_k^{(1)}; \bm S^{(d,6)}) - f_5(\tilde{\bm Z}_k^{(1)}; \bm S^{(d,6)})]| \\
    &\lcon \|\bm u^{(d,6)}\|_2 \|\bm v^{(d,6)}\|_2 \|\bm A^{(d,6)}\|_{\text{op}}(p^{1/2} + \|\bm B^{(d,6)}\|_{\text{op}}^2 p^{5/2} + \|\bm B^{(d,6)}\|_{\text{op}}^4 p^4 + \|\bm B^{(d,6)}\|_{\text{op}}^6 p^6)  \\
    &\lcon \sigma^2 n^{-3} \lambda^{-1} \lambda^{-1} (\|\bm X^{(1),\setminus k}\|_{\text{op}}^2 + \|\bm X^{(2)}\|_{\text{op}}^2)  \lambda^{-2} \|\bm U\|_2^2 \\
    &\qquad (p^{1/2} + n^{-2} \lambda^{-2} p^{5/2} + n^{-4} \lambda^{-4} p^4 + n^{-6} \lambda^{-6} p^6) \\
    &\lcon \sigma^2 n^{-5/2} \lambda^{-4} \|\bm U\|_2^2 (\|\bm X^{(1),\setminus k}\|_{\text{op}}^2 + \|\bm X^{(2)}\|_{\text{op}}^2)  (1 + \lambda^{-2}  + n^{-1/2} \lambda^{-4}  + n^{-1/2} \lambda^{-6} ) 
\end{align*}
and using the second moment bound for $f_5$ yields
\begin{align*}
    \max\{&\E_{1,k}[f_5(\bm Z_k^{(1)}; \bm S^{(d,6)})^2], \E_{1,k}[f_5(\tilde{\bm Z}_k^{(1)}; \bm S^{(d,6)})^2]\} \\
    &\lcon \|\bm u^{(d,6)}\|_2^2 \|\bm v^{(d,6)}\|_2^2 \|\bm A^{(d,6)}\|_{\text{op}}^2  p^2 \\
    &\lcon \sigma^4 n^{-4} \lambda^{-8} (\|\bm X^{(1),\setminus k}\|_{\text{op}}^4 + \|\bm X^{(2)}\|_{\text{op}}^4)   \|\bm U\|_2^4
\end{align*}

\paragraph{Seventh term} Using the first moment bound for $f_7$, we have
\begin{align*}
    |&\E_{1,k}[f_7(\bm Z_k^{(1)}; \bm S^{(d,7)}) - f_7(\tilde{\bm Z}_k^{(1)}; \bm S^{(d,7)})]| \\
    &\lcon  \|\bm u^{(d,7)}\|_2 \|\bm v^{(d,7)}\|_2 \|\bm A^{(d,7)}\|_{\text{op}} \|\bm B^{(d,7)}\|_{\text{op}} \\
    &\qquad ( p^{3/2} +  \|\bm C^{(d,7)}\|_{\text{op}}^2 p^{7/2} +  \|\bm C^{(d,7)}\|_{\text{op}}^4 p^5 + \|\bm C^{(d,7)}\|_{\text{op}}^6 p^7) \\
    &\lcon  \sigma^2 n^{-3/2} \lambda^{-4} \|\bm U\|_2^2 ( 1 +  \lambda^{-2}  +  n^{-1/2} \lambda^{-4} + n^{-1/2} \lambda^{-6} ) 
\end{align*}
and using the second moment bound for $f_7$ yields
\begin{align*}
    \max\{&\E_{1,k}[f_7(\bm Z_k^{(1)}; \bm S^{(d,7)})^2], \E_{1,k}[f_7(\tilde{\bm Z}_k^{(1)}; \bm S^{(d,7)})^2]\} \\
    &\lcon \|\bm u^{(d,7)}\|_2^2 \|\bm v^{(d,7)}\|_2^2 \|\bm A^{(d,7)}\|_{\text{op}}^2  \|\bm B^{(d,7)}\|_{\text{op}}^2   p^4 \\
    &\lcon \sigma^4 n^{-2} \lambda^{-8}  \|\bm U\|_2^4
\end{align*}
\paragraph{Combined}
It thus follows from the above bounds that
\begin{align*}
    |&\E[\varphi(\E_U[\tilde V(\bm Z^{(1)}, \bm Z^{(2)})])] - \E[\varphi(\E_U[\tilde V(\tilde{\bm Z}^{(1)}, \bm Z^{(2)})])]| \\
    &\lcon \sigma^4 n^{-1} \lambda^{-4} \E[\|\bm U\|_2^4] \\
    &\qquad + \sigma^2 n^{-3/2} \lambda^{-4} (\E[\|\bm X^{(1),\setminus k}\|_{\text{op}}^2] + \E[\|\bm X^{(2)}\|_{\text{op}}^2])  \\
    &\hspace{4em}(1 + n^{-1/2} \lambda^{-1}  + n^{-1/2} \lambda^{-2}) \E[\|\bm U\|_2^2]  \\
    &\qquad + \sigma^4 n^{-3} \lambda^{-6} (\E[\|\bm X^{(1),\setminus k}\|_{\text{op}}^4] + \E[\|\bm X^{(2)}\|_{\text{op}}^4])\E[\|\bm U\|_2^4] \\
    &\qquad + \sigma^2 n^{-1/2} \lambda^{-3} ( 1 +  \lambda^{-1} + n^{-1/2} \lambda^{-2} + n^{-1/2} \lambda^{-3}) \E[\|\bm U\|_2^2] \\
    &\qquad + \sigma^4 n^{-1} \lambda^{-6} \E[\|\bm U\|_2^4] \\
    &\qquad + \sigma^2 n^{-3/2} \lambda^{-4}  (\E[\|\bm X^{(1),\setminus k}\|_{\text{op}}^2] + \E[\|\bm X^{(2)}\|_{\text{op}}^2]) \\
    &\hspace{4em}( 1 + n^{-1/2} \lambda^{-1}  + n^{-1/2} \lambda^{-2} ) \E[\|\bm U\|_2^2] \\
    &\qquad + \sigma^4 n^{-3} \lambda^{-6}  (\E[\|\bm X^{(1),\setminus k}\|_{\text{op}}^4] + \E[\|\bm X^{(2)}\|_{\text{op}}^4]) \E[\|\bm U\|_2^4] \\
    &\qquad +  \sigma^2 n^{-1/2} \lambda^{-3} ( 1 +  \lambda^{-1} + n^{-1/2} \lambda^{-2}  + n^{-1/2} \lambda^{-3}) \E[\|\bm U\|_2^2] \\
    &\qquad + \sigma^4 n^{-1} \lambda^{-6} \E[\|\bm U\|_2^4]  \\
    &\qquad + \sigma^2 n^{-3/2} \lambda^{-4} (\E[\|\bm X^{(1),\setminus k}\|_{\text{op}}^2] + \E[\|\bm X^{(2)}\|_{\text{op}}^2])  \\
    &\hspace{4em}(1 + \lambda^{-2}  + n^{-1/2} \lambda^{-4}  + n^{-1/2} \lambda^{-6} ) \E[\|\bm U\|_2^2] \\
    &\qquad + \sigma^4 n^{-3} \lambda^{-8} (\E[\|\bm X^{(1),\setminus k}\|_{\text{op}}^4] + \E[\|\bm X^{(2)}\|_{\text{op}}^4])  \E[\|\bm U\|_2^4] \\
    &\qquad + \sigma^2 n^{-1/2} \lambda^{-4} ( 1 +  \lambda^{-2}  +  n^{-1/2} \lambda^{-4} + n^{-1/2} \lambda^{-6} ) \E[\|\bm U\|_2^2] \\
    &\qquad + \sigma^4 n^{-1} \lambda^{-8} \E[\|\bm U\|_2^4]\\
    &\lcon \sigma^2 n^{-1/2} \log n \cdot \lambda^{-4} (1 + \lambda^{-2} + n^{-1/2}(\lambda^{-1} + \lambda^{-4} + \lambda^{-6})) \\
    &\qquad + \sigma^2 n^{-1/2} \lambda^{-3} \\
    &\qquad + \sigma^4 n^{-1} \log^2n \cdot (\lambda^{-6} + \lambda^{-8}) \\
    &\qquad + \sigma^4 n^{-1} \lambda^{-4}
\end{align*}
\subsubsection{Universality for \texorpdfstring{$\E[\tilde V]$}{E[V]}}
Because of the symmetry between in $\E[\tilde V]$ between $\bm Z^{(1)}$ and $\bm Z^{(2)}$, the same bound holds for the difference $|\E[\varphi(\E_U[\tilde V(\tilde{\bm Z}^{(1)}, \bm Z^{(2)})])] - \E[\varphi(\E_U[\tilde V(\tilde{\bm Z}^{(1)}, \tilde{\bm Z}^{(2)})])]|$. Therefore,
\begin{align*}
    |&\E[\varphi(V(\bZ^{(1)}, \bZ^{(2)}))] - \E[\varphi(V(\tilde \bZ^{(1)}, \tilde \bZ^{(2)}))]| \\
    &=|\E[\varphi(\E_U[\tilde V({\bm Z}^{(1)}, {\bm Z}^{(2)})])] - \E[\varphi(\E_U[\tilde V(\tilde{\bm Z}^{(1)}, \tilde{\bm Z}^{(2)})])]| \\
    &\lcon \sigma^2 n^{-1/2} \log n \cdot \lambda^{-4} (1 + \lambda^{-2} + n^{-1/2}(\lambda^{-1} + \lambda^{-4} + \lambda^{-6})) + \sigma^2 n^{-1/2} \lambda^{-3} \\
    &\qquad + \sigma^4 n^{-1} \log^2n \cdot (\lambda^{-6} + \lambda^{-8}) + \sigma^4 n^{-1} \lambda^{-4}.
\end{align*}
which concludes the proof of Proposition \ref{pp:universality_ridge}.

\subsection{Proof of Theorem \ref{thm:universality}}
We now use Proposition \ref{pp:universality_ridge} to finish our proof of Theorem \ref{thm:universality}. In particular, we use largely the same ideas as Appendix \ref{section:theorem_model_shift_final}, where we control the difference between the corresponding ridge and ridgeless quantities of interest. Recall that, by Lemmas \ref{lm:b2_ridge_limit_bound} and \ref{lm:b3_ridge_limit_bound}, for any small constant $c > 0$, we have 
\begin{align*}
    |B_2(\hat \bbeta_\lambda; \bbeta^{(2)}) - B_2(\hat \bbeta; \bbeta^{(2)})| &= O(n^c \lambda \|\tilde \bbeta\|_2^2) \\
    |B_3(\hat \bbeta_\lambda; \bbeta^{(2)}) - B_3(\hat \bbeta; \bbeta^{(2)})| &= O(n^c\lambda \|\tilde \bbeta\|_2\|\bbeta^{(2)}\|)
\end{align*}
with high probability over $(\bm Z^{(1)}, \bm Z^{(2)})$. Additionally, it follows from the second result in Lemma \ref{lm:pseudoinverse_trace_ridge_bound} that
\begin{equation*}
    |V(\hat \bbeta_\lambda; \bbeta^{(2)}) - V(\hat \bbeta; \bbeta^{(2)})| = O(n^c\lambda)
\end{equation*}
with high probability.
Finally, we prove the analogous bound for the first bias term. 
\begin{lemma}\label{lm:b1_ridge_limit_bound}
    We have
    \begin{equation*}
        |B_1(\hat \bbeta_\lambda; \bbeta^{(2)}) - B_1(\hat \bbeta; \bbeta^{(2)})| =O_\prec(\lambda \|\bbeta^{(2)}\|_2^2).
    \end{equation*}
\end{lemma}
\begin{proof}
    Using the triangle inequality, we have
    \begin{align*}
        &|B_1(\hat \bbeta_\lambda; \bbeta^{(2)}) - B_1(\hat \bbeta; \bbeta^{(2)})| \\
        &= |\lambda^2 \bbeta^{(2)\top} (\hat \bSigma + \lambda \bm I)^{-1} \bSigma (\hat \bSigma + \lambda \bm I)^{-1} \bbeta^{(2)} - \bbeta^{(2)\top} (\bm I - \hat \bSigma^\dagger \hat \bSigma) \bSigma (\bm I - \hat \bSigma^\dagger \hat \bSigma)   \bbeta^{(2)} | \\
        &\leq |\lambda\bbeta^{(2)\top} (\lambda (\hat \bSigma + \lambda \bm I)^{-1} - (\bm I - \hat \bSigma^\dagger \hat \bSigma))   \bSigma (\hat \bSigma + \lambda \bm I)^{-1}   \bbeta^{(2)}| \\
        &\qquad + | \bbeta^{(2)\top} (\bm I - \hat \bSigma^\dagger \hat \bSigma) \bSigma (\lambda(\hat \bSigma + \lambda \bm I)^{-1} - (\bm I - \hat \bSigma^\dagger \hat \bSigma))  \bbeta^{(2)} | \\
        &\lesssim \|\bbeta^{(2)}\|_2^2 \|\bSigma\|_{\text{op}} \cdot \|\lambda (\hat \bSigma + \lambda \bm I)^{-1} - (\bm I - \hat \bSigma^\dagger \hat \bSigma)\|_{\text{op}} (\|\bm I - \hat \bSigma^\dagger \hat \bSigma\|_{\text{op}} + \|\lambda (\hat \bSigma + \lambda \bm I)^{-1}\|_{\text{op}}) \\
        &\lesssim \|\bbeta^{(2)}\|_2^2 \|\bSigma\|_{\text{op}} \cdot \frac{\lambda}{d_{\text{nz}}^2} \\
        &= O_\prec(\lambda \|\bbeta^{(2)}\|_2^2 )
    \end{align*}
    where again the last two lines follow from Lemmas \ref{lemma:ESD_bounded} and \ref{lm:pseudoinverse_ridge_diff_bound}.
\end{proof}
That is, as $\lambda \downarrow 0$ sufficiently slowly and $n \to \infty$, the following four converge in probability: (1) $B_{\lambda,j}(\bZ^{(1)}, \bZ^{(2)})$ converges in probability to $B_{0,j}(\bZ^{(1)}, \bZ^{(2)})$ for $j = 1,2,3$, (2) $B_{\lambda,j}(\tilde \bZ^{(1)}, \tilde \bZ^{(2)})$ converges in probability to $B_{0,j}(\tilde \bZ^{(1)}, \tilde \bZ^{(2)})$ for $j = 1,2,3$, (3) $V_{\lambda}(\bZ^{(1)}, \bZ^{(2)})$ converges in probability to $B_{0,j}(\bZ^{(1)}, \bZ^{(2)})$, and (4) $V_{\lambda}(\tilde \bZ^{(1)}, \tilde \bZ^{(2)})$ converges in probability to $V_{0}(\tilde \bZ^{(1)}, \tilde \bZ^{(2)})$. Recalling that $\varphi$ is a continuous function, it follows that (1) $\varphi(B_{\lambda,j}(\bZ^{(1)}, \bZ^{(2)}))$ converges in probability to $\varphi(B_{0,j}(\bZ^{(1)}, \bZ^{(2)}))$ for $j = 1,2,3$, (2) $\varphi(B_{\lambda,j}(\tilde \bZ^{(1)}, \tilde \bZ^{(2)}))$ converges in probability to $\varphi(B_{0,j}(\tilde \bZ^{(1)}, \tilde \bZ^{(2)}))$ for $j = 1,2,3$, (3) $\varphi(V_{\lambda}(\bZ^{(1)}, \bZ^{(2)}))$ converges in probability to $\varphi(B_{0,j}(\bZ^{(1)}, \bZ^{(2)}))$, and (4) $\varphi(V_{\lambda}(\tilde \bZ^{(1)}, \tilde \bZ^{(2)}))$ converges in probability to $\varphi(V_{0}(\tilde \bZ^{(1)}, \tilde \bZ^{(2)}))$

Now, recalling that $\varphi$ is bounded, it follows from the dominated convergence theorem that, as $\lambda\downarrow 0$ sufficiently slowly and $n \to \infty$, we have
\begin{align*}
    |\E[\varphi(B_{\lambda,j}(\bZ^{(1)}, \bZ^{(2)}))] - \E[\varphi(B_{0,j}(\bZ^{(1)}, \bZ^{(2)}))]| &\to 0,\quad j = 1,2,3 \\
    |\E[\varphi(B_{\lambda,j}(\tilde \bZ^{(1)}, \tilde \bZ^{(2)}))] - \E[\varphi(B_{0,j}(\tilde \bZ^{(1)}, \tilde \bZ^{(2)}))]| &\to 0,\quad j = 1,2,3 \\
    |\E[\varphi(V_{\lambda}(\bZ^{(1)},  \bZ^{(2)}))] - \E[\varphi(V_{0}( \bZ^{(1)}, \bZ^{(2)}))]| &\to 0 \\
    |\E[\varphi(V_{\lambda}(\tilde \bZ^{(1)}, \tilde \bZ^{(2)}))] - \E[\varphi(V_{0}(\tilde \bZ^{(1)}, \tilde \bZ^{(2)}))]| &\to 0.
\end{align*}
Therefore, we can conclude that, taking $\lambda\downarrow 0$ sufficiently slowly and taking $n \to \infty$, we get
\begin{align*}
    &|\E[\varphi(B_{0,1}(\bZ^{(1)}, \bZ^{(2)}))] - \E[\varphi(B_{0,1}(\tilde \bZ^{(1)}, \tilde \bZ^{(2)}))]| \\
    &\le |\E[\varphi(B_{0,1}(\bZ^{(1)}, \bZ^{(2)}))] - \E[\varphi(B_{\lambda,1}(\bZ^{(1)}, \bZ^{(2)}))]|  \\
    &\qquad + |\E[\varphi(B_{\lambda,1}(\bZ^{(1)}, \bZ^{(2)}))] - \E[\varphi(B_{\lambda,1}(\tilde \bZ^{(1)}, \tilde \bZ^{(2)}))]| \\
    &\qquad + |\E[\varphi(B_{\lambda,1}(\tilde \bZ^{(1)}, \tilde \bZ^{(2)}))] - \E[\varphi(B_{0,1}(\tilde \bZ^{(1)}, \tilde \bZ^{(2)}))]|  \\
    &\to 0
\end{align*}
The same reasoning follows for $j = 2,3$ and for the variance terms, from which we conclude 
\begin{align*}
    |\E[\varphi(B_{0,j}(\bZ^{(1)}, \bZ^{(2)}))] - \E[\varphi(B_{0,j}(\tilde \bZ^{(1)}, \tilde \bZ^{(2)}))]| &\to 0,\quad j = 1,2,3 \\
    |\E[\varphi(V_{0}(\bZ^{(1)}, \bZ^{(2)}))] - \E[\varphi(V_{0}(\tilde \bZ^{(1)}, \tilde \bZ^{(2)}))]| &\to 0.
\end{align*}
Theorem \ref{thm:universality} then follows immediately by the standard Lindeberg argument (cf.~ \cite{korado2011lindeberg, hu2023universality}).

\section{Additional Proofs}\label{sec:other_extension_proofs}
In this section, we prove Theorems \ref{thm:misspecification} and \ref{thm:bias_corrected_risk}.

\subsection{Proof of Theorem \ref{thm:misspecification}}
In this section, we provide a proof of Theorem \ref{thm:misspecification}. Note that it suffices to characterize the behavior of $R_1(\hat \bbeta; \bbeta^{(2)}, \btheta^{(2)})$ and $R_2(\hat \bbeta; \bbeta^{(2)}, \btheta^{(2)})$ separately: we handle these two terms in Sections \ref{sec:misspecification_r1_proof} and \ref{sec:misspecification_r2_proof}, respectively.

\subsubsection{First term}\label{sec:misspecification_r1_proof}
Crucially, note that we can express
\begin{align*}
    \hat \bbeta &= (\bm X^\top \bm X)^\dagger (\bm X^{(1)\top} (\bm X^{(1)} \bbeta^{(1)} + \bep^{(1)}) + \bm X^{(2)\top} (\bm X^{(2)} \bbeta^{(2)} + \bep^{(2)})) \\
    &\qquad + (\bm X^\top \bm X)^\dagger (\bm X^{(1)\top}\bm W^{(1)} \btheta^{(1)} + \bX^{(2)\top} \bW^{(2)} \btheta^{(2)})
\end{align*}
where the first term $\hat \bbeta_{\bm X} := (\bm X^\top \bm X)^\dagger (\bm X^{(1)\top} (\bm X^{(1)} \bbeta^{(1)} + \bep^{(1)}) + \bm X^{(2)\top} (\bm X^{(2)} \bbeta^{(2)} + \bep^{(2)}))$ matches the min-$\ell_2$-norm interpolator and setting studied in Theorem \ref{thm:model_shift} and the second term $\bep_{\bm W} := (\bm X^\top \bm X)^\dagger (\bm X^{(1)\top}\bm W^{(1)} \btheta^{(1)} + \bX^{(2)\top} \bW^{(2)} \btheta^{(2)})$ is mean zero, conditional on $\bm X$. Additionally, by assumption, we have that $\hat \bbeta_{\bm X}$ and $\bep_{\bm W}$ are independent.

Now, using the fact that $\bm x_0$ and $\bm w_0$ are independent, we have
\begin{align*}
    R_1(\hat \bbeta; \bbeta^{(2)}, \btheta^{(2)}) &= \E[(\bm x_0^\top \hat \bbeta - \bm x_0^\top \bbeta^{(2)} - \E[\bm w_0 \mid \bm x_0]^\top \btheta^{(2)})^2 \mid \bm X] \\
    &= \E[(\bm x_0^\top \hat \bbeta - x_0^\top \bbeta^{(2)})^2 \mid \bm X] \\
    &= \E[\|\hat \bbeta - \bbeta^{(2)}\|_{\Sigma^{(2)}}^2 \mid \bm X] \\
    &= \| \E[\hat \bbeta \mid \bm X] - \bbeta^{(2)}\|_{\bSigma^{(2)}}^2 + \Tr[\Cov(\hat \bbeta \mid \bm X) \bSigma^{(2)}] \\
    &= \|\E[\hat \bbeta_{\bm X} \mid \bm X] - \bbeta^{(2)}\|_{\bSigma^{(2)}}^2 + \Tr[\Cov(\hat \bbeta_{\bm X} \mid \bm X) \bSigma^{(2)}] \\
    &\qquad + \Tr[\Cov(\bep_{\bm W} \mid \bm X) \bSigma^{(2)}].
\end{align*}
The behavior of the first two summands follows directly from our model shift results.
\begin{lemma}
    Suppose the setting of Theorem \ref{thm:misspecification} holds. Then, for any small constant $c > 0$, with high probability over $(\bm Z^{(1)}, \bm Z^{(2)})$, we have
    \begin{align*}
        &\|\E[\hat \bbeta_{\bm X} \mid \bm X] - \bbeta^{(2)}\|_{\bSigma^{(2)}}^2 + \Tr[\Cov(\hat \bbeta_{\bm X} \mid \bm X) \bSigma^{(2)}] \\
        &= \mathcal V(\hat H_n, \gamma) + \mathcal B_1(\hat H_n, \hat G_n^{\bbeta^{(2)}}, \gamma) + \mathcal B_2(\hat H_n, \hat G_n^{\tilde \bbeta}, \gamma) + \mathcal B_3(\hat H_n, \hat G_n^{(b)}, \gamma) \\
        &\qquad+ O(p^{-1/10+c} \|\bbeta^{(2)}\|_2 \|\tilde \bbeta\|_2)
    \end{align*}
\end{lemma}
\begin{proof}
    Recall that, in the setting of \ref{thm:misspecification}, the term $\hat \bbeta_{\bm X}$ precisely matches the min-$\ell_2$-norm interpolator in the setting of Theorem \ref{thm:model_shift}. Therefore, the result in Theorem \ref{thm:model_shift} applies, from which the lemma follows.
\end{proof}
The remaining term $\Tr[\Cov(\bep_{\bm W} \mid \bm X) \bSigma^{(2)}]$ requires more care: we use the same symmetrization techniques as in Appendix \ref{sec:proof:model_shift}. First, note that
\begin{align*}
        \Cov(\bep_{\bm W} \mid \bm X) &= \Cov((\bm X^\top \bm X)^\dagger \bX^{(1)\top} \bW^{(1)} \btheta^{(1)} \mid \bm X) \\
        &\qquad + \Cov((\bm X^\top \bm X)^\dagger \bX^{(2)\top} \bW^{(2)} \btheta^{(2)} \mid \bm X) \\
        &= (\bm X^\top \bm X)^\dagger \bm X^{(1)\top} \bm X^{(1)} (\bm X^\top \bm X)^\dagger \cdot \|\btheta^{(1)}\|_{\bSigma_m^{(1)}}^2 \\
        &\qquad + (\bm X^\top \bm X)^\dagger \bm X^{(2)\top} \bm X^{(2)} (\bm X^\top \bm X)^\dagger \cdot \|\btheta^{(2)}\|_{\bSigma_m^{(2)}}^2.
    \end{align*}
    and thus
\begin{align*}
    \Tr[\Cov(\bep_{\bm W} \mid \bm X)] &= \|\btheta^{(1)}\|_{\bSigma_m^{(1)}}^2 \cdot \frac{1}{n} \Tr\biggl[\hat \bSigma^\dagger \biggl(\frac{\bm X^{(1)\top} \bm X^{(1)}}{n}\biggr) \hat \bSigma^\dagger \bSigma^{(2)} \biggr] \\
    &\qquad + \|\btheta^{(2)}\|_{\bSigma_m^{(2)}}^2 \cdot \frac{1}{n} \Tr\biggl[\hat \bSigma^\dagger \biggl(\frac{\bm X^{(2)\top} \bm X^{(2)}}{n}\biggr) \hat \bSigma^\dagger \bSigma^{(2)} \biggr]
\end{align*}
It suffices to characterize $\frac{1}{n} \Tr[\hat \bSigma^\dagger (\frac{\bm X^{(1)\top} \bm X^{(1)}}{n}) \hat \bSigma^\dagger \bSigma^{(2)}]$, since the behavior of the latter term $\frac{1}{n} \Tr[\hat \bSigma^\dagger (\frac{\bm X^{(2)\top} \bm X^{(2)}}{n}) \hat \bSigma^\dagger \bSigma^{(2)}]$ follows by symmetry in this setting.

We begin by defining
\begin{align*}
    \bm V_M &:= \biggl(\frac{\bm Z^{(1)\top} \bm Z^{(1)}}{n}\biggr) (\hat \bSigma_Z + \lambda \bSigma^{-1})^{-2} \\
    \bm V_M^F &:= \hat \bSigma_Z  (\hat \bSigma + \lambda \bSigma^{-1})^{-2}
\end{align*}
\begin{lemma}\label{lm:pseudoinverse_trace_ridge_bound}
We have 
    \begin{align*}
        \biggl|\frac{1}{n} \Tr\biggl[\hat \bSigma^\dagger \biggl( \frac{\bm X^{(1)\top} \bm X^{(1)}}{n} \biggr) \hat \bSigma^\dagger \bSigma^{(2)}\biggr] - \frac{1}{n} \Tr[\bm V_M]\biggr| &=O_\prec( \lambda) \\
        \biggl|\frac{1}{n} \Tr[\hat \bSigma^\dagger \bSigma^{(2)}] - \frac{1}{n} \Tr[\bm V_M^F]\biggr| &= O_\prec(\lambda) 
    \end{align*}
\end{lemma}
\begin{proof}
    We follows the same reasoning as Section \ref{subsubsec:finite_sample_model_diff}. 

    For the first line, note that
    \begin{align*}
        &\biggl|\frac{1}{n} \Tr\biggl[\hat \bSigma^\dagger \biggl( \frac{\bm X^{(1)\top} \bm X^{(1)}}{n} \biggr) \hat \bSigma^\dagger \bSigma^{(2)}\biggr] - \frac{1}{n} \Tr\biggl[ (\hat \bSigma + \lambda \bm I)^{-1} \biggl( \frac{\bm X^{(1)\top} \bm X^{(1)}}{n} \biggr) (\hat \bSigma + \lambda \bm I)^{-1} \bSigma^{(2)}\biggr]\biggr| \\
        &= \biggl|\frac{1}{n} \Tr\biggl[\frac{\bm X^{(1)}}{\sqrt{n}} \hat \bSigma^\dagger \bSigma^{(2)} \bSigma^\dagger \frac{\bm X^{(1)\top}}{\sqrt{n}} \biggr] -\frac{1}{n} \Tr\biggl[   \frac{\bm X^{(1)}}{\sqrt n}  (\hat \bSigma + \lambda \bm I)^{-1} \bSigma^{(2)} (\hat \bSigma + \lambda \bm I)^{-1} \frac{\bm X^{(1)\top}}{\sqrt{n}}\biggr] \biggr| \\
        &\le \biggl|\frac{1}{n} \Tr\biggl[\frac{\bm X^{(1)}}{\sqrt{n}} \hat \bSigma^\dagger \bSigma^{(2)} (\bSigma^\dagger - (\hat \bSigma + \lambda \bm I)^{-1}) \frac{\bm X^{(1)\top}}{\sqrt{n}} \biggr] \biggr| \\
        &\qquad + \biggl|\frac{1}{n} \Tr\biggl[   \frac{\bm X^{(1)}}{\sqrt n}  (\hat \bSigma^\dagger - (\hat \bSigma + \lambda \bm I)^{-1}) \bSigma^{(2)} (\hat \bSigma + \lambda \bm I)^{-1} \frac{\bm X^{(1)\top}}{\sqrt{n}}\biggr] \biggr| \\
        &\le \biggl\|\frac{\bm X^{(1)}}{\sqrt{n}} \hat \bSigma^\dagger \bSigma^{(2)} (\bSigma^\dagger - (\hat \bSigma + \lambda \bm I)^{-1}) \frac{\bm X^{(1)\top}}{\sqrt{n}} \biggr\|_{\text{op}} \\
        &\qquad + \biggl\|\frac{\bm X^{(1)}}{\sqrt n}  (\hat \bSigma^\dagger - (\hat \bSigma + \lambda \bm I)^{-1}) \bSigma^{(2)} (\hat \bSigma + \lambda \bm I)^{-1} \frac{\bm X^{(1)\top}}{\sqrt{n}}\biggr\|_{\text{op}}  \\
        &\le \|\bSigma^{(2)}\|_{\text{op}} \biggl\|(\hat \bSigma^\dagger - (\hat \bSigma + \lambda\bm I)^{-1}) \frac{\bm X^{(1)\top}}{\sqrt{n}} \biggr\|_{\text{op}} \biggl(\biggl\| \hat \bSigma^\dagger \frac{\bm X^{(1)\top}}{\sqrt{n}}\biggr\|_{\text{op}} + \biggl\| (\hat \bSigma + \lambda \bm I)^{-1} \frac{\bm X^{(1)\top}}{\sqrt{n}}\biggr\|_{\text{op}} \biggr) \\
        &\le \|\bSigma^{(2)}\|_{\text{op}} \cdot \frac{2\lambda}{d_{\text{nz}}^4} \\
        &= O_\prec( \lambda \|\bSigma^{(2)}\|_{\text{op}})
    \end{align*}
    where the last two lines follow from Lemmas  \ref{lemma:ESD_bounded}, \ref{lm:pseudoinverse_ridge_diff_bound}, and \ref{lm:pseudoinverse_ridge_indiv_bound}. The first line follows by recognizing that
    \begin{equation*}
        \frac{1}{n} \Tr\biggl[(\hat \bSigma + \lambda \bm I)^{-1} \biggl( \frac{\bm X^{(1)\top} \bm X^{(1)}}{n}\biggr)(\hat \bSigma + \lambda \bm I)^{-1} \bSigma^{(2)}\biggr] = \frac{1}{n} \Tr[\bm V_M].
    \end{equation*}
    The same bound and reasoning applies for the second line, recalling that $\hat \bSigma^\dagger = \hat \bSigma^\dagger \hat \bSigma \hat \bSigma^\dagger$.
\end{proof}

\begin{lemma}\label{lm:trace_symmetry_mis}
Let $c > 0$ be a small constant. Then,
    \begin{align*}
        &\biggl|\frac{1}{n} \Tr[\bm V_M] - \frac{n_1}{n} \cdot \frac
        1   n\ \Tr[\bm V_M^F]\biggr| = O_p(\lambda^{-3} n^{-1+c} )
    \end{align*}
    with high probability over $(\bZ^{(1)}, \bZ^{(2)})$.
\end{lemma}
\begin{proof}
    First, we note that the same symmetry argument as \eqref{eq:symmetry_B3}, we have
    \begin{align}
        \E[\Tr[\bm V_M]] &= \Tr\biggl[\E\biggl[\biggl(\frac{\bm Z^{(1)\top} \bm Z^{(1)}}{n}\biggr) (\hat \bSigma_Z + \lambda \bSigma^{-1})^{-2} \biggr] \biggr]  \\
        &= \Tr\biggl[\E\biggl[\biggl(\frac{1}{n} \sum_{i \in I} \bm Z_i \bm Z_i^\top \biggr) (\hat \bSigma_Z + \lambda \bSigma^{-1})^{-2} \biggr] \biggr] \\
        &= \frac{n_1}{n} \Tr[\E[\hat \bSigma_Z(\hat \bSigma_Z + \lambda\bSigma^{-1})^{-2}]] \\
        &= \frac{n_1}{n} \E[\Tr[\bm V_M^F]] \label{eq:symmetry_vm}.
    \end{align}
    
    As in Lemma \ref{lm:b2p_conc}, we now use the Gaussian Poincar\'e inequality to prove concentration of $\Tr[\bm V_M]$ and $\Tr[\bm V_M^F]$ around their respective expectations.

    For $i = 1, \dots, n_1$ and $p = 1, \dots, p$, we can similarly compute
    \begin{align*}
        &\frac{\partial}{\partial \bm Z_{ij}^{(1)}} \Tr\biggl[\frac{\bm Z^{(1)\top} \bm Z^{(1)}}{n} (\hat \bSigma_Z + \lambda \bSigma^{-1})^{-2} \biggr] \\
        &= \frac{1}{n} \Tr\biggl[(e_j (\bm Z_i^{(1)})^\top + \bm Z_i^{(1)} e_j^\top) \biggl( (\hat \bSigma_Z + \lambda \bSigma^{-1})^{-2} \\
        &\hspace{12em}- (\hat \bSigma_Z + \lambda \bSigma^{-1})^{-1} \biggl(\frac{\bm Z^{(1)\top} \bm Z^{(1)}}{n}\biggr) (\hat \bSigma_Z + \lambda \bSigma^{-1})^{-2} \\
        &\hspace{12em} - (\hat \bSigma_Z + \lambda \bSigma^{-1})^{-2} \biggl(\frac{\bm Z^{(1)\top} \bm Z^{(1)}}{n}\biggr) (\hat \bSigma_Z + \lambda \bSigma^{-1})^{-1}\biggr) \biggr] \\
        &= \frac{2}{n} \biggl(\bm Z^{(1)} \biggl( (\hat \bSigma_Z + \lambda \bSigma^{-1})^{-2} - (\hat \bSigma_Z + \lambda \bSigma^{-1})^{-1} \biggl(\frac{\bm Z^{(1)\top} \bm Z^{(1)}}{n}\biggr) (\hat \bSigma_Z + \lambda \bSigma^{-1})^{-2} \\
        &\hspace{7em} - (\hat \bSigma_Z + \lambda \bSigma^{-1})^{-2} \biggl(\frac{\bm Z^{(1)\top} \bm Z^{(1)}}{n}\biggr) (\hat \bSigma_Z + \lambda \bSigma^{-1})^{-1}\biggr)\biggr)_{ij}
    \end{align*}
    and similarly, for $i = 1, \dots, n_2$ and $p= 1,\dots, p$, we have
    \begin{align*}
        &\frac{\partial}{\partial \bm Z_{ij}^{(2)}} \Tr\biggl[\frac{\bm Z^{(1)\top} \bm Z^{(1)}}{n} (\hat \bSigma_Z + \lambda \bSigma^{-1})^{-2} \biggr] \\
        &= -\frac{2}{n} \biggl(\bm Z^{(2)} \biggl( (\hat \bSigma_Z + \lambda \bSigma^{-1})^{-1} \biggl(\frac{\bm Z^{(1)\top} \bm Z^{(1)}}{n}\biggr) (\hat \bSigma_Z + \lambda \bSigma^{-1})^{-2} \\
        &\hspace{7em} + (\hat \bSigma_Z + \lambda \bSigma^{-1})^{-2} \biggl(\frac{\bm Z^{(1)\top} \bm Z^{(1)}}{n}\biggr) (\hat \bSigma_Z + \lambda \bSigma^{-1})^{-1}\biggr)\biggr)_{ij}.
    \end{align*}
    Therefore, the  Gaussian Poincar\'e inequality implies that
    \begin{align*}
        \Var\biggl(\frac{1}{n} \Tr[\bm V_M]\biggr) &\le \frac{4}{n^4} \cdot \E\biggl[\biggl\|\bm Z^{(1)} \biggl( (\hat \bSigma_Z + \lambda \bSigma^{-1})^{-2} \\
        &\hspace{6em}- (\hat \bSigma_Z + \lambda \bSigma^{-1})^{-1} \biggl(\frac{\bm Z^{(1)\top} \bm Z^{(1)}}{n}\biggr) (\hat \bSigma_Z + \lambda \bSigma^{-1})^{-2} \\
        &\hspace{6em} - (\hat \bSigma_Z + \lambda \bSigma^{-1})^{-2} \biggl(\frac{\bm Z^{(1)\top} \bm Z^{(1)}}{n}\biggr) (\hat \bSigma_Z + \lambda \bSigma^{-1})^{-1}\biggr) \biggr\|_F^2\biggr] \\
        &\qquad + \frac{4}{n^4} \cdot \E\biggl[\biggl\|\bm Z^{(2)} \biggl( (\hat \bSigma_Z + \lambda \bSigma^{-1})^{-1} \biggl(\frac{\bm Z^{(1)\top} \bm Z^{(1)}}{n}\biggr) (\hat \bSigma_Z + \lambda \bSigma^{-1})^{-2} \\
        &\hspace{6em} + (\hat \bSigma_Z + \lambda \bSigma^{-1})^{-2} \biggl(\frac{\bm Z^{(1)\top} \bm Z^{(1)}}{n}\biggr) (\hat \bSigma_Z + \lambda \bSigma^{-1})^{-1}\biggr) \biggr\|_F^2\biggr] \\
        &\lesssim \frac{1}{n^3} \E[ \|\bm Z^{(1)} (\hat \bSigma_Z + \lambda \bSigma^{-1})^{-2}\|_{\text{op}}^2] \\
        &\qquad + \frac{1}{n^3} \E\biggl[ \biggl\|\bm Z^{(1)} (\hat \bSigma_Z + \lambda \bSigma^{-1})^{-1} \biggl(\frac{\bm Z^{(1)\top} \bm Z^{(1)}}{n}\biggr) (\hat \bSigma_Z + \lambda \bSigma^{-1})^{-2}\biggr\|_{\text{op}}^2\biggr] \\
        &\qquad + \frac{1}{n^3} \E\biggl[ \biggl\|\bm Z^{(2)} (\hat \bSigma_Z + \lambda \bSigma^{-1})^{-1} \biggl(\frac{\bm Z^{(1)\top} \bm Z^{(1)}}{n}\biggr) (\hat \bSigma_Z + \lambda \bSigma^{-1})^{-2}\biggr\|_{\text{op}}^2\biggr] \\
        &\qquad + \frac{1}{n^3} \E\biggl[ \biggl\|\bm Z^{(1)} (\hat \bSigma_Z + \lambda \bSigma^{-1})^{-2} \biggl(\frac{\bm Z^{(1)\top} \bm Z^{(1)}}{n}\biggr) (\hat \bSigma_Z + \lambda \bSigma^{-1})^{-1}\biggr\|_{\text{op}}^2\biggr] \\
        &\qquad + \frac{1}{n^3} \E\biggl[ \biggl\|\bm Z^{(2)} (\hat \bSigma_Z + \lambda \bSigma^{-1})^{-2} \biggl(\frac{\bm Z^{(1)\top} \bm Z^{(1)}}{n}\biggr) (\hat \bSigma_Z + \lambda \bSigma^{-2})^{-1}\biggr\|_{\text{op}}^2\biggr] \\
        &\le \frac{\|\bSigma\|_{\text{op}}^6}{\lambda^6 n^2} \biggl(\E\biggl[ \biggl\| \frac{\bm Z^{(1)\top} \bm Z^{(1)}}{n} \biggr\|_{\text{op}}^2 \biggl(\biggl\|\frac{\bm Z^{(1)\top} \bm Z^{(1)}}{n}\biggr\|_{\text{op}} + \biggl\|\frac{\bm Z^{(2)\top} \bm Z^{(2)}}{n}\biggr\|_{\text{op}} \biggr)\biggr] \biggr) \\
        &\qquad + \frac{\|\bSigma\|_{\text{op}}^4}{\lambda^4 n^2} \E\biggl[\biggl\|\frac{\bm Z^{(1)\top} \bm Z^{(1)}}{n}\biggr\|_{\text{op}} \biggr] \\
        &\lesssim \frac{\|\bSigma\|_{\text{op}}^6}{\lambda^6 n^2} + \frac{\|\bSigma\|_{\text{op}}^4}{\lambda^4 n^2} 
    \end{align*}
    where the last line follows from applying H\"older's inequality and Lemma \ref{lm:gaussian_matrix_norm_expec}.
    The same argument demonstrates that the same bound holds for $\Var(\frac{1}{n} \Tr[\bm V_M^F])$. The conclusion then follows from Chebyshev's inequality and \eqref{eq:symmetry_vm}.
\end{proof}

Thus, we yield the following characterization:
\begin{lemma}
Let $c > 0$ be a small constant. Then,
    \begin{align*}
    \Tr[\Cov(\bep_{\bm W} \mid \bm X)] 
    &= \biggl(\frac{n_1}{n} \cdot \frac{\|\btheta^{(1)}\|_{\bSigma_m^{(1)}}^2}{\sigma^2}  + \frac{n_2}{n} \cdot \frac{\|\btheta^{(2)}\|_{\bSigma_m^{(2)}}^2}{\sigma^2}\biggr) \cdot \mathcal V(\hat H_n, \gamma) + O(p^{-1/10+c}).
\end{align*}
with high probability over $(\bZ^{(1)}, \bZ^{(2)})$
\end{lemma}
\begin{proof}
    Taking $\lambda = p^{-1/10}$ in Lemmas \ref{lm:pseudoinverse_trace_ridge_bound} and \ref{lm:trace_symmetry_mis}, we have
    \begin{equation*}
        \biggl|\frac{1}{n} \Tr\biggl[\hat \bSigma^\dagger \biggl(\frac{\bm X^{(1)\top} \bm X^{(1)}}{n}\biggr) \hat \bSigma^\dagger \bSigma^{(2)} \biggr] - \frac{n_1}{n} \cdot \frac{1}{n} \Tr[\hat \bSigma^\dagger \bSigma^{(2)}]\biggr| = O(p^{-1/10+c}).
    \end{equation*}
    Recognizing that $\frac{1}{\sigma^2}V(\hat \bbeta; \bbeta^{(2)}) = \frac{1}{n} \Tr[\hat \bSigma^\dagger \bSigma^{(2)}]$, it follows from Theorem \ref{thm:model_shift} that
    \begin{equation*}
        \biggl|\frac{1}{n} \Tr\biggl[\hat \bSigma^\dagger \biggl(\frac{\bm X^{(1)\top} \bm X^{(1)}}{n}\biggr) \hat \bSigma^\dagger \bSigma^{(2)} \biggr] - \frac{n_1}{\sigma^2 n} \cdot \mathcal V(\hat H_n, \gamma) \biggr| = O(p^{-1/10+c}).
    \end{equation*}
    By symmetry, we have 
    \begin{equation*}
        \biggl|\frac{1}{n} \Tr\biggl[\hat \bSigma^\dagger \biggl(\frac{\bm X^{(2)\top} \bm X^{(2)}}{n}\biggr) \hat \bSigma^\dagger \bSigma^{(2)} \biggr] - \frac{n_2}{\sigma^2 n} \cdot \mathcal V(\hat H_n, \gamma) \biggr| = O(p^{-1/10+c}),
    \end{equation*}
    from which the conclusion follows.
\end{proof}

\subsubsection{Second term}\label{sec:misspecification_r2_proof}
Now, consider the second term $R_2(\hat \bbeta; \bbeta^{(2)}, \btheta^{(2)})$. Note that $\E[\bm w_0 \mid \bm x_0] = \E[\bm w_0] = 0$ by independence of $\bm x_0$ and $\bm w_0$, which implies that
\begin{align*}
    R_2(\hat \bbeta; \bbeta^{(2)}, \btheta^{(2)}) &= \E[(\E[\bm w_0 \mid \bm x_0]^\top \btheta^{(2)} - \bm w_0^\top \btheta^{(2)})^2] \\
    &= \E[(\bm w_0^\top \btheta^{(2)})^2] = \|\btheta^{(2)}\|_{\bSigma_m^{(2)}}^2
\end{align*}
as desired.

\subsection{Proof of Theorem \ref{thm:bias_corrected_risk}}
In this section, we provide a proof of Theorem \ref{thm:bias_corrected_risk}. 

Denote $n_\kappa = n_1 + \lfloor (1 - \kappa ) n_2 \rfloor$. For a given subset $\mathcal I \subseteq [n_2]$, we denote $\hat \bSigma_{\mathcal I} = \frac{\bm X^{(1)\top} \bm X^{(1)}}{n_\kappa} + \frac{\bm X^{(2)\top} \bm X^{(2)}}{n_\kappa}$ and $\hat \bSigma_{\mathcal I^c} = \frac{\bm X^{(2)\top } \bm X^{(2)}}{\kappa n_2}$, in which case
\begin{align*}
    \hat \bbeta_{\text{BC}} &= \hat \bbeta_{\mathcal I} + \hat \bdelta_{\lambda_\delta} \\
    &= \hat \bbeta_{\mathcal I} + \frac{1}{\kappa n_2} (\hat \bSigma_{\mathcal I^c} + \lambda_\delta \bm I)^{-1} \bm X_{\mathcal I^c}^{(2)\top} (\bm y_{\mathcal I^c}^{(2)} - \bm X_{\mathcal I^c}^{(2)} \hat \bbeta_{\mathcal I}) \\
    &= (\hat \bSigma_{\mathcal I^c} + \lambda _\delta \bm I)^{-1} \frac{\bm X^{(2)\top}_{\mathcal I^c} \bm y_{\mathcal I^c}^{(2)}}{\kappa n_2} + \lambda_\delta(\hat \bSigma_{\mathcal I^c} + \lambda_\delta)^{-1} \hat \bbeta_{\mathcal I}.
 \end{align*}
 Therefore, we have
 \begin{align*}
     \hat \bbeta_{\text{BC}} - \bbeta^{(2)} &= (\hat \bSigma_{\mathcal I^c} + \lambda _\delta \bm I)^{-1} \biggl(\frac{\bm X^{(2)\top}_{\mathcal I^c} \bm y_{\mathcal I^c}^{(2)}}{\kappa n_2} - \hat \bSigma_{\mathcal I^c} \bbeta^{(2)}\biggr) + \lambda_\delta(\hat \bSigma_{\mathcal I^c} + \lambda_\delta)^{-1} (\hat \bbeta_{\mathcal I} - \bbeta^{(2)}) \\
     &= (\hat \bSigma_{\mathcal I^c} + \lambda _\delta \bm I)^{-1} \biggl(\frac{\bm X^{(2)\top}_{\mathcal I^c} \bep_{\mathcal I^c}^{(2)}}{\kappa n_2} \biggr) + \lambda_\delta(\hat \bSigma_{\mathcal I^c} + \lambda_\delta)^{-1} (\hat \bbeta_{\mathcal I} - \bbeta^{(2)})
 \end{align*}
In the above expression, note that the terms $(\hat \bSigma_{\mathcal I^c} + \lambda _\delta \bm I)^{-1} (\frac{\bm X^{(2)\top}_{\mathcal I^c} \bep_{\mathcal I^c}^{(2)}}{\kappa n_2})$ and $\lambda_\delta(\hat \bSigma_{\mathcal I^c} + \lambda_\delta)^{-1} (\hat \bbeta_{\mathcal I} - \bbeta^{(2)})$ are independent, given $\bm X$, and that the first term has mean $0$, given $\bm X$. Therefore, we can decompose
\begin{align*}
    &R(\hat \bbeta_{\text{BC}}; \bbeta^{(2)}) \\&= \E[\|\hat \bbeta_{\text{BC}} - \bbeta^{(2)}\|_2^2 \mid \bm X]  \\
    &= \underbrace{\E\biggl[\biggl\|(\hat \bSigma_{\mathcal I^c} + \lambda _\delta \bm I)^{-1} \biggl(\frac{\bm X^{(2)\top}_{\mathcal I^c} \bep_{\mathcal I^c}^{(2)}}{\kappa n_2}\biggr)\biggr\|_2^2\biggm\vert \bm X \biggr]}_{R_1(\hat \bbeta_{\text{BC}}; \bbeta^{(2)})} + \underbrace{\E[\|\lambda_\delta(\hat \bSigma_{\mathcal I^c} + \lambda_\delta)^{-1} (\hat \bbeta_{\mathcal I} - \bbeta^{(2)})\|_2^2 \mid \bm X]}_{R_2(\hat \bbeta_{\text{BC}}; \bbeta^{(2)})}.
\end{align*}
The analysis of the first term is relatively straightforward.
\begin{lemma}
    Define the quantity
    \begin{equation*}
        \mathcal R_1(\hat \bbeta_{\text{BC}}; \bbeta^{(2)}) = \sigma^2 \gamma_\kappa \cdot \frac{\lambda_\delta + 1 - \gamma_\kappa + \gamma_\kappa \lambda_\delta m_n(-\lambda_\delta) - \lambda_\delta (1 + \gamma_\kappa m_{n,1}(-\lambda_\delta))}{(\lambda_\delta + 1 - \gamma_\kappa + \gamma_\kappa \lambda_\delta m_n(-\lambda_\delta))^2} 
    \end{equation*}
    In the context of Theorem \ref{thm:bias_corrected_risk}, for any small $c > 0$, we have
    \begin{equation}
        |R_1(\hat \bbeta_{BC}; \bbeta^{(2)}) - \mathcal R_1(\hat \bbeta_{\mathrm{BC}}; \bbeta^{(2)})| = O(n^{-1/2 + c})
    \end{equation}
    with high probability over $(\bZ^{(1)}, \bZ^{(2)})$.
\end{lemma}
\begin{proof}
    We can first compute
    \begin{align*}
    \E\biggl[\biggl\|(\hat \bSigma_{\mathcal I^c} + \lambda _\delta \bm I)^{-1} \biggl(\frac{\bm X^{(2)\top}_{\mathcal I^c} \bep_{\mathcal I^c}^{(2)}}{\kappa n_2}\biggr)\biggr\|_2^2\biggm\vert \bm X \biggr] &= \frac{\sigma^2}{\kappa n_2} \Tr[(\hat \bSigma_{\mathcal I^c} + \lambda_\delta \bm I)^{-2} \hat \bSigma_{\mathcal I^c}] \\
    &=\frac{\sigma^2}{\kappa n_2} \Tr[(\hat \bSigma_{\mathcal I^c} + \lambda_\delta \bm I)^{-1}] \\
    &\qquad - \frac{\sigma^2}{\kappa n_2} \Tr[\lambda_\delta(\hat \bSigma_{\mathcal I^c} + \lambda_\delta \bm I)^{-2}]
\end{align*}
Now, as in Lemma \ref{lm:b3f_local}, we can apply \cite[Theorem 3.16]{knowles2017anisotropic} and \cite[Section A.2]{hastie2022surprises} to yield
    \begin{align}
        &\biggl|\frac{1}{p} \Tr[(\hat \bSigma_{\mathcal I^c} + \lambda_\delta \bm I)^{-1}] - \frac{1}{\lambda_\delta + 1 - \gamma_\kappa + \gamma_\kappa \lambda_\delta m_n(-\lambda_\delta)} \biggr| = O_\prec(n^{-1/2} \lambda^{-1})\\
        &\biggl|\frac1p \Tr[\lambda_\delta  (\hat \bSigma_{\mathcal I^c} + \lambda_\delta \bm I)^{-2}]  - \frac{\lambda_\delta (1 + \gamma_\kappa m_{n,1}(-\lambda_\delta))}{(\lambda_\delta + 1 - \gamma_\kappa + \gamma_\kappa \lambda m_n(-\lambda_\delta))^2} \biggr| = O_\prec(n^{-1} \lambda^{-1})
    \end{align}
    from which the conclusion follows.
\end{proof}
Now, we can similarly use a local law to analyze the latter term.
\begin{lemma}
    Define the quantity
    \begin{equation*}
        \mathcal R_2(\hat \bbeta_{\text{BC}}; \bbeta^{(2)}) = \frac{\lambda_\delta^2(1 + \gamma_\kappa m_{n,1}(-\lambda_\delta))}{(\lambda_\delta + 1 - \gamma_\kappa + \gamma_\kappa \lambda_\delta m_n(-\lambda_\delta))^2} \cdot R(\hat \bbeta_{\mathcal I}; \bbeta^{(2)}).
    \end{equation*}
    Then, for any small constant $c > 0$, we have
    \begin{equation*}
        |R_2(\hat \bbeta_{\mathrm{BC}}; \bbeta^{(2)}) -  \mathcal R_2(\hat \bbeta_{\mathrm{BC}}; \bbeta^{(2)}) | = O_p(n^{-1/2 + c})
    \end{equation*}
    with high probability over $(\bZ^{(1)}, \bZ^{(2)})$
\end{lemma}
\begin{proof}
    Note that the matrix $\lambda_\delta (\hat \bSigma_{\mathcal I^c} + \lambda_\delta)^{-1}$ is both $\sigma(\bm X)$-measurable and independent of $\hat \bbeta_{\mathcal I} - \bbeta^{(2)}$. Thus, conditional on $\hat \bbeta_{\mathcal I} - \bbeta^{(2)}$, we can apply the local law from  \cite[Section A.2]{hastie2022surprises} (cf.~\cite[Theorem 3.16]{knowles2017anisotropic}) to yield that there exists a fixed constant $C$ (not dependent on $\hat \bbeta_{\mathcal I} - \bbeta^{(2)}$ such that
    \begin{align*}
        &\biggl|\lambda_\delta^2\frac{(\hat \bbeta_{\mathcal I} - \bbeta^{(2)})^\top}{\|\hat \bbeta_{\mathcal I} - \bbeta^{(2)}\|_2} (\hat \bSigma_{\mathcal I^c} + \lambda_\delta)^{-2} \frac{(\hat \bbeta_{\mathcal I} - \bbeta^{(2)})}{\|\hat \bbeta_{\mathcal I} - \bbeta^{(2)}\|_2} - \frac{\lambda_\delta^2(1 + \gamma_\kappa m_{n,1}(-\lambda_\delta))}{(\lambda_\delta + 1 - \gamma_\kappa + \gamma_\kappa \lambda_\delta m_n(-\lambda_\delta))^2}  \biggr| \\
        &\le O_\prec(n^{-1/2} )
    \end{align*}
    from which the conclusion follows.
\end{proof}

\section{Sketch of a Covariate Shift Result Without Simultaneous Diagonalizability}\label{sec:sketch_nonsimul}
In Theorem \ref{thm:design_shift}, we imposed Assumption \ref{as:simul_diag}; that is, the covariance matrices $\bSigma^{(1)}$ and $\bSigma^{(2)}$ are diagonalizable in a shared eigenbasis $\bm V$. In the underparametrized regime in \cite{yang2020analysis}, such an assumption can often be avoided by whitening with respect to one of the covariance matrices. However, in the overparametrized regime, this technique fails because whitening the design also changes the geometry of the resulting interpolator: in particular, the min-$\ell_2$-norm interpolator is transformed into a min-norm interpolator with respect to a different, covariance-dependent norm.

Fully overcoming Assumption \ref{as:simul_diag} requires substantial technical work beyond the work in this manuscript. To this end, we sketch a promising pathway for generalizing our results beyond simultaneous diagonalizability while deferring a rigorous proof to future work.

Our sketch relies upon the observation that we may apply anisotropic local laws from \cite{knowles2017anisotropic} repeatedly, conditional on the source- or target-only datasets. In this appendix, we focus on the first bias term
\begin{equation*}
    B_1(\hat \bbeta; \bbeta^{(2)}) = \lambda^2 \bbeta^{(2)\top} (\hat \bSigma + \lambda \bm I)^{-1} \bSigma^{(2)} (\hat \bSigma + \lambda \bm I)^{-1} \bbeta^{(2)}
\end{equation*}
and note that our approach may be adapted to the other terms. We omit exact technical rigor (e.g., we assume that $\lambda \in \R$) and instead focus on the general ideas and challenges of our pathway.

Let $\hat \bSigma^{(i)} = \frac{\bX^{(i)\top} \bm X^{(i)}}{n}$ for $i = 1,2$. We begin by first expressing, as in the proof of Theorem \ref{thm:design_shift}, that
\begin{align*}
    B_1(\hat \bbeta; \bbeta^{(2)}) &= \lambda^2 \bbeta^{(2)\top} (\hat \bSigma + \lambda \bm I)^{-1} \bSigma^{(2)} (\hat \bSigma + \lambda \bm I)^{-1} \bbeta^{(2)} \\
    &= \lambda^2 \bbeta^{(2)\top} (\hat \bSigma^{(1)} + \hat \bSigma^{(2)} + \lambda \bm I)^{-1} \bSigma^{(2)} (\hat \bSigma^{(1)} + \hat \bSigma^{(2)} + \lambda\bm I )^{-1} \bbeta^{(2)} \\
    &=- \frac{\partial}{\partial t} \bigg\vert_{t=0} \lambda \bbeta^{(2)\top} (\hat \bSigma^{(1)} + \hat \bSigma^{(2)} + \lambda \bm I + \lambda t\bSigma^{(2)})^{-1} \bbeta^{(2)} \\
    &= -\frac{\partial}{\partial t} \bigg\vert_{t=0} \lambda \bbeta^{(2)\top} (\hat \bSigma^{(1)} + \lambda \bm R_1)^{-1} \bbeta^{(2)} \\
    &= -\frac{\partial}{\partial t} \bigg\vert_{t=0} \lambda \bbeta^{(2)\top} \bm R_1^{-1/2} (\bm R_1^{-1/2} \hat \bSigma^{(1)} \bm R_1^{-1/2} + \lambda \bm I)^{-1} \bm R_1^{-1/2} \bbeta^{(2)}
\end{align*}
where we let $\bm R_1 = \bm I + t \bSigma^{(2)} + \frac{1}{\lambda} \hat \bSigma^{(2)}$. Then, conditional on $\hat \bSigma^{(2)}$, the term $\bm R_1^{-1/2} \hat \bSigma^{(1)} \bm R_1^{-1/2}$ corresponds to a sample covariance matrix with covariance $\bm R_1^{-1/2} \bSigma^{(1)} \bm R_1^{-1/2}$. Therefore, by applying an anisotropic local law (e.g., \cite{knowles2017anisotropic}), we have
\begin{align*}
    &\lambda \bbeta^{(2)\top} \bm R_1^{-1/2} (\bm R_1^{-1/2} \hat \bSigma^{(1)} \bm R_1^{-1/2} + \lambda \bm I)^{-1} \bm R_1^{-1/2} \bbeta^{(2)} \\
    &\approx  \lambda \bbeta^{(2)\top} (\lambda \bm R_1 + \tfrac{n_1}{n} \lambda r_n(-\lambda; \gamma_1, \tfrac{n_1}{n} (\bSigma^{(1)})^{1/2} \bm R_1^{-1}  (\bSigma^{(1)})^{1/2}) \bSigma^{(1)})^{-1} \bbeta^{(2)}
\end{align*}
where $r_n(-\lambda; \gamma_1, \tfrac{n_1}{n} (\bSigma^{(1)})^{1/2} \bm R_1^{-1}  (\bSigma^{(1)})^{1/2})$ is the companion Stieltjes transform (cf.~\cite[Lemma 2.2]{knowles2017anisotropic}) corresponding to the empirical spectral distribution of the matrix 
\begin{align*}
    \frac{n_1}{n} (\bSigma^{(1)})^{1/2} \bm R_1^{-1}  (\bSigma^{(1)})^{1/2} &= \frac{\lambda n_1}{n} (\bSigma^{(1)})^{1/2} (\hat \bSigma^{(2)} + \lambda t \bSigma^{(2)} + \lambda \bm I )^{-1} (\bSigma^{(1)})^{1/2}.
\end{align*}
If it can be shown that the empirical spectral distribution of $\frac{\lambda n_1}{n} (\bSigma^{(1)})^{1/2} (\hat \bSigma^{(2)} + \lambda t \bSigma^{(2)} + \lambda \bm I )^{-1} (\bSigma^{(1)})^{1/2}$ converges in probability to some probability measure $\mu_{1,t}$ (with respect to a suitable metric on probability measures), then we have 
\begin{align*}
    &\lambda \bbeta^{(2)\top} (\lambda \bm R_1 + \tfrac{n_1}{n} \lambda r_n(-\lambda; \gamma_1, \tfrac{n_1}{n} (\bSigma^{(1)})^{1/2} \bm R_1^{-1}  (\bSigma^{(1)})^{1/2}) \bSigma^{(1)})^{-1} \bbeta^{(2)} \\
    &\approx \lambda \bbeta^{(2)\top} (\lambda \bm R_1 + \tfrac{n_1}{n} \lambda r(-\lambda; \gamma_1, \mu_{1,t}) \bSigma^{(1)})^{-1} \bbeta^{(2)} \\
    &= \lambda \bbeta^{(2)\top} (\hat \bSigma^{(2)} + t \lambda \bSigma^{(2)} + \lambda \bm I + \tfrac{n_1}{n} \lambda r(-\lambda; \gamma_1, \mu_{1,t}) \bSigma^{(1)})^{-1} \bbeta^{(2)} \\
    &= \lambda \bbeta^{(2)} (\hat \bSigma^{(2)} + \lambda \bm R_2)^{-1} \bbeta^{(2)} \\
    &= \lambda \bbeta^{(2)} \bm R_2^{-1/2}(\bm R_2^{-1/2} \hat \bSigma^{(2)} \bm R_2^{-1/2} + \lambda\bm I)^{-1} \bm R_2^{-1/2} \bbeta^{(2)}
\end{align*}
where we let  $\bm R_2 = t  \bSigma^{(2)} +  \bm I + \tfrac{n_1}{n}  r(-\lambda; \gamma_1, \mu_{1,t}) \bSigma^{(1)}$. Then, we can once again apply an anisotropic local law (e.g., \cite{knowles2017anisotropic}) to yield
\begin{align*}
    &\lambda \bbeta^{(2)} \bm R_2^{-1/2}(\bm R_2^{-1/2} \hat \bSigma^{(2)} \bm R_2^{-1/2} + \lambda\bm I)^{-1} \bm R_2^{-1/2} \bbeta^{(2)} \\
    &\approx \lambda \bbeta^{(2)\top} (\lambda \bm R_2 + \tfrac{n_2}{n} \lambda r_n(-\lambda; \gamma_1, \tfrac{n_2}{n} (\bSigma^{(2)})^{1/2} \bm R_2^{-1}  (\bSigma^{(2)})^{1/2}) \bSigma^{(2)})^{-1} \bbeta^{(2)}
\end{align*}
which is a deterministic quantity.

There remain two major technical challenges to completing such a proof: (1) demonstrating that the empirical spectral distribution of $\frac{\lambda n_1}{n} (\bSigma^{(1)})^{1/2} (\hat \bSigma^{(2)} + \lambda t \bSigma^{(2)} + \lambda \bm I )^{-1} (\bSigma^{(1)})^{1/2}$ converges in probability to some deterministic probability measure $\mu_{1,t}$ and (2) characterizing the measure $\mu_{1,t}$ is nontrivial. With the diagonalizations $\bSigma^{(i)} = \bV^{(i)} \bm D^{(i)} \bm V^{(i)\top}$ for $i = 1,2$,  the empirical spectral distribution is equal in distribution to that of 
\begin{equation*}
    \frac{\lambda n_1}{n} (\bD^{(1)})^{1/2} \bm V^{(1)\top} \bm V^{(2)} (\hat \bSigma^{(2)}_Z + \lambda t \bD^{(2)} + \lambda \bm I )^{-1} \bm V^{(2)\top} \bm V^{(1)}(\bD^{(1)})^{1/2}
\end{equation*}
where $\hat \bSigma^{(2)}_Z = \frac{\bm Z^{(2)\top} \bm Z^{(2)}}{n}$. While analyzing the empirical spectral distribution of $(\hat \bSigma^{(2)}_Z + \lambda t \bD^{(2)} + \lambda \bm I )^{-1}$ is possible with existing tools, a major challenge is introduced through the conjugation by $\bm V^{(2)\top} \bm V^{(1)}(\bm D^{(1)})^{1/2}$.  We believe that linearization techniques (such as Definition \ref{def:resolvent}) may be promising for tackling this challenge.

The second major technical challenge is related to analyzing the resulting object, the $t$-derivative of the object
\begin{equation*}
    \lambda \bbeta^{(2)\top} (\lambda \bm R_2 + \tfrac{n_2}{n} \lambda r_n(-\lambda; \gamma_1, \tfrac{n_2}{n} (\bSigma^{(2)})^{1/2} \bm R_2^{-1}  (\bSigma^{(2)})^{1/2}) \bSigma^{(2)})^{-1} \bbeta^{(2)}.
\end{equation*}
Even if one is able to determine that $\lambda^2 \bbeta^{(2)\top} (\hat \bSigma + \lambda \bm I)^{-1} \bSigma^{(2)} (\hat \bSigma + \lambda \bm I)^{-1} \bbeta^{(2)}$ converges to such an object, it is less clear how to make sense of it in a way that allows for insights similar to those of our main text. However, we believe that the resulting objects may reduce significantly for special cases of covariance matrices which are of interest (e.g., when $\bm V^{(1)} = \bm Q \bm V^{(2)}$ for a  rotation matrix $\bm Q$ that only acts on a subspace of dimension $O(1)$) while still yielding  insights into how to treat general covariance matrices.

\section{Additional Simulations}\label{sec:additional_simulations}
\subsection{Universality}\label{sec:university_sims}
In this section, we examine the robustness of our risk characterizations beyond the Gaussian setting. Theorem \ref{thm:universality} reveals that, under Assumption \ref{as:design_all_moments}, the asymptotic prediction risk of the pooled min-$\ell_2$-norm interpolator agrees with its Gaussian counterpart under model shift. Here, we test whether the risk characterization in Theorem \ref{thm:model_shift} continues to provide accurate predictions for a range of non-Gaussian covariates, including some that fall outside the scope of Assumption \ref{as:shared} or \ref{as:design_all_moments}.

\subsubsection{Synthetic covariates}
In Figure \ref{fig:universality_version_isotropic}, we reproduce Figures \ref{fig:isotropic_model_shift}(a) and \ref{fig:isotropic_model_shift}(b) under three synthetic non-Gaussian covariate distributions. Figures \ref{fig:universality_version_isotropic}(a)-(d) fall directly within the scope of Theorem \ref{thm:universality}. In Figures \ref{fig:universality_version_isotropic}(a) and \ref{fig:universality_version_isotropic}(b), our covariates are drawn as i.i.d.~Rademacher entries. In Figures \ref{fig:universality_version_isotropic}(c) and \ref{fig:universality_version_isotropic}(d), the entries are generated from standardized Hardy-Weinberg distributions: for each coordinate $j = 1, \dots, p$, we first fix an allele frequency $p_j$ drawn uniformly from $[0.25, 0.75]$. Then, for a given observation, the $j$th entry (i.e., the $j$th genotype count) is drawn from a rescaled and centered $\text{Bin}(2, p_j)$ distribution. In all four of these figures, we see very close alignment between the empirical risk values and the Gaussian risk predictions, which provides strong evidence for the finite-sample applicability of our asymptotic universality result in Theorem \ref{thm:universality}.

In Figures \ref{fig:universality_version_isotropic}(e) and \ref{fig:universality_version_isotropic}(f), the entries are drawn from a standardized $t_4$ distribution. Recalling that a $t_4$ distribution lacks a fourth moment, it certainly follows that Assumption \ref{as:design_all_moments} fails to hold. However, we note that the empirical risk values and Gaussian risk predictions in Figures \ref{fig:universality_version_isotropic}(e) and \ref{fig:universality_version_isotropic}(f) still match tightly, which suggests that the universality phenomenon described in Figure \ref{thm:universality} may extend to heavier-tailed covariates, though formally establishing such a result requires additional arguments.

\begin{figure}
    \centering
    \begin{subfigure}[b]{0.48\linewidth}
    \includegraphics[width=\linewidth]{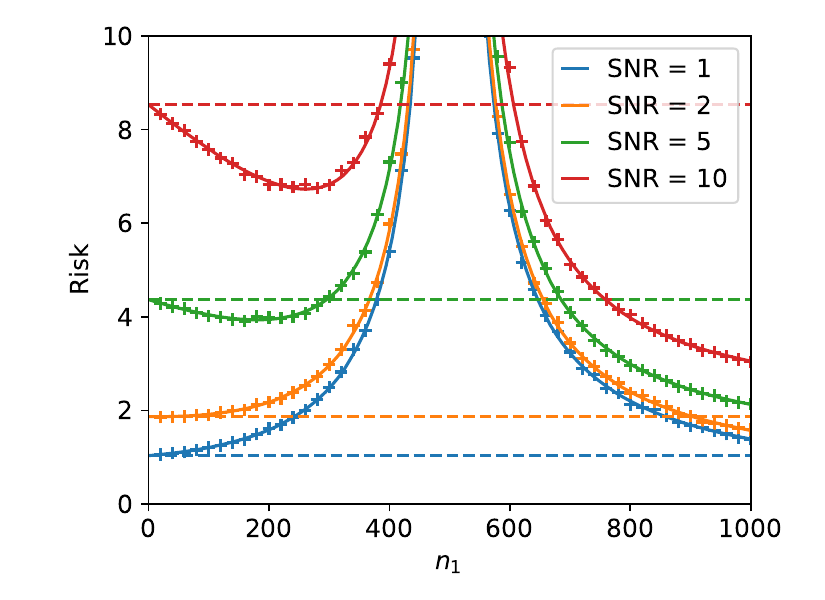}
    \caption{Fixing $\textnormal{SSR}=0.2$}
    \end{subfigure}
    \begin{subfigure}[b]{0.48\linewidth}
    \includegraphics[width=\linewidth]{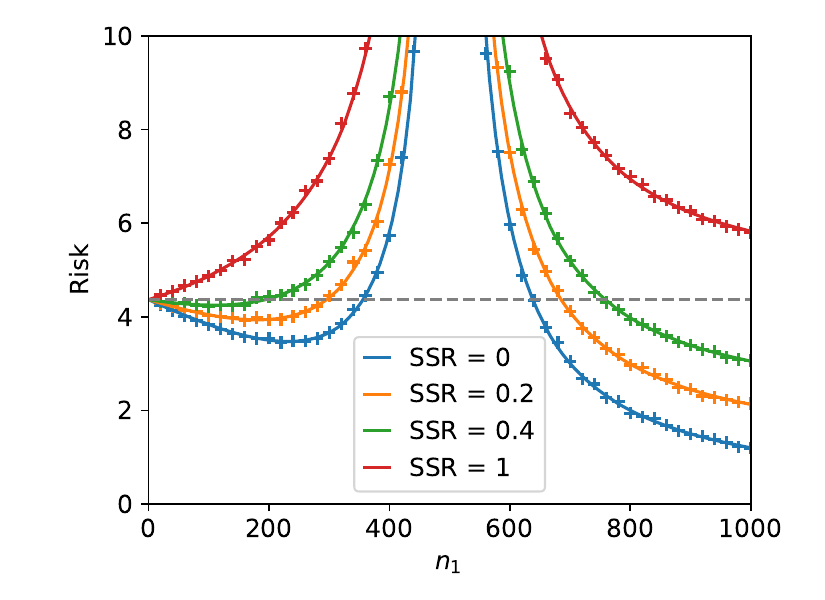}
    \caption{Fixing $\textnormal{SNR}=5$}
    \end{subfigure}
    \begin{subfigure}[b]{0.48\linewidth}
    \includegraphics[width=\linewidth]{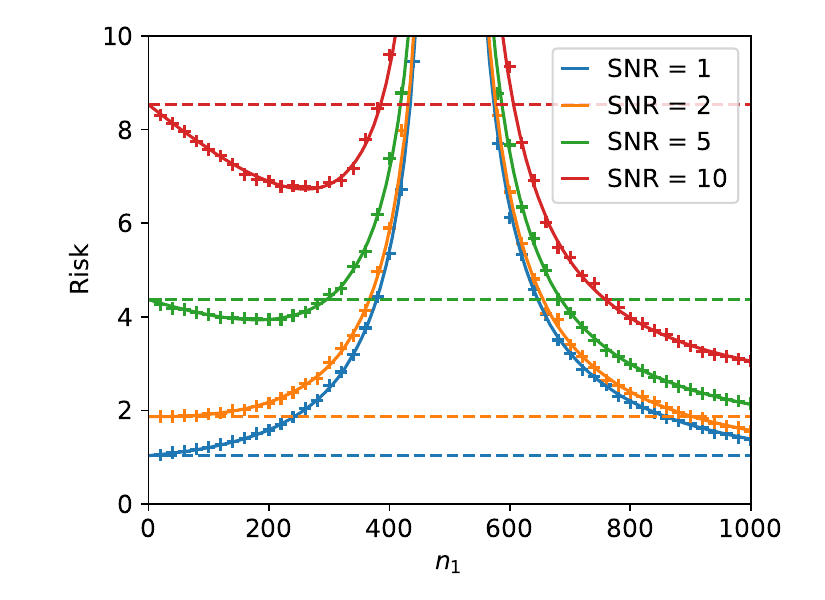}
    \caption{Fixing $\textnormal{SSR}=0.2$}
    \end{subfigure}
    \begin{subfigure}[b]{0.48\linewidth}
    \includegraphics[width=\linewidth]{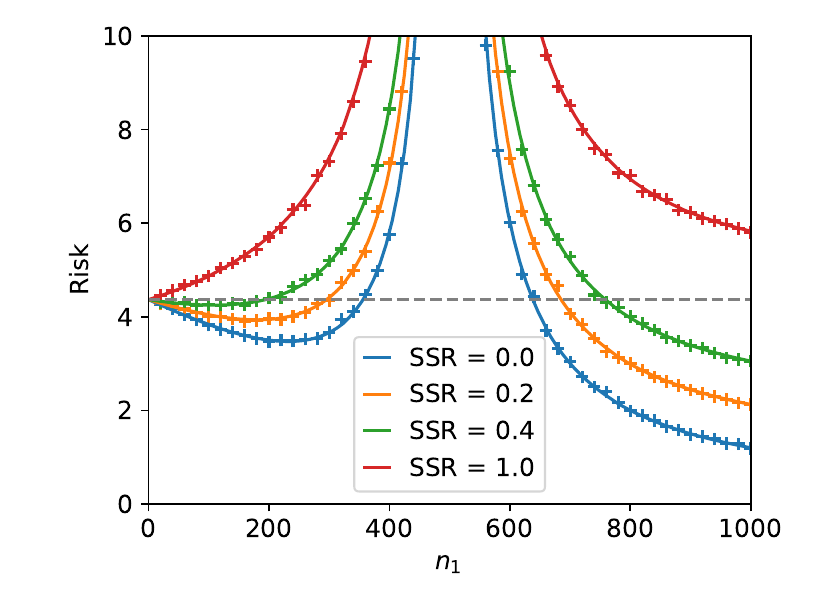}
    \caption{Fixing $\textnormal{SNR}=5$}
    \end{subfigure}
    \begin{subfigure}[b]{0.48\linewidth}
    \includegraphics[width=\linewidth]{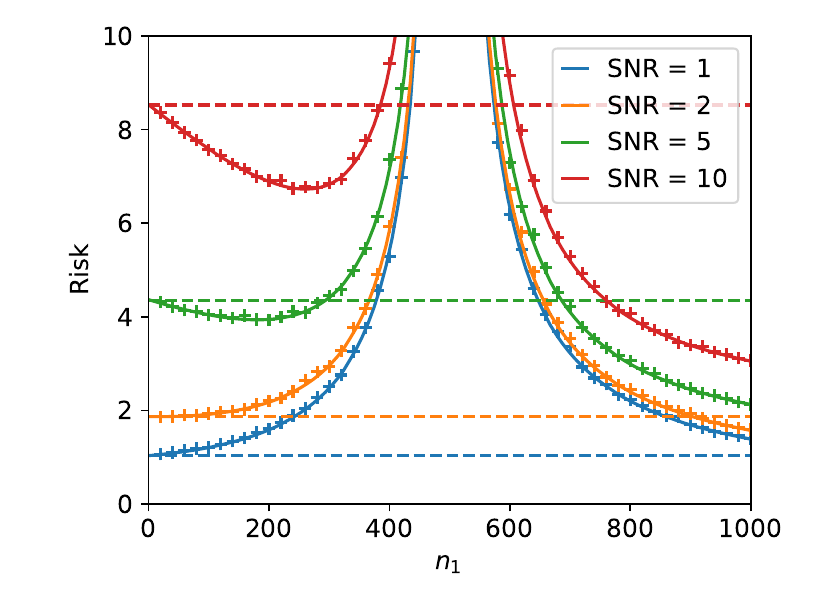}
    \caption{Fixing $\textnormal{SSR}=0.2$}
    \end{subfigure}
    \begin{subfigure}[b]{0.48\linewidth}
    \includegraphics[width=\linewidth]{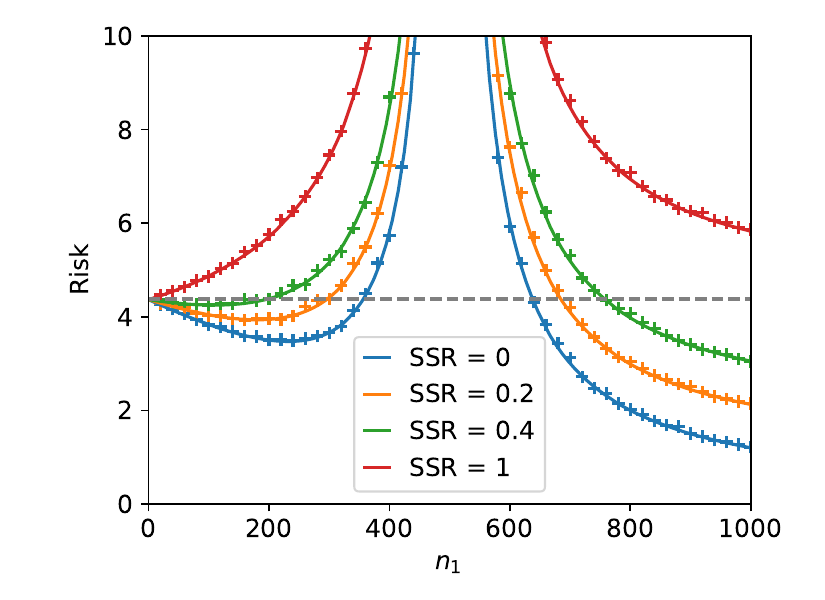}
    \caption{Fixing $\textnormal{SNR}=5$}
    \end{subfigure}
    \caption{Generalization error of the pooled min-$\ell_2$-norm interpolator under isotropic model shift with Rademacher, Hardy-Weinberg, and $t_4$ covariates, with $n_2 = 100$ and $p = 600$. A fixed realization of $\bbeta^{(2)}$ and $\tilde \bbeta$ are used throughout, where $\bbeta^{(2)}$ is drawn uniformly from the sphere of radius $\sqrt{\textnormal{SNR}}$ and $\tilde \bbeta$ is drawn uniformly from the sphere of radius $\sqrt{\textnormal{SSR} \cdot \textnormal{SNR}}$. All six panels redraw i.i.d. $\mathcal N(0,1)$ noise across trials.
    Panels (a) and (b) use i.i.d. Rademacher covariates. Panels (c) and (d) use Hardy-Weinberg covariates: for each coordinate $j = 1, \dots, p$, we first draw and fix $p_j \sim \text{Unif}(0.25, 0.75)$, sample entries from $\text{Bin}(2, p_j)$, and then center and standardize the results. Panels (e) and (f) use i.i.d. standardized $t_4$ covariates.
    Solid curves denote theoretical predictions, obtained from Theorem \ref{thm:model_shift} for $n_1 + n_2 < p$ and from \cite{yang2020analysis} for $n_1 + n_2 > p$. $+$ markers denote simulation averages across $50$ trials, and dashed horizontal lines denote the simulation average risk of the target-only interpolator. }
    \label{fig:universality_version_isotropic}
\end{figure}

\subsubsection{Semi-synthetic covariates}
In Figure \ref{fig:semisynthetic}, we evaluate our risk predictions using semi-synthetic covariates derived from real human genotype data. We select 600 SNPs from the 1000 Genomes Phase 3 panel \cite{1000genomes} and fit the HMM implemented in the  fastPHASE software \cite{scheet06fastphase}. With the fitted parameters, we then use the SNPknock package from \cite{sessia2019hmm} to sample new SNPs. When whitened, the sampled SNPs are not expected to be i.i.d., as Assumption \ref{as:shared} required. Therefore, like the $t_4$ covariates from Figures \ref{fig:universality_version_isotropic}(e) and \ref{fig:universality_version_isotropic}(f), the covariates in Figure \ref{fig:semisynthetic} once again fall outside the theoretical assumptions for our results. Nonetheless, in Figures \ref{fig:semisynthetic}(a)-(c), we again see very close alignment between the empirical risk values and the Gaussian risk predictions, thus validating our risk predictions for a set of semi-synthetic covariates.

\begin{figure}
    \centering
    \begin{subfigure}[b]{0.48\linewidth}
    \includegraphics[width=\linewidth]{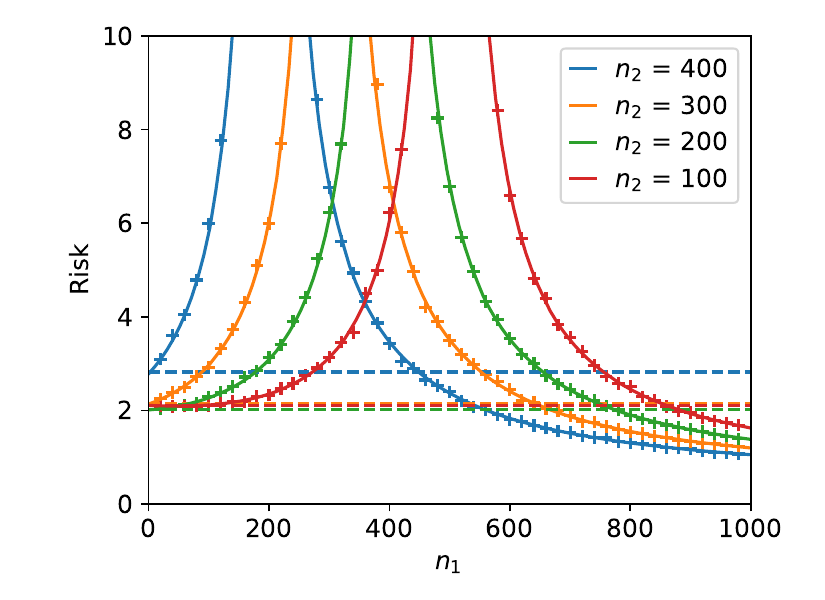}
    \caption{Fixing $\textnormal{SNR}=5, \textnormal{SSR}=0.2$}
    \end{subfigure}
    \begin{subfigure}[b]{0.48\linewidth}
    \includegraphics[width=\linewidth]{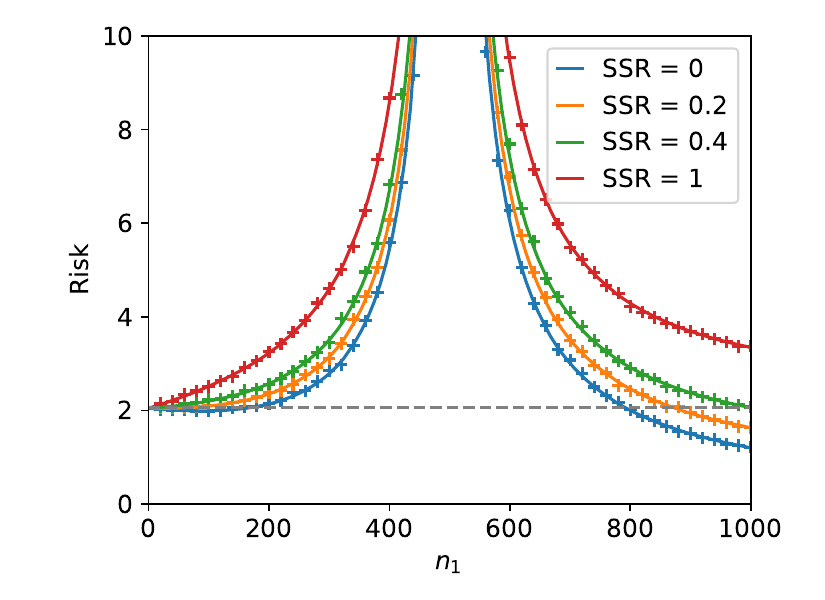}
    \caption{Fixing $n_2=100,\textnormal{SNR}=5$}
    \end{subfigure}
    \begin{subfigure}[b]{0.48\linewidth}
    \includegraphics[width=\linewidth]{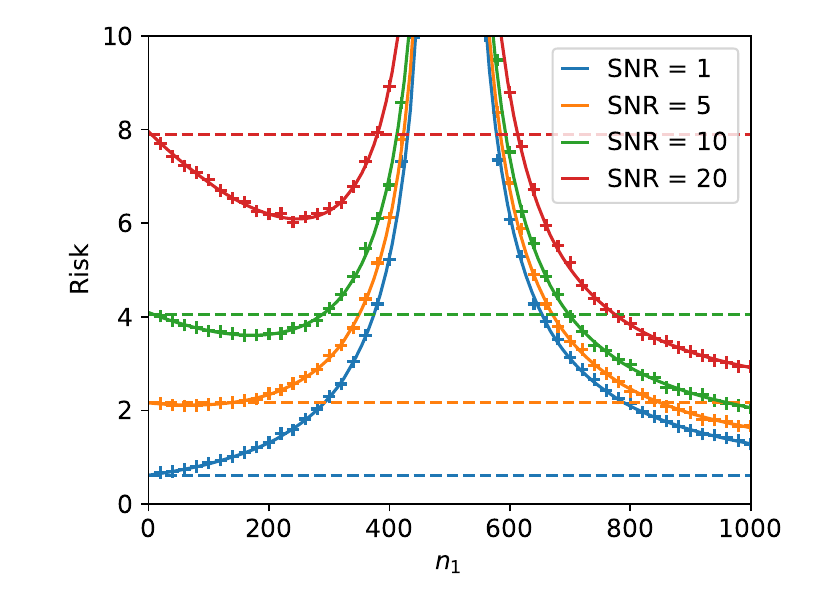}
    \caption{Fixing $n_2=100,\textnormal{SSR}=0.2$}
    \end{subfigure}
    \caption{Generalization error of the pooled min-$\ell_2$-norm interpolator under model shift with semi-synthetic data for $p = 600$ SNPs. A fixed realization of $\bbeta^{(2)}$ and $\tilde \bbeta$ are used throughout, where $\bbeta^{(2)}$ is drawn uniformly from the sphere of radius $\sqrt{\textnormal{SNR}}$ and $\tilde \bbeta$ is drawn uniformly from the sphere of radius $\sqrt{\textnormal{SSR} \cdot \textnormal{SNR}}$. SNPs are independently redrawn across trials from an HMM fit to SNPs from the 1000 Genomes Phase 3 panel. Panels (a), (b), and (c) show the risk of the pooled interpolator over varying source sample sizes $n_1$ for varying target sample sizes $n_2$, SSR, and SNR, respectively. Solid curves denote theoretical predictions, obtained from Theorem \ref{thm:model_shift} for $n_1 + n_2 < p$ and from \cite{yang2020analysis} for $n_1 + n_2 > p$. $+$ markers denote simulation averages across $50$ trials, and dashed horizontal lines denote the simulation average risk of the target-only interpolator. }
    \label{fig:semisynthetic}
\end{figure}

\subsection{Misspecification}\label{sec:misspec_sim} In Figure \ref{fig:misspecification}, we examine the effect of misspecification strength on the risk of the pooled min-$\ell_2$-norm interpolator under the misspecified model shift setting considered in Theorem \ref{thm:misspecification}. In Figure \ref{fig:misspecification}(a), we vary the source misspecification strength $\|(\bSigma_m^{(1)})^{1/2} \btheta^{(1)}\|_2^2$ while fixing the target misspecification strength $\|(\bSigma_m^{(2)})^{1/2} \btheta^{(2)}\|_2^2$ at $0$; conversely, in Figure \ref{fig:misspecification}(b), we vary the target misspecification strength $\|(\bSigma_m^{(2)})^{1/2} \btheta^{(2)}\|_2^2$ while fixing the source misspecification strength $\|(\bSigma_m^{(1)})^{1/2} \btheta^{(1)}\|_2^2$ at $0$. We note that, in both Figures \ref{fig:misspecification}(a) and \ref{fig:misspecification}(b), the empirical risk of the pooled min-$\ell_2$-norm interpolator closely matches the risk predicted by Theorem \ref{thm:misspecification}, thus again providing evidence for the strong finite-sample performance of our results.

In both Figures \ref{fig:misspecification}(a) and \ref{fig:misspecification}(b), we see that increasing the amount of misspecification increases the risk of the pooled min-$\ell_2$-norm interpolator, as expected. However, we note that different patterns emerge for the two sources of misspecification. As $n_1$ increases, the effect of source misspecification increases, as larger fractions of the pooled data are affected by the source misspecification. On the other hand, target misspecification raises the risk of the pooled min-$\ell_2$-norm interpolator both through the training labels (as captured by an increased value of $R_1(\hat \bbeta; \bbeta^{(2)}, \btheta^{(2)})$ in Theorem \ref{thm:misspecification}) and through the irreducible target prediction error (as captured by an increased value of $R_2(\hat \bbeta; \bbeta^{(2)}, \btheta^{(2)})$). Therefore, for small and moderate $n_1$, target misspecification is more harmful than analogous source misspecification. However, as $n_1$ grows, source misspecification becomes more amplified and can have a greater impact than analogous target misspecification.

\begin{figure}
    \centering
    \begin{subfigure}[b]{0.48\linewidth}
    \includegraphics[width=\linewidth]{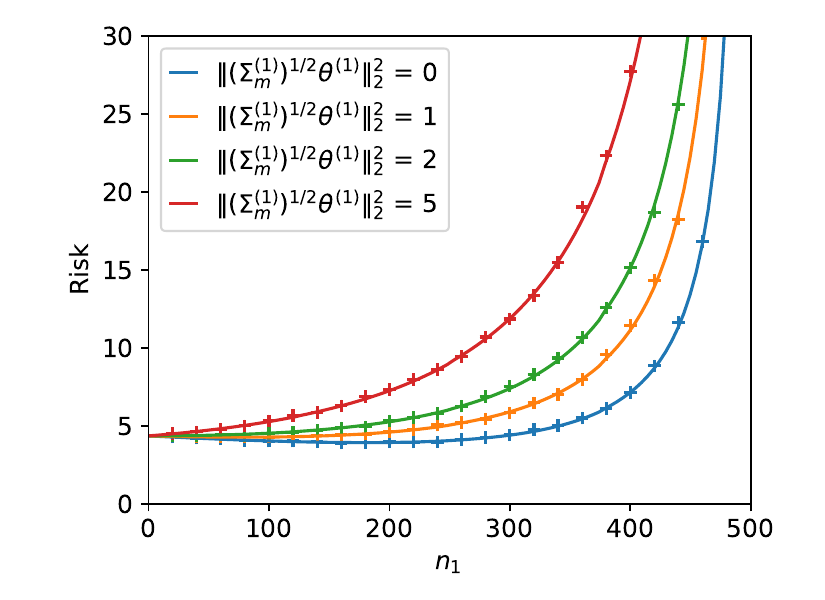}
    \caption{Fixing $\|(\bSigma^{(2)}_m)^{1/2} \btheta^{(2)}\|_2^2 = 0$}
    \end{subfigure}
    \begin{subfigure}[b]{0.48\linewidth}
    \includegraphics[width=\linewidth]{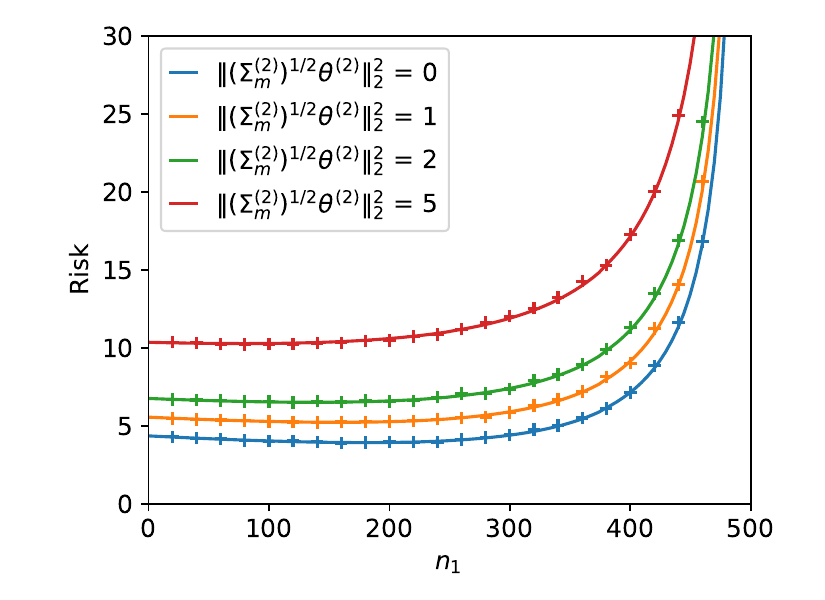}
    \caption{Fixing $\|(\bSigma^{(1)}_m)^{1/2} \btheta^{(1)}\|_2^2 = 0$}
    \end{subfigure}
    \caption{Comparison of the generalization error of the pooled min-$\ell_2$-norm interpolator under the misspecified model shift setting from Theorem \ref{thm:misspecification} as the source sample size $n_1$ varies. We fix $n_2 = 100$, $p = 600$,  $p_{m,1} = p_{m,2} = 200$, $\textnormal{SNR} = 5$, $\textnormal{SSR} = 0.2$, and $\bSigma^{(1)}_m = \bSigma^{(2)}_m = \bm I$. A fixed realization of $\bbeta^{(2)}$, $\tilde \bbeta$, $\btheta^{(1)}$, and $\btheta^{(2)}$ are used throughout, where $\bbeta^{(2)}$, $\tilde \bbeta$, $\btheta^{(1)}$, and $\btheta^{(2)}$ are drawn uniformly from spheres of radius $\sqrt{\textnormal{SNR}}$,  $\sqrt{\textnormal{SSR} \cdot \textnormal{SNR}}$, $\|(\bSigma_m^{(1)})^{1/2} \btheta^{(1)}\|_2$, and $\|(\bSigma_m^{(2)})^{1/2} \btheta^{(2)}\|_2$, respectively. Isotropic Gaussian covariates and i.i.d. $\mathcal N(0,1)$ noise are redrawn across trials. Panel (a) fixes $\|(\bSigma_m^{(2)})^{1/2} \btheta^{(2)}\|_2^2 = 0$, while Panel (b) fixes $\|(\bSigma_m^{(1)})^{1/2} \btheta^{(1)}\|_2^2 = 0$. Solid curves denote theoretical predictions, obtained from Theorem \ref{thm:misspecification}. $+$ markers denote simulation averages across $50$ trials.}
    \label{fig:misspecification}
\end{figure}

\subsection{Bias-corrected estimator}\label{sec:bias_corected_sims} We now provide empirical results related to the bias-corrected estimator in Algorithm \ref{alg:bias_corrected}.

First, in Figure \ref{fig:biascorrected}, we verify the risk predictions of Theorem \ref{thm:bias_corrected_risk} for the bias-corrected estimator across a range of SNR, SSR, sample-split parameter, and second-stage regularization parameter values. Across these parameter settings and over a range of source sample sizes $n_1$, the empirical risks closely match theoretical predictions from Theorem \ref{thm:bias_corrected_risk}, validating the strong finite-sample performance of our results for the bias-corrected estimator.
\begin{figure}
    \centering
    \begin{subfigure}[b]{0.48\linewidth}
    \includegraphics[width=\linewidth]{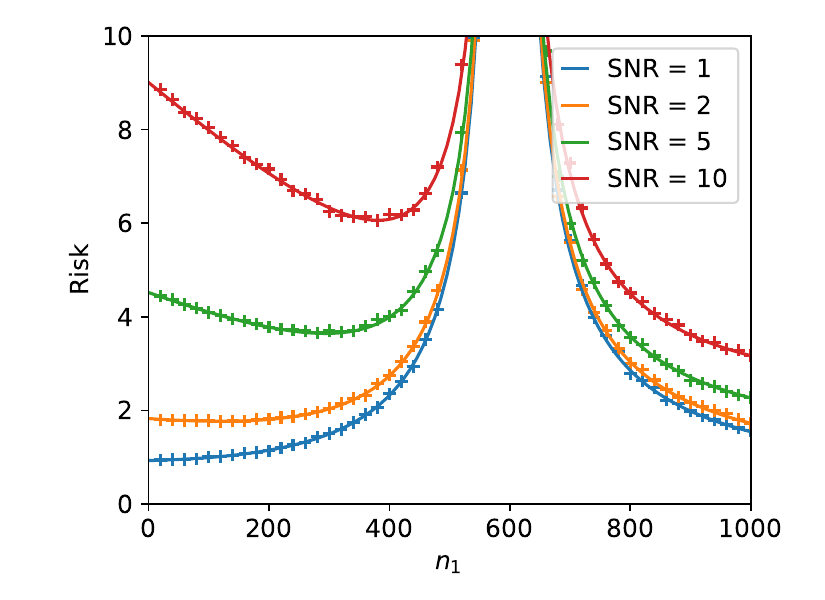}
    \caption{Fixing $\textnormal{SSR}=0.2$, $\kappa = 0.95$, $\lambda_\delta = 10$}
    \end{subfigure}
    \begin{subfigure}[b]{0.48\linewidth}
    \includegraphics[width=\linewidth]{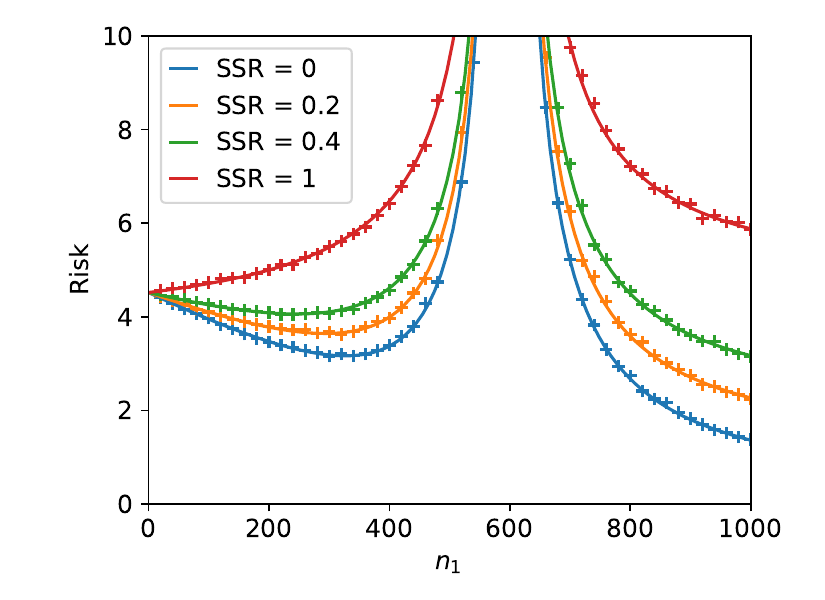}
    \caption{Fixing $\textnormal{SNR}=5$, $\kappa = 0.95$, $\lambda_\delta = 10$}
    \end{subfigure}
    \begin{subfigure}[b]{0.48\linewidth}
    \includegraphics[width=\linewidth]{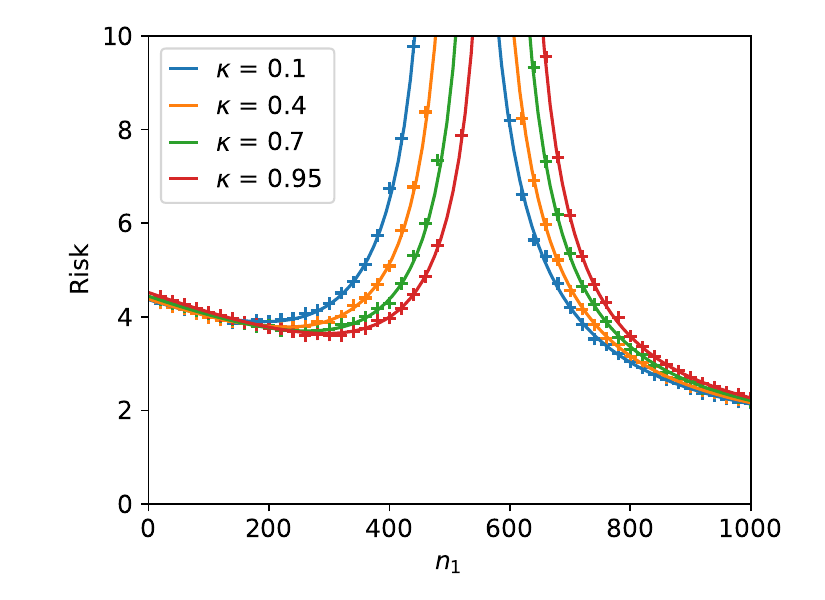}
    \caption{Fixing $\textnormal{SNR} = 5, \textnormal{SSR}=0.2$, $\lambda_\delta = 10$}
    \end{subfigure}
    \begin{subfigure}[b]{0.48\linewidth}
    \includegraphics[width=\linewidth]{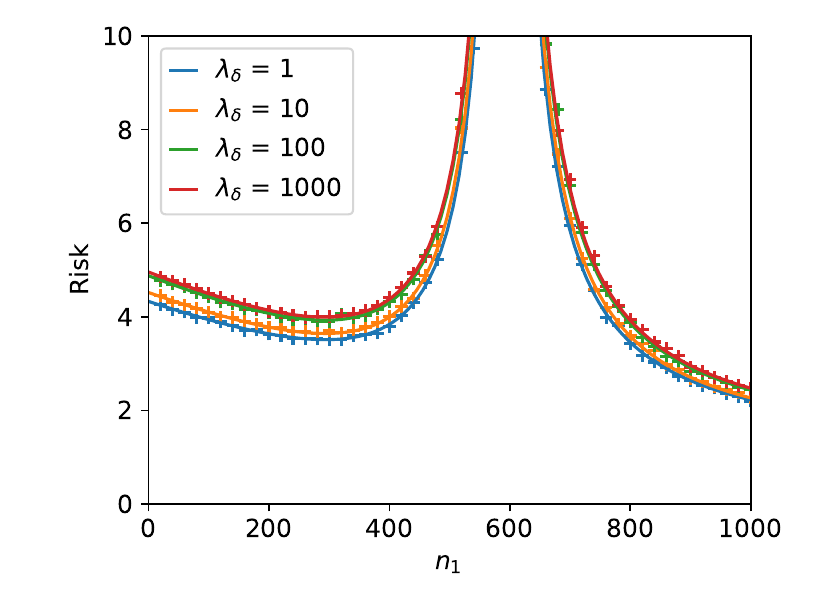}
    \caption{Fixing $\textnormal{SNR} = 5$, $\textnormal{SSR}=0.2$, $\kappa = 0.95$}
    \end{subfigure}
    \caption{Generalization error of the bias-corrected estimator under model shift with $n_2 = 100$ and $p = 600$. A fixed realization of $\bbeta^{(2)}$ and $\tilde \bbeta$ are used throughout, where $\bbeta^{(2)}$ is drawn uniformly from the sphere of radius $\sqrt{\textnormal{SNR}}$ and $\tilde \bbeta$ is drawn uniformly from the sphere of radius $\sqrt{\textnormal{SSR} \cdot \textnormal{SNR}}$. Isotropic Gaussian covariates and i.i.d. $\mathcal N(0,1)$ noise are redrawn across trials.
    Panels (a), (b), (c), and (d) show the risk of the bias-corrected estimator over varying sample sizes $n_1$ for varying SNR, SSR, $\kappa$, and $\lambda_\delta$. Solid curves denote theoretical predictions obtained from Theorem \ref{thm:bias_corrected_risk}, and $+$ markers denote simulation averages across $50$ trials.}
    \label{fig:biascorrected}
\end{figure}

Next, in Figure \ref{fig:biascorrected_compare}, we compare the performance of the bias-corrected estimator, the pooled min-$\ell_2$-norm interpolator, and the target-only min-$\ell_2$-norm interpolator across a range of source sample sizes $n_1$. Figures \ref{fig:biascorrected_compare}(a) and \ref{fig:biascorrected_compare}(b) demonstrate qualitatively similar but substantively different patterns for different SNR values. In both cases, the pooled min-$\ell_2$-norm interpolator outperforms the bias-corrected estimator for sufficiently small $n_1$, and this range increases as the SNR increases. Note also that the risk of the two transfer estimators peak at different values of $n_1$: relative to the bias-corrected estimator, the pooled estimator becomes unstable earlier.

\begin{figure}
    \centering
    \begin{subfigure}[b]{0.48\linewidth}
    \includegraphics[width=\linewidth]{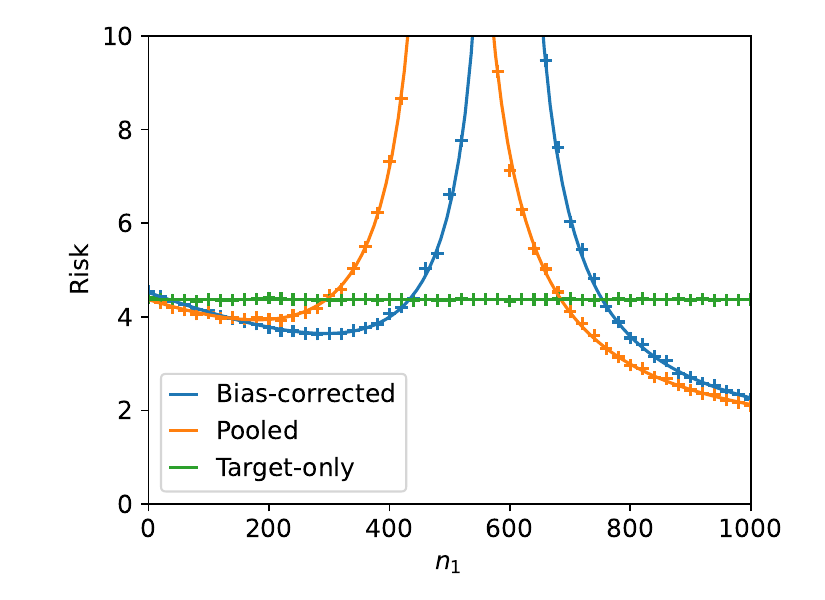}
    \caption{Fixing $\textnormal{SNR}=5$}
    \end{subfigure}
    \begin{subfigure}[b]{0.48\linewidth}
    \includegraphics[width=\linewidth]{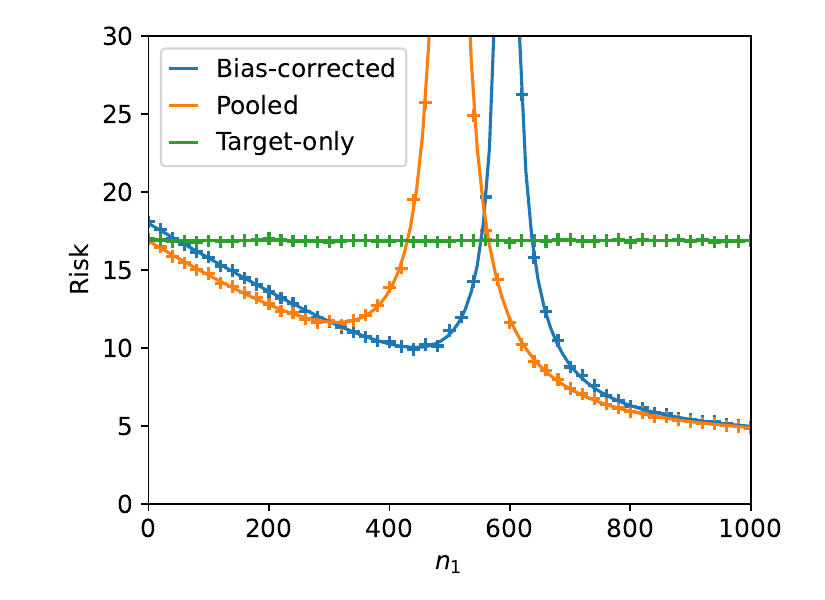}
    \caption{Fixing $\textnormal{SNR}=20$}
    \end{subfigure}
    \caption{Comparison of the generalization error of the bias-corrected estimator, pooled min-$\ell_2$-norm interpolator, and target-$\ell_2$-norm interpolator with $n_2 = 100$, $p = 600$, $\textnormal{SSR} = 0.2$, $\kappa = 0.95$, and $\lambda_\delta = 10$. A fixed realization of $\bbeta^{(2)}$ and $\tilde \bbeta$ are used throughout, where $\bbeta^{(2)}$ is drawn uniformly from the sphere of radius $\sqrt{\textnormal{SNR}}$ and $\tilde \bbeta$ is drawn uniformly from the sphere of radius $\sqrt{\textnormal{SSR} \cdot \textnormal{SNR}}$. Isotropic Gaussian covariates and i.i.d. $\mathcal N(0,1)$ noise are redrawn across trials. Panel (a) fixes $\textnormal{SNR} = 5$, while Panel (b) fixes $\textnormal{SNR} = 20$. Solid curves denote theoretical predictions, obtained from Theorem \ref{thm:model_shift}, Theorem \ref{thm:bias_corrected_risk}, and \cite[Theorem 1]{hastie2022surprises} for $n_1 + n_2 < p$ and from \cite{yang2020analysis} for $n_1 + n_2 > p$. $+$ markers denote simulation averages across $50$ trials, and dashed horizontal lines denote the simulation average risk of the target-only interpolator.}
    \label{fig:biascorrected_compare}
\end{figure}

\subsection{Comparison to null risk}\label{sec:null_comparison} We briefly note that, for sufficiently small SNR, the pooled min-$\ell_2$-norm interpolator is not guaranteed to outperform even the null estimator $\hat \bbeta_{\text{null}} = 0$. In Figure \ref{fig:null_compare}, we replicate Figures \ref{fig:isotropic_model_shift}(a) and \ref{fig:design_shift}(c), respectively, but include horizontal dotted lines to denote the risk of the null estimator, which always has risk $\E[\|\hat \bbeta_{\text{null}} - \bbeta^{(2)}\|_{\bSigma^{(2)}}^2] = \|\bbeta^{(2)}\|_2^2 = \text{SNR} \cdot \sigma^2$ in these two settings. Indeed, in both Figures \ref{fig:null_compare}(a) and \ref{fig:null_compare}(b), we see that there are typically at least some source sample sizes $n_1$ for which the pooled min-$\ell_2$-norm interpolator outperforms the null estimator. However, for $\text{SNR} = 1$ in the both figures, we see that the risk of the pooled min-$\ell_2$-norm interpolator is greater than that of the null estimator for the entire range of the overparametrized setting.

\begin{figure}
    \centering
    \begin{subfigure}[b]{0.48\linewidth}
    \includegraphics[width=\linewidth]{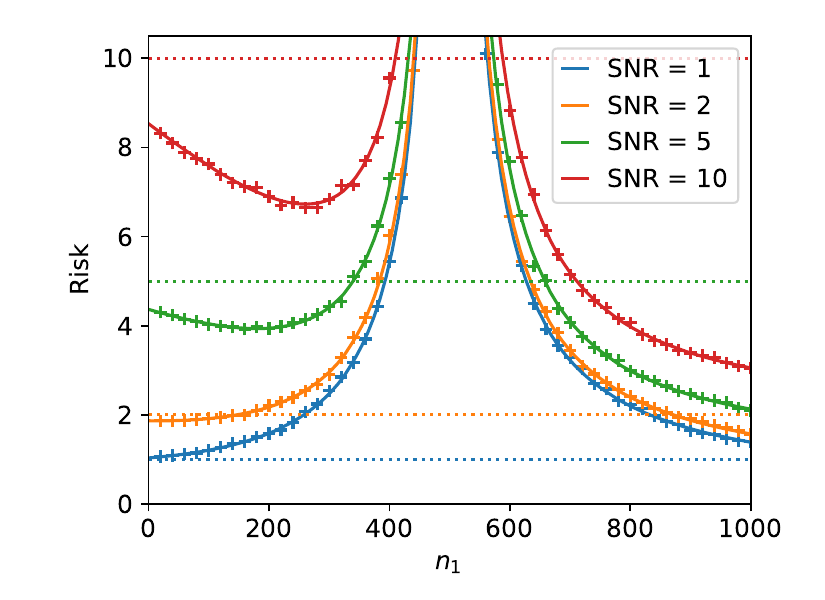}
    \caption{version of 1(a)}
    \end{subfigure}
    \begin{subfigure}[b]{0.48\linewidth}
    \includegraphics[width=\linewidth]{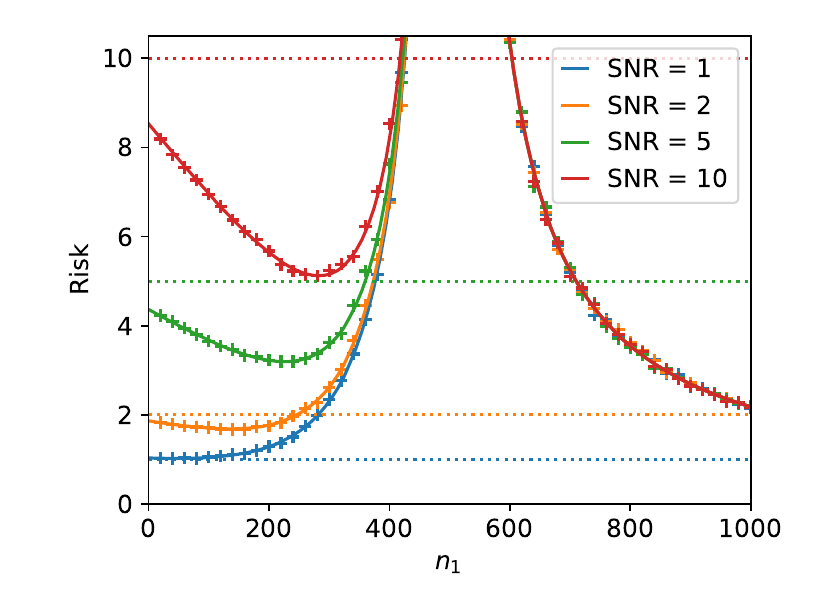}
    \caption{version of 3(c)}
    \end{subfigure}
    \caption{Replica of Figures \ref{fig:isotropic_model_shift}(a) and \ref{fig:design_shift}(c) with the null risk marked explicitly.}
    \label{fig:null_compare}
\end{figure}

\end{document}